\documentclass[11pt,a4paper]{article}

\usepackage[T1]{fontenc}
\usepackage[utf8]{inputenc}
\usepackage{fullpage}
\usepackage{amsmath,amssymb,amsthm,mathtools}
\usepackage{enumerate}
\usepackage{booktabs}
\usepackage{mathrsfs}
\usepackage{hyperref}
\usepackage{cite}
\usepackage{color}

\hypersetup{colorlinks=true,linkcolor=blue,citecolor=blue}

\newtheorem{theorem}{Theorem}[section]
\newtheorem{lemma}[theorem]{Lemma}
\newtheorem{corollary}[theorem]{Corollary}
\newtheorem{proposition}[theorem]{Proposition}
\newtheorem{definition}[theorem]{Definition}
\newtheorem{remark}[theorem]{Remark}

\newcommand{\R}{\mathbb{R}}
\newcommand{\C}{\mathbb{C}}

\newcommand{\He}[1]{\mathrm{Her}[#1]}
\newcommand{\tr}{\mathrm{trace}}
\newcommand{\rank}{\mathrm{rank}}
\newcommand{\lmin}{\lambda_{\min}}
\newcommand{\lmax}{\lambda_{\max}}
\newcommand{\ds}{\,\mathrm{d}s}
\newcommand{\dt}{\,\mathrm{d}t}
\let\d\relax
\newcommand{\d}{\,\mathrm{d}}
\newcommand{\norm}[1]{\|#1\|_*}
\newcommand{\schur}[1]{\widetilde{/}}
\newcommand{\ip}[2]{\langle #1,\,#2\rangle}
\newcommand{\Sn}{\mathbb{H}^n}
\newcommand{\Snm}{\mathbb{H}^{n+m}}
\newcommand{\Snpsd}{\mathbb{H}^n_{\succeq0}}
\newcommand{\Snmpsd}{\mathbb{H}^{n+m}_{\succeq0}}
\newcommand{\Snpd}{\mathbb{H}^n_{\succ0}}
\newcommand{\pinv}[1]{{#1}^\dagger}

\providecommand{\blue}[1]{\color{blue}{#1}\color{black}\hspace{0pt}}

\title{\bfseries Optimal Control and Dissipativity of Linear Hermitian Matrix-Valued Dynamical Systems}
\author{Corentin Briat}
\date{\today}

\begin{document}
\maketitle
\tableofcontents
\bigskip

\begin{abstract}
We develop a unified framework for linear-cost optimal control,
finite-time optimal steering, dissipativity analysis, and zero-sum
differential games for linear impulsive dynamical systems whose
state is a Hermitian matrix evolving in $\mathbb{H}^{n+m}_{\succeq0}$.
The system class encompasses
continuous-time (CT) and discrete-time (DT) linear systems as
degenerate cases, switched systems, and includes the dynamics of second-order moments
of linear (stochastic) hybrid systems as a particular instance.

All results rest on three tools: a single \emph{key identity}
relating cost, trajectory, and a dual variable; an Extended Schur
complement lemma; and a Schur inner-product decomposition.  Applied
identically to the flow integral and to each jump, these yield
structurally uniform sufficient and necessary conditions, dual linear matrix inequality (LMI) 
characterizations, and explicit optimal policies across all the considered
problem classes.

For optimal control and dissipativity, we derive finite-horizon
sufficient conditions (via a generalized impulsive Riccati equation,
I-GRE), necessary conditions, and dual LMI representations, then
extend all results to the infinite horizon under LTV assumptions,
without assuming time invariance or periodicity, via a monotone
convergence argument.  For finite-time optimal steering, Lagrangian
duality reduces the terminal-constraint problem to an unconstrained
optimal control problem; the optimal policy is characterized by a two-point
boundary value problem and computed by a forward-backward algorithm.
Zero-sum games are handled via the Hamilton-Jacobi-Isaacs version of
the I-GRE under Isaacs' condition.  For the three problems that
admit dwell-time extensions (optimal control, dissipativity, and
games), causal policies under periodic, minimum, and range
dwell-time constraints are derived by combining a forward Riccati
ODE over a bounded timer interval with a discrete-time Riccati
equation as the boundary condition at the end of the dwell period. Examples are discussed in the related sections.
\end{abstract}

\section{Introduction}

Hermitian matrix-valued dynamical systems form an important class
of systems whose states are Hermitian matrices evolving in time.
They arise naturally whenever the quantity of interest is a
second-order moment, a covariance, or a Gram matrix, all quantities
of central importance in stochastic control, optimal transport,
information geometry, and signal
processing~\cite{Skelton:98,Chen:16,Bamieh:24,Amari:00}.
In particular, the class of systems that leave the cone of positive
semidefinite matrices invariant is of special relevance, as it
encompasses the dynamics of state covariance and second-order moment
matrices associated e.g., with linear stochastic
systems~\cite{Ichikawa:01,Costa:13,Dragan:13,Dragan:25}.
Stability analysis and stabilization of such systems, including
continuous-time, discrete-time, and impulsive variants, were studied
in~\cite{Briat:22:Matrix} under a unified framework, extending earlier analyses
conducted separately for specific problem classes and under additional
structural assumptions~\cite{Freiling:96,AitRami:01a,AitRami:01b,Costa:13,Ichikawa:01}.
The present paper extends that work to the problems of optimal
control, optimal steering, dissipativity, and zero-sum differential
games.

The view of linear systems through the lens of second-order information
and covariance representations has a long history.
The systematic treatment of covariance control (the problem of steering
and assigning the state covariance of a stochastic system by means of
linear state feedback) was pioneered by Hotz and Skelton~\cite{Hotz:87}
for discrete-time systems, and subsequently placed within a broader
algebraic framework in the monograph by Skelton, Iwasaki, and
Grigoriadis~\cite{Skelton:98}, which unifies the design of covariance
controllers, $H_2$ controllers, and $H_\infty$ controllers under a
common linear-algebra formulation involving covariance matrices and
matrix inequalities.
More recently, Gattami~\cite{Gattami:16} proposed an $H_\infty$ analysis
method based directly on the state covariance, and Bamieh~\cite{Bamieh:24}
gave a comprehensive treatment of optimal control problems in systems and controls
using a covariance-representation approach in which LQ objectives become
linear-in-matrix semidefinite programs and LMIs emerge as the natural
conal constraints of their duals.
The present paper adopts a similar point of view but works directly with
the matrix-valued second-moment process as a dynamical system in its own
right, rather than as a representation of an underlying vector state.

The classical linear quadratic regulator (LQR) for vector-state linear
systems is characterized by the algebraic or differential Riccati
equation; see~\cite{AbouKandil:03} for a comprehensive treatment of
matrix Riccati equations and their role in systems theory,
and~\cite{Bertsekas:12,Bertsekas:17} for the dynamic-programming
foundations of optimal control on which the necessary conditions of
this paper rest.
When the cost matrices are allowed to be indefinite, the problem becomes
significantly more involved: the generalized differential Riccati equation
and the corresponding LMI conditions were studied in depth in the series
of papers~\cite{AitRami:96,AitRami:00,AitRami:01a,AitRami:01b},
which established solvability conditions, asymptotic behavior, and the
connection to stochastic LQ control.
For systems with periodic parameters or periodically sampled inputs, the
periodic Riccati equation plays the central role; the theory of periodic
systems and its connections to $H_2$ and $H_\infty$ control are surveyed
in~\cite{Bittanti:09}.
In all of these references, the infinite-horizon results are obtained
under a time-invariance or periodicity assumption on the system and
cost operators.

Impulsive and jump linear systems, in which a continuous-time flow is
combined with instantaneous state resets at a sequence of event times,
arise in sampled-data control \cite{Naghshtabrizi:08,Goebel:12, Briat:13:ImpDet, Briat:16:ImpStoch}, networked control systems  \cite{Naghshtabrizi:08,Heemels:10,Cloosterman:10}, reset systems \cite{Banos:12} and the modeling
of physical systems subject to impulses \cite{Brogliato:00}.
The $H_2$ and $H_\infty$ theory for linear time-varying and jump systems
was developed in~\cite{Ichikawa:01}, which established the connection
between jump systems and sampled-data systems and derived the associated
Riccati equations in the LTV setting; the $H_\infty$ analysis of
sampled-data systems through this jump-system equivalence was developed
in~\cite{Sivashankar:94}.
Markov jump linear systems, where the jump times and the post-jump
parameters are governed by a Markov chain, have been studied extensively;
see~\cite{Costa:13,Dragan:13} for continuous-time and~\cite{Costa:05,Dragan:10}
for discrete-time systems.
Sampled-data control of switched and Markov jump linear systems has been
investigated in depth by Geromel and co-workers~\cite{Geromel:19,Geromel:15b,Souza:21}.
More recent contributions, including optimal control for linear impulsive
stochastic systems and their application to sampled-data problems, can be
found in~\cite{Dragan:25}.
In all these references, the treatment of infinite-horizon problems
assumes, either explicitly or implicitly, that the system parameters are
time-invariant or periodic.

Covariance steering in finite time, the problem of steering the
state second-moment of a linear stochastic system from a prescribed
initial value to a prescribed terminal one while minimizing a given
cost, was formulated and solved for continuous-time LTV systems via a
Schr\"odinger bridge interpretation in the series of papers by Chen,
Georgiou, and Pavon~\cite{Chen:16,Chen:16b}, where the optimal policy
is characterized by a system of two coupled Riccati or Lyapunov equations
and shown to coincide with an LQG controller with a specific terminal cost
weight. Goldshtein and Tsiotras~\cite{Goldshtein:17} addressed the discrete
time-varying case and provided a computational algorithm, while
Bakolas~\cite{Bakolas:16} treated the problem subject to integral
quadratic constraints. The extension of optimal steering to impulsive systems, where
the second-moment is subject to resets at each impulse time, has not
been addressed in the literature.

Dissipativity theory, introduced by Willems~\cite{Willems:72}, provides
a general framework for characterising the energy-like behavior of
dynamical systems in terms of supply rates and storage functions \cite{Scherer:05a,vanderSchaft:00,Brogliato:07}.
It unifies a broad range of analysis and design questions, including
passivity, $H_\infty$ performance, and integral quadratic
constraints~\cite{Seiler:19}, under a single input-output framework.
For linear systems, dissipativity is equivalent to the existence of a
solution to a Lyapunov or Riccati inequality, and the available storage
and required supply have natural state-space characterizations.
The dissipativity of impulsive systems has been studied in several
works \cite{Haddad:06}; however, to the best of our knowledge, a treatment in the
matrix-valued setting that addresses both finite- and infinite-horizon
conditions under LTV assumptions and with indefinite supply rates is
not available in the literature.

The theory of two-player zero-sum differential games for linear systems
was developed in detail by Ba\c{s}ar and Olsder~\cite{Basar:95}, and
the connection to $H_\infty$ optimal control~\cite{Doyle:89a}, formulated
as a minimax problem over the disturbance input, is well established;
for linear time-varying systems, the corresponding state-space
$H_\infty$ and game solutions were developed in~\cite{Ravi:91}.
The key role of the Hamilton--Jacobi--Isaacs equation, and its reduction
to a Riccati-type equation in the linear-quadratic case, places the
game-theoretic problem squarely within the framework of matrix Riccati
equations; see again~\cite{AbouKandil:03} for the game-theoretic Riccati
equations and~\cite{Basar:95} for the general theory.

An important class of causal policies for impulsive systems are
those based on a \emph{timer} variable, which
measures the elapsed time since the last
jump~\cite{Morse:96,Hespanha:99,Goebel:12}.
For stability analysis, conditions based on minimum dwell-time (MDT), were studied by Geromel
and Colaneri~\cite{Geromel:06,Geromel:06d} in the context of switched systems and were later considered for switched \cite{Xiang:16} and impulsive systems \cite{Briat:11l,Briat:13:ImpDet} where maximum dwell-time, range dwell-time (RDT), and constant dwell-time were also considered in the context of looped-functionals and timer-dependent Lyapunov functions. Analogous concepts include the average dwell-time and the reverse average dwell-time \cite{Hespanha:08}. Some of those results were later extended to matrix-valued impulsive and hybrid systems in~\cite{Briat:22:Matrix}.
Timer-dependent Lyapunov conditions for dwell-time analysis of
deterministic and stochastic impulsive and sampled-data systems have
been developed in~\cite{Briat:13:ImpDet,Briat:16:ImpStoch}.
These conditions are particularly valuable because they are verifiable
on bounded time intervals even though the dwell-time sequences may be
infinite; the Geromel--Colaneri stability condition at the end of
the minimum dwell period is the key technique that converts an
infinite-horizon verification to a finite one. To date, analogous conditions in the context of optimal control, dissipativity, and games have not been developed in the matrix-valued impulsive setting.

The present paper fills these gaps.
We address the linear-cost optimal control problem, optimal steering,
dissipativity analysis, and minimax optimal control for the class of
linear Hermitian impulsive matrix-valued systems introduced
in~\cite{Briat:22:Matrix}, covering both finite- and infinite-horizon
problems, and developing dwell-time extensions for all three problems.
The paper is built around a single \emph{key identity} that holds for
any admissible trajectory and any dual variable, together with two Schur
complement lemmas; these three tools apply identically to the flow integral
and to each jump summand, giving a structurally uniform proof method
throughout.

A central feature is that all infinite-horizon results are derived for
\emph{linear time-varying} systems and cost operators under suitable
stabilizability and detectability conditions, without assuming
time-invariance or periodicity, in contrast to the treatments
in~\cite{Ichikawa:01,Dragan:25,Costa:13,AbouKandil:03,Bittanti:09}.
Specifically, a monotone convergence argument shows that the
horizon-$T$ optimal cost is non-decreasing in $T$ and converges to
the infinite-horizon value; the time-varying case requires this
functional-analytic approach in place of the algebraic Riccati
equations that characterize the LTI case. Examples are considered in the related sections.

The paper is structured as follows.
Notation, Schur complement lemmas, and the system class are collected
in Section~\ref{sec:prelim}.
Linear-cost optimal control, including dwell-time extensions, is covered
in Section~\ref{sec:OC}.
Finite-time optimal steering, the two-point boundary value problem,
and the forward-backward algorithm are in Section~\ref{sec:cov}.
Dissipativity analysis is in Section~\ref{sec:diss}, and minimax control
in Section~\ref{sec:game}.

\section{Preliminaries}
\label{sec:prelim}

\subsection{Notation}

Throughout, $n,m\ge1$ are fixed integers. The set of Hermitian matrices, positive semidefinite Hermitian, and positive definite Hermitian matrices of dimension $n$ are denoted by $\Sn$, $\Snpsd$, and $\Snpd$. We denote the conjugate transpose of a matrix $A$ by $A^*$, and we write $\He{A}:=A+A^*$. With a slight abuse of notation, the dual of an operator $\mathcal A$ by $\mathcal A^*$.  For a matrix $$X=\begin{bmatrix}
  X_1 & X_2^*\\
  X_2 & X_3
\end{bmatrix}\in\Snmpsd,$$ we set $X_1:=EXE^*\in\Sn$,
$X_2:=E_\bot XE^*\in\C^{m\times n}$, and
$X_3:=E_\bot XE_\bot^*\in\mathbb{H}^m_{\succeq0}$, where $E:=\bigl[\begin{smallmatrix}I_n&0\end{smallmatrix}\bigr]
\in\C^{n\times(n+m)}$ and
$E_\bot:=\bigl[\begin{smallmatrix}0&I_m\end{smallmatrix}\bigr]
\in\C^{m\times(n+m)}$. For a function $f$ defined on $\mathbb{R}$, the left limit at $t$ is defined as $f(t^-):\lim_{s\uparrow t}f(s)$ while the right limit at $t$ is denoted by $f(t^+):\lim_{s\downarrow t}f(s)$.

\subsection{Inner product and nuclear norm}

\begin{definition}[{\cite[Def.~2.2]{Briat:22:Matrix}}]\label{def:ip}
  The \emph{Frobenius inner product} on $\C^{n\times n}$ is
  \begin{equation}
    \ip{A}{B}:=\tr(AB^*),\qquad A,B\in\C^{n\times n}.
  \end{equation}
  For $A\in\Snpsd$ the nuclear norm equals the trace:
  $\norm{A}:=\tr(A)$.
\end{definition}

\begin{proposition}[{\cite[Prop.~2.3]{Briat:22:Matrix}}]\label{prop:PQ}
  Let $P\in\Snpd$, $Q\in\Snpsd$, $R\in\Sn$.  Then:
  \begin{enumerate}[\upshape(a)]
    \item\label{st:zero} $\ip{P}{Q}=0$ if and only if $Q=0$.
    \item\label{st:neg}
      $\ip{R}{Q}\le0$ for all $Q\succeq0$, $Q\ne0$, if and only if
      $R\preceq0$.
    \item\label{st:bound}
      $\lmin(P)\norm{Q}\le\ip{P}{Q}\le\lmax(P)\norm{Q}$ for all
      $Q\succeq0$.
  \end{enumerate}
\end{proposition}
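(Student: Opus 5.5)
The three items all follow from one reduction: a positive semidefinite $Q$ is a sum of nonnegative multiples of rank-one projectors. Fix the spectral decomposition $Q=\sum_{i=1}^n \mu_i v_iv_i^*$ with $\mu_i\ge0$ and orthonormal eigenvectors $v_i$. Since $P$, $Q$ and $R$ are Hermitian, $\ip{A}{Q}=\tr(AQ^*)=\tr(AQ)$ for $A\in\{P,R\}$. Linearity and cyclicity of the trace then give
\begin{equation*}
  \ip{A}{Q}=\sum_{i=1}^n\mu_i\, v_i^*Av_i ,
\end{equation*}
and we also have $\norm{Q}=\tr(Q)=\sum_i\mu_i$. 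Each item reduces to scalar facts about the Rayleigh quotients $v_i^*Av_i$.

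For (\ref{st:bound}), we bound each term using $\lmin(P)\le v_i^*Pv_i\le\lmax(P)$, which holds for unit vectors. Multiplying by $\mu_i\ge0$ and summing gives
\begin{equation*}
  \lmin(P)\sum_i\mu_i\le\ip{P}{Q}\le\lmax(P)\sum_i\mu_i,
\end{equation*}
which is the claim because $\sum_i\mu_i=\norm{Q}$.

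Item (\ref{st:zero}) follows from this. If $Q=0$, then trivially $\ip{P}{Q}=0$. Conversely, suppose $\ip{P}{Q}=0$. Since $P\succ0$, we have $\lmin(P)>0$, so the lower bound in (\ref{st:bound}) forces $\tr(Q)=0$. A positive semidefinite matrix with zero trace has all eigenvalues nonnegative and summing to zero, so all of them vanish and $Q=0$.

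For (\ref{st:neg}), take the two directions separately.
\begin{itemize}
  \item If $R\preceq0$, then $v_i^*Rv_i\le0$ for every $i$, and the weighted sum with weights $\mu_i\ge0$ is $\le0$.
  \item Conversely, assume $\ip{R}{Q}\le0$ for every nonzero $Q\succeq0$. Choose $Q=xx^*$ for an arbitrary $x\ne0$; this is nonzero and positive semidefinite. Then $\ip{R}{xx^*}=x^*Rx\le0$ for all $x$, which is exactly $R\preceq0$.
\end{itemize}

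The argument is elementary, and the only step needing care is the conjugation convention. The paper defines $\ip{A}{B}=\tr(AB^*)$, so we must check that the inner product is real and that $Q^*=Q$, which is what lets us write $\tr(AQ)=\sum_i\mu_i v_i^*Av_i$. We also need the rank-one test matrices $xx^*$ to be admissible in (\ref{st:neg}). Both checks are immediate from hermiticity.
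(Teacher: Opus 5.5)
Your proof is correct and complete. The expansion $\ip{A}{Q}=\sum_i\mu_i\,v_i^*Av_i$ with $\mu_i\ge0$ gives (c) from the Rayleigh-quotient bounds. Item (a) then follows from (c), because $\lmin(P)>0$ and a positive semidefinite matrix with zero trace vanishes. Item (b) follows from the same expansion together with the rank-one test matrices $Q=xx^*$.

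The paper itself gives no proof of this proposition; it imports it from \cite{Briat:22:Matrix}, and your spectral argument is the standard route. One small remark: your proof of (c) never uses $P\succ0$, so that two-sided bound holds for every Hermitian $P$. Positive definiteness is needed only in (a).
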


\subsection{Schur complements}

\begin{definition}[Generalized Schur complement]\label{def:genschur}
  For $\mathcal{L}=\bigl[
  \begin{smallmatrix}\mathcal{L}_1&\mathcal{L}_2\\
  \mathcal{L}_2^*&\mathcal{L}_3\end{smallmatrix}\bigr]\in\Snm$,
  the \emph{generalized Schur complement} of $\mathcal{L}_3$ in
  $\mathcal{L}$ is
  \begin{equation}\label{eq:genSchur}
    \mathcal{L}\,\schur{/}\,\mathcal{L}_3
    :=\mathcal{L}_1-\mathcal{L}_2\pinv{\mathcal{L}_3}\mathcal{L}_2^*.
  \end{equation}
  When $\mathcal{L}_3$ is invertible this coincides with the ordinary
  Schur complement.  The same notation is used for any partitioned
  Hermitian matrix $\mathcal{L}$, regardless of context.
\end{definition}

The following two lemmas are used in every proof of this paper.

\begin{lemma}[Extended Schur lemma,
  {\cite{Boyd:94a}}]\label{lemma:extSchur}
  Let $M=M^*$, $N$, $R=R^*$ have compatible dimensions.
  The following are equivalent:
  \begin{enumerate}[\upshape(i)]
    \item\label{extS:i} $M-N\pinv{R}N^*\succeq0$,\quad
      $R\succeq0$,\quad and\quad $N(I-\pinv{R}R)=0$.
    \item\label{extS:ii}
      $\begin{bmatrix}M&N\\N^*&R\end{bmatrix}\succeq0$.
  \end{enumerate}
\end{lemma}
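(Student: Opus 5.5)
The plan is to prove the two implications separately. Both rest on the Moore--Penrose identities $R\pinv{R}R=R$ and $\pinv{R}R\pinv{R}=\pinv{R}$. We also use that, for $R\succeq0$, $\pinv{R}\succeq0$ is Hermitian and $\pinv{R}R=R\pinv{R}$ is the orthogonal projector onto $\mathrm{range}(R)$. We also use the block factorization that underlies the ordinary Schur complement.

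\emph{(i)$\Rightarrow$(ii).} Assume $N(I-\pinv{R}R)=0$, so that $N=N\pinv{R}R$. Then we can write
\begin{equation*}
\begin{bmatrix}M&N\\N^*&R\end{bmatrix}
=\begin{bmatrix}I&N\pinv{R}\\0&I\end{bmatrix}
\begin{bmatrix}M-N\pinv{R}N^*&0\\0&R\end{bmatrix}
\begin{bmatrix}I&0\\\pinv{R}N^*&I\end{bmatrix}.
\end{equation*}
To verify this, multiply out. The $(1,2)$ block is $N\pinv{R}R=N$. The $(1,1)$ block is $M-N\pinv{R}N^*+N\pinv{R}R\pinv{R}N^*=M$, because $\pinv{R}R\pinv{R}=\pinv{R}$. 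The outer factors are adjoints of each other, since $(\pinv{R})^*=\pinv{R}$. The middle factor is positive semidefinite by (i), so the whole matrix is a congruence of a positive semidefinite matrix and hence is $\succeq0$.

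\emph{(ii)$\Rightarrow$(i).} Assume the block matrix is $\succeq0$. Its principal submatrix $R$ is then $\succeq0$. For the range condition, take any $v\in\ker R$ and any vector $u$. For every scalar $t$, evaluate the quadratic form at $(u,tv)$:
\begin{equation*}
u^*Mu+2t\,\mathrm{Re}(u^*Nv)\ge0 .
\end{equation*}
This is affine in $t$, so it can stay nonnegative for all $t$ only if $\mathrm{Re}(u^*Nv)=0$. Replacing $u$ by $iu$ also gives $\mathrm{Im}(u^*Nv)=0$, so $Nv=0$. Hence $\ker R\subseteq\ker N$. Since $I-\pinv{R}R$ is the orthogonal projector onto $\ker R$, this is exactly $N(I-\pinv{R}R)=0$.

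With that identity in hand, the factorization above is valid. Its outer factors are invertible, so congruence by their inverses shows that $\mathrm{diag}(M-N\pinv{R}N^*,R)\succeq0$, which gives $M-N\pinv{R}N^*\succeq0$.

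The main obstacle is the range condition in (ii)$\Rightarrow$(i). The factorization cannot be used until $N=N\pinv{R}R$ is established, so the kernel-inclusion argument must come first, and it must handle complex scalars correctly. Everything else is routine algebra with the pseudoinverse.
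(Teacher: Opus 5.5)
Your proof is correct and complete. The paper gives no proof of its own: it imports the lemma from \cite{Boyd:94a}. Your argument is the standard proof of that result. You first extract $\ker R\subseteq\ker N$ from the quadratic form restricted to $\ker R$. You then use the unit-triangular congruence $\begin{bmatrix}M&N\\N^*&R\end{bmatrix}=L\,\mathrm{diag}(M-NR^\dagger N^*,R)\,L^*$ in both directions. One small wording fix: the expansion $u^*Mu+2t\,\mathrm{Re}(u^*Nv)$ is valid for real $t$, so say ``every real $t$''; that is all the argument needs.
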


\begin{lemma}[Schur inner-product decomposition]\label{lemma:decomp}
  Let $\mathcal{L}\in\Snm$ with $\mathcal{L}_3\succeq0$ and
  $X\in\Snmpsd$.  Write $X=UU^*$ with
  $U=[U_1^*\;U_2^*]^*\in\C^{(n+m)\times r}$,
  $r=\rank(X)$.  Suppose the \emph{compatibility condition}
  \begin{equation}\label{eq:compat}
    \mathcal{L}_2(I-\pinv{\mathcal{L}_3}\mathcal{L}_3)=0
    \quad\bigl(\text{equivalently,\;}
    \operatorname{Im}(\mathcal{L}_2^*)\subseteq
    \operatorname{Im}(\mathcal{L}_3)\bigr)
  \end{equation}
  holds.  Set $W:=\pinv{\mathcal{L}_3}\mathcal{L}_2^* U_1+U_2$.
  Then
  \begin{equation}\label{eq:decomp}
    \ip{\mathcal{L}}{X}
    =\ip{\mathcal{L}\,\schur{/}\,\mathcal{L}_3}{U_1U_1^*}
    +\ip{\mathcal{L}_3}{WW^*}.
  \end{equation}
  In particular, whenever $\mathcal{L}\succeq0$,
  condition~\eqref{eq:compat} holds automatically
  by Lemma~\ref{lemma:extSchur}.
\end{lemma}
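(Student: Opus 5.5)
The identity \eqref{eq:decomp} is linear in $X$ and bilinear in the factor $U$, so the plan is to expand $\ip{\mathcal{L}}{X}=\tr(\mathcal{L}UU^*)=\tr(U^*\mathcal{L}U)$ blockwise. Since $\mathcal{L}$ and $X$ are Hermitian, the inner product is real and equals this trace. Writing $U=[U_1^*\;U_2^*]^*$ gives
\begin{equation*}
  U^*\mathcal{L}U=U_1^*\mathcal{L}_1U_1+U_1^*\mathcal{L}_2U_2+U_2^*\mathcal{L}_2^*U_1+U_2^*\mathcal{L}_3U_2 .
\end{equation*}
I will then complete the square in $U_2$, using $\pinv{\mathcal{L}_3}$ in place of an inverse.

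For the completion of the square, set $K:=\pinv{\mathcal{L}_3}\mathcal{L}_2^*$, so that $W=KU_1+U_2$. Expanding $W^*\mathcal{L}_3W$ gives
\begin{equation*}
  U_1^*K^*\mathcal{L}_3KU_1+U_1^*K^*\mathcal{L}_3U_2+U_2^*\mathcal{L}_3KU_1+U_2^*\mathcal{L}_3U_2 .
\end{equation*}
Two Moore--Penrose identities are needed here, both for the Hermitian matrix $\mathcal{L}_3$, whose pseudoinverse is Hermitian and commutes with it.
\begin{enumerate}[(1)]
  \item $K^*\mathcal{L}_3K=\mathcal{L}_2\pinv{\mathcal{L}_3}\mathcal{L}_3\pinv{\mathcal{L}_3}\mathcal{L}_2^*=\mathcal{L}_2\pinv{\mathcal{L}_3}\mathcal{L}_2^*$. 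This holds unconditionally.
  \item $\mathcal{L}_3K=\mathcal{L}_3\pinv{\mathcal{L}_3}\mathcal{L}_2^*=\mathcal{L}_2^*$. This is exactly where the compatibility condition \eqref{eq:compat} enters. Taking adjoints in $\mathcal{L}_2(I-\pinv{\mathcal{L}_3}\mathcal{L}_3)=0$ gives $(I-\mathcal{L}_3\pinv{\mathcal{L}_3})\mathcal{L}_2^*=0$, using $(\pinv{\mathcal{L}_3})^*=\pinv{\mathcal{L}_3}$.
\end{enumerate}
Substituting both identities, the cross terms of $W^*\mathcal{L}_3W$ become $U_1^*\mathcal{L}_2U_2+U_2^*\mathcal{L}_2^*U_1$, which are exactly the cross terms of $U^*\mathcal{L}U$. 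Hence
\begin{equation*}
  U^*\mathcal{L}U=U_1^*(\mathcal{L}_1-\mathcal{L}_2\pinv{\mathcal{L}_3}\mathcal{L}_2^*)U_1+W^*\mathcal{L}_3W .
\end{equation*}
Taking traces and using cyclicity, $\tr(U_1^*SU_1)=\ip{S}{U_1U_1^*}$ with $S$ Hermitian, yields \eqref{eq:decomp}.

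The equivalence stated in \eqref{eq:compat}, namely $\operatorname{Im}(\mathcal{L}_2^*)\subseteq\operatorname{Im}(\mathcal{L}_3)$, follows because $\mathcal{L}_3\pinv{\mathcal{L}_3}$ is the orthogonal projector onto $\operatorname{Im}(\mathcal{L}_3)$.

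For the final claim, if $\mathcal{L}\succeq0$ then Lemma~\ref{lemma:extSchur} (ii)$\Rightarrow$(i), applied with $M=\mathcal{L}_1$, $N=\mathcal{L}_2$, $R=\mathcal{L}_3$, gives $\mathcal{L}_3\succeq0$ and $\mathcal{L}_2(I-\pinv{\mathcal{L}_3}\mathcal{L}_3)=0$ directly. So both hypotheses of the lemma hold automatically in that case.

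The only delicate step is identity (2): recognizing that the cross terms match only under \eqref{eq:compat} and transferring that condition correctly to its adjoint form. The hypothesis $\mathcal{L}_3\succeq0$ is not actually needed for the identity itself; it matters only for interpreting the last term as nonnegative. The choice of factorization $X=UU^*$, and the rank $r$, play no role beyond existence of the factorization.
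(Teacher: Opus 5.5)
Your proof is correct and complete. The blockwise expansion of $\tr(U^*\mathcal{L}U)$ rests on two identities. The first, $K^*\mathcal{L}_3K=\mathcal{L}_2\pinv{\mathcal{L}_3}\mathcal{L}_2^*$, holds unconditionally. The second, $\mathcal{L}_3K=\mathcal{L}_2^*$, depends on compatibility, and you correctly identify it as the only place where \eqref{eq:compat} enters. You also handle the range-inclusion equivalence and the final claim via Lemma~\ref{lemma:extSchur} properly.

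The paper states this lemma without proof, treating it as a standard fact alongside the cited Extended Schur lemma. Your completion-of-squares argument therefore supplies the missing justification rather than competing with an existing one.

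Your remark that only Hermitian-ness of the pivot is used, not $\mathcal{L}_3\succeq0$, is worth keeping. The paper relies on exactly this in Remark~\ref{rem:game:saddle}, where the decomposition is applied with the indefinite Isaacs block $\mathcal{G}$ as pivot under the range condition alone. There $\pinv{\mathcal{G}}$ is still Hermitian and commutes with $\mathcal{G}$, so your two identities carry over verbatim.
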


\subsection{Dynamical systems}

\begin{definition}[Impulsive system,
  {\cite[Sec.~5.1]{Briat:22:Matrix}}]\label{def:IMP:sys}
  The \emph{impulsive} matrix-valued system combines a
  continuous-time flow with instantaneous jumps at times
  $t_1<t_2<\cdots$, $t_k\to\infty$:
  \begin{subequations}\label{eq:sys:IMP}
  \begin{align}
    E\dot{X}(t)E^*
      &=\mathcal{F}(t,X(t)),\quad t\ne t_k,
       \quad t\ge t_0,
       \label{eq:sys:IMP:flow}\\
    EX(t_k^+)E^*
      &=\mathcal{J}(k,X(t_k^-)),\quad k\ge1,
       \label{eq:sys:IMP:jump}
  \end{align}
  \end{subequations}
  with $EX(t_0)E^*=X_1^0\in\Snpsd$.
  The \emph{flow operator} $\mathcal{F}:\R_{\ge0}\times\Snmpsd\to\blue{\Sn}$
  is linear, bounded, continuous in $t$, leaves $\Snmpsd$
  invariant~\cite{Briat:22:Matrix}, and has adjoint
  $\mathcal{F}^*(t,\cdot):\Sn\to\Snm$ satisfying
  $\ip{P}{\mathcal{F}(t,X)}=\ip{\mathcal{F}^*(t,P(t))}{X}$.
  The \emph{jump operator} $\mathcal{J}(k,\cdot):\Snmpsd\to\blue{\Sn}$
  is linear, bounded, leaves $\Snmpsd$ invariant, and has adjoint
  $\ip{P}{\mathcal{J}(k,X)}=\ip{\mathcal{J}^*(k,P(t))}{X}$.
\end{definition}

\begin{definition}[Continuous-time system,
  {\cite[Sec.~3.1]{Briat:22:Matrix}}]\label{def:CT:sys}
  The \emph{continuous-time} (CT) system is the impulsive
  system~\eqref{eq:sys:IMP} with no jump times ($N_T=0$):
  \begin{equation}\label{eq:sys:CT}
    E\dot{X}(t)E^*=\mathcal{F}(t,X(t)),\quad t\ge t_0,\qquad
    EX(t_0)E^*=X_1^0.
  \end{equation}
\end{definition}

\begin{definition}[Discrete-time system,
  {\cite[Sec.~4.1]{Briat:22:Matrix}}]\label{def:DT:sys}
  The \emph{discrete-time} (DT) system is the impulsive
  system~\eqref{eq:sys:IMP} with $\mathcal{F}(t,X)=0$ on each
  interval $(t_{k-1},t_k)$, and jump times $t_k=k_0+k$:
  \begin{equation}\label{eq:sys:DT}
    EX(k+1)E^*=\mathcal{J}(k,X(k)),\quad k\ge k_0,\qquad
    EX(k_0)E^*=X_1^0.
  \end{equation}
\end{definition}

\section{Linear-Cost Optimal Control}
\label{sec:OC}

This section develops the linear-cost optimal control theory for the
impulsive matrix-valued system~\eqref{eq:sys:IMP}.  The term
\emph{linear-cost} refers to the fact that the cost functional is
linear in the trajectory $X(\cdot)\in\Snmpsd$; the classical
linear-quadratic (LQ) terminology is recovered when $X$ represents
the second-order moment matrix of an underlying stochastic process,
but no such interpretation is required here.

The central analytical tool is a single algebraic
\emph{key identity} (Proposition~\ref{prop:keyid}) that holds for any
admissible trajectory and any piecewise-$C^1$ dual variable.  All
sufficiency, necessity, and duality results follow by one application
of this identity together with the Schur complement
Lemmas~\ref{lemma:extSchur}--\ref{lemma:decomp}.
The finite-horizon theory is in Section~\ref{sec:OC:fh}, the
infinite-horizon theory under LTV conditions in
Section~\ref{sec:OC:inf}, and causal dwell-time policies in
Section~\ref{sec:OC:dwell}.  Continuous-time and discrete-time
corollaries appear throughout.

\subsection{Finite-horizon}
\label{sec:OC:fh}

We study the optimal control problem on the bounded horizon
$[t_0,t_0+T]$ for a fixed $T>0$.  The decision variable is the full
trajectory $X(\cdot)\in\Snmpsd$, which accounts simultaneously for
the constrained state block $X_1$ and the free input blocks
$X_2$, $X_3$.  We define the cost, the composite operators, and the
key identity in Section~\ref{sec:OC:fh:cost}, the I-GRE and optimal
sets in Section~\ref{sec:OC:fh:GRE}, and the sufficient, necessary,
and dual conditions in
Theorems~\ref{th:IMP:LQ:suff}--\ref{th:IMP:LQ:LMI}.

\subsubsection{Cost function, decision variable, and operators}
\label{sec:OC:fh:cost}

Let $\{t_k\}_{k=1}^{N_T}$ be the jump times in $(t_0,t_0+T)$, with
the convention $t_0^+:=t_0$ and $t_{N_T+1}^-:=t_0+T$.
For cost weight matrices $Z:\R_{\ge0}\to\Snm$ with
$Z(t)\succeq0$, $Z_k\in\Snmpsd$, and terminal weight $Z_T\in\Snpsd$,
the finite-horizon cost is
\begin{equation}\label{eq:IMP:LQ:cost}
  J_T(X,X_1^0)
  :=\int_{t_0}^{t_0+T}\ip{Z(t)}{X(t)}\dt
  +\sum_{k=1}^{N_T}\ip{Z_k}{X(t_k^-)}
  +\ip{Z_T}{EX(t_0+T)E^*}.
\end{equation}
The three terms represent, respectively, the accumulated flow cost, the
total jump cost, and the terminal penalty on the constrained block.
The cost is \emph{linear} in $X(\cdot)\in\Snmpsd$: when $X$ is the
second-moment matrix of a stochastic process with state $x(t)$ and
input $u(t)$, the inner product $\ip{Z(t)}{X(t)}=\mathrm{tr}(Z(t)X(t))$
equals the expected value $\mathbb{E}[{[x^*\;u^*]}\,Z(t)\,[x^*\;u^*]^*]$,
recovering the LQ expected cost.  The lower-right blocks
$Z_3(t)\succeq0$ and $Z_{k,3}\succeq0$, which correspond to the
input-channel cost, are allowed to be \emph{singular}, encompassing
free-input problems and singular optimal control.

We consider now the following optimization problem
\begin{equation}\label{eq:IMP:LQ:prob}
  \underset{X(\cdot)\in\Snmpsd}{\mathrm{minimize}}
  \quad J_T(X,X_1^0)
  \quad\text{subject to \eqref{eq:sys:IMP} and }
  EX(t_0)E^*=X_1^0
\end{equation}
with optimal cost $$J_T^\star(X_1^0):=\underset{X(\cdot)\in\Snmpsd}{\inf} J_T(X,X_1^0),$$ where the \emph{decision variable} is the full trajectory
$X(\cdot)\in\Snmpsd$ subject to~\eqref{eq:sys:IMP} and
$EX(t_0)E^*=X_1^0$. Observe that in this abstract setting, $X_1(\cdot)=EX(\cdot)E^*$ is the \emph{constrained} block governed by the flow~\eqref{eq:sys:IMP:flow} and the jumps~\eqref{eq:sys:IMP:jump} dynamics, while the blocks
$X_2(\cdot)=E_\bot X(\cdot)E^*$ and $X_3(\cdot)=E_\bot X(\cdot)E_\bot^*$ are the \emph{free} blocks. The constraint $X(\cdot)\in\Snmpsd$ ensures that $X(\cdot)$ remains a
valid positive semidefinite matrix throughout.


The composite operators below are the fundamental quantities in all
subsequent results.  They absorb the cost weights, the dynamics, and
the dual variable $P$ into a single symmetric matrix whose
semidefiniteness is the key optimality condition.
For a piecewise-$C^1$ function $P:[t_0,t_0+T]\to\Sn$, define
the \emph{composite flow operator}\footnote{In proof steps, we write $\mathcal{C}_Z(t,P)$ as shorthand for
$\mathcal{C}_Z(t,P(t))$, and $X$ for $X(t)$, when the time point is fixed;
in theorem statements and definitions the argument $P(t)$ is always explicit.}
\begin{equation}\label{eq:IMP:L}
  \mathcal{C}_Z(t,P(t))
  :=E^*\dot{P}(t)E+\mathcal{F}^*(t,P(t))+Z(t)\in\Snm,
\end{equation}
partitioned as
$\bigl[\begin{smallmatrix}(\mathcal{C}_Z)_1&(\mathcal{C}_Z)_2\\
(\mathcal{C}_Z)_2^*&(\mathcal{C}_Z)_3\end{smallmatrix}\bigr]$,
and the \emph{jump operator} at $t_k$
\begin{equation}\label{eq:IMP:Lj}
  \mathcal{D}_Z\bigl(k,P(t_k^+),P(t_k^-)\bigr)
  :=\mathcal{J}^*\bigl(k,P(t_k^+)\bigr)-E^* P(t_k^-)E+Z_k\in\Snm,
\end{equation}
partitioned as $\bigl[\begin{smallmatrix}
(\mathcal{D}_Z)_1&(\mathcal{D}_Z)_2\\
(\mathcal{D}_Z)_2^*&(\mathcal{D}_Z)_3
\end{smallmatrix}\bigr]$.

Intuitively, $\mathcal{C}_Z(t,P(t))$ captures the time derivative of
$t\mapsto\ip{P(t)}{EX(t)E^*}$ augmented by the running cost $Z(t)$,
while $\mathcal{D}_Z(k,P(t_k^+),P(t_k^-))$ captures the jump in
$\ip{P}{EXE^*}$ augmented by the jump cost $Z_k$.
Both are symmetric $(n+m)\times(n+m)$ matrices, partitioned into
$(n\times n)$, $(m\times n)$, and $(m\times m)$ blocks corresponding
to the state, cross-term, and input channels.

The following identity is the single most important tool of the
paper.  It holds for \emph{any} piecewise-$C^1$ function $P$ and
\emph{any} admissible trajectory, regardless of whether $P$ solves
the I-GRE.  By choosing $P$ to be the I-GRE solution, all subsequent
optimality conditions reduce to signed comparisons of its terms.

\begin{proposition}[Impulsive key identity]\label{prop:keyid}
For any piecewise-$C^1$ $P:[t_0,t_0+T]\to\Sn$ and any
trajectory $X(\cdot)\in\Snmpsd$ satisfying~\eqref{eq:sys:IMP}
with $EX(t_0)E^*=X_1^0$,
\begin{align}
  J_T(X,X_1^0)
  &=\ip{P(t_0)}{X_1^0}
   +\ip{Z_T-P(t_0+T)}{EX(t_0+T)E^*}
   \notag\\
  &\quad
   +\int_{t_0}^{t_0+T}\ip{\mathcal{C}_Z(t,P(t))}{X(t)}\dt
   +\sum_{k=1}^{N_T}
    \ip{\mathcal{D}_Z(k,P(t_k^+),P(t_k^-))}{X(t_k^-)}.
   \label{eq:IMP:keyid}
\end{align}
When $P(t_0+T)=Z_T$ the second term vanishes:
\begin{equation}\label{eq:IMP:keyid2}
  J_T(X,X_1^0)
  =\ip{P(t_0)}{X_1^0}
  +\int_{t_0}^{t_0+T}\ip{\mathcal{C}_Z(t,P(t))}{X(t)}\dt
  +\sum_{k=1}^{N_T}
   \ip{\mathcal{D}_Z(k,P(t_k^+),P(t_k^-))}{X(t_k^-)}.
\end{equation}
\end{proposition}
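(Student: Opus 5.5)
The plan is a telescoping argument for the scalar function $V(t):=\ip{P(t)}{EX(t)E^*}$. We add and subtract its variation over $[t_0,t_0+T]$ in the cost $J_T$. The variation splits into a flow part on each open interval $(t_k,t_{k+1})$ and a jump part at each $t_k$. After telescoping, the endpoint contributions are $V(t_0)=\ip{P(t_0)}{X_1^0}$ and $-V(t_0+T)$. The latter combines with the terminal cost $\ip{Z_T}{EX(t_0+T)E^*}$ to give the second term in \eqref{eq:IMP:keyid}.

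\textbf{Flow intervals.} Take $t\in(t_k,t_{k+1})$, where $P$ is $C^1$ (refining the partition by the finitely many nonsmooth points of $P$ if needed). Differentiating with the product rule gives
\[
\dot V(t)=\ip{\dot P(t)}{EX(t)E^*}+\ip{P(t)}{E\dot X(t)E^*}.
\]
For the first term, use $\ip{\dot P}{EXE^*}=\tr(\dot P E X E^*)=\ip{E^*\dot P E}{X}$, by cyclicity of the trace and Hermitianity. For the second term, substitute the flow \eqref{eq:sys:IMP:flow} and the adjoint relation of Definition~\ref{def:IMP:sys} to get $\ip{P}{\mathcal{F}(t,X)}=\ip{\mathcal{F}^*(t,P)}{X}$. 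Hence
\[
\dot V(t)+\ip{Z(t)}{X(t)}=\ip{\mathcal{C}_Z(t,P(t))}{X(t)}.
\]
Integrating over $(t_k^+,t_{k+1}^-)$ and summing over $k=0,\dots,N_T$ expresses the running cost as the integral of $\ip{\mathcal{C}_Z}{X}$ minus $\sum_k\bigl[V(t_{k+1}^-)-V(t_k^+)\bigr]$, with the conventions $t_0^+=t_0$ and $t_{N_T+1}^-=t_0+T$.

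\textbf{Jumps.} At each $t_k$, use \eqref{eq:sys:IMP:jump} and the adjoint of $\mathcal{J}$:
\[
V(t_k^+)=\ip{P(t_k^+)}{\mathcal{J}(k,X(t_k^-))}=\ip{\mathcal{J}^*(k,P(t_k^+))}{X(t_k^-)},
\]
and, as in the flow step, $V(t_k^-)=\ip{E^*P(t_k^-)E}{X(t_k^-)}$. Therefore
\[
V(t_k^+)-V(t_k^-)+\ip{Z_k}{X(t_k^-)}=\ip{\mathcal{D}_Z(k,P(t_k^+),P(t_k^-))}{X(t_k^-)}.
\]

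\textbf{Assembly.} Rearranging the telescoping sum gives
\[
V(t_0+T)-V(t_0)=\sum_{k=0}^{N_T}\bigl[V(t_{k+1}^-)-V(t_k^+)\bigr]+\sum_{k=1}^{N_T}\bigl[V(t_k^+)-V(t_k^-)\bigr].
\]
Insert this into $J_T$ and substitute the flow and jump identities; this yields \eqref{eq:IMP:keyid} directly. Setting $P(t_0+T)=Z_T$ kills the second term and gives \eqref{eq:IMP:keyid2}.

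\textbf{Main obstacle.} The step needing most care is the regularity bookkeeping. $X$ must be absolutely continuous, or piecewise $C^1$, on each flow interval so that the fundamental theorem of calculus applies to $V$. Only the block $EXE^*$ carries dynamics; the free blocks need only be integrable, which suffices because $V$ depends only on $EXE^*$. One-sided limits at the $t_k$ and at any kinks of $P$ must exist. Kinks of $P$ that are not jump times contribute nothing, since $P$ is continuous there. The convention $t_0^+=t_0$ handles the initial condition $EX(t_0)E^*=X_1^0$.
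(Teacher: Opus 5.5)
Your proposal is correct and follows the paper's proof essentially step for step. The paper also differentiates $\ip{P(t)}{EX(t)E^*}$ along the flow on each inter-jump interval, uses the adjoint of $\mathcal{J}$ at each $t_k$ to produce $\mathcal{D}_Z$, and telescopes to $\ip{P(t_0+T)}{EX(t_0+T)E^*}-\ip{P(t_0)}{X_1^0}$ before rearranging; your regularity remarks make explicit an assumption the paper leaves tacit, namely that $P$ is continuous at any non-jump breakpoints (otherwise an extra term $\ip{P(s^+)-P(s^-)}{EX(s)E^*}$ would appear).
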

\begin{proof}
On the flow interval $(t_{k-1},t_k)$, differentiating
$\ip{P(t)}{EX(t)E^*}$ along~\eqref{eq:sys:IMP:flow} and
integrating yields
\begin{align}\label{pf:key:flow}
  &\ip{P(t_k^-)}{EX(t_k^-)E^*}
  -\ip{P(t_{k-1}^+)}{EX(t_{k-1}^+)E^*}
  \notag\\
  &\;=-\int_{t_{k-1}}^{t_k}\ip{Z(t)}{X(t)}\dt
  +\int_{t_{k-1}}^{t_k}\ip{\mathcal{C}_Z(t,P(t))}{X(t)}\dt.
\end{align}
At each jump $t_k$, using~\eqref{eq:sys:IMP:jump} and the adjoint gives
\begin{align}
  \ip{P(t_k^+)}{EX(t_k^+)E^*}-\ip{P(t_k^-)}{EX(t_k^-)E^*}
  &=\ip{P(t_k^+)}{\mathcal{J}(k,X(t_k^-))}-\ip{P(t_k^-)}{EX(t_k^-)E^*}
   \notag\\
  &=\ip{\mathcal{J}^*(k,P(t_k^+))-E^*P(t_k^-)E}{X(t_k^-)}
   \notag\\
  &=\ip{\mathcal{D}_Z(k,P(t_k^+),P(t_k^-))}{X(t_k^-)}
   -\ip{Z_k}{X(t_k^-)}.
   \label{pf:key:jump}
\end{align}
Summing~\eqref{pf:key:flow} over all $N_T+1$ flow intervals and
\eqref{pf:key:jump} over all $N_T$ jumps, the terms telescope to
give $\ip{P(t_0+T)}{EX(t_0+T)E^*}-\ip{P(t_0)}{X_1^0}$ on the
left.  Rearranging yields~\eqref{eq:IMP:keyid}.
\end{proof}

\subsubsection{Impulsive GRE and optimal sets}
\label{sec:OC:fh:GRE}

With the composite operators in hand, we now define the central object
of the theory: the \emph{impulsive generalized Riccati equation}
(I-GRE).  This is a coupled flow-jump terminal-value problem for
the dual variable $P(\cdot)$ with $P(t_0+T)=Z_T$.  The I-GRE
imposes that $\mathcal{C}_Z(t,P(t))\succeq0$ with vanishing generalized
Schur complement on each flow interval, and the analogous
condition on $\mathcal{D}_Z$ at each jump.  By
Lemma~\ref{lemma:GREequiv}, these are equivalent to a single
condition per interval: $\mathcal{C}_Z\succeq0$ and
$\mathcal{C}_Z\,\schur{/}\,\allowbreak(\mathcal{C}_Z)_3=0$.
The I-GRE generalizes the classical differential Riccati equation
to the impulsive setting and reduces to it in the regular case
$(\mathcal{C}_Z)_3\succ0$.

\begin{definition}[Impulsive GRE]\label{def:IMP:GRE}
  The \emph{impulsive generalized Riccati equation} (I-GRE) for
  piecewise-$C^1$ $P:[t_0,t_0+T]\to\Sn$ with $P(t_0+T)=Z_T$
  consists of:
  \begin{itemize}
    \item \textit{Flow conditions,} for a.e.\ $t$ between jumps:
      \begin{subequations}\label{eq:IMP:GRE:flow}
      \begin{align}
        &(\mathcal{C}_Z)_1(t,P(t))-(\mathcal{C}_Z)_2(t,P(t))\,
          \pinv{(\mathcal{C}_Z)_3(t,P(t))}\,
          (\mathcal{C}_Z)_2(t,P(t))^*=0,
          \label{eq:IMP:GRE:fR}\\
        &(\mathcal{C}_Z)_3(t,P(t))\,\pinv{(\mathcal{C}_Z)_3(t,P(t))}\,
          (\mathcal{C}_Z)_2(t,P(t))^*=(\mathcal{C}_Z)_2(t,P(t))^*,
          \label{eq:IMP:GRE:fC}\\
        &(\mathcal{C}_Z)_3(t,P(t))\succeq0.
          \label{eq:IMP:GRE:fPSD}
      \end{align}
      \end{subequations}
    \item \textit{Jump conditions,} at each $t_k$
      ($k=1,\ldots,N_T$):
      \begin{subequations}\label{eq:IMP:GRE:jump}
      Writing $P_k^+:=P(t_k^+)$ and $P_k^-:=P(t_k^-)$:
      \begin{align}
        &(\mathcal{D}_Z)_1(k,P_k^+,P_k^-)
          -(\mathcal{D}_Z)_2(k,P_k^+,P_k^-)\,
          \pinv{(\mathcal{D}_Z)_3(k,P_k^+,P_k^-)}\,
          (\mathcal{D}_Z)_2(k,P_k^+,P_k^-)^*=0,
          \label{eq:IMP:GRE:jR}\\
        &(\mathcal{D}_Z)_3(k,P_k^+,P_k^-)\,
          \pinv{(\mathcal{D}_Z)_3(k,P_k^+,P_k^-)}\,
          (\mathcal{D}_Z)_2(k,P_k^+,P_k^-)^*
          =(\mathcal{D}_Z)_2(k,P_k^+,P_k^-)^*,
          \label{eq:IMP:GRE:jC}\\
        &(\mathcal{D}_Z)_3(k,P_k^+,P_k^-)\succeq0.
          \label{eq:IMP:GRE:jPSD}
      \end{align}
      \end{subequations}
  \end{itemize}
  By Lemma~\ref{lemma:GREequiv}, each triple is equivalent to
  the respective operator being $\succeq0$ with vanishing
  generalized Schur complement.
  The \emph{flow optimal set} and \emph{jump optimal set} are
  \begin{align}
    \mathcal{K}_c(t,P(t))
    &:=\{K\in\C^{m\times n}\mid
      (\mathcal{C}_Z)_3(t,P(t))K^*+(\mathcal{C}_Z)_2(t,P(t))^*=0\},
      \label{eq:IMP:Kf}\\
    \mathcal{K}_d(k,P_k^+,P_k^-)
    &:=\{K\in\C^{m\times n}\mid
      (\mathcal{D}_Z)_3(k,P_k^+,P_k^-)\,K^*
      {}+(\mathcal{D}_Z)_2(k,P_k^+,P_k^-)^*=0\},
      \label{eq:IMP:Kj}
  \end{align}
  whose general element is
  \[
    K=\pinv{(\mathcal{C}_Z)_3(t,P(t))}(\mathcal{C}_Z)_2(t,P(t))^*
    +F\bigl(I-(\mathcal{C}_Z)_3(t,P(t))\pinv{(\mathcal{C}_Z)_3(t,P(t))}\bigr)
  \]
  and
    \[
    K=\pinv{(\mathcal{D}_Z)_3(k,P_k^+,P_k^-)}(\mathcal{D}_Z)_2(k,P_k^+,P_k^-)^*
    +F\bigl(I-(\mathcal{D}_Z)_3(k,P_k^+,P_k^-)\pinv{(\mathcal{D}_Z)_3(k,P_k^+,P_k^-)}\bigr)
  \]
  for $\mathcal{K}_c$ and $\mathcal{K}_d$, respectively, and where $F\in\C^{m\times n}$~\cite{AitRami:01a}.
\end{definition}

\begin{remark}
  When $(\mathcal{C}_Z)_3\succ0$ and $(\mathcal{D}_Z)_3\succ0$
  (in particular when $Z_3\succ0$ and $Z_{k,3}\succ0$), the
  pseudoinverses become ordinary inverses and the compatibility
  conditions~\eqref{eq:IMP:GRE:fC},~\eqref{eq:IMP:GRE:jC} hold
  trivially.  The I-GRE then reduces to classical Riccati equations.
\end{remark}

The result below plays an essential role in the derivation of the main results in this section and is essentially a limiting case of the Schur complement lemma (Lemma \ref{lemma:extSchur}):
\begin{lemma}[Schur equality lemma]\label{lemma:GREequiv}
  Let $\mathcal{L}=\bigl[\begin{smallmatrix}
  \mathcal{L}_1&\mathcal{L}_2\\
  \mathcal{L}_2^*&\mathcal{L}_3\end{smallmatrix}\bigr]\in\Snm$.
  The following are equivalent:
  \begin{enumerate}[\upshape(i)]
    \item\label{GREeq:LMI} $\mathcal{L}\succeq0$ and
      $\mathcal{L}\,\schur{/}\,\mathcal{L}_3=0$.
    \item\label{GREeq:triple}
      $\mathcal{L}_1-\mathcal{L}_2\pinv{\mathcal{L}_3}
      \mathcal{L}_2^*=0$,\quad
      $\mathcal{L}_3\pinv{\mathcal{L}_3}\mathcal{L}_2^*
      =\mathcal{L}_2^*$,\quad and\quad
      $\mathcal{L}_3\succeq0$.
  \end{enumerate}
\end{lemma}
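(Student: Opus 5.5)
The plan is to reduce both directions to the Extended Schur lemma (Lemma~\ref{lemma:extSchur}), applied with $M=\mathcal{L}_1$, $N=\mathcal{L}_2$, $R=\mathcal{L}_3$. The only extra ingredient is translating its compatibility condition $N(I-\pinv{R}R)=0$ into the form $\mathcal{L}_3\pinv{\mathcal{L}_3}\mathcal{L}_2^*=\mathcal{L}_2^*$ used in (ii).

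\emph{(i)$\Rightarrow$(ii).} Assume $\mathcal{L}\succeq0$. Lemma~\ref{lemma:extSchur}, direction (ii)$\Rightarrow$(i), gives three facts:
\begin{itemize}
\item $\mathcal{L}_3\succeq0$;
\item $\mathcal{L}_2(I-\pinv{\mathcal{L}_3}\mathcal{L}_3)=0$;
\item $\mathcal{L}_1-\mathcal{L}_2\pinv{\mathcal{L}_3}\mathcal{L}_2^*\succeq0$.
\end{itemize}
The last one is upgraded to an equality by the hypothesis $\mathcal{L}\,\schur{/}\,\mathcal{L}_3=0$, which is exactly the first equation in (ii) by Definition~\ref{def:genschur}.

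For the compatibility condition, take the conjugate transpose: $(I-\pinv{\mathcal{L}_3}\mathcal{L}_3)^*\mathcal{L}_2^*=0$. Since $\mathcal{L}_3$ is Hermitian, its Moore--Penrose inverse is Hermitian and commutes with it, so $(\pinv{\mathcal{L}_3}\mathcal{L}_3)^*=\mathcal{L}_3\pinv{\mathcal{L}_3}=\pinv{\mathcal{L}_3}\mathcal{L}_3$. This is the orthogonal projector onto $\operatorname{Im}(\mathcal{L}_3)$. Hence $(I-\mathcal{L}_3\pinv{\mathcal{L}_3})\mathcal{L}_2^*=0$, which is the second condition in (ii).

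\emph{(ii)$\Rightarrow$(i).} Run the same argument in reverse. From $\mathcal{L}_3\pinv{\mathcal{L}_3}\mathcal{L}_2^*=\mathcal{L}_2^*$, the projector symmetry above gives $\mathcal{L}_2(I-\pinv{\mathcal{L}_3}\mathcal{L}_3)=0$. The first equation in (ii) gives $\mathcal{L}_1-\mathcal{L}_2\pinv{\mathcal{L}_3}\mathcal{L}_2^*=0\succeq0$. Together with $\mathcal{L}_3\succeq0$, all three conditions of Lemma~\ref{lemma:extSchur}(i) hold, so $\mathcal{L}\succeq0$. The vanishing Schur complement is again the first equation of (ii) read through Definition~\ref{def:genschur}.

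The only step needing care is the identity $\pinv{\mathcal{L}_3}\mathcal{L}_3=\mathcal{L}_3\pinv{\mathcal{L}_3}$ for Hermitian $\mathcal{L}_3$. I will justify it directly from the spectral decomposition $\mathcal{L}_3=V\Lambda V^*$, which gives $\pinv{\mathcal{L}_3}=V\pinv{\Lambda}V^*$, so both products equal $V\Lambda\pinv{\Lambda}V^*$. No positivity of $\mathcal{L}_3$ is needed for this step, so it applies in both directions before $\mathcal{L}_3\succeq0$ is established.
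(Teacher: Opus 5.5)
Your proof is correct and follows the paper's route: both directions reduce to the Extended Schur lemma with $M=\mathcal{L}_1$, $N=\mathcal{L}_2$, $R=\mathcal{L}_3$, and the compatibility condition is converted to $\mathcal{L}_3\pinv{\mathcal{L}_3}\mathcal{L}_2^*=\mathcal{L}_2^*$ by a conjugate transpose using that $\mathcal{L}_3$ and $\pinv{\mathcal{L}_3}$ are Hermitian. In (ii)$\Rightarrow$(i) you feed $\mathcal{L}_1-\mathcal{L}_2\pinv{\mathcal{L}_3}\mathcal{L}_2^*=0\succeq0$ directly into that lemma, which is exactly the hypothesis it needs and slightly cleaner than the paper's detour through showing $\mathcal{L}_1\succeq0$.
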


\begin{proof}
\noindent\eqref{GREeq:LMI}$\Rightarrow$\eqref{GREeq:triple}.
Since $\mathcal{L}\succeq0$, Lemma~\ref{lemma:extSchur} gives
$\mathcal{L}_3\succeq0$,
$\mathcal{L}_1-\mathcal{L}_2\pinv{\mathcal{L}_3}\mathcal{L}_2^*\succeq0$,
and the compatibility
$\mathcal{L}_2(I-\pinv{\mathcal{L}_3}\mathcal{L}_3)=0$.
Combined with $\mathcal{L}\,\schur{/}\,\mathcal{L}_3=0$, we get
$\mathcal{L}_1-\mathcal{L}_2\pinv{\mathcal{L}_3}\mathcal{L}_2^*=0$.
The compatibility condition reads
$\mathcal{L}_2=\mathcal{L}_2\pinv{\mathcal{L}_3}\mathcal{L}_3$;
taking conjugate transposes gives
$\mathcal{L}_2^*=\mathcal{L}_3\pinv{\mathcal{L}_3}\mathcal{L}_2^*$.

\noindent\eqref{GREeq:triple}$\Rightarrow$\eqref{GREeq:LMI}.
From the first condition and Definition~\ref{def:genschur},
$\mathcal{L}\,\schur{/}\,\mathcal{L}_3=0$.
The second condition is the compatibility
$\operatorname{Im}(\mathcal{L}_2^*)\subseteq\operatorname{Im}(\mathcal{L}_3)$,
so Lemma~\ref{lemma:extSchur} applies with
$M=\mathcal{L}_1=\mathcal{L}_2\pinv{\mathcal{L}_3}\mathcal{L}_2^*$.
Since $\mathcal{L}_3\succeq0$ we have $\pinv{\mathcal{L}_3}\succeq0$,
so $\mathcal{L}_1=\bigl((\pinv{\mathcal{L}_3})^{1/2}\mathcal{L}_2^*\bigr)^*
\bigl((\pinv{\mathcal{L}_3})^{1/2}\mathcal{L}_2^*\bigr)\succeq0$,
giving $\mathcal{L}\succeq0$.
\end{proof}

\subsubsection{Sufficient condition}

Theorem~\ref{th:IMP:LQ:suff} below shows that any piecewise-$C^1$
solution $P\succeq0$ of the I-GRE is the unique solution, and that
$\ip{P(t_0)}{X_1^0}$ is both the optimal cost and the value attained
by any trajectory with gains in the optimal sets $\mathcal{K}_c$
and $\mathcal{K}_d$ in \eqref{eq:IMP:Kf} and \eqref{eq:IMP:Kj}.  The proof uses the key identity in 
Proposition~\ref{prop:keyid} to decompose the cost into a fixed
initial term $\ip{P(t_0)}{X_1^0}$ and non-negative remainder terms
that vanish exactly on the optimal trajectory.

\begin{theorem}[Sufficiency of the I-GRE]\label{th:IMP:LQ:suff}
  Let $T>0$.  Suppose there exists a piecewise-$C^1$ solution
  $P:[t_0,t_0+T]\to\Snpsd$ with $P(t_0+T)=Z_T$ satisfying the
  I-GRE~\eqref{eq:IMP:GRE:flow}--\eqref{eq:IMP:GRE:jump}.  Then:
  \begin{enumerate}[\upshape(a)]
    \item\label{suff:val}
      The optimal cost is $J_T^\star (X_1^0)=\ip{P(t_0)}{X_1^0}$.
    \item\label{suff:att}
      The optimum is attained at any $X^\star$ with
      $K(t)\in\mathcal{K}_c(t,P(t))$ on flow intervals and
      $K_k\in\mathcal{K}_d(k,P(t_k^+),P(t_k^-))$ at each jump.
    \item\label{suff:uniq}
      The solution $P$ is unique.
  \end{enumerate}
\end{theorem}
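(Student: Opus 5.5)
The plan is to apply the key identity \eqref{eq:IMP:keyid2} of Proposition~\ref{prop:keyid} with the given I-GRE solution $P$, for which $P(t_0+T)=Z_T$. The cost of any admissible trajectory then splits into the fixed term $\ip{P(t_0)}{X_1^0}$ plus the flow integral of $\ip{\mathcal{C}_Z(t,P(t))}{X(t)}$ and the jump sum of $\ip{\mathcal{D}_Z(k,P_k^+,P_k^-)}{X(t_k^-)}$. By Lemma~\ref{lemma:GREequiv}, the flow conditions \eqref{eq:IMP:GRE:flow} are equivalent to $\mathcal{C}_Z\succeq0$ with $\mathcal{C}_Z\,\schur{/}\,(\mathcal{C}_Z)_3=0$, and the jump conditions likewise give $\mathcal{D}_Z\succeq0$ with vanishing Schur complement.

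Since $X(t)\succeq0$, each remainder term is nonnegative. Hence $J_T(X,X_1^0)\ge\ip{P(t_0)}{X_1^0}$ for every admissible $X$. This is the lower bound in \eqref{suff:val}.

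For attainment \eqref{suff:att}, I apply Lemma~\ref{lemma:decomp} pointwise. The compatibility condition \eqref{eq:compat} holds automatically because $\mathcal{C}_Z\succeq0$. Writing $X=UU^*$, the first term of \eqref{eq:decomp} vanishes because the Schur complement is zero. What remains is
\[
\ip{(\mathcal{C}_Z)_3}{WW^*},\qquad W=\pinv{(\mathcal{C}_Z)_3}(\mathcal{C}_Z)_2^*U_1+U_2 .
\]
If $X$ is generated by a gain $K\in\mathcal{K}_c$, that is $U_2=KU_1$ (so $X_2=KX_1$ and $X_3=KX_1K^*$), then
\[
(\mathcal{C}_Z)_3W=\bigl((\mathcal{C}_Z)_2^*+(\mathcal{C}_Z)_3K\bigr)U_1 .
\]
Here I used \eqref{eq:IMP:GRE:fC}, and the bracket vanishes by the definition of $\mathcal{K}_c$. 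Since $(\mathcal{C}_Z)_3\succeq0$, $\ip{(\mathcal{C}_Z)_3}{WW^*}=\tr(W^*(\mathcal{C}_Z)_3W)$, which is zero because $(\mathcal{C}_Z)_3W=0$. The same argument at each jump with $K_k\in\mathcal{K}_d$ kills the jump terms.

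One technical point must be checked: that such a feedback trajectory $X^\star$ exists and is admissible. Given $X_1$, I set $X=[I\;K^*]^*X_1[I\;K^*]$, which is positive semidefinite. The flow then becomes a linear ODE in $X_1$ once $K(t)$ is chosen measurably, for example $K=\pinv{(\mathcal{C}_Z)_3}(\mathcal{C}_Z)_2^*$, which is piecewise continuous where the rank is constant. Positivity is preserved because $\mathcal{F}$ and $\mathcal{J}$ leave $\Snmpsd$ invariant. This yields equality, $J_T(X^\star,X_1^0)=\ip{P(t_0)}{X_1^0}$, and hence \eqref{suff:val}. I expect this measurability and well-posedness of the closed loop to be the main obstacle; I would handle it by restricting to the canonical pseudoinverse gain.

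For uniqueness \eqref{suff:uniq}, let $\tilde P$ be a second solution. The argument above, applied to $\tilde P$, gives $\ip{\tilde P(t_0)}{X_1^0}=J_T^\star(X_1^0)=\ip{P(t_0)}{X_1^0}$ for every $X_1^0\succeq0$. Since positive semidefinite matrices span $\Sn$, this forces $\tilde P(t_0)=P(t_0)$.

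Uniqueness at every $t\in[t_0,t_0+T]$ follows by repeating the argument on the subhorizon $[t,t_0+T]$, starting from an arbitrary $X_1(t)\succeq0$. At a jump time $t=t_k$ I take the right limit, so both $P(t^+)$ and $P(t^-)$ are identified. The restriction of an I-GRE solution is again an I-GRE solution on the subhorizon, so the same value identification applies there. Hence $\tilde P\equiv P$.
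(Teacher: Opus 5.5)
Your proposal is correct and follows the paper's proof essentially step for step. It applies the key identity with $P(t_0+T)=Z_T$, uses nonnegativity of the $\mathcal{C}_Z$ and $\mathcal{D}_Z$ pairings, and annihilates the residual $\ip{(\mathcal{C}_Z)_3}{WW^*}$ with gains in $\mathcal{K}_c,\mathcal{K}_d$; you do this via the defining equation plus \eqref{eq:IMP:GRE:fC}, while the paper uses the general-element formula, which amounts to the same thing. It then obtains uniqueness by identifying $\ip{P(t)}{\cdot}$ with the value function on every subhorizon.

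One small slip: with your convention $U_2=KU_1$ and $(\mathcal{C}_Z)_3K+(\mathcal{C}_Z)_2^*=0$, the canonical gain is $K=-\pinv{(\mathcal{C}_Z)_3}(\mathcal{C}_Z)_2^*$, with a minus sign. This gain need not be locally bounded when nonzero eigenvalues of $(\mathcal{C}_Z)_3$ approach zero, so choosing it does not by itself settle well-posedness of the closed loop. The paper's proof likewise takes that existence point for granted.
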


\begin{proof}
\noindent\textbf{Lower bound.}
By the I-GRE, $\mathcal{C}_Z(t,P(t))\succeq0$ with
$\mathcal{C}_Z\,\schur{/}\,\allowbreak(\mathcal{C}_Z)_3=0$ on flow, and
$\mathcal{D}_Z(k,P(t_k^+),P(t_k^-))\succeq0$ with
$\mathcal{J}\,\schur{/}\,(\mathcal{D}_Z)_3=0$
at each jump (Lemma~\ref{lemma:extSchur}).
Let $X$ be any trajectory of~\eqref{eq:sys:IMP} with
$EX(t_0)E^*=X_1^0$.
The key identity~\eqref{eq:IMP:keyid2} gives
\begin{equation}\label{pf:suff:id}
  J_T(X,X_1^0)
  =\ip{P(t_0)}{X_1^0}
  +\underbrace{
    \int_{t_0}^{t_0+T}\ip{\mathcal{C}_Z(t,P(t))}{X(t)}\dt
  }_{=:\,I_f}
  +\underbrace{
    \sum_{k=1}^{N_T}\ip{\mathcal{D}_Z(k,P(t_k^+),P(t_k^-))}{X(t_k^-)}
  }_{=:\,I_j}.
\end{equation}
Write $X(t)=U(t)U(t)^*$ with $U=[U_1^*\;U_2^*]^*$.
Since $\mathcal{C}_Z(t,P(t))\succeq0$, compatibility~\eqref{eq:compat}
holds (Lemma~\ref{lemma:extSchur}), and
Lemma~\ref{lemma:decomp} gives, for a.e.\ $t$,
\begin{equation}\label{pf:suff:schur:flow}
  \ip{\mathcal{C}_Z(t,P(t))}{X(t)}
  =\underbrace{
    \ip{\mathcal{C}_Z\,\schur{/}\,(\mathcal{C}_Z)_3}{U_1U_1^*}
  }_{=\,0}
  +\underbrace{
    \ip{(\mathcal{C}_Z)_3}{W_fW_f^*}
  }_{\ge\,0},
  \qquad W_f:=\pinv{(\mathcal{C}_Z)_3}(\mathcal{C}_Z)_2^* U_1+U_2.
\end{equation}
Writing $X(t_k^-)=V_kV_k^*$ with $V_k=[V_{k,1}^*\;V_{k,2}^*]^*$
and applying Lemma~\ref{lemma:decomp} to $\mathcal{J}$
at each $t_k$:
\begin{equation}\label{pf:suff:schur:jump}
  \ip{\mathcal{D}_Z(k,P(t_k^+),P(t_k^-))}{X(t_k^-)}
  =\underbrace{
    \ip{\mathcal{D}_Z\,\schur{/}\,(\mathcal{D}_Z)_3}
    {V_{k,1}V_{k,1}^*}
  }_{=\,0}
  +\underbrace{
    \ip{(\mathcal{D}_Z)_3}{W_kW_k^*}
  }_{\ge\,0},
\end{equation}
Therefore $I_f\ge0$ and $I_j\ge0$, so
\begin{equation}\label{pf:suff:lb}
  J_T(X,X_1^0)\ge\ip{P(t_0)}{X_1^0}
  \quad\text{for every admissible }X.
\end{equation}

\noindent\textbf{Attainment (flow).} Fix $K(t)\in\mathcal{K}_c(t,P(t))$.  The general-element formula
gives $K=\pinv{(\mathcal{C}_Z)_3}(\mathcal{C}_Z)_2^*
+F(I-(\mathcal{C}_Z)_3\pinv{(\mathcal{C}_Z)_3})$, so
$K-\pinv{(\mathcal{C}_Z)_3}(\mathcal{C}_Z)_2^*
=F(I-(\mathcal{C}_Z)_3\pinv{(\mathcal{C}_Z)_3})$,
whose columns lie in $\ker((\mathcal{C}_Z)_3)$.
Setting $U_2^\star=-KU_1^\star$:
\[
  W_f^\star
  =\pinv{(\mathcal{C}_Z)_3}(\mathcal{C}_Z)_2^* U_1^\star+U_2^\star
  =(\pinv{(\mathcal{C}_Z)_3}(\mathcal{C}_Z)_2^*-K)U_1^\star
  =-F(I-(\mathcal{C}_Z)_3\pinv{(\mathcal{C}_Z)_3})U_1^\star.
\]
Every column of $(I-(\mathcal{C}_Z)_3\pinv{(\mathcal{C}_Z)_3})\allowbreak U_1^\star$
lies in $\ker((\mathcal{C}_Z)_3)$, so\linebreak[1]$(\mathcal{C}_Z)_3W_f^\star=0$,
giving $\ip{(\mathcal{C}_Z)_3}{W_f^\star W_f^{\star*}}
=\tr((\mathcal{C}_Z)_3W_f^\star W_f^{\star*})=0$.
Combined with $\mathcal{C}_Z\,\schur{/}\,\allowbreak(\mathcal{C}_Z)_3=0$,
both terms in~\eqref{pf:suff:schur:flow} vanish along $X^\star$:
\begin{equation}\label{pf:suff:flow:zero}
  \ip{\mathcal{C}_Z(t,P(t))}{X^\star(t)}=0
  \quad\text{for all }t.
\end{equation}
\noindent\textbf{Attainment (jump).}  The identical argument with $K_k\in\mathcal{K}_d$ and
$V_{k,2}^\star=-K_kV_{k,1}^\star$ gives $W_k^\star\in
\ker((\mathcal{D}_Z)_3)$, hence
\begin{equation}\label{pf:suff:jump:zero}
  \ip{\mathcal{D}_Z(k,P(t_k^+),P(t_k^-))}{X^\star(t_k^-)}=0
  \quad\text{for every }k.
\end{equation}
Substituting~\eqref{pf:suff:flow:zero}--\eqref{pf:suff:jump:zero}
into~\eqref{pf:suff:id}: $J_T(X^\star,X_1^0)=\ip{P(t_0)}{X_1^0}$.
Combined with~\eqref{pf:suff:lb}:
$J_T^\star (X_1^0)=\ip{P(t_0)}{X_1^0}$, attained at $X^\star$.\\

\noindent\textbf{Positivity of $P$.}
$J_T^\star (X_1^0)=\ip{P(t_0)}{X_1^0}\ge0$ for all $X_1^0\succeq0$
(since $Z\succeq0$, $Z_k\succeq0$, and $X\succeq0$).
Proposition~\ref{prop:PQ}\eqref{st:neg} gives $P(t_0)\succeq0$,
and since $t_0\in[t_0,t_0+T]$ is arbitrary, $P(t)\succeq0$.\\

\noindent\textbf{Uniqueness of $P$.}
Suppose $\tilde P$ is another piecewise-$C^1$ solution of the
I-GRE with $\tilde P(t_0+T)=Z_T$.  By the argument above,
$J_T^\star (X_1^0)=\ip{\tilde P(t_0)}{X_1^0}$ for all $X_1^0\succeq0$.
Since also $J_T^\star (X_1^0)=\ip{P(t_0)}{X_1^0}$:
$\ip{P(t_0)-\tilde P(t_0)}{X_1^0}=0$ for all $X_1^0\succeq0$.
Taking $X_1^0=vv^*$ for every $v\in\R^n$ gives
$(P(t_0)-\tilde P(t_0))v=0$, hence $P(t_0)=\tilde P(t_0)$.
Since $t_0\in[t_0,t_0+T]$ is arbitrary, $P\equiv\tilde P$.
\end{proof}

\subsubsection{Necessary condition}

Conversely, whenever the optimization problem is well-posed
(the infimum is finite for every $X_1^0\succeq0$ and is attained),
the value function is necessarily linear in $X_1^0$ and the
corresponding dual variable must satisfy the I-GRE.
Theorem~\ref{th:IMP:LQ:nec} establishes this necessity via a
dynamic programming argument: the positivity constraint on
$\mathcal{C}_Z$ comes from the value-function inequality on flow
intervals, and the Riccati equation $\mathcal{C}_Z\,\schur{/}
\,(\mathcal{C}_Z)_3=0$ comes from the vanishing of the key identity
along optimal trajectories.

\begin{theorem}[Necessity of the I-GRE]\label{th:IMP:LQ:nec}
  Let $T>0$.  Suppose~\eqref{eq:IMP:LQ:prob} is well-posed and an
  optimal trajectory exists for every $X_1^0\succeq0$.  Then:
  \begin{enumerate}[\upshape(a)]
    \item\label{nec:lin}
      The value function is linear: there exists a unique
      piecewise-$C^1$ $P$ with $P(t_0+T)=Z_T$ such that
      $J_T^\star (X_1^0)=\ip{P(t_0)}{X_1^0}$ for all
      $X_1^0\succeq0$.
    \item\label{nec:DLMI}
      $P$ satisfies the flow LMI:
      $\mathcal{C}_Z(t,P(t))\succeq0$ for a.e.\ $t$.
    \item\label{nec:DLI}
      $P$ satisfies the jump LMI:
      $\mathcal{D}_Z(k,P(t_k^+),P(t_k^-))\succeq0$ for each $k$.
    \item\label{nec:GRE}
      $P$ satisfies the full I-GRE, including both compatibility
      conditions~\eqref{eq:IMP:GRE:fC} and~\eqref{eq:IMP:GRE:jC}.
    \item\label{nec:psd}
      $P(t)\succeq0$ for all $t$.
  \end{enumerate}
\end{theorem}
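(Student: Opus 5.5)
The argument is dynamic programming on the value function, combined with the key identity (Proposition~\ref{prop:keyid}) and the Schur lemmas.

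\textbf{Step 1: linearity and definition of $P$.} For $s\in[t_0,t_0+T]$, let $V(s,Y)$ be the optimal cost-to-go from $EX(s)E^*=Y\succeq0$; by hypothesis it is finite and attained. Since the dynamics and cost are linear in $X$ and the admissible set is a convex cone, $V(s,\cdot)$ is positively homogeneous and superadditive. For additivity, optimal trajectories $X^a$, $X^b$ from $Y_a$, $Y_b$ can be added to give an admissible trajectory from $Y_a+Y_b$, so $V(s,Y_a+Y_b)\le V(s,Y_a)+V(s,Y_b)$. For the reverse inequality we split an optimal trajectory for $Y_a+Y_b$. If $X=UU^*$ is generated by a feedback $U_2=-KU_1$, the feedback trajectories from $Y_a$ and $Y_b$ sum to the one from $Y_a+Y_b$. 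The main obstacle is to justify that optimal trajectories can be taken of this feedback form, i.e. that their input blocks are compatible with the state block. We would handle it using the fact that $X\succeq0$ forces $X_2=LX_1$ on $\operatorname{Im}X_1$.

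Once $V(s,\cdot)$ is additive and homogeneous on the cone $\Snpsd$, which spans $\Sn$, it extends to a linear functional. Hence $V(s,Y)=\ip{P(s)}{Y}$ for a unique Hermitian $P(s)$, with uniqueness from testing on $Y=vv^*$. We take $P(t_0+T)=Z_T$, which holds since the cost-to-go at the final time is the terminal cost. Piecewise-$C^1$ regularity of $P$ between jumps follows from continuity of $\mathcal{F}$ in $t$ and the flow structure; we would argue it via continuous dependence of $V$ on $s$, but this is partly assumed.

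\textbf{Step 2: the flow and jump LMIs (b), (c).} By Bellman's principle, for any admissible $X$ on $[s,s+h]$ inside a flow interval,
\[
\ip{P(s)}{EX(s)E^*}\le\int_s^{s+h}\ip{Z}{X}\dt+\ip{P(s+h)}{EX(s+h)E^*}.
\]
Writing this with the local key identity \eqref{pf:key:flow} gives $\int_s^{s+h}\ip{\mathcal{C}_Z(t,P(t))}{X(t)}\dt\ge0$. The trajectory $X(s)$ can be chosen as an arbitrary element of $\Snmpsd$ with free blocks held fixed (constant input blocks, or a feedback $U_2=-KU_1$ with arbitrary $U(s)$). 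Dividing by $h$ and letting $h\to0$ gives $\ip{\mathcal{C}_Z(s,P(s))}{X}\ge0$ for all $X\succeq0$, hence $\mathcal{C}_Z\succeq0$ by Proposition~\ref{prop:PQ}\eqref{st:neg}. At a jump, the same reasoning applied to the identity \eqref{pf:key:jump} gives
\[
\ip{P(t_k^-)}{EX(t_k^-)E^*}\le\ip{Z_k}{X(t_k^-)}+\ip{P(t_k^+)}{\mathcal{J}(k,X(t_k^-))},
\]
i.e. $\ip{\mathcal{D}_Z}{X(t_k^-)}\ge0$ for arbitrary $X(t_k^-)\succeq0$, since its free blocks are arbitrary. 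Thus $\mathcal{D}_Z\succeq0$.

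\textbf{Step 3: the full I-GRE (d) and positivity (e).} Take the optimal $X^\star$ from an arbitrary $X_1^0$. The key identity \eqref{eq:IMP:keyid2} gives $J_T(X^\star,X_1^0)=\ip{P(t_0)}{X_1^0}+I_f+I_j$ with $I_f,I_j\ge0$ by Step~2, so both integrands vanish a.e. Lemma~\ref{lemma:decomp} then splits each into two nonnegative parts, giving
\[
\ip{\mathcal{C}_Z\,\schur{/}\,(\mathcal{C}_Z)_3}{U_1U_1^*}=0,
\]
and similarly at the jumps. Applying the same argument from every intermediate time $s$ with every state $U_1U_1^*=vv^*$, via Step~1, makes the PSD Schur complement vanish against every rank-one matrix, so it is zero. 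Lemma~\ref{lemma:GREequiv} now converts $\mathcal{C}_Z\succeq0$ with zero Schur complement into the triple \eqref{eq:IMP:GRE:flow}, including compatibility; the jump triple follows identically. Finally, (e) holds because $\ip{P(s)}{Y}=V(s,Y)\ge0$ for all $Y\succeq0$, since $Z,Z_k,Z_T\succeq0$, so $P(s)\succeq0$ by Proposition~\ref{prop:PQ}.
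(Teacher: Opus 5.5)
Your architecture is the paper's: a Riesz representation of a linear value function for (a), one-step dynamic-programming inequalities for (b)--(c), vanishing of the key identity along optimal trajectories together with Lemma~\ref{lemma:decomp} for (d), and $Z,Z_k,Z_T\succeq0$ for (e).

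The substantive gap is the step you flag yourself: the reverse inequality $V(s,Y_a+Y_b)\ge V(s,Y_a)+V(s,Y_b)$. (Adding admissible trajectories gives \emph{sub}additivity, not the superadditivity your first sentence claims.) The fact you cite does not close it. $X\succeq0$ gives $X_2=LX_1$ with $L:=X_2\pinv{X_1}$, but only $X_3\succeq LX_1L^*$. So an optimal trajectory need not have the form $UU^*$ with $U_2=-KU_1$. Nor can you discard the excess $\Delta:=X_3-LX_1L^*\succeq0$ a priori. It also drives $X_1$ through $\mathcal{F}$, and showing that removing it does not raise the cost amounts to monotonicity of $V$, which is no easier than the superadditivity itself.

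The missing idea is to drop the feedback-form requirement altogether. Take \emph{any} admissible $X$ from $Y_a+Y_b$. Let $X^b$ be the trajectory from $Y_b$ with $X_2^b=LX_1^b$, $X_3^b=LX_1^bL^*$, and set $X^a:=X-X^b$. By linearity of $\mathcal{F}$ and $\mathcal{J}$, $X^a$ solves the dynamics from $Y_a$ with free blocks $LX_1^a$ and $LX_1^aL^*+\Delta$. Positivity of $X^a$ and $X^b$ then reduces to $X_1^a,X_1^b\succeq0$, i.e.\ to cone invariance of the closed loop (driven by the positive term $\Delta$ for $X^a$). Since the cost on $[s,t_0+T]$ is linear in the trajectory, the cost of $X$ is the sum of those of $X^a$ and $X^b$, hence at least $V(s,Y_a)+V(s,Y_b)$. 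Taking the infimum over $X$ gives the reverse inequality. The only leftover technicality is that $L$ may be unbounded where $X_1$ loses rank, so the closed loop defining $X^b$ needs a regularity argument.

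Once repaired, your linearity step is sounder than the paper's. The paper asserts uniqueness of optimal trajectories through a strict convexity that a linear cost does not have. It then gets additivity from a state-independent optimal feedback supplied by dynamic programming, which presupposes the linear value function being established. Your splitting needs neither.

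Your Step~3 also departs from the paper in two useful ways.
\begin{itemize}
\item Restarting from every intermediate time $s$ with $EX(s)E^*=vv^*$ replaces the paper's unjustified claim that the optimal trajectory from $t_0$ reaches every $Q\in\Snpsd$. The price is assuming attainment from every intermediate state, which the paper's dynamic-programming steps also take for granted. You also need a limit $t\downarrow s$, since the integrand vanishes only almost everywhere.
\item Reading compatibility off $\mathcal{C}_Z\succeq0$ via Lemma~\ref{lemma:GREequiv} avoids the paper's detour through a gain $K^\star\in\mathcal{K}_c$, whose existence already presupposes compatibility.
\end{itemize}
Piecewise-$C^1$ regularity of $P$ is asserted rather than proved in both versions.
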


\begin{proof}
\noindent\textbf{Part~\eqref{nec:lin}: Linearity.} By well-posedness, for every $X_1^0\succeq0$ the optimum is attained;
coercivity~(H2) makes the cost strictly convex in the trajectory, so
the optimal trajectory $X^\star(\cdot;X_1^0)$ is \emph{unique}.\\

\emph{Positive homogeneity.}
For $\lambda\ge0$, linearity of the dynamics in $X$ implies that
$\lambda X^\star(\cdot;X_1^0)$ is admissible for $\lambda X_1^0$, with
cost $\lambda J_T^*(X_1^0)$.  By uniqueness it is optimal, so
$J_T^*(\lambda X_1^0)=\lambda J_T^*(X_1^0)$.\\

\emph{Additivity.}
For $X_1^0,\tilde X_1^0\succeq0$, the sum
$X^\star(\cdot;X_1^0)+X^\star(\cdot;\tilde X_1^0)$ is admissible for
$X_1^0+\tilde X_1^0$ (linearity of the dynamics), with cost
$J_T^*(X_1^0)+J_T^*(\tilde X_1^0)$; hence
\begin{equation*}
  J_T^*(X_1^0+\tilde X_1^0)
  \le J_T^*(X_1^0)+J_T^*(\tilde X_1^0).
\end{equation*}
Conversely, by dynamic programming~\cite{Bertsekas:12,Bertsekas:17}
the optimum is attained by a \emph{state-independent} linear feedback on
the free blocks (the optimal gain $K(t)$ depends on the value function,
not on the initial condition).  The induced optimal closed-loop is thus a
linear, cone-preserving flow $x\mapsto X^\star(\cdot;x)$, so
$X^\star(\cdot;X_1^0+\tilde X_1^0)
=X^\star(\cdot;X_1^0)+X^\star(\cdot;\tilde X_1^0)$ is the sum of two
\emph{admissible} ($\Snmpsd$-valued) optimal trajectories.  Therefore
\begin{equation*}
  J_T^*(X_1^0+\tilde X_1^0)
  =J_T^*(X_1^0)+J_T^*(\tilde X_1^0),
\end{equation*}
which gives the reverse inequality (and in fact equality directly).\\

\emph{Polarisation.}
$J_T^*$ is therefore linear on $\Snpsd$.
Extending by polarisation $Q=Q_+-Q_-$ for $Q\in\Sn$ yields a linear
functional on $\Sn$ \cite{Bhatia:97}.
The Frobenius--Riesz representation \cite{Conway:90} gives unique $P(t_0)\in\Sn$ with
$J_T^*(X_1^0)=\ip{P(t_0)}{X_1^0}$.
Applying dynamic programming \cite{Bertsekas:12,Bertsekas:17} at every initial time $t$ defines
$P(t)$; piecewise-$C^1$ regularity follows from continuity of the
optimal cost on each flow interval, with possible
discontinuities at jump times.  The terminal condition
$P(t_0+T)=Z_T$ holds by evaluating at $t_0=t_0+T$.\\

\noindent\textbf{Part~\eqref{nec:DLMI}: Flow LMI.}
Fix a flow instant $t$ in the interior of some interval $(t_{k-1},t_k)$.
For any $[v_1^*\;v_2^*]^*\in\C^{n+m}$, consider a
constant perturbation $X_2\equiv v_2 v_1^*$,
$X_3\equiv v_2v_2^*$ on $[t,t+h]\subset(t_{k-1},t_k)$
and continue optimally on $[t+h,t_0+T]$.  The dynamic programming
inequality
$\ip{P(t)}{EX(t)E^*}\le\int_t^{t+h}\ip{Z(t)}{X(t)}\dt
+\ip{P(t+h)}{EX(t+h)E^*}$,
after subtracting, dividing by $h>0$, and letting $h\to0$ (using
the definition of $\mathcal{C}_Z(t,P(t))$), gives
\[
  0\le
  \begin{bmatrix}v_1\\v_2\end{bmatrix}^*
  \mathcal{C}_Z(t,P(t))
  \begin{bmatrix}v_1\\v_2\end{bmatrix}.
\]
Since $v_1\in\C^n$ and $v_2\in\C^m$ are arbitrary,
$\mathcal{C}_Z(t,P(t))\succeq0$.\\

\noindent\textbf{Part~\eqref{nec:DLI}: Jump LMI.}
At $t_k$, the dynamic programming inequality reads
\[
  \ip{P(t_k^-)}{EX(t_k^-)E^*}
  \le\ip{Z_k}{X(t_k^-)}
  +\ip{P(t_k^+)}{EX(t_k^+)E^*}.
\]
Using~\eqref{eq:sys:IMP:jump} and the adjoint relation:
\[
  0\le\ip{Z_k+\mathcal{J}^*(k,P(t_k^+))-E^*P(t_k^-)E}{X(t_k^-)}
  =\ip{\mathcal{D}_Z(k,P(t_k^+),P(t_k^-))}{X(t_k^-)}.
\]
Testing with $X(t_k^-)=[v_1^*\;v_2^*]^*
[v_1^*\;v_2^*]$ for arbitrary $v_1\in\C^n$, $v_2\in\C^m$ gives
$\mathcal{D}_Z(k,P(t_k^+),P(t_k^-))\succeq0$.\\

\noindent\textbf{Part~\eqref{nec:GRE}: I-GRE conditions.}
Let $X^\star$ be an optimal trajectory.
From~\eqref{eq:IMP:keyid2}:
\begin{equation}\label{pf:nec:zero}
  0=J_T(X^\star,X_1^0)-J_T^\star (X_1^0)
  =\int_{t_0}^{t_0+T}\ip{\mathcal{C}_Z}{X^\star}\dt
  +\sum_k\ip{\mathcal{D}_Z(k,P(t_k^+),P(t_k^-))}{X^\star(t_k^-)}.
\end{equation}
Since the integrand (Part~\eqref{nec:DLMI}) and each summand
(Part~\eqref{nec:DLI}) are non-negative, each must vanish
separately:
\begin{equation}\label{pf:nec:zero:sep}
  \ip{\mathcal{C}_Z(t,P(t))}{X^\star(t)}=0,\ t\ge0\ \text{a.e.},
  \qquad
  \ip{\mathcal{D}_Z(k,P(t_k^+),P(t_k^-))}{X^\star(t_k^-)}=0,\ k\ge1.
\end{equation}

We prove now the Flow Riccati equation and compatibility~\eqref{eq:IMP:GRE:fR},
\eqref{eq:IMP:GRE:fC}. Since $\mathcal{C}_Z(t,P(t))\succeq0$, Lemma~\ref{lemma:extSchur}
gives compatibility~\eqref{eq:compat}.
Lemma~\ref{lemma:decomp} with~\eqref{pf:nec:zero:sep} gives
\[
  0=\ip{\mathcal{C}_Z\,\schur{/}\,(\mathcal{C}_Z)_3}
    {U_1^\star U_1^{\star*}}
  +\ip{(\mathcal{C}_Z)_3}{W_f^\star W_f^{\star*}},
\]
with both terms non-negative.  Hence each is zero.
Since the optimal trajectory $X_1^\star=EX^\star E^*$ can reach
any $Q\in\Snpsd$ by varying $X_1^0$,
$\ip{\mathcal{C}_Z\,\schur{/}\,(\mathcal{C}_Z)_3}{Q}=0$ for all
$Q\succeq0$ gives $\mathcal{C}_Z\,\schur{/}\,\allowbreak(\mathcal{C}_Z)_3=0$
by Proposition~\ref{prop:PQ}\eqref{st:neg},
i.e.,~\eqref{eq:IMP:GRE:fR} holds.
From $\ip{(\mathcal{C}_Z)_3}{W_f^\star W_f^{\star*}}=0$ and
$(\mathcal{C}_Z)_3\succeq0$: $(\mathcal{C}_Z)_3W_f^\star=0$.
Expanding $W_f^\star=\pinv{(\mathcal{C}_Z)_3}(\mathcal{C}_Z)_2^*
U_1^\star+U_2^\star$ (with $U_2^\star=-K^\star U_1^\star$) and
using $(\mathcal{C}_Z)_3K^{\star*}=-(\mathcal{C}_Z)_2^*$:
\[
  0=(\mathcal{C}_Z)_3\bigl(
    \pinv{(\mathcal{C}_Z)_3}(\mathcal{C}_Z)_2^*-K^{\star*}\bigr)
    U_1^\star
  =(I-(\mathcal{C}_Z)_3\pinv{(\mathcal{C}_Z)_3})(\mathcal{C}_Z)_2^* U_1^\star.
\]
Because $U_1^\star$ spans $\R^n$ over all initial conditions,
$(I-(\mathcal{C}_Z)_3\pinv{(\mathcal{C}_Z)_3})(\mathcal{C}_Z)_2^*=0$,
which is~\eqref{eq:IMP:GRE:fC}.\\

Let us consider now the Jump Riccati equation and compatibility~\eqref{eq:IMP:GRE:jR},
\eqref{eq:IMP:GRE:jC}. The identical argument applied to $\mathcal{D}_Z(k,P(t_k^+),P(t_k^-))$ and
$V_{k,1}^\star:=EU^\star(t_k^-)$, where $X^\star(t_k^-)=U^\star(t_k^-)U^\star(t_k^-)^*$,
gives
\begin{align*}
  0&=\ip{\mathcal{D}_Z\,\schur{/}\,(\mathcal{D}_Z)_3}{V_{k,1}^\star V_{k,1}^{\star*}}
  +\ip{(\mathcal{D}_Z)_3}{W_k^\star W_k^{\star*}},\\
  (\mathcal{D}_Z)_3 W_k^\star &= 0,\quad
  (I-(\mathcal{D}_Z)_3\pinv{(\mathcal{D}_Z)_3})(\mathcal{D}_Z)_2^*=0,
\end{align*}
which are~\eqref{eq:IMP:GRE:jR} and~\eqref{eq:IMP:GRE:jC} respectively.\\

\noindent\textbf{Part~\eqref{nec:psd}: Positivity.}
Same argument as in Theorem~\ref{th:IMP:LQ:suff}.
\end{proof}

\subsubsection{Equivalence and dual LMI}

Combining Theorems~\ref{th:IMP:LQ:suff} and~\ref{th:IMP:LQ:nec}
gives the equivalence Corollary~\ref{cor:IMP:LQ:equiv}: the
existence of a positive solution to the I-GRE is both necessary
and sufficient for well-posedness.
Theorem~\ref{th:IMP:LQ:LMI} then provides an alternative
semidefinite program characterization of the optimal cost,
whose constraint $\mathcal{C}_Z\succeq0$ and $\mathcal{D}_Z\succeq0$
is \emph{linear} in $P$ (an LMI).
This is useful for numerical computation: rather than solving the
nonlinear I-GRE directly, one can maximize $\ip{P(t_0)}{X_1^0}$
over $P\succeq0$ subject to LMI constraints.

\begin{corollary}[Equivalence]\label{cor:IMP:LQ:equiv}
  Problem~\eqref{eq:IMP:LQ:prob} is well-posed with an optimal
  trajectory for every $X_1^0\succeq0$ if and only if the I-GRE
  has a piecewise-$C^1$ solution $P\succeq0$ with $P(t_0+T)=Z_T$.
  The solution is unique and $J_T^\star (X_1^0)=\ip{P(t_0)}{X_1^0}$.
\end{corollary}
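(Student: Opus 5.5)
The equivalence follows by combining Theorem~\ref{th:IMP:LQ:suff} and Theorem~\ref{th:IMP:LQ:nec}, one direction each.

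\emph{Sufficiency.} Assume the I-GRE has a piecewise-$C^1$ solution $P\succeq0$ with $P(t_0+T)=Z_T$. Theorem~\ref{th:IMP:LQ:suff}\eqref{suff:val} gives $J_T^\star(X_1^0)=\ip{P(t_0)}{X_1^0}$, which is finite and nonnegative for every $X_1^0\succeq0$; hence the problem is well-posed. For attainment, Theorem~\ref{th:IMP:LQ:suff}\eqref{suff:att} shows that any trajectory generated with gains $K(t)\in\mathcal{K}_c(t,P(t))$ and $K_k\in\mathcal{K}_d(k,P(t_k^+),P(t_k^-))$ achieves this value. One point needs an explicit check: such a trajectory must exist and remain in $\Snmpsd$. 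Both optimal sets are nonempty because of the compatibility conditions \eqref{eq:IMP:GRE:fC} and \eqref{eq:IMP:GRE:jC}. I would construct the trajectory as follows.
\begin{itemize}
\item[(i)] Choose a measurable selection $K(t)$, for instance the minimal-norm element $\pinv{(\mathcal{C}_Z)_3}(\mathcal{C}_Z)_2^*$. This is piecewise continuous wherever the rank of $(\mathcal{C}_Z)_3$ is locally constant, and otherwise only measurable.
\item[(ii)] Set $U_2=-KU_1$, so that $X=\bigl[\begin{smallmatrix}I\\-K\end{smallmatrix}\bigr]X_1\bigl[\begin{smallmatrix}I\\-K\end{smallmatrix}\bigr]^*$.
\item[(iii)] Then $X_1$ is governed on each flow interval by a linear ODE with measurable, bounded coefficients, and at each jump by a linear map. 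Both preserve $\Snpsd$ because $\mathcal{F}$ and $\mathcal{J}$ leave $\Snmpsd$ invariant.
\end{itemize}
Step (i) is the main technical obstacle: boundedness of the pseudoinverse can fail where the rank drops. If that happens, I would restrict to the regular case or assume a bounded measurable selection exists.

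\emph{Necessity.} Assume the problem is well-posed with an optimal trajectory for every $X_1^0\succeq0$. Theorem~\ref{th:IMP:LQ:nec} then yields:
\begin{itemize}
\item[(a)] a unique piecewise-$C^1$ $P$ with $P(t_0+T)=Z_T$ and $J_T^\star(X_1^0)=\ip{P(t_0)}{X_1^0}$, by part~\eqref{nec:lin};
\item[(b)] that $P$ satisfies the full I-GRE, by part~\eqref{nec:GRE};
\item[(c)] that $P\succeq0$, by part~\eqref{nec:psd}.
\end{itemize}

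\emph{Uniqueness and value.} Uniqueness of the solution follows from Theorem~\ref{th:IMP:LQ:suff}\eqref{suff:uniq}. Any two positive semidefinite solutions both represent the same functional $J_T^\star$ at every initial time. The Riesz argument via $X_1^0=vv^*$ then forces them to coincide, and the value formula $J_T^\star(X_1^0)=\ip{P(t_0)}{X_1^0}$ follows from either theorem.
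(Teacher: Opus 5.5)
Your argument is the paper's own: sufficiency is Theorem~\ref{th:IMP:LQ:suff}, necessity is Theorem~\ref{th:IMP:LQ:nec} (parts~\eqref{nec:lin}, \eqref{nec:GRE}, \eqref{nec:psd}), and uniqueness and the value formula come from the same $X_1^0=vv^*$ Riesz argument. The existence issue you raise in step~(i) is a genuine subtlety that the paper does not address either, since it takes an admissible closed-loop trajectory with gains in $\mathcal{K}_c,\mathcal{K}_d$ for granted in Theorem~\ref{th:IMP:LQ:suff}\eqref{suff:att}; your extra hypothesis in the singular rank-drop case is therefore a caveat on the statement shared with the paper's proof, not a flaw in your route.
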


\begin{proof}
  Sufficiency is Theorem~\ref{th:IMP:LQ:suff}; necessity is
  Theorem~\ref{th:IMP:LQ:nec}.
\end{proof}

\begin{theorem}[Dual LMI: finite-horizon]\label{th:IMP:LQ:LMI}
  Suppose the I-GRE has a solution $P^*$.  Then
  \begin{align}\label{eq:IMP:LQ:LMI}
    J_T^\star (X_1^0)
    =\max_{P\text{ pw-}C^1}&\;\ip{P(t_0)}{X_1^0}
    \notag\\
    \text{s.t.}\quad
    &P\succeq0,\;
    \mathcal{C}_Z(t,P(t))\succeq0,\;
    \mathcal{D}_Z(k,P(t_k^+),P(t_k^-))\succeq0,\;
    P(t_0+T)\preceq Z_T,
  \end{align}
  and the maximum is attained at $P^*$.
\end{theorem}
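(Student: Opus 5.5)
The plan is a weak-duality/strong-duality argument built entirely on the key identity of Proposition~\ref{prop:keyid}. Feasibility of $P^*$ comes first. By Theorem~\ref{th:IMP:LQ:suff}, $P^*\succeq0$ and $P^*(t_0+T)=Z_T$, so the terminal constraint $P^*(t_0+T)\preceq Z_T$ holds with equality. By the I-GRE together with Lemma~\ref{lemma:GREequiv}, we also have $\mathcal{C}_Z(t,P^*(t))\succeq0$ on flow intervals and $\mathcal{D}_Z(k,P^*(t_k^+),P^*(t_k^-))\succeq0$ at every jump. Hence $P^*$ is feasible for~\eqref{eq:IMP:LQ:LMI}, and it achieves the objective value $\ip{P^*(t_0)}{X_1^0}=J_T^\star(X_1^0)$ by Theorem~\ref{th:IMP:LQ:suff}\eqref{suff:val}.

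The main step is the upper bound: I will show that every feasible $P$ satisfies $\ip{P(t_0)}{X_1^0}\le J_T^\star(X_1^0)$. Fix any feasible $P$ and any admissible trajectory $X$ with $EX(t_0)E^*=X_1^0$, and apply the general identity~\eqref{eq:IMP:keyid}, not the reduced form~\eqref{eq:IMP:keyid2}, because $P(t_0+T)$ need not equal $Z_T$. Each of the three remainder terms is then nonnegative:
\begin{itemize}
\item $\ip{Z_T-P(t_0+T)}{EX(t_0+T)E^*}\ge0$, since $Z_T-P(t_0+T)\succeq0$ and $EX(t_0+T)E^*\succeq0$ (the trace of a product of two PSD matrices is nonnegative);
\item the flow integrand $\ip{\mathcal{C}_Z(t,P(t))}{X(t)}$ is nonnegative for a.e.\ $t$, for the same reason, since $X(t)\in\Snmpsd$;
\item each jump summand $\ip{\mathcal{D}_Z(k,P(t_k^+),P(t_k^-))}{X(t_k^-)}$ is nonnegative, again by the same reason.
\end{itemize}
Therefore $J_T(X,X_1^0)\ge\ip{P(t_0)}{X_1^0}$. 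Taking the infimum over admissible $X$ gives $J_T^\star(X_1^0)\ge\ip{P(t_0)}{X_1^0}$.

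Combining the two parts, the supremum of the program equals $J_T^\star(X_1^0)$ and is attained at $P^*$, so it is a maximum.

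The only delicate point is the regularity needed to apply~\eqref{eq:IMP:keyid} to an arbitrary feasible $P$. That identity only needs $P$ to be piecewise-$C^1$ with its discontinuities placed at the $t_k$. Feasible $P$ whose kinks fall elsewhere can be handled by inserting trivial jumps with identity reset and zero cost, or more simply by noting that the flow-interval telescoping argument in the proof of Proposition~\ref{prop:keyid} only uses absolute continuity of $t\mapsto\ip{P(t)}{EX(t)E^*}$ between the $t_k$. The constraint $P\succeq0$ is never used in the bound; it merely keeps $P^*$ feasible, which Theorem~\ref{th:IMP:LQ:suff} guarantees.
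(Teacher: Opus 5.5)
Your proof is correct and matches the paper's argument essentially step for step. The paper also applies the unreduced key identity~\eqref{eq:IMP:keyid} to an arbitrary feasible $P$ and signs the terminal, flow, and jump remainders to obtain $J_T^\star(X_1^0)\ge\ip{P(t_0)}{X_1^0}$, then checks that $P^*$ is feasible and attains this bound via Theorem~\ref{th:IMP:LQ:suff}. Your closing remark on kinks of $P$ away from the jump times is a sensible addition that the paper leaves implicit, and it holds because $P$ is tacitly assumed continuous away from the $t_k$.
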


\begin{proof}
\noindent\textbf{Step~1: Every feasible $P$ is a lower bound.}
Let $P$ be feasible and $X$ any admissible trajectory.
From the key identity~\eqref{eq:IMP:keyid} (before using
$P(T)=Z_T$):
\begin{align*}
  J_T(X,X_1^0)
  &=\ip{P(t_0)}{X_1^0}
   +\underbrace{\ip{Z_T-P(t_0+T)}{EX(T)E^*}}_{\ge\,0}
   \notag\\
   &\quad+\underbrace{\int\ip{\mathcal{C}_Z(t,P(t))}{X(t)}\dt}_{\ge\,0}
   +\underbrace{\textstyle\sum_k\ip{\mathcal{D}_Z(k,P(t_k^+),P(t_k^-))}{X(t_k^-)}}_{\ge\,0}\\
  &\ge\ip{P(t_0)}{X_1^0}.
\end{align*}
The first non-negative term uses $P(T)\preceq Z_T$ and
$EX(T)E^*\succeq0$; the remaining two use the LMI constraints.
Taking the infimum: $J_T^\star (X_1^0)\ge\ip{P(t_0)}{X_1^0}$.\\

\noindent\textbf{Step~2: $P^*$ is feasible and attains the bound.}
The I-GRE and Lemma~\ref{lemma:extSchur} give
$\mathcal{C}_Z(t,P^*)\succeq0$ and
$\mathcal{J}\succeq0$; $P^*(T)=Z_T$; $P^*\succeq0$.
So $P^*$ is feasible.  By Theorem~\ref{th:IMP:LQ:suff},
$\ip{P^*(t_0)}{X_1^0}=J_T^\star (X_1^0)$.
Hence $\max_{P\text{ feas.}}\ip{P(t_0)}{X_1^0}
\ge\ip{P^*(t_0)}{X_1^0}=J_T^\star (X_1^0)$.
Combined with Step~1: equality holds and $P^*$ attains it.
\end{proof}

\begin{corollary}[Continuous-time: finite horizon]\label{cor:CT:LQ:fh}
  Set $N_T=0$.  The jump conditions are vacuous, the I-GRE reduces
  to the \emph{CT-GRE} 
  \begin{equation}
    \mathcal{C}_Z(t,P(t))\succeq0,\ 
  \mathcal{C}_Z\,\schur{/}\ (\mathcal{C}_Z)_3=0,\ P(T)=Z_T,
  \end{equation}
  and the impulsive key identity reduces to the continuous-time key identity
  \begin{equation}\label{eq:CT:keyid}
    J_T(X,X_1^0)
    =\ip{P(t_0)}{X_1^0}
    +\int_{t_0}^{t_0+T}\ip{\mathcal{C}_Z(t,P(t))}{X(t)}\dt.
  \end{equation}
  Theorems~\ref{th:IMP:LQ:suff}, \ref{th:IMP:LQ:nec},
  Corollary~\ref{cor:IMP:LQ:equiv}, and
  Theorem~\ref{th:IMP:LQ:LMI} all hold with the dual LMI
  \begin{equation}\label{eq:CT:LQ:LMI}
    J_T^\star (X_1^0)
    =\max_{P\succeq0}\;\ip{P(t_0)}{X_1^0}
    \quad\text{s.t.}\quad
    \mathcal{C}_Z(t,P(t))\succeq0,\;P(T)\preceq Z_T,
  \end{equation}
  recovering~\cite{AitRami:01a,AitRami:01b}.
\end{corollary}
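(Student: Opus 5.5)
This corollary is a specialization, so the plan is to set $N_T=0$ in each impulsive result and check that every step survives. Nothing new has to be constructed.

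First, with $N_T=0$ there are no jump times in $(t_0,t_0+T)$. The index sets $\{1,\dots,N_T\}$ are then empty, so the jump conditions~\eqref{eq:IMP:GRE:jump} are vacuous and the sums in~\eqref{eq:IMP:LQ:cost} and~\eqref{eq:IMP:keyid} vanish. With the convention $t_0^+=t_0$ and $t_1^-=t_0+T$, the telescoping argument in the proof of Proposition~\ref{prop:keyid} uses a single flow interval. Relation~\eqref{pf:key:flow} applied on $[t_0,t_0+T]$ then gives~\eqref{eq:IMP:keyid}. Imposing $P(t_0+T)=Z_T$ yields exactly~\eqref{eq:CT:keyid}. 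Next, the flow triple~\eqref{eq:IMP:GRE:flow} is equivalent, by Lemma~\ref{lemma:GREequiv}, to $\mathcal{C}_Z\succeq0$ together with $\mathcal{C}_Z\,\schur{/}\,(\mathcal{C}_Z)_3=0$. With the terminal condition this is the stated CT-GRE.

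Second, I re-read the proofs of Theorems~\ref{th:IMP:LQ:suff} and~\ref{th:IMP:LQ:nec}, Corollary~\ref{cor:IMP:LQ:equiv}, and Theorem~\ref{th:IMP:LQ:LMI}, deleting every jump-related term. In the sufficiency proof, $I_j=0$. The lower bound and the attainment use only~\eqref{pf:suff:schur:flow} and~\eqref{pf:suff:flow:zero}, and positivity and uniqueness are unchanged. In the necessity proof, Part~\eqref{nec:DLI} is vacuous. Part~\eqref{nec:GRE} uses only the flow half of~\eqref{pf:nec:zero:sep}, and $P$ is now $C^1$ on the whole horizon because there are no jump discontinuities. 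In the dual LMI theorem, Step~1 keeps the terminal term and the integral term, both nonnegative. Step~2 uses feasibility of the GRE solution. This gives~\eqref{eq:CT:LQ:LMI}, where $P\succeq0$ is written under the max. Corollary~\ref{cor:IMP:LQ:equiv} follows by the same combination as before.

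The only point needing care is the claim that this recovers~\cite{AitRami:01a,AitRami:01b}. I would instantiate $\mathcal{F}(t,X)=[A\;B]X[A\;B]^*$, possibly with stochastic terms, so that $\mathcal{F}^*(t,P)=[A\;B]^*P[A\;B]$. Substituting this into $\mathcal{C}_Z$ turns the block $(\mathcal{C}_Z)_1$ into $\dot P+A^*P+PA+Z_1$, and $(\mathcal{C}_Z)_2$ and $(\mathcal{C}_Z)_3$ into the familiar cross and input blocks. The vanishing Schur complement is then the generalized Riccati differential equation of those references, and the LMI is their dual SDP.

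I expect no real obstacle. The bookkeeping that needs the most attention is the endpoint convention for the single interval in the telescoping step and the identification with the classical generalized Riccati equation; the latter is a direct block computation.
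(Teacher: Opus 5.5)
Your reduction is the one the paper intends. The paper gives no separate proof, since the corollary is a direct specialization of the impulsive results. With $N_T=0$ the jump index sets are empty, and the telescoping in Proposition~\ref{prop:keyid} runs over the single interval with $t_0^+=t_0$, $t_1^-=t_0+T$. Lemma~\ref{lemma:GREequiv} collapses the flow triple, and deleting $I_j$, Part~(c) of Theorem~\ref{th:IMP:LQ:nec}, and the jump summands from the proofs of Theorems~\ref{th:IMP:LQ:suff}, \ref{th:IMP:LQ:nec} and~\ref{th:IMP:LQ:LMI} leaves valid arguments. That part is fine.

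The step that fails as written is your instantiation for the comparison with~\cite{AitRami:01a,AitRami:01b}. The map $\mathcal{F}(t,X)=[A\;B]X[A\;B]^*$ is the discrete-time (jump-type) second-moment map, i.e.\ the form of $\mathcal{J}$ in Corollary~\ref{cor:DT:LQ:fh}. Its adjoint $[A\;B]^*P[A\;B]$ has $(1,1)$ block $A^*PA$, so you would get:
\begin{itemize}
  \item $(\mathcal{C}_Z)_1=\dot P+A^*PA+Z_1$;
  \item $(\mathcal{C}_Z)_2=A^*PB+\cdots$;
  \item $(\mathcal{C}_Z)_3=B^*PB+Z_3$.
\end{itemize}
The vanishing Schur complement is then a hybrid ``derivative plus discrete Riccati'' equation, not a Riccati differential equation. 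The block $\dot P+A^*P+PA+Z_1$ you claim does not follow from the $\mathcal{F}^*$ you wrote.

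For $dx=(Ax+Bu)\,dt+(Cx+Du)\,dw$ the flow operator has a Lyapunov-type drift plus a congruence-type diffusion:
\[
\mathcal{F}(t,X)=[A\;B]XE^*+EX[A\;B]^*+[C\;D]X[C\;D]^*,
\qquad
\mathcal{F}^*(t,P)=\begin{bmatrix}PA+A^*P+C^*PC & PB+C^*PD\\ B^*P+D^*PC & D^*PD\end{bmatrix}.
\]
With $Z=\bigl[\begin{smallmatrix}Q&S\\S^*&R\end{smallmatrix}\bigr]$, the condition $\mathcal{C}_Z\,\schur{/}\,(\mathcal{C}_Z)_3=0$ becomes
\[
\dot P+PA+A^*P+C^*PC+Q-(PB+C^*PD+S)(R+D^*PD)^\dagger(PB+C^*PD+S)^*=0,
\]
together with $R+D^*PD\succeq0$ and the range condition~\eqref{eq:IMP:GRE:fC}. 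This is the generalized Riccati differential equation of those references, and $\mathcal{C}_Z\succeq0$ is their LMI. Only the diffusion term has the congruence form you used.

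Note also that the specialized theorems assume $Z\succeq0$ and $Z_T\succeq0$. What is recovered is therefore the definite-weight case of those indefinite-LQ results.
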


\begin{corollary}[Discrete-time: finite horizon]\label{cor:DT:LQ:fh}
  Apply Definitions~\ref{def:DT:sys} and set $Z=0$ on flow,
  $t_k=k_0+k$, $Z_k=Z(k)$.
  With $P$ constant on each interval $(k,k+1)$ the flow conditions
  are trivially satisfied, and the I-GRE reduces to the
  \emph{DT-GRE} at each step, with jump operator
  \begin{equation}\label{eq:DT:Ld}
    \tag{DT-GRE}
    \mathcal{D}_Z(k,P(k+1),P(k))
    :=\mathcal{J}^*(k,P(k+1))-E^* P(k)E+Z(k).
  \end{equation}
  The key identity reduces to the discrete-time key identity
  \[J_N(X,X_1^0)=\ip{P(k_0)}{X_1^0}
  +\textstyle\sum_{k=k_0}^{k_0+N-1}\ip{\mathcal{D}_Z(k,P(k+1),P(k))}{X(k)},\]
  and the dual LMI is
  \begin{equation}\label{eq:DT:LQ:LMI}
    J_N^*(X_1^0)
    =\max_{P\succeq0}\;\ip{P(k_0)}{X_1^0}
    \quad\text{s.t.}\quad
    \mathcal{D}_Z(k,P(k+1),P(k))\succeq0,\;P(k_0+N)\preceq Z_N,
  \end{equation}
  recovering~\cite{AitRami:01b}.
\end{corollary}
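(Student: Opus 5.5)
The plan is to derive Corollary~\ref{cor:DT:LQ:fh} by specializing the impulsive results, Theorems~\ref{th:IMP:LQ:suff}, \ref{th:IMP:LQ:nec}, \ref{th:IMP:LQ:LMI} and Corollary~\ref{cor:IMP:LQ:equiv}, to the discrete-time system of Definition~\ref{def:DT:sys}. Take jump times $t_k=k_0+k$ over the horizon $[k_0,k_0+N]$, zero flow operator $\mathcal{F}\equiv0$, zero running weight $Z\equiv0$, jump weights $Z_k=Z(k)$, and terminal weight $Z_N$. One subtlety is that the DT state at step $k$ is the value held on the interval; we identify $X(k)$ with $X(t_k^-)$ (equivalently with $X(t_{k-1}^+)$, since $E\dot X E^*=0$ leaves $X_1$ constant on flow intervals). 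We then index the jumps so that the sum in the impulsive cost~\eqref{eq:IMP:LQ:cost} becomes $\sum_{k=k_0}^{k_0+N-1}\ip{Z(k)}{X(k)}$. The flow integral vanishes because $Z=0$.

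\textbf{Step 1: reduce the dual variable.} Restrict to $P$ that are constant on each open interval, with $P(k)$ denoting the value immediately before the jump at step $k$. Then $\dot P=0$, so $\mathcal{C}_Z(t,P)=E^*\dot PE+\mathcal{F}^*(t,P)+Z=0$. The zero matrix is $\succeq0$ and has zero generalized Schur complement, so the flow part of the I-GRE~\eqref{eq:IMP:GRE:flow} holds trivially by Lemma~\ref{lemma:GREequiv}. The flow LMI also holds, and the flow integral in the key identity vanishes.

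\textbf{Step 2: rewrite the jump operator.} With the relabelling $P(t_k^+)\to P(k+1)$ and $P(t_k^-)\to P(k)$, the jump operator~\eqref{eq:IMP:Lj} becomes exactly~\eqref{eq:DT:Ld}.

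\textbf{Step 3: transfer the results.} Substituting into~\eqref{eq:IMP:keyid2} with $P(k_0+N)=Z_N$ gives the stated DT key identity, since only the initial term and the jump sum survive. Theorem~\ref{th:IMP:LQ:suff} and Corollary~\ref{cor:IMP:LQ:equiv} then carry over verbatim, with $\mathcal{K}_d$ supplying the optimal gains. The dual LMI~\eqref{eq:DT:LQ:LMI} follows from Theorem~\ref{th:IMP:LQ:LMI}. For Step~1 of that proof, use~\eqref{eq:IMP:keyid} with the terminal term $\ip{Z_N-P(k_0+N)}{EX(k_0+N)E^*}\ge0$ and $\mathcal{D}_Z\succeq0$. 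For Step~2, note that the DT-GRE solution is feasible and attains the bound.

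\textbf{Main obstacle.} The delicate point is that the impulsive LMI in Theorem~\ref{th:IMP:LQ:LMI} ranges over all piecewise-$C^1$ $P$, not only piecewise-constant ones. I would handle this by observing that piecewise-constant $P$ still form a feasible subclass that contains the maximizer. Indeed, by Theorem~\ref{th:IMP:LQ:suff}\eqref{suff:uniq} the I-GRE solution is unique, and the piecewise-constant extension of the DT-GRE solution is one such solution. Hence the maximum over the subclass equals $J_N^\star(X_1^0)$.

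Alternatively, one can argue directly that any feasible $P$ with $\dot P\ne0$ must satisfy $E^*\dot PE\succeq0$, because $\mathcal{C}_Z=E^*\dot PE$ when $\mathcal{F}=0$ and $Z=0$. Then $P$ is nondecreasing on each interval, and replacing $P$ by its right-end value on each interval preserves the jump constraints and does not decrease the objective. I expect this monotonicity check to be the only real work; the rest is bookkeeping. For necessity, Theorem~\ref{th:IMP:LQ:nec} applies with the flow parts vacuous.
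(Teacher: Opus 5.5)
Your proposal is correct and takes the same route as the paper, which gives no separate proof and simply specializes Proposition~\ref{prop:keyid} and Theorems~\ref{th:IMP:LQ:suff} and~\ref{th:IMP:LQ:LMI} to $\mathcal{F}=0$, $Z=0$ on flow, piecewise-constant $P$, and the relabelling $P(t_k^+)\to P(k+1)$, $P(t_k^-)\to P(k)$; your ``feasible subclass containing the maximizer'' argument is exactly what reconciles the piecewise-$C^1$ and sequence forms of the dual LMI. Two small remarks: uniqueness is not needed there, since the flow I-GRE already forces $\dot P=0$ (the generalized Schur complement of $E^*\dot PE$ with respect to its zero lower-right block is $\dot P$); and your alternative monotone-replacement argument also needs $\mathcal{J}^*(k,\cdot)$ to be order-preserving, which follows from the cone invariance of $\mathcal{J}(k,\cdot)$ and the self-duality of the positive semidefinite cone.
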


\subsection{Infinite-horizon}
\label{sec:OC:inf}

We now study the optimal control problem on the infinite horizon
$[t_0,\infty)$.  The setup mirrors Section~\ref{sec:OC:fh} but with
two important differences.  First, there is no terminal cost $Z_T$:
the horizon extends to infinity so no terminal penalty is imposed.
Second, the I-GRE no longer has a terminal condition $P(T)=Z_T$;
instead, the dual variable $P_\infty$ is characterized as the
bounded monotone limit of the finite-horizon solutions $P_T$ as
$T\to\infty$ (Theorem~\ref{th:IMP:LQ:inf}).

The infinite-horizon problem is well-posed, meaning the optimal
cost is finite and uniquely represented as $\ip{P_\infty(t_0)}{X_1^0}$, under the uniform stabilizability, coercivity, and boundedness
assumptions (H1)--(H3) stated below.  Unlike most treatments of
infinite-horizon problems for impulsive systems~\cite{Ichikawa:01,
Dragan:25,AbouKandil:03}, no time-invariance of $\mathcal{F}$,
$\mathcal{J}$, or $Z$ is assumed: the LTV setting requires the
monotone-limit argument below in place of the algebraic Riccati
equation.  The theorem also provides a dual characterization: the
infinite-horizon optimal cost equals the supremum of $\ip{P}{X_1^0}$
over all bounded positive semidefinite $P$ satisfying the LMI
constraints, paralleling Theorem~\ref{th:IMP:LQ:LMI} at the infinite
horizon.

\subsubsection{Setting, cost function, and assumptions (infinite horizon)}

The infinite-horizon cost removes the terminal penalty $\ip{Z_T}{EX(T)E^*}$
from the finite-horizon cost~\eqref{eq:IMP:LQ:cost}, since no fixed terminal
time is imposed.  The resulting integral over $[t_0,\infty)$ is
\begin{equation}\label{eq:IMP:LQ:Jinf}
  J_\infty(X,X_1^0)
  :=\int_{t_0}^\infty\ip{Z(t)}{X(t)}\dt
  +\sum_{k=1}^\infty\ip{Z_k}{X(t_k^-)},
\end{equation}
with problem 
\begin{equation}
  \min_{X(\cdot)\in\Snmpsd}J_\infty(X,X_1^0)\ \textnormal{subject to }\eqref{eq:sys:IMP} \textnormal{ and } EX(t_0)E^*=X_1^0,
\end{equation}
and optimal cost
$$J_\infty^\star(X_1^0):=\inf_{X(\cdot)\in\Snmpsd}J_\infty(X,X_1^0).$$
As in Section~\ref{sec:OC:fh}, the decision variable is the full
trajectory $X(\cdot)$ with $X_1$ constrained and $(X_2,X_3)$ free.\\

Well-posedness of the infinite-horizon problem requires three
conditions on the system and cost operators.

\medskip\noindent\textbf{(H1) Uniform stabilizability.}
There exists an admissible trajectory $X_s$ with free variables
$X_2^s=K_sX_1^s$, $X_3^s=K_sX_1^sK_s^*$, constants
$\beta\ge1$, $\alpha>0$, $\rho\in(0,1)$, $C_s>0$, such that
$\norm{EX_s(t)E^*}\le\beta e^{-\alpha(t-s)}\rho^{\kappa(t,s)}\norm{EX_s(s)E^*}$
for all $t\ge s\ge t_0$ and
$J_\infty(X_s,X_1^0)\le C_s\norm{X_1^0}$, where $\kappa(t,s)$ denotes the number of jumps in the interval $(s,t)$.

\medskip\noindent\textbf{(H2) Uniform coercivity.}
$EZ(t)E^*\succeq\delta I$ uniformly for some $\delta>0$.

\medskip\noindent\textbf{(H3) Uniform boundedness.}
$Z$, $Z_k$, $\mathcal{F}(t,\cdot)$, $\mathcal{J}(k,\cdot)$ are
uniformly bounded; $(\mathcal{C}_Z)_3(t,P(t))\succeq0$ and
$(\mathcal{D}_Z)_3(k,P(t_k^+),P(t_k^-))\succeq0$ for all $P\succeq0$;
$\pinv{(\mathcal{C}_Z)_3}$ and $\pinv{(\mathcal{D}_Z)_3}$ are
uniformly bounded on bounded sets of $P$.\\

The key differences from the finite-horizon case
(Section~\ref{sec:OC:fh}) are as follows.
In the finite-horizon case, the I-GRE is a terminal-value ODE
starting from $P(t_0+T)=Z_T$ and solved backward to $t_0$.
In the infinite-horizon case, there is no terminal condition:
instead, the dual variable $P_\infty$ is characterized as the
bounded monotone limit of the finite-horizon solutions $P_T$
as $T\to\infty$.  The infinite-horizon I-GRE is the equation
satisfied by this limit, which in the LTI case reduces to the
classical algebraic Riccati equation.  Under (H2),
$P_\infty\succeq\alpha_1 I\succ0$ (strict positivity).

\subsubsection{Sufficient condition (infinite horizon)}

The following theorem shows that any bounded positive solution of the
infinite-horizon I-GRE is the unique such solution and directly
provides the optimal cost and the optimal policy.  The proof mirrors
Theorem~\ref{th:IMP:LQ:suff} but with the finite-horizon terminal
correction replaced by a Gronwall argument that forces
$\ip{P_\infty(T)}{EX^\star(T)E^*}\to0$.

\begin{theorem}[Sufficient condition, infinite-horizon]
  \label{th:IMP:LQ:inf:suff}
  Suppose there exists a bounded piecewise-$C^1$ function
  $P_\infty:[t_0,\infty)\to\Snpsd$ satisfying the
  infinite-horizon I-GRE\textup{:} \eqref{eq:IMP:GRE:flow} for almost all $t\ge t_0$ and \eqref{eq:IMP:GRE:jump} at all jump times.\\

  Then\textup{:}
  \begin{enumerate}[\upshape(a)]
    \item\label{inf:suff:val}
      $J_\infty^\star (X_1^0)=\ip{P_\infty(t_0)}{X_1^0}$, attained at
      the trajectory with $K_\infty(t)\in\mathcal{K}_c(t,P_\infty(t))$
      on flow and $K_k\in\mathcal{K}_d(k,P_\infty(t_k^+),P_\infty(t_k^-))$
      at each jump.
    \item\label{inf:suff:uniq}
      $P_\infty$ is the unique bounded solution of the
      infinite-horizon I-GRE.
  \end{enumerate}
\end{theorem}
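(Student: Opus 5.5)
The plan is to apply the key identity of Proposition~\ref{prop:keyid} on the truncated horizon $[t_0,T]$ and then pass to the limit $T\to\infty$. For any admissible $X$ with $EX(t_0)E^*=X_1^0$, take $P=P_\infty$ and $Z_T=0$ in \eqref{eq:IMP:keyid}. Since $Z\succeq0$ and $Z_k\succeq0$, this gives
\[
\int_{t_0}^{T}\ip{Z}{X}\dt+\sum_{t_k<T}\ip{Z_k}{X(t_k^-)}
=\ip{P_\infty(t_0)}{X_1^0}-\ip{P_\infty(T)}{EX(T)E^*}+I_f(T)+I_j(T).
\]
The I-GRE holds for almost all $t$ and at every jump. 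Lemma~\ref{lemma:GREequiv} then gives $\mathcal{C}_Z\succeq0$ and $\mathcal{D}_Z\succeq0$ with vanishing generalized Schur complements. As in the proof of Theorem~\ref{th:IMP:LQ:suff}, Lemma~\ref{lemma:decomp} therefore shows $I_f(T)\ge0$ and $I_j(T)\ge0$.

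\textbf{Lower bound.} If $J_\infty(X,X_1^0)=\infty$ there is nothing to prove, so assume it is finite. By (H2), $\ip{Z}{X}\ge\delta\norm{EXE^*}$, so $\int_{t_0}^\infty\norm{EX(t)E^*}\dt<\infty$. Along a sequence $T_j\to\infty$ this forces $\norm{EX(T_j)E^*}\to0$. Boundedness of $P_\infty$ and Proposition~\ref{prop:PQ}\eqref{st:bound} then give $\ip{P_\infty(T_j)}{EX(T_j)E^*}\to0$. Dropping $I_f,I_j\ge0$ and letting $j\to\infty$ with monotone convergence on the left yields $J_\infty(X,X_1^0)\ge\ip{P_\infty(t_0)}{X_1^0}$.

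\textbf{Attainment.} Choose $K_\infty(t)\in\mathcal{K}_c$ and $K_k\in\mathcal{K}_d$. Exactly as in the attainment step of Theorem~\ref{th:IMP:LQ:suff}, the integrand and summands of $I_f$ and $I_j$ vanish along $X^\star$. This gives, for every $T$,
\[
J_{[t_0,T]}(X^\star)+\ip{P_\infty(T)}{EX^\star(T)E^*}=\ip{P_\infty(t_0)}{X_1^0}.
\]
Both terms on the left are nonnegative, so the partial costs are bounded by $\ip{P_\infty(t_0)}{X_1^0}$. Hence $J_\infty(X^\star,X_1^0)<\infty$. By the coercivity argument above, $\ip{P_\infty(T_j)}{EX^\star(T_j)E^*}\to0$ along a subsequence. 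Since the partial cost is monotone in $T$, its limit is therefore exactly $\ip{P_\infty(t_0)}{X_1^0}$. This proves \eqref{inf:suff:val}.

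This step is the one I expect to be the main obstacle. One must make sure the closed loop generated by gains in $\mathcal{K}_c$, which may be unbounded when $(\mathcal{C}_Z)_3$ is singular, gives a well-defined admissible trajectory. If needed I would use the minimal-norm choice $F=0$, which is bounded by (H3). An alternative route to the vanishing terminal term is a Gronwall-type decay estimate: from the identity, $\tfrac{d}{dt}\ip{P_\infty}{EX^\star E^*}\le-\delta\norm{EX^\star E^*}$ on flows and nonincreasing at jumps. Combined with $\ip{P_\infty}{EX^\star E^*}\le\sup\|P_\infty\|\,\norm{EX^\star E^*}$, this gives exponential decay, provided $P_\infty\succ0$, which I would try to derive from (H2) and the identity itself.

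\textbf{Uniqueness.} Let $\tilde P$ be another bounded solution. Part \eqref{inf:suff:val} applies verbatim to $\tilde P$, so $\ip{\tilde P(t_0)}{X_1^0}=J_\infty^\star(X_1^0)=\ip{P_\infty(t_0)}{X_1^0}$ for all $X_1^0\succeq0$. Testing with $X_1^0=vv^*$ gives $\tilde P(t_0)=P_\infty(t_0)$. Repeating the argument with the initial time replaced by any $s\ge t_0$ (restricting both solutions to $[s,\infty)$) gives $\tilde P\equiv P_\infty$.
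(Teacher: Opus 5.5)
Your proof is correct, and it differs from the paper mainly in the attainment step.

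\textbf{Lower bound.} You use the same strategy as the paper: key identity on $[t_0,T]$, discard the nonnegative flow and jump terms, and kill the terminal term via (H2) and boundedness of $P_\infty$. You treat the terminal term more rigorously, though. Integrability of $\norm{EX(t)E^*}$ only gives $\liminf_{t\to\infty}\norm{EX(t)E^*}=0$, so your subsequence $T_j$ combined with monotonicity of the partial cost is what is actually needed. The paper simply asserts that finite cost forces $\norm{EX(t)E^*}\to0$.

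\textbf{Attainment.} The paper differentiates $\ip{P_\infty(t)}{EX^\star(t)E^*}$ along the optimal flow. It then uses $\ip{Z}{X^\star}\ge(\delta/\alpha_2)\ip{P_\infty}{EX^\star E^*}$ and Gronwall to obtain exponential decay on flows, and claims a contraction factor $1-\delta/\alpha_2$ at jumps. You instead note that the terminal term in the equality form of the identity is nonnegative, so the partial costs of $X^\star$ are bounded by $\ip{P_\infty(t_0)}{X_1^0}$. This gives $J_\infty(X^\star,X_1^0)\le\ip{P_\infty(t_0)}{X_1^0}<\infty$ directly, and your lower bound then closes the argument; the subsequence step is not even needed there.

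\textbf{What each route buys.} Yours is shorter and needs no decay rate. It also sidesteps the paper's jump contraction, which would actually require coercivity of $Z_k$. Only non-increase at jumps follows from $Z_k\succeq0$, though that already suffices for the paper's conclusion. The paper's route buys a quantitative exponential decay of $\ip{P_\infty}{EX^\star E^*}$, i.e., a stability certificate for the optimal closed loop.

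\textbf{Smaller points.}
In your fallback Gronwall argument, $P_\infty\succ0$ is not needed; only $P_\infty\preceq\alpha_2 I$ is, exactly as in the paper.
You and the paper both use (H2) in the form $Z(t)\succeq\delta E^*E$, as the paper states explicitly. This is stronger than the literal $EZ(t)E^*\succeq\delta I$.
Your uniqueness argument coincides with the paper's.
Your concern about well-posedness of the closed loop for possibly unbounded gains in $\mathcal{K}_c$ is legitimate; the paper does not address it.
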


\begin{proof}
\noindent\textbf{Lower bound.}
For any $X$ with $J_\infty(X,X_1^0)<\infty$, apply the key
identity~\eqref{eq:IMP:keyid2} on $[t_0,t_0+T]$ with $P_\infty$
(note that $\mathcal{C}_Z(t,P_\infty)\succeq0$ and
$\mathcal{D}_Z\succeq0$ by the infinite-horizon I-GRE):
\[
  \int_{t_0}^{t_0+T}\ip{Z(t)}{X(t)}\dt
  +\sum_{k:\,t_k\le T}\ip{Z_k}{X(t_k^-)}
  \ge\ip{P_\infty(t_0)}{X_1^0}
  -\ip{P_\infty(T)}{EX(T)E^*}.
\]
Under (H2), finite cost forces $\norm{EX(t)E^*}\to0$; since
$P_\infty\preceq\alpha_2 I$,
$\ip{P_\infty(T)}{EX(T)E^*}\le\alpha_2\norm{EX(T)E^*}\to0$.
Taking $T\to\infty$: $J_\infty(X,X_1^0)\ge\ip{P_\infty(t_0)}{X_1^0}$.\\

\noindent\textbf{Attainment.}
For $K_\infty\in\mathcal{K}_c(t,P_\infty)$ and
$K_k\in\mathcal{K}_d(k,P_\infty(t_k^+),P_\infty(t_k^-))$,
\eqref{pf:suff:flow:zero}--\eqref{pf:suff:jump:zero} hold along
$X^\star$.  The key identity on $[t_0,t_0+T]$ gives
\begin{equation}\label{eq:inter}
  \int_{t_0}^{t_0+T}\ip{Z(t)}{X^\star(t)}\dt
  +\sum_{k:\,t_k\le T}\ip{Z_k}{X^\star(t_k^-)}
  =\ip{P_\infty(t_0)}{X_1^0}
  -\ip{P_\infty(T)}{EX^\star(T)E^*}.
\end{equation}

Differentiating $\ip{P_\infty(t)}{EX^\star(t) E^*}$ along the optimal
flow gives $\tfrac{d}{dt}\ip{P_\infty(t)}{EX^\star(t) E^*}=-\ip{Z(t)}{X^\star(t)}$ over each $(t_k,t_{k+1})$.
Since $\ip{Z(t)}{X^\star(t)}\ge\delta\norm{EX^\star(t) E^*}
\ge\tfrac{\delta}{\alpha_2}\ip{P_\infty(t)}{EX^\star(t) E^*}$
(using (H2), that is $Z(t)\succeq \delta E^*E$, and $P_\infty(t)\preceq\alpha_2 I$), Gronwall gives
$\ip{P_\infty(t)}{EX^\star(t)E^*}\le e^{-\delta (t-t_k)/\alpha_2}
\ip{P_\infty(t_k^+)}{EX^\star(t_k^+)E^*}$.\\

At jumps we have $\ip{P_\infty(t_k^+)}{EX^\star(t_k^+) E^*}-\ip{P_\infty(t_k^-)}{EX^\star(t_k^-) E^*}=-\ip{Z_k}{X^\star(t_k^-)}$, and using a similar argument as before, we get $\ip{P_\infty(t_k^+)}{EX^\star(t_k^+)E^*}\le \rho\ip{P_\infty(t_k^-)}{EX^\star(t_k^-)E^*}$ where $\rho:=1-\delta/\alpha_2<1$. Combining those expression yields
$$\ip{P_\infty(T)}{EX^\star(T)E^*}\le e^{-\delta (T-t_k)/\alpha_2}\rho^{\kappa(T,t_0)}
\ip{P_\infty(t_0)}{X_1^0}\to0\ \textnormal{as }T\to\infty$$

Taking $T\to\infty$ in \eqref{eq:inter} yields $J_\infty(X^\star,X_1^0)=\ip{P_\infty(t_0)}{X_1^0}=J_\infty^\star (X_1^0)$.\\

\noindent\textbf{Uniqueness.}
If $\tilde P$ is another bounded solution, the lower-bound and
attainment arguments give $J_\infty^\star (X_1^0)=\ip{\tilde P(t_0)}{X_1^0}$.
Since also $J_\infty^\star =\ip{P_\infty(t_0)}{X_1^0}$, taking
$X_1^0=vv^*$ for all $v$ gives $P_\infty\equiv\tilde P$.
\end{proof}

\subsubsection{Necessary condition and existence (infinite horizon)}

Under assumptions (H1)--(H3), the existence of a bounded solution
to the infinite-horizon I-GRE is guaranteed by a monotone convergence
argument: the finite-horizon solutions $P_T$ form a non-decreasing
bounded sequence and their pointwise limit satisfies the I-GRE.
This mirrors the role of Theorem~\ref{th:IMP:LQ:nec} at the
infinite-horizon level.

\begin{theorem}[Necessary condition and existence, infinite-horizon]
  \label{th:IMP:LQ:inf}
  Under \textup{(H1)--(H3)}, there exist $\alpha_1,\alpha_2>0$ and
  a unique bounded piecewise-$C^1$ solution
  $P_\infty:[t_0,\infty)\to\Snpsd$,
  $\alpha_1 I\preceq P_\infty\preceq\alpha_2 I$, of the
  infinite-horizon I-GRE.  It satisfies
  $P_\infty=\lim_{T\to\infty}P_T$ (monotone: $P_{T'}\succeq P_T$
  for $T'>T$), where $P_T$ solves the finite-horizon I-GRE with
  $P_T(t_0+T)=0$, and the infinite-horizon optimal cost is
  $J_\infty^\star (X_1^0)=\ip{P_\infty(t_0)}{X_1^0}$.
\end{theorem}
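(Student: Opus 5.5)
The plan is the classical monotone-limit construction. For each $T>0$, let $P_T$ be the finite-horizon I-GRE solution with $P_T(t_0+T)=0$. I would obtain it from Corollary~\ref{cor:IMP:LQ:equiv}: under (H2)--(H3) the horizon-$T$ problem with $Z_T=0$ is well-posed with an attained optimum. By Theorem~\ref{th:IMP:LQ:suff}, $\ip{P_T(s)}{Q}=J^\star_{[s,t_0+T]}(Q)$ for every initial time $s$ and every $Q\succeq0$; here I use that the dynamic-programming construction in Theorem~\ref{th:IMP:LQ:nec} defines $P_T(s)$ at every $s$.

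The first step is monotonicity and bounds. For $T'>T$, restrict any admissible trajectory on $[s,t_0+T']$ to $[s,t_0+T]$. Since $Z\succeq0$ and $Z_k\succeq0$, the restricted cost is no larger, so $J^\star_{T}\le J^\star_{T'}$, and Proposition~\ref{prop:PQ}\eqref{st:neg} gives $P_{T'}(s)\succeq P_T(s)$. For the upper bound, (H1) applied from initial time $s$ (uniformity in $s$) gives an admissible trajectory of cost at most $C_s\norm{Q}$. Hence $\ip{P_T(s)}{Q}\le C_s\tr(Q)$, and therefore $P_T(s)\preceq C_sI=:\alpha_2 I$ uniformly in $T$ and $s$.

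For the lower bound I would use (H2) together with uniform boundedness of $\mathcal{F}$ (H3). On a short window $[s,s+h]$ that contains no jump, or whose jumps are bounded via the bound on $\mathcal{J}$, the state block satisfies $EX(t)E^*\succeq c\,Q$-like bounds, because $X_1$ cannot decay faster than $e^{-\|\mathcal F\|(t-s)}$ in trace. Then $\int_s^{s+h}\ip{Z}{X}\ge\delta c h\,\tr Q$, which gives $P_T\succeq\alpha_1 I$ for all $T\ge h$. I expect this step to need care near jumps: the jump map could collapse $X_1$. I would first try choosing $h$ less than the distance to the next jump when possible, and otherwise bound using the flow portion before $t_k$.

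The second step is passing to the limit. A monotone, uniformly bounded family of Hermitian matrices converges pointwise, so $P_\infty(s):=\lim_T P_T(s)$ exists and satisfies $\alpha_1I\preceq P_\infty\preceq\alpha_2I$. To show $P_\infty$ solves the I-GRE, I would rewrite the flow equation on each interval $(t_{k-1},t_k)$ in regular form:
\[
\dot P_T=-\big(E\mathcal{F}^*(t,P_T)E^*+EZE^*\big)+(\mathcal C_Z)_2\pinv{(\mathcal C_Z)_3}(\mathcal C_Z)_2^*.
\]
The right-hand side is continuous in $P_T$ (as a function of $P$ alone, since $\dot P$ does not enter blocks 2 and 3) and uniformly bounded on the bounded set $\{\|P\|\le\alpha_2\}$ by (H3). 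Hence $\{P_T\}$ is equi-Lipschitz on each flow interval. By Arzel\`a--Ascoli, the convergence is locally uniform there, and passing to the limit in the integral form of the ODE gives the flow conditions for $P_\infty$. The compatibility and positivity conditions are closed conditions and are preserved in the limit. The main obstacle is the continuity of the pseudoinverse term, since $\pinv{}$ is discontinuous where rank changes. I would handle it by using the compatibility condition and the uniform bound on $\pinv{(\mathcal C_Z)_3}$ from (H3), or by expressing the Schur complement through the minimum $\min_K$ of a quadratic form in $K$, which is upper semicontinuous and monotone. The jump conditions pass to the limit in the same way, since the jump relation is algebraic in $(P(t_k^+),P(t_k^-))$.

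The third step is optimal cost and uniqueness. Once $P_\infty$ is a bounded nonnegative solution, Theorem~\ref{th:IMP:LQ:inf:suff} yields $J_\infty^\star(X_1^0)=\ip{P_\infty(t_0)}{X_1^0}$ and uniqueness among bounded solutions. As a consistency check, $J^\star_T\le J^\star_\infty$ shows the limit is dominated by the infinite-horizon value, while Theorem~\ref{th:IMP:LQ:inf:suff} gives equality.
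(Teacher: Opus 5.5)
Your architecture (restriction monotonicity, the (H1) upper bound, an (H2)-based strict lower bound, equi-Lipschitz estimates with Arzel\`a--Ascoli, then Theorem~\ref{th:IMP:LQ:inf:suff} for the value and uniqueness) is the paper's. However, the strict lower bound does not go through as you justify it.

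The claim that $\tr(X_1)$ ``cannot decay faster than $e^{-\|\mathcal F\|(t-s)}$'' is false for a general admissible trajectory. The operator $\mathcal F(t,X)$ acts on the whole matrix, including the free blocks, so $\tfrac{d}{dt}\tr(X_1)=\tr(\mathcal F(t,X))\ge-\|\mathcal F\|\,(\tr(X_1)+\tr(X_3))$ gives nothing unless $X_3$ is dominated by $X_1$. Since $Z_3$ may be singular, nothing in the problem rules out large free blocks. For example, take $n=m=1$ and $\tfrac{d}{dt}X_1=-2X_1+X_2+X_2^*$ (the second moment of $\dot x=-x+u$). The admissible choice $X_2=-kX_1$, $X_3=k^2X_1$ gives decay rate $2(1+k)$ for every $k$, while $\|\mathcal F\|$ stays fixed.

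The missing idea is that you only need the estimate along one trajectory, the optimal one, because $J_T^\star=J_T(X^\star)$. Along it, $X_2^\star=KX_1^\star$ and $X_3^\star=KX_1^\star K^*$ with $K=-\pinv{(\mathcal C_Z)_3}(\mathcal C_Z)_2^*$. This gain is uniformly bounded by (H3) together with $0\preceq P_T\preceq\alpha_2 I$. Hence $|\tr(\mathcal F(t,X^\star))|\le c\,\tr(X_1^\star)$, and Gronwall gives $\tr(X_1^\star(t))\ge e^{-c(t-s)}\tr(Q)$. This is exactly the paper's step. Note also that $\int\ip{Z}{X}\ge\delta\int\tr(X_1)$ needs (H2) in the form $Z(t)\succeq\delta E^*E$, which is how the paper uses it in Theorem~\ref{th:IMP:LQ:inf:suff}. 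The weaker $EZE^*\succeq\delta I$ does not bound $\ip{Z}{X}$ from below when $Z$ has off-diagonal blocks.

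Your worry about jumps is justified, and neither of your fallbacks gives a constant independent of $s$. Using the flow portion before $t_k$ yields roughly $\delta(t_k-s)\tr(Q)$, which degenerates as $s\uparrow t_k$. In fact no uniform $\alpha_1$ exists in general. If $\mathcal J(k,\cdot)\equiv0$ and $Z_k=0$, then $\mathcal D_Z=-E^*P(t_k^-)E$, and the vanishing Schur complement forces $P_T(t_k^-)=0$ for every $T$. The paper's proof does not address this either; its Gronwall bound is asserted on $[t_0,t_0+h]$ regardless of jumps. So the $\alpha_1$ part needs an extra hypothesis, e.g.\ a coercivity condition on $Z_k$.

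On the other hand, your treatment of the limit passage is sounder than the paper's. A uniform bound on $\pinv{(\mathcal C_Z)_3}$ over bounded sets of $P$, as in (H3), keeps the nonzero eigenvalues of $(\mathcal C_Z)_3(t,P_T(t))$ above a fixed threshold. The rank therefore stabilizes along convergent sequences and the pseudoinverse converges. As a result, the flow and jump conditions, including compatibility, do pass to the limit.
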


\begin{proof}
\noindent\textbf{Monotone limit.}
Corollary~\ref{cor:IMP:LQ:equiv} gives a unique $P_T\in\Snpsd$ for
each $T>0$.\\

\emph{Monotonicity.}
For $T'>T$, restricting any $T'$-admissible $X$ to $[t_0,t_0+T]$
gives a $T$-admissible trajectory with no larger cost (since the
additional terms on $(T,T']$ are non-negative).  Hence
$J_T^\star (X_1^0)\le J_{T'}^*(X_1^0)$ and $P_T(t)\preceq P_{T'}(t)$.\\

\emph{Bounds.}
Lower: $P_T\succeq0$ since $Z,Z_k\succeq0$.
Upper: (H1) gives $P_T(t_0)\preceq C_s I=:\alpha_2 I$ uniformly.
Strict lower: by (H3) the generator $\mathcal{F}$ and the optimal
gain $K(t)$ are uniformly bounded, so along any optimal closed-loop
trajectory $\tfrac{d}{dt}\tr(X_1(t))=\tr(\mathcal{F}(t,X^\star(t)))\ge
-c\,\tr(X_1(t))$ for some $c>0$, whence $\tr(X_1(t))\ge
e^{-c(t-t_0)}\tr(X_1^0)$ (a Gronwall lower bound).  By (H2),
$Q(t)=EZ(t)E^*\succeq\delta I$, so for any fixed $h>0$ with $t_0+h\le t_0+T$,
\[
  \ip{P_T(t_0)}{X_1^0}=J_T^\star(X_1^0)
  \ge\int_{t_0}^{t_0+h}\!\!\ip{Q(t)}{X_1(t)}\dt
  \ge\delta\!\int_{t_0}^{t_0+h}\!\!\tr(X_1(t))\dt
  \ge\delta\,\tfrac{1-e^{-ch}}{c}\,\tr(X_1^0).
\]
Taking $X_1^0=vv^*$ and $\alpha_1:=\delta(1-e^{-ch})/c>0$ gives
$P_T(t_0)\succeq\alpha_1 I$ for all $T\ge h$; the same argument from each
$t$ gives $P_T(t)\succeq\alpha_1 I$.\\

\emph{Convergence.}
$\{P_T(t)\}$ is bounded and non-decreasing; its pointwise limit
$P_\infty$ exists.  Equicontinuity\footnote{A family of functions is
\emph{equicontinuous} when a single modulus of continuity serves all of
its members simultaneously; here it follows from a uniform Lipschitz
bound. On each flow interval the I-GRE flow
conditions~\eqref{eq:IMP:GRE:flow} determine $E^*\dot{P}_T(t)E$ as a
fixed continuous function of $P_T(t)$ and the flow data; by the uniform
boundedness~(H3) of the data and of the pseudoinverses on bounded sets,
together with the uniform bound $0\preceq P_T\preceq\alpha_2 I$, there is
a constant $L$ (independent of $T$) such that $\|\dot{P}_T(t)\|\le L$ for
all $T$ and almost every $t$. Hence every $P_T$ is $L$-Lipschitz, so the
family $\{P_T\}$ is equicontinuous.} from the I-GRE and (H3), plus
Arzel\`a-Ascoli, give uniform convergence on compact subintervals.
Passing $T\to\infty$ in the integrated I-GRE shows $P_\infty$
satisfies the infinite-horizon I-GRE.\\

\noindent\textbf{Optimality.}
$P_\infty$ is bounded and satisfies the infinite-horizon I-GRE, so
Theorem~\ref{th:IMP:LQ:inf:suff} gives
$J_\infty^\star (X_1^0)=\ip{P_\infty(t_0)}{X_1^0}$.
\end{proof}

\subsubsection{Equivalence and dual LMI (infinite horizon)}

Theorems~\ref{th:IMP:LQ:inf:suff} and~\ref{th:IMP:LQ:inf} together
give the complete characterization.  The dual LMI extends the
finite-horizon version (Theorem~\ref{th:IMP:LQ:LMI}) by replacing
the terminal constraint $P(T)\preceq Z_T$ with a boundedness condition.

\begin{corollary}[Equivalence, infinite-horizon]
  \label{cor:IMP:LQ:inf:equiv}
  The infinite-horizon problem is well-posed with
  $J_\infty^\star (X_1^0)=\ip{P_\infty(t_0)}{X_1^0}$ for all
  $X_1^0\succeq0$ if and only if the infinite-horizon I-GRE has a
  bounded piecewise-$C^1$ solution $P_\infty\succeq0$.
  The solution is unique.
\end{corollary}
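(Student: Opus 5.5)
The plan is to prove the two implications separately, each by reduction to a result already established, and then read off uniqueness.

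\emph{Sufficiency.} Suppose the infinite-horizon I-GRE has a bounded piecewise-$C^1$ solution $P_\infty\succeq0$. Theorem~\ref{th:IMP:LQ:inf:suff} applies verbatim and gives the following:
\begin{itemize}
\item the lower bound $J_\infty(X,X_1^0)\ge\ip{P_\infty(t_0)}{X_1^0}$ for every admissible $X$ of finite cost;
\item attainment of this bound by the feedback $K_\infty\in\mathcal{K}_c(t,P_\infty(t))$, $K_k\in\mathcal{K}_d(k,P_\infty(t_k^+),P_\infty(t_k^-))$.
\end{itemize}
Hence $J_\infty^\star(X_1^0)=\ip{P_\infty(t_0)}{X_1^0}$, which is finite for every $X_1^0\succeq0$, so the problem is well-posed.

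The only point to check is the standing hypothesis behind the Gronwall decay step. That step uses (H2) to get $\ip{Z}{X^\star}\ge\frac{\delta}{\alpha_2}\ip{P_\infty}{EX^\star E^*}$, and boundedness of $P_\infty$ to get some $\alpha_2$ with $P_\infty\preceq\alpha_2 I$. So (H2) is taken as a standing assumption of the subsection, and $\alpha_2$ comes from boundedness of $P_\infty$.

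\emph{Necessity.} Suppose the problem is well-posed with $J_\infty^\star(X_1^0)=\ip{P(t_0)}{X_1^0}$ for some representing $P$. I will work under the standing assumptions (H1)--(H3) and invoke Theorem~\ref{th:IMP:LQ:inf}. It produces a bounded piecewise-$C^1$ solution $P_\infty$ of the infinite-horizon I-GRE with $\alpha_1 I\preceq P_\infty\preceq\alpha_2 I$, in particular $P_\infty\succeq0$. It also gives $J_\infty^\star(X_1^0)=\ip{P_\infty(t_0)}{X_1^0}$.

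To avoid leaning entirely on (H1), I will also argue directly from well-posedness. The functional $X_1^0\mapsto J_\infty^\star(X_1^0)$ is finite. It is linear by the homogeneity/additivity argument of Theorem~\ref{th:IMP:LQ:nec}\eqref{nec:lin}, now applied at every initial time $t$. This defines $P(t)$. Sandwiching $J_T^\star\le J_\infty^\star$ gives $P_T(t)\preceq P(t)$, so the monotone family $P_T$ is bounded without (H1). The convergence step of Theorem~\ref{th:IMP:LQ:inf} (equicontinuity via (H3) and Arzel\`a--Ascoli) then yields a bounded limit solving the I-GRE.

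\emph{Uniqueness.} Uniqueness is Theorem~\ref{th:IMP:LQ:inf:suff}\eqref{inf:suff:uniq}. Any two bounded solutions both represent $J_\infty^\star$. Testing with $X_1^0=vv^*$ and varying the initial time then forces equality.

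The step I expect to be the main obstacle is the necessity direction without (H1). It needs the representing functional $P(t)$ to be finite and bounded uniformly in $t$, so that the monotone limit is \emph{bounded} rather than merely finite pointwise. I would first get this from well-posedness at each initial time together with the linearity argument. If uniformity in $t$ fails, I would fall back on the equivalence stated under (H1)--(H3), which is how the paper frames the corollary.
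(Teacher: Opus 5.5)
Your proposal follows the paper's proof: sufficiency and uniqueness come from Theorem~\ref{th:IMP:LQ:inf:suff}, and necessity comes from Theorem~\ref{th:IMP:LQ:inf} under the standing assumptions (H1)--(H3); the paper's necessity direction likewise uses only these assumptions and never the well-posedness hypothesis itself. Your supplementary ``direct'' route is not needed, and as you suspect it only gives the pointwise bound $P_T(t)\preceq P(t)$, whereas the equicontinuity step and the boundedness hypothesis of Theorem~\ref{th:IMP:LQ:inf:suff} both need a bound uniform in $t$, which is exactly what (H1) supplies.
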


\begin{proof}
  Necessity: Theorem~\ref{th:IMP:LQ:inf}.
  Sufficiency: Theorem~\ref{th:IMP:LQ:inf:suff}.
  Uniqueness: Theorem~\ref{th:IMP:LQ:inf:suff}\eqref{inf:suff:uniq}.
\end{proof}

\begin{theorem}[Dual LMI, infinite-horizon]\label{th:IMP:LQ:inf:LMI}
  Under \textup{(H1)--(H3)},
  \begin{equation}\label{eq:IMP:LQ:LMI:inf}
    J_\infty^\star (X_1^0)
    =\sup_P\;\ip{P(t_0)}{X_1^0}
    \quad\text{s.t.}\quad
    \mathcal{C}_Z(t,P(t))\succeq0,\;
    \mathcal{D}_Z(k,P(t_k^+),P(t_k^-))\succeq0,\;P\text{ bounded};
  \end{equation}
  the supremum is attained at $P_\infty$.
\end{theorem}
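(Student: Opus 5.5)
The plan mirrors the proof of Theorem~\ref{th:IMP:LQ:LMI}, with the terminal correction term now handled by a limiting argument instead of the constraint $P(t_0+T)\preceq Z_T$. The argument has two steps: show that every feasible $P$ gives a lower bound on $J_\infty^\star(X_1^0)$, and then show that $P_\infty$ is feasible and attains this bound.

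\textbf{Step 1: weak duality.} Let $P$ be bounded, say $\|P(t)\|\le c_P$, and satisfy $\mathcal{C}_Z\succeq0$ and $\mathcal{D}_Z\succeq0$. Take any admissible $X$. If $J_\infty(X,X_1^0)=\infty$ there is nothing to prove, so assume the cost is finite.
\begin{itemize}
\item Apply the key identity~\eqref{eq:IMP:keyid} on $[t_0,t_0+T]$ with $Z_T=0$. The flow integral and the jump sum are non-negative, which gives
\[
\int_{t_0}^{t_0+T}\ip{Z}{X}\dt+\sum_{t_k\le t_0+T}\ip{Z_k}{X(t_k^-)}\ \ge\ \ip{P(t_0)}{X_1^0}-\ip{P(t_0+T)}{EX(t_0+T)E^*}.
\]
\item By (H2), finite cost implies $\int_{t_0}^\infty\norm{EX(t)E^*}\dt<\infty$.
\item From this we cannot conclude that $\norm{EX(t)E^*}\to0$ pointwise. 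We can, however, pick a sequence $T_j\to\infty$ along which $\norm{EX(t_0+T_j)E^*}\to0$.
\item On this sequence $|\ip{P(t_0+T_j)}{EX(t_0+T_j)E^*}|\le c_P\norm{EX(t_0+T_j)E^*}\to0$. Note that $P$ need not be positive semidefinite here; only boundedness and Proposition~\ref{prop:PQ}\eqref{st:bound}-type estimates on $|\ip{P}{Q}|$ are used.
\item The left-hand side is monotone in $T$ because $Z,Z_k\succeq0$. Passing to the limit along $T_j$ therefore gives $J_\infty(X,X_1^0)\ge\ip{P(t_0)}{X_1^0}$.
\end{itemize}
Taking the infimum over $X$ shows that the supremum is at most $J_\infty^\star(X_1^0)$.

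\textbf{Step 2: attainment.} Under (H1)--(H3), Theorem~\ref{th:IMP:LQ:inf} provides $P_\infty$ with $\alpha_1I\preceq P_\infty\preceq\alpha_2I$; in particular $P_\infty$ is bounded. It solves the infinite-horizon I-GRE, so by Lemma~\ref{lemma:GREequiv} we have $\mathcal{C}_Z(t,P_\infty(t))\succeq0$ a.e.\ and $\mathcal{D}_Z\succeq0$ at every jump. Hence $P_\infty$ is feasible. Theorem~\ref{th:IMP:LQ:inf} (equivalently Theorem~\ref{th:IMP:LQ:inf:suff}) then gives $\ip{P_\infty(t_0)}{X_1^0}=J_\infty^\star(X_1^0)$. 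Combined with Step~1, the supremum equals $J_\infty^\star(X_1^0)$ and is attained at $P_\infty$.

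\textbf{Main obstacle.} The delicate point is the vanishing of the tail term $\ip{P(t_0+T)}{EX(t_0+T)E^*}$ in Step~1 for an arbitrary finite-cost trajectory. Integrability of $\norm{EX(t)E^*}$ alone does not give pointwise decay, which is why the plan uses a subsequence together with monotonicity of the partial costs.

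There is a subtlety if jumps cluster in the sense that $t_0+T_j$ falls at a jump time. This is avoided by choosing each $T_j$ off the jump set, which is possible since $t_k\to\infty$ gives only finitely many jumps in any bounded interval.

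If one prefers the paper's stronger claim that $\norm{EX(t)E^*}\to0$, that claim would need a uniform Lipschitz bound on $X_1$, and free inputs do not supply one. The subsequence route avoids this entirely.
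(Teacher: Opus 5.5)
Your proof is correct and follows the same route as the paper: weak duality comes from the key identity~\eqref{eq:IMP:keyid} together with the LMI constraints, and feasibility and optimality of $P_\infty$ come from Theorem~\ref{th:IMP:LQ:inf}. The only difference is the tail term, where the paper simply asserts $\norm{EX(T)E^*}\to0$ ``from (H2)'' while your subsequence-plus-monotonicity argument actually justifies the limit; note also that both you and the paper need (H2) in the stronger form $Z(t)\succeq\delta E^*E$ (the form used in the proof of Theorem~\ref{th:IMP:LQ:inf:suff}) to get $\int_{t_0}^\infty\norm{EX(t)E^*}\dt<\infty$ from finite cost, because $EZ(t)E^*\succeq\delta I$ alone does not give $\ip{Z(t)}{X}\ge\delta\norm{EXE^*}$.
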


\begin{proof}
For any bounded feasible $P$, the key identity on $[t_0,t_0+T]$
and the LMI constraints give
$J_T(X,X_1^0)\ge\ip{P(t_0)}{X_1^0}-\ip{P(T)}{EX(T)E^*}$.
As $P$ is bounded and $\norm{EX(T)E^*}\to0$ (from (H2)),
taking $T\to\infty$ gives $J_\infty^\star \ge\ip{P(t_0)}{X_1^0}$.
Since $P_\infty$ is feasible and $J_\infty^\star =\ip{P_\infty(t_0)}{X_1^0}$,
the supremum equals $J_\infty^\star $ and is attained at $P_\infty$.
\end{proof}

\subsubsection{Dwell-time and causal conditions}
\label{sec:OC:dwell}

Both the finite-horizon I-GRE (Section~\ref{sec:OC:fh}) and the
infinite-horizon I-GRE (Section~\ref{sec:OC:inf}) are solved
\emph{backward} in time: the dual variable $P$ is computed from a
terminal condition and propagated toward the initial time.  Evaluating
the optimal gain $K(t)\in\mathcal{K}_c(t,P(t))$ at any given time $t$
therefore requires knowing $P(t)$, which in turn depends on the next
jump time $t_{k+1}$ through the backward GRE's terminal condition at
$t_{k+1}^-$.  Since $t_{k+1}$ is not known in advance in most
applications, these optimal policies are non-causal and cannot be
implemented online.

This section derives \emph{causal} policies by restricting to
specific dwell-time scenarios.  Section~\ref{sec:OC:dwell:periodic}
treats periodic impulses (fixed period $T_p$): the periodic I-GRE
(Theorem~\ref{th:OC:periodic}) gives a causal policy that is also
necessary and sufficient.  Theorem~\ref{th:OC:MDT} treats a
minimum dwell-time constraint $\Delta_k\ge T_{\min}$: the
Geromel--Colaneri stability trick (Theorem~\ref{th:OC:MDT}) reduces
the infinite-horizon verification to an LMI on the bounded interval
$[0,T_{\min}]$ plus a single frozen condition at $T_{\min}$.
Section~\ref{sec:OC:dwell:RDT} treats a range dwell-time
$T_{\min}\le\Delta_k\le T_{\max}$: Theorem~\ref{th:OC:RDT} gives a
policy verifiable on $[0,T_{\max}]$ without the stability condition.
In all three cases, the key tool is the \emph{forward Riccati ODE}
(equation~\eqref{eq:forwardRiccati}), which propagates $P$ forward
in the timer $\tau=t-t_k$ from the post-jump value.

\paragraph{Non-causality of the I-GRE and the timer variable}

The I-GRE is a terminal-value problem: $P$ is computed backward from
$P(t_0+T)=Z_T$ toward the initial time $t_0$.
On the inter-jump interval $(t_k, t_{k+1})$, the flow GRE requires
knowing the value $P(t_{k+1}^-)$, which depends on the jump
condition at $t_{k+1}$.  Since the next jump time $t_{k+1}$ is not
known before it occurs, the optimal policy is \emph{non-causal}.

\medskip
\noindent\textbf{Timer variable.}
Introduce the \emph{timer}
\begin{equation}\label{eq:timer}
  \tau(t):=t-t_k,\quad t\in[t_k,t_{k+1}),
\end{equation}
satisfying $\dot\tau=1$ and resetting to $0$ after each jump.
A policy depending only on $X_1$ and $\tau$ is \emph{causal}.
Set $\tilde{P}(\tau):=P(t_k+\tau)$ for $\tau\in[0,\Delta_k)$,
$\Delta_k:=t_{k+1}-t_k$.  The I-GRE flow condition
in terms of $\tau$ reads (with a slight abuse of notation)
\begin{equation}\label{eq:tauGRE}
  \mathcal{C}_Z\!\left(\tau,\tilde{P}(\tau)\right)\succeq0,
  \qquad
  \mathcal{C}_Z\!\left(\tau,\tilde{P}\right)
  \,\schur{/}\,
  \left(\mathcal{C}_Z\right)_3\!\left(\tau,\tilde{P}\right)=0,
\end{equation}
where $\mathcal{C}_Z$ uses $t=t_k+\tau$.

\medskip
\noindent\textbf{Forward-in-time reformulation.}
The Schur complement condition in~\eqref{eq:tauGRE} expresses
$\frac{d\tilde{P}}{d\tau}$ as a closed-form Riccati-type ODE:
with $(\mathcal{C}_Z)_3(\tau,\tilde P)\succ0$ (regular case), the
GRE equality $\mathcal{C}_Z\,\schur{/}\,\allowbreak(\mathcal{C}_Z)_3=0$
gives
\begin{equation}\label{eq:forwardRiccati}
  E^*\tfrac{d\tilde{P}}{d\tau}E
  =-\mathcal{F}^*(\tau,\tilde{P})-Z(\tau)
  +(\mathcal{C}_Z)_2(\mathcal{C}_Z)_3^{-1}(\mathcal{C}_Z)_2^*.
\end{equation}
This is a \emph{forward} ODE: given the post-jump value
$\tilde{P}(0^+)$ set by the jump GRE, equation~\eqref{eq:forwardRiccati}
propagates $\tilde{P}(\tau)$ forward in $\tau$ without requiring
knowledge of the future dwell-time $\Delta_k$.  Along its solution,
$\mathcal{C}_Z=0$ by construction, so the GRE condition~\eqref{eq:tauGRE}
is automatically satisfied and the policy
$K(\tau)\in\mathcal{K}_c(\tau,\tilde{P}(\tau))$ is optimal within
each flow interval.

The remaining difficulty is the \emph{initial condition}
$\tilde{P}(0^+)$, which comes from the jump GRE:
$\mathcal{D}_Z(k,\tilde{P}(0^+),\tilde{P}(\Delta_k^-))\succeq0$
and its Schur complement equals zero.  This condition links
$\tilde{P}(0^+)$ to $\tilde{P}(\Delta_k^-)$, the value at the
\emph{next} jump, which is again unknown.  The three special cases
below show how to obtain fully causal policies under specific
dwell-time constraints.

\paragraph{Periodic impulses}
\label{sec:OC:dwell:periodic}

Assume $\Delta_k\equiv T_p$ (constant period), that $\mathcal{F}(\cdot,X)$
and $Z(\cdot)$ are $T_p$-periodic functions of $t$, and that
$\mathcal{J}(k,\cdot)$ and $Z_k$ are independent of~$k$, i.e.,  $\mathcal{J}(k,\cdot)=\mathcal{J}(\cdot)$, $Z$, $Z_k\equiv Z_0$ (equivalently,
$T_p$-periodic in the sense that the same jump operator and jump cost
apply at every $t_k$).
Under these assumptions, the operators
\begin{equation*}
  \mathcal{D}_Z(P^+,P^-):=\mathcal{J}^*(P^+)-E^*P^-E+Z_0,
  \qquad
  \mathcal{K}_d(P^+,P^-)
  :=\{K\mid(\mathcal{D}_Z)_3 K^*+(\mathcal{D}_Z)_2^*=0\}
\end{equation*}
are also independent of~$k$, and we drop the $k$ argument throughout
this section.

The I-GRE on each period $[t_k,t_{k+1}]$ takes the same form, and we
seek a $T_p$-periodic solution $P_\infty$ of the
\emph{periodic I-GRE}: a piecewise-$C^1$ $P_\infty:[0,T_p]\to\Snpsd$
satisfying
\begin{subequations}\label{eq:periodicGRE}
\begin{align}
  &\textit{Flow on }(0,T_p):\quad
   \mathcal{C}_Z\!\left(\tau,P_\infty(\tau)\right)\succeq0,\quad
   \mathcal{C}_Z\,\widetilde{/}\,(\mathcal{C}_Z)_3=0,
   \label{eq:periodicGRE:flow}\\
  &\textit{Jump at }\tau=0:\quad
   \mathcal{D}_Z\!\left(P_\infty(0^+),P_\infty(T_p^-)\right)\succeq0,\quad
   \mathcal{D}_Z\,\widetilde{/}\,(\mathcal{D}_Z)_3=0,
   \label{eq:periodicGRE:jump}\\
  &\textit{Periodicity:}\quad
   P_\infty(0^+)\text{~determined by the jump GRE from }
   P_\infty(T_p^-).
   \label{eq:periodicGRE:period}
\end{align}
\end{subequations}
%
The system~\eqref{eq:periodicGRE} is equivalent to the fixed-point
equation $\Phi(P_\infty(T_p^-))=P_\infty(T_p^-)$, where the
\emph{one-period map} $\Phi:\Snpsd\to\Snpsd$ is constructed as
follows: given any $Q\in\Snpsd$, define $\Phi(Q)$ by
\begin{enumerate}[\upshape(i)]
  \item applying the jump GRE~\eqref{eq:periodicGRE:jump} with
    pre-jump value $Q$ to obtain $\tilde{P}(0^+)\in\Snpsd$;
  \item integrating the forward Riccati ODE~\eqref{eq:forwardRiccati}
    on $[0,T_p]$ with initial condition $\tilde{P}(0^+)$, and
    setting $\Phi(Q):=\tilde{P}(T_p^-)$.
\end{enumerate}

\begin{lemma}[Monotone convergence of the multi-period I-GRE]
  \label{lemma:periodic:mono}
  Under the conditions of Theorem~\ref{th:OC:periodic},
  let $P_n$ denote the I-GRE solution over exactly $n$ periods
  with zero terminal cost $P_n(t_0+nT_p)=0$.  Define the
  \emph{one-period map}
  \begin{equation}\label{eq:onePeriodMap}
    \Phi:\Snpsd\to\Snpsd,\qquad
    \Phi(Q)\;:=\;\tilde{P}(T_p^-)
  \end{equation}
  where $\tilde{P}(0^+)$ is obtained from $Q$ via the jump
  GRE~\eqref{eq:periodicGRE:jump} and $\tilde{P}(T_p^-)$ is
  the endpoint of the forward Riccati ODE~\eqref{eq:forwardRiccati}
  integrated on $[0,T_p]$ from $\tilde{P}(0^+)$.
  Then:
  \begin{enumerate}[\upshape(a)]
    \item\label{mono:nondec}
      The iterates $\Phi^n(0):=P_n(0^-)$ form a non-decreasing
      sequence: $\Phi^{n+1}(0)\succeq\Phi^n(0)$ for all $n\ge1$.
    \item\label{mono:bound}
      The iterates are bounded: $\Phi^n(0)\preceq\alpha_2 I$
      for all $n$, with $\alpha_2=C_s$ from \textup{(H1)}.
    \item\label{mono:fixed}
      The limit $Q_\infty:=\lim_{n\to\infty}\Phi^n(0)$ exists,
      satisfies $\alpha_1 I\preceq Q_\infty\preceq\alpha_2 I$,
      and is the unique fixed point of $\Phi$ in $[\alpha_1I,\alpha_2I]$.
  \end{enumerate}
\end{lemma}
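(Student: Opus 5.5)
The plan is to read every iterate $\Phi^n(0)$ as a finite-horizon value function, and then get each of (a)--(c) from results already proved for the I-GRE. By the periodicity assumptions, shifting time by $T_p$ maps the $n$-period problem onto itself. Together with the uniqueness in Theorem~\ref{th:IMP:LQ:suff}\eqref{suff:uniq}, this means the solution $P_{n+1}$ restricted to the last $n$ periods coincides, after the shift, with $P_n$. Hence $P_{n+1}$ at a period boundary is obtained from $P_n$ at the corresponding boundary by one application of the one-period map, and $\Phi^n(0)=P_n(0^-)$ holds by induction on $n$.

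One caveat must be settled first. The jump GRE naturally determines $P^-$ from $P^+$, and the flow GRE is naturally backward. I would therefore first check that the composition in~\eqref{eq:onePeriodMap} is the same as the backward one-period propagation of the I-GRE, which is well defined and unique by Corollary~\ref{cor:IMP:LQ:equiv}. If the two do not agree as written, I would work throughout with the backward one-period map, which is what $P_n$ actually iterates. With this identification, Theorem~\ref{th:IMP:LQ:suff}\eqref{suff:val} gives $\ip{\Phi^n(0)}{X_1^0}=J^\star_{nT_p}(X_1^0)$ for the problem started at a period boundary.

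\textbf{Part (a).} This repeats the monotonicity argument of Theorem~\ref{th:IMP:LQ:inf}. Any $(n+1)$-period admissible trajectory restricts to an $n$-period admissible one, and the discarded flow and jump costs are nonnegative because $Z,Z_0\succeq0$. So $J^\star_{nT_p}\le J^\star_{(n+1)T_p}$ for every $X_1^0\succeq0$. Testing with $X_1^0=vv^*$ and using Proposition~\ref{prop:PQ} gives $\Phi^n(0)\preceq\Phi^{n+1}(0)$.

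\textbf{Part (b).} Evaluate the stabilizing trajectory $X_s$ of (H1) on $n$ periods. The terminal cost is zero and all running costs are nonnegative, so $J^\star_{nT_p}(X_1^0)\le J_\infty(X_s,X_1^0)\le C_s\norm{X_1^0}$. Taking $X_1^0=vv^*$ yields $\Phi^n(0)\preceq C_sI$.

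\textbf{Part (c).} For each $v$, the scalars $v^*\Phi^n(0)v$ are non-decreasing and bounded, so they converge. Polarization then gives convergence of all entries, so $Q_\infty$ exists and $Q_\infty\preceq\alpha_2I$. The lower bound $Q_\infty\succeq\alpha_1I$ is the strict-lower-bound argument in the proof of Theorem~\ref{th:IMP:LQ:inf} (Gronwall lower bound on $\tr X_1$ plus (H2)); it applies verbatim once $nT_p\ge h$.

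To show $Q_\infty$ is a fixed point, I would prove that $\Phi$ is continuous on bounded subsets of $\Snpsd$. On the flow, (H3) gives uniformly bounded pseudoinverses, so the right-hand side of~\eqref{eq:forwardRiccati} is locally Lipschitz and continuous dependence on initial data applies. At the jump, the same bounds make the Schur-complement map continuous. Then $\Phi(Q_\infty)=\lim\Phi(\Phi^n(0))=\lim\Phi^{n+1}(0)=Q_\infty$.

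For uniqueness, take any fixed point $Q\in[\alpha_1I,\alpha_2I]$ and concatenate the one-period solution periodically. This gives a bounded piecewise-$C^1$ $T_p$-periodic solution of the infinite-horizon I-GRE with values in $\Snpsd$. By Theorem~\ref{th:IMP:LQ:inf:suff}\eqref{inf:suff:uniq} it equals $P_\infty$, and by Theorem~\ref{th:IMP:LQ:inf} we have $P_\infty=\lim P_T$, whose value at period boundaries is $Q_\infty$. Hence $Q=Q_\infty$.

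The main obstacle is the continuity of $\Phi$ and, in the singular case, its well-definedness: the pseudoinverse is discontinuous wherever the rank of $(\mathcal{C}_Z)_3$ or $(\mathcal{D}_Z)_3$ changes. I would first try to lean on (H3), that is, on uniformly bounded pseudoinverses on bounded sets, possibly after restricting to the regular case as in~\eqref{eq:forwardRiccati}. Failing that, I would bypass continuity altogether. In that route, the monotone limit along the multi-period solutions $P_n$ (equicontinuity plus Arzel\`a--Ascoli, as in Theorem~\ref{th:IMP:LQ:inf}) produces a periodic I-GRE solution directly, and its boundary value is then a fixed point by construction.
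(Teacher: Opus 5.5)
Your proposal is correct and follows essentially the same route as the paper. Part (a) uses the restriction argument with nonnegative discarded costs, part (b) uses the (H1) stabilizing trajectory, and part (c) combines monotone convergence, the strict lower bound from (H2), continuity of the one-period map via (H3), and uniqueness through the unique bounded infinite-horizon I-GRE solution. Your caveat is also warranted: the jump and flow GRE naturally propagate backward, so the iterates $P_n$ are really generated by the backward one-period map, a point the paper's proof passes over silently; your fix, working with that map and identifying fixed points with periodic I-GRE solutions, is sound.
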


\begin{proof}
\noindent\textbf{(a) Monotonicity.}
For any $(n+1)T_p$-horizon admissible trajectory $X$, its restriction
to the first $nT_p$ is a valid $nT_p$-horizon trajectory (the remaining
$T_p$ contributes non-negative cost since $Z,Z_k\succeq0$).  Hence
$J_{nT_p}^*(X_1^0)\le J_{(n+1)T_p}^*(X_1^0)$, giving
$\ip{P_n(t_0)}{X_1^0}\le\ip{P_{n+1}(t_0)}{X_1^0}$ for all
$X_1^0\succeq0$, i.e., $P_n(t_0)\preceq P_{n+1}(t_0)$.
Under periodic data, this translates to
$\Phi^n(0)=P_n(0^-)\preceq P_{n+1}(0^-)=\Phi^{n+1}(0)$.\\

\noindent\textbf{(b) Upper bound.}
By (H1), $J_{nT_p}^*(X_1^0)\le C_s\norm{X_1^0}$ for all $n$, so
$\ip{P_n(t_0)}{X_1^0}\le C_s\norm{X_1^0}$.  Taking $X_1^0=vv^*$
for all $v$ gives $P_n(t_0)\preceq C_s I=:\alpha_2 I$.\\

\noindent\textbf{(c) Convergence and uniqueness.}
Parts~\eqref{mono:nondec}--\eqref{mono:bound} give a non-decreasing
sequence $\Phi^n(0)$ bounded in $[\alpha_1 I,\alpha_2 I]$
(strict positivity $P_n\succeq\alpha_1 I$ follows from (H2) as in
Theorem~\ref{th:IMP:LQ:inf}).  By standard monotone convergence for
symmetric matrices, $Q_\infty:=\lim_n\Phi^n(0)$ exists.\\

\emph{Fixed point:} $\Phi$ is continuous on $[\alpha_1 I,\alpha_2 I]$
by the Lipschitz bound on the forward Riccati ODE and the pseudoinverse
continuity in (H3).  Hence
$\Phi(Q_\infty)=\Phi\!\left(\lim_n\Phi^n(0)\right)
=\lim_n\Phi^{n+1}(0)=Q_\infty$.\\

\emph{Uniqueness:} Suppose $\hat Q_\infty$ is another fixed point, i.e.,
another $T_p$-periodic solution of~\eqref{eq:periodicGRE}.
Both $Q_\infty$ and $\hat Q_\infty$ represent infinite-horizon costs:
$\ip{Q_\infty}{X_1^0}=J_\infty^\star (X_1^0)=\ip{\hat Q_\infty}{X_1^0}$
for all $X_1^0\succeq0$ (by Theorem~\ref{th:IMP:LQ:inf}, which identifies
the unique infinite-horizon optimal cost regardless of the periodicity
structure).  Taking $X_1^0=vv^*$ gives $Q_\infty=\hat Q_\infty$.
\end{proof}

\begin{remark}
  Lemma~\ref{lemma:periodic:mono} establishes existence and uniqueness
  via monotone iteration, which does not require the one-period map
  $\Phi$ to be a contraction in operator norm on a single step.
  In fact, $\Phi$ is always a contraction for sufficiently large
  $n_0$: the iterates $\Phi^{n_0}$ are a contraction on $[\alpha_1
  I,\alpha_2 I]$ with constant $L<1$ (the Lipschitz bound is
  $L=O(\rho^{2n_0}e^{-2\alpha n_0T_p})$, accumulating one jump
  factor $\rho$ and one flow factor $e^{-\alpha T_p}$ per period over the
  $n_0$ periods, from the exponential stability rate
  in (H1)), but the single-step map need not satisfy $L<1$.
  See~\cite{Bittanti:09} for the classical periodic Riccati theory
  and~\cite{Briat:22:Matrix} for the stability counterpart.
\end{remark}

\begin{theorem}[Causal conditions: periodic impulses]
  \label{th:OC:periodic}
  Assume $\Delta_k\equiv T_p$, that $\mathcal{F}(\cdot,X)$ and
  $Z(\cdot)$ are $T_p$-periodic, and that $\mathcal{J}(k,\cdot)$ and
  $Z_k$ are independent of~$k$.  Under \textup{(H1)--(H3)}:
  \begin{enumerate}[\upshape(a)]
    \item \emph{(Existence and uniqueness.)}
      There exists a unique $T_p$-periodic piecewise-$C^1$ solution
      $P_\infty(\tau)$ of the periodic I-GRE~\eqref{eq:periodicGRE}.
    \item \emph{(Sufficiency.)}
      The policy $K(\tau)\in\mathcal{K}_c(\tau,P_\infty(\tau))$ on
      flow and $K_k\in\mathcal{K}_d(P_\infty(0^+),P_\infty(T_p^-))$
      at each jump is causal and achieves
      $J_\infty^*(X_1^0)=\ip{P_\infty(0)}{X_1^0}$.
    \item \emph{(Necessity.)}
      Any causal policy that achieves the infinite-horizon optimal
      cost $J_\infty^*$ must use gains from the periodic I-GRE, so
      the conditions are also necessary.
  \end{enumerate}
\end{theorem}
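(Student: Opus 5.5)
The plan is to reduce all three parts to the infinite-horizon results already established, using Lemma~\ref{lemma:periodic:mono} to supply the periodic structure. For part~(a), apply Lemma~\ref{lemma:periodic:mono} to obtain the fixed point $Q_\infty=\lim_n\Phi^n(0)$ of the one-period map. Then define $P_\infty$ on one period in two steps. First set $P_\infty(T_p^-):=Q_\infty$. Next obtain $P_\infty(0^+)$ from the jump GRE~\eqref{eq:periodicGRE:jump} with pre-jump value $Q_\infty$, and propagate it by the forward Riccati ODE~\eqref{eq:forwardRiccati} on $(0,T_p)$. The fixed-point property $\Phi(Q_\infty)=Q_\infty$ says exactly that the endpoint returns to $Q_\infty$, so the periodicity condition~\eqref{eq:periodicGRE:period} holds. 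Extending $T_p$-periodically then gives a bounded piecewise-$C^1$ function on $[t_0,\infty)$. Along the forward ODE the flow GRE~\eqref{eq:periodicGRE:flow} holds by construction, and the jump GRE holds by construction of $P_\infty(0^+)$. Hence the periodic extension solves the infinite-horizon I-GRE. Uniqueness of the periodic solution follows from uniqueness of the fixed point in Lemma~\ref{lemma:periodic:mono}(c). Equivalently, it follows from the uniqueness of bounded solutions in Theorem~\ref{th:IMP:LQ:inf:suff}(b), since any periodic solution is bounded.

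For part~(b), I will invoke Theorem~\ref{th:IMP:LQ:inf:suff} directly on the periodic extension, which is bounded and takes values in $[\alpha_1I,\alpha_2I]$. This gives $J_\infty^\star(X_1^0)=\ip{P_\infty(0)}{X_1^0}$, attained by any gains $K(t)\in\mathcal{K}_c(t,P_\infty(t))$ and $K_k\in\mathcal{K}_d$. Causality holds because $P_\infty(t)=P_\infty(\tau(t))$ depends only on the timer $\tau$. The jump gains depend only on the fixed pair $(P_\infty(0^+),P_\infty(T_p^-))$, so no future jump time is needed. I also need consistency with the monotone limit of Theorem~\ref{th:IMP:LQ:inf}: the limit $P_\infty=\lim P_T$ restricted to multiples of $T_p$ is the same $Q_\infty$. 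This follows from uniqueness in Theorem~\ref{th:IMP:LQ:inf}.

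For part~(c), the plan is to reuse the argument of Theorem~\ref{th:IMP:LQ:nec} on the infinite horizon. Suppose a causal policy attains $J_\infty^\star$. Applying the key identity~\eqref{eq:IMP:keyid2} on $[t_0,t_0+T]$ with $P_\infty$ gives
\[
J_\infty(X,X_1^0)-\ip{P_\infty(t_0)}{X_1^0}
=\int_{t_0}^\infty\ip{\mathcal{C}_Z}{X}\dt+\sum_k\ip{\mathcal{D}_Z}{X(t_k^-)}.
\]
The boundary term vanishes as $T\to\infty$, by (H2) and boundedness of $P_\infty$, exactly as in the lower-bound step of Theorem~\ref{th:IMP:LQ:inf:suff}. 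Both sums on the right are nonnegative, so each must vanish. Lemma~\ref{lemma:decomp} then forces $(\mathcal{C}_Z)_3W_f=0$ and $(\mathcal{D}_Z)_3W_k=0$. Ranging over all initial conditions $X_1^0$, as in the compatibility step of Theorem~\ref{th:IMP:LQ:nec}, this means $K(t)\in\mathcal{K}_c$ and $K_k\in\mathcal{K}_d$.

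The main obstacle is this last spanning step. On an infinite horizon one must ensure the optimal state block $X_1^\star(t)$ remains "rich" enough, i.e. nonsingular, to conclude the gain relation for every direction. I would first try the Gronwall lower bound from the strict-lower-bound step of Theorem~\ref{th:IMP:LQ:inf}, which keeps $X_1^\star(t)\succ0$ on flow intervals when $X_1^0\succ0$. I would combine this with the linearity of $X_1^0\mapsto X^\star$ to cover all directions at each fixed time. At jumps, a singular $\mathcal{J}$ could collapse rank, in which case only the restriction of $K_k$ to the reachable range is determined. I would state necessity modulo that range, which is consistent with the nonuniqueness already built into $\mathcal{K}_d$.
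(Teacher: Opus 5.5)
Your parts (a) and (b) follow the paper. You take the fixed point $Q_\infty$ of Lemma~\ref{lemma:periodic:mono}, rebuild one period from the jump GRE and the forward Riccati ODE, and get uniqueness from the fixed point. You then obtain the lower bound and attainment from the key identity; the paper redoes this inline, while citing Theorem~\ref{th:IMP:LQ:inf:suff} on the bounded periodic extension, as you do, is cleaner.

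Part (c) takes a genuinely different route. The paper works at the level of the dual variable. It shifts the bounded solution $P_\infty^{ih}$ of Theorem~\ref{th:IMP:LQ:inf} by $T_p$ and uses uniqueness to conclude that $P_\infty^{ih}$ is $T_p$-periodic, hence a fixed point of $\Phi$, hence equal to $P_\infty$. It then asserts that any optimal policy ``arises from'' the periodic I-GRE.

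You instead argue by complementary slackness, as in Theorem~\ref{th:IMP:LQ:nec}. Along an optimal trajectory, the nonnegative remainder of the key identity with $P_\infty$ must vanish. Lemma~\ref{lemma:decomp} then forces $(\mathcal{C}_Z)_3W_f=0$ and $(\mathcal{D}_Z)_3W_k=0$, i.e.\ gains in $\mathcal{K}_c(t,P_\infty(t))$ and $\mathcal{K}_d$ on the range of $X_1^\star$.

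What the paper's shift argument establishes is the identification of the infinite-horizon dual with the periodic one. You already get that in (b), since the periodic extension is itself a bounded solution and uniqueness of bounded solutions applies. Your argument then proves the actual gain statement of (c), which the paper's argument does not, and it does so without using causality. The price is the spanning step. Your caveat is the correct form of (c) when $\mathcal{J}$ can drop rank: gains are determined only on the reachable range, and only modulo $\ker(\mathcal{C}_Z)_3$, which $\mathcal{K}_c$ already allows.

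The spanning step needs one repair. The strict-lower-bound step of Theorem~\ref{th:IMP:LQ:inf} only bounds $\tr(X_1(t))$ from below, and that does not give $X_1^\star(t)\succ0$. Use instead that the closed-loop flow on each interval is an invertible linear map leaving $\Snpsd$ invariant. Being a homeomorphism, it maps the interior $\Snpd$ into $\Snpd$. Equivalently, you can run the Gronwall argument on $\lambda_{\min}(X_1^\star(t))$, using cone invariance of the closed-loop generator.

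A smaller point: finite cost and (H2) only give $\liminf_{T\to\infty}\norm{EX(t_0+T)E^*}=0$. That suffices, because the remaining terms of the identity are monotone in $T$, so you can pass to the limit along a subsequence.
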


\begin{proof}
\noindent\textbf{Existence and uniqueness~(a).}
Lemma~\ref{lemma:periodic:mono}\eqref{mono:fixed} gives the unique
fixed point $Q_\infty$ of $\Phi$, so the pre-jump value
$P_\infty(T_p^-):=Q_\infty$ is uniquely determined.  The post-jump
value $P_\infty(0^+)$ follows from the jump
GRE~\eqref{eq:periodicGRE:jump} with terminal value $Q_\infty$
(the attaining gain is unique by Proposition~\ref{prop:keyid}).
The flow profile $\tau\mapsto P_\infty(\tau)$ on $(0,T_p)$ is then
the unique solution of the forward Riccati
ODE~\eqref{eq:forwardRiccati} started at $P_\infty(0^+)$.  By
construction, $P_\infty$ is $T_p$-periodic and
piecewise-$C^1$, and satisfies
\eqref{eq:periodicGRE:flow}--\eqref{eq:periodicGRE:period}.\\

\noindent\textbf{Sufficiency~(b).}
The $T_p$-periodic extension of $P_\infty$ to all of $[t_0,\infty)$
satisfies the flow GRE with equality $\mathcal{C}_Z\schur{/}(\mathcal{C}_Z)_3=0$ and the
jump GRE with equality $\mathcal{D}_Z\,\schur{/}\,
(\mathcal{D}_Z)_3=0$ at every jump.  Apply the key identity
(Proposition~\ref{prop:keyid}) on $[t_0,t_0+T]$ with this $P_\infty$
and no terminal cost ($Z_T=0$):
\begin{align}\label{pf:periodic:keyid}
  J_T(X,X_1^0)
  &=\ip{P_\infty(0^+)}{X_1^0}
  -\ip{P_\infty(T^-)}{EX(T)E^*}
  \notag\\&\quad
  +\underbrace{\int_{t_0}^{t_0+T}\ip{\mathcal{C}_Z(\tau,P_\infty)}{X}\dt}_{\ge0}
  +\underbrace{\textstyle\sum_k\ip{\mathcal{D}_Z(P_\infty(0^+),P_\infty(T_p^-))}{X(t_k^-)}}_{\ge0}.
\end{align}
Since $\mathcal{C}_Z,\mathcal{D}_Z\succeq0$ on the periodic solution,
$J_T(X,X_1^0)\ge\ip{P_\infty(0^+)}{X_1^0}
-\ip{P_\infty(T^-)}{EX(T)E^*}$ for all admissible $X$.
Under~(H1), $\norm{EX(T)E^*}\to0$ for any admissible trajectory;
since $P_\infty$ is bounded (Lemma~\ref{lemma:periodic:mono}),
$\ip{P_\infty(T^-)}{EX(T)E^*}\to0$ as $T\to\infty$, giving
$J_\infty(X,X_1^0)\ge\ip{P_\infty(0^+)}{X_1^0}$.\\

For the policy $K(\tau)\in\mathcal{K}_c(\tau,P_\infty(\tau))$ (flow)
and $K_k\in\mathcal{K}_d(P_\infty(0^+),P_\infty(T_p^-))$ (jump),
both GRE Schur equalities are achieved, so
$\ip{\mathcal{C}_Z}{X^\star}=0$ and $\ip{\mathcal{D}_Z}{X^\star}=0$
everywhere. The key identity gives
$J_T(X^\star,X_1^0)=\ip{P_\infty(0^+)}{X_1^0}
-\ip{P_\infty(T^-)}{EX^\star(T)E^*}$,
and taking $T\to\infty$ with $\ip{P_\infty(T^-)}{EX^\star(T)E^*}\to0$
yields $J_\infty(X^\star,X_1^0)=\ip{P_\infty(0^+)}{X_1^0}$.
The policy is causal since $P_\infty(\tau)$ depends only on the timer.\\

\noindent\textbf{Necessity~(c).}
By Theorem~\ref{th:IMP:LQ:inf}, under~(H1)--(H3) there is a
\emph{unique} bounded piecewise-$C^1$ solution $P_\infty^{ih}$ of the
infinite-horizon I-GRE satisfying
$J_\infty^\star (X_1^0)=\ip{P_\infty^{ih}(t_0)}{X_1^0}$.
We show $P_\infty^{ih}$ is $T_p$-periodic, hence equal to $P_\infty$.\\

Define $\hat{P}(\tau):=P_\infty^{ih}(t_0+T_p+\tau)$ for $\tau\ge0$.
Since $\mathcal{F}$ and $Z$ are $T_p$-periodic and $\mathcal{J}$ and $Z_k$ are time-independent, 
$\hat{P}$ satisfies the same infinite-horizon I-GRE as $P_\infty^{ih}$.
By uniqueness (Theorem~\ref{th:IMP:LQ:inf}),
$\hat{P}\equiv P_\infty^{ih}$, i.e.,
$P_\infty^{ih}(t_0+T_p+\tau)=P_\infty^{ih}(t_0+\tau)$ for all
$\tau\ge0$: the infinite-horizon dual is $T_p$-periodic.\\

Being a $T_p$-periodic solution of the I-GRE, $P_\infty^{ih}$ is a
fixed point of $\Phi$.  By uniqueness of the fixed point
(Lemma~\ref{lemma:periodic:mono}\eqref{mono:fixed}),
$P_\infty^{ih}=P_\infty$.  Any optimal dual variable must equal
$P_\infty^{ih}=P_\infty$, so the optimal policy must arise from the
periodic I-GRE.
\end{proof}

\paragraph{Minimum dwell-time}

For a minimum dwell-time (MDT) constraint $\Delta_k\ge T_{\min}>0$,
the dwell-time $\Delta_k$ is variable and unknown in advance.
We assume throughout this subsection that the flow and jump data
\emph{depend only on the timer} $\tau:=t-t_k$ (not on absolute time
or on the jump index), and \emph{the flow data are time-invariant beyond}
$T_{\min}$ and that \emph{the jump data are $k$-independent}:
\begin{equation}\label{eq:MDT:LTIassumption}
  \mathcal{F}(\tau,\cdot)\equiv\mathcal{F}_\infty(\cdot),
  \quad
  Z(\tau)\equiv Z(T_{\min})
  \;\;\text{for }\tau\ge T_{\min},
  \qquad
  \mathcal{J}(k,\cdot)\equiv\mathcal{J}(\cdot),
  \quad
  Z_k\equiv Z_{\mathrm{jp}}
  \;\;\text{for all }k.
\end{equation}
Under~\eqref{eq:MDT:LTIassumption}, the operators
$\mathcal{D}_Z(P^+,P^-)$ and $\mathcal{K}_d(P^+,P^-)$ are also
$k$-independent, and we drop the $k$ argument throughout this
subsection. Under~\eqref{eq:MDT:LTIassumption}, the Geromel--Colaneri
technique~\cite{Geromel:05} shows that the GRE condition for all
$\tau\ge T_{\min}$ reduces to a single LMI at $\tau=T_{\min}$
combined with a condition on the bounded interval $[0,T_{\min}]$.

\begin{definition}[Frozen composite]\label{def:frozen}
  Under~\eqref{eq:MDT:LTIassumption}, for any $Q\in\Snpsd$, define
  the \emph{frozen composite}:
  \begin{equation}\label{eq:frozen}
    \mathcal{C}_Z^0(Q)
    :=\mathcal{F}_\infty^*(Q)+Z(T_{\min})\in\Snm.
  \end{equation}
  The frozen composite is linear in $Q$ (hence an LMI) and represents
  the composite $\mathcal{C}_Z$ evaluated with no time derivative
  of $Q$ and on the time-invariant data beyond $T_{\min}$.
\end{definition}

\begin{theorem}[Sufficient causal condition: MDT]
  \label{th:OC:MDT}
  Let $T_{\min}>0$ and assume the flow data satisfy the
  time-invariance condition~\eqref{eq:MDT:LTIassumption}.
  Suppose there exists a piecewise-$C^1$ function
  $P_s:[0,T_{\min}]\to\Snpsd$ satisfying:
  \begin{enumerate}[\upshape(i)]
    \item \emph{Flow GRE on $[0,T_{\min}]$:}
      $P_s$ solves the forward Riccati ODE~\eqref{eq:forwardRiccati}
      on $(0,T_{\min})$, i.e.,
      $\mathcal{C}_Z(\tau,P_s(\tau))\succeq0$ with
      $\mathcal{C}_Z\,\schur{/}\,(\mathcal{C}_Z)_3=0$ for all
      $\tau\in(0,T_{\min})$.
    \item \emph{Jump GRE at $\tau=T_{\min}$:}
      $\mathcal{D}_Z(P_s(0^+),P_s(T_{\min}))\succeq0$
      and $\mathcal{D}_Z\,\widetilde{/}\,(\mathcal{D}_Z)_3=0$.
    \item \emph{Geromel--Colaneri stability condition:}
      The frozen composite satisfies the single LMI
      \begin{equation}\label{eq:GC:stability}
        \mathcal{C}_Z^0(P_s(T_{\min}))
        :=\mathcal{F}_\infty^*(P_s(T_{\min}))+Z(T_{\min})
        \preceq0.
      \end{equation}
  \end{enumerate}
Then for any dwell-time sequence with $\Delta_k\ge T_{\min}$, the
  policy
  $K(\tau)\in\mathcal{K}_c(\tau,P_s(\tau))$ for $\tau\in[0,T_{\min}]$,
  $K(\tau)=K(T_{\min})$ for $\tau>T_{\min}$ on flow,
  and $K_k\in\mathcal{K}_d(P_s(0^+),P_s(T_{\min}))$ at each jump,
  is causal and achieves $J_T(X^\star,X_1^0)\le\ip{P_s(0)}{X_1^0}$
  for all $T>0$.  In particular,
  $J_\infty^*(X_1^0)\le\ip{P_s(0)}{X_1^0}$.  The
  conditions~\textup{(i)--(iii)} are verifiable on
  $[0,T_{\min}]$ alone, without knowledge of future dwell-times.
\end{theorem}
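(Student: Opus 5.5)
The plan is to apply the key identity of Proposition~\ref{prop:keyid} with a timer-dependent dual variable built from $P_s$, and with the terminal weight set to zero, as in the infinite-horizon setting. Given any dwell-time sequence with $\Delta_k\ge T_{\min}$, I define on each inter-jump interval $[t_k,t_{k+1})$
\[
  P(t):=\begin{cases}P_s(t-t_k), & t-t_k\in[0,T_{\min}],\\ P_s(T_{\min}), & t-t_k\in(T_{\min},\Delta_k).\end{cases}
\]
I take $t_0$ to be a reset instant with timer $0$, so that $P(t_0)=P_s(0)$. This $P$ is piecewise-$C^1$ and $\Snpsd$-valued. Its pre-jump value is always $P(t_{k+1}^-)=P_s(T_{\min})$ and its post-jump value is always $P(t_{k+1}^+)=P_s(0^+)$, precisely because $\Delta_k\ge T_{\min}$. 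Hence every jump operator equals $\mathcal{D}_Z(P_s(0^+),P_s(T_{\min}))$, which is the object controlled by condition~(ii).

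Applying \eqref{eq:IMP:keyid} on $[t_0,t_0+T]$ with $Z_T=0$ along the trajectory $X^\star$ generated by the stated policy gives
\[
  J_T(X^\star,X_1^0)=\ip{P_s(0)}{X_1^0}-\ip{P(t_0+T)}{EX^\star(t_0+T)E^*}+\int\ip{\mathcal{C}_Z}{X^\star}\dt+\sum_k\ip{\mathcal{D}_Z}{X^\star(t_k^-)}.
\]
I then sign each term in turn.

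\emph{Terminal term.} It is $\le0$, since $P\succeq0$ and $EX^\star E^*\succeq0$.

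\emph{Flow, timer in $(0,T_{\min})$.} Condition~(i) gives $\mathcal{C}_Z\succeq0$ with vanishing generalized Schur complement. With $K(\tau)\in\mathcal{K}_c$, the attainment argument of Theorem~\ref{th:IMP:LQ:suff} (Lemma~\ref{lemma:decomp}, with $W_f^\star$ in $\ker(\mathcal{C}_Z)_3$) gives $\ip{\mathcal{C}_Z}{X^\star}=0$.

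\emph{Flow, timer beyond $T_{\min}$.} Here $\dot P=0$ and the data are frozen by \eqref{eq:MDT:LTIassumption}, so $\mathcal{C}_Z(t,P(t))=\mathcal{C}_Z^0(P_s(T_{\min}))\preceq0$ by~(iii). Since $X^\star(t)\succeq0$, the integrand is $\le0$. This holds whatever gain is used there; in particular it holds for the frozen $K(T_{\min})$, which only serves to keep $X^\star$ admissible.

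\emph{Jumps.} Condition~(ii) together with $K_k\in\mathcal{K}_d$ gives $\ip{\mathcal{D}_Z}{X^\star(t_k^-)}=0$, by the jump half of the attainment argument.

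Combining the four signs yields $J_T(X^\star,X_1^0)\le\ip{P_s(0)}{X_1^0}$ for every $T>0$.

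For the infinite-horizon claim, $J_T(X^\star,X_1^0)$ is non-decreasing in $T$ because the integrand $\ip{Z}{X^\star}$ and the jump costs are non-negative, and it is bounded above by $\ip{P_s(0)}{X_1^0}$. Monotone convergence therefore gives $J_\infty(X^\star,X_1^0)\le\ip{P_s(0)}{X_1^0}$, and hence $J_\infty^\star(X_1^0)\le\ip{P_s(0)}{X_1^0}$. Causality follows because the gains depend only on $\tau$ and on $P_s$, which is computed on $[0,T_{\min}]$ through the forward ODE~\eqref{eq:forwardRiccati} and the jump GRE, with no reference to future dwell times.

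The step I expect to need the most care is justifying the key identity for this spliced $P$. It is only piecewise-$C^1$, with a kink at timer $T_{\min}$; I will handle this by splitting each flow interval at $t_k+T_{\min}$ and noting that $\ip{P}{EXE^*}$ is continuous there, so the telescoping in the proof of Proposition~\ref{prop:keyid} still goes through. A second point to check is that the jump operator really uses the pre-jump value $P_s(T_{\min})$ in every case; this needs $\Delta_k\ge T_{\min}$, and the boundary case $\Delta_k=T_{\min}$ requires $P_s$ to be continuous at $T_{\min}^-$. Finally, if $t_0$ is not a reset instant, the same argument applies with $P(t_0)=P_s(\tau(t_0))$, or $P_s(T_{\min})$ when $\tau(t_0)\ge T_{\min}$, and I will state the convention $\tau(t_0)=0$ explicitly.
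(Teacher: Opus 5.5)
Your proposal is correct and follows the paper's proof essentially step for step: the same constant extension of $P_s$ beyond timer $T_{\min}$, the key identity with $Z_T=0$, and the same signing of the terminal, flow (split at timer $T_{\min}$) and jump terms. Your extra care tightens points the paper passes over without changing the argument: monotone convergence for the $T\to\infty$ limit, continuity of $P_s$ at $T_{\min}$ (needed for the splice and for the case $\Delta_k=T_{\min}$), and the explicit convention $\tau(t_0)=0$.
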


\begin{proof}
\noindent\textbf{Extension.}
Define $\bar{P}:[0,\infty)\to\Snpsd$ by
$\bar{P}(\tau):=P_s(\tau)$ for $\tau\in[0,T_{\min}]$ and
$\bar{P}(\tau):=P_s(T_{\min})$ for $\tau>T_{\min}$.\\

\noindent\textbf{Verification of GRE conditions on $\bar{P}$.}

\emph{Flow, $\tau\in(0,T_{\min})$:}
Condition~(i) gives $\mathcal{C}_Z(\tau,\bar{P}(\tau))=0$.\\

\emph{Flow, $\tau>T_{\min}$:}
$\bar{P}$ is constant and the data are time-invariant by
\eqref{eq:MDT:LTIassumption}, so $\dot{\bar{P}}=0$ and
$\mathcal{C}_Z(\tau,\bar{P}(\tau))
=\mathcal{F}_\infty^*(P_s(T_{\min}))+ZT_{\min}
=\mathcal{C}_Z^0(P_s(T_{\min}))\preceq0$
by condition~(iii).\\

\emph{Jump at every $t_k$ with $\Delta_k\ge T_{\min}$:}
$\bar{P}(\Delta_k^-)=P_s(T_{\min})$, so
$\mathcal{D}_Z(\bar{P}(0^+),\bar{P}(\Delta_k^-))
=\mathcal{D}_Z(P_s(0^+),P_s(T_{\min}))\succeq0$
and the Schur complement equals zero (condition~(ii)).
With $K_k\in\mathcal{K}_d(P_s(0^+),P_s(T_{\min}))$, the corresponding
inner product $\ip{\mathcal{D}_Z(P_s(0^+),P_s(T_{\min}))}{X^\star(t_k^-)}=0$
at every jump.\\

\noindent\textbf{Cost bound.}
Let $\{t_k\}_{k=0}^{N_T}$ denote the jump times in $[t_0,t_0+T]$ and
define the absolute-time dual variable
$P(t):=\bar{P}(t-t_k)$ for $t\in(t_k,t_{k+1}]$.
Applying the key identity (Proposition~\ref{prop:keyid}) on
$[t_0,t_0+T]$ with $P$ and $Z_T=0$ gives
\begin{align*}
  J_T(X^\star,X_1^0)
  &=\ip{P(t_0)}{X_1^0}
  -\underbrace{\ip{P(t_0+T)}{EX^\star(t_0+T)E^*}}_{\ge\,0}
  \\&\quad
  +\underbrace{\sum_{k=0}^{N_T}\int_0^{\Delta_k}\ip{\mathcal{C}_Z(\tau,\bar{P}(\tau))}{X^\star(t_k+\tau)}\,\d\tau}_{=:\,I_f}
  \\&\quad
  +\underbrace{\sum_{k=1}^{N_T}\ip{\mathcal{D}_Z(\bar{P}(0^+),\bar{P}(\Delta_k^-))}{X^\star(t_k^-)}}_{=\,0},
\end{align*}
where on each inter-jump interval $(t_k,t_{k+1})$ the change of
variables $\tau=t-t_k$ has been applied to the flow integral.
The jump sum equals zero because the gain
$K_k\in\mathcal{K}_d(P_s(0^+),P_s(T_{\min}))$ saturates the
Schur equality (condition~(ii)).\\

The flow integral on each interval splits at $\tau=T_{\min}$:
\begin{align*}
  \int_0^{\Delta_k}\ip{\mathcal{C}_Z(\tau,\bar{P}(\tau))}{X^\star(t_k+\tau)}\,\d\tau
  &=\underbrace{\int_0^{T_{\min}}\ip{\mathcal{C}_Z(\tau,P_s(\tau))}{X^\star(t_k+\tau)}\,\d\tau}_{=\,0\text{(Schur decomp., (i))}}
  \\&\quad
  +\underbrace{\int_{T_{\min}}^{\Delta_k}\ip{\mathcal{C}_Z^0(P_s(T_{\min}))}{X^\star(t_k+\tau)}\,\d\tau}_{\le\,0\text{ by~(iii)}},
\end{align*}
since $\ip{\mathcal{C}_Z(\tau,P_s(\tau))}{X^\star(t_k+\tau)}=0$ on
$(0,T_{\min})$ along the optimal trajectory (the Schur decomposition of
Lemma~\ref{lemma:decomp} with the gain in $\mathcal{K}_c$, as in
Theorem~\ref{th:IMP:LQ:suff}), by~(i), and
for $\tau>T_{\min}$ we have $\bar{P}(\tau)=P_s(T_{\min})$ constant
and the data time-invariant, so
$\mathcal{C}_Z(\tau,\bar{P}(\tau))=\mathcal{C}_Z^0(P_s(T_{\min}))\preceq0$
by~(iii) and $X^\star\succeq0$.
Summing over $k$, $I_f\le0$.
The terminal correction $-\ip{P(t_0+T)}{EX^\star(t_0+T)E^*}\le0$
since $P(t_0+T)\succeq0$ and $X^\star(t_0+T)\succeq0$.
Combining: $J_T(X^\star,X_1^0)\le\ip{P_s(0)}{X_1^0}$ for all $T>0$,
and letting $T\to\infty$ gives $J_\infty^*(X_1^0)\le\ip{P_s(0)}{X_1^0}$.
The policy is causal since conditions~(i)--(iii) depend only on
$\tau\le T_{\min}$.
\end{proof}

\begin{remark}
  Condition~(iii) is an LMI in $P_s(T_{\min})$ and is the
  matrix-valued analogue of the \emph{Geromel--Colaneri stability
  condition}~\cite{Briat:22:Matrix}: the frozen flow dynamics (with no
  Riccati correction) must dissipate at least as much as the cost
  accumulates at $\tau=T_{\min}$.  In the stability limit
  $Z=Z_k=0$, condition~(iii) reduces to
  $\mathcal{F}^*(P_s(T_{\min}))\preceq0$, which is the standard
  Lyapunov condition for asymptotic stability of the
  flow~\cite{Briat:22:Matrix}.
\end{remark}

\begin{remark}
The dual variable $\bar{P}$ constructed in the proof of
Theorem~\ref{th:OC:MDT} is in general not $C^1$ at the breakpoints
$\tau=T_{\min}$: just before, $\dot{\bar{P}}(T_{\min}^-)=\dot{P}_s(T_{\min}^-)$
is determined by the forward Riccati ODE and is typically nonzero,
whereas just after, $\dot{\bar{P}}(T_{\min}^+)=0$ since $\bar{P}$ is
constant on $(T_{\min},\Delta_k]$.  This is not a difficulty, for two
reasons.  First, the I-GRE only requires $P$ to be piecewise-$C^1$,
which permits a finite set of isolated discontinuities of
$\dot{P}$; the key identity (Proposition~\ref{prop:keyid}) and the
verification arguments in the proof use $\dot{P}$ only inside
integrals and are unaffected by a measure-zero set of discontinuities,
so the piecewise-$C^1$ condition can in fact be relaxed to absolute
continuity of $P$ throughout.  Second, when a $C^1$ solution is
desired, the construction of~\cite{Holicki:19,Holicki:22} can be
adapted to produce a smoothed $\tilde{P}$ satisfying the same
GRE conditions, by interpolating in a small neighbourhood of
each breakpoint without affecting the cost bound.
\end{remark}

\paragraph{Range dwell-time}
\label{sec:OC:dwell:RDT}

For a range dwell-time (RDT) constraint
$T_{\min}\le\Delta_k\le T_{\max}$, the policy must be valid for
any dwell-time in this interval.

\begin{theorem}[Sufficient causal condition: RDT]
  \label{th:OC:RDT}
  Let $0<T_{\min}\le T_{\max}$.  Suppose there exists
  $P_s:[0,T_{\max}]\to\Snpsd$ satisfying:
  \begin{enumerate}[\upshape(i)]
    \item \emph{Flow GRE on $[0,T_{\max}]$:}
      $\mathcal{C}_Z(\tau,P_s(\tau))\succeq0$ and
      $\mathcal{C}_Z\,\schur{/}\,\allowbreak(\mathcal{C}_Z)_3=0$
      for $\tau\in(0,T_{\max})$.
    \item \emph{Jump GRE for all $\tau\in[T_{\min},T_{\max}]$:}
      $\mathcal{D}_Z(k,P_s(0^+),P_s(\tau^-))\succeq0$ and
      $\mathcal{D}_Z\,\schur{/}\,\allowbreak(\mathcal{D}_Z)_3=0$.
  \end{enumerate}
Then for any dwell-time sequence with $\Delta_k\in[T_{\min},T_{\max}]$,
the policy
$K(\tau)\in\mathcal{K}_c(\tau,P_s(\tau))$ on flow and
$K_k\in\mathcal{K}_d(k,P_s(0^+),P_s(\Delta_{k-1}^-))$ at each jump
$t_k$ is causal and achieves $J_T(X^\star,X_1^0)\le\ip{P_s(0)}{X_1^0}$
for all $T>0$.  In particular,
$J_\infty^*(X_1^0)\le\ip{P_s(0)}{X_1^0}$.  The gain $K_k$ depends
only on the just-observed dwell-time $\Delta_{k-1}$, which is
available at $t_k$, so the policy is fully causal.
\end{theorem}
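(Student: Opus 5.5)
The plan mirrors the proof of Theorem~\ref{th:OC:MDT}, but with no frozen extension: since every dwell-time satisfies $\Delta_k\le T_{\max}$, the timer never leaves $[0,T_{\max}]$, so $P_s$ itself serves as the dual variable on every inter-jump interval. First I define the absolute-time dual variable $P(t):=P_s(t-t_k)$ for $t\in(t_k,t_{k+1}]$ (and $P(t_0):=P_s(0)$, with the first interval treated the same way). This is piecewise-$C^1$ with discontinuities only at jump times, where the pre-jump value is $P(t_k^-)=P_s(\Delta_{k-1}^-)$ and the post-jump value is $P(t_k^+)=P_s(0^+)$. Then I apply the key identity \eqref{eq:IMP:keyid} of Proposition~\ref{prop:keyid} on $[t_0,t_0+T]$ with $Z_T=0$. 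This gives $J_T(X^\star,X_1^0)=\ip{P_s(0)}{X_1^0}-\ip{P(t_0+T)}{EX^\star(t_0+T)E^*}+I_f+I_j$.

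\emph{Flow terms.} On each interval I change variables $\tau=t-t_k$. Condition~(i) gives $\mathcal{C}_Z(\tau,P_s(\tau))\succeq0$ with vanishing generalized Schur complement for $\tau\in(0,\Delta_k)\subseteq(0,T_{\max})$, so compatibility holds by Lemma~\ref{lemma:extSchur}. With the gain $K(\tau)\in\mathcal{K}_c(\tau,P_s(\tau))$, the attainment argument of Theorem~\ref{th:IMP:LQ:suff}, via Lemma~\ref{lemma:decomp}, makes $W_f^\star$ lie in $\ker(\mathcal{C}_Z)_3$. Hence the integrand vanishes and $I_f=0$.

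\emph{Jump terms.} At $t_k$, condition~(ii) is applied with $\tau=\Delta_{k-1}\in[T_{\min},T_{\max}]$. This gives $\mathcal{D}_Z(k,P_s(0^+),P_s(\Delta_{k-1}^-))\succeq0$ with zero Schur complement. The gain $K_k\in\mathcal{K}_d(k,P_s(0^+),P_s(\Delta_{k-1}^-))$ then makes each summand vanish by the same decomposition, so $I_j=0$.

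\emph{Terminal term.} Since $P_s\succeq0$ and $X^\star\succeq0$, we have $-\ip{P(t_0+T)}{EX^\star(t_0+T)E^*}\le0$. Combining the three pieces gives $J_T(X^\star,X_1^0)\le\ip{P_s(0)}{X_1^0}$ for every $T$. Since $J_T(X^\star,\cdot)$ is non-decreasing in $T$ (all cost terms are nonnegative), monotone convergence gives $J_\infty(X^\star,X_1^0)\le\ip{P_s(0)}{X_1^0}$, and therefore $J_\infty^*\le\ip{P_s(0)}{X_1^0}$.

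\emph{Causality.} The flow gain depends only on the timer $\tau$ through $P_s(\tau)$. The jump gain depends only on $P_s(0^+)$ and on $P_s(\Delta_{k-1}^-)$, and $\Delta_{k-1}=t_k-t_{k-1}$ has already been observed at $t_k$.

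The main obstacle is bookkeeping at the jumps. The key identity requires the pre-jump value of the dual variable to equal the endpoint of the flow segment just completed. This forces the jump operator to be evaluated at $P_s(\Delta_{k-1}^-)$, the dwell-time just elapsed, so that the telescoping in Proposition~\ref{prop:keyid} is exact. I must also make sure the post-jump value is the same $P_s(0^+)$ for every $k$, so the forward profile restarts consistently. This is exactly why condition~(ii) is imposed uniformly over the whole range $[T_{\min},T_{\max}]$ rather than at a single point. A secondary point is the boundary interval at the end of the horizon, where $t_0+T$ may fall mid-interval. There, $P(t_0+T)=P_s(\tau)$ for some $\tau<T_{\max}$, which is still positive semidefinite, so the sign argument on the terminal term still applies.
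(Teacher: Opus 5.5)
Your proposal is correct and follows essentially the same route as the paper's proof. Both use the timer-indexed dual $P(t)=P_s(t-t_k)$ and the key identity with $Z_T=0$, make the flow and jump terms vanish through the gains in $\mathcal{K}_c$ and $\mathcal{K}_d$ with the jump condition evaluated at $\tau=\Delta_{k-1}$, drop the terminal term by a sign argument, and give the same causality argument. Your version is in fact slightly tighter on two points. You justify $I_f=0$ through the Schur decomposition along the closed loop, whereas the paper asserts that $\mathcal{C}_Z$ vanishes pointwise, which condition (i) does not give. And your monotone-convergence passage to $T\to\infty$ avoids the paper's appeal to (H1), which is not among the theorem's hypotheses.
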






\begin{proof}
Let $\{t_k\}_{k=0}^{N_T}$ denote the jump times in $[t_0,t_0+T]$,
with $t_0$ the initial time and $\Delta_k:=t_{k+1}-t_k\in[T_{\min},T_{\max}]$.
Define the absolute-time dual variable
\begin{equation*}
  P(t):=P_s(t-t_k)\quad\text{for }t\in(t_k,t_{k+1}],\;k=0,\ldots,N_T,
\end{equation*}
so that on each inter-jump interval the dual is the timer-domain
function $P_s$ evaluated at the timer $\tau=t-t_k\in(0,\Delta_k]$.
By construction $P(t_k^+)=P_s(0^+)$ and $P(t_k^-)=P_s(\Delta_{k-1}^-)$
for $k\ge1$.

\noindent\textbf{Verification of GRE conditions on $P$.}

\emph{Flow, $t\in(t_k,t_{k+1})$:}
Setting $\tau=t-t_k\in(0,\Delta_k)\subset(0,T_{\max})$,
condition~(i) gives $\mathcal{C}_Z(\tau,P_s(\tau))=0$, hence
$\mathcal{C}_Z(t,P(t))=0$ on the entire flow interval.

\emph{Jump at $t_k$ ($k\ge1$):}
The pre-jump dwell-time is $\Delta_{k-1}\in[T_{\min},T_{\max}]$, so
$\tau:=\Delta_{k-1}\in[T_{\min},T_{\max}]$ satisfies the range in
condition~(ii).  Thus
$\mathcal{D}_Z(k,P(t_k^+),P(t_k^-))
=\mathcal{D}_Z(k,P_s(0^+),P_s(\Delta_{k-1}^-))\succeq0$
with Schur complement equal to zero.

\noindent\textbf{Cost bound.}
Apply the key identity (Proposition~\ref{prop:keyid}) on
$[t_0,t_0+T]$ with $P$ and $Z_T=0$:
\begin{align*}
  J_T(X^\star,X_1^0)
  &=\ip{P(t_0)}{X_1^0}
  -\underbrace{\ip{P(t_0+T)}{EX^\star(t_0+T)E^*}}_{\ge\,0}
  \\&\quad
  +\underbrace{\sum_{k=0}^{N_T}\int_0^{\Delta_k}\ip{\mathcal{C}_Z(\tau,P_s(\tau))}{X^\star(t_k+\tau)}\,\dt}_{=:\,I_f}
  \\&\quad
  +\underbrace{\sum_{k=1}^{N_T}\ip{\mathcal{D}_Z(k,P_s(0^+),P_s(\Delta_{k-1}^-))}{X^\star(t_k^-)}}_{=:\,I_j},
\end{align*}
where on each inter-jump interval $(t_k,t_{k+1})$ the change of
variables $\tau=t-t_k$ has been applied to the flow integral, and
the last interval is truncated at $t_0+T$ if it ends mid-flow.

\emph{Flow integral.}
By condition~(i), $\mathcal{C}_Z(\tau,P_s(\tau))=0$ pointwise on
$(0,T_{\max})$, so each integrand vanishes and $I_f=0$.

\emph{Jump sum.}
With the gain $K_k\in\mathcal{K}_d(k,P_s(0^+),P_s(\Delta_{k-1}^-))$
applied at $t_k$, the Schur equality of condition~(ii) gives
$\ip{\mathcal{D}_Z(k,P_s(0^+),P_s(\Delta_{k-1}^-))}{X^\star(t_k^-)}=0$
at every jump, so $I_j=0$.

\emph{Terminal correction.}
Since $P(t_0+T)=P_s(t_0+T-t_{N_T})\succeq0$ and $X^\star(t_0+T)\succeq0$,
the terminal term $-\ip{P(t_0+T)}{EX^\star(t_0+T)E^*}\le0$.

Combining, $J_T(X^\star,X_1^0)\le\ip{P_s(0)}{X_1^0}$ for all $T>0$.
Under~(H1), $\norm{EX^\star(t_0+T)E^*}\to0$ as $T\to\infty$;
combined with $P(t_0+T)$ bounded ($P_s$ is continuous on the compact
set $[0,T_{\max}]$), the terminal correction vanishes in the limit
and $J_\infty^*(X_1^0)\le\ip{P_s(0)}{X_1^0}$.

\emph{Causality.}
The flow gain $K(\tau)$ depends on the current timer $\tau\in[0,\Delta_k]$,
known continuously during the inter-jump flow.
The jump gain $K_k$ at $t_k$ depends on $\Delta_{k-1}$, which is
$t_k-t_{k-1}$, available at $t_k$.  Thus the entire policy is causal.

No Geromel--Colaneri extension is needed: the flow GRE is satisfied
with equality everywhere on $[0,T_{\max}]$, and the jump GRE is
satisfied for every admissible $\Delta_{k-1}\in[T_{\min},T_{\max}]$.
\end{proof}



\subsubsection{Continuous- and discrete-time corollaries (infinite horizon)}

\begin{corollary}[Continuous-time: infinite horizon]\label{cor:CT:LQ:inf}
  Set $N_T=0$.  Theorem~\ref{th:IMP:LQ:inf} gives the unique
  bounded $C^1$ solution $P_\infty$ of the continuous-time infinite-horizon GRE
  ($\mathcal{C}_Z(t,P(t))\succeq0$,
  $\mathcal{C}_Z\,\schur{/}\,\allowbreak(\mathcal{C}_Z)_3=0$, for all $t\ge t_0$),
  with $J_\infty^\star (X_1^0)=\ip{P_\infty(t_0)}{X_1^0}$,
  the monotone limit $P_\infty=\lim_T P_T$, and dual
  $J_\infty^\star =\sup_P\ip{P(t_0)}{X_1^0}$ s.t.\
  $\mathcal{C}_Z(t,P(t))\succeq0$, $P$ bounded~\cite{AitRami:01a}.
\end{corollary}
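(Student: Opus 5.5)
The plan is to obtain Corollary~\ref{cor:CT:LQ:inf} by specializing the impulsive infinite-horizon theory to the case $N_T=0$, rather than arguing from scratch. With no jump times, every jump term drops out. The jump conditions~\eqref{eq:IMP:GRE:jump} are vacuous, the jump sums in the key identity~\eqref{eq:IMP:keyid} vanish, and in (H1) the counter $\kappa(t,s)\equiv0$. Stabilizability thus reduces to the flow estimate $\norm{EX_s(t)E^*}\le\beta e^{-\alpha(t-s)}\norm{EX_s(s)E^*}$. Assumptions (H2)--(H3) keep only their flow parts. In particular, $Z$ and $\mathcal{F}$ are uniformly bounded, $(\mathcal{C}_Z)_3\succeq0$, and $\pinv{(\mathcal{C}_Z)_3}$ is bounded on bounded sets.

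\textbf{Existence and monotone limit.} I will invoke Theorem~\ref{th:IMP:LQ:inf} with $N_T=0$. Corollary~\ref{cor:CT:LQ:fh} gives, for each $T$, the unique CT-GRE solution $P_T$ with $P_T(t_0+T)=0$. Monotonicity $P_T\preceq P_{T'}$ follows from the truncation argument, since the omitted flow costs are non-negative. The bounds $\alpha_1I\preceq P_T\preceq\alpha_2 I$ come from (H1) and (H2) exactly as in that proof. The equicontinuity and Arzel\`a--Ascoli step then yields the pointwise, locally uniform limit $P_\infty$.

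\textbf{Regularity.} I need $C^1$ rather than merely piecewise $C^1$, and this step needs the most care. With no jumps there are no breakpoints, so $P_\infty$ is continuous on $[t_0,\infty)$. To upgrade to $C^1$, I would pass to the limit in the integrated flow equation
\[
E^*P_T(t)E=E^*P_T(s)E+\int_t^s\bigl(\mathcal{F}^*(r,P_T(r))+Z(r)-G(r,P_T(r))\bigr)\,\mathrm{d}r,
\]
where $G$ is the generalized Schur correction term $(\mathcal{C}_Z)_2\pinv{(\mathcal{C}_Z)_3}(\mathcal{C}_Z)_2^*$. Continuity of the integrand in $P$ is the delicate point, because the pseudoinverse can be discontinuous when the rank changes. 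I would handle this with (H3) and dominated convergence. Where $(\mathcal{C}_Z)_3$ depends on $P$ only through bounded data and has constant rank, the integrand is continuous in $P$. The limit integrand is then continuous in $t$, so the fundamental theorem of calculus gives $C^1$.

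\textbf{Optimality, uniqueness and dual.} Theorem~\ref{th:IMP:LQ:inf:suff} with no jumps then gives $J_\infty^\star(X_1^0)=\ip{P_\infty(t_0)}{X_1^0}$ and uniqueness among bounded solutions. Its Gronwall step uses only the flow decay $e^{-\delta(t-t_0)/\alpha_2}$. Finally, Theorem~\ref{th:IMP:LQ:inf:LMI} with the $\mathcal{D}_Z$ constraints removed yields the dual characterization: any bounded $P$ with $\mathcal{C}_Z\succeq0$ is a lower bound by the CT key identity~\eqref{eq:CT:keyid} together with $\norm{EX(T)E^*}\to0$, and $P_\infty$ attains the supremum. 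This matches~\cite{AitRami:01a}.
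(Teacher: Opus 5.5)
Your proposal is correct and follows the paper's route: the paper obtains this corollary simply by setting $N_T=0$ in Theorems~\ref{th:IMP:LQ:inf}, \ref{th:IMP:LQ:inf:suff} and~\ref{th:IMP:LQ:inf:LMI}, and piecewise-$C^1$ becomes $C^1$ there because the only breakpoints of $P$ are jump times.

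Your extra regularity step goes through once you make two repairs. First, close the constant-rank hedge: the uniform bound on $\pinv{(\mathcal{C}_Z)_3}$ in (H3) forces a spectral gap, so the rank is constant on the connected set $\{0\preceq P\preceq\alpha_2 I\}$ and the correction term $G$ is continuous there. Second, write the integrated equation for the $(1,1)$ block only, $P_T(t)=P_T(s)+\int_t^s\bigl(E\mathcal{F}^*(r,P_T(r))E^*+EZ(r)E^*-G(r,P_T(r))\bigr)\,\mathrm{d}r$; as you wrote it, the off-diagonal blocks of $\mathcal{F}^*+Z$ have no counterpart on the $E^*P_TE$ side.
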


\begin{corollary}[Discrete-time: infinite horizon]\label{cor:DT:LQ:inf}
  Apply Theorem~\ref{th:IMP:LQ:inf} with $\mathcal{F}=0$, $Z=0$,
  $t_k=k+k_0$.  The infinite-horizon DT
  optimal cost is $J_\infty^{N*}(X_1^0)=\ip{P_\infty(k_0)}{X_1^0}$,
  where $P_\infty$ satisfies the algebraic DT-GRE and equals the
  non-decreasing limit of the $N$-step solutions.
\end{corollary}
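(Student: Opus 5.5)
The statement is Corollary~\ref{cor:DT:LQ:inf}. I will derive it as a specialization of Theorem~\ref{th:IMP:LQ:inf} (existence, monotone limit) together with Theorem~\ref{th:IMP:LQ:inf:suff} (value and uniqueness). The work is in checking which parts of the continuous-time machinery survive when the flow is trivial.

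\textbf{Step 1: the I-GRE in the discrete-time setting.} Under Definition~\ref{def:DT:sys} with $\mathcal{F}=0$, $Z=0$ and $t_k=k_0+k$, the flow conditions read $\mathcal{C}_Z(t,P(t))=E^*\dot P(t)E$. I restrict attention to $P$ constant on each $(k,k+1)$, as in Corollary~\ref{cor:DT:LQ:fh}. For such $P$ we get $\mathcal{C}_Z\equiv0$, so~\eqref{eq:IMP:GRE:flow} holds trivially with zero Schur complement. The I-GRE then collapses to the jump conditions~\eqref{eq:IMP:GRE:jump}, i.e.\ the DT-GRE for $\mathcal{D}_Z(k,P(k+1),P(k))$ in~\eqref{eq:DT:Ld}. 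The key identity becomes the discrete one of Corollary~\ref{cor:DT:LQ:fh}.

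\textbf{Step 2: the monotone limit.} For each $N$, Corollary~\ref{cor:DT:LQ:fh} (via Corollary~\ref{cor:IMP:LQ:equiv}) gives the $N$-step solution $P_N$ with $P_N(k_0+N)=0$. Monotonicity in $N$ follows exactly as in Theorem~\ref{th:IMP:LQ:inf}: truncating an $(N+1)$-step trajectory loses only the nonnegative term $\ip{Z_{k_0+N}}{X(k_0+N)}$, so $P_N(k)\preceq P_{N+1}(k)$. The upper bound $P_N\preceq C_sI$ comes from (H1). Pointwise monotone convergence of bounded Hermitian sequences then gives $P_\infty(k)$ for each integer $k$.

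Unlike the impulsive case, no Arzel\`a--Ascoli or equicontinuity argument is needed, because the index set is discrete. To pass to the limit in the DT-GRE, I use that $\mathcal{D}_Z$ is affine in $(P^+,P^-)$. Hence $\mathcal{D}_Z(k,P_N(k+1),P_N(k))\to\mathcal{D}_Z(k,P_\infty(k+1),P_\infty(k))$, and positive semidefiniteness is preserved in the limit. The Schur equality requires continuity of the pseudoinverse along the sequence. Here I invoke (H3): uniform boundedness of $\pinv{(\mathcal{D}_Z)_3}$ on bounded sets prevents rank drops in the limit, which gives continuity of $\pinv{(\mathcal{D}_Z)_3}$ and hence of the generalized Schur complement.

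\textbf{Step 3: value and uniqueness.} I then apply Theorem~\ref{th:IMP:LQ:inf:suff}. The key identity truncated at step $N$ gives the lower bound $J\ge\ip{P_\infty(k_0)}{X_1^0}-\ip{P_\infty(k_0+N)}{X_1(k_0+N)}$.

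\textbf{Main obstacle.} Step 3 is the delicate point. The paper's argument for vanishing of the terminal term uses (H2), i.e.\ $EZ(t)E^*\succeq\delta I$ on the flow, but here $Z=0$. I plan to substitute the jump analogue, namely coercivity of the state block $EZ_kE^*\succeq\delta I$. With it, finite cost forces $\norm{X_1(k)}\to0$. The same substitution drives the geometric decay along the optimal policy, with factor $\rho=1-\delta/\alpha_2$ per step, exactly as in the jump part of the proof of Theorem~\ref{th:IMP:LQ:inf:suff}. With that in place, $J_\infty^{N*}(X_1^0)=\ip{P_\infty(k_0)}{X_1^0}$ follows, and uniqueness among bounded solutions follows by testing with $X_1^0=vv^*$.
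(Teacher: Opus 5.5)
Your route is the one the paper intends. The corollary is stated as a bare specialization of Theorem~\ref{th:IMP:LQ:inf}, whose value and uniqueness parts rest on Theorem~\ref{th:IMP:LQ:inf:suff}. You correctly notice that (H2) cannot hold with $Z=0$, which that one-line specialization glosses over. Steps~1--2 go through; in particular, your rank-stability argument for $\pinv{(\mathcal{D}_Z)_3}$ under (H3) is correct.

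\textbf{The gap is in Step~3.} Your substitute $EZ_kE^*\succeq\delta I$ does not give $\ip{Z_k}{X}\ge\delta\norm{EXE^*}$ on $\Snmpsd$, because the off-diagonal block of $Z_k$ can cancel the state cost. So neither ``finite cost forces $\norm{X_1(k)}\to0$'' nor the contraction factor $1-\delta/\alpha_2$ follows. For a counterexample, take $n=m=1$, $\mathcal{J}(X)=[3\;\,1]\,X\,[3\;\,1]^*$ (i.e.\ $x^+=3x+u$) and $Z_k=\bigl[\begin{smallmatrix}1&1\\1&1\end{smallmatrix}\bigr]$, so $EZ_kE^*=1$. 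Then (H1) holds with the deadbeat gain $K_s=-3$ ($C_s=4$), and (H3) holds since $(\mathcal{D}_Z)_3=P(k+1)+1\ge1$. The choice $X_2=-X_1$, $X_3=X_1$ has zero stage cost while $X_1(k+1)=4X_1(k)$ diverges. Moreover, the DT-GRE reduces to the recursion $P(k)=4P(k+1)/(P(k+1)+1)$, which admits the two bounded nonnegative solutions $P\equiv0$ and $P\equiv3$. So the uniqueness you conclude is false under your hypothesis. What your argument actually needs is $Z_k\succeq\delta E^*E$. This is also the form the paper's proof of Theorem~\ref{th:IMP:LQ:inf:suff} invokes in place of (H2).

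\textbf{The corollary needs no coercivity.} Its actual claims hold without it, so you can drop the detour through Theorem~\ref{th:IMP:LQ:inf:suff}.
\begin{itemize}
\item \emph{Lower bound.} For any admissible $X$, truncation and $Z_k\succeq0$ give $J_\infty(X,X_1^0)\ge J_N^\star(X_1^0)=\ip{P_N(k_0)}{X_1^0}$ for every $N$. Hence $J_\infty^\star(X_1^0)\ge\ip{P_\infty(k_0)}{X_1^0}$ by your Step~2.
\item \emph{Upper bound.} Along the trajectory $X^\star$ generated by $K_k\in\mathcal{K}_d(k,P_\infty(k+1),P_\infty(k))$, every $\mathcal{D}_Z$-term vanishes, and the discrete key identity gives
\[
  \sum_{k=k_0}^{k_0+N-1}\ip{Z_k}{X^\star(k)}
  =\ip{P_\infty(k_0)}{X_1^0}-\ip{P_\infty(k_0+N)}{EX^\star(k_0+N)E^*}
  \le\ip{P_\infty(k_0)}{X_1^0},
\]
because $P_\infty\succeq0$.
\end{itemize}
Letting $N\to\infty$ yields $J_\infty^\star(X_1^0)=\ip{P_\infty(k_0)}{X_1^0}$ with attainment. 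The terminal term only needs the right sign, not to vanish. In the example above this correctly selects the minimal solution $P_\infty\equiv0$, matching $J_\infty^\star=0$. A hypothesis like $Z_k\succeq\delta E^*E$ is needed only for uniqueness of the bounded solution, which the corollary does not assert.

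\textbf{Existence of $P_N$.} In discrete time you do not need Corollary~\ref{cor:IMP:LQ:equiv}, whose necessity direction presupposes attainment. Since $\mathcal{J}^*(k,\cdot)$ maps $\Snpsd$ into $\Snmpsd$, and $E^*P(k)E$ enters only the $(1,1)$ block of $\mathcal{D}_Z$, you can define $P(k)$ as the generalized Schur complement of $\mathcal{J}^*(k,P(k+1))+Z_k$. This generates $P_N\succeq0$ by backward recursion, by Lemmas~\ref{lemma:extSchur} and~\ref{lemma:GREequiv}.
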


\subsection{Long-term average cost}
\label{sec:OC:AC}

The finite- and infinite-horizon problems of
Sections~\ref{sec:OC:fh}--\ref{sec:OC:inf} are well-posed only when the
total accumulated cost on $[t_0,t_0+T]$ remains bounded as $T\to\infty$.
This rules out a class of problems in which the cost grows
\emph{linearly} in $T$, with a non-zero rate, due to a persistent
exogenous input acting on the state block.  In the
stochastic interpretation $X_1=\mathbb{E}[xx^*]$, this corresponds to
a continuous-time process noise covariance and a jump-noise
covariance that keep injecting energy into the second-moment, so that
$X_1(t)$ cannot decay to zero and the integrated cost diverges; in a
deterministic reading, the same effect is produced by a fixed
Hermitian disturbance acting on the state block.  In both cases, the
quantity of interest is the \emph{long-term average cost rate}
$\rho^\star=\lim_{T\to\infty}J_T^\star /T$.\\

This subsection develops the corresponding theory.  The dynamics
are augmented by a Hermitian forcing on the state block
(Section~\ref{sec:OC:AC:setup}), the key identity acquires a
linear-in-$P$ forcing term
(Section~\ref{sec:OC:AC:keyid}), and the average cost is
characterised by the same I-GRE as in Section~\ref{sec:OC:inf} with
$\rho^\star$ given by a linear pairing of the I-GRE solution
$P_\infty$ with the forcing data.  Finite-horizon and
infinite-horizon results parallel
Sections~\ref{sec:OC:fh}--\ref{sec:OC:inf}, with sufficient
(Theorem~\ref{th:IMP:AC:suff}), necessary and existence
(Theorem~\ref{th:IMP:AC:nec}), equivalence
(Corollary~\ref{cor:IMP:AC:equiv}), and dual-LMI
(Theorem~\ref{th:IMP:AC:LMI}) statements, plus CT and DT corollaries
(Section~\ref{sec:OC:AC:CTDT}).  A central feature is that the
\emph{Riccati equation itself is unchanged}: only the post-processing
of $P_\infty$ with the forcing data changes.  

\subsubsection{Forced dynamics, cost, and assumptions}
\label{sec:OC:AC:setup}

We extend Definition~\ref{def:IMP:sys} by allowing an additive
Hermitian forcing on the state block.

\begin{definition}[Forced impulsive system]\label{def:IMP:forced}
The \emph{forced} impulsive matrix-valued system is
\begin{subequations}\label{eq:sys:IMP:forced}
\begin{align}
  E\dot{X}(t)E^*
    &=\mathcal{F}(t,X(t))+\mathcal{W}(t),\quad t\ne t_k,
     \quad t\ge t_0,
     \label{eq:sys:IMP:forced:flow}\\
  EX(t_k^+)E^*
    &=\mathcal{J}(k,X(t_k^-))+\mathcal{W}_k,\quad k\ge1,
     \label{eq:sys:IMP:forced:jump}
\end{align}
\end{subequations}
with $EX(t_0)E^*=X_1^0\in\Snpsd$, where $\mathcal{F}$ and
$\mathcal{J}$ are as in Definition~\ref{def:IMP:sys}, and the
\emph{flow forcing} $\mathcal{W}:\R_{\ge0}\to\Snpsd$ is piecewise
continuous and uniformly bounded, and the \emph{jump forcing}
$\mathcal{W}_k\in\Snpsd$ is a bounded sequence.
\end{definition}

The forcing is taken positive semidefinite to preserve the cone
$\Snpsd$ for $X_1$, and to match the stochastic interpretation where
$\mathcal{W}=BB^*$ is a process-noise covariance and $\mathcal{W}_k$
is the covariance of the jump noise.

The cost is the integrated linear cost~\eqref{eq:IMP:LQ:Jinf} without
terminal penalty:
\begin{equation}\label{eq:IMP:AC:JT}
  J_T(X,X_1^0)
  :=\int_{t_0}^{t_0+T}\ip{Z(t)}{X(t)}\dt
   +\sum_{k:\,t_k\le t_0+T}\ip{Z_k}{X(t_k^-)},
\end{equation}
with $Z(t)\succeq0$, $Z_k\succeq0$.  The associated optimal
finite-horizon cost is
$$J_T^\star (X_1^0):=\inf_{X(\cdot)}J_T(X,X_1^0)$$ subject
to~\eqref{eq:sys:IMP:forced} and $EX(t_0)E^*=X_1^0$. The decision variable is the
full trajectory $X(\cdot)\in\Snmpsd$ satisfying~\eqref{eq:sys:IMP:forced}
with $EX(t_0)E^*=X_1^0$, and the free blocks $X_2,X_3$ are chosen
exactly as in Section~\ref{sec:OC:fh}.

\begin{definition}[Long-term average cost]\label{def:IMP:AC}
The \emph{long-term average cost} (or \emph{average-cost rate}) \cite{Bertsekas:17,Bertsekas:19} is
\begin{equation}\label{eq:IMP:AC:rho}
  \rho^\star(X_1^0)
  :=\limsup_{T\to\infty}\frac{1}{T}\,J_T^\star (X_1^0).
\end{equation}
\end{definition}

In contrast to the infinite-horizon problem of
Section~\ref{sec:OC:inf}, the persistent forcing
$\mathcal{W},\mathcal{W}_k$ generally makes the integrated cost
$J_\infty^\star =\lim_{T\to\infty}J_T^\star $ infinite, so~\eqref{eq:IMP:LQ:Jinf}
is no longer well-posed and the rate~\eqref{eq:IMP:AC:rho} is the
appropriate object of study.

\medskip\noindent\textbf{Assumptions.}
The well-posedness of the average-cost problem requires three
assumptions on the homogeneous dynamics and the forcing data.

\medskip\noindent\textbf{(A1) Uniform stabilizability of the homogeneous
dynamics.}
There exist a feedback $K_s\in\C^{m\times n}$, constants
$\beta\ge1$, $\rho\in(0,1)$ and $\alpha>0$, such that the homogeneous closed-loop
trajectory $X_s^{\mathrm h}$ of~\eqref{eq:sys:IMP} (i.e.,
$\mathcal{W}=0$, $\mathcal{W}_k=0$) with feedback
$X_2^s=K_sX_1^s$, $X_3^s=K_sX_1^sK_s^*$ satisfies
$\norm{EX_s^{\mathrm h}(t)E^*}\le\beta e^{-\alpha(t-s)}\rho^{\kappa(t,s)}
\norm{EX_s^{\mathrm h}(s)E^*}$ for all $t\ge s\ge t_0$.

\medskip\noindent\textbf{(A2) Uniform coercivity.}
$EZ(t)E^*\succeq\delta I$ uniformly for some $\delta>0$.

\medskip\noindent\textbf{(A3) Uniform boundedness of the operators
and supply rates.}
$Z,Z_k,\mathcal{F}(t,\cdot),\mathcal{J}(k,\cdot)$ are uniformly bounded;
$(\mathcal{C}_Z)_3(t,P(t))\succeq0$ and $(\mathcal{D}_Z)_3\succeq0$
for all $P\succeq0$; $\pinv{(\mathcal{C}_Z)_3}$ and
$\pinv{(\mathcal{D}_Z)_3}$ are uniformly bounded on bounded sets of
$P$; and $\mathcal{W}(t)$, $\mathcal{W}_k$ are uniformly bounded.

\medskip\noindent\textbf{(A4) Jump-rate regularity.}
The inter-jump times $\Delta_k=t_{k+1}-t_k$ are bounded both above
and below by positive constants: there exist $0<\Delta_{\min}\le
\Delta_{\max}<\infty$ such that $\Delta_k\in[\Delta_{\min},
\Delta_{\max}]$ for all $k\ge0$.  In particular, the jump rate
$\rho_{\mathrm{rate}}:=\lim_{T\to\infty}T^{-1}\#\{k:t_k\le t_0+T\}$
exists and is finite, with $\Delta_{\max}^{-1}\le\rho_{\mathrm{rate}}
\le\Delta_{\min}^{-1}$.

\medskip
Assumption~(A1) is the same uniform stabilizability condition as in
(H1) except that the boundedness $J_\infty(X_s,X_1^0)\le C_s\norm{X_1^0}$
of the cumulative cost is dropped, since this can no longer hold under
persistent forcing; only the exponential bound on
$\norm{EX_s^{\mathrm h}E^*}$ for the homogeneous system is retained.
Assumptions~(A2),~(A3) coincide with~(H2),~(H3) except for the
additional uniform boundedness of the forcing data.  Together,
(A1)--(A3) imply that the infinite-horizon I-GRE of
Section~\ref{sec:OC:inf}, which only sees the homogeneous part of the
dynamics, has a unique bounded solution
$P_\infty:[t_0,\infty)\to\Snpsd$ by
Theorem~\ref{th:IMP:LQ:inf}.

Assumption~(A4) plays a different role from~(A1)--(A3).
Assumption~(A1), through the dual decay rate
$\beta e^{-\alpha(t-s)}\rho^{\kappa(t,s)}$, already ensures that the
homogeneous closed-loop trajectory decays robustly under both small
and large inter-jump times: small $\Delta_k$ are absorbed by the
geometric factor $\rho^{\kappa(t,s)}$ (each jump contributing decay),
while large $\Delta_k$ are absorbed by the continuous factor
$e^{-\alpha(t-s)}$ (the flow being independently stable).
Consequently, state stability and the summability of the jump-cost
contribution $\sum_k\|Z_k\|\,\|X(t_k^-)\|$ along stabilising
trajectories follow from~(A1) alone, without any direct constraint
on the $\Delta_k$.  The role of~(A4) is purely to make the
\emph{time-averaged} long-term cost well-defined as a genuine limit
rather than a $\limsup$: the existence of the jump rate
$\rho_{\mathrm{rate}}$ ensures that the discrete jump contribution
admits a long-run average, and bounded $\Delta_k$ ensures that the
supply rates $\mathcal{W},\mathcal{W}_k$ are properly time-averaged.
Theorems giving only an upper bound on the long-term cost invoke
only~(A1)--(A3); theorems giving exact long-term cost equalities
additionally require~(A4).  The condition can be weakened to an
average dwell-time bound at the cost of replacing the limit by a
$\limsup$.

\subsubsection{Forced key identity}
\label{sec:OC:AC:keyid}

Compared with Proposition~\ref{prop:keyid}, the differentiation
of $\ip{P(t)}{EX(t)E^*}$ along the forced
flow~\eqref{eq:sys:IMP:forced:flow} produces an additional term
$\ip{P(t)}{\mathcal{W}(t)}$, and the jump
relation~\eqref{eq:sys:IMP:forced:jump} contributes an additional
term $\ip{P(t_k^+)}{\mathcal{W}_k}$ at each $t_k$.  Both terms are
\emph{linear in $P$} and \emph{decoupled from the trajectory $X$}:
they appear as additive corrections that do not interact with the
$\mathcal{C}_Z,\mathcal{D}_Z$ blocks.

\begin{proposition}[Forced impulsive key identity]\label{prop:keyid:forced}
For any piecewise-$C^1$ $P:[t_0,t_0+T]\to\Sn$ and any trajectory
$X(\cdot)\in\Snmpsd$ satisfying~\eqref{eq:sys:IMP:forced} with
$EX(t_0)E^*=X_1^0$,
\begin{align}
  J_T(X,X_1^0)
  &=\ip{P(t_0)}{X_1^0}
   -\ip{P(t_0+T)}{EX(t_0+T)E^*}
   \notag\\
  &\quad
   +\int_{t_0}^{t_0+T}\ip{\mathcal{C}_Z(t,P(t))}{X(t)}\dt
   +\sum_{k=1}^{N_T}
    \ip{\mathcal{D}_Z(k,P(t_k^+),P(t_k^-))}{X(t_k^-)}
   \notag\\
  &\quad
   +\int_{t_0}^{t_0+T}\ip{P(t)}{\mathcal{W}(t)}\dt
   +\sum_{k=1}^{N_T}\ip{P(t_k^+)}{\mathcal{W}_k}.
   \label{eq:IMP:keyid:forced}
\end{align}
\end{proposition}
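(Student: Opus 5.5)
The plan is to repeat the telescoping argument of Proposition~\ref{prop:keyid} verbatim, keeping track of the two extra forcing contributions. Throughout, $P$ is an arbitrary piecewise-$C^1$ function and $X$ an arbitrary solution of~\eqref{eq:sys:IMP:forced}. No sign or positivity property of $P$ or $\mathcal{C}_Z,\mathcal{D}_Z$ is used, so the identity is purely algebraic.

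\emph{Flow step.} On each flow interval $(t_{k-1},t_k)$, with the conventions $t_0^+=t_0$ and $t_{N_T+1}^-=t_0+T$, differentiate $t\mapsto\ip{P(t)}{EX(t)E^*}$ by the product rule:
\[
\tfrac{d}{dt}\ip{P}{EXE^*}=\ip{\dot P}{EXE^*}+\ip{P}{\mathcal{F}(t,X)}+\ip{P}{\mathcal{W}(t)}.
\]
Rewrite the first term as $\ip{E^*\dot PE}{X}$ and the second as $\ip{\mathcal{F}^*(t,P)}{X}$ using the adjoint. Adding and subtracting $\ip{Z}{X}$, the derivative equals $\ip{\mathcal{C}_Z(t,P)}{X}-\ip{Z}{X}+\ip{P}{\mathcal{W}}$. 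Integrating over the interval gives the analogue of~\eqref{pf:key:flow} with the extra term $\int\ip{P(t)}{\mathcal{W}(t)}\dt$. Integration is legitimate because $P$ is $C^1$ and $X$ is absolutely continuous on each open interval, with one-sided limits at the endpoints.

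\emph{Jump step.} At each $t_k$, substitute~\eqref{eq:sys:IMP:forced:jump} into $\ip{P(t_k^+)}{EX(t_k^+)E^*}$. This produces $\ip{\mathcal{J}^*(k,P(t_k^+))}{X(t_k^-)}+\ip{P(t_k^+)}{\mathcal{W}_k}$. Subtracting $\ip{E^*P(t_k^-)E}{X(t_k^-)}$ and adding and subtracting $\ip{Z_k}{X(t_k^-)}$ yields the analogue of~\eqref{pf:key:jump}, again with the single extra term $\ip{P(t_k^+)}{\mathcal{W}_k}$.

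\emph{Telescoping.} Sum the flow identities over the $N_T+1$ intervals and the jump identities over the $N_T$ jumps. The left-hand sides telescope to $\ip{P(t_0+T)}{EX(t_0+T)E^*}-\ip{P(t_0)}{X_1^0}$. The $-\ip{Z}{X}$ and $-\ip{Z_k}{X(t_k^-)}$ pieces sum to $-J_T(X,X_1^0)$, as defined in~\eqref{eq:IMP:AC:JT} (no terminal term). Rearranging gives~\eqref{eq:IMP:keyid:forced}.

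The only point needing care is bookkeeping. The forcing enters additively, so it contributes a term that is linear in $P$ and independent of $X$. In the jump term this contribution is evaluated at $P(t_k^+)$, not $P(t_k^-)$. The sign of the terminal term is $-\ip{P(t_0+T)}{\cdot}$ because the cost carries no terminal penalty here. There is no genuine obstacle beyond checking these conventions.
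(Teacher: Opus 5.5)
Your proposal is correct and follows essentially the same route as the paper's proof: differentiate $\ip{P(t)}{EX(t)E^*}$ along the forced flow to pick up the extra term $\ip{P(t)}{\mathcal{W}(t)}$, and substitute the forced jump map to pick up the extra term $\ip{P(t_k^+)}{\mathcal{W}_k}$. The rest (telescoping over the flow intervals and jumps, then rearranging) is exactly as in the paper. Your bookkeeping of the signs and of the $P(t_k^+)$ evaluation in the jump forcing term matches the statement.
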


\begin{proof}
On the flow interval $(t_{k-1},t_k)$, differentiating
$\ip{P(t)}{EX(t)E^*}$ along~\eqref{eq:sys:IMP:forced:flow}:
\begin{align*}
  \frac{d}{dt}\ip{P(t)}{EX(t)E^*}
  &=\ip{\dot P(t)}{EX(t)E^*}+\ip{P(t)}{E\dot X(t)E^*}\\
  &=\ip{E^*\dot P(t)E}{X(t)}
   +\ip{P(t)}{\mathcal{F}(t,X(t))+\mathcal{W}(t)}\\
  &=\ip{E^*\dot P(t)E+\mathcal{F}^*(t,P(t))}{X(t)}
   +\ip{P(t)}{\mathcal{W}(t)}\\
  &=\ip{\mathcal{C}_Z(t,P(t))-Z(t)}{X(t)}
   +\ip{P(t)}{\mathcal{W}(t)},
\end{align*}
using the adjoint identity
$\ip{P}{\mathcal{F}(t,X)}=\ip{\mathcal{F}^*(t,P)}{X}$ and the
definition~\eqref{eq:IMP:L} of $\mathcal{C}_Z$.
Integrating on $(t_{k-1},t_k)$ and rearranging:
\begin{align}\label{pf:keyid:forced:flow}
  &\ip{P(t_k^-)}{EX(t_k^-)E^*}
  -\ip{P(t_{k-1}^+)}{EX(t_{k-1}^+)E^*}
  \notag\\
  &\quad
  =-\int_{t_{k-1}}^{t_k}\ip{Z(t)}{X(t)}\dt
   +\int_{t_{k-1}}^{t_k}\ip{\mathcal{C}_Z(t,P(t))}{X(t)}\dt
   +\int_{t_{k-1}}^{t_k}\ip{P(t)}{\mathcal{W}(t)}\dt.
\end{align}
At each jump $t_k$, using~\eqref{eq:sys:IMP:forced:jump} and the
adjoint relation,
\begin{align*}
  &\ip{P(t_k^+)}{EX(t_k^+)E^*}-\ip{P(t_k^-)}{EX(t_k^-)E^*}\\
  &\quad=\ip{P(t_k^+)}{\mathcal{J}(k,X(t_k^-))+\mathcal{W}_k}
   -\ip{P(t_k^-)}{EX(t_k^-)E^*}\\
  &\quad=\ip{\mathcal{J}^*(k,P(t_k^+))-E^*P(t_k^-)E}{X(t_k^-)}
   +\ip{P(t_k^+)}{\mathcal{W}_k}\\
  &\quad=\ip{\mathcal{D}_Z(k,P(t_k^+),P(t_k^-))}{X(t_k^-)}
   -\ip{Z_k}{X(t_k^-)}
   +\ip{P(t_k^+)}{\mathcal{W}_k},
\end{align*}
using~\eqref{eq:IMP:Lj}.  Hence
\begin{align}\label{pf:keyid:forced:jump}
  \ip{P(t_k^+)}{EX(t_k^+)E^*}-\ip{P(t_k^-)}{EX(t_k^-)E^*}
  =\ip{\mathcal{D}_Z}{X(t_k^-)}-\ip{Z_k}{X(t_k^-)}
   +\ip{P(t_k^+)}{\mathcal{W}_k}.
\end{align}
Summing~\eqref{pf:keyid:forced:flow} over all $N_T+1$ flow intervals
and~\eqref{pf:keyid:forced:jump} over all $N_T$ jumps, the
$\ip{P}{EXE^*}$ terms telescope to
$\ip{P(t_0+T)}{EX(t_0+T)E^*}-\ip{P(t_0)}{X_1^0}$ on the left.
Substituting and rearranging gives~\eqref{eq:IMP:keyid:forced}.
\end{proof}

The forced key identity~\eqref{eq:IMP:keyid:forced} is the source of
every average-cost result below.  The first four terms are exactly
those of~\eqref{eq:IMP:keyid} with $Z_T=0$, and the last two are
the linear forcing pairings that produce $\rho^\star$.

\subsubsection{Finite-horizon average cost}
\label{sec:OC:AC:fh}

The finite-horizon \emph{time-averaged cost} is
$\bar J_T(X,X_1^0):=J_T(X,X_1^0)/T$, with optimum
$\bar J_T^\star (X_1^0):=J_T^\star (X_1^0)/T$.  By a routine
adaptation of Theorem~\ref{th:IMP:LQ:suff} to the forced
dynamics, $J_T^\star (X_1^0)$ is attained by the same I-GRE feedback
$K(t)\in\mathcal{K}_c(t,P_T(t))$ and $K_k\in\mathcal{K}_d$ as in the
homogeneous case, where $P_T$ is the unique piecewise-$C^1$ solution
of the finite-horizon I-GRE (Definition~\ref{def:IMP:GRE}) with
terminal condition $P_T(t_0+T)=0$.  This is because the forcing
$\mathcal{W},\mathcal{W}_k$ enters the forced key
identity~\eqref{eq:IMP:keyid:forced} only through the last two terms,
which are independent of the free blocks $(X_2,X_3)$; the
minimization over $(X_2,X_3)$ is therefore identical to that of
Section~\ref{sec:OC:fh} and is achieved by the homogeneous I-GRE
feedback.  The value of the optimum and its time-averaged version are
given by the following theorem.

\begin{theorem}[Finite-horizon average cost decomposition]
\label{th:IMP:AC:finite}
Let $T>0$ and suppose the finite-horizon I-GRE
(Definition~\ref{def:IMP:GRE}) has a solution $P_T\succeq0$ with
$P_T(t_0+T)=0$.  Then the optimal cost
$J_T^\star (X_1^0)$ of~\eqref{eq:IMP:AC:JT} under~\eqref{eq:sys:IMP:forced}
decomposes as
\begin{equation}\label{eq:IMP:AC:finite:J}
  J_T^\star (X_1^0)
  =\ip{P_T(t_0)}{X_1^0}
  +\int_{t_0}^{t_0+T}\ip{P_T(t)}{\mathcal{W}(t)}\dt
  +\sum_{k:\,t_k\le t_0+T}\ip{P_T(t_k^+)}{\mathcal{W}_k},
\end{equation}
and the finite-horizon time-averaged optimum is
\begin{equation}\label{eq:IMP:AC:finite:rho}
  \bar J_T^\star (X_1^0)
  =\frac{1}{T}\ip{P_T(t_0)}{X_1^0}
  +\frac{1}{T}\int_{t_0}^{t_0+T}\ip{P_T(t)}{\mathcal{W}(t)}\dt
  +\frac{1}{T}\!\!\sum_{k:\,t_k\le t_0+T}\!\!\ip{P_T(t_k^+)}{\mathcal{W}_k}.
\end{equation}
The optimum is attained at $X^\star$ with
$K(t)\in\mathcal{K}_c(t,P_T(t))$ on flow and
$K_k\in\mathcal{K}_d(k,P_T(t_k^+),P_T(t_k^-))$ at each jump.
\end{theorem}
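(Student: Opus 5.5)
The plan is to replay the sufficiency argument of Theorem~\ref{th:IMP:LQ:suff}, but with the forced key identity of Proposition~\ref{prop:keyid:forced} in place of Proposition~\ref{prop:keyid}. I will take $P=P_T$, the given I-GRE solution with $P_T(t_0+T)=0$. Then the terminal term $-\ip{P_T(t_0+T)}{EX(t_0+T)E^*}$ in \eqref{eq:IMP:keyid:forced} vanishes. Hence, for every admissible trajectory $X$ of \eqref{eq:sys:IMP:forced},
\[
J_T(X,X_1^0)=\ip{P_T(t_0)}{X_1^0}+\int_{t_0}^{t_0+T}\ip{P_T(t)}{\mathcal{W}(t)}\dt+\sum_{k}\ip{P_T(t_k^+)}{\mathcal{W}_k}+I_f(X)+I_j(X),
\]
where $I_f$ and $I_j$ are the flow integral and jump sum of $\ip{\mathcal{C}_Z}{X}$ and $\ip{\mathcal{D}_Z}{X(t_k^-)}$. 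The key observation is that the first three terms depend only on $X_1^0$, $P_T$ and the forcing data. They do not depend on the trajectory, and in particular not on the free blocks $(X_2,X_3)$.

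\textbf{Lower bound.} By the I-GRE and Lemma~\ref{lemma:GREequiv}, $\mathcal{C}_Z(t,P_T(t))\succeq0$ with vanishing generalized Schur complement on each flow interval, and likewise for $\mathcal{D}_Z$ at every jump. Lemma~\ref{lemma:decomp}, applied pointwise exactly as in \eqref{pf:suff:schur:flow}--\eqref{pf:suff:schur:jump}, then gives $I_f(X)\ge0$ and $I_j(X)\ge0$. So every admissible $X$ has cost at least the right-hand side of \eqref{eq:IMP:AC:finite:J}. This step uses only the pointwise positivity of the composite operators, so the forcing plays no role in it.

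\textbf{Attainment.} I close the loop with $K(t)\in\mathcal{K}_c(t,P_T(t))$ and $K_k\in\mathcal{K}_d(k,P_T(t_k^+),P_T(t_k^-))$, that is, $U_2=-KU_1$ on flow and at the pre-jump instants. The attainment computation of Theorem~\ref{th:IMP:LQ:suff} then gives $(\mathcal{C}_Z)_3W_f^\star=0$ and $(\mathcal{D}_Z)_3W_k^\star=0$, so $I_f(X^\star)=I_j(X^\star)=0$. Combining this with the lower bound yields \eqref{eq:IMP:AC:finite:J}, and dividing by $T>0$ gives \eqref{eq:IMP:AC:finite:rho}.

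\textbf{Main obstacle.} The step I expect to be delicate is checking that the closed-loop forced trajectory $X^\star$ is admissible. This means showing it exists on $[t_0,t_0+T]$ and stays in $\Snmpsd$. I would build it by setting $X^\star=\begin{bmatrix}I\\ -K\end{bmatrix}X_1^\star\begin{bmatrix}I\\ -K\end{bmatrix}^*$. Then $X_1^\star$ solves a linear ODE with jumps, driven by the positive semidefinite inputs $\mathcal{W}$ and $\mathcal{W}_k$. Its solution stays positive semidefinite because $\mathcal{F}$ and $\mathcal{J}$ preserve the cone and the forcing is positive semidefinite; this can be seen through a variation-of-constants representation. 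Measurability and boundedness of $K$ require choosing the gain via the pseudoinverse formula with $F=0$. Its regularity on each flow interval then follows from the piecewise-$C^1$ regularity of $P_T$ together with the compatibility condition \eqref{eq:IMP:GRE:fC}.
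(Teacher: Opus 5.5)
Your proposal is correct and is essentially the paper's own proof. You apply the forced key identity with $P=P_T$, the terminal term disappears because $P_T(t_0+T)=0$, the I-GRE makes the $\mathcal{C}_Z$ and $\mathcal{D}_Z$ pairings nonnegative and zero at gains in $\mathcal{K}_c,\mathcal{K}_d$, and the forcing pairings do not depend on the free blocks.

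Your admissibility check goes beyond what the paper writes. Note, however, that boundedness of the $F=0$ gain does not follow from piecewise-$C^1$ regularity of $P_T$ and the compatibility condition alone, because the pseudoinverse of $(\mathcal{C}_Z)_3$ can blow up where its rank drops. That step is therefore an extra regularity hypothesis on the I-GRE solution, which the paper's attainment claim also assumes tacitly.
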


\begin{proof}
Apply the forced key identity~\eqref{eq:IMP:keyid:forced} with
$P=P_T$ and $P_T(t_0+T)=0$:
\begin{align*}
  J_T(X,X_1^0)
  &=\ip{P_T(t_0)}{X_1^0}-\underbrace{\ip{P_T(t_0+T)}{EX(t_0+T)E^*}}_{=\,0}
   \notag\\
  &\quad
   +\int_{t_0}^{t_0+T}\ip{\mathcal{C}_Z(t,P_T(t))}{X(t)}\dt
   +\sum_{k=1}^{N_T}\ip{\mathcal{D}_Z(k,P_T(t_k^+),P_T(t_k^-))}{X(t_k^-)}
   \notag\\
  &\quad
   +\int_{t_0}^{t_0+T}\ip{P_T(t)}{\mathcal{W}(t)}\dt
   +\sum_{k=1}^{N_T}\ip{P_T(t_k^+)}{\mathcal{W}_k}.
\end{align*}
Since $P_T$ solves the I-GRE, $\mathcal{C}_Z(t,P_T(t))\succeq0$ with
$\mathcal{C}_Z\,\schur{/}\,(\mathcal{C}_Z)_3=0$, and the analogous
condition for $\mathcal{D}_Z$ at each jump (Definition~\ref{def:IMP:GRE}).
The Schur inner-product decomposition (Lemma~\ref{lemma:decomp})
applied as in the proof of Theorem~\ref{th:IMP:LQ:suff} gives
$\ip{\mathcal{C}_Z(t,P_T(t))}{X(t)}\ge0$ and
$\ip{\mathcal{D}_Z}{X(t_k^-)}\ge0$ for every admissible $X$, with
equality whenever $K(t)\in\mathcal{K}_c(t,P_T(t))$ on flow and
$K_k\in\mathcal{K}_d$ at each jump.  Since the last two terms in the
forced key identity are independent of the free blocks
$(X_2,X_3)$, the infimum over admissible $X$ is attained at any such
choice and equals~\eqref{eq:IMP:AC:finite:J}.
Dividing by $T$ gives~\eqref{eq:IMP:AC:finite:rho}.
\end{proof}

The decomposition~\eqref{eq:IMP:AC:finite:rho} separates
$\bar J_T^\star $ into a \emph{transient term}
$\ip{P_T(t_0)}{X_1^0}/T$ that depends on the initial condition and
vanishes as $T\to\infty$, and \emph{forcing terms} whose
$T\to\infty$ limit gives the long-term average cost rate; this is the
content of the next subsection.

\subsubsection{Infinite-horizon average cost: sufficient condition}
\label{sec:OC:AC:inf:suff}

The following theorem is the central sufficient condition of this
subsection.  It says that any bounded solution of the
\emph{homogeneous} infinite-horizon I-GRE
(Section~\ref{sec:OC:inf}) determines the long-term average cost rate
through a linear pairing with the forcing data.  No new
Riccati-type equation is required.

\begin{theorem}[Sufficient condition, infinite-horizon average cost]
\label{th:IMP:AC:suff}
Suppose there exists a bounded piecewise-$C^1$ function
$P_\infty:[t_0,\infty)\to\Snpsd$ satisfying the infinite-horizon
I-GRE: $\mathcal{C}_Z(t,P_\infty)\succeq0$,
$\mathcal{C}_Z\,\schur{/}\,(\mathcal{C}_Z)_3=0$ for a.e.\ $t$, and the
corresponding jump conditions at every $t_k$.  Assume further that
the limit
\begin{equation}\label{eq:IMP:AC:rhostar}
  \rho^\star
  :=\lim_{T\to\infty}\frac{1}{T}\!\left[
    \int_{t_0}^{t_0+T}\ip{P_\infty(t)}{\mathcal{W}(t)}\dt
    +\!\!\sum_{k:\,t_k\le t_0+T}\!\!\ip{P_\infty(t_k^+)}{\mathcal{W}_k}
  \right]
\end{equation}
exists.  Then\textup{:}
\begin{enumerate}[\upshape(a)]
  \item\label{AC:suff:val}
    The long-term average cost satisfies
    $\rho^\star(X_1^0)=\rho^\star$ for every $X_1^0\in\Snpsd$,
    i.e., $\rho^\star$ is independent of the initial condition.
  \item\label{AC:suff:gain}
    The optimum is attained at any trajectory $X^\star$ with
    $K_\infty(t)\in\mathcal{K}_c(t,P_\infty(t))$ on flow and
    $K_k\in\mathcal{K}_d(k,P_\infty(t_k^+),P_\infty(t_k^-))$ at each
    jump.
  \item\label{AC:suff:uniq}
    $P_\infty$ is the unique bounded solution of the infinite-horizon
    I-GRE.
\end{enumerate}
\end{theorem}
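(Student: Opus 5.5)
The plan is to apply the forced key identity (Proposition~\ref{prop:keyid:forced}) on $[t_0,t_0+T]$ with $P=P_\infty$. The identity splits $J_T(X,X_1^0)$ into four pieces: a transient term $\ip{P_\infty(t_0)}{X_1^0}-\ip{P_\infty(t_0+T)}{EX(t_0+T)E^*}$, the nonnegative flow and jump remainders $\int\ip{\mathcal{C}_Z}{X}\dt+\sum_k\ip{\mathcal{D}_Z}{X(t_k^-)}$, and the forcing pairing $F_T:=\int\ip{P_\infty}{\mathcal{W}}\dt+\sum_k\ip{P_\infty(t_k^+)}{\mathcal{W}_k}$. The forcing pairing is independent of $X$, and by assumption $F_T/T\to\rho^\star$. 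The proof then reduces to a lower and an upper bound on $J_T^\star/T$, both of which differ from $F_T/T$ only by $O(1)$ terms.

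\textbf{Lower bound.} Since $P_\infty$ solves the I-GRE, Lemma~\ref{lemma:decomp} shows, exactly as in Theorem~\ref{th:IMP:LQ:suff}, that both remainders are nonnegative for every admissible $X$. This gives $J_T(X,X_1^0)\ge F_T-\ip{P_\infty(t_0+T)}{EX(t_0+T)E^*}$. Unlike the homogeneous case, the terminal term no longer vanishes, so it must be controlled. Two cases arise. If $\sup_T\norm{EX(T)E^*}$ is finite, boundedness of $P_\infty$ ($P_\infty\preceq\alpha_2I$) makes the terminal term $O(1)$, and it disappears after dividing by $T$. If instead $\norm{EX(T)E^*}$ is unbounded along a sequence, I would use (A2) to exclude it from being near-optimal. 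Integrating the flow over a window before $T$ (using the boundedness of $\mathcal{F}$, (A3), and a Gronwall lower bound as in Theorem~\ref{th:IMP:LQ:inf}) gives $J_T(X)\ge c\,\norm{EX(T)E^*}-C$. Then $J_T\ge\max(F_T-\alpha_2 s,\ cs-C)$ with $s=\norm{EX(T)E^*}$. For large $s$ this exceeds the upper bound on $J_T^\star$ obtained below, so $J_T^\star\ge F_T-O(1)$ after restricting to candidate trajectories with $J_T$ below the upper bound. I expect this step to be the main obstacle: the coercivity-based argument relating $J_T$ to the terminal state must be made uniform in $T$. An alternative is to compare directly with Theorem~\ref{th:IMP:AC:finite}, $J_T^\star=\ip{P_T(t_0)}{X_1^0}+F_T^{(P_T)}$, and show $\frac1T\bigl(F_T^{(P_T)}-F_T\bigr)\to0$, using $P_T\to P_\infty$ monotonically with an exponential rate inherited from (A1) away from the horizon end.

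\textbf{Upper bound and attainment.} For the feedback with $K_\infty\in\mathcal{K}_c(t,P_\infty)$ and $K_k\in\mathcal{K}_d$, both remainders vanish as in Theorem~\ref{th:IMP:LQ:suff}. This gives $J_T(X^\star,X_1^0)=\ip{P_\infty(t_0)}{X_1^0}-\ip{P_\infty(t_0+T)}{EX^\star(t_0+T)E^*}+F_T\le\ip{P_\infty(t_0)}{X_1^0}+F_T$, since the terminal term is $\ge0$. Hence $\limsup_T J_T^\star/T\le\rho^\star$, and the same bound holds along $X^\star$. Combined with the lower bound, $\rho^\star(X_1^0)=\rho^\star$ for every $X_1^0$, since $\ip{P_\infty(t_0)}{X_1^0}/T\to0$. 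This proves (a) and shows that $X^\star$ attains the optimal rate, which is (b). It also supplies the boundedness of $\norm{EX^\star(T)E^*}$ needed above: along $X^\star$, the Gronwall decay from Theorem~\ref{th:IMP:LQ:inf:suff} combined with the bounded forcing (A3) keeps the state uniformly bounded.

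\textbf{Uniqueness (c).} The average-cost rate sees only the forcing pairing, so uniqueness cannot be read off from $\rho^\star$. Instead I would observe that the I-GRE conditions involve only the homogeneous data. Any bounded solution $\tilde P$ is therefore a bounded solution of the homogeneous infinite-horizon I-GRE of Section~\ref{sec:OC:inf}, and Theorem~\ref{th:IMP:LQ:inf:suff}\eqref{inf:suff:uniq} (with (A2) playing the role of (H2)) gives $\tilde P\equiv P_\infty$.
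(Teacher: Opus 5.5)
Your skeleton matches the paper's proof: the forced key identity with $P_\infty$, nonnegativity of the $\mathcal{C}_Z,\mathcal{D}_Z$ remainders via Lemma~\ref{lemma:decomp}, their vanishing along $\mathcal{K}_c,\mathcal{K}_d$, and uniqueness via Theorem~\ref{th:IMP:LQ:inf:suff}. Your upper bound, which simply drops the nonnegative terminal term, is cleaner than the paper's. You are also right that the terminal term in the lower bound is the crux. The paper disposes of it by citing Lemma~\ref{lem:IMP:AC:bdd} for every admissible trajectory, but that lemma only concerns the $K_s$ closed loop.

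\textbf{The primary repair fails.} Grant $J_T(X)\ge c\,s-C$ for the moment. The upper bound you use to prune candidates is $\ip{P_\infty(t_0)}{X_1^0}+F_T$, and it grows like $\rho^\star T$. So pruning only gives $s\le (F_T+C')/c=O(T)$, and then $F_T-\alpha_2 s$ is only $(1-\alpha_2/c)F_T-O(1)$. Balancing the two bounds yields $\liminf_T J_T^\star/T\ge \frac{c}{c+\alpha_2}\rho^\star$, which is strictly below $\rho^\star$ whenever $\rho^\star>0$.

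Moreover, $J_T(X)\ge c\,s-C$ is itself unjustified for arbitrary admissible $X$. (A2) penalizes only the $X_1$ block, and $Z_3$, $Z_{k,3}$ may be singular. The free blocks, or a jump just before $t_0+T$ fed by a large $X(t_k^-)$, can make $EX(t_0+T)E^*$ large at little cost. The Gronwall estimate of Theorem~\ref{th:IMP:LQ:inf} holds along a closed loop with bounded gains, not along arbitrary trajectories. Finally, your case split is per trajectory, whereas $J_T^\star$ is an infimum over a $T$-dependent family. What you need is control uniform over near-optimal horizon-$T$ trajectories.

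\textbf{Make the alternative your main argument.} By Theorem~\ref{th:IMP:AC:finite}, $J_T^\star=\ip{P_T(t_0)}{X_1^0}+R_T(P_T)$ exactly, with no terminal term. The lower bound therefore reduces to $R_T(P_\infty)-R_T(P_T)=o(T)$. Pointwise monotone convergence $P_T\uparrow P_\infty$ does not give this. You need a uniform boundary-layer estimate of the form $\sup_{t\le t_0+T-L}\|P_\infty(t)-P_T(t)\|\le Ce^{-\gamma L}$.

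One way to obtain it:
\begin{enumerate}
  \item $P_T\preceq P_\infty$, since truncating nonnegative homogeneous costs can only lower the value.
  \item Apply the homogeneous key identity on $[t,t_0+T]$ with $P_\infty$ along the $P_T$-optimal trajectory $\hat X$. This gives $\ip{P_\infty(t)-P_T(t)}{X_1}\le\ip{P_\infty(t_0+T)}{E\hat X(t_0+T)E^*}$.
  \item The right side decays exponentially in $t_0+T-t$. Apply the coercivity/Gronwall argument of Theorem~\ref{th:IMP:LQ:inf:suff} to $\ip{P_T(s)}{E\hat X(s)E^*}$, which does not increase at jumps along $\hat X$. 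Then use $P_T\succeq\alpha_1 I$ away from the horizon end, the bounded gains from (A3), and (A4) to bound the number of jumps, and of jump-forcing terms, in the final window.
\end{enumerate}
With this estimate, $R_T(P_\infty)-R_T(P_T)=O(1)$, and (a)--(b) follow from your upper bound combined with $J_T(X^\star)\ge J_T^\star$.
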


\begin{proof}
\noindent\textbf{Lower bound.}
Let $X$ be any admissible trajectory of~\eqref{eq:sys:IMP:forced}.
Apply the forced key identity~\eqref{eq:IMP:keyid:forced} on
$[t_0,t_0+T]$ with $P=P_\infty$.  Since $P_\infty$ solves the
infinite-horizon I-GRE,
$\mathcal{C}_Z(t,P_\infty(t))\succeq0$ and
$\mathcal{D}_Z(k,P_\infty(t_k^+),P_\infty(t_k^-))\succeq0$.
The Schur inner-product decomposition gives
$\ip{\mathcal{C}_Z(t,P_\infty(t))}{X(t)}\ge0$ a.e.\ and
$\ip{\mathcal{D}_Z}{X(t_k^-)}\ge0$ for every $k$, so
\begin{equation}\label{pf:AC:suff:lb}
  J_T(X,X_1^0)
  \ge\ip{P_\infty(t_0)}{X_1^0}
  -\ip{P_\infty(t_0+T)}{EX(t_0+T)E^*}
  +R_T(P_\infty),
\end{equation}
where
\[
  R_T(P_\infty)
  :=\int_{t_0}^{t_0+T}\ip{P_\infty(t)}{\mathcal{W}(t)}\dt
   +\sum_{k:\,t_k\le t_0+T}\ip{P_\infty(t_k^+)}{\mathcal{W}_k}.
\]
The boundary term $\ip{P_\infty(t_0+T)}{EX(t_0+T)E^*}\ge0$ since
$P_\infty\succeq0$ and $EX(t_0+T)E^*\succeq0$; dropping it can only
decrease the right side, so~\eqref{pf:AC:suff:lb} also holds with
that term omitted.  Dividing~\eqref{pf:AC:suff:lb} by $T$, and
noting that $\ip{P_\infty(t_0)}{X_1^0}/T\to0$ as $T\to\infty$ (both
factors bounded), and that
$\ip{P_\infty(t_0+T)}{EX(t_0+T)E^*}/T\to0$ provided
$\norm{EX(t_0+T)E^*}$ is bounded (which holds along any admissible
trajectory under (A1)--(A3) by Lemma~\ref{lem:IMP:AC:bdd} below):
\[
  \limsup_{T\to\infty}\frac{1}{T}J_T(X,X_1^0)
  \ge\lim_{T\to\infty}\frac{1}{T}R_T(P_\infty)
  =\rho^\star.
\]
Taking the infimum over admissible $X$ yields
$\rho^\star(X_1^0)\ge\rho^\star$.\\

\noindent\textbf{Attainment.}
Let $X^\star$ be the trajectory with gains
$K_\infty(t)\in\mathcal{K}_c(t,P_\infty(t))$ on flow and
$K_k\in\mathcal{K}_d(k,P_\infty(t_k^+),P_\infty(t_k^-))$ at each jump.
By the attainment argument of Theorem~\ref{th:IMP:LQ:suff} (which
relies only on the I-GRE and not on the homogeneous form of the
dynamics),
$\ip{\mathcal{C}_Z(t,P_\infty(t))}{X^\star(t)}=0$ for a.e.\ $t$ and
$\ip{\mathcal{D}_Z(k,P_\infty(t_k^+),P_\infty(t_k^-))}{X^\star(t_k^-)}=0$
for every $k$.  Substituting into~\eqref{eq:IMP:keyid:forced} gives
\begin{equation}
  J_T(X^\star,X_1^0)
  =\ip{P_\infty(t_0)}{X_1^0}
  -\ip{P_\infty(t_0+T)}{EX^\star(t_0+T)E^*}
  +R_T(P_\infty).
\end{equation}
Dividing by $T$ and letting $T\to\infty$, the first and second terms
vanish (boundedness and $1/T$), and $R_T(P_\infty)/T\to\rho^\star$
by hypothesis.  Hence
$\limsup_{T\to\infty}J_T(X^\star,X_1^0)/T=\rho^\star$, so the
infimum is achieved and
$\rho^\star(X_1^0)=\rho^\star$, proving~\eqref{AC:suff:val}
and~\eqref{AC:suff:gain}.\\

\noindent\textbf{Uniqueness of $P_\infty$.}
If $\tilde P$ is another bounded solution of the infinite-horizon
I-GRE, the lower-bound and attainment arguments above give
$\rho^\star=\lim_{T\to\infty}R_T(\tilde P)/T$ as well.  But
$\tilde P\equiv P_\infty$ already by Theorem~\ref{th:IMP:LQ:inf:suff}
(the homogeneous I-GRE has a unique bounded solution).  This
proves~\eqref{AC:suff:uniq}.
\end{proof}

The bounded-trajectory claim used in the lower bound is recorded as
the following lemma, whose proof is essentially the standard
input-to-state argument for linear systems and is included for
completeness.

\begin{lemma}[Bounded second moment under forcing]\label{lem:IMP:AC:bdd}
Under (A1) and (A3), for any admissible trajectory $X(\cdot)$
of~\eqref{eq:sys:IMP:forced} with stabilizing feedback $K_s$ from
(A1), there exists a constant $C>0$ (depending only on $K_s$,
$\beta,\alpha,\rho$ from (A1), and the bounds in (A3), and
$\norm{X_1^0}$) such that $\norm{EX(t)E^*}\le C$ for all $t\ge t_0$.
\end{lemma}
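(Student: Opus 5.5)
The plan is a variation-of-constants argument for the closed-loop forced system, using only linearity of the dynamics and the decay estimate in (A1). Fix the stabilizing feedback $K_s$ and consider the closed-loop trajectory with $X_2=K_sX_1$, $X_3=K_sX_1K_s^*$. The constrained block $X_1$ then obeys a linear, cone-preserving impulsive equation
\begin{equation*}
  \dot X_1=\mathcal{F}_{K_s}(t,X_1)+\mathcal{W}(t),\qquad X_1(t_k^+)=\mathcal{J}_{K_s}(k,X_1(t_k^-))+\mathcal{W}_k,
\end{equation*}
where $\mathcal{F}_{K_s}(t,Q):=\mathcal{F}(t,[I\;K_s^*]^*Q[I\;K_s^*])$ and $\mathcal{J}_{K_s}$ is defined analogously. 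Let $\Psi(t,s)$ denote the evolution operator of the homogeneous part, including jumps in $(s,t]$. Assumption (A1) reads $\norm{\Psi(t,s)Q}\le\beta e^{-\alpha(t-s)}\rho^{\kappa(t,s)}\norm{Q}$ for $Q\succeq0$. Since $\Psi(t,s)$ is linear and positive, this extends to all of $\Sn$ with an extra factor of at most $2$, using the splitting $Q=Q_+-Q_-$. Because the forcing is positive semidefinite, however, only the cone case is actually needed.

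The first step is to derive the impulsive variation-of-constants formula
\begin{equation*}
  X_1(t)=\Psi(t,t_0)X_1^0+\int_{t_0}^{t}\Psi(t,s)\mathcal{W}(s)\ds+\sum_{k:\,t_0<t_k\le t}\Psi(t,t_k^+)\mathcal{W}_k .
\end{equation*}
This can be checked by differentiating on flow intervals and checking the increment at each jump, or equivalently by induction over jump intervals. Each term is positive semidefinite, so the nuclear norm is additive and equals the trace. Taking traces and applying (A1) gives
\begin{equation*}
  \norm{X_1(t)}\le\beta\norm{X_1^0}+\beta\sup_s\norm{\mathcal{W}(s)}\int_{t_0}^{t}e^{-\alpha(t-s)}\ds+\beta\sup_k\norm{\mathcal{W}_k}\sum_{k:\,t_k\le t}\rho^{\kappa(t,t_k)}.
\end{equation*}
The integral is at most $1/\alpha$. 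The jump sum is also controlled: the jumps counted backward from $t$ carry exponents $\kappa=0,1,2,\dots$, each appearing at most once, so the sum is at most $\sum_{j\ge0}\rho^j=1/(1-\rho)$. This bound needs no dwell-time assumption, which is consistent with the paper's remark that (A4) is not required here. The resulting constant is $C=\beta\bigl(\norm{X_1^0}+\bar w/\alpha+\bar w_d/(1-\rho)\bigr)$, where $\bar w:=\sup_s\norm{\mathcal{W}(s)}$ and $\bar w_d:=\sup_k\norm{\mathcal{W}_k}$ are finite by (A3). This depends only on the quantities listed in the statement.

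The main obstacle I expect is interpretive rather than technical. The lemma says ``any admissible trajectory \ldots\ with stabilizing feedback $K_s$'', and this must be read as trajectories generated by the feedback $K_s$, or by a feedback with the same uniform decay. An arbitrary admissible $X$ with free blocks chosen adversarially is not bounded in general. I would therefore state the proof for the $K_s$ closed loop. To cover the use made of it in Theorem~\ref{th:IMP:AC:suff}, I would then note that the same argument applies verbatim to any feedback whose homogeneous closed loop satisfies an estimate of the form in (A1). For the competing trajectories in the lower bound, it suffices to restrict to $X$ with $\limsup_T J_T(X)/T<\infty$. Combining (A2) with the forced key identity then yields $\frac1T\ip{P_\infty(t_0+T)}{EX(t_0+T)E^*}\to0$ along a subsequence, which is all the limsup argument there requires.

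The remaining technical point is justifying the variation-of-constants formula in the impulsive setting. I would handle it through the evolution-operator property $\Psi(t,r)\Psi(r,s)=\Psi(t,s)$ and uniqueness of solutions of the linear impulsive equation, which is guaranteed by the boundedness and continuity of $\mathcal{F}$ in $t$.
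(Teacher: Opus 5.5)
Your proof is correct and matches the paper's argument step for step: variation of constants for the $K_s$ closed loop, the (A1) decay applied to each term, the bound $\int_{t_0}^t e^{-\alpha(t-s)}\ds\le 1/\alpha$ for the flow forcing, and the backward-enumerated geometric series $\sum_{j\ge0}\rho^j$ for the jump forcing, which gives the same constant $C$. Your caveat that the bound covers only trajectories generated by $K_s$ (or by a feedback with the same uniform decay), not arbitrary admissible $X$, is well taken: the paper's own proof also works with the closed loop under that same feedback, although the lemma is later invoked for \emph{any} admissible trajectory in the lower-bound steps of the average-cost theorems.
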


\begin{proof}
Let $X^{\mathrm h}$ be the homogeneous trajectory with the same
feedback and initial condition.  By the variation-of-constants
formula for flow-jump linear evolutions on $\Snpsd$, the forced trajectory satisfies
\begin{align*}
  \norm{EX(t)E^*}
  &\le\beta e^{-\alpha(t-t_0)}\rho^{\kappa(t,t_0)}\norm{X_1^0}\\
  &\quad
   +\beta\!\int_{t_0}^t e^{-\alpha(t-s)}\rho^{\kappa(t,s)}\norm{\mathcal{W}(s)}\,ds
   +\beta\!\!\sum_{t_k\in[t_0,t]}\!\!e^{-\alpha(t-t_k)}\rho^{\kappa(t,t_k)}\norm{\mathcal{W}_k},
\end{align*}
where the $\rho^{\kappa}$ factors come from the dual decay in (A1).
The first term is bounded by $\beta\norm{X_1^0}$.
The second term is bounded by
$\beta(\sup_s\norm{\mathcal{W}(s)})\int_{t_0}^t e^{-\alpha(t-s)}\,ds
\le\beta\alpha^{-1}\sup_s\norm{\mathcal{W}(s)}$.
For the third term, enumerate the jumps in $[t_0,t]$ in reverse
chronological order so that $\kappa(t,t_k)=N_t-k$ where $N_t$ is the
total number of jumps in $[t_0,t]$; using $e^{-\alpha(t-t_k)}\le1$,
\[
  \beta\!\!\sum_{t_k\in[t_0,t]}\!\!e^{-\alpha(t-t_k)}\rho^{\kappa(t,t_k)}\norm{\mathcal{W}_k}
  \le\beta(\sup_k\norm{\mathcal{W}_k})\sum_{j=0}^{\infty}\rho^{j}
  =\beta(1-\rho)^{-1}\sup_k\norm{\mathcal{W}_k}.
\]
Combining:
\[
  \norm{EX(t)E^*}
  \le\underbrace{\beta\norm{X_1^0}
   +\beta\alpha^{-1}\sup_s\norm{\mathcal{W}(s)}
   +\beta(1-\rho)^{-1}\sup_k\norm{\mathcal{W}_k}}_{=:\,C},
\]
which is uniform in $t$ and independent of the inter-jump
times.
\end{proof}

\begin{theorem}[Necessary condition and existence,
  infinite-horizon average cost]\label{th:IMP:AC:nec}
Under (A1)--(A3), there exists a unique bounded piecewise-$C^1$
solution $P_\infty:[t_0,\infty)\to\Snpsd$,
$\alpha_1 I\preceq P_\infty\preceq\alpha_2 I$, of the
infinite-horizon I-GRE, and
\begin{equation}\label{eq:IMP:AC:nec:val}
  \rho^\star(X_1^0)
  \;=\;\limsup_{T\to\infty}\frac{1}{T}\!\left[
    \int_{t_0}^{t_0+T}\!\ip{P_\infty(t)}{\mathcal{W}(t)}\dt
    +\!\!\sum_{k:\,t_k\le t_0+T}\!\!\ip{P_\infty(t_k^+)}{\mathcal{W}_k}
  \right]
\end{equation}
independently of $X_1^0$. The right-hand side is finite whenever the
jump count grows at most linearly, i.e.,
$\limsup_{T\to\infty}T^{-1}\#\{k:t_k\le t_0+T\}<\infty$.
The optimum is attained at the closed-loop trajectory $X^\star$ with
gains $K_\infty(t)\in\mathcal{K}_c(t,P_\infty(t))$ on flow and
$K_k\in\mathcal{K}_d(k,P_\infty(t_k^+),P_\infty(t_k^-))$ at each jump.
\end{theorem}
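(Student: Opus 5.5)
The proof has three steps: obtain $P_\infty$ from the homogeneous theory, bound the average cost of every admissible trajectory from below using the forced key identity, and show that the I-GRE feedback attains this bound.

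\textbf{Existence and uniqueness of $P_\infty$.} The homogeneous infinite-horizon I-GRE involves only $\mathcal{F},\mathcal{J},Z,Z_k$ and not the forcing. I would therefore invoke Theorem~\ref{th:IMP:LQ:inf} applied to the unforced system~\eqref{eq:sys:IMP}. The one gap is that (A1) drops the cost bound $J_\infty(X_s,X_1^0)\le C_s\norm{X_1^0}$ of (H1). I recover it directly. Along the homogeneous stabilizing trajectory $X_s^{\mathrm h}$ we have $\norm{X_s^{\mathrm h}(t)}\le c\norm{EX_s^{\mathrm h}(t)E^*}$ for a constant $c$ depending on $K_s$. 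Hence, by (A3), the flow cost is at most $\|Z\|_\infty c\beta\int e^{-\alpha(t-t_0)}\dt$. The jump cost is at most $\|Z_k\|_\infty c\beta\sum_k\rho^{k-1}$, since $\kappa(t_k,t_0)=k-1$. Both are finite multiples of $\norm{X_1^0}$, so (H1)--(H3) hold and Theorem~\ref{th:IMP:LQ:inf} yields a unique bounded $P_\infty$ with $\alpha_1I\preceq P_\infty\preceq\alpha_2 I$.

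\textbf{Lower bound.} Fix $X_1^0$ and any admissible $X$, and apply the forced key identity (Proposition~\ref{prop:keyid:forced}) with $P=P_\infty$ on $[t_0,t_0+T]$. By the I-GRE and Lemma~\ref{lemma:decomp}, the $\mathcal{C}_Z$ and $\mathcal{D}_Z$ terms are nonnegative, which gives
\[
J_T(X,X_1^0)\ge \ip{P_\infty(t_0)}{X_1^0}-\ip{P_\infty(t_0+T)}{EX(t_0+T)E^*}+R_T(P_\infty).
\]
The boundary term needs care. It suffices to treat trajectories with $\limsup_T J_T(X,X_1^0)/T<\infty$, since otherwise the bound is trivial. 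I would try to show that $\norm{EX(T)E^*}/T\to0$ for such trajectories, or at least along a subsequence. By (A2), $\int_{t_0}^{t_0+T}\norm{EX E^*}\dt\le J_T/\delta=O(T)$, so the running average of $\norm{EXE^*}$ is bounded. Because the dynamics are linear with bounded operators, on flow intervals we have $\tfrac{d}{dt}\norm{EXE^*}\le c(\norm{X}+\norm{\mathcal{W}})$. I expect this to give $\liminf_T\norm{EX(T)E^*}/T=0$. Since $\limsup$ of a sum dominates the value along any subsequence, passing to that subsequence gives $\limsup_T J_T/T\ge\limsup_T R_T(P_\infty)/T$. Taking the infimum over $X$ yields $\rho^\star(X_1^0)\ge\limsup_T R_T/T$.

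\textbf{Upper bound and attainment.} Along the trajectory $X^\star$ with gains in $\mathcal{K}_c(t,P_\infty)$ and $\mathcal{K}_d$, the same identity holds with equality and without the $\mathcal{C}_Z,\mathcal{D}_Z$ terms:
\[
J_T(X^\star,X_1^0)=\ip{P_\infty(t_0)}{X_1^0}-\ip{P_\infty(t_0+T)}{EX^\star(t_0+T)E^*}+R_T(P_\infty).
\]
The boundary term is nonpositive, so $J_T(X^\star)\le\ip{P_\infty(t_0)}{X_1^0}+R_T$. Dividing by $T$ gives $\limsup_T J_T(X^\star,X_1^0)/T\le\limsup_T R_T/T$. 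Since $J_T^\star\le J_T(X^\star)$, this proves \eqref{eq:IMP:AC:nec:val} together with attainment, and independence of $X_1^0$ is immediate because the right-hand side does not involve $X_1^0$. For finiteness, the flow part of $R_T$ is bounded by $\alpha_2\sup_t\norm{\mathcal{W}(t)}\,T$ using Proposition~\ref{prop:PQ}\eqref{st:bound}. The jump part is bounded by $\alpha_2\sup_k\norm{\mathcal{W}_k}\,\#\{k:t_k\le t_0+T\}$, so linear growth of the jump count gives a finite $\limsup$.

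\textbf{Main obstacle.} The main obstacle is the lower-bound boundary term for arbitrary admissible trajectories, whose free blocks are not feedback-generated. Lemma~\ref{lem:IMP:AC:bdd} only covers the stabilizing feedback, so I would rely on the coercivity-based average argument above. If that fails, I would restrict the infimum to trajectories with bounded $\norm{EXE^*}$, as the earlier proof of Theorem~\ref{th:IMP:AC:suff} implicitly does.
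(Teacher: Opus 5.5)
Your architecture is the paper's: existence of $P_\infty$ from Theorem~\ref{th:IMP:LQ:inf}, the forced key identity with $P_\infty$ for the lower bound, and the closed loop $X^\star$ for the upper bound. Three of your deviations are improvements. You verify the cost bound of (H1) from (A1) and (A3), which the paper skips. Your upper bound uses only the sign of $-\ip{P_\infty(t_0+T)}{EX^\star(t_0+T)E^*}$, so it needs no boundedness of $X^\star$. And you correctly note that Lemma~\ref{lem:IMP:AC:bdd} says nothing about arbitrary admissible trajectories, although the paper's proof applies it to them.

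The lower bound, however, does not go through as written, for two reasons.

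\emph{The subsequence step.} $\liminf_T\norm{EX(t_0+T)E^*}/T=0$ only gives a sequence $T_j$ along which the boundary term is negligible. That yields $\limsup_T J_T(X,X_1^0)/T\ge\limsup_j R_{T_j}(P_\infty)/T_j$, which can be strictly smaller than $\limsup_T R_T(P_\infty)/T$. The theorem is stated with a $\limsup$ precisely because $R_T/T$ need not converge. Nothing forces the times where the boundary term is small to be the times where $R_T/T$ is large.

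\emph{The order of infimum and limsup.} Even a correct per-trajectory bound $\limsup_T J_T(X,X_1^0)/T\ge r$ only controls $\inf_X\limsup_T J_T(X,X_1^0)/T$. But $\rho^\star(X_1^0)=\limsup_T\inf_X J_T(X,X_1^0)/T$ can be smaller, because at each horizon the infimum runs over horizon-$T$ trajectories that may let $EXE^*$ grow near $t_0+T$. So ``taking the infimum over $X$'' goes the wrong way; the paper's text makes the same move.

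Your fallback of restricting the infimum to trajectories with bounded $\norm{EXE^*}$ changes the problem and gives no lower bound on $\rho^\star$. The derivative estimate is also unusable, since $\norm{X}$ contains the free blocks that (A2) does not control; it is not needed anyway.

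Both defects disappear if you make the estimate uniform in $X$ at each fixed horizon, applying your coercivity idea on a window rather than along a subsequence. Fix $\epsilon\in(0,1)$ and any horizon-$T$ admissible $X$. Coercivity, in the form $Z\succeq\delta E^*E$ that you and the paper use, gives $\int_{t_0+(1-\epsilon)T}^{t_0+T}\norm{EXE^*}\dt\le J_T(X,X_1^0)/\delta$. Hence some $S\in[(1-\epsilon)T,T]$ satisfies $\norm{EX(t_0+S)E^*}\le J_T(X,X_1^0)/(\delta\epsilon T)$. All integrands are nonnegative, so along $X$ you have $J_S\le J_T$ and $R_S(P_\infty)\ge R_{(1-\epsilon)T}(P_\infty)$. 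The forced key identity on $[t_0,t_0+S]$, with $P_\infty\preceq\alpha_2 I$, then gives
\[
  \Bigl(1+\frac{\alpha_2}{\delta\epsilon T}\Bigr)J_T(X,X_1^0)
  \ge\ip{P_\infty(t_0)}{X_1^0}+R_{(1-\epsilon)T}(P_\infty)
\]
for every admissible $X$, and therefore also with $J_T^\star(X_1^0)$ on the left. Divide by $T$, take $\limsup_T$, and let $\epsilon\downarrow0$ to get $\rho^\star(X_1^0)\ge\limsup_T R_T(P_\infty)/T$. Together with your upper bound, this proves \eqref{eq:IMP:AC:nec:val} and the attainment claim.
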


\begin{proof}
\textbf{Existence and bounds on $P_\infty$.}
By Theorem~\ref{th:IMP:LQ:inf} applied to the homogeneous system,
(A1)--(A3) provide a unique bounded piecewise-$C^1$ solution $P_\infty$
with $\alpha_1 I\preceq P_\infty\preceq\alpha_2 I$.

\smallskip
\textbf{Key identity with $P_\infty$.}
For any admissible trajectory $X$ of~\eqref{eq:sys:IMP:forced}, the
forced key identity~\eqref{eq:IMP:keyid:forced} with $P=P_\infty$ on
$[t_0,t_0+T]$ reads
\[
  J_T(X,X_1^0)
  =\ip{P_\infty(t_0)}{X_1^0}
   -\ip{P_\infty(t_0+T)}{EX(t_0+T)E^*}
   +\!\!\int_{t_0}^{t_0+T}\!\!\ip{\mathcal{C}_Z(t,P_\infty)}{X}\dt
\]
\[
   +\sum_{k:\,t_k\le t_0+T}\ip{\mathcal{D}_Z}{X(t_k^-)}
   +R_T(P_\infty),
\]
with
$R_T(P_\infty):=\int_{t_0}^{t_0+T}\!\ip{P_\infty(t)}{\mathcal{W}(t)}\dt
+\sum_{k:\,t_k\le t_0+T}\!\ip{P_\infty(t_k^+)}{\mathcal{W}_k}$. Since
$P_\infty$ solves the infinite-horizon I-GRE,
$\mathcal{C}_Z(t,P_\infty)\succeq 0$ and $\mathcal{D}_Z\succeq 0$, so
Lemma~\ref{lemma:decomp} gives $\ip{\mathcal{C}_Z}{X(t)}\ge 0$ a.e.\
and $\ip{\mathcal{D}_Z}{X(t_k^-)}\ge 0$, with equality along the
closed-loop trajectory $X^\star$. Hence
\begin{equation}\label{eq:IMP:AC:nec:bound}
  J_T(X,X_1^0)\;\ge\;\ip{P_\infty(t_0)}{X_1^0}
   -\ip{P_\infty(t_0+T)}{EX(t_0+T)E^*}+R_T(P_\infty),
\end{equation}
with equality at $X^\star$.

\smallskip
\textbf{Boundary term is $O(1)$ in $T$.}
Lemma~\ref{lem:IMP:AC:bdd} bounds
$\|EX(t)E^*\|\le C_X<\infty$ uniformly in $t$ along any admissible
trajectory with the stabilising feedback of (A1); its proof uses the
geometric factor $\rho^j$ from the dual decay in (A1) and never
invokes a bound on $\Delta_k$. Consequently
$\ip{P_\infty(t_0+T)}{EX(t_0+T)E^*}\le\alpha_2 C_X$ uniformly in $T$.

\smallskip
\textbf{Conclusion.}
Dividing~\eqref{eq:IMP:AC:nec:bound} by $T$ and taking
$\limsup_{T\to\infty}$, both
$T^{-1}\ip{P_\infty(t_0)}{X_1^0}$ and
$T^{-1}\ip{P_\infty(t_0+T)}{EX(t_0+T)E^*}$ vanish, leaving
$\limsup_T T^{-1}J_T(X,X_1^0)\ge\limsup_T T^{-1}R_T(P_\infty)$. Taking
the infimum over admissible $X$ on the left gives
$\rho^\star(X_1^0)\ge\limsup_T T^{-1}R_T(P_\infty)$. The closed-loop
trajectory $X^\star$ saturates~\eqref{eq:IMP:AC:nec:bound}, so the
reverse inequality holds and
$\rho^\star(X_1^0)=\limsup_T T^{-1}R_T(P_\infty)$, which
is~\eqref{eq:IMP:AC:nec:val}. The right-hand side does not depend on
$X_1^0$.

\smallskip
\textbf{Finiteness.}
$|R_T(P_\infty)|\le\alpha_2\bigl(T\sup_t\|\mathcal{W}(t)\|
+\#\{k:t_k\le t_0+T\}\cdot\sup_k\|\mathcal{W}_k\|\bigr)$, so
$T^{-1}|R_T(P_\infty)|$ is bounded above whenever
$\limsup_T T^{-1}\#\{k:t_k\le t_0+T\}<\infty$.
\end{proof}

\begin{remark}\label{rem:IMP:AC:nec}
Formula~\eqref{eq:IMP:AC:nec:val} characterises $\rho^\star$ as a
$\limsup$ regardless of any inter-jump regularity. A genuine $\lim$ is
recovered under additional structural assumptions on
$(P_\infty,\mathcal{W},\mathcal{W}_k)$, the cleanest case being
$T_p$-periodicity of the data (Theorem~\ref{th:IMP:AC:dwell:periodic})
where both contributions of $R_T(P_\infty)/T$ converge by genuine
periodic averaging. In the LTI specialisation
(Corollary~\ref{cor:CT:AC}), $P_\infty$ is constant and the formula
collapses to $\rho^\star=\tr(P_\infty\mathcal{W})$, again a true limit.
This is the matrix-valued analogue of the classical ergodic property
of LQG control with stable closed-loop dynamics: the long-run cost
rate depends only on the closed-loop steady state, not on the initial
condition.
\end{remark}

\subsubsection{Equivalence and dual LMI (infinite horizon)}
\label{sec:OC:AC:LMI}

The equivalence statement and dual LMI for the average-cost
problem are direct consequences of
Theorems~\ref{th:IMP:AC:suff} and~\ref{th:IMP:AC:nec}.

\begin{corollary}[Equivalence, infinite-horizon average cost]
\label{cor:IMP:AC:equiv}
The infinite-horizon average-cost
problem~\eqref{eq:IMP:AC:rho} under~\eqref{eq:sys:IMP:forced} is
well-posed with $\rho^\star(X_1^0)=\rho^\star$ given
by~\eqref{eq:IMP:AC:rhostar} if and only if the homogeneous
infinite-horizon I-GRE has a bounded piecewise-$C^1$ solution
$P_\infty\succeq0$.  The solution is unique.
\end{corollary}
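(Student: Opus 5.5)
The plan is to prove the two implications of Corollary~\ref{cor:IMP:AC:equiv} separately, mirroring Corollary~\ref{cor:IMP:LQ:inf:equiv}, and then to settle uniqueness.

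\textbf{Sufficiency.} Suppose the homogeneous infinite-horizon I-GRE has a bounded piecewise-$C^1$ solution $P_\infty\succeq0$. I will invoke Theorem~\ref{th:IMP:AC:suff} directly. Its lower-bound step applies the forced key identity (Proposition~\ref{prop:keyid:forced}) with $P=P_\infty$ and uses Lemma~\ref{lemma:decomp} to sign the $\mathcal{C}_Z$ and $\mathcal{D}_Z$ pairings. Its attainment step uses gains in $\mathcal{K}_c,\mathcal{K}_d$, and the boundary term is controlled by Lemma~\ref{lem:IMP:AC:bdd}. Together these give $\rho^\star(X_1^0)=\rho^\star$ independently of $X_1^0$, and show that the limit in~\eqref{eq:IMP:AC:rhostar} is attained by the closed-loop policy.

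The one gap is that Theorem~\ref{th:IMP:AC:suff} \emph{assumes} the limit in~\eqref{eq:IMP:AC:rhostar} exists. I will close it in one of two ways: either read ``well-posed'' as including existence of this limit (for instance under (A4)), or fall back on the $\limsup$ formula~\eqref{eq:IMP:AC:nec:val}, whose proof only needs boundedness of $P_\infty$ and of the trajectory. In the second case, finiteness of $\rho^\star$ follows from the bound $|R_T(P_\infty)|\le\alpha_2(T\sup\|\mathcal{W}\|+\#\{k\}\sup\|\mathcal{W}_k\|)$ together with a linear jump count.

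\textbf{Necessity.} Suppose the average-cost problem is well-posed, with $\rho^\star(X_1^0)=\rho^\star$ given by~\eqref{eq:IMP:AC:rhostar}. I will interpret well-posedness as implying the standing hypotheses (A1)--(A3) of the homogeneous part, in the same way Corollary~\ref{cor:IMP:LQ:inf:equiv} relies on (H1)--(H3). Theorem~\ref{th:IMP:AC:nec} then supplies a bounded piecewise-$C^1$ solution $P_\infty$ with $\alpha_1I\preceq P_\infty\preceq\alpha_2 I$, and in particular $P_\infty\succeq0$. The key structural observation is that the forcing enters the key identity only through terms that are linear in $P$ and independent of $X$. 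The Riccati equation therefore depends only on the homogeneous data, so its bounded solution is exactly the one produced by Theorem~\ref{th:IMP:LQ:inf}.

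\textbf{Uniqueness} follows from Theorem~\ref{th:IMP:AC:suff}\eqref{AC:suff:uniq}, which itself reduces to Theorem~\ref{th:IMP:LQ:inf:suff}\eqref{inf:suff:uniq}. Two bounded solutions of the homogeneous I-GRE both represent $J_\infty^\star(X_1^0)$ for the unforced problem, and testing with $X_1^0=vv^*$ forces them to coincide.

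\textbf{Main obstacle.} I expect the delicate step to be necessity. It has to extract a Riccati solution from well-posedness of a problem whose cost diverges, so the infinite-horizon value $J_\infty^\star$ cannot be used directly. My first attempt will pass to the unforced problem through the decomposition~\eqref{eq:IMP:AC:finite:J}: the $X_1^0$-dependent part $\ip{P_T(t_0)}{X_1^0}$ is precisely the homogeneous finite-horizon value. The monotone-limit argument of Theorem~\ref{th:IMP:LQ:inf} then applies verbatim to $P_T$, provided uniform boundedness of $P_T(t_0)$ can be inferred. I will try to get that bound from (A1), using the stabilizing feedback and summing its exponential decay to bound the homogeneous cost.
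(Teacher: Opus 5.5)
Your proposal matches the paper's proof, which consists of the same three citations: Theorem~\ref{th:IMP:AC:suff} for sufficiency, Theorem~\ref{th:IMP:AC:nec} for necessity (which, as in your reading, takes (A1)--(A3) as standing hypotheses rather than deriving them from well-posedness), and Theorem~\ref{th:IMP:AC:suff}\eqref{AC:suff:uniq} for uniqueness. Your caveats are sound and make explicit what the paper leaves implicit: sufficiency needs existence of the limit in~\eqref{eq:IMP:AC:rhostar} as an extra hypothesis, although your parenthetical is wrong because (A4) does not by itself guarantee that this limit exists for time-varying data; and the cumulative-cost bound of (H1) required by Theorem~\ref{th:IMP:LQ:inf} does follow from (A1) and the uniform bounds in (A3) by summing the exponential and geometric decay.
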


\begin{proof}
Sufficiency: Theorem~\ref{th:IMP:AC:suff}.
Necessity: Theorem~\ref{th:IMP:AC:nec}.
Uniqueness: Theorem~\ref{th:IMP:AC:suff}\eqref{AC:suff:uniq}.
\end{proof}

The LMI characterisation expresses $\rho^\star$ as the supremum
of a linear-in-$P$ functional over the LMI feasibility set of
Theorem~\ref{th:IMP:LQ:inf:LMI}.  Define the
\emph{average-forcing pairing}
\begin{equation}\label{eq:IMP:AC:Phi}
  \Phi(P):=\limsup_{T\to\infty}\frac{1}{T}\!\left[
    \int_{t_0}^{t_0+T}\ip{P(t)}{\mathcal{W}(t)}\dt
    +\!\!\sum_{k:\,t_k\le t_0+T}\!\!\ip{P(t_k^+)}{\mathcal{W}_k}
  \right],
\end{equation}
which is well-defined and finite for any bounded $P$ under (A3).
The functional $\Phi:\Sn\text{-valued bounded functions}\to\R$ is
linear and order-preserving: if $P\preceq P'$ pointwise then
$\Phi(P)\le\Phi(P')$ (since $\mathcal{W}(t)\succeq0$ and
$\mathcal{W}_k\succeq0$).

\begin{theorem}[Dual LMI, infinite-horizon average cost]
\label{th:IMP:AC:LMI}
Under (A1)--(A4),
\begin{equation}\label{eq:IMP:AC:LMI}
  \rho^\star
  =\sup_P\;\Phi(P)
  \quad\text{s.t.}\quad
  \mathcal{C}_Z(t,P(t))\succeq0,\;
  \mathcal{D}_Z(k,P(t_k^+),P(t_k^-))\succeq0,\;
  P\text{ bounded};
\end{equation}
the supremum is attained at $P_\infty$.
\end{theorem}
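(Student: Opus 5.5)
The argument has two halves, a weak-duality bound and an attainment step, following the pattern of Theorem~\ref{th:IMP:LQ:inf:LMI}. The only change is that the objective is now the average-forcing pairing $\Phi(P)$ rather than $\ip{P(t_0)}{X_1^0}$.

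\textbf{Weak duality.} Fix any bounded $P$ satisfying $\mathcal{C}_Z(t,P(t))\succeq0$ for a.e.\ $t$ and $\mathcal{D}_Z(k,P(t_k^+),P(t_k^-))\succeq0$ at every jump. Let $X$ be any admissible trajectory of~\eqref{eq:sys:IMP:forced}.

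First apply the forced key identity (Proposition~\ref{prop:keyid:forced}) on $[t_0,t_0+T]$. Since $X(t)\succeq0$ and $X(t_k^-)\succeq0$, the flow integral and the jump sum are nonnegative, which gives
\[
J_T(X,X_1^0)\ge\ip{P(t_0)}{X_1^0}-\ip{P(t_0+T)}{EX(t_0+T)E^*}+R_T(P).
\]
Here $R_T(P)$ denotes the two forcing pairings. No Schur decomposition is needed at this step, only positivity of the inner products.

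Next, divide by $T$ and take $\limsup_{T\to\infty}$. The initial term is $O(1)$ and vanishes after division by $T$. The boundary term has the right sign to be dropped altogether, because $P\succeq0$ is not assumed. This leaves $\limsup_T J_T(X,X_1^0)/T\ge\Phi(P)$ whenever $\norm{EX(t)E^*}$ stays bounded.

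Taking the infimum over $X$ then yields $\rho^\star\ge\Phi(P)$.

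\textbf{Main obstacle: the boundary term.} The delicate point is that $P$ is bounded but possibly indefinite, and $X$ is an arbitrary admissible trajectory that need not use the stabilizing feedback of Lemma~\ref{lem:IMP:AC:bdd}. I would handle this in two steps.

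First, trajectories with $\limsup_T J_T(X,X_1^0)/T=\infty$ can be discarded, since they do not affect the infimum.

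Second, for the remaining trajectories, (A2) gives
\[
\delta\int_{t_0}^{t_0+T}\norm{EX(t)E^*}\dt\le J_T(X,X_1^0)=O(T).
\]
So $\norm{EX(t)E^*}$ is bounded on average, though not necessarily pointwise. To handle this, I would pass to a subsequence of horizons $T_j\to\infty$ along which $\norm{EX(t_0+T_j)E^*}\le C$; such a subsequence exists because the average bound rules out $\norm{EX(t)E^*}\to\infty$. Along it, $\ip{P(t_0+T_j)}{EX(t_0+T_j)E^*}/T_j\to0$.

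This suffices because
\[
\limsup_T\frac{J_T}{T}\ge\limsup_j\frac{J_{T_j}}{T_j}\ge\limsup_j\frac{R_{T_j}(P)}{T_j}.
\]
Under (A4), together with the regularity that makes the limit in~\eqref{eq:IMP:AC:rhostar} exist, the last quantity equals $\Phi(P)$. If that limit is only a $\limsup$, I would instead accept the bound via the subsequence realizing the $\limsup$ of $R_T/T$. This is exactly where (A4) is invoked.

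\textbf{Attainment.} By Theorem~\ref{th:IMP:AC:nec}, under (A1)--(A3) the bounded solution $P_\infty$ of the infinite-horizon I-GRE exists. By Lemma~\ref{lemma:GREequiv} it satisfies both LMI constraints, so it is feasible. Moreover $\rho^\star=\limsup_T R_T(P_\infty)/T=\Phi(P_\infty)$ by~\eqref{eq:IMP:AC:nec:val}.

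Hence $\sup_P\Phi(P)\ge\rho^\star$. Combined with weak duality, this gives equality, with the supremum attained at $P_\infty$.
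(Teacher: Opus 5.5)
Your two-step structure matches the paper's ``direct argument'' in Step~1 and its Step~2: weak duality from the forced key identity, then attainment at $P_\infty$ via Theorem~\ref{th:IMP:AC:nec}. Your use of coercivity to control $\norm{EX(t)E^*}$ along an arbitrary admissible $X$ is also more careful than the paper's appeal to Lemma~\ref{lem:IMP:AC:bdd}, which only covers the $K_s$ closed loop.

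The weak-duality step nevertheless has a genuine gap in the order of infimum and $\limsup$. What you prove is $\limsup_T J_T(X,X_1^0)/T\ge\Phi(P)$ for each fixed $X$, with the good horizons $T_j$ chosen after $X$. Taking the infimum gives $\inf_X\limsup_T J_T(X,X_1^0)/T\ge\Phi(P)$. But by definition $\rho^\star=\limsup_T\inf_X J_T(X,X_1^0)/T$, which is \emph{at most} $\inf_X\limsup_T J_T(X,X_1^0)/T$, so nothing follows for $\rho^\star$. The horizon-$T$ near-minimizers $X^{(T)}$ change with $T$, and it is $\ip{P(t_0+T)}{EX^{(T)}(t_0+T)E^*}$ that must be $o(T)$. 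The paper's secondary direct argument has the same interchange.

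Two smaller points:
\begin{itemize}
\item Your fallback when $R_T(P)/T$ has no limit fails. The horizons realizing $\limsup_T R_T(P)/T$ need not be horizons at which $\norm{EX(t_0+T)E^*}$ is bounded. Also, (A4) only constrains the jump times and does not make $R_T(P)/T$ converge for a time-varying $P$.
\item The sentence saying the boundary term ``has the right sign to be dropped because $P\succeq0$ is not assumed'' is wrong. With $P\succeq0$ the term $-\ip{P(t_0+T)}{EX(t_0+T)E^*}\le0$ has the wrong sign for a lower bound; without it the sign is unknown.
\end{itemize}

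The simplest repair is the route the paper takes first in Step~1.
\begin{itemize}
\item For bounded feasible $P$, Theorem~\ref{th:IMP:LQ:inf:LMI} applied from every initial time gives $P(t)\preceq P_\infty(t)$.
\item There the value is $\inf_X J_\infty(X,\cdot)$ over single infinite trajectories of the homogeneous system. So your coercivity-plus-subsequence device is exactly what is needed, and no interchange arises.
\item Since $\mathcal{W}(t)\succeq0$ and $\mathcal{W}_k\succeq0$, you get $R_T(P)\le R_T(P_\infty)$ for every $T$, hence $\Phi(P)\le\Phi(P_\infty)=\rho^\star$ by Theorem~\ref{th:IMP:AC:nec}, which you already invoke for attainment.
\end{itemize}

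If you want to keep the direct argument instead, run it on a near-minimizer $X^{(T')}$ of the horizon $T'=(1+\epsilon)T$ problem:
\begin{itemize}
\item $J_{T'}^\star\le CT'$, using the stabilizing policy of (A1), Lemma~\ref{lem:IMP:AC:bdd}, and (A3)--(A4).
\item Coercivity then yields some $s\in[T,T']$ with $\norm{EX^{(T')}(t_0+s)E^*}\le C'/\epsilon$.
\item Monotonicity of the cost in the horizon, from $Z,Z_k\succeq0$, gives $J_{T'}^\star+1\ge J_s(X^{(T')},X_1^0)$.
\item The key identity on $[t_0,t_0+s]$, together with $|R_s(P)-R_T(P)|\le C''\epsilon T$, gives $\rho^\star\ge(\Phi(P)-C''\epsilon)/(1+\epsilon)$. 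The bound on $|R_s(P)-R_T(P)|$ is where (A4) genuinely enters, through $\Delta_{\min}$.
\item Let $\epsilon\downarrow0$.
\end{itemize}
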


\begin{proof}
\noindent\textbf{Step 1: every feasible $P$ is a lower bound for
$\rho^\star$.}
Let $P$ be any feasible function for~\eqref{eq:IMP:AC:LMI}.  By the
proof of Theorem~\ref{th:IMP:LQ:inf:LMI},
$P(t)\preceq P_\infty(t)$ for all $t$.  Hence
$\Phi(P)\le\Phi(P_\infty)$ by the order-preserving property, so
$\sup_P\Phi(P)\le\Phi(P_\infty)$.

A direct argument that does not invoke the comparison
$P\preceq P_\infty$ proceeds as follows.  For any admissible
trajectory $X$ of~\eqref{eq:sys:IMP:forced} and any feasible $P$,
the forced key identity~\eqref{eq:IMP:keyid:forced} combined with
the LMI constraints
$\mathcal{C}_Z(t,P(t))\succeq0$ and $\mathcal{D}_Z\succeq0$ gives
\[
  J_T(X,X_1^0)
  \ge\ip{P(t_0)}{X_1^0}-\ip{P(t_0+T)}{EX(t_0+T)E^*}+R_T(P),
\]
where $R_T(P):=\int_{t_0}^{t_0+T}\ip{P(t)}{\mathcal{W}(t)}\dt
+\sum_{k:\,t_k\le t_0+T}\ip{P(t_k^+)}{\mathcal{W}_k}$.
Dividing by $T$, the boundary terms vanish (boundedness of $P$ and
of $EXE^*$ by Lemma~\ref{lem:IMP:AC:bdd}, divided by $T$), so
$\liminf_{T\to\infty}J_T(X,X_1^0)/T\ge\Phi(P)$.
Taking the infimum over $X$ on the left, then
$\rho^\star\ge\Phi(P)$.  Since this holds for every feasible $P$,
$\rho^\star\ge\sup_P\Phi(P)$.

\noindent\textbf{Step 2: $P_\infty$ is feasible and attains the
bound.}
$P_\infty$ satisfies the LMI constraints by the I-GRE
(Lemma~\ref{lemma:GREequiv}) and is bounded.  By
Theorem~\ref{th:IMP:AC:nec}, $\Phi(P_\infty)=\rho^\star$, so
$\sup_P\Phi(P)\ge\Phi(P_\infty)=\rho^\star$.

Combining Steps 1 and 2: $\rho^\star=\sup_P\Phi(P)$, attained at
$P_\infty$.
\end{proof}


\subsubsection{Dwell-time conditions}
\label{sec:OC:AC:dwell}

The infinite-horizon I-GRE solution $P_\infty$ used in
Sections~\ref{sec:OC:AC:inf:suff}--\ref{sec:OC:AC:LMI} is non-causal
in the sense of Section~\ref{sec:OC:dwell}: its value at time $t$
depends on the future jump times.  We now combine the average-cost
formula~\eqref{eq:IMP:AC:rhostar} with the causal dwell-time policies
of Section~\ref{sec:OC:dwell} (periodic, MDT, RDT) to obtain causal
characterisations and bounds for $\rho^\star$.  In all three cases,
the timer-dependent storage function $P_s(\tau)$ used in
Section~\ref{sec:OC:dwell} provides the average forcing pairing
through the same construction as in
Section~\ref{sec:OC:AC:inf:suff}, with the difference that $P_s(\tau)$
is now defined on a bounded timer interval and the time-average of
the forcing data must be re-expressed in timer-dependent form.  The
structural assumptions on the system operators
$\mathcal{F},\mathcal{J},Z,Z_k$ in the three scenarios are inherited
verbatim from Section~\ref{sec:OC:dwell}; the only additions are the
corresponding assumptions on the forcing data
$\mathcal{W},\mathcal{W}_k$.

\paragraph{Periodic impulses.}
Adopt the periodicity hypothesis of Theorem~\ref{th:OC:periodic}:
$\Delta_k\equiv T_p$, $\mathcal{F}(\cdot,X)$ and $Z(\cdot)$ are
$T_p$-periodic in $t$, and $\mathcal{J}(k,\cdot)$, $Z_k$ are
independent of $k$.  For the forcing data, assume in addition that
$\mathcal{W}(\cdot)$ is $T_p$-periodic in $t$ and that
$\mathcal{W}_k$ is independent of $k$, i.e.,
$\mathcal{W}_k\equiv\mathcal{W}_{\mathrm{jp}}$.  Under these
hypotheses we regard $\mathcal{W}$ as a function of the timer
$\tau\in[0,T_p)$.

\begin{theorem}[Causal LTAC under periodic impulses]
  \label{th:IMP:AC:dwell:periodic}
  Under \textup{(A1)--(A3)} and the periodicity assumption above
  \textup{(}note that \textup{(A4)} is automatic from
  $\Delta_k\equiv T_p$\textup{)}, the long-term average
  cost~\eqref{eq:IMP:AC:rho} is given by the closed-form expression
  \begin{equation}\label{eq:IMP:AC:dwell:periodic:rho}
    \rho^\star
    =\frac{1}{T_p}\!\left[
      \int_0^{T_p}\ip{P_\infty(\tau)}{\mathcal{W}(\tau)}\,d\tau
      +\ip{P_\infty(0^+)}{\mathcal{W}_{\mathrm{jp}}}
    \right],
  \end{equation}
  where $P_\infty:[0,T_p]\to\Snpsd$ is the unique $T_p$-periodic
  piecewise-$C^1$ solution of the periodic I-GRE~\eqref{eq:periodicGRE},
  attained by the causal policy
  $K(\tau)\in\mathcal{K}_c(\tau,P_\infty(\tau))$ on flow and
  $K_k\in\mathcal{K}_d(P_\infty(0^+),P_\infty(T_p^-))$ at each jump.
\end{theorem}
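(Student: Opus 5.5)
The plan is to combine the periodic existence result with the general average-cost characterisation, and then evaluate the resulting $\limsup$ by exact periodic averaging.

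\textbf{Step 1 (identify the dual).} By Theorem~\ref{th:OC:periodic}(a), under (A1)--(A3) and the periodicity hypotheses there is a unique $T_p$-periodic piecewise-$C^1$ solution $P_\infty$ of~\eqref{eq:periodicGRE}. The homogeneous assumptions (A1)--(A3) play the role of (H1)--(H3). The required bound on the cumulative cost is not given by (A1) directly. It follows from the exponential decay in (A1) together with the boundedness of $Z,Z_k$ in (A3), and (A4) is used to bound the number of jumps per unit time.

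Its $T_p$-periodic extension to $[t_0,\infty)$, with jumps at $t_k=t_0+kT_p$, is a bounded solution of the infinite-horizon I-GRE. This is the necessity part of Theorem~\ref{th:OC:periodic}. By the uniqueness statements in Theorems~\ref{th:IMP:LQ:inf:suff} and~\ref{th:IMP:AC:nec}, this extension is exactly the $P_\infty$ appearing in~\eqref{eq:IMP:AC:nec:val}. Therefore Theorem~\ref{th:IMP:AC:nec} applies: $\rho^\star(X_1^0)=\limsup_T T^{-1}R_T(P_\infty)$, independently of $X_1^0$, and the optimum is attained by the gains in $\mathcal{K}_c(\tau,P_\infty(\tau))$ and $\mathcal{K}_d(P_\infty(0^+),P_\infty(T_p^-))$. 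These gains depend only on the timer, so the policy is causal.

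\textbf{Step 2 (exact averaging).} I will write $T=NT_p+r$ with $r\in[0,T_p)$. Since $\tau\mapsto\ip{P_\infty(\tau)}{\mathcal{W}(\tau)}$ is $T_p$-periodic and the jump summand $\ip{P_\infty(t_k^+)}{\mathcal{W}_k}$ equals the constant $\ip{P_\infty(0^+)}{\mathcal{W}_{\mathrm{jp}}}$, I obtain
\[
R_T(P_\infty)=N\Bigl[\int_0^{T_p}\ip{P_\infty(\tau)}{\mathcal{W}(\tau)}\,d\tau+\ip{P_\infty(0^+)}{\mathcal{W}_{\mathrm{jp}}}\Bigr]+O(1).
\]
The $O(1)$ remainder is bounded by $T_p\,\alpha_2\sup\|\mathcal{W}\|+\alpha_2\|\mathcal{W}_{\mathrm{jp}}\|$, where $\alpha_2$ comes from Lemma~\ref{lemma:periodic:mono}. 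Dividing by $T$ and using $N/T\to1/T_p$, the $\limsup$ is in fact a genuine limit and equals~\eqref{eq:IMP:AC:dwell:periodic:rho}. This verifies the limit hypothesis~\eqref{eq:IMP:AC:rhostar}, so Theorem~\ref{th:IMP:AC:suff} also applies and gives independence of $X_1^0$ together with attainment.

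\textbf{Main obstacle.} The delicate point is the identification in Step 1: we must check that the periodic fixed point produced from $\Phi$ (built from homogeneous data) coincides with the infinite-horizon solution used by the average-cost theorems. We also need the jump-time bookkeeping to be consistent, namely whether the jump at $t_0$ is counted and the convention $P_\infty(0^+)$ versus $P_\infty(T_p^-)$ in the forcing pairing. I would handle this using the forced key identity~\eqref{eq:IMP:keyid:forced}, which pairs $\mathcal{W}_k$ with $P(t_k^+)$. Any single boundary jump only changes $R_T$ by $O(1)$, so it disappears after division by $T$.
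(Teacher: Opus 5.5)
Your proposal is correct and follows essentially the same route as the paper: obtain the periodic solution from Theorem~\ref{th:OC:periodic}, identify its periodic extension with the infinite-horizon dual variable of Theorem~\ref{th:IMP:AC:nec}, evaluate the $\limsup$ by exact periodic averaging, and get attainment by the causal policy from Theorem~\ref{th:IMP:AC:suff}. Your additions make explicit several steps the paper leaves implicit: deriving the cumulative-cost bound of (H1) from (A1) and the boundedness in (A3), writing out the $O(1)$ remainder, and checking that the limit in~\eqref{eq:IMP:AC:rhostar} exists before invoking Theorem~\ref{th:IMP:AC:suff}. One small point: (A4) is not actually needed for the jump-cost bound, because the factor $\rho^{\kappa(t,s)}$ in (A1) already makes that series geometric.
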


\begin{proof}
By Theorem~\ref{th:OC:periodic} applied to the homogeneous dynamics,
the periodic I-GRE has a unique $T_p$-periodic solution $P_\infty$.
Its $T_p$-periodic extension to $[t_0,\infty)$ coincides with the
infinite-horizon dual variable of Theorem~\ref{th:IMP:AC:nec}, so
formula~\eqref{eq:IMP:AC:nec:val} applies and the $\limsup$ becomes a
true $\lim$ by genuine periodic averaging:
the flow integral over $[t_0,t_0+T]$ contains $\lfloor T/T_p\rfloor$
complete periods plus a boundary remainder, so its time-average
converges to
$T_p^{-1}\int_0^{T_p}\ip{P_\infty(\tau)}{\mathcal{W}(\tau)}\,d\tau$;
the jump sum contains $\lfloor T/T_p\rfloor$ jumps each contributing
$\ip{P_\infty(0^+)}{\mathcal{W}_{\mathrm{jp}}}$, with time-average
$T_p^{-1}\ip{P_\infty(0^+)}{\mathcal{W}_{\mathrm{jp}}}$.  Combining
yields~\eqref{eq:IMP:AC:dwell:periodic:rho}.  The causal policy is
the same as in Theorem~\ref{th:OC:periodic} and is the optimal LTAC
policy by Theorem~\ref{th:IMP:AC:suff}~(\ref{AC:suff:gain}).
\end{proof}

\paragraph{Minimum dwell-time.}
Adopt the timer-dependence hypothesis of Theorem~\ref{th:OC:MDT}:
$\Delta_k\ge T_{\min}>0$, the flow data depend only on the timer
$\tau=t-t_k$, and the time-invariance condition
\eqref{eq:MDT:LTIassumption} holds for $\mathcal{F},Z,\mathcal{J},Z_k$.
For the forcing data, assume in addition that
$\mathcal{W}$ depends only on the timer, with
\begin{equation}\label{eq:MDT:LTIassumption:forcing}
  \mathcal{W}(\tau)\equiv\mathcal{W}_\infty
  \;\;\text{for }\tau\ge T_{\min},
  \qquad
  \mathcal{W}_k\equiv\mathcal{W}_{\mathrm{jp}}
  \;\;\text{for all }k.
\end{equation}

\begin{theorem}[Causal LTAC upper bound under MDT]
  \label{th:IMP:AC:dwell:MDT}
  Under the assumptions of Theorem~\ref{th:OC:MDT}
  and~\eqref{eq:MDT:LTIassumption:forcing}, suppose moreover that the
  jump rate $\bar\nu:=\limsup_{T\to\infty}T^{-1}\#\{k:t_k\le t_0+T\}$
  is finite \textup{(}so $\bar\nu\le 1/T_{\min}$\textup{)} and is in
  fact a $\lim$.  Then the causal policy of Theorem~\ref{th:OC:MDT}
  achieves a long-term average cost rate bounded above by
  \begin{equation}\label{eq:IMP:AC:dwell:MDT:rho}
    \rho^\star
    \le
    \bar\nu\!\int_0^{T_{\min}}\!\ip{P_s(\tau)}{\mathcal{W}(\tau)}\,d\tau
    +(1-\bar\nu T_{\min})\ip{P_s(T_{\min})}{\mathcal{W}_\infty}
    +\bar\nu\ip{P_s(0^+)}{\mathcal{W}_{\mathrm{jp}}},
  \end{equation}
  where $P_s:[0,T_{\min}]\to\Snpsd$ is as in Theorem~\ref{th:OC:MDT}.
  The condition is verifiable on $[0,T_{\min}]$ alone.
\end{theorem}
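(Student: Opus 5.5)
The proof reuses the verification argument of Theorem~\ref{th:OC:MDT}, now with the forced key identity (Proposition~\ref{prop:keyid:forced}) in place of Proposition~\ref{prop:keyid}. Let $\bar P$ be the extension of $P_s$ used there ($\bar P=P_s$ on $[0,T_{\min}]$, frozen at $P_s(T_{\min})$ afterwards), and set $P(t):=\bar P(t-t_k)$ on $(t_k,t_{k+1}]$. Let $X^\star$ be the closed loop under the causal MDT policy, with flow gains in $\mathcal{K}_c(\tau,P_s(\tau))$ for $\tau\le T_{\min}$, frozen gain afterwards, and jump gains in $\mathcal{K}_d(P_s(0^+),P_s(T_{\min}))$. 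Applying \eqref{eq:IMP:keyid:forced} on $[t_0,t_0+T]$ gives
\[
J_T(X^\star,X_1^0)=\ip{P(t_0)}{X_1^0}-\ip{P(t_0+T)}{EX^\star(t_0+T)E^*}+I_f+I_j+R_T(P).
\]
The terms are handled exactly as in the MDT proof. The jump sum vanishes because $I_j=0$ by condition~(ii) and the choice of $K_k$. The flow integral splits into a zero part on timer values in $(0,T_{\min})$ (condition~(i), Lemma~\ref{lemma:decomp}) and a nonpositive part on timer values beyond $T_{\min}$ (condition~(iii) with $X^\star\succeq0$). The boundary term is nonpositive because $P\succeq0$. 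Hence $J_T(X^\star,X_1^0)\le\ip{P_s(0)}{X_1^0}+R_T(P)$. None of this uses the forcing, since the forcing terms are decoupled from $(X_2,X_3)$.

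Dividing by $T$, the initial term vanishes, so $\rho^\star\le\limsup_T T^{-1}R_T(P)$, which must now be evaluated in timer form. Write the forcing pairing as a sum over the inter-jump intervals $(t_k,t_{k+1})$ with $\Delta_k\ge T_{\min}$, using \eqref{eq:MDT:LTIassumption:forcing}. Each full interval contributes
\[
\int_0^{T_{\min}}\ip{P_s(\tau)}{\mathcal{W}(\tau)}\d\tau+(\Delta_k-T_{\min})\ip{P_s(T_{\min})}{\mathcal{W}_\infty},
\]
and each jump contributes $\ip{P_s(0^+)}{\mathcal{W}_{\mathrm{jp}}}$. Let $N(T)$ be the number of jumps in $(t_0,t_0+T]$. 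Then $\sum_{k}\Delta_k = T-O(1)$, where the $O(1)$ collects the first partial interval and the truncated last one. Both are bounded contributions because $P_s$ is bounded on the compact set $[0,T_{\min}]$ and $\mathcal{W}$ is bounded by (A3). The total therefore equals
\[
N(T)\int_0^{T_{\min}}\ip{P_s}{\mathcal{W}}\d\tau+(T-N(T)T_{\min})\ip{P_s(T_{\min})}{\mathcal{W}_\infty}+N(T)\ip{P_s(0^+)}{\mathcal{W}_{\mathrm{jp}}}+O(1).
\]
Dividing by $T$ and using $N(T)/T\to\bar\nu$ (a genuine limit, by hypothesis) yields \eqref{eq:IMP:AC:dwell:MDT:rho}. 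Since $\Delta_k\ge T_{\min}$, we have $N(T)\le T/T_{\min}+1$, so $\bar\nu\le1/T_{\min}$ and the coefficient $1-\bar\nu T_{\min}$ is nonnegative.

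The main obstacle is the last, truncated interval. If $t_0+T$ falls more than $T_{\min}$ past the last jump, the remainder contributes up to $(t_0+T-t_{N_T}-T_{\min})\ip{P_s(T_{\min})}{\mathcal{W}_\infty}$, which is unbounded unless the dwell times are bounded. I would handle it by not dropping these pieces: keep them inside the term $(T-N(T)T_{\min})\ip{P_s(T_{\min})}{\mathcal{W}_\infty}$, which accounts exactly for all time spent at timer values $\ge T_{\min}$, including the final partial interval. With that bookkeeping the error is $O(1)$ and comes only from the partial pieces with timer below $T_{\min}$. The remaining care is in the sign argument: the inequalities $I_f\le0$ and boundary term $\le0$ rely only on $X^\star\succeq0$ and $P_s\succeq0$, so neither needs (A4) or a bound on the forced state. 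This is consistent with the statement being an upper bound, and verifiability on $[0,T_{\min}]$ follows because every quantity involves $P_s$ only on that interval.
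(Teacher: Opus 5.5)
Your proposal is correct and follows the paper's own proof. The paper likewise applies the forced key identity with the frozen extension $\bar P$, signs the flow and jump correction terms exactly as in Theorem~\ref{th:OC:MDT}, and evaluates the forcing pairing by a timer-occupation average. Your additions tighten two points the paper states informally: you drop the terminal term by sign rather than asserting it vanishes (which would need a state bound under the MDT policy), and you book-keep the truncated final interval explicitly, which makes the ``fraction of time'' argument rigorous.
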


\begin{proof}
Extend $P_s$ to $\bar P:[0,\infty)\to\Snpsd$ by
$\bar P(\tau)=P_s(\tau)$ for $\tau\le T_{\min}$ and
$\bar P(\tau)=P_s(T_{\min})$ for $\tau>T_{\min}$ as in the proof of
Theorem~\ref{th:OC:MDT}.  Apply the forced key
identity~\eqref{eq:IMP:keyid:forced} on $[t_0,t_0+T]$ with $P=\bar P$
and $Z_T=0$, denoting by $\tau(t)$ the timer value at time $t$:
\begin{align*}
  J_T(X^\star,X_1^0)
  &=\ip{\bar P(0^+)}{X_1^0}-\ip{\bar P(\tau(t_0+T))}{EX^\star(t_0+T)E^*}\\
  &\quad
   +\underbrace{\int_{t_0}^{t_0+T}\!\ip{\mathcal{C}_Z(\bar P)}{X^\star}\dt}_{\le\,0\text{ by Thm~\ref{th:OC:MDT}~(iii)}}
   +\underbrace{\sum_{k:\,t_k\le t_0+T}\!\!\ip{\mathcal{D}_Z}{X^\star(t_k^-)}}_{=\,0\text{ at the optimal policy}}\\
  &\quad
   +\int_{t_0}^{t_0+T}\!\ip{\bar P(\tau(t))}{\mathcal{W}(\tau(t))}\dt
   +\!\!\sum_{k:\,t_k\le t_0+T}\!\!\ip{\bar P(0^+)}{\mathcal{W}_{\mathrm{jp}}}.
\end{align*}
Dividing by $T$ and taking $T\to\infty$, the boundary terms vanish.
The time-average of $\ip{\bar P(\tau(t))}{\mathcal{W}(\tau(t))}$
decomposes according to the timer distribution: under MDT, in each
inter-jump interval of length $\Delta_k\ge T_{\min}$ the timer spends
time $T_{\min}$ on $[0,T_{\min}]$ (where $\bar P=P_s$ and
$\mathcal{W}=\mathcal{W}(\tau)$ vary) and time $\Delta_k-T_{\min}$ on
the frozen region (where $\bar P=P_s(T_{\min})$ and
$\mathcal{W}=\mathcal{W}_\infty$).  With average jump rate $\bar\nu$,
the fraction of time spent on $[0,T_{\min}]$ is $\bar\nu T_{\min}$ and
on the frozen region is $1-\bar\nu T_{\min}$,
giving~\eqref{eq:IMP:AC:dwell:MDT:rho}.
\end{proof}

\begin{remark}\label{rem:IMP:AC:dwell:MDT:periodic}
For the constant dwell-time $\Delta_k\equiv T_{\min}$, the jump rate
$\bar\nu=1/T_{\min}$ and the frozen contribution
$(1-\bar\nu T_{\min})\ip{P_s(T_{\min})}{\mathcal{W}_\infty}$ vanishes,
so~\eqref{eq:IMP:AC:dwell:MDT:rho} recovers the periodic
formula~\eqref{eq:IMP:AC:dwell:periodic:rho} (with constant data and
$T_p=T_{\min}$).  For larger jump rates ($\bar\nu<1/T_{\min}$), the
frozen contribution accounts for the time spent past the minimum
dwell-time.
\end{remark}

\paragraph{Range dwell-time.}
Adopt the timer-dependence hypothesis of Theorem~\ref{th:OC:RDT}:
$\Delta_k\in[T_{\min},T_{\max}]$ and the flow and jump data depend
only on the timer $\tau$ as in~\eqref{eq:MDT:LTIassumption}, now over
the extended interval $[0,T_{\max}]$ \textup{(}so that
$\mathcal{F}(\tau,\cdot),Z(\tau)$ are defined for
$\tau\in[0,T_{\max}]$ without imposing constancy beyond $T_{\min}$,
and $\mathcal{J},Z_k$ are $k$-independent\textup{)}.  For the forcing
data, assume similarly that $\mathcal{W}$ depends only on the timer
$\tau\in[0,T_{\max}]$ and that $\mathcal{W}_k$ is $k$-independent:
$\mathcal{W}_k\equiv\mathcal{W}_{\mathrm{jp}}$.

\begin{theorem}[Causal LTAC under RDT]
  \label{th:IMP:AC:dwell:RDT}
  Under the assumptions of Theorem~\ref{th:OC:RDT} and the
  timer-dependence hypotheses above, suppose moreover that the
  empirical occupation measure of the inter-jump sequence
  $\{\Delta_k\}$ converges weakly to a probability measure $\mu$ on
  $[T_{\min},T_{\max}]$ as $T\to\infty$, and that the jump rate
  $\bar\nu=(\int_{[T_{\min},T_{\max}]}\Delta\,\mu(d\Delta))^{-1}$
  is finite.  Then the causal policy of Theorem~\ref{th:OC:RDT}
  achieves a long-term average cost rate bounded above by
  \begin{equation}\label{eq:IMP:AC:dwell:RDT:rho}
    \rho^\star
    \le
    \bar\nu\!\int_{T_{\min}}^{T_{\max}}\!\!\biggl(\int_0^\Delta\!
      \ip{P_s(\tau)}{\mathcal{W}(\tau)}\,d\tau\biggr)\mu(d\Delta)
    +\bar\nu\ip{P_s(0^+)}{\mathcal{W}_{\mathrm{jp}}},
  \end{equation}
  where $P_s:[0,T_{\max}]\to\Snpsd$ is as in Theorem~\ref{th:OC:RDT}.
\end{theorem}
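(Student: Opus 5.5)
We follow the proof of Theorem~\ref{th:IMP:AC:dwell:MDT}, with the Geromel--Colaneri extension replaced by the timer-domain dual of Theorem~\ref{th:OC:RDT}.

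\textbf{Absolute-time dual variable.} Define $P(t):=P_s(t-t_k)$ for $t\in(t_k,t_{k+1}]$, exactly as in the proof of Theorem~\ref{th:OC:RDT}. This is well defined because every $\Delta_k\le T_{\max}$, and it satisfies $P(t_k^+)=P_s(0^+)$ and $P(t_k^-)=P_s(\Delta_{k-1}^-)$.

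\textbf{Forced key identity.} Apply the forced key identity (Proposition~\ref{prop:keyid:forced}) on $[t_0,t_0+T]$ with this $P$ and $Z_T=0$, along the closed-loop trajectory $X^\star$ generated by the causal policy of Theorem~\ref{th:OC:RDT}.
\begin{itemize}
\item By condition~(i), the Schur decomposition (Lemma~\ref{lemma:decomp}) with the flow gain $K(\tau)\in\mathcal{K}_c(\tau,P_s(\tau))$ gives $\ip{\mathcal{C}_Z}{X^\star}=0$ almost everywhere.
\item By condition~(ii) with $\tau=\Delta_{k-1}\in[T_{\min},T_{\max}]$, the jump gain $K_k\in\mathcal{K}_d$ gives $\ip{\mathcal{D}_Z}{X^\star(t_k^-)}=0$ at every jump.
\item The terminal term $-\ip{P(t_0+T)}{EX^\star(t_0+T)E^*}$ is nonpositive, so it can be dropped for an upper bound.
\end{itemize}
Hence
\[
J_T(X^\star,X_1^0)\le\ip{P_s(0)}{X_1^0}+\int_{t_0}^{t_0+T}\ip{P_s(\tau(t))}{\mathcal{W}(\tau(t))}\dt+N_T\,\ip{P_s(0^+)}{\mathcal{W}_{\mathrm{jp}}},
\]
where $N_T$ is the number of jumps in $(t_0,t_0+T]$. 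Dividing by $T$, the initial term vanishes because $P_s$ is bounded on the compact interval $[0,T_{\max}]$. Since $\rho^\star$ is an infimum, it is bounded by the closed-loop average, so it suffices to compute the $\limsup$ of the two forcing averages.

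\textbf{Renewal-type averaging.} Set $g(\Delta):=\int_0^\Delta\ip{P_s(\tau)}{\mathcal{W}(\tau)}\,d\tau$. This function is continuous and bounded on $[T_{\min},T_{\max}]$ by continuity of $P_s$ and the boundedness of $\mathcal{W}$ in (A3).
\begin{itemize}
\item Split the flow integral into complete inter-jump intervals $\sum_{k<N_T}g(\Delta_k)$ plus two boundary pieces. Each boundary piece is bounded by $T_{\max}\sup\|P_s\|\sup\|\mathcal{W}\|$, so it is $O(1)$ and vanishes after division by $T$.
\item Write $T^{-1}\sum_{k<N_T}g(\Delta_k)=(N_T/T)\cdot N_T^{-1}\sum_{k<N_T}g(\Delta_k)$.
\item For the second factor, interpret the occupation-measure hypothesis as weak convergence of the empirical measures $N^{-1}\sum_{k<N}\delta_{\Delta_k}$ to $\mu$. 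Since $g$ is bounded and continuous, $N_T^{-1}\sum_{k<N_T}g(\Delta_k)\to\int g\,d\mu$.
\item For the first factor, $t_0+T$ lies between $\sum_{k<N_T}\Delta_k$ and $\sum_{k\le N_T}\Delta_k$, and $\Delta_k\le T_{\max}$. Testing weak convergence against the identity function gives $T/N_T\to\int\Delta\,\mu(d\Delta)$, so $N_T/T\to\bar\nu$.
\end{itemize}
The jump sum then contributes $\bar\nu\ip{P_s(0^+)}{\mathcal{W}_{\mathrm{jp}}}$, which gives~\eqref{eq:IMP:AC:dwell:RDT:rho}.

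\textbf{Main obstacle.} The delicate step is this averaging. It must make precise how the occupation-measure hypothesis turns the time average into a $\mu$-average per interval, and it must handle the random number $N_T$ together with the partial last interval. Both issues are resolved by the uniform bounds $T_{\min}\le\Delta_k\le T_{\max}$, which make every boundary contribution $O(1)$. The assumption (A1) is not needed here, because the terminal term was discarded by sign rather than driven to zero.
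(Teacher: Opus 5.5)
Your proposal is correct and follows the same route as the paper: apply the forced key identity with the timer-indexed dual $P_s$, eliminate the $\mathcal{C}_Z$ and $\mathcal{D}_Z$ terms using the gains of Theorem~\ref{th:OC:RDT}, and average the forcing terms over the occupation measure $\mu$ with $\bar\nu=(\int\Delta\,\mu(d\Delta))^{-1}$. Your version is somewhat more careful than the paper's sketch in two places: you discard the terminal term using the sign of $P_s\succeq0$, which avoids the paper's unjustified assumption that $EX^\star E^*$ stays bounded under the RDT policy, and you supply an explicit renewal argument for $N_T/T\to\bar\nu$, a step the paper only asserts.
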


\begin{proof}
Apply the forced key identity with $P=P_s$ over $[t_0,t_0+T]$.  The
$\mathcal{C}_Z$ and $\mathcal{D}_Z$ contributions vanish identically
at the optimal policy by the conditions of Theorem~\ref{th:OC:RDT},
so
\begin{align*}
  J_T(X^\star,X_1^0)
  &=\ip{P_s(0^+)}{X_1^0}-\ip{P_s(\tau(t_0+T))}{EX^\star(t_0+T)E^*}\\
  &\quad
   +\int_{t_0}^{t_0+T}\!\ip{P_s(\tau(t))}{\mathcal{W}(\tau(t))}\dt
   +\!\!\sum_{k:\,t_k\le t_0+T}\!\!\ip{P_s(0^+)}{\mathcal{W}_{\mathrm{jp}}}.
\end{align*}
Dividing by $T$ and letting $T\to\infty$, the boundary terms vanish,
the jump-sum time-average is
$\bar\nu\ip{P_s(0^+)}{\mathcal{W}_{\mathrm{jp}}}$, and the flow
time-average is obtained from the timer occupation measure: in an
inter-jump interval of length $\Delta_k$, the timer traverses
$[0,\Delta_k]$, contributing
$\int_0^{\Delta_k}\ip{P_s(\tau)}{\mathcal{W}(\tau)}\,d\tau$ to the
integral; dividing by $T$ and using
$\bar\nu=(\mathbb{E}_\mu[\Delta])^{-1}$ gives the $\mu$-integral
in~\eqref{eq:IMP:AC:dwell:RDT:rho}.
\end{proof}

\begin{remark}\label{rem:IMP:AC:dwell:RDT}
When $\mu$ is concentrated at a single point $\Delta_k\equiv T_p$
with $T_{\min}\le T_p\le T_{\max}$, the
bound~\eqref{eq:IMP:AC:dwell:RDT:rho} becomes equality and reduces
to~\eqref{eq:IMP:AC:dwell:periodic:rho}.  For unknown $\mu$, the
worst case over admissible distributions on $[T_{\min},T_{\max}]$
gives a robust upper bound on $\rho^\star$.
\end{remark}

\subsubsection{Continuous- and discrete-time corollaries}
\label{sec:OC:AC:CTDT}

The continuous-time and discrete-time specialisations follow by
setting $N_T=0$ or $\mathcal{F}=0$ as in
Sections~\ref{sec:OC:fh}--\ref{sec:OC:inf}.

\begin{corollary}[Continuous-time average cost]\label{cor:CT:AC}
Set $N_T=0$ (no jumps), so $\mathcal{W}_k$ is irrelevant.  Under the
continuous-time forced
flow $E\dot X E^*=\mathcal{F}(t,X(t))+\mathcal{W}(t)$ and (A1)--(A3),
\begin{equation}\label{eq:CT:AC:rho}
  \rho^\star
  =\lim_{T\to\infty}\frac{1}{T}\int_{t_0}^{t_0+T}\ip{P_\infty(t)}{\mathcal{W}(t)}\dt,
\end{equation}
where $P_\infty$ is the unique bounded $C^1$ solution of the
continuous-time infinite-horizon GRE (Corollary~\ref{cor:CT:LQ:inf}).
The dual LMI is
\[
  \rho^\star
  =\sup_P\;\lim_{T\to\infty}\frac{1}{T}\!\!\int_{t_0}^{t_0+T}\!\!\ip{P(t)}{\mathcal{W}(t)}\dt
  \quad\text{s.t.}\quad
  \mathcal{C}_Z(t,P(t))\succeq0,\;P\text{ bounded}.
\]
In the LTI case ($\mathcal{F},Z,\mathcal{W}$ time-invariant), $P_\infty$
is constant and~\eqref{eq:CT:AC:rho} reduces to
\begin{equation}\label{eq:CT:AC:LTI}
  \rho^\star=\ip{P_\infty}{\mathcal{W}}=\tr(P_\infty\mathcal{W}),
\end{equation}
recovering the classical LQG steady-state cost formula~\cite{AitRami:01a}
when $\mathcal{W}=BB^*$ is a process-noise covariance.
\end{corollary}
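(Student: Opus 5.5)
The plan is to obtain Corollary~\ref{cor:CT:AC} as the specialisation $N_T=0$ of the impulsive average-cost results, and not to re-derive anything. With no jumps, the jump sums in the forced key identity~\eqref{eq:IMP:keyid:forced} and in the pairing $R_T(P)$ are empty. The identity therefore reduces to
\[
J_T(X,X_1^0)=\ip{P(t_0)}{X_1^0}-\ip{P(t_0+T)}{EX(t_0+T)E^*}+\int_{t_0}^{t_0+T}\ip{\mathcal{C}_Z(t,P(t))}{X(t)}\dt+\int_{t_0}^{t_0+T}\ip{P(t)}{\mathcal{W}(t)}\dt .
\]
Corollary~\ref{cor:CT:LQ:inf}, which is Theorem~\ref{th:IMP:LQ:inf} applied to the homogeneous dynamics under (A1)--(A3) with (A1) supplying (H1), gives a unique bounded $C^1$ solution $P_\infty$ of the CT infinite-horizon GRE. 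Theorem~\ref{th:IMP:AC:nec} then gives $\rho^\star(X_1^0)=\limsup_T T^{-1}\int\ip{P_\infty}{\mathcal{W}}\dt$, independently of $X_1^0$. The finiteness condition on the jump count holds trivially here. The bounded-state estimate of Lemma~\ref{lem:IMP:AC:bdd} uses only its flow term, so the boundary term is $O(1)$ and disappears after division by $T$. The formula~\eqref{eq:CT:AC:rho} is stated as a limit, so I will either read it under the standing assumption that the limit exists, as in Theorem~\ref{th:IMP:AC:suff}, or note that it is a $\limsup$ in general and a true limit in the LTI case. Remark~\ref{rem:IMP:AC:nec} already makes this qualification.

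For the dual LMI, the plan is to repeat Theorem~\ref{th:IMP:AC:LMI} without jumps. Assumption (A4) is vacuous in the jump-free case. Take any bounded $P$ with $\mathcal{C}_Z\succeq0$. Lemma~\ref{lemma:decomp} makes the $\mathcal{C}_Z$-integral nonnegative, so $J_T(X,X_1^0)/T\ge T^{-1}\int\ip{P}{\mathcal{W}}\dt+O(1/T)$, and taking the infimum over $X$ gives $\rho^\star\ge\Phi(P)$. Conversely, $P_\infty$ is feasible by Lemma~\ref{lemma:GREequiv} and attains $\rho^\star$, so the supremum is attained at $P_\infty$.

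For the LTI case, the step needing real care is showing that $P_\infty$ is constant. I would argue by time-shift invariance, as in the necessity part of Theorem~\ref{th:OC:periodic}. Since $\mathcal{F}$ and $Z$ are time-invariant, $\hat P(t):=P_\infty(t+s)$ solves the same infinite-horizon GRE for every $s\ge0$, and it is bounded. Uniqueness in Theorem~\ref{th:IMP:LQ:inf:suff} then forces $\hat P\equiv P_\infty$, so $P_\infty$ is constant and $\dot P_\infty=0$. With $P_\infty$ and $\mathcal{W}$ constant, the average $T^{-1}\int_{t_0}^{t_0+T}\ip{P_\infty}{\mathcal{W}}\dt$ equals $\ip{P_\infty}{\mathcal{W}}$ for every $T$. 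The limit therefore exists, giving $\rho^\star=\ip{P_\infty}{\mathcal{W}}=\tr(P_\infty\mathcal{W})$ for Hermitian arguments.

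Finally, for $\mathcal{W}=BB^*$ and $P_\infty$ the stabilizing solution of the algebraic Riccati equation $\mathcal{C}_Z\,\schur{/}\,(\mathcal{C}_Z)_3=0$ with $\dot P=0$, this is the classical steady-state LQG cost $\tr(B^*P_\infty B)$. That identification is only the remark recovering~\cite{AitRami:01a}.
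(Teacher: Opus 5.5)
Your proposal is correct and follows the paper's own route. The paper gives no separate proof: it obtains the corollary as the $N_T=0$ specialization of Theorems~\ref{th:IMP:AC:nec} and~\ref{th:IMP:AC:LMI}, with $P_\infty$ supplied by Corollary~\ref{cor:CT:LQ:inf}, exactly as you do. Your two refinements are both sound and go slightly beyond the paper: you note that~\eqref{eq:CT:AC:rho} is in general only a $\limsup$ under (A1)--(A3), as Remark~\ref{rem:IMP:AC:nec} concedes, and you prove that $P_\infty$ is constant in the LTI case via time-shift invariance plus the uniqueness in Theorem~\ref{th:IMP:LQ:inf:suff}, which the paper merely asserts.
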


\begin{corollary}[Discrete-time average cost]\label{cor:DT:AC}
Apply with $\mathcal{F}=0$, $Z=0$ on flow, $t_k=k_0+k$,
$\mathcal{J}=\mathcal{D}$, $\mathcal{W}=0$ on flow.  Under the
discrete-time forced jump
$EX(k+1)E^*=\mathcal{D}(k,X(k))+\mathcal{W}_k$ and the
discrete-time analogues of (A1)--(A3),
\begin{equation}\label{eq:DT:AC:rho}
  \rho^\star
  =\lim_{N\to\infty}\frac{1}{N}\sum_{k=k_0}^{k_0+N-1}\ip{P_\infty(k+1)}{\mathcal{W}_k},
\end{equation}
where $P_\infty$ satisfies the algebraic DT-GRE
(Corollary~\ref{cor:DT:LQ:inf}).  In the LTI case
($\mathcal{D},Z_k,\mathcal{W}_k$ constant), $P_\infty$ is constant
and~\eqref{eq:DT:AC:rho} reduces to
$\rho^\star=\ip{P_\infty}{\mathcal{W}}=\tr(P_\infty\mathcal{W})$,
again recovering the classical steady-state cost
formula~\cite{AitRami:01a}.
\end{corollary}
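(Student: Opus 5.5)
The plan is to specialise the impulsive average-cost machinery to the discrete-time setting, exactly as Corollary~\ref{cor:DT:LQ:inf} specialises Theorem~\ref{th:IMP:LQ:inf}. I would take Definition~\ref{def:DT:sys} with $\mathcal{F}=0$, $Z=0$ and $\mathcal{W}=0$ on the flow intervals, unit jump times $t_k=k_0+k$, and jump operator $\mathcal{J}=\mathcal{D}$ with forcing $\mathcal{W}_k$. Then (A4) holds trivially with $\Delta_{\min}=\Delta_{\max}=1$, and the jump rate equals $1$.

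\textbf{Step 1 (the dual is piecewise constant).} With these data, $\mathcal{C}_Z(t,P)=E^*\dot P E$ on each open interval $(k,k+1)$. Choosing $P$ constant there gives $\mathcal{C}_Z=0$, so the flow part of the I-GRE holds trivially. The dual $P_\infty$ supplied by Theorem~\ref{th:IMP:AC:nec}, whose existence follows from (A1)--(A3) through Theorem~\ref{th:IMP:LQ:inf}, can therefore be taken constant on $(k,k+1)$. I would identify $P_\infty(k):=P_\infty(t_k^-)$, so that $P_\infty(t_k^+)=P_\infty(k+1)$. Under this identification the jump conditions become exactly the algebraic DT-GRE of Corollary~\ref{cor:DT:LQ:inf}.

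The one thing to check here is uniqueness. The bounded solution of the impulsive I-GRE is unique by Theorem~\ref{th:IMP:LQ:inf:suff}, so the piecewise-constant solution is that solution, and no information is lost by restricting to constant $P$ on flow intervals.

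\textbf{Step 2 (evaluate the forcing pairing).} In $R_T(P_\infty)$ the flow integral vanishes because $\mathcal{W}=0$ on flow. The jump sum reduces to $\sum_k\ip{P_\infty(k+1)}{\mathcal{W}_k}$, with the index shift coming from $P_\infty(t_k^+)=P_\infty(k+1)$.

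Taking $T=N$, formula~\eqref{eq:IMP:AC:nec:val} gives $\rho^\star$ as the $\limsup$ of the averaged sum. To upgrade the $\limsup$ to a $\lim$ as stated in \eqref{eq:DT:AC:rho}, I would use one of two routes:
\begin{itemize}
\item invoke Theorem~\ref{th:IMP:AC:suff}, which requires assuming the limit exists; or
\item argue, as in Remark~\ref{rem:IMP:AC:nec}, that the limit exists under the structural assumptions being used.
\end{itemize}

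\textbf{Step 3 (LTI case).} When $\mathcal{D}$, $Z_k$ and $\mathcal{W}_k$ are constant, I would show $P_\infty$ is constant. The argument mirrors the necessity part of Theorem~\ref{th:OC:periodic} with $T_p=1$: the shifted sequence $P_\infty(\cdot+1)$ solves the same infinite-horizon DT-GRE, so uniqueness forces $P_\infty(k+1)=P_\infty(k)$. The averaged sum is then constant and equal to $\ip{P_\infty}{\mathcal{W}}=\tr(P_\infty\mathcal{W})$, which is a genuine limit.

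\textbf{Main obstacle.} I expect the hardest step to be the existence of the limit, as opposed to the $\limsup$, in the time-varying case. The honest resolution is to state \eqref{eq:DT:AC:rho} as a $\limsup$ in general, or to assume its existence as in \eqref{eq:IMP:AC:rhostar}. Only in the LTI case (or periodic data, via Theorem~\ref{th:IMP:AC:dwell:periodic}) does the limit exist automatically.
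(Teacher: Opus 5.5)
Your route is the one the paper intends. No separate argument is given there; the corollary is meant as the specialisation of Theorems~\ref{th:IMP:AC:suff} and~\ref{th:IMP:AC:nec} through the discrete-time reduction of Corollaries~\ref{cor:DT:LQ:fh} and~\ref{cor:DT:LQ:inf}, which is exactly your Steps~1--3. Step~1 can be tightened in two places.
\begin{enumerate}
\item With $\mathcal{F}=0$ and $Z=0$ on the flow you have $\mathcal{C}_Z=E^*\dot PE$, whose generalized Schur complement is $\dot P$ itself. The flow I-GRE therefore \emph{forces} $\dot P=0$, so you do not need uniqueness to justify piecewise constancy.
\item Theorems~\ref{th:IMP:LQ:inf} and~\ref{th:IMP:AC:nec} cannot be quoted verbatim. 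Their coercivity hypothesis (H2)/(A2) is $EZ(t)E^*\succeq\delta I$ on the \emph{flow} cost, which the choice $Z=0$ violates. The ``discrete-time analogue'' has to be a coercivity condition on the jump costs $Z_k$, and the steps where coercivity enters must be re-checked with it: the lower bound on $P_T$, and the decay argument in Theorem~\ref{th:IMP:LQ:inf:suff}. The paper's Corollary~\ref{cor:DT:LQ:inf} skips the same point.
\end{enumerate}

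The obstacle you single out is genuine, and it is a defect of the statement rather than of your argument. Under the discrete-time analogues of (A1)--(A3), the limit in~\eqref{eq:DT:AC:rho} need not exist. Here is a counterexample.
\begin{itemize}
\item Take any constant $\mathcal{D}$ and $Z_k$ satisfying those hypotheses. Then $P_\infty$ is constant by your Step~3, which uses only the constancy of $\mathcal{D}$ and $Z_k$.
\item Fix $W\succ0$. Set $\mathcal{W}_k=W$ for $k\in[4^j,2\cdot4^j)$ and $\mathcal{W}_k=0$ for $k\in[2\cdot4^j,4^{j+1})$, for $j\ge0$.
\end{itemize}
All hypotheses hold; unit spacing even makes the jump-rate regularity (A4) automatic. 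Yet $\tfrac1N\sum_{k<N}\ip{P_\infty}{\mathcal{W}_k}$ tends to $\tfrac23\ip{P_\infty}{W}$ along $N=2\cdot4^j$ and to $\tfrac13\ip{P_\infty}{W}$ along $N=4^{j+1}$. These two values differ because $\ip{P_\infty}{W}>0$.

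So what can actually be proved is what you propose:
\begin{itemize}
\item \eqref{eq:DT:AC:rho} with $\limsup$ in general, by Theorem~\ref{th:IMP:AC:nec};
\item \eqref{eq:DT:AC:rho} with $\lim$ when the existence of the limit is assumed, by Theorem~\ref{th:IMP:AC:suff};
\item the genuine limit $\rho^\star=\tr(P_\infty\mathcal{W})$ in the LTI case, which your Step~3 establishes.
\end{itemize}
The same example shows that the paper's assertion that bounded inter-jump times turn the $\limsup$ into a $\lim$ is incorrect. That route would not rescue the stated formula either.
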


\section{Finite-Time Optimal Steering}
\label{sec:cov}

The optimal control theory of Section~\ref{sec:OC} is unconstrained
at the terminal time: the state $EX(T)E^*$ is shaped by a terminal
cost $\ip{Z_T}{EX(T)E^*}$ but not fixed to any prescribed value.
This section addresses the complementary problem of enforcing a
\emph{hard equality constraint} on the terminal state.  Given initial
and terminal targets $X_1^0, X_1^f\in\Snpsd$, we seek the
minimum-cost admissible trajectory that steers the constrained block
$EX(T)E^*$ from $X_1^0$ to $X_1^f$ at a fixed final time $T$.\\

The framework is fully general: $X$ is any Hermitian matrix-valued
state satisfying~\eqref{eq:sys:IMP}, and $X_1^f$ need not arise
from any stochastic model.  When $X$ represents the second-order
moment of a linear stochastic system, the problem specialises to
the covariance steering setting of~\cite{Chen:16,Chen:16b,
Goldshtein:17,Bakolas:16}, but the analysis here applies without
that interpretation, covering deterministic matrix-valued dynamics
and indefinite costs as in the rest of the paper.\\

The problem is solved via Lagrangian duality
(Section~\ref{sec:cov:dual}): a multiplier $M\succeq0$ for the
terminal constraint converts it into an unconstrained optimal control problem
(Section~\ref{sec:OC:fh}), with the optimal multiplier $M^\star$
identified by the steering condition 
$EX^\star(T;M^\star)E^*=X_1^f$ (Theorem~\ref{th:cov:dual}).
Feasibility and the reachable set are characterized in
Section~\ref{sec:cov:reach}.
The problem structure is made explicit via a two-point boundary
value problem (TPBVP) coupling the backward I-GRE with a forward
moment equation for $\Sigma(t):=EX^\star(t)E^*$
(Section~\ref{sec:cov:tpbvp}), and a forward-backward algorithm
for computing $M^\star$ is described in Section~\ref{sec:cov:algo}.

\subsection{Problem formulation}

The linear-cost optimal control of Section~\ref{sec:OC} leaves the
terminal state $EX(T)E^*$ free (beyond the terminal cost $Z_T$).
The present problem adds a \emph{hard equality constraint}: given a
target $X_1^f\in\Snpsd$, find the minimum-cost trajectory that
steers the constrained block from $X_1^0$ to $X_1^f$ exactly.

\begin{definition}[Finite-time optimal steering problem]
  \label{def:cov:prob}
  The \emph{optimal steering problem} is
  \begin{equation}\label{eq:cov:prob}
    J^\star(X_1^0,X_1^f)
    :=\inf_{X(\cdot)\in\Snmpsd}
    \int_{t_0}^{t_0+T}\ip{Z(t)}{X(t)}\dt
    +\sum_{k=1}^{N_T}\ip{Z_k}{EX(t_k^-)E^*}
  \end{equation}
  subject to~\eqref{eq:sys:IMP}, $EX(t_0)E^*=X_1^0$, and the
  \emph{terminal constraint}
  \begin{equation}\label{eq:cov:terminal}
    EX(t_0+T)E^*=X_1^f.
  \end{equation}
  The problem is \emph{feasible} if the infimum is finite.
  The \emph{reachable set} is
  $\mathcal{R}_T(X_1^0):=\{EX(T)E^*:\text{admissible }X,\;EX(t_0)E^*=X_1^0\}$.
\end{definition}

\begin{remark}
  The running cost has no terminal term ($Z_T=0$) since the terminal
  state is fixed by the constraint.  The case $Z_k=0$ corresponds to
  the minimum-energy optimal steering studied (as a covariance problem)
  in~\cite{Goldshtein:17,Bakolas:16}; retaining general $Z,Z_k$ gives
  a \emph{minimum linear-cost} extension natural to the
  matrix-valued impulsive framework.
\end{remark}

\subsection{Lagrangian duality and the steering condition}
\label{sec:cov:dual}

The terminal constraint~\eqref{eq:cov:terminal} is relaxed via a
multiplier $M\in\Sn$.  Define the Lagrangian
\begin{equation}\label{eq:cov:lagrangian}
  L(X,M)
  :=J_T(X,X_1^0)+\ip{M}{EX(T)E^*-X_1^f}.
\end{equation}
Since $\ip{M}{EX(T)E^*}$ equals the terminal cost
term $\ip{Z_T}{EX(T)E^*}$ with $Z_T=M$, the infimum of $L$ over
$X$ for a given $M$ is the unconstrained optimal cost
(Theorem~\ref{th:IMP:LQ:suff}) with terminal weight $Z_T=M$, minus
the constant $\ip{M}{X_1^f}$.  The dual function is therefore
\begin{equation}\label{eq:cov:dual}
  d(M):=\inf_X L(X,M)
  =\ip{P(t_0;M)}{X_1^0}-\ip{M}{X_1^f},
\end{equation}
where $P(\cdot;M)$ is the I-GRE solution with $P(T;M)=M$.
For $M\not\succeq0$ the I-GRE has no solution and $d(M)=-\infty$,
so the domain of $d$ is $\Snpsd$.  The dual problem is
\begin{equation}\label{eq:cov:dualmax}
  \sup_{M\succeq0}\,d(M)
  =\sup_{M\succeq0}\bigl[\ip{P(t_0;M)}{X_1^0}-\ip{M}{X_1^f}\bigr],
\end{equation}
which is a concave maximization over the PSD cone: $d$ is concave in
$M$ as the infimum of affine functions (for each fixed $X$,
$L(X,M)$ is affine in $M$).

\begin{theorem}[Lagrangian duality for optimal steering]
  \label{th:cov:dual}
  Suppose the optimal steering problem~\eqref{eq:cov:prob} is
  feasible.  Then:
  \begin{enumerate}[\upshape(a)]
    \item\label{cov:strongdual}
      \emph{(Strong duality.)}
      $J^\star(X_1^0,X_1^f)=\sup_{M\succeq0}d(M)$;
      there is no duality gap.
    \item\label{cov:steeringcond}
      \emph{(Steering condition.)}
      The supremum is attained at $M^\star\succeq0$, which satisfies
      \begin{equation}\label{eq:cov:steer}
        EX^\star(T;M^\star)E^*=X_1^f,
      \end{equation}
      where $X^\star(\cdot;M)$ denotes the optimal trajectory of the
      unconstrained optimal control problem with terminal cost $Z_T=M$.
    \item\label{cov:optcost}
      \emph{(Optimal cost.)}
      $J^\star(X_1^0,X_1^f)=\ip{P(t_0;M^\star)}{X_1^0}-\ip{M^\star}{X_1^f}$,
      attained by the optimal control policy with gains
      $K(\cdot)\in\mathcal{K}_c(\cdot,P(\cdot;M^\star))$ on flow and
      $K_k\in\mathcal{K}_d(k,\cdot)$ at each jump.
  \end{enumerate}
\end{theorem}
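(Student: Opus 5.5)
The plan is to prove the three parts in the order weak duality, existence of a maximizer with the steering condition, then zero gap with the cost formula. The main tools are the key identity (Proposition~\ref{prop:keyid}) with terminal weight $Z_T=M$, and the finite-horizon results (Theorem~\ref{th:IMP:LQ:suff}, Theorem~\ref{th:IMP:LQ:LMI}), used to evaluate $d(M)$ as in \eqref{eq:cov:dual}.

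\textbf{Weak duality.} Let $X$ be any admissible trajectory satisfying \eqref{eq:cov:terminal}. Then $L(X,M)=J_T(X,X_1^0)$ for every $M$, so $d(M)\le J_T(X,X_1^0)$. Taking the infimum over feasible $X$ gives $\sup_{M\succeq0}d(M)\le J^\star(X_1^0,X_1^f)<\infty$; this supremum is finite by the feasibility assumption.

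\textbf{Existence of a maximizer and the steering condition.} By Theorem~\ref{th:IMP:LQ:suff} and the linearity of the key identity in the terminal weight, the map $M\mapsto\ip{P(t_0;M)}{X_1^0}$ is concave. It is also differentiable wherever the optimal trajectory is unique, and by an envelope argument its derivative in direction $H$ is $\ip{H}{EX^\star(T;M)E^*}$. Concretely, applying \eqref{eq:IMP:keyid} with $P(\cdot;M)$ to $X^\star(\cdot;M+H)$, and symmetrically, sandwiches $\ip{P(t_0;M+H)-P(t_0;M)}{X_1^0}$ between $\ip{H}{EX^\star(T;M+H)E^*}$ and $\ip{H}{EX^\star(T;M)E^*}$. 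Hence the supergradient of $d$ at $M$ is $EX^\star(T;M)E^*-X_1^f$.

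Next we need a maximizer $M^\star$ of the concave function $d$ on $\Snpsd$. I would first try to show $d$ is coercive, i.e.\ $d(M)\to-\infty$ along unbounded directions. For this I would use feasibility with $X_1^f$ in the relative interior of $\mathcal{R}_T(X_1^0)$, which amounts to a Slater-type condition. The first-order condition for an interior maximizer then gives \eqref{eq:cov:steer}.

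\textbf{Main obstacle.} This step is the crux. Attainment genuinely requires a constraint qualification, namely $X_1^f$ lying in the interior of the reachable set; otherwise the maximizer may escape to infinity or sit on $\partial\Snpsd$. On the boundary, the KKT condition only yields $EX^\star(T)E^*-X_1^f\preceq0$ together with complementary slackness. I would therefore argue that the ``sup over $M\succeq0$'' should really be taken over $M\in\Sn$, where the I-GRE still has a solution as long as $\mathcal{C}_Z\succeq0$ stays feasible. Alternatively, I would assume Slater's condition, so that the steering equality holds exactly.

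\textbf{Zero gap and optimal cost.} Given $M^\star$ with \eqref{eq:cov:steer}, the trajectory $X^\star(\cdot;M^\star)$ generated by gains in $\mathcal{K}_c(\cdot,P(\cdot;M^\star))$ and $\mathcal{K}_d$ is feasible for \eqref{eq:cov:prob}. Its cost equals $L(X^\star,M^\star)=d(M^\star)$, because the multiplier term vanishes by \eqref{eq:cov:steer}. Combining this with weak duality gives parts \eqref{cov:strongdual} and \eqref{cov:optcost}, with $J^\star=\ip{P(t_0;M^\star)}{X_1^0}-\ip{M^\star}{X_1^f}$.

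As an alternative route to strong duality, note that the problem is a linear program in $X(\cdot)$ over a convex cone, and apply a conic duality theorem (Rockafellar/Luenberger) under the same interior-point condition.
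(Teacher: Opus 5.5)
Your route differs from the paper's. The paper gets (a) by citing conic LP duality under a strict-feasibility qualification. It then derives (b) from an envelope-theorem gradient $\nabla_M d(M)=EX^\star(T;M)E^*-X_1^f$ together with the unconstrained stationarity $\nabla_M d(M^\star)=0$, and finally deduces (c) from (a). You reverse the order: weak duality, then a maximizer of the finite-dimensional concave function $d$, then the steering condition from first-order optimality. Zero gap and the cost formula then follow together from $J^\star\le J_T(X^\star(\cdot;M^\star),X_1^0)=L(X^\star,M^\star)=d(M^\star)\le J^\star$. This is more self-contained. All the convex analysis takes place on $\Sn$, so you need no infinite-dimensional conic duality theorem, whose interior condition on the cone of PSD trajectories is itself delicate. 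Your two-sided key-identity sandwich also turns the paper's envelope heuristic into the rigorous supergradient inequality $d(M+H)\le d(M)+\ip{H}{EX^\star(T;M)E^*-X_1^f}$.

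Your diagnosis of the obstacle is correct, and it applies to the paper's proof as well.
\begin{itemize}
\item The strict-feasibility hypothesis is not in the statement; the paper invokes it only inside the proof of (a).
\item The paper's claim in (b) that $d$ is coercive ``under the feasibility assumption'' fails without that hypothesis. For $X_1^f=0$ in a regular problem, $d(M)=\ip{P(t_0;M)}{X_1^0}$ increases to $J^\star$ as $M\to\infty$, but no finite $M$ steers exactly to $0$.
\item The paper justifies restricting to $M\succeq0$ by claiming that the I-GRE has no solution, so $d(M)=-\infty$, whenever $M\not\succeq0$. This is false in general (see below), so unconstrained stationarity at a maximizer over $\Snpsd$ is not available.
\end{itemize}

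What needs fixing in your plan is that the two remedies are not alternatives: you need both. Slater alone does not rescue the restriction $M\succeq0$. Consider the following example.
\begin{itemize}
\item Take the second-moment flow of $\dot x=ax+bu$ with $a<0$, $b\neq0$, no jumps, $Z=\mathrm{diag}(0,r)$ with $r>0$, and $X_1^f=X_1^0>0$. This target is interior to $\mathcal{R}_T(X_1^0)$, since every positive terminal value is reachable.
\item For $M\ge0$, zero input is admissible in the terminal-weight problem, so $d(M)\le M(e^{2aT}-1)X_1^0\le0=d(0)$. Hence $\sup_{M\ge0}d(M)=0$, attained only at $M=0$, where $EX^\star(T;0)E^*=e^{2aT}X_1^0\neq X_1^f$.
\item Yet $J^\star>0$: it is the positive minimum-energy cost of returning $\sqrt{X_1}$ to $\sqrt{X_1^0}$ against the stable drift.
\end{itemize}
So (a) and (b) both fail with $M\succeq0$, even under Slater. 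Meanwhile, for $M\in(2ar/|b|^2,0)$ the scalar Riccati equation $-\dot p=2ap-(|b|^2/r)p^2$, $p(t_0+T)=M$, increases backward in time from $M$ toward $0$. This gives $d(M)=(p(t_0)-M)X_1^0>0$, which refutes the paper's claim that $d=-\infty$ off $\Snpsd$.

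The correct statement takes the supremum over $M\in\Sn$ and assumes $X_1^f$ interior to $\mathcal{R}_T(X_1^0)$. With both in place, your argument goes through once you add two remarks.
\begin{itemize}
\item The lower-bound and attainment steps of Theorem~\ref{th:IMP:LQ:suff} never use $Z_T\succeq0$ or $P\succeq0$. So $d(M)=\ip{P(t_0;M)}{X_1^0}-\ip{M}{X_1^f}$ remains valid for indefinite $M$ whenever the I-GRE with $P(t_0+T)=M$ is solvable.
\item The first-order step needs $M^\star$ interior to that solvability domain, with $M\mapsto EX^\star(T;M)E^*$ single-valued and continuous near $M^\star$. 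The statement tacitly assumes this by speaking of ``the'' optimal trajectory.
\end{itemize}
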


\begin{proof}
\noindent\textbf{(a) Strong duality.}
The primal~\eqref{eq:cov:prob} is a linear program over the self-dual
cone $\Snmpsd$: the objective is linear in $X$, the flow and jump
dynamics are affine, and the terminal constraint~\eqref{eq:cov:terminal}
is affine.  Under feasibility and the standard constraint qualification
for conic programs---namely the existence of a strictly feasible
(relative-interior) admissible trajectory for the conic constraint
$X\succeq0$, the affine equality constraints requiring no
qualification---strong duality holds and the dual optimum is
attained~\cite{Boyd:04}.

\noindent\textbf{(b) Steering condition.}
By~(a), the dual maximum is attained at some $M^\star$ (since $d$
is concave, upper semicontinuous on $\Snpsd$, and coercive under
the feasibility assumption).

To identify $M^\star$, compute $\nabla_M d(M)$.  For a perturbation
$\delta M$, the envelope theorem for parameterised minimization gives
\begin{equation}\label{pf:cov:envelope}
  \frac{d}{d\epsilon}d(M+\epsilon\,\delta M)\bigg|_{\epsilon=0}
  =\frac{\partial L}{\partial M}(X^\star(\cdot;M),M)\cdot\delta M
  =\ip{\delta M}{EX^\star(T;M)E^*-X_1^f},
\end{equation}
where the $\partial L/\partial X$ contribution vanishes by the
optimality of $X^\star(\cdot;M)$.  Hence
$\nabla_M d(M)=EX^\star(T;M)E^*-X_1^f$.
The stationarity condition $\nabla_M d(M^\star)=0$
gives~\eqref{eq:cov:steer}.

\noindent\textbf{(c) Optimal cost.}
By the steering condition, $X^\star(\cdot;M^\star)$ satisfies the
terminal constraint $EX^\star(T;M^\star)E^*=X_1^f$.  Its running
cost (without terminal weight) is
$J_T(X^\star(\cdot;M^\star))
=\ip{P(t_0;M^\star)}{X_1^0}-\ip{M^\star}{EX^\star(T;M^\star)E^*}
=\ip{P(t_0;M^\star)}{X_1^0}-\ip{M^\star}{X_1^f}$,
where the first equality uses the key identity with $P(\cdot;M^\star)$
(the I-GRE gives zero integrand and zero jump summands along the
optimal trajectory, and $P(T;M^\star)=M^\star$ eliminates the
terminal correction), and the second uses~\eqref{eq:cov:steer}.
By strong duality, this equals $J^\star$.
\end{proof}


\subsection{Feasibility and the reachable set}
\label{sec:cov:reach}

The optimal steering problem is feasible iff $X_1^f$ belongs to
the reachable set $\mathcal{R}_T(X_1^0)$.  The following gives a
computable characterization.

\begin{proposition}[Reachable set]\label{prop:cov:reach}
  Define the \emph{free-evolution state}
  $X_1^{\text{free}}(T):=EX^{\text{free}}(T)E^*$ where
  $X^{\text{free}}$ is the solution of~\eqref{eq:sys:IMP}
  with free variables $X_2=X_3=0$.  Then:
  \begin{enumerate}[\upshape(a)]
    \item $\mathcal{R}_T(X_1^0)$ is a convex subset of $\Snpsd$
      containing the free evolution $X_1^{\mathrm{free}}(T)$, and
      $X_1^f\in\mathcal{R}_T(X_1^0)$ if and only if the
      terminal-constraint steering problem is feasible, i.e.\ there
      exists an admissible $X\succeq0$ of~\eqref{eq:sys:IMP} with
      $EX(t_0)E^*=X_1^0$ and $EX(T)E^*=X_1^f$.
    \item Let $\mathcal{L}_T:(X_{2}(\cdot),X_{3}(\cdot))\mapsto
      EX(T)E^*-X_1^{\mathrm{free}}(T)$ denote the (linear) input-to-terminal
      map sending the admissible free-block trajectory to the induced
      perturbation of the terminal state block, and define the
      $\Sn$-valued reachability Gram
      \begin{equation}\label{eq:cov:gram}
        \mathcal{G}_T:=
        \int_{t_0}^T \Phi_T(t)\,E_\bot^*E_\bot\,\Phi_T(t)^*\dt
        +\sum_{k:\,t_k\le T} \Psi_T(t_k)\,E_\bot^*E_\bot\,\Psi_T(t_k)^*
        \;\in\Sn,
      \end{equation}
      where $\Phi_T(t),\Psi_T(t_k):\C^{n+m}\to\C^{n}$ are the
      flow/jump sensitivity operators of $EX(T)E^*$ with respect to the
      free blocks at $t$ resp.\ $t_k$.  If $\mathcal{G}_T\succ0$
      (controllability of the terminal block from the free inputs), then
      $\mathcal{L}_T$ is surjective onto $\Sn$, and every $X_1^f$ in a
      neighborhood of $X_1^{\mathrm{free}}(T)$ within $\Snpsd$ is
      reachable.
  \end{enumerate}
\end{proposition}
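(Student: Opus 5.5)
We treat the two parts separately. For (a) the approach is convexity of the admissible set. For (b) it is a linear open-mapping argument around an interior admissible trajectory.

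\textbf{Part (a).} The key observation is that the admissible set
\[
\mathcal{A}:=\{X(\cdot)\in\Snmpsd:\ X\ \text{satisfies \eqref{eq:sys:IMP}},\ EX(t_0)E^*=X_1^0\}
\]
is convex. The flow equation \eqref{eq:sys:IMP:flow} and jump equation \eqref{eq:sys:IMP:jump} are linear in $X$, the initial constraint is affine, and $\Snmpsd$ is a convex cone. Hence for $X,\tilde X\in\mathcal{A}$ and $\lambda\in[0,1]$, the combination $\lambda X+(1-\lambda)\tilde X$ again lies in $\mathcal{A}$. The terminal map $X\mapsto EX(T)E^*$ is linear, so $\mathcal{R}_T(X_1^0)$ is the linear image of a convex set and is therefore convex.

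Each element of the reachable set lies in $\Snpsd$, since $EX(T)E^*\succeq0$ whenever $X\succeq0$. The free evolution belongs to $\mathcal{A}$. Indeed, the choice $X_2=X_3=0$ is admissible: $X=\mathrm{diag}(X_1,0)\succeq0$ if and only if $X_1\succeq0$, and $X_1$ stays in $\Snpsd$ because $\mathcal{F}$ and $\mathcal{J}$ leave the cone invariant (Definition~\ref{def:IMP:sys}). The equivalence ``$X_1^f\in\mathcal{R}_T(X_1^0)$ iff the steering problem is feasible'' is just the definition of $\mathcal{R}_T$. The only point to check is that the cost of any admissible trajectory is finite: the data are bounded and $X$ is bounded on the compact horizon, so the infimum in \eqref{eq:cov:prob} is finite as soon as the admissible set is nonempty.

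\textbf{Part (b).} First write $EX(T)E^*$ by variation of constants for the flow/jump linear evolution. This gives
\[
EX(T)E^*=X_1^{\mathrm{free}}(T)+\mathcal{L}_T(X_2,X_3),
\]
where $\mathcal{L}_T$ integrates the sensitivities $\Phi_T(t)$ and sums the $\Psi_T(t_k)$ against the free blocks, so that $\mathcal{L}_T$ is linear. Next identify $\mathcal{G}_T$ with $\mathcal{L}_T\mathcal{L}_T^*$, the standard Gramian with respect to the $L^2$ and Frobenius inner products. Then $\mathcal{G}_T\succ0$ means $\mathcal{L}_T^*$ is injective. Because $\Sn$ is finite dimensional, this yields surjectivity of $\mathcal{L}_T$, with the explicit right inverse $\mathcal{L}_T^*\mathcal{G}_T^{-1}$.

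\textbf{Main obstacle.} Surjectivity over unconstrained perturbations $(X_2,X_3)$ does not by itself respect the constraint $X\succeq0$. The plan is to work around a strictly feasible trajectory. Take $\bar X$ with $\bar X_3=\epsilon I$ and $\bar X_2=0$ on the flow intervals; this is positive definite wherever $X_1\succ0$. If $X_1^0$ is singular, first use a short initial segment, or assume $X_1^0\succ0$. Small perturbations $\delta X_2,\delta X_3$ of size $O(\|\delta\|)$ then preserve $\bar X+\delta X\succeq0$ by the Schur lemma (Lemma~\ref{lemma:extSchur}). Since $\mathcal{L}_T$ is a surjective linear map, it is open, so the image of a small ball of admissible perturbations contains a neighborhood of $E\bar X(T)E^*$. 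Finally, let $\epsilon\to0$ so that $E\bar X(T)E^*\to X_1^{\mathrm{free}}(T)$. This step requires uniform control of the positivity margin relative to $\epsilon$, and it is where one may need $X_1^{\mathrm{free}}(T)$ in the interior of $\Snpsd$. Intersecting the resulting neighborhood with $\Snpsd$ gives the claim.
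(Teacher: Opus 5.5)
Part (a) is essentially the paper's argument and is fine. Your remark on finiteness of the cost is superfluous, since the statement identifies feasibility with the existence of an admissible trajectory.

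\textbf{The gap: the $\varepsilon\to0$ step in (b).} Around $\bar X_\varepsilon$, admissible perturbations must satisfy $\varepsilon I+\delta X_3\succeq0$, together with the Schur condition on $\delta X_2$. So the ball you obtain around $E\bar X_\varepsilon(T)E^*=X_1^{\mathrm{free}}(T)+\varepsilon\,\mathcal{L}_T(0,I)$ is limited by the positivity margin of $\bar X_\varepsilon$. In the $\delta X_3$ directions that margin is at most $\varepsilon$, so it collapses at the same rate as the distance from the center to $X_1^{\mathrm{free}}(T)$. Nothing forces $X_1^{\mathrm{free}}(T)$ inside these balls, and assuming $X_1^{\mathrm{free}}(T)$ is interior to $\Snpsd$ does not help.

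\textbf{The step cannot be repaired: the neighborhood claim in (b) is false as stated.} Take $n=m=1$, $t_0=0$, no jumps, and $\mathcal{F}(t,X)=-EXE^*+E_\bot XE_\bot^*$, i.e.\ $\dot X_1=-X_1+X_3$. This flow is linear, bounded and cone-preserving.
\begin{itemize}
\item Over unconstrained inputs, $\mathcal{L}_T(X_2,X_3)=\int_0^T e^{-(T-t)}X_3(t)\dt$ is onto $\mathbb{H}^1=\R$, and $\mathcal{L}_T\mathcal{L}_T^*=\tfrac12(1-e^{-2T})>0$.
\item Admissibility forces $X_3\ge0$, so $\mathcal{R}_T(X_1^0)=[e^{-T}X_1^0,\infty)$.
\item For $X_1^0>0$, the free state $e^{-T}X_1^0$ is interior to $\mathbb{H}^1_{\succeq0}$, yet no point just below it is reachable.
\item In your construction, $E\bar X_\varepsilon(T)E^*=e^{-T}X_1^0+\varepsilon(1-e^{-T})$. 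The largest reachable interval centered there has radius exactly $\varepsilon(1-e^{-T})$, and its lower end is $X_1^{\mathrm{free}}(T)$ for every $\varepsilon$.
\end{itemize}

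You located the real obstacle, which the paper's proof passes over. The paper invokes surjectivity of $\mathcal{L}_T$ and the inverse/implicit function theorem. But $X_2=X_3=0$ lies on the boundary of the admissible set, because $X_3\succeq0$ is forced. So openness of $\mathcal{L}_T$ on a full ball of inputs is unavailable at the free trajectory.

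What your argument does establish is a correct weaker statement. This requires $\bar X_\varepsilon\succeq cI$ uniformly on the horizon, which needs, e.g., $X_1^0\succ0$ and jump maps that preserve definiteness. Under that condition, the terminal state of any uniformly strictly feasible trajectory lies in $\operatorname{int}\mathcal{R}_T(X_1^0)$. Hence $\mathcal{R}_T(X_1^0)$ has nonempty interior, and $X_1^{\mathrm{free}}(T)$ lies in the closure of that interior.

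Finally, state explicitly that you replace the $n\times n$ matrix $\mathcal{G}_T$ of the statement by the operator $\mathcal{L}_T\mathcal{L}_T^*$ on $\Sn$. Only the invertibility of the latter is literally the surjectivity of $\mathcal{L}_T$.
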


\begin{proof}
Part~(a): convexity of $\mathcal{R}_T(X_1^0)$ follows from
linearity of the dynamics and convexity of the admissible set
$\{X\succeq0\}$ (a convex combination of two admissible trajectories is
admissible and its terminal block is the corresponding convex
combination); $X_1^{\mathrm{free}}(T)$ is reached by the admissible
choice $X_2=X_3=0$.  The ``if and only if'' is the definition of
reachability as feasibility of the terminal constraint.  We do
\emph{not} claim the order bound $X_1^f\succeq X_1^{\mathrm{free}}(T)$:
nonzero free blocks may either increase or decrease $EX(T)E^*$ depending
on $\mathcal{F},\mathcal{J}$, so no monotone characterization holds in
general.
Part~(b): $\mathcal{G}_T\succ0$ is exactly the condition that the linear
map $\mathcal{L}_T$ has full range $\Sn$ (its image is
$\operatorname{Im}\mathcal{G}_T$); surjectivity and the inverse/implicit
function theorem then give local reachability around
$X_1^{\mathrm{free}}(T)$.  A complete global description of
$\mathcal{R}_T(X_1^0)$ (e.g.\ as the convex set swept by the dual map
$M\mapsto EX^\star(T;M)E^*$, with explicit sensitivity operators
$\Phi_T,\Psi_T$) is not pursued here: the steering results below do not
use this proposition, proceeding instead through the Lagrangian dual of
Theorem~\ref{th:cov:dual}, which characterises reachability of a given
$X_1^f$ by feasibility of the steering condition~\eqref{eq:cov:steer}.
\end{proof}

\subsection{Forward moment equation and two-point boundary value problem}
\label{sec:cov:tpbvp}

Theorem~\ref{th:cov:dual} identifies the optimal terminal weight
$M^\star$ via the steering condition~\eqref{eq:cov:steer} but leaves
its computation implicit.  We now make the structure explicit by deriving
the \emph{forward moment equation} for the optimal state block
$\Sigma(t):=EX^\star(t;M^\star)E^*$ and combining it with the backward
I-GRE into a \emph{two-point boundary value problem} (TPBVP).

\subsubsection{Closed-loop operators}

With optimal gain
$K(t;P):=-(\mathcal{C}_Z)_3(t,P(t))^{\dag}\,(\mathcal{C}_Z)_2^*(t,P(t))$
(the pseudo-inverse $\dag$ handling the singular case), the closed-loop
trajectory satisfies $X_2^\star=K\Sigma$ and $X_3^\star=K\Sigma K^*$.
Define the \emph{closed-loop flow operator} and
\emph{closed-loop jump operator}
\begin{align}
  \mathcal{A}_{\mathrm{cl}}(t,P(t))[\Sigma]
  &:=\mathcal{F}\!\left(t,
    \begin{bsmallmatrix}\Sigma & \Sigma K^*\\
    K\Sigma & K\Sigma K^*\end{bsmallmatrix}
  \right),
  \label{eq:cov:Acl}\\
  \mathcal{B}_{\mathrm{cl},k}(P(t_k^+),P(t_k^-))[\Sigma]
  &:=\mathcal{J}\!\left(k,
    \begin{bsmallmatrix}\Sigma & \Sigma K_k^*\\
    K_k\Sigma & K_k\Sigma K_k^*\end{bsmallmatrix}
  \right),
  \label{eq:cov:Bcl}
\end{align}
where $K_k:=-(\mathcal{D}_Z)_3(k,P(t_k^+),P(t_k^-))^{\dag}\,(\mathcal{D}_Z)_2^*(k,P(t_k^+),P(t_k^-))$.
Both operators are \emph{linear} in $\Sigma$ (by linearity of
$\mathcal{F}$ and $\mathcal{J}$ in $X$) and depend on $P$ through the
optimal gain.  The second-moment $\Sigma(t;M):=EX^\star(t;M)E^*$
satisfies the \emph{forward moment equation}
\begin{equation}\label{eq:cov:forward}
  \dot\Sigma=\mathcal{A}_{\mathrm{cl}}(t,P(t;M))[\Sigma],
  \quad t\in(t_k,t_{k+1}),
\end{equation}
with $\Sigma(t_0)=X_1^0$ and closed-loop jump update
\begin{equation}\label{eq:cov:jumpfwd}
  \Sigma(t_k^+)
  =\mathcal{B}_{\mathrm{cl},k}(P(t_k^+;M),P(t_k^-;M))[\Sigma(t_k^-)].
\end{equation}

\subsubsection{The two-point boundary value problem}

\begin{definition}[Impulsive optimal steering TPBVP]\label{def:cov:TPBVP}
  Find $(P,\Sigma,M^\star)$ satisfying:
  \begin{subequations}\label{eq:cov:TPBVP}
  \begin{align}
    &\textit{Backward I-GRE on }(t_k,t_{k+1})\textit{:}\quad
      \mathcal{C}_Z(t,P(t))\succeq0,\quad
      \mathcal{C}_Z\,\schur{/}\,(\mathcal{C}_Z)_3=0,
      \label{eq:TPBVP:back}\\
    &\textit{Backward jump at }t_k\textit{:}\quad
      \mathcal{D}_Z(k,P(t_k^+),P(t_k^-))\succeq0,\quad
      \mathcal{D}_Z\,\schur{/}\,(\mathcal{D}_Z)_3=0,
      \label{eq:TPBVP:backjump}\\
    &\textit{Forward moment equation on }(t_k,t_{k+1})\textit{:}\quad
      \dot\Sigma=\mathcal{A}_{\mathrm{cl}}(t,P(t))[\Sigma],
      \label{eq:TPBVP:fwd}\\
    &\textit{Forward jump at }t_k\textit{:}\quad
      \Sigma(t_k^+)=\mathcal{B}_{\mathrm{cl},k}(P(t_k^+),P(t_k^-))[\Sigma(t_k^-)],
      \label{eq:TPBVP:fwdjump}\\
    &\textit{Boundary conditions:}\quad
      \Sigma(t_0)=X_1^0,\quad
      P(T)=M^\star,\quad
      \Sigma(T)=X_1^f.
      \label{eq:TPBVP:bc}
  \end{align}
  \end{subequations}
\end{definition}

The TPBVP~\eqref{eq:cov:TPBVP} is coupled in two ways, as noted
in~\cite{Chen:16} for the scalar stochastic case:
\begin{enumerate}[(i)]
  \item \emph{Boundary coupling}: the terminal weight $M^\star=P(T)$
    is unknown and is determined by the terminal constraint $\Sigma(T)=X_1^f$.
  \item \emph{Nonlinear dynamic coupling}: the gain $K(t;P)$ in
    \eqref{eq:cov:Acl}--\eqref{eq:cov:Bcl} depends on $P(t)$,
    so the forward and backward equations are coupled through their
    dynamics, not only through boundary values.
    This coupling vanishes when the controlled and noise channels
    coincide (e.g., $B=B_1$ in the stochastic case): the forward
    equation then reduces to a Lyapunov equation with
    coefficient $A+BK(t)$ depending only on the backward solution.
\end{enumerate}

\begin{theorem}[Equivalence]\label{th:cov:TPBVP}
  Any solution $(P^\star,\Sigma^\star,M^\star)$ of~\eqref{eq:cov:TPBVP}
  satisfies the steering condition~\eqref{eq:cov:steer}; conversely,
  any $M^\star$ satisfying~\eqref{eq:cov:steer} generates a solution
  via $P^\star=P(\cdot;M^\star)$ and forward integration
  of~\eqref{eq:TPBVP:fwd}--\eqref{eq:TPBVP:fwdjump} from $\Sigma(t_0)=X_1^0$.
  The optimal cost is $J^\star=\ip{P^\star(t_0)}{X_1^0}-\ip{M^\star}{X_1^f}$.
\end{theorem}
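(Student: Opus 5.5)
The proof is a direct unpacking of the definitions. The TPBVP~\eqref{eq:cov:TPBVP} is the I-GRE with terminal weight $Z_T=M^\star$, coupled with the closed-loop evolution of the constrained block. So both directions reduce to one identification: the forward moment solution $\Sigma$ driven by $P^\star$ is exactly $EX^\star(\cdot;M^\star)E^*$. The cost formula then follows from Theorem~\ref{th:cov:dual}\eqref{cov:optcost}.

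\emph{Forward direction.} Let $(P^\star,\Sigma^\star,M^\star)$ solve~\eqref{eq:cov:TPBVP}.
\begin{enumerate}[(i)]
\item Conditions~\eqref{eq:TPBVP:back}--\eqref{eq:TPBVP:backjump}, together with $P^\star(T)=M^\star$ and Lemma~\ref{lemma:GREequiv}, say that $P^\star$ solves the I-GRE of Definition~\ref{def:IMP:GRE} with $Z_T=M^\star$. By Theorem~\ref{th:IMP:LQ:suff}\eqref{suff:uniq} it is the unique such solution, so $P^\star=P(\cdot;M^\star)$.
\item The gains $K(t;P^\star)=-\pinv{(\mathcal{C}_Z)_3}(\mathcal{C}_Z)_2^*$ and $K_k$ are the $F=0$ elements of $\mathcal{K}_c$ and $\mathcal{K}_d$. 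Since the compatibility conditions~\eqref{eq:IMP:GRE:fC} and~\eqref{eq:IMP:GRE:jC} hold, they satisfy the defining equations.
\item Build the full trajectory $X$ from $\Sigma^\star$ by setting $X_2=K\Sigma^\star$ and $X_3=K\Sigma^\star K^*$. Then $X=\bigl[\begin{smallmatrix}I\\K\end{smallmatrix}\bigr]\Sigma^\star\bigl[\begin{smallmatrix}I\\K\end{smallmatrix}\bigr]^*\succeq0$, provided $\Sigma^\star\succeq0$. Positivity of $\Sigma^\star$ holds by cone invariance of $\mathcal{F}$ and $\mathcal{J}$ applied to this congruence.
\item By~\eqref{eq:cov:Acl}--\eqref{eq:cov:Bcl}, this $X$ satisfies~\eqref{eq:sys:IMP} with $EX(t_0)E^*=X_1^0$. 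By Theorem~\ref{th:IMP:LQ:suff}\eqref{suff:att}, it is an optimal trajectory of the problem with terminal cost $M^\star$, i.e.\ $X=X^\star(\cdot;M^\star)$.
\item The boundary condition $\Sigma^\star(T)=X_1^f$ is then precisely~\eqref{eq:cov:steer}.
\end{enumerate}

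\emph{Converse.} Let $M^\star$ satisfy~\eqref{eq:cov:steer}. Feasibility of the problem with terminal cost $M^\star$ provides the I-GRE solution $P(\cdot;M^\star)$ through Corollary~\ref{cor:IMP:LQ:equiv}, and this gives~\eqref{eq:TPBVP:back}--\eqref{eq:TPBVP:backjump} with $P(T)=M^\star$. Now integrate the linear equations~\eqref{eq:TPBVP:fwd}--\eqref{eq:TPBVP:fwdjump} forward from $X_1^0$; a solution exists and is unique on each flow interval because the coefficients are bounded. By step (iv) above, the result equals $EX^\star(\cdot;M^\star)E^*$, whose terminal value is $X_1^f$ by assumption. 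Hence all boundary conditions in~\eqref{eq:TPBVP:bc} hold.

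\emph{Cost.} Apply the key identity~\eqref{eq:IMP:keyid} with $P=P^\star$ and $Z_T=0$ along the constructed trajectory. The flow integrand and jump summands vanish by the attainment argument in Theorem~\ref{th:IMP:LQ:suff}, which leaves $\ip{P^\star(t_0)}{X_1^0}-\ip{M^\star}{\Sigma^\star(T)}=\ip{P^\star(t_0)}{X_1^0}-\ip{M^\star}{X_1^f}$. Optimality of this value, i.e.\ that it equals $J^\star$, follows from Theorem~\ref{th:cov:dual}\eqref{cov:optcost}. That theorem additionally needs $M^\star$ to be the dual maximizer. This is supplied by concavity of $d$ on $\Snpsd$: by~\eqref{pf:cov:envelope} the gradient vanishes at $M^\star$, so $M^\star$ is a maximizer.

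\emph{Main obstacle.} The delicate step is identifying the forward-integrated $\Sigma$ with \emph{the} optimal trajectory $X^\star(\cdot;M^\star)$ in the singular case. When $(\mathcal{C}_Z)_3$ is singular, the optimal sets $\mathcal{K}_c$ and $\mathcal{K}_d$ are not singletons, so different optimal trajectories can reach different terminal blocks. I would resolve this by defining $X^\star(\cdot;M)$ in~\eqref{eq:cov:steer} as the trajectory generated by the pseudo-inverse gain, the same one used in~\eqref{eq:cov:Acl}. Under the coercivity (H2) invoked in Theorem~\ref{th:IMP:LQ:nec}, this choice costs nothing because the optimal trajectory is unique.
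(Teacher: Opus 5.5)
Your proposal follows the paper's proof. The paper does the same direct substitution: identify $P^\star=P(\cdot;M^\star)$ via Theorem~\ref{th:IMP:LQ:suff}, read~\eqref{eq:cov:steer} off $\Sigma^\star(T)=X_1^f$, and cite Theorem~\ref{th:cov:dual}(c); you fill in the omitted details, and you correctly note that citing Theorem~\ref{th:cov:dual}(c) requires $M^\star$ to be a dual maximizer. That last step is cleaner by weak duality, $d(M^\star)\le J^\star\le J_T(X^\star(\cdot;M^\star),X_1^0)=d(M^\star)$, which needs neither differentiability of $d$ (this can fail in the singular case) nor the uniqueness of optimal trajectories you attribute to (H2), which a linear cost does not provide.
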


\begin{proof}
  Direct substitution: $P^\star=P(\cdot;M^\star)$ solves
  \eqref{eq:TPBVP:back}--\eqref{eq:TPBVP:backjump} by
  Theorem~\ref{th:IMP:LQ:suff}; $\Sigma^\star(T)=X_1^f$
  is~\eqref{eq:cov:steer}; the cost is Theorem~\ref{th:cov:dual}(c).
\end{proof}

\subsection{Existence, uniqueness, and the forward-backward algorithm}
\label{sec:cov:algo}

\begin{theorem}[Existence and uniqueness]\label{th:cov:exist}
  Suppose $X_1^f\in\mathcal{R}_T(X_1^0)$ and that assumption~\textup{(H1)}
  holds, so the closed-loop generator is strictly stable at $M=0$.
  Then the map $M\mapsto\Sigma(T;M)$ is strictly monotone decreasing
  \textup{(}$M_1\succ M_2\Rightarrow\Sigma(T;M_1)\prec\Sigma(T;M_2)$\textup{)},
  the TPBVP~\eqref{eq:cov:TPBVP} has a unique solution, and the
  optimal policy is unique a.e.
\end{theorem}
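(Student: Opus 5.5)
The plan is to work entirely through the dual function $d(M)=\ip{P(t_0;M)}{X_1^0}-\ip{M}{X_1^f}$ of \eqref{eq:cov:dual}. By \eqref{pf:cov:envelope}, its gradient is $\nabla_M d(M)=\Sigma(T;M)-X_1^f$. Monotonicity of $M\mapsto\Sigma(T;M)$ is therefore a statement about the Hessian of the concave function $M\mapsto\ip{P(t_0;M)}{X_1^0}$. It is also a statement about antimonotonicity of its gradient map.

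\textbf{Step 1 (concavity gives weak monotonicity).} For each fixed admissible $X$, the map $M\mapsto J_T(X,X_1^0)+\ip{M}{EX(T)E^*}$ is affine. Hence $V(M):=\ip{P(t_0;M)}{X_1^0}=\inf_X[\cdots]$ is concave on $\Snpsd$, with supergradient $\Sigma(T;M)$ by Theorem~\ref{th:IMP:LQ:suff}. Comparing optimal values and using optimality of $X^\star(\cdot;M_1)$ and $X^\star(\cdot;M_2)$ in each other's problem gives
\[
\ip{M_1-M_2}{\Sigma(T;M_1)-\Sigma(T;M_2)}\le0 .
\]
To upgrade this to the Loewner statement, I plan to test against rank-one directions. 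The idea is that, by linearity of the closed-loop evolution in $\Sigma$ and the I-GRE, the derivative $\partial_\epsilon \Sigma(T;M+\epsilon vv^*)$ is $-\mathcal{S}_M[vv^*]$ for a positive (cone-preserving) linear operator $\mathcal{S}_M$. I will obtain $\mathcal{S}_M$ by differentiating the I-GRE with respect to its terminal value along the regular branch $(\mathcal{C}_Z)_3\succ0$ given by (H3), then integrating along the segment $M_2+s(M_1-M_2)$.

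\textbf{Step 2 (strictness from (H1)).} Strictness requires $\mathcal{S}_M$ to be positive definite, i.e.\ mapping $\Snpsd\setminus\{0\}$ into $\Snpd$. Here I will use (H1): the closed-loop generator is exponentially stable, so the forward evolution operator of \eqref{eq:cov:forward}--\eqref{eq:cov:jumpfwd} is invertible on $[t_0,T]$ and does not annihilate any nonzero PSD direction. Combined with the coercivity (H2) feeding into the second variation, this yields a strictly negative definite Hessian of $V$ in the $X_1^0$-weighted sense.

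\textbf{Step 3 (existence and uniqueness).} Existence of $M^\star$ with $\Sigma(T;M^\star)=X_1^f$ follows from Theorem~\ref{th:cov:dual}\eqref{cov:steeringcond}, given $X_1^f\in\mathcal{R}_T(X_1^0)$ and feasibility. Uniqueness follows because two solutions $M^\star_1\neq M^\star_2$ would give equal $\Sigma(T;\cdot)$. This contradicts the strict inequality $\ip{M_1-M_2}{\Sigma(T;M_1)-\Sigma(T;M_2)}<0$ from Step 2; note that strict monotonicity in the Loewner sense already implies this inner-product strictness for incomparable pairs via the segment integral. With $M^\star$ fixed, $P^\star=P(\cdot;M^\star)$ is unique by Theorem~\ref{th:IMP:LQ:suff}\eqref{suff:uniq}, and $\Sigma^\star$ is unique by forward integration, so the TPBVP solution is unique by Theorem~\ref{th:cov:TPBVP}. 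Uniqueness of the optimal policy a.e.\ follows because $(\mathcal{C}_Z)_3\succ0$ makes $\mathcal{K}_c$ a singleton, and the same holds for $\mathcal{K}_d$ at jumps.

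The main obstacle is Step 2. The second variation must be shown strictly definite uniformly, and the rank-one upgrade from the scalar inequality to Loewner order is delicate when $X_1^0$ is singular; in that case I expect to need $\Sigma(t)\succ0$ for $t>t_0$, which I would try to extract from (H1) and the forward evolution.
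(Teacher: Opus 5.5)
Your skeleton matches the paper's: concavity of $d$, the gradient $\nabla_M d(M)=\Sigma(T;M)-X_1^f$, the exchange argument giving $\ip{M_1-M_2}{\Sigma(T;M_1)-\Sigma(T;M_2)}\le0$, and uniqueness of $M^\star$ by contradiction with a strict version of that inequality. The two places where you go beyond it do not work.

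\textbf{The L\"owner upgrade fails.} The derivative $\partial_\epsilon\Sigma(T;M+\epsilon vv^*)$ is not negative semidefinite in general. The perturbation of the gain enters the forward equation through an indefinite product term; in the LQ specialization this term is $-(G\,\delta P\,\Sigma+\Sigma\,\delta P\,G)$ with $G=BR^{-1}B^*$.

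Concretely, take $n=m=2$, $\dot x=u$ (so $\mathcal{F}^*(P)=\bigl[\begin{smallmatrix}0&P\\P&0\end{smallmatrix}\bigr]$) and $Z=\mathrm{diag}(\delta I,I)$. Then $K=-P$ and $\dot\Sigma=-(P\Sigma+\Sigma P)$. For a short horizon $h=T-t_0$ and $W=M_1-M_2$, this gives $\Sigma(T;M_1)-\Sigma(T;M_2)=-h(WX_1^0+X_1^0W)+O(h^2)$. Choose $W=\mathrm{diag}(1,\eta)\succ0$ and $X_1^0=\bigl[\begin{smallmatrix}1&1\\1&1+\epsilon\end{smallmatrix}\bigr]\succ0$. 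Then $\det(WX_1^0+X_1^0W)=4\eta(1+\epsilon)-(1+\eta)^2<0$ for small $\eta,\epsilon$, so the difference is indefinite although $M_1\succ M_2$. Meanwhile (H1)--(H3) hold with $K_s=-I$.

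So no cone-positive $\mathcal{S}_M$ exists, and the L\"owner clause is false for $n\ge2$. The paper does not prove it either: it establishes only the inner-product form and says the L\"owner form is not needed.

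Your remark that L\"owner strictness yields inner-product strictness for incomparable pairs via the segment integral is also invalid. For example, $\mathcal{S}[W]=\tr(W)\,I$ maps $\Snpsd\setminus\{0\}$ into $\Snpd$ but vanishes on traceless $W$. Uniqueness needs strict concavity of $d$ itself.

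\textbf{Strictness cannot be extracted from (H1); this is the real gap.} In the LQ specialization the second variation is $\ip{W}{D\Sigma(T;M)[W]}=-2\int_{t_0}^{T}\tr\bigl(G^{1/2}\,\delta P\,\Sigma\,\delta P\,G^{1/2}\bigr)\ds$, with $\delta P(s)=\Phi_{\mathrm{cl}}(T,s)^*W\Phi_{\mathrm{cl}}(T,s)$. It is strictly negative for all $W\neq0$ when $\Sigma(s)\succ0$ and the input channel is controllable, and it can vanish otherwise. Exponential stabilizability says nothing about either condition, and (H2) does not enter at all.

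In noise-free moment dynamics, $\Sigma(s)=\Phi_{\mathrm{cl}}(s,t_0)X_1^0\Phi_{\mathrm{cl}}(s,t_0)^*$ keeps the rank of $X_1^0$, and then uniqueness actually fails. Take the example above with $X_1^0=x_0x_0^*$ of rank one. Set $X_1^f:=\Sigma(T;M_0)$ and pick $W\succeq0$, $W\neq0$, with $WX_1^f=0$. Under terminal weight $M_0+tW$, the old optimal trajectory keeps its cost and no other trajectory's cost decreases. Since $(\mathcal{C}_Z)_3=I$ makes the optimum unique, $\Sigma(T;M_0+tW)=X_1^f$ for all $t\ge0$. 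Thus $d$ is constant along this ray and the TPBVP has a continuum of solutions. The paper's proof has the same hole, since it simply asserts that (H1) makes $d$ strictly concave.

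To close your argument you need added hypotheses, e.g.\ $X_1^0\succ0$, $(\mathcal{C}_Z)_3\succ0$, and a controllable input channel. Note that (H3) gives only $(\mathcal{C}_Z)_3\succeq0$, so your singleton claim for $\mathcal{K}_c$ is also an extra assumption. With these hypotheses you obtain the inner-product monotonicity, which suffices for uniqueness.

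Existence by citing Theorem~\ref{th:cov:dual}(b) follows the paper's logic. However, the paper's own existence proof adds the Slater condition $X_1^f\in\operatorname{int}\mathcal{R}_T(X_1^0)$, which mere membership in $\mathcal{R}_T(X_1^0)$ does not supply.
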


\begin{proof}
\noindent\textbf{Strict monotonicity.}
We establish the \emph{operator-monotone} form of the terminal map
$M\mapsto\Sigma(T;M)$, which is what the uniqueness argument uses.  Since
$d$ is concave (an infimum of affine functions of $M$) with gradient
$\nabla d(M)=\Sigma(T;M)-X_1^f$, monotonicity of the gradient of a
concave function gives, for all $M_1\ne M_2$,
\[
  \ip{M_1-M_2}{\Sigma(T;M_1)-\Sigma(T;M_2)}
  =\ip{M_1-M_2}{\nabla d(M_1)-\nabla d(M_2)}\le0,
\]
with \emph{strict} inequality under~\textup{(H1)} (which makes $d$
strictly concave, since the reachability Gram of
Proposition~\ref{prop:cov:reach}(b) is then positive definite).  Thus
$M\mapsto\Sigma(T;M)$ is strictly monotone decreasing in the operator
sense.  The stronger L\"owner statement $\Sigma(T;M_1)\prec\Sigma(T;M_2)$
for $M_1\succ M_2$ additionally requires the Riccati comparison principle
$P(t;M_1)\succeq P(t;M_2)$~\cite{AbouKandil:03} and a closed-loop
comparison; it is not needed in what follows.

\noindent\textbf{Existence.}
The dual function $d(M)=\ip{P(t_0;M)}{X_1^0}-\ip{M}{X_1^f}$ is
concave (an infimum of affine functions: by the envelope/sufficiency
identity, $\ip{P(t_0;M)}{X_1^0}=\min_{X}\bigl[J_T^{Z}(X)+\ip{M}{EX(T)E^*}\bigr]$
over admissible $X$ with running cost $J_T^Z$, so
$d(M)=\min_{X}\bigl[J_T^{Z}(X)+\ip{M}{EX(T)E^*-X_1^f}\bigr]$).
Assume the Slater condition $X_1^f\in\operatorname{int}\mathcal{R}_T(X_1^0)$
(strict feasibility).  Then for every direction $N\succeq0$, $\|N\|=1$,
strict feasibility provides an admissible $\bar X$ with
$E\bar X(T)E^*=X_1^f-\varepsilon N\in\mathcal{R}_T(X_1^0)$ for some
$\varepsilon>0$, so $d(tN)\le J_T^Z(\bar X)-t\varepsilon\to-\infty$ as
$t\to\infty$; hence $d$ is coercive on $\Snpsd$.  The supremum is
therefore attained at some
$M^\star\succeq0$, and the stationarity condition
$\nabla_M d(M^\star)=\Sigma(T;M^\star)-X_1^f=0$ yields existence.

\noindent\textbf{Uniqueness.}
Suppose $M_1^\star\ne M_2^\star$ both satisfy~\eqref{eq:cov:steer},
so $\Sigma(T;M_1^\star)=\Sigma(T;M_2^\star)=X_1^f$.  The strict
monotonicity just established holds in the operator sense:
$\ip{M_1-M_2}{\Sigma(T;M_1)-\Sigma(T;M_2)}<0$ for all $M_1\ne M_2$
(equivalently, $d$ is strictly concave).  Applying this to
$M_1^\star,M_2^\star$ gives
$0=\ip{M_1^\star-M_2^\star}{X_1^f-X_1^f}<0$, a contradiction (this
avoids the invalid step of assuming $M_1^\star,M_2^\star$ are
L\"owner-comparable). Hence
$M^\star$ is unique, and the optimal policy is unique a.e.\ by
Theorem~\ref{th:IMP:LQ:suff}.
\end{proof}

\begin{remark}
  Strict monotonicity holds whenever (H1) ensures the closed-loop
  generator is strictly negative definite at $M=0$.  For the scalar
  stochastic case, Chen et al.~\cite{Chen:16} prove existence and
  uniqueness via the Schr\"{o}dinger factorization; the proof above
  gives an alternative Riccati-based argument.
\end{remark}

\subsubsection{The forward-backward algorithm}

Since $\nabla_M d(M)=\Sigma(T;M)-X_1^f$, maximizing $d$ is equivalent
to finding the zero of $\Sigma(T;M)-X_1^f$.  The
\emph{forward-backward} (FB) algorithm does this iteratively.

\begin{description}
  \item[(FB0)] \emph{Initialize}: set $M^{(0)}=0$ (or any $M^{(0)}\succeq0$).
  \item[(FB1)] \emph{Backward pass}: integrate the I-GRE
    backward from $P^{(k)}(T)=M^{(k)}$ to obtain the gain
    $K^{(k)}(t)=K(t;P^{(k)}(t))$ on $[t_0,T]$.
  \item[(FB2)] \emph{Forward pass}: integrate
    \eqref{eq:TPBVP:fwd}--\eqref{eq:TPBVP:fwdjump} forward from
    $\Sigma^{(k)}(t_0)=X_1^0$ using $K^{(k)}$ to obtain
    $\Sigma^{(k)}(T)$.
  \item[(FB3)] \emph{Update}:
    \begin{equation}\label{eq:cov:update}
      M^{(k+1)}=M^{(k)}+\alpha^{(k)}\bigl(\Sigma^{(k)}(T)-X_1^f\bigr),
    \end{equation}
    where $\alpha^{(k)}>0$ is a step size (a Newton step using the
    Jacobian of $M\mapsto\Sigma(T;M)$ gives quadratic convergence
    near $M^\star$~\cite{Chen:16,Goldshtein:17}).
  \item[(FB4)] \emph{Terminate} when
    $\|\Sigma^{(k)}(T)-X_1^f\|<\varepsilon$; otherwise return to (FB1).
\end{description}

Step~\eqref{eq:cov:update} is gradient ascent on $d$: if
$\Sigma^{(k)}(T)\succ X_1^f$ the terminal second-moment is too large,
so $M$ is increased to penalize it more, driving $\Sigma(T)$ toward
$X_1^f$; and conversely.  Convergence is guaranteed by the concavity
of $d$ and the strict monotonicity of Theorem~\ref{th:cov:exist}.

\begin{remark}[Connection to the literature]
  The TPBVP~\eqref{eq:cov:TPBVP} and the FB algorithm recover,
  in the impulsive matrix-valued setting, the optimality conditions
  of~\cite{Chen:16} (equations~(39a)--(39c) therein) and the
  computational scheme of~\cite{Goldshtein:17}.
  For $N_T=0$ (no jumps) and
  $Z=\bigl[\begin{smallmatrix}0&0\\0&R\end{smallmatrix}\bigr]$ with
  standard Itô dynamics, \eqref{eq:TPBVP:back}--\eqref{eq:TPBVP:fwd}
  reduce exactly to the backward Riccati and forward Lyapunov equations
  of~\cite{Chen:16}.
  The impulsive framework adds jump
  conditions~\eqref{eq:TPBVP:backjump}--\eqref{eq:TPBVP:fwdjump},
  extends the cost to general $Z,Z_k$, and requires no Gaussian or
  stochastic structure.
\end{remark}

\section{Dissipativity}
\label{sec:diss}

This section develops a dissipativity theory for impulsive matrix-valued
systems in the spirit of Willems~\cite{Willems:72}.  The supply rate
is \emph{linear} in the trajectory $X$, with no sign assumed on the
supply matrix $M(t)$: this encompasses passivity-type supply rates,
$H_\infty$-type supply rates, and integral quadratic constraints as
special cases.  The storage function is also linear in $EXE^*$, again
with no sign on its coefficient $P$.\\

The finite-horizon theory (Section~\ref{sec:diss:fh}) introduces the
supply rate, the available storage, and the I-D-GRE, and establishes
necessary and sufficient conditions via Propositions~%
\ref{prop:Vsandwich} and Theorems in Sections~\ref{sec:diss:suff}%
--\ref{sec:diss:dual}.
The infinite-horizon theory (Section~\ref{sec:diss:inf}) shows that the
available storage is the pointwise minimum over all storage functions, and
that the sequence $(-P_T)$ converges monotonically under detectability
conditions, again without time-invariance.  All results parallel those of
Section~\ref{sec:OC} with $Z\leftarrow M$ and
$\mathcal{C}_Z\leftarrow\mathcal{C}_M$,
$\mathcal{D}_Z\leftarrow\mathcal{D}_M$.
Dwell-time conditions are in Section~\ref{sec:diss:dwell}.

\subsection{Finite-horizon}
\label{sec:diss:fh}

The finite-horizon analysis begins with the definitions of supply rate
and storage function.  The key point is that the supply and storage are
both \emph{linear} in the trajectory, which allows the same Schur
complement and key-identity machinery as in Section~\ref{sec:OC} to
apply verbatim.

\subsubsection{Supply rates, storage functions, and available storage (finite horizon)}
\label{sec:diss:supply}

\begin{definition}[Linear supply rate]\label{def:IMP:supply}
  A \emph{linear supply rate} consists of a \emph{flow supply}
  $\sigma_f(t,X):=\ip{M(t)}{X}$ with $M:\R_{\ge0}\to\Snm$
  piecewise continuous and bounded (no sign assumed on $M$),
  \emph{jump supplies} $\sigma_k(X):=\ip{M_k}{EXE^*}$ with
  $M_k\in\Sn$, and \emph{terminal supply}
  $\sigma_T(X):=\ip{M_T}{EXE^*}$ with $M_T\in\Sn$.
\end{definition}

\begin{definition}[Linear storage function]\label{def:IMP:stor}
  A \emph{linear storage function} is
  $V(t,X):=\ip{-P(t)}{EXE^*}$ for some piecewise-$C^1$
  $P:\R_{\ge0}\to\Sn$; no sign is placed on $P$.
  The sign convention $-P$ is natural since the available storage
  equals $\ip{-P(t_0)}{X_1^0}$ when $P$ solves the D-GRE.
\end{definition}

\begin{definition}[Dissipativity and available
  storage]\label{def:IMP:Va}
  The system~\eqref{eq:sys:IMP} is \emph{finite-horizon dissipative}
  over $[t_0,t_0+T]$ with respect to $(\sigma_f,\sigma_k,\sigma_T)$
  if there exists $\alpha:\R_{\ge0}\to\R_{\ge0}$ continuous with
  \[
    \int_{t_0}^{t_0+T}\sigma_f(t,X(t))\dt
    +\sum_k\sigma_k(X(t_k^-))
    +\sigma_T(X(t_0+T))
    \ge-\alpha(\norm{X_1^0})
  \]
  for all admissible $X$ and all $X_1^0\succeq0$.
  The \emph{finite-horizon available storage} is
  \begin{align}\label{eq:IMP:Va}
    V_a^T(t_0,X_1^0)
    &:=\sup_{\substack{X(\cdot)\in\Snmpsd,\;\text{s.t.~\eqref{eq:sys:IMP}}\\
        EX(t_0)E^*=X_1^0}}
    \Bigl\{-\int_{t_0}^{t_0+T}\ip{M(t)}{X(t)}\dt
    \notag\\
    &\qquad\qquad\qquad\quad
    -\sum_k\ip{M_k}{EX(t_k^-)E^*}
    -\ip{M_T}{EX(T)E^*}\Bigr\}.
  \end{align}
  The \emph{infinite-horizon available storage} is
  $V_a(t_0,X_1^0):=\sup_{T\ge0}V_a^T(t_0,X_1^0)$.
  The \emph{required supply} is
  \begin{equation}\label{eq:IMP:Vr}
    V_r(t_0,X_1^0)
    :=\inf_{\substack{t_1\le t_0,\;EX(t_1)E^*=0\\
        EX(t_0)E^*=X_1^0}}
    \Bigl\{\int_{t_1}^{t_0}\ip{M(t)}{X(t)}\dt
    +\sum_{k:\,t_1<t_k<t_0}\ip{M_k}{EX(t_k^-)E^*}\Bigr\}.
  \end{equation}
  Finite-horizon dissipativity holds iff $V_a^T<\infty$ for all
  $X_1^0\succeq0$.
\end{definition}

\begin{definition}[Strict dissipativity]\label{def:IMP:strict}
  The system is \emph{strictly dissipative} with respect to
  $(\sigma_f,\sigma_k,\sigma_T)$ if there exist $\varepsilon>0$
  and $\alpha(\cdot)$ continuous such that
  \[
    \int_{t_0}^{t_0+T}\sigma_f(t,X)\dt
    +\sum_k\sigma_k(X(t_k^-))
    +\sigma_T(X(T))
    \ge\varepsilon\int_{t_0}^{t_0+T}\norm{E_\bot X E_\bot^*}\dt
    -\alpha(\norm{X_1^0}).
  \]
  Strict dissipativity can always be obtained from dissipativity
  by replacing $M\leftarrow M+\varepsilon I$ (perturbing the supply
  rate), which tightens the I-D-GRE by adding $\varepsilon I$ to
  $(\mathcal{C}_M)_3$.  Such a perturbation changes $V_a$ continuously.
\end{definition}

\begin{proposition}[Storage function bounds, {\cite{Willems:72}}]
  \label{prop:Vsandwich}
  Suppose the system~\eqref{eq:sys:IMP} is dissipative with supply
  $(\sigma_f,\sigma_k,\sigma_T)$, and let $V=-\ip{P}{EXE^*}$ be any
  storage function satisfying $V(t,0)=0$.  Then:
  \begin{enumerate}[\upshape(a)]
    \item\label{sand:a}
      $V_a(t,X_1)\le V(t,X_1)\le V_r(t,X_1)$
      for all $t\ge t_0$ and $X_1\in\Snpsd$.
    \item\label{sand:b}
      The available storage $V_a$ is itself a storage function
      \emph{(}the smallest\emph{)}: every storage function satisfies
      $V(t,X_1)\ge V_a(t,X_1)$.
    \item\label{sand:c}
      The required supply $V_r$ is itself a storage function
      \emph{(}the largest\emph{)}: every storage function satisfies
      $V(t,X_1)\le V_r(t,X_1)$.
    \item\label{sand:d}
      In the linear storage function parametrization,
      $V_a=\ip{-P_a}{\cdot}$ and $V_r=\ip{-P_r}{\cdot}$ with
      $P_a\succeq P\succeq P_r$ for any storage function matrix $P$,
      i.e., $-P_a\preceq-P\preceq-P_r$.
    \item\label{sand:e}
      The system is dissipative if and only if $V_r<\infty$
      \emph{(}or equivalently $V_a<\infty$\emph{)}.
  \end{enumerate}
\end{proposition}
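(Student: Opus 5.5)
The plan is to follow Willems' classical argument, with the key identity of Proposition~\ref{prop:keyid} (supply $M$ in place of the cost $Z$) doing the bookkeeping. Throughout, "storage function" means that $V$ satisfies the dissipation inequality along every admissible trajectory on every subinterval $[s,t]$:
\[
V(t,EX(t)E^*)-V(s,EX(s)E^*)\le\int_s^t\ip{M}{X}\d r+\sum_{s<t_k\le t}\ip{M_k}{EX(t_k^-)E^*}.
\]
By the key identity this is equivalent to $\mathcal{C}_M(t,P(t))\succeq0$ and $\mathcal{D}_M\succeq0$. Indeed, with $V=\ip{-P}{EXE^*}$, the difference between the two sides equals the flow integral of $\ip{\mathcal{C}_M}{X}$ plus the jump sum of $\ip{\mathcal{D}_M}{X(t_k^-)}$, and its nonnegativity for all $X\succeq0$ gives the two LMIs via Proposition~\ref{prop:PQ}\eqref{st:neg}. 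I will prove \eqref{sand:a} first, then read \eqref{sand:b}--\eqref{sand:e} off from it.

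\textbf{Lower bound $V_a\le V$.} Fix $t$, $X_1$, a horizon $T$, and any admissible $X$ starting from $X_1$ at $t$. Apply the dissipation inequality on $[t,t+T]$ and use $V(t+T,\cdot)\ge0$; this needs the normalization $V(\cdot,0)=0$ together with the nonnegativity of storage. If instead one works with the terminal supply $\sigma_T$, compare it with $V(t+T,\cdot)$. The result is $-\int\ip{M}{X}-\sum\ip{M_k}{\cdot}\le V(t,X_1)$. Taking the supremum over $X$ and then over $T$ gives $V_a\le V$.

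\textbf{Upper bound $V\le V_r$.} Take any admissible $X$ on $[t_1,t]$ with $EX(t_1)E^*=0$ and $EX(t)E^*=X_1$. The dissipation inequality on $[t_1,t]$ together with $V(t_1,0)=0$ gives $V(t,X_1)\le\int_{t_1}^t\ip{M}{X}+\sum\ip{M_k}{\cdot}$. Taking the infimum gives $V\le V_r$.

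For \eqref{sand:b}: $V_a$ is a storage function by dynamic programming. Concatenating an arbitrary segment on $[s,t]$ with a near-optimal continuation from $t$ shows $V_a(s,\cdot)\ge$ (negative supply on $[s,t]$) $+\,V_a(t,\cdot)$, which is exactly the dissipation inequality. We also have $V_a\ge0$ (take $T=0$) and $V_a(t,0)=0$, by linearity and positive homogeneity of the supremum. Item \eqref{sand:c} is symmetric: the infimum in $V_r$ over extended past trajectories gives subadditivity along concatenations. Minimality and maximality then follow from \eqref{sand:a}. For \eqref{sand:d}, I invoke the finite-horizon representation (the dissipativity analogue of Corollary~\ref{cor:IMP:LQ:equiv}, with $Z\leftarrow M$) to write $V_a^T=\ip{-P_T}{\cdot}$. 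The supremum over $T$ of linear functionals, when finite, is linear on $\Snpsd$ by the same additivity and homogeneity argument used in Theorem~\ref{th:IMP:LQ:nec}\eqref{nec:lin}, hence equals $\ip{-P_a}{\cdot}$; similarly $V_r=\ip{-P_r}{\cdot}$. The inequalities in \eqref{sand:a}, tested with $X_1=vv^*$, give $-P_a\preceq-P\preceq-P_r$. For \eqref{sand:e}: if the system is dissipative, a storage function exists, so by \eqref{sand:a} $V_a\le V<\infty$. Conversely, if $V_a<\infty$ then $V_a$ is itself a storage function by \eqref{sand:b}. If $V_r<\infty$, then $V_r$ is a storage function; this requires reachability from $0$ so that $V_r>-\infty$.

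The main obstacle is the linearity of $V_a$ and $V_r$ in \eqref{sand:d}, which is also what makes \eqref{sand:b}--\eqref{sand:c} meaningful within the linear parametrization. A supremum of linear functionals is only sublinear in general, so additivity must come from the superposition principle for optimal (or near-optimal) trajectories. I would try first to use the finite-horizon I-D-GRE representation $V_a^T=\ip{-P_T}{\cdot}$ and the monotonicity of $T\mapsto V_a^T$, which is a limit of linear functionals and therefore linear. This requires assuming the finite-horizon D-GRE is solvable, which I would state as a standing hypothesis inherited from the finite-horizon theory.
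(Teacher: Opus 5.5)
Your architecture matches the paper's. You integrate the dissipation inequality backward from a zero state for $V\le V_r$, forward for $V_a\le V$, and test with $X_1=vv^*$ for (d); the upper bound is fine.

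The gap is in the lower bound, which you close with $V(t+T,\cdot)\ge0$. That is not available here. Definition~\ref{def:IMP:stor} places no sign on $P$, and the normalization $V(\cdot,0)=0$ (automatic for a linear $V$) says nothing about sign. Without a substitute hypothesis the inequality is actually false. Take no jumps, $\mathcal{F}=0$, $M=0$, $M_T=0$ and $P\equiv I$. Then $\mathcal{C}_M\equiv0$, so $V=-\tr(X_1)$ satisfies the dissipation inequality and $V(t,0)=0$, yet $V_a\equiv0>V$ for every $X_1\neq0$.

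The paper's proof flags exactly this point. It obtains $V_a^T\le V$ only under the terminal-compatibility hypothesis $V(t_0+T,EX(t_0+T)E^*)+\sigma_T(X(t_0+T))\ge0$, a condition tying $P(t_0+T)$ to $M_T$, stated as the replacement for nonnegativity. Your aside about comparing $\sigma_T$ with $V(t+T,\cdot)$ points at this. It has to be written as this explicit inequality and imposed as a hypothesis, or else you must restrict to nonnegative storages ($P\preceq0$); it cannot be derived. Your route to (e), ``dissipative $\Rightarrow$ a storage exists $\Rightarrow V_a\le V<\infty$'', inherits the same gap. The paper instead reads $V_a<\infty$ directly off the definition of dissipativity (Definition~\ref{def:IMP:Va}).

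For (d), the paper does not prove that $V_a$ and $V_r$ are linear. The representations $V_a=\ip{-P_a}{\cdot}$ and $V_r=\ip{-P_r}{\cdot}$ are part of the hypothesis (``in the linear storage function parametrization''), and only the $vv^*$ test is carried out, as in your last step. Your proposed derivation of linearity via monotonicity of $T\mapsto V_a^T$ would fail. That map is not monotone for indefinite $M$; the paper itself notes this in the proof of Theorem~\ref{th:IMP:diss:inf}. Moreover, $V_a$ can be genuinely nonlinear. Take $n=2$, $\mathcal{F}=0$, no jumps, $M_T=0$, and $M(t)=\mathrm{diag}(M_1(t),0)$ with $M_1=\mathrm{diag}(-1,1)$ on $[0,1)$, $\mathrm{diag}(1,-1)$ on $[1,2)$, and $0$ afterwards. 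Then $V_a^T(0,X_1)=\min\{T,\max\{2-T,0\}\}(x_{11}-x_{22})$, so $V_a(0,X_1)=\max\{0,x_{11}-x_{22}\}$, which is sublinear but not additive. Linearity must therefore stay an assumption.

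Where you do improve on the paper is (b)--(c). Your concatenation argument shows directly that $V_a$ and $V_r$ satisfy the dissipation inequality, with no Riccati solvability and no linearity needed. The paper instead appeals to the D-GRE theorems, which only concern $V_a$. Two small corrections there:
\begin{itemize}
\item $V_a\ge0$ ``by $T=0$'' requires $\sigma_T\le0$, since $V_a^0=-\sigma_T$.
\item Reachability from $0$ gives $V_r<\infty$, whereas $V_r>-\infty$ follows from $V_r\ge V$ for any storage $V$, i.e.\ from your own upper-bound argument.
\end{itemize}
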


\begin{proof}
  \eqref{sand:a}.
  \emph{Upper bound} $V\le V_r$: For any past trajectory steering
  the state from $X_1(t_1)=0$ to $X_1(t_0)=X_1^0$, integrating
  the dissipation inequality $\dot{V}\le\sigma_f$ on flow and
  $\Delta V\le\sigma_k$ at jumps gives $V(t_0,X_1^0)
  -V(t_1,0)\le\int_{t_1}^{t_0}\sigma_f\dt
  +\sum_{k:\,t_1<t_k<t_0}\sigma_k$.  Since $V(t_1,0)=0$, taking
  the infimum over all such past trajectories:
  $V(t_0,X_1^0)\le V_r(t_0,X_1^0)$.

  \emph{Lower bound} $V\ge V_a$: For any future trajectory from
  $X_1(t_0)=X_1^0$, the dissipation inequality integrated
  forward gives
  \[
    -\int_{t_0}^{t_0+T}\sigma_f\dt-\sum_k\sigma_k-\sigma_T
    \le V(t_0,X_1^0)-\bigl[V(t_0+T,X(t_0+T))+\sigma_T\bigr].
  \]
  In the indefinite-supply setting storage functions are
  sign-indefinite (no sign is placed on $P$), so the classical step
  ``$V(t_0+T,\cdot)\ge0$'' is \emph{not} available.  The lower bound
  $-\int\sigma_f-\sum_k\sigma_k-\sigma_T\le V(t_0,X_1^0)$ instead holds
  under the terminal-compatibility condition
  $V(t_0+T,X(t_0+T))+\sigma_T\ge0$, i.e.\ the terminal boundary condition
  of the finite-horizon D-GRE relating $P(t_0+T)$ to $M_T$
  (cf.~Theorem~\ref{th:IMP:diss:LMI}); this is the precise hypothesis
  that must replace nonnegativity of $V$. Taking the supremum: $V_a^T\le V(t_0,X_1^0)$, hence $V_a\le V$.

  \eqref{sand:b}--\eqref{sand:c} follow from~\eqref{sand:a}
  and the fact that $V_a$, $V_r$ are attained by the D-GRE
  solutions (Theorems~\ref{th:IMP:diss:suff}
  and~\ref{th:IMP:diss:inf}).

  \eqref{sand:d}: In our linear parametrization,
  $V\ge V_a$ reads $\ip{-P}{X_1^0}\ge\ip{-P_a}{X_1^0}$ for all
  $X_1^0\succeq0$, hence $-P\succeq-P_a$, i.e., $P\preceq P_a$.
  Similarly $V\le V_r$ gives $P\succeq P_r$.

  \eqref{sand:e}: Dissipative $\Rightarrow$ $V_a<\infty$
  (Def.~\ref{def:IMP:Va}).  Conversely, $V_r<\infty$ implies any
  trajectory can be steered from $0$ at finite supply cost, so
  $V_r$ itself is a storage function (by~\eqref{sand:c}).
\end{proof}

\subsubsection{Dissipativity GRE and operators (finite horizon)}

Replacing $Z$ and $Z_k$ by $M$ and $M_k$ in~\eqref{eq:IMP:L}
and~\eqref{eq:IMP:Lj} gives the \emph{dissipation operators}
\begin{align}\label{eq:IMP:LM}
  &\mathcal{C}_M(t,P(t)):=E^*\dot{P}(t)E+\mathcal{F}^*(t,P(t))+M(t),
  \notag\\
  &\mathcal{D}_M\bigl(k,P(t_k^+),P(t_k^-)\bigr)
  :=\mathcal{J}^*\!\bigl(k,P(t_k^+)\bigr)-E^*P(t_k^-)E+M_k.
\end{align}
The dissipation inequality $\tfrac{d}{dt}V\le\sigma_f$ on flow is
equivalent to $\mathcal{C}_M(t,P(t))\succeq0$; the jump dissipation
inequality $V(t_k^+)-V(t_k^-)\le\sigma_k$ is equivalent to
$\mathcal{D}_M(k,P(t_k^+),P(t_k^-))\succeq0$.

\begin{definition}[Impulsive D-GRE]\label{def:IMP:dGRE}
  The \emph{impulsive dissipativity GRE} (I-D-GRE) is the I-GRE
  (Definition~\ref{def:IMP:GRE}) with $\mathcal{C}_M$ and
  $\mathcal{D}_M$ in place of $\mathcal{C}_Z$ and
  $\mathcal{J}$, terminal condition $P(t_0+T)=-M_T$,
  and no sign constraint on $P$.
  The optimal sets are
  $\mathcal{K}_c^M(t,P(t)):=\{K:(\mathcal{C}_M)_3K^*
  +(\mathcal{C}_M)_2^*=0\}$ and
  $\mathcal{K}_d^M:=\{K:(\mathcal{D}_M)_3K^*
  +(\mathcal{D}_M)_2^*=0\}$.
\end{definition}

\begin{remark}
  The I-D-GRE is formally identical to the I-GRE with $Z\leftarrow M$
  and $Z_k\leftarrow M_k$.  Because $M$ is indefinite,
  $(\mathcal{C}_M)_3$ need not be positive semidefinite for arbitrary
  $P$; condition~\eqref{eq:IMP:GRE:fPSD} (with $M$) is therefore a
  genuine restriction on the solution.
\end{remark}

The key identity for the dissipativity problem is obtained by
replacing $Z\leftarrow M$, $Z_k\leftarrow M_k$, $Z_T\leftarrow M_T$
in~\eqref{eq:IMP:keyid2}: for $P(T)=-M_T$,
\begin{align}\label{eq:IMP:diss:keyid}
  &-\int\ip{M(t)}{X(t)}\dt
  -\sum_k\ip{M_k}{EX(t_k^-)E^*}
  -\ip{M_T}{EX(T)E^*}
  \notag\\&\quad
  =\ip{-P(t_0)}{X_1^0}
  -\int\ip{\mathcal{C}_M(t,P(t))}{X(t)}\dt
  -\sum_k\ip{\mathcal{D}_M}{X(t_k^-)}.
\end{align}

\subsubsection{Sufficient condition (finite horizon)}
\label{sec:diss:suff}

Theorem~\ref{th:IMP:diss:suff} shows that any solution $P$
of the I-D-GRE provides a valid storage function:
$V(t,X)=-\ip{P(t)}{EXE^*}$ satisfies the dissipation inequality
for all admissible trajectories.  The proof is identical in
structure to Theorem~\ref{th:IMP:LQ:suff}, with
$\mathcal{C}_M$ in place of $\mathcal{C}_Z$.

\begin{theorem}[Sufficiency, impulsive dissipativity]
  \label{th:IMP:diss:suff}
  Let $T>0$.  Suppose the I-D-GRE has a piecewise-$C^1$ solution
  $P$ with $P(t_0+T)=-M_T$.  Then:
  \begin{enumerate}[\upshape(a)]
    \item $V_a^T(t_0,X_1^0)=\ip{-P(t_0)}{X_1^0}$, attained at
      $X^\star$ with $K\in\mathcal{K}_c^M$ on flow and
      $K_k\in\mathcal{K}_d^M$ at each jump.
    \item $P$ is the unique piecewise-$C^1$ solution with this
      property.
    \item $V(t,X)=\ip{-P(t)}{EXE^*}$ satisfies the dissipation
      inequality on flow and at each jump.
  \end{enumerate}
\end{theorem}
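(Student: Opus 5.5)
The proof will follow Theorem~\ref{th:IMP:LQ:suff} step by step, with the dissipativity key identity~\eqref{eq:IMP:diss:keyid} used in place of~\eqref{eq:IMP:keyid2}. We fix an admissible $X$ with $EX(t_0)E^*=X_1^0$. Because $P(t_0+T)=-M_T$, identity~\eqref{eq:IMP:diss:keyid} writes the negated supply as $\ip{-P(t_0)}{X_1^0}$ minus the flow remainder $\int\ip{\mathcal{C}_M(t,P(t))}{X(t)}\dt$ and minus the jump remainder $\sum_k\ip{\mathcal{D}_M}{X(t_k^-)}$. By Lemma~\ref{lemma:GREequiv}, the I-D-GRE is equivalent to $\mathcal{C}_M\succeq0$ and $\mathcal{D}_M\succeq0$ with vanishing generalized Schur complements. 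Lemma~\ref{lemma:extSchur} then gives the compatibility condition~\eqref{eq:compat}. Factor $X(t)=UU^*$ and $X(t_k^-)=V_kV_k^*$. Lemma~\ref{lemma:decomp} splits each remainder into a Schur term that is zero and a term $\ip{(\mathcal{C}_M)_3}{W_fW_f^*}\ge0$, and likewise at every jump. Hence the negated supply is at most $\ip{-P(t_0)}{X_1^0}$ for every admissible $X$, and taking the supremum gives $V_a^T(t_0,X_1^0)\le\ip{-P(t_0)}{X_1^0}$.

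For attainment we choose $K(t)\in\mathcal{K}_c^M(t,P(t))$ and $K_k\in\mathcal{K}_d^M$, and set $U_2^\star=-KU_1^\star$ and $V_{k,2}^\star=-K_kV_{k,1}^\star$. As in the attainment step of Theorem~\ref{th:IMP:LQ:suff}, the general-element formula shows that the columns of $W_f^\star$ lie in $\ker(\mathcal{C}_M)_3$, and similarly at the jumps, so both remainders vanish. This gives equality, which proves~(a). We also need the closed-loop trajectory to be admissible, i.e.\ the forward linear flow/jump recursion for $X_1^\star$ must have a solution that stays in $\Snpsd$. This holds because $\mathcal{F}$ and $\mathcal{J}$ preserve the cone and $X^\star=\bigl[\begin{smallmatrix}I\\-K\end{smallmatrix}\bigr]X_1^\star\bigl[\begin{smallmatrix}I\\-K\end{smallmatrix}\bigr]^*\succeq0$. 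Measurability of the pseudoinverse gain is the one technical point here.

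For uniqueness~(b) we repeat the argument of Theorem~\ref{th:IMP:LQ:suff}. Any other solution $\tilde P$ with $\tilde P(t_0+T)=-M_T$ gives the same value $V_a^T(t_0,X_1^0)=\ip{-\tilde P(t_0)}{X_1^0}$ for all $X_1^0\succeq0$. Testing with $X_1^0=vv^*$ yields $\tilde P(t_0)=P(t_0)$. Running the same argument from every initial time $s\in[t_0,t_0+T]$, with the problem restricted to $[s,t_0+T]$, gives $\tilde P\equiv P$. The paper's sufficiency proof used positivity of $Z$ at this stage, but the dissipativity argument never needs a sign on $P$, and the uniqueness argument does not use one either.

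For~(c), with $V(t,X)=\ip{-P(t)}{EXE^*}$, we differentiate along an arbitrary admissible flow and use the adjoint of $\mathcal{F}$. This gives $\dot V-\sigma_f=-\ip{\mathcal{C}_M(t,P(t))}{X(t)}\le0$, since $\mathcal{C}_M\succeq0$ and $X\succeq0$ (Proposition~\ref{prop:PQ}). At each jump, the jump computation in the proof of Proposition~\ref{prop:keyid} gives $V(t_k^+)-V(t_k^-)-\sigma_k=-\ip{\mathcal{D}_M}{X(t_k^-)}\le0$. The main obstacle is not any of these computations, which are routine, but the indefiniteness of $M$. It must be handled by relying on the I-D-GRE condition $(\mathcal{C}_M)_3\succeq0$ (which, unlike in the optimal control case, is not automatic) to apply Lemma~\ref{lemma:decomp}, and by making sure that no step, especially the supremum/attainment step, silently uses $P\succeq0$ or $M\succeq0$.
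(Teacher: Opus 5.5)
Your route is the paper's route essentially line for line. You use the dissipativity key identity plus Lemma~\ref{lemma:decomp} for the bound, kernel gains from $\mathcal{K}_c^M,\mathcal{K}_d^M$ for attainment, the $vv^*$ test from every initial time for uniqueness, and the direct flow/jump computation for~(c). The genuine problem is a step you import rather than check: identity~\eqref{eq:IMP:diss:keyid} does not hold with $P(t_0+T)=-M_T$. Summing the flow and jump computations you carry out in~(c) gives, for any piecewise-$C^1$ $P$,
\begin{align*}
&-\int_{t_0}^{t_0+T}\ip{M(t)}{X(t)}\dt-\sum_k\ip{M_k}{EX(t_k^-)E^*}-\ip{M_T}{EX(t_0+T)E^*}\\
&\quad=\ip{-P(t_0)}{X_1^0}+\ip{P(t_0+T)-M_T}{EX(t_0+T)E^*}\\
&\qquad-\int_{t_0}^{t_0+T}\ip{\mathcal{C}_M(t,P(t))}{X(t)}\dt-\sum_k\ip{\mathcal{D}_M}{X(t_k^-)}.
\end{align*}
So the terminal term drops out only when $P(t_0+T)=M_T$. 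With $P(t_0+T)=-M_T$ it equals $-2\ip{M_T}{EX(t_0+T)E^*}$. That term depends on the trajectory and has no sign, so neither the upper bound nor equality at $X^\star$ follows.

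In fact~(a) is false as stated whenever $M_T\neq0$. Take $\mathcal{F}\equiv0$, $M\equiv0$ and no jumps. Then $\mathcal{C}_M=E^*\dot PE$ and the I-D-GRE reduces to $\dot P=0$, so $P\equiv-M_T$ and the claimed value is $\ip{M_T}{X_1^0}$. But $X_1\equiv X_1^0$ along every admissible trajectory, so $V_a^T(t_0,X_1^0)=-\ip{M_T}{X_1^0}$.

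The correct terminal condition is $P(t_0+T)=M_T$, i.e.\ $V(t_0+T,\cdot)=-\sigma_T$. This is exactly the terminal compatibility invoked in the proof of Proposition~\ref{prop:Vsandwich}. The paper's own proof rests on the same mistaken identity, so this is not a divergence from it. It is, however, the kind of silent sign dependence you set out to rule out, and deriving the identity instead of citing it would have caught it.

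With $P(t_0+T)=M_T$ (or when $M_T=0$), every step of your proposal goes through unchanged. Part~(c) never touches the terminal condition. Part~(b) only uses that two solutions with the same terminal value represent the same supremum, which survives the correction.
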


\begin{proof}
\noindent\textbf{Part~(a): Value.}
The I-D-GRE gives $\mathcal{C}_M\succeq0$ and
$\mathcal{D}_M\succeq0$, so both the integral and the
sum in~\eqref{eq:IMP:diss:keyid} are $\ge0$ (apply
Lemma~\ref{lemma:decomp} as in the proof of
Theorem~\ref{th:IMP:LQ:suff}).
Hence the left-hand side of~\eqref{eq:IMP:diss:keyid} is
$\le\ip{-P(t_0)}{X_1^0}$ for every admissible $X$, giving
$V_a^T\le\ip{-P(t_0)}{X_1^0}$.

For $K\in\mathcal{K}_c^M$ and $K_k\in\mathcal{K}_d^M$,
the attainment argument of Theorem~\ref{th:IMP:LQ:suff}
(with $\mathcal{C}_M$, $\mathcal{D}_M$) gives
$\ip{\mathcal{C}_M(t,P(t))}{X^\star(t)}=0$ on flow and
$\ip{\mathcal{D}_M}{X^\star(t_k^-)}=0$ at each jump.
Identity~\eqref{eq:IMP:diss:keyid} then gives equality:
$V_a^T=\ip{-P(t_0)}{X_1^0}$.

\noindent\textbf{Part~(b): Uniqueness.}
If $\tilde P$ is another piecewise-$C^1$ solution with
$\tilde P(T)=-M_T$, then
$\ip{-\tilde P(t_0)}{X_1^0}=V_a^T=\ip{-P(t_0)}{X_1^0}$ for all
$X_1^0\succeq0$.  Taking $X_1^0=vv^*$ gives
$P(t_0)=\tilde P(t_0)$ for all $t_0$, hence $P\equiv\tilde P$.

\noindent\textbf{Part~(c): Storage function.}
On any flow subinterval $[s,t]$ without jumps, the key identity
with sub-horizon $[s,t]$ gives
\begin{align*}
  V(t,X(t))-V(s,X(s))
  &=\int_s^t\ip{M(\tau)}{X(\tau)}\mathrm{d}\tau
   -\int_s^t\ip{\mathcal{C}_M(\tau,P(t))}{X(\tau)}\mathrm{d}\tau
   \le\int_s^t\sigma_f(\tau,X(\tau))\mathrm{d}\tau,
\end{align*}
using $\mathcal{C}_M\succeq0$ and $X\succeq0$.  At jump $t_k$:
\begin{align*}
  V(t_k^+,X(t_k^+))-V(t_k^-,X(t_k^-))
  &=\ip{-P(t_k^+)}{\mathcal{J}(k,X(t_k^-))}+\ip{P}{EX(t_k^-)E^*}\\
  &=-\ip{\mathcal{D}_M-M_k}{EX(t_k^-)E^*}\\
  &\le\ip{M_k}{EX(t_k^-)E^*}=\sigma_k(X(t_k^-)),
\end{align*}
using $\mathcal{D}_M\succeq0$ (so the
$\mathcal{D}_M$ term contributes non-negatively to
the right side).
\end{proof}

\subsubsection{Necessary condition (finite horizon)}
\label{sec:diss:nec}

Conversely, if a linear storage function exists, then its coefficient
must satisfy the I-D-GRE and the available storage is the unique
smallest storage function.

\begin{theorem}[Necessity, impulsive dissipativity]
  \label{th:IMP:diss:nec}
  Suppose the system is finite-horizon dissipative.  Then the unique
  piecewise-$C^1$ $P$ with $V_a^T=\ip{-P(t_0)}{\cdot}$ satisfies
  the full I-D-GRE.
\end{theorem}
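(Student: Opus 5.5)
The plan is to mirror the proof of Theorem~\ref{th:IMP:LQ:nec}, with $Z\leftarrow M$, $Z_k\leftarrow M_k$, $Z_T\leftarrow M_T$, and with the minimisation replaced by the supremum defining $V_a^T$. Note that $V_a^T$ is minus the optimal cost of the linear-cost problem with the indefinite weights $(M,M_k,M_T)$.

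\textbf{Linearity.} First we establish that $V_a^T(t,\cdot)$ is linear on $\Snpsd$. Finite-horizon dissipativity gives $V_a^T<\infty$. Positive homogeneity follows by scaling admissible trajectories. For additivity, $V_a^T(t,X_1^0+\tilde X_1^0)\ge V_a^T(t,X_1^0)+V_a^T(t,\tilde X_1^0)$ holds because sums of admissible trajectories are admissible. The reverse inequality comes from the dynamic-programming argument used in Theorem~\ref{th:IMP:LQ:nec}: a state-independent feedback attains the supremum, so the optimal closed loop is linear. Polarisation and Riesz representation then give a unique $P(t)\in\Sn$ with $V_a^T(t,X_1)=\ip{-P(t)}{X_1}$. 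Evaluating at $t=t_0+T$ gives $P(t_0+T)=-M_T$, and piecewise-$C^1$ regularity is inherited as before.

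\textbf{Flow and jump LMIs.} By the principle of optimality, for any admissible $X$ on $[t,t+h]$ we have
\[
\ip{-P(t)}{EX(t)E^*}\ge-\int_t^{t+h}\ip{M}{X}\ds+\ip{-P(t+h)}{EX(t+h)E^*}.
\]
Test with constant free blocks built from $v=[v_1^*\;v_2^*]^*$, divide by $h$, and let $h\to0$. This gives $v^*\mathcal{C}_M(t,P(t))v\ge0$, hence $\mathcal{C}_M\succeq0$. The same one-step comparison across $t_k$, tested with $X(t_k^-)=vv^*$ and combined with the adjoint of $\mathcal{J}$, yields $\mathcal{D}_M\succeq0$. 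These inequalities give $(\mathcal{C}_M)_3\succeq0$ and $(\mathcal{D}_M)_3\succeq0$ and, by Lemma~\ref{lemma:extSchur}, the compatibility conditions~\eqref{eq:IMP:GRE:fC} and~\eqref{eq:IMP:GRE:jC}. This matters because, unlike the case $Z\succeq0$, $(\mathcal{C}_M)_3$ is not a priori semidefinite here.

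\textbf{Riccati equalities.} Evaluate the dissipativity key identity~\eqref{eq:IMP:diss:keyid} along a maximising trajectory $X^\star$. Its left side equals $\ip{-P(t_0)}{X_1^0}$, so $\int\ip{\mathcal{C}_M}{X^\star}\dt+\sum_k\ip{\mathcal{D}_M}{X^\star(t_k^-)}=0$. Since every term is nonnegative, each vanishes a.e. Lemma~\ref{lemma:decomp} then splits each term into two nonnegative pieces, so $\ip{\mathcal{C}_M\,\schur{/}\,(\mathcal{C}_M)_3}{U_1^\star U_1^{\star*}}=0$. Varying $X_1^0$ lets $EX^\star(t)E^*$ sweep $\Snpsd$, because the dynamic-programming argument may be restarted at any $t$. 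Since the generalized Schur complement is $\succeq0$, it must be zero, which gives~\eqref{eq:IMP:GRE:fR}. The jump equality~\eqref{eq:IMP:GRE:jR} follows identically with $\mathcal{D}_M$. Lemma~\ref{lemma:GREequiv} then assembles the full I-D-GRE.

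\textbf{Main obstacle.} The hardest step is existence of a maximiser together with the linearity/additivity step. With $M$ indefinite, finiteness of $V_a^T$ does not immediately give attainment or coercivity. To get around this, I would first apply the strict-dissipativity perturbation $M\leftarrow M+\varepsilon I$ from Definition~\ref{def:IMP:strict}. This makes $(\mathcal{C}_M)_3\succ0$, so the supremum is attained by the regular Riccati feedback. I would then derive the I-D-GRE for $P_\varepsilon$ and pass $\varepsilon\to0$ using monotonicity of $P_\varepsilon$ in $\varepsilon$ and closedness of the conditions $\mathcal{C}\succeq0$ and $\mathcal{C}\,\schur{/}\,\mathcal{C}_3=0$. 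Some care is needed in this limit, since the pseudoinverse is discontinuous; the safer route is to pass to the limit in the equivalent LMI-plus-equality form.
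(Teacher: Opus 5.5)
Your main line is the paper's proof. It uses linearity of $V_a^T$ via homogeneity, superadditivity and the state-independent-feedback argument of Theorem~\ref{th:IMP:LQ:nec}. It gets $\mathcal{C}_M\succeq0$ and $\mathcal{D}_M\succeq0$ from the Bellman inequality. It obtains the Riccati equalities from the vanishing of the key identity along a maximizer, combined with Lemma~\ref{lemma:decomp}. The paper simply writes ``on the optimal trajectory'' and never justifies that a maximizer exists; unlike Theorem~\ref{th:IMP:LQ:nec}, the statement has no attainment hypothesis. You are right to flag this step, but your repair does not close it.

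The set $\{\mathcal{L}\succeq0,\ \mathcal{L}\,\schur{/}\,\mathcal{L}_3=0\}$, equivalently $\mathcal{L}\succeq0$ with $\rank\mathcal{L}=\rank\mathcal{L}_3$, is not closed. The matrix $\bigl[\begin{smallmatrix}1&\varepsilon\\ \varepsilon&\varepsilon^2\end{smallmatrix}\bigr]$ lies in it for every $\varepsilon>0$, while its limit $\mathrm{diag}(1,0)$ has Schur complement $1$. The ``LMI-plus-equality form'' of Lemma~\ref{lemma:GREequiv} describes the same set. So the limit of $P_\varepsilon$ inherits only $\mathcal{C}_M\succeq0$ and $\mathcal{D}_M\succeq0$, not \eqref{eq:IMP:GRE:fR} or \eqref{eq:IMP:GRE:jR}. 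Separately, $M\leftarrow M+\varepsilon I$ gives $(\mathcal{C}_{M+\varepsilon I})_3\succeq\varepsilon I$ only if $E_\bot(\mathcal{F}^*(t,P_\varepsilon)+M)E_\bot^*\succeq0$. That is not automatic when this block of $\mathcal{F}^*$ depends on an indefinite $P_\varepsilon$.

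This is not a technicality: the conclusion genuinely fails without attainment. Take $n=m=1$, no jumps, $\mathcal{F}(X)=X_2+X_2^*$ (the second moment of $\dot x=u$), $M=\mathrm{diag}(q,0)$ with $q>0$, and $M_T=0$.
\begin{itemize}
\item The rank-one feedback $X_2=-X_1/(2\delta)$, $X_3=X_1/(4\delta^2)$ extracts $-q\delta(1-e^{-T/\delta})X_1^0\to0$ as $\delta\to0$.
\item Every admissible trajectory extracts at most $0$. Hence $V_a^T\equiv0$: the system is dissipative, $P\equiv0$, and the supremum is never attained for $X_1^0\neq0$.
\item Yet $\mathcal{C}_M(t,0)=\mathrm{diag}(q,0)$ has $\mathcal{C}_M\,\schur{/}\,(\mathcal{C}_M)_3=q\neq0$.
\item In fact no $P$ solves the I-D-GRE here. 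Since $(\mathcal{C}_M)_3=0$, the condition $\mathcal{C}_M\succeq0$ forces $(\mathcal{C}_M)_2=P=0$.
\end{itemize}
Your regularization reproduces exactly this. For $M_\varepsilon=\mathrm{diag}(q,\varepsilon)$ one gets $P_\varepsilon(t)=\sqrt{q\varepsilon}\,\tanh\bigl(\sqrt{q/\varepsilon}\,(t_0+T-t)\bigr)\to0$. Then $\mathcal{C}_{M_\varepsilon}(t,P_\varepsilon(t))\to\mathrm{diag}(q,0)$ along a rank-one family of the type above.

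The argument, yours or the paper's, therefore needs an additional hypothesis. One option is attainment of the supremum in \eqref{eq:IMP:Va} from every $(t,X_1)$, as in Theorem~\ref{th:IMP:LQ:nec}. Another is a uniform regularity condition $(\mathcal{C}_M)_3\succeq\varepsilon_0 I$ along the solution. Without such a hypothesis, your limiting argument delivers at best the LMIs $\mathcal{C}_M\succeq0$ and $\mathcal{D}_M\succeq0$, i.e.\ feasibility in the dual DLMI of Theorem~\ref{th:IMP:diss:LMI}, not the I-D-GRE.
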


\begin{proof}
Linearity of $V_a^T(t_0,X_1^0)$ in $X_1^0$ and the Riesz
representation give piecewise-$C^1$ $P$ with $P(T)=-M_T$, by the
same positive-homogeneity and super-additivity argument as
Theorem~\ref{th:IMP:LQ:nec}\eqref{nec:lin}.

\noindent\textbf{Flow DLMI.}
Fix a flow instant $t\in(t_{k-1},t_k)$.  For any
$[v_1^*\;v_2^*]^*\in\C^{n+m}$, consider the constant
perturbation $X_1\equiv v_1 v_1^*$, $X_2\equiv v_2 v_1^*$,
$X_3\equiv v_2v_2^*$ on $[t,t+h]$ and continue optimally.
The dynamic programming inequality for the supremum reads
\[
  \ip{-P(t)}{EX(t)E^*}
  \le-\int_t^{t+h}\ip{M(t)}{X(t)}\dt
  +\ip{-P(t+h)}{EX(t+h)E^*}.
\]
Dividing by $h>0$, letting $h\to0$, and substituting
$\dot{V}=\ip{M(t)}{X(t)}-\ip{\mathcal{C}_M(t,P(t))}{X(t)}$:
\[
  0\le[v_1^*\;v_2^*]
  \mathcal{C}_M(t,P(t))
  [v_1^*\;v_2^*]^*,
\]
so $\mathcal{C}_M(t,P(t))\succeq0$.

\noindent\textbf{Jump DLI.}
At $t_k$, the dynamic programming inequality for the supremum gives
\[
  \ip{-P}{EX(t_k^-)E^*}
  \le-\ip{M_k}{EX(t_k^-)E^*}
  +\ip{-P(t_k^+)}{\mathcal{J}(k,X(t_k^-))},
\]
which rearranges to
$\ip{\mathcal{D}_M(k,P(t_k^+),P(t_k^-))}{X(t_k^-)}\ge0$
for all $X(t_k^-)\succeq0$, giving
$\mathcal{D}_M\succeq0$.

\noindent\textbf{I-D-GRE conditions.}
On the optimal trajectory,
$\int\ip{\mathcal{C}_M}{X^\star}\dt
+\sum_k\ip{\mathcal{D}_M}{X^\star(t_k^-)}=0$
(from~\eqref{eq:IMP:diss:keyid} at equality), with both terms
$\ge0$.  The decomposition argument of
Theorem~\ref{th:IMP:LQ:nec}\eqref{nec:GRE} (with $\mathcal{C}_M$
and $\mathcal{D}_M$) then gives the flow Riccati
equation~\eqref{eq:IMP:GRE:fR}, flow compatibility
\eqref{eq:IMP:GRE:fC}, jump Riccati~\eqref{eq:IMP:GRE:jR},
and jump compatibility~\eqref{eq:IMP:GRE:jC}.
\end{proof}

\subsubsection{Equivalence and dual DLMI (finite horizon)}
\label{sec:diss:dual}

As in the optimal control case, necessity and sufficiency combine into an equivalence
and a dual DLMI.  The dual DLMI for dissipativity seeks the
\emph{minimum} storage: the constraint $\mathcal{C}_M\succeq0$,
$\mathcal{D}_M\succeq0$ is again linear in $P$, but now we
minimize $\ip{-P(t_0)}{X_1^0}$ (the available storage) rather
than maximizing it.

\begin{corollary}[Equivalence]\label{cor:IMP:diss:equiv}
  The system is finite-horizon dissipative with supply
  $(\sigma_f,\sigma_k,\sigma_T)$ if and only if the I-D-GRE has a
  piecewise-$C^1$ solution $P$ with $P(T)=-M_T$.
\end{corollary}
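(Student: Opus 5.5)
The equivalence is proved by combining the two preceding theorems, in the same way as Corollary~\ref{cor:IMP:LQ:equiv} was obtained from Theorems~\ref{th:IMP:LQ:suff} and~\ref{th:IMP:LQ:nec}. The only care needed is to check that the definition of finite-horizon dissipativity matches the hypotheses and conclusions of those theorems.

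\emph{Sufficiency.} Assume the I-D-GRE has a piecewise-$C^1$ solution $P$ with $P(T)=-M_T$. Theorem~\ref{th:IMP:diss:suff}(a) then gives $V_a^T(t_0,X_1^0)=\ip{-P(t_0)}{X_1^0}$. This is finite for every $X_1^0\succeq0$. More directly, the key identity~\eqref{eq:IMP:diss:keyid}, combined with $\mathcal{C}_M\succeq0$, $\mathcal{D}_M\succeq0$ and Lemma~\ref{lemma:decomp}, yields for every admissible $X$:
\[
\int\sigma_f\dt+\sum_k\sigma_k+\sigma_T\ \ge\ \ip{P(t_0)}{X_1^0}\ \ge\ -\|P(t_0)\|\,\norm{X_1^0}.
\]
The last step uses Proposition~\ref{prop:PQ}\eqref{st:bound} applied to the positive part of $-P(t_0)$. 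Hence the dissipation inequality holds with the continuous gauge $\alpha(s):=\max(0,\lmax(-P(t_0)))\,s$, and the system is finite-horizon dissipative.

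\emph{Necessity.} Assume the system is finite-horizon dissipative. By Definition~\ref{def:IMP:Va} the available storage $V_a^T$ is then finite for all $X_1^0\succeq0$. Theorem~\ref{th:IMP:diss:nec} applies and produces the unique piecewise-$C^1$ $P$ with $V_a^T=\ip{-P(t_0)}{\cdot}$. This $P$ satisfies the full I-D-GRE, including both compatibility conditions. The terminal condition $P(T)=-M_T$ follows by evaluating the available storage at zero remaining horizon. On the interval $[T,T]$ the supremum in~\eqref{eq:IMP:Va} reduces to $-\ip{M_T}{X_1}$, so $-P(T)=-M_T$ after testing with $X_1=vv^*$.

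\emph{Main obstacle.} The delicate point is the hidden hypothesis behind Theorem~\ref{th:IMP:diss:nec}. Its proof reuses the linearity argument of Theorem~\ref{th:IMP:LQ:nec}\eqref{nec:lin}, which requires the supremum defining $V_a^T$ to be \emph{attained} by an optimal trajectory; this is needed both for the additivity step and for the decomposition step in~\eqref{pf:nec:zero:sep}. I would read ``finite-horizon dissipative'' in the corollary as including this well-posedness, exactly as Corollary~\ref{cor:IMP:LQ:equiv} assumes an optimal trajectory exists, and state this explicitly. If attainment is not assumed, I would first work with the strictly dissipative perturbation $M\leftarrow M+\varepsilon I$ of Definition~\ref{def:IMP:strict}. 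The perturbation makes $(\mathcal{C}_M)_3\succ0$, which supplies attainment through the regular Riccati case. I would then pass to the limit $\varepsilon\downarrow0$, using monotonicity of $V_a^T$ in the supply and the uniform bounds on $P_\varepsilon(t_0)$ from dissipativity. This limiting step is the part of the argument I expect to be the weakest.
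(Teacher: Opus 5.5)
Your route is the one the paper intends: sufficiency from Theorem~\ref{th:IMP:diss:suff} and necessity from Theorem~\ref{th:IMP:diss:nec}. The paper writes no separate proof. However, your terminal-condition step fails. At zero remaining horizon you correctly obtain $\ip{-P(T)}{X_1}=-\ip{M_T}{X_1}$, that is, $-P(T)=-M_T$. This says $P(T)=M_T$, not $P(T)=-M_T$.

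This is not a slip you can absorb. Take $\sigma_T=\ip{M_T}{EXE^*}$ and $V=\ip{-P}{EXE^*}$. The general key identity~\eqref{eq:IMP:keyid} with $Z\leftarrow M$, $Z_k\leftarrow M_k$, $Z_T\leftarrow M_T$ then reads
\[
\int\sigma_f\dt+\sum_k\sigma_k+\sigma_T=\ip{P(t_0)}{X_1^0}+\ip{M_T-P(T)}{EX(T)E^*}+\int\ip{\mathcal{C}_M}{X}\dt+\sum_k\ip{\mathcal{D}_M}{X(t_k^-)}.
\]
So the boundary term vanishes only for $P(T)=M_T$. Identity~\eqref{eq:IMP:diss:keyid} ``for $P(T)=-M_T$'' has the wrong sign, and so does Theorem~\ref{th:IMP:diss:suff}(a) as you invoke it. With $P(T)=-M_T$, your sufficiency bound picks up the extra term $2\ip{M_T}{EX(T)E^*}$. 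This term is unbounded below whenever $M_T$ has a negative direction that the free blocks can excite.

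In fact the statement as literally written is false for $M_T\neq0$. Take $n=m=1$, no jumps, $\mathcal{F}(t,X)=X_3$, $M=\mathrm{diag}(0,1)$ and $M_T=\mu\in\R$. Then $\mathcal{F}^*(t,P)=\mathrm{diag}(0,P)$ and $\mathcal{C}_M=\mathrm{diag}(\dot P,P+1)$, so the I-D-GRE reduces to $\dot P=0$ and $P+1\ge0$. The total supply is $(1+\mu)\int X_3\dt+\mu X_1^0$ with $\int X_3\dt\ge0$ free, so the system is dissipative iff $\mu\ge-1$.

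With $P(T)=-\mu$ the equivalence fails in both directions:
\begin{itemize}
\item For $\mu=2$ the system is dissipative, but no solution exists, since $P+1=-1$ near $T$.
\item For $\mu=-2$ the constant $P\equiv2$ solves the I-D-GRE, yet the system is not dissipative.
\end{itemize}
With $P(T)=\mu$ the equivalence holds. So your argument works in both directions only after replacing the terminal condition by $P(T)=M_T$ throughout, or equivalently flipping the sign convention of $\sigma_T$. State this correction explicitly rather than asserting that your check confirms $P(T)=-M_T$.

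Your remark about attainment is fair, since the proof of Theorem~\ref{th:IMP:diss:nec} works along ``the optimal trajectory''. Assuming attainment explicitly, as in Corollary~\ref{cor:IMP:LQ:equiv}, is the right reading. The $\varepsilon$-perturbation does not by itself give $(\mathcal{C}_M)_3\succ0$, because that block contains the $P$-dependent term $E_\bot\mathcal{F}^*(t,P)E_\bot^*$.
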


\begin{theorem}[Dual DLMI, impulsive dissipativity]
  \label{th:IMP:diss:LMI}
  If the I-D-GRE has a solution $P^*$, then
  \begin{equation}\label{eq:IMP:diss:LMI}
    V_a^T(t_0,X_1^0)
    =\inf_{P\text{ pw-}C^1}\;\ip{-P(t_0)}{X_1^0}
    \quad\text{s.t.}\quad
    \begin{gathered}
    \mathcal{C}_M(t,P(t))\succeq0,\quad
    \mathcal{D}_M(k,P(t_k^+),P(t_k^-))\succeq0,\\
    P(T)\succeq-M_T
    \end{gathered}
  \end{equation}
  attained at $P^*$.  Moreover $V_a^T=\ip{-P^*}{\cdot}$ is the
  \emph{smallest} storage function:
  $-P^*(t)\preceq-P(t)$ (equivalently $P^*(t)\succeq P(t)$)
  for every other valid storage function $-\ip{P}{EXE^*}$.
\end{theorem}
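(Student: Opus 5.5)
The plan mirrors the proof of Theorem~\ref{th:IMP:LQ:LMI}, with inequalities reversed because we now take a supremum of the negated supply. The argument has two steps: every feasible $P$ gives an upper bound on $V_a^T$, and $P^*$ is feasible and attains it. The smallest-storage statement then follows by reading the upper bound pointwise in time.

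\textbf{Step 1: every feasible $P$ is an upper bound.} Fix a feasible $P$, meaning $\mathcal{C}_M\succeq0$, $\mathcal{D}_M\succeq0$ and $P(T)\succeq-M_T$, and an admissible $X$ with $EX(t_0)E^*=X_1^0$. We do not have $P(T)=-M_T$, so we use the general key identity~\eqref{eq:IMP:keyid} with $Z\leftarrow M$, $Z_k\leftarrow M_k$, $Z_T\leftarrow M_T$. After negation it reads
\[
-\int\ip{M}{X}\dt-\sum_k\ip{M_k}{EX(t_k^-)E^*}-\ip{M_T}{EX(T)E^*}
=\ip{-P(t_0)}{X_1^0}-\ip{M_T+P(T)}{EX(T)E^*}-\int\ip{\mathcal{C}_M}{X}\dt-\sum_k\ip{\mathcal{D}_M}{X(t_k^-)}.
\]
The three subtracted terms are all nonnegative. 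The terminal term is nonnegative because $M_T+P(T)\succeq0$ and $EX(T)E^*\succeq0$. The flow and jump terms are nonnegative by the LMI constraints together with $X\succeq0$, through Proposition~\ref{prop:PQ} or directly since a trace of a product of PSD matrices is nonnegative. Taking the supremum over admissible $X$ gives $V_a^T(t_0,X_1^0)\le\ip{-P(t_0)}{X_1^0}$. Hence $V_a^T$ is at most the infimum in~\eqref{eq:IMP:diss:LMI}.

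\textbf{Step 2: $P^*$ is feasible and attains the bound.} By Lemma~\ref{lemma:GREequiv}, the I-D-GRE triples are equivalent to $\mathcal{C}_M(t,P^*)\succeq0$ and $\mathcal{D}_M\succeq0$ with vanishing Schur complements. In addition $P^*(T)=-M_T$, so $P^*$ is feasible. Theorem~\ref{th:IMP:diss:suff}(a) gives $V_a^T(t_0,X_1^0)=\ip{-P^*(t_0)}{X_1^0}$. Combined with Step~1, the infimum equals $V_a^T$ and is attained at $P^*$.

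\textbf{Step 3: minimality.} Let $-\ip{P}{EXE^*}$ be any valid storage function, i.e.\ $P$ feasible. Apply Step~1 on the subhorizon $[t,t_0+T]$ with initial time $t$; the key identity and Theorem~\ref{th:IMP:diss:suff} hold for an arbitrary starting time. This gives $\ip{-P^*(t)}{X_1}=V_a^T(t,X_1)\le\ip{-P(t)}{X_1}$ for all $X_1\succeq0$. Testing with $X_1=vv^*$ for every $v$ yields $-P^*(t)\preceq-P(t)$, equivalently $P^*(t)\succeq P(t)$.

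\textbf{Main obstacle.} The delicate point is what ``valid storage function'' means at the terminal time. As the proof of Proposition~\ref{prop:Vsandwich} notes, sign-indefinite storages need the terminal compatibility $P(T)\succeq-M_T$ in place of nonnegativity. I will therefore define valid storage functions as the feasible set of~\eqref{eq:IMP:diss:LMI}, so that the terminal term is handled exactly as in Step~1.
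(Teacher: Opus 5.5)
Your three steps are the paper's own argument almost line by line. However, Step~1 contains a sign error (the paper's Step~1 has the same one), and with it the statement as written is false. No proof can succeed unless the terminal constraint is changed.

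\textbf{The sign error.} In~\eqref{eq:IMP:keyid} the telescoping contributes $-\ip{P(t_0+T)}{EX(t_0+T)E^*}$ to $J_T$. After substituting $Z_T\leftarrow M_T$ and negating, the terminal term is therefore $+\ip{P(T)-M_T}{EX(T)E^*}$, not $-\ip{M_T+P(T)}{EX(T)E^*}$. As a check, at zero horizon your displayed identity would force $P(t_0)=0$. This term is nonpositive for all admissible $X$ exactly when $P(T)\preceq M_T$. Your constraint $P(T)\succeq-M_T$ is a lower bound and gives no control over it. The terminal-compatibility condition you invoke from Proposition~\ref{prop:Vsandwich}, $V(t_0+T,\cdot)+\sigma_T\ge0$, actually reads $\ip{M_T-P(t_0+T)}{X_1}\ge0$. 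That is $P(t_0+T)\preceq M_T$, the opposite direction to the constraint you adopt.

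\textbf{A counterexample.} Take $\mathcal{F}\equiv0$, $M\equiv0$, $M_T=0$ and no jumps.
\begin{itemize}
\item Then $V_a^T\equiv0$, and $P^*\equiv0$ solves the I-D-GRE.
\item Yet every $P\equiv cI$ with $c>0$ satisfies $\mathcal{C}_M(t,P)=0\succeq0$ and $P(T)\succeq0$.
\item For such $P$, $\ip{-P(t_0)}{X_1^0}=-c\,\tr(X_1^0)$, so the infimum is $-\infty$ rather than $V_a^T=0$.
\item The minimality claim $P^*\succeq P$ also fails for these $P$, even under your definition of valid storage functions as the feasible set.
\end{itemize}

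\textbf{The repair.} It is forced by the definitions. At zero horizon $V_a^0(t,X_1)=-\ip{M_T}{X_1}$, so with $V=\ip{-P}{\cdot}$ the I-D-GRE terminal condition must be $P(t_0+T)=M_T$. The condition $P(t_0+T)=-M_T$ used in Theorem~\ref{th:IMP:diss:suff}, and hence in your Step~2, is off by the sign of $M_T$. The dual program must then impose the upper bound $P(t_0+T)\preceq M_T$. With these two corrections your Steps~1--3 go through verbatim.

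Your Step~3 is then a slightly more careful version of the paper's sandwich argument, for two reasons. First, you apply Step~1 on each subhorizon $[t,t_0+T]$, which gives the bound pointwise in $t$. Second, you make the terminal requirement part of the definition of a valid storage function. The paper instead asserts that any storage function is feasible from the dissipation inequalities alone and ignores the terminal constraint.
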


\begin{proof}
\noindent\textbf{Step~1: Every feasible $P$ is an upper bound.}
For feasible $P$ and admissible $X$, the key
identity~\eqref{eq:IMP:keyid} with $Z\leftarrow M$,
$Z_k\leftarrow M_k$ (not yet using $P(T)=-M_T$) gives
\begin{align*}
  &-\int\ip{M(t)}{X(t)}\dt-\textstyle\sum_k\ip{M_k}{EX(t_k^-)E^*}
   -\ip{M_T}{EX(T)E^*}\notag\\
  &\quad=\ip{-P(t_0)}{X_1^0}
   +\underbrace{\ip{-M_T-P(T)}{EX(T)E^*}}_{\le\,0}\\
  &\quad\quad
   -\underbrace{\int\ip{\mathcal{C}_M(t,P(t))}{X(t)}\dt}_{\ge\,0}
   -\underbrace{\textstyle\sum_k\ip{\mathcal{D}_M}{X(t_k^-)}}_{\ge\,0}\\
  &\quad\le\ip{-P(t_0)}{X_1^0}.
\end{align*}
Taking the supremum over $X$: $V_a^T\le\ip{-P(t_0)}{X_1^0}$ for
every feasible $P$, hence
$V_a^T\le\inf_{P\,\mathrm{feas.}}\ip{-P(t_0)}{X_1^0}$.

\noindent\textbf{Step~2: $P^*$ is feasible and attains the infimum.}
$P^*$ satisfies both DLMI constraints (from the I-D-GRE and
Lemma~\ref{lemma:extSchur}) and $P^*(T)=-M_T$ (so the terminal
constraint holds with equality).  By Theorem~\ref{th:IMP:diss:suff},
$\ip{-P^*(t_0)}{X_1^0}=V_a^T$.
Hence $\inf\ip{-P}{X_1^0}\le\ip{-P^*}{X_1^0}=V_a^T$.
Combining with Step~1: equality, attained at $P^*$.

\noindent\textbf{Sandwich.}
For any storage function $V=-\ip{P}{EXE^*}$, the dissipation
inequality gives $\mathcal{C}_M(t,P(t))\succeq0$, so $P$ is feasible.
Step~1 applied to this $P$ gives $V_a^T\le\ip{-P(t_0)}{X_1^0}=V$,
i.e., $\ip{P-P^*}{X_1^0}=\ip{-P^*}{X_1^0}-\ip{-P}{X_1^0}
=V_a^T-V\le0$ for all $X_1^0\succeq0$.  Taking
$X_1^0=vv^*$ gives $P^*(t)\succeq P(t)$,
i.e., $-P^*(t)\preceq-P(t)$: the available storage is the smallest
storage function, consistent with
Proposition~\ref{prop:Vsandwich}\eqref{sand:b}.
\end{proof}

\subsubsection{Continuous- and Discrete-Time Corollaries}

\begin{corollary}[Continuous-time dissipativity: finite horizon]
  \label{cor:CT:diss:fh}
  Set $N_T=0$.  The I-D-GRE reduces to the continuous-time D-GRE:
  $\mathcal{C}_M(t,P(t))\succeq0$,
  $\mathcal{C}_M\,\schur{/}\,\allowbreak(\mathcal{C}_M)_3=0$,
  $P(T)=-M_T$.
  The dual DLMI is
  \begin{equation}\label{eq:CT:diss:LMI}
    V_a^T(t_0,X_1^0)
    =\inf_{P}\;\ip{-P(t_0)}{X_1^0}
    \quad\text{s.t.}\quad
    \mathcal{C}_M(t,P(t))\succeq0, P(T)\succeq-M_T,
  \end{equation}
  recovering the results of~\cite{AitRami:01a,Briat:22:Matrix}.
\end{corollary}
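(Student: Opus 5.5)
The plan is to obtain Corollary~\ref{cor:CT:diss:fh} by specializing the impulsive results with $N_T=0$, exactly as Corollary~\ref{cor:CT:LQ:fh} was derived from the optimal-control theorems. First, with no jump times the jump operator $\mathcal{D}_M$ never appears: the sum over $k$ in the dissipativity key identity~\eqref{eq:IMP:diss:keyid} is empty, and the jump conditions \eqref{eq:IMP:GRE:jR}--\eqref{eq:IMP:GRE:jPSD} (with $M$) are vacuous. The I-D-GRE of Definition~\ref{def:IMP:dGRE} then consists only of the flow triple \eqref{eq:IMP:GRE:fR}--\eqref{eq:IMP:GRE:fPSD} with $\mathcal{C}_M$ in place of $\mathcal{C}_Z$, together with $P(T)=-M_T$. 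By Lemma~\ref{lemma:GREequiv}, this triple is equivalent to $\mathcal{C}_M(t,P(t))\succeq0$ and $\mathcal{C}_M\,\schur{/}\,(\mathcal{C}_M)_3=0$, which is the stated CT D-GRE.

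Next, I will reread the proofs of Theorems~\ref{th:IMP:diss:suff}, \ref{th:IMP:diss:nec} and~\ref{th:IMP:diss:LMI} with $N_T=0$, checking that every jump term is either absent or an empty sum. The sufficiency argument (Lemma~\ref{lemma:decomp} applied to $\mathcal{C}_M$, and attainment via $K\in\mathcal{K}_c^M$) uses only the flow part. The necessity argument uses only the flow DLMI perturbation and the flow Riccati/compatibility step. In the dual step, the identity
\[
-\int\ip{M}{X}\dt-\ip{M_T}{EX(T)E^*}=\ip{-P(t_0)}{X_1^0}+\ip{-M_T-P(T)}{EX(T)E^*}-\int\ip{\mathcal{C}_M}{X}\dt
\]
gives the upper bound for every $P$ satisfying $\mathcal{C}_M\succeq0$ and $P(T)\succeq-M_T$. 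The infimum in~\eqref{eq:CT:diss:LMI} is then attained at the D-GRE solution $P^*$, since $\mathcal{C}_M(t,P^*(t))\succeq0$ by Lemma~\ref{lemma:extSchur} and $P^*(T)=-M_T$.

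The only point needing care is that the dissipativity setting places no sign on $P$ and $M$ is indefinite. So I must check that no step inherited from the optimal-control proofs relied on $P\succeq0$ or $Z\succeq0$; in particular, the positivity part of Theorem~\ref{th:IMP:LQ:suff} must not be imported. In the flow-only case the only sign facts used are $\mathcal{C}_M\succeq0$, $X\succeq0$, and $(\mathcal{C}_M)_3\succeq0$, which is part of the D-GRE itself. With this checked, the corollary follows, and it matches the CT results of~\cite{AitRami:01a,Briat:22:Matrix} when $\mathcal{F}^*$ is the usual Lyapunov-type adjoint.
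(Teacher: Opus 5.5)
\textbf{There is a genuine gap: the boundary identity in your dual step has the wrong sign, so the upper bound fails.} Your reduction of the I-D-GRE to the flow triple, and its collapse to $\mathcal{C}_M\succeq0$, $\mathcal{C}_M\,\schur{/}\,(\mathcal{C}_M)_3=0$ via Lemma~\ref{lemma:GREequiv}, is fine. The trouble is the identity itself. Differentiating along the flow gives $\frac{d}{dt}\ip{P}{EXE^*}=\ip{\mathcal{C}_M(t,P(t))-M(t)}{X(t)}$, hence
\[
-\int_{t_0}^{t_0+T}\ip{M}{X}\dt-\ip{M_T}{EX(T)E^*}
=\ip{-P(t_0)}{X_1^0}+\ip{P(T)-M_T}{EX(T)E^*}-\int_{t_0}^{t_0+T}\ip{\mathcal{C}_M}{X}\dt .
\]
The boundary term is $\ip{P(T)-M_T}{EX(T)E^*}$, not $\ip{-M_T-P(T)}{EX(T)E^*}$. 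The same slip occurs in the proof of Theorem~\ref{th:IMP:diss:LMI}. It also occurs in \eqref{eq:IMP:diss:keyid}: under $Z_T\leftarrow M_T$, that identity needs $P(T)=Z_T=M_T$, not $-M_T$.

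Consequently the constraint $P(T)\succeq-M_T$ gives no control of the boundary term, and the statement as printed is false. Take $\mathcal{F}\equiv0$, $M\equiv0$, $M_T\neq0$, and $X_1^0\neq0$:
\begin{itemize}
  \item Then $EX(T)E^*=X_1^0$, so $V_a^T(t_0,X_1^0)=-\ip{M_T}{X_1^0}$.
  \item Here $\mathcal{C}_M=E^*\dot PE$, whose Schur complement is $\dot P$, so the D-GRE forces $P\equiv-M_T$. This would give $\ip{-P(t_0)}{X_1^0}=+\ip{M_T}{X_1^0}$, the wrong sign.
  \item Every $P\equiv cI$ with $cI\succeq-M_T$ is feasible in~\eqref{eq:CT:diss:LMI}, since $\mathcal{C}_M=0$. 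Its objective $-c\,\tr(X_1^0)$ tends to $-\infty$, so the infimum is $-\infty$.
\end{itemize}

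\textbf{The fix.} The correct terminal condition is $P(T)=M_T$: at zero remaining horizon the available storage is $-\ip{M_T}{X_1}$, which must equal $\ip{-P(T)}{X_1}$. The correct dual constraint is $P(T)\preceq M_T$. This is exactly the terminal-compatibility condition $V(t_0+T,\cdot)+\sigma_T\ge0$ that the paper itself isolates in the proof of Proposition~\ref{prop:Vsandwich}. Equivalently, it is the optimal-control dual (Theorem~\ref{th:IMP:LQ:LMI}) with $Z\leftarrow M$, $Z_T\leftarrow M_T$, negated.

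With these corrections your argument goes through verbatim. For any feasible $P$, both $\ip{P(T)-M_T}{EX(T)E^*}$ and $-\int\ip{\mathcal{C}_M}{X}\dt$ are $\le0$, which gives the upper bound. The D-GRE solution with $P(T)=M_T$ attains it using a gain in $\mathcal{K}_c^M$. Also state explicitly that attainment requires the D-GRE to have a solution; the corollary inherits this hypothesis from Theorem~\ref{th:IMP:diss:LMI}.
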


\begin{corollary}[Discrete-time dissipativity: finite horizon]
  \label{cor:DT:diss:fh}
  Apply Corollary~\ref{cor:CT:diss:fh} with $\mathcal{F}=0$,
  $M=0$ on flow, $t_k=k$, $\mathcal{J}=\mathcal{D}$, $M_k=M(k)$.
  The I-D-GRE reduces to the discrete-time D-GRE at each step:
  $\mathcal{D}_M(k,P^+,P(t))\succeq0$,
  $\mathcal{D}_M\,\schur{/}\,\allowbreak(\mathcal{D}_M)_3=0$,
  where $\mathcal{D}_M(k,P^+,P(t))
  :=\mathcal{D}^*(k,P^+)-E^* PE+M(k)$.
  The dual DLMI is
  \begin{equation}\label{eq:DT:diss:LMI}
    V_{a,N}(k_0,X_1^0)
    =\inf_{P}\;\ip{-P(k_0)}{X_1^0}
    \quad\text{s.t.}\quad
    \mathcal{D}_M(k,P^+,P(t))\succeq0, P(k_0+N)\succeq-M_N.
  \end{equation}
\end{corollary}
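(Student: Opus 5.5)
The plan is to obtain this corollary by direct specialization of the finite-horizon impulsive dissipativity results (Theorems~\ref{th:IMP:diss:suff}, \ref{th:IMP:diss:nec}, Corollary~\ref{cor:IMP:diss:equiv} and Theorem~\ref{th:IMP:diss:LMI}) to the discrete-time system of Definition~\ref{def:DT:sys}. This parallels the way Corollary~\ref{cor:DT:LQ:fh} was derived from the optimal-control results. I take $\mathcal{F}\equiv0$ and $M\equiv0$ on every flow interval $(k,k+1)$, jump times $t_k=k$, jump operator $\mathcal{J}(k,\cdot)=\mathcal{D}(k,\cdot)$, and jump supply $M_k=M(k)$. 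The horizon $T=N$ then contains exactly the jumps $k_0+1,\dots,k_0+N$, with terminal supply $M_N$.

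First I would handle the flow. With $\mathcal{F}=0$ and $M=0$, the composite operator reduces to $\mathcal{C}_M(t,P(t))=E^*\dot P(t)E$. I choose $P$ constant on each open interval, so $\mathcal{C}_M\equiv0$. Then $\mathcal{C}_M\succeq0$ and the Schur-complement equality (with $(\mathcal{C}_M)_2=0$ and $(\mathcal{C}_M)_3=0$) hold trivially, and the flow integral in the key identity~\eqref{eq:IMP:diss:keyid} vanishes.

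Next, writing $P(k):=P(t_k^+)$, I identify $P(t_{k+1}^-)=P(k)$ and $P(t_{k+1}^+)=P(k+1)=:P^+$. The jump operator~\eqref{eq:IMP:LM} becomes exactly $\mathcal{D}^*(k,P^+)-E^*P(k)E+M(k)$. The jump part of the I-D-GRE therefore becomes the stated discrete-time D-GRE, with terminal condition $P(k_0+N)=-M_N$. The key identity collapses to the telescoped sum of $\ip{\mathcal{D}_M}{X(k)}$, which is the discrete analogue of the identity in Corollary~\ref{cor:DT:LQ:fh}.

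For the dual DLMI~\eqref{eq:DT:diss:LMI}, I would rerun Steps~1--2 of the proof of Theorem~\ref{th:IMP:diss:LMI}. Step~1 uses the terminal inequality $P(k_0+N)\succeq-M_N$ and the positivity of each $\mathcal{D}_M$ to give the upper bound $V_{a,N}\le\ip{-P(k_0)}{X_1^0}$ for every feasible $P$. Step~2 uses the DT-GRE solution, which by the specialized Theorem~\ref{th:IMP:diss:suff} attains $V_{a,N}$.

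The main obstacle I expect is a bookkeeping subtlety rather than a new argument. One point is to check that restricting to piecewise-constant $P$ loses nothing in the dual: any feasible pw-$C^1$ $P$ already has $E^*\dot PE\succeq0$, so its value at $k_0$ is still bounded by the discrete argument applied to its sampled values. The other is to align the jump indexing ($P(t_k^-)$ versus $P(k)$) so that the pre-jump state $X(k)$ is paired with the correct multiplier.
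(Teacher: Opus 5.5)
Your proposal is correct and follows the route the paper intends. The corollary is stated without proof, by analogy with Corollary~\ref{cor:DT:LQ:fh}. Taking $P$ constant on each flow interval makes $\mathcal{C}_M=0$, so the flow conditions are vacuous; the flow GRE in fact forces this, since $\mathcal{C}_M=E^*\dot P E$ has Schur complement $\dot P$. Reindexing the jump operator by the pre-jump step then gives the DT-D-GRE, and Steps~1--2 of Theorem~\ref{th:IMP:diss:LMI} give the dual.

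Specializing the impulsive results directly, rather than literally ``applying Corollary~\ref{cor:CT:diss:fh}'' (which has no jumps), is the right reading. Your remark about sampling pw-$C^1$ multipliers is correct but unnecessary, because the discrete-time dual can be proved directly over sequences through the discrete-time key identity.
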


\subsection{Infinite-horizon}
\label{sec:diss:inf}

The infinite-horizon available storage $V_a(t_0,X_1^0)=
\sup_{T\ge0}V_a^T(t_0,X_1^0)$ is the tightest lower bound on the
energy that the system can extract from any initial state.
This subsection mirrors the structure of Section~\ref{sec:diss:fh}
for the infinite horizon: we first give a sufficient condition
(Theorem~\ref{th:IMP:diss:inf:suff}), then an existence result under
detectability assumptions (Theorem~\ref{th:IMP:diss:inf}), and finally
the equivalence and dual DLMI (Theorem~\ref{th:IMP:diss:inf:LMI}).

\subsubsection{Setting and assumptions (infinite horizon)}

Standing assumptions for the infinite-horizon dissipativity problem:

\medskip\noindent\textbf{(D1) Controllability.}
$V_r(t_0,X_1^0)<\infty$ for all $X_1^0\succeq 0$.

\medskip\noindent\textbf{(D2) Uniform boundedness.}
$M$, $M_k$, $\mathcal{F}$, $\mathcal{J}$ are uniformly bounded;
$\pinv{(\mathcal{C}_M)_3}$ and $\pinv{(\mathcal{D}_M)_3}$ are
uniformly bounded on compact sets of $P$.

\medskip\noindent\textbf{(D3) Supply lower bound.}
There exists $\delta>0$ with
$\ip{M(t)}{X}\ge-\delta\norm{EXE^*}$ for all $X\succeq 0$, uniformly in $t$.

\medskip\noindent\textbf{(D4) Detectability.}
Along any admissible trajectory with bounded supply extraction,
$\norm{EX(t)E^*}$ is bounded and
$T^{-1}\ip{P_\infty(t_0+T)}{EX(t_0+T)E^*}\to0$ as $T\to\infty$.  This is
the dissipativity analogue of the stabilizability/detectability
hypothesis~(H1) of Section~\ref{sec:OC:inf}.

Observe that (D4) is \emph{not} implied by
the supply lower bound (D3), and is needed to make the infinite-horizon
boundary term asymptotically negligible.


\subsubsection{Sufficient condition (infinite horizon)}

Any bounded solution of the infinite-horizon I-D-GRE is a valid
storage function achieving the available storage.

\begin{theorem}[Sufficient condition for infinite-horizon dissipativity]
  \label{th:IMP:diss:inf:suff}
  Assume \textup{(D1)--(D3)}.  Suppose there exists a bounded
  piecewise-$C^1$ solution $P_\infty$ of the infinite-horizon I-D-GRE
  \textup{(Definition~\ref{def:IMP:dGRE})}.  Then:
  \begin{enumerate}[\upshape(a)]
    \item\label{inf:suff:diss:val}
      The system is dissipative on the infinite horizon:
      $V_a(t_0,X_1^0)<\infty$ on $\Snpsd$, and
      $V_a(t_0,X_1^0)=\ip{-P_\infty(t_0)}{X_1^0}$, attained by the
      trajectory with gains from $\mathcal{K}_c^M$ and $\mathcal{K}_d^M$.
    \item\label{inf:suff:diss:stor}
      $V(t,X)=\ip{-P_\infty(t)}{EXE^*}$ satisfies the dissipation
      inequalities on flow and at each jump.
    \item\label{inf:suff:diss:largest}
      $P_\infty$ is the largest solution: for any other bounded
      storage matrix $P$, $P_\infty(t)\succeq P(t)$ uniformly in $t$,
      equivalently $-P_\infty(t)\preceq -P(t)$.
    \item\label{inf:suff:diss:uniq}
      $P_\infty$ is the unique bounded solution of the
      infinite-horizon I-D-GRE.
  \end{enumerate}
\end{theorem}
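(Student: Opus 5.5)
The plan mirrors Theorem~\ref{th:IMP:LQ:inf:suff} and Theorem~\ref{th:IMP:diss:suff}, with the dissipativity key identity~\eqref{eq:IMP:diss:keyid} applied on $[t_0,t_0+T]$ and $P=P_\infty$. Since no terminal supply is imposed in the infinite-horizon setting, I use the unreduced form~\eqref{eq:IMP:keyid} with $Z\leftarrow M$, $Z_k\leftarrow M_k$. For any admissible $X$ this gives
\[
  -\int_{t_0}^{t_0+T}\ip{M}{X}\dt-\sum_{k}\ip{M_k}{EX(t_k^-)E^*}
  =\ip{-P_\infty(t_0)}{X_1^0}+\ip{P_\infty(t_0+T)}{EX(t_0+T)E^*}
  -I_f^T-I_j^T,
\]
where $I_f^T$ and $I_j^T$ are the flow integral of $\ip{\mathcal{C}_M}{X}$ and the jump sum of $\ip{\mathcal{D}_M}{X(t_k^-)}$. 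The I-D-GRE and Lemma~\ref{lemma:GREequiv} give $\mathcal{C}_M\succeq0$ and $\mathcal{D}_M\succeq0$. Lemma~\ref{lemma:decomp} then gives $I_f^T,I_j^T\ge0$, with equality along the closed loop generated by gains in $\mathcal{K}_c^M$ and $\mathcal{K}_d^M$, exactly as in the attainment step of Theorem~\ref{th:IMP:LQ:suff}.

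For part~\eqref{inf:suff:diss:val}, the upper bound $V_a\le\ip{-P_\infty(t_0)}{X_1^0}$ needs control of the boundary term $\ip{P_\infty(t_0+T)}{EX(t_0+T)E^*}$. Boundedness of $P_\infty$ gives $|\ip{P_\infty(T)}{EX(T)E^*}|\le\|P_\infty\|_\infty\norm{EX(T)E^*}$. The main obstacle is showing that this term is nonpositive, or vanishes, along the trajectories that matter.

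\begin{itemize}
\item First I would restrict to trajectories with bounded extracted supply, since the others are irrelevant to the supremum once $V_a<\infty$ is established. I would use (D3) to bound $\norm{EX(T)E^*}$ along them.
\item Second I would invoke the detectability-type property (D4). It is not listed among the hypotheses, so I would either argue that it follows from (D1)--(D3) plus boundedness of $P_\infty$, or use the weaker route of taking $\liminf_T$ along a sequence $T_j$ on which the boundary term tends to a nonpositive limit.
\end{itemize}

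Along the optimal closed loop, equality in the identity gives $V_a^T(X^\star)=\ip{-P_\infty(t_0)}{X_1^0}+\ip{P_\infty(T)}{EX^\star(T)E^*}$. Its limit is handled with the same boundary-term argument, yielding attainment and finiteness. Finiteness is also consistent with $V_a\le V_r<\infty$ from (D1) and Proposition~\ref{prop:Vsandwich}.

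Part~\eqref{inf:suff:diss:stor} is verbatim Part~(c) of Theorem~\ref{th:IMP:diss:suff}. That argument is local on flow subintervals and at single jumps, so it needs no horizon. Part~\eqref{inf:suff:diss:largest} follows the sandwich step of Theorem~\ref{th:IMP:diss:LMI}. Any bounded storage matrix $P$ satisfies $\mathcal{C}_M(t,P)\succeq0$ and $\mathcal{D}_M\succeq0$, so the same identity gives $V_a(t,\cdot)\le\ip{-P(t)}{\cdot}$ at every initial time $t$, after the same boundary-term limit. Combining this with $V_a=\ip{-P_\infty(t)}{\cdot}$ and testing with $X_1=vv^*$ yields $P_\infty(t)\succeq P(t)$; the bound is uniform in $t$ because $t$ is arbitrary.

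Part~\eqref{inf:suff:diss:uniq} follows as in Theorem~\ref{th:IMP:LQ:inf:suff}. Another bounded solution $\tilde P$ would also represent $V_a(t,\cdot)$ by part~\eqref{inf:suff:diss:val}, and then $vv^*$ testing gives $\tilde P\equiv P_\infty$.
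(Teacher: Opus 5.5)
Your outline follows the paper's own proof: key identity with $P_\infty$, Lemma~\ref{lemma:decomp} for $I_f^T,I_j^T\ge0$ with equality under gains in $\mathcal{K}_c^M,\mathcal{K}_d^M$, the local argument for (b), and comparison plus $vv^*$ testing for (c)--(d). You also correctly isolate the step that carries all the weight, the boundary term $\ip{P_\infty(t_0+T)}{EX(t_0+T)E^*}$. The gap is that this step cannot be closed from (D1)--(D3) and boundedness, because the statement is false under those hypotheses.

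Take $n=m=1$, no jumps, $\mathcal{F}(X)=2aX_1+b(X_2+X_2^*)$ (the second moment of $\dot x=ax+bu$) and $M=\mathrm{diag}(-1,\gamma^2)$ with $\gamma|a|>|b|>0$. Then (D1)--(D3) hold (reachability, constant data, $\ip{M}{X}\ge -X_1$), and
\[
\mathcal{C}_M(P)=\begin{bmatrix}\dot P+2aP-1 & bP\\ bP & \gamma^2\end{bmatrix},
\qquad
P_\pm=\frac{\gamma^2}{b^2}\Bigl(a\pm\sqrt{a^2-b^2/\gamma^2}\Bigr).
\]
Both constants $P_\pm$ are bounded solutions of the infinite-horizon I-D-GRE. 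Their extracting closed loops are $\dot X_1=\mp2\sqrt{a^2-b^2/\gamma^2}\,X_1$.
\begin{itemize}
\item If $a>0$, then $P_\pm>0$ while $V_a=+\infty$ (take $X_2=X_3=0$). So (a) fails for both solutions, and it even predicts a negative value.
\item If $a<0$, then $V_a=\ip{-P_+}{X_1^0}$. For $P_-$, statements (a), (c) (here $P_-<P_+$) and (d) all fail.
\end{itemize}

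Accordingly, none of your repairs can work.
\begin{itemize}
\item (D3) bounds the extraction rate by $\delta\norm{EXE^*}$; it does not bound the state by the extraction.
\item Discarding trajectories with unbounded extraction presupposes $V_a<\infty$, which is exactly what fails for $a>0$.
\item The bound $V_a\le V_r$ from Proposition~\ref{prop:Vsandwich} presupposes dissipativity. For $a>0$, $V_r$ is finite but $V_a$ is not.
\item A subsequence $T_j$ gives no freedom in the upper bound. That bound must hold for each finite-horizon trajectory, with the boundary term evaluated at its own $T$.
\end{itemize}
The paper's proof has the same hole. It invokes (D4), which is not among the hypotheses. Moreover, (D4) controls only $T^{-1}$ times the boundary term, so it would not remove that term from the unnormalized identity anyway.

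What the argument actually needs are two extra hypotheses.
\begin{enumerate}
\item $P_\infty(t)\preceq0$, i.e., a nonnegative storage. Then $\ip{P_\infty(t_0+T)}{EX(t_0+T)E^*}\le0$ for every admissible $X$ and every $T$, so $V_a^T\le\ip{-P_\infty(t_0)}{X_1^0}$ follows at once, with no detectability.
\item The extracting closed loop is stabilizing, $\norm{EX^\star(t)E^*}\to0$. Together with boundedness of $P_\infty$, this gives attainment.
\end{enumerate}
With the second hypothesis you should also redo (c). Do not bound $V_a$ by $\ip{-P(t)}{\cdot}$ over all trajectories, since that again needs a sign on $P$. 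Instead, apply the key identity with an arbitrary bounded storage matrix $P$ along the single trajectory $X^\star$. Its boundary term vanishes, which gives $\ip{-P_\infty(t_0)}{X_1^0}\le\ip{-P(t_0)}{X_1^0}$ with no sign condition on $P$. Uniqueness (d) then holds only within the class of stabilizing solutions. As $P_-$ shows, that is the most one can claim.
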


\begin{proof}
\noindent\textbf{Value and dissipativity~\eqref{inf:suff:diss:val}.}
For any admissible $X$ with $EX(t_0)E^*=X_1^0$, the key
identity~\eqref{eq:IMP:diss:keyid} applied with $P_\infty$ on
$[t_0,t_0+T]$, using $\mathcal{C}_M\succeq 0$ and
$\mathcal{D}_M\succeq 0$, gives
\[
  -\!\int_{t_0}^{t_0+T}\!\ip{M(t)}{X(t)}\dt
   -\!\sum_{k:\,t_k\le t_0+T}\!\ip{M_k}{EX(t_k^-)E^*}
   \le\ip{-P_\infty(t_0)}{X_1^0}+\ip{P_\infty(t_0+T)}{EX(t_0+T)E^*}.
\]
Under the detectability assumption \textup{(D4)}, along any admissible trajectory with bounded
supply extraction, $\norm{EX(t)E^*}$ is bounded and $T^{-1}\ip{P_\infty(t_0+T)}{EX(t_0+T)E^*}\to0$, the boundary term is
asymptotically negligible. Taking the supremum over admissible $X$ and the limit $T\to\infty$,
\[
  V_a(t_0,X_1^0)\le\ip{-P_\infty(t_0)}{X_1^0}\le\alpha_2\ip{I}{X_1^0}
  <\infty.
\]
The system is therefore dissipative on the infinite horizon.

For attainment, the closed-loop trajectory $X^\star$ with gains
$K\in\mathcal{K}_c^M$, $K_k\in\mathcal{K}_d^M$ tightens the Schur
inequalities to equalities, so the key identity becomes
\[
  V_a^T(X^\star)=\ip{-P_\infty(t_0)}{X_1^0}
   -\ip{-P_\infty(t_0+T)}{EX^\star(t_0+T)E^*}.
\]
Under the detectability assumption (D4), the closed-loop trajectory
$X^\star$ satisfies $\ip{-P_\infty(t_0+T)}{EX^\star(t_0+T)E^*}\to 0$. Hence
$V_a^T(X^\star)\to\ip{-P_\infty(t_0)}{X_1^0}$ and
$V_a\ge\ip{-P_\infty(t_0)}{X_1^0}$.  Equality holds.

\noindent\textbf{Storage function~\eqref{inf:suff:diss:stor}.}
$\mathcal{C}_M\succeq 0$ on flow and $\mathcal{D}_M\succeq 0$ at
jumps give the dissipation inequalities for
$V=\ip{-P_\infty}{E\cdot E^*}$ directly on every interval.

\noindent\textbf{Largest~\eqref{inf:suff:diss:largest}.}
$V_a$ is the smallest storage function
(Proposition~\ref{prop:Vsandwich}\eqref{sand:d}): any other bounded
storage $V'=\ip{-P}{E\cdot E^*}$ satisfies $V_a\le V'$ on $\Snpsd$.
Substituting linear representations and applying
Proposition~\ref{prop:PQ}\eqref{st:neg},
$-P_\infty(t)\preceq -P(t)$.

\noindent\textbf{Uniqueness~\eqref{inf:suff:diss:uniq}.}
Any other bounded $\tilde P$ solving the I-D-GRE satisfies the
hypotheses with $\tilde P$ in place of $P_\infty$, so
by~\eqref{inf:suff:val},
$\ip{-\tilde P(t_0)}{X_1^0}=V_a(t_0,X_1^0)=\ip{-P_\infty(t_0)}{X_1^0}$.
Taking $X_1^0=vv^*$ for every $v\in\C^n$ gives
$P_\infty(t_0)=\tilde P(t_0)$, and the same at every $t$ yields
$P_\infty\equiv\tilde P$.
\end{proof}

\subsubsection{Necessary condition and existence (infinite horizon)}


\begin{theorem}[Necessary condition for infinite-horizon dissipativity]
  \label{th:IMP:diss:inf}
  Assume \textup{(D1)--(D3)}, and suppose the system is dissipative on
  the infinite horizon: $V_a(t_0,X_1^0)<\infty$ for all
  $X_1^0\succeq 0$.  Then there exists a unique bounded
  piecewise-$C^1$ solution $P_\infty$ of the infinite-horizon I-D-GRE,
  with
  \[
    0\preceq -P_\infty(t)\preceq\alpha_2 I\quad\text{uniformly in }t,
  \]
  for some $\alpha_2>0$.  It is the Loewner least upper bound of the
  finite-horizon family $\{-P_T\}_{T\ge 0}$:
  $-P_\infty(t)\succeq -P_T(t)$ for every $T\ge 0$ and
  $t\in[t_0,t_0+T]$, with $P_T$ the I-D-GRE solution on
  $[t_0,t_0+T]$ subject to $P_T(t_0+T)=0$.  The available storage is
  $V_a(t_0,X_1^0)=\ip{-P_\infty(t_0)}{X_1^0}$.
\end{theorem}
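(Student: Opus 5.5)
The proof will follow the monotone-limit template of Theorem~\ref{th:IMP:LQ:inf}, transposed to the dissipativity setting by $Z\leftarrow M$ and the sign flip $P\leftarrow -P$. First, for each $T>0$ I invoke Corollary~\ref{cor:IMP:diss:equiv}. Infinite-horizon dissipativity gives $V_a^T\le V_a<\infty$, so the system is finite-horizon dissipative with $M_T=0$. Hence there is a unique piecewise-$C^1$ solution $P_T$ of the I-D-GRE on $[t_0,t_0+T]$ with $P_T(t_0+T)=0$, and $V_a^T(t,X_1)=\ip{-P_T(t)}{X_1}$ for every initial time $t$ in the horizon. The same holds when the problem is restarted at any $t$, by dynamic programming.

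\textbf{Monotonicity.} Let $T'>T$ and take a trajectory that is optimal on $[t,t_0+T]$. I extend it beyond $t_0+T$ by a trajectory whose extracted supply on $(t_0+T,t_0+T']$ is nonnegative. The natural choice is the optimal continuation for the $[t_0+T,t_0+T']$ problem with zero terminal weight. Its value $\ip{-P_{T'}(t_0+T)}{\cdot}$ is $\ge 0$, because $V_a^{T'-T}\ge V_a^0=0$, as one may take the zero-length horizon or use $V_a$ as a supremum over horizons. This gives $V_a^{T'}\ge V_a^T$, i.e.\ $-P_{T'}(t)\succeq -P_T(t)$, and in particular $-P_T\succeq0$.

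\textbf{Bounds.} For the upper bound I use $V_a^T(t,\cdot)\le V_a(t,\cdot)\le V_r(t,\cdot)$, from Proposition~\ref{prop:Vsandwich}\eqref{sand:a}. Combined with (D1), linearity, and (D3)/(D2), which ensure a uniform bound in $t$, this yields $\ip{-P_T(t)}{vv^*}\le\alpha_2|v|^2$. Hence $0\preceq -P_T(t)\preceq\alpha_2 I$ uniformly in $T$ and $t$.

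\textbf{Convergence and limit equation.} The family $\{-P_T(t)\}$ is Loewner-nondecreasing and bounded, so it converges pointwise to some $-P_\infty(t)$, with $0\preceq -P_\infty\preceq\alpha_2 I$ and $-P_\infty\succeq -P_T$. To pass to the limit in the I-D-GRE I repeat the equicontinuity argument from the proof of Theorem~\ref{th:IMP:LQ:inf}. On each flow interval, the Riccati equality expresses $E^*\dot P_TE$ as a continuous function of $P_T$ and the data. By (D2), the boundedness of the pseudoinverses on the compact set $\{-\alpha_2 I\preceq P\preceq 0\}$ gives a uniform Lipschitz bound, and Arzel\`a--Ascoli then gives locally uniform convergence. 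The nonstrict conditions $\mathcal{C}_M\succeq0$ and $\mathcal{D}_M\succeq0$ pass to the limit after integration. The jump equalities pass to the limit by continuity at each of the finitely many jumps in a compact window.

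\textbf{Main obstacle.} The hardest step is passing the generalized Schur equality and the compatibility conditions to the limit, since the pseudoinverse is discontinuous when rank changes. I would first try the route of Lemma~\ref{lemma:GREequiv}: keep the limit as ``$\mathcal{C}_M\succeq0$ and Schur complement $=0$'' and write the Schur complement as $\min_K$ of a quadratic form. The limit then satisfies $\succeq$ by lower semicontinuity. For the reverse inequality, I use that the value $\ip{-P_\infty(t)}{X_1}$ equals $\lim V_a^T=V_a$, which is attained via limits of the optimal gains; these are bounded by (D2), so I can take subsequences. The identification $V_a(t_0,\cdot)=\ip{-P_\infty(t_0)}{\cdot}$ follows from $V_a=\sup_T V_a^T=\lim_T V_a^T$. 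Uniqueness among bounded solutions then follows from Theorem~\ref{th:IMP:diss:inf:suff}\eqref{inf:suff:diss:uniq}, and the least-upper-bound property is exactly the monotone-limit statement.
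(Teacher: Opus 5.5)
There is a genuine gap in your \textbf{Monotonicity} step, and everything after it depends on that step. This includes the pointwise convergence of $-P_T$, the bound $0\preceq -P_T$ that gives you a compact set for the Lipschitz estimate, the claim that the least upper bound is the monotone limit, and the identity $V_a=\sup_T V_a^T=\lim_T V_a^T$ in your last paragraph.

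You justify $\ip{-P_{T'}(t_0+T)}{\cdot}\ge0$ for the zero-terminal continuation by ``$V_a^{T'-T}\ge V_a^0=0$''. That inequality is itself monotonicity in the horizon, so the argument is circular. Neither justification you give works either. On a fixed window the continuation must run over the whole window, so there is no zero-length option. And $V_a=\sup_{s\ge0}V_a^s\ge0$ says nothing about an individual $V_a^s$.

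The claim is false under (D1)--(D3). Whenever $M\succeq0$ and $M_k\succeq0$, restricting a $T'$-trajectory to $[t_0,t_0+T]$ shows $V_a^{T'}\le V_a^T$. This is just $V_a^T=-J_T^\star$ with $Z=M$, so the monotonicity of Theorem~\ref{th:IMP:LQ:inf} runs the opposite way from the one you need. Concretely, take $n=m=1$, no jumps, flow $\mathcal{F}(t,X)=X_2+X_2^*$ (the second moment of $\dot x=u$), and $M\equiv I$. The I-D-GRE becomes $\dot P=P^2-1$, so $P_T(t)=\tanh(t_0+T-t)$ and $V_a^T(t_0,X_1^0)=-\tanh(T)\,X_1^0$. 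This is strictly decreasing and negative, which refutes both $-P_T\succeq-P_{T}$-monotonicity and $-P_T\succeq0$. The paper flags exactly this obstruction at the start of its proof and deliberately does not follow the template of Theorem~\ref{th:IMP:LQ:inf}.

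The paper never takes a limit of $P_T$. Its steps are:
\begin{itemize}
\item It shows directly that $V_a(t,\cdot)$ is linear, via positive homogeneity and additivity through a state-independent extraction feedback.
\item It defines $-P_\infty(t)$ as the Frobenius--Riesz representer of $V_a(t,\cdot)$.
\item It gets $-P_\infty\succeq0$ from the $T=0$ term of the supremum.
\item It reads off $-P_\infty\succeq-P_T$ and the least-upper-bound property directly from $V_a=\sup_TV_a^T$ and Proposition~\ref{prop:PQ}(b).
\item It derives the LMIs and Schur equalities by local Bellman arguments on $V_a$ itself.
\end{itemize}
So the pseudoinverse-limit problem you single out never arises.

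Be aware, though, that the sup/lim distinction you glossed over is substantive. In the scalar example $V_a\equiv0$, so the representer is $P_\infty\equiv0$, yet $\mathcal{C}_M(t,0)=I$ has Schur complement $1\neq0$. Meanwhile the bounded I-D-GRE solution your limit would produce is $P\equiv1$. It represents $\lim_TV_a^T=-X_1^0$, not $V_a$, and it violates $-P\succeq0$. Hence under (D1)--(D3) alone the stated conclusions cannot all hold. Some extra hypothesis, such as monotonicity of $T\mapsto V_a^T$, is needed. The Bellman-equality (tightening) part of the paper's Step~4 tacitly relies on a condition of this kind as well.
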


\begin{proof}
With $M$ indefinite, $V_a^T(t_0,X_1^0)$ is in general not monotone in
$T$: extending a $T$-trajectory to a $T'$-trajectory may add negative
supply on $(T,T']$, so the inequality $V_a^{T'}\ge V_a^T$ used in the
optimal-control case (Theorem~\ref{th:IMP:LQ:inf}) does not propagate.
We bypass monotone convergence and construct $P_\infty$ as the
Frobenius-Riesz representation of $V_a$.

\medskip
\noindent\textbf{Step 1: $V_a$ is finite, linear, and non-negative.}
The dissipativity assumption gives $V_a^T(t_0,X_1^0)\le
V_a(t_0,X_1^0)<\infty$ for every $T\ge 0$ and every
$X_1^0\succeq 0$.  Theorem~\ref{th:IMP:diss:nec} applied on each
$[t_0,t_0+T]$ yields a unique piecewise-$C^1$ $P_T$ with
$P_T(t_0+T)=0$ satisfying the finite-horizon I-D-GRE, with the linear
parametrisation $V_a^T(t_0,X_1^0)=\ip{-P_T(t_0)}{X_1^0}$.
Linearity of $V_a$ itself is established directly (not via
Proposition~\ref{prop:Vsandwich}, which presupposes the linear form):
$V_a$ is positively homogeneous as a supremum of homogeneous functions,
and by dynamic programming the available storage is attained by a
\emph{state-independent} linear feedback (the optimal extracting policy's
gain depends on the storage, not on $X_1^0$), so the optimal future
trajectory is linear in $X_1^0$ and $V_a$ is additive, exactly as in
the additivity step of Theorem~\ref{th:IMP:diss:nec}.  Hence
$V_a(t,X_1)=\ip{-P_\infty(t)}{X_1}$ for a unique $-P_\infty(t)\in\Sn$,
obtained pointwise via Frobenius-Riesz, and $-P_\infty(t)$ coincides with
the Loewner least upper bound of $\{-P_T(t)\}$ constructed in Step~2.
Including $T=0$ in the supremum $V_a=\sup_T V_a^T$ gives
$V_a(t,X_1)\ge 0$ for $X_1\succeq 0$.

\medskip
\noindent\textbf{Step 2: $-P_\infty$ is the Loewner sup of $\{-P_T\}$, not a monotone limit.}
For every $T\ge 0$ and $X_1^0\succeq 0$,
\[
  \ip{-P_\infty(t_0)}{X_1^0}=V_a(t_0,X_1^0)\ge V_a^T(t_0,X_1^0)
   =\ip{-P_T(t_0)}{X_1^0},
\]
so by Proposition~\ref{prop:PQ}\eqref{st:neg},
$-P_\infty(t_0)\succeq -P_T(t_0)$.  Pointwise in $t$,
$-P_\infty(t)\succeq -P_T(t)$ for every $T$ and $t\in[t_0,t_0+T]$.
By $V_a=\sup_T V_a^T$, any linear functional $\ip{Q}{\cdot}$
dominating all $V_a^T$ also dominates $V_a$, so
$Q\succeq -P_\infty$; hence $-P_\infty$ is the Loewner least upper
bound.

The family $\{-P_T(t)\}$ itself need not be monotone in $T$ when $M$
is indefinite: a longer horizon may add a negative supply contribution
on $(T,T']$ that decreases $V_a^{T'}$ relative to $V_a^T$.  This is
in contrast to the optimal-control case
(Theorem~\ref{th:IMP:LQ:inf}), where $Z\succeq 0$ forces the analogous
sequence to be monotone.

\medskip
\noindent\textbf{Step 3: pointwise bounds on $-P_\infty$.}
Lower bound: $V_a\ge 0$ from Step~1 gives
$-P_\infty(t)\succeq 0$ by
Proposition~\ref{prop:PQ}\eqref{st:neg}.  Upper bound: by (D2),
$V_a(t,X_1)\le\alpha_2\ip{I}{X_1}$ on $\Snpsd$ for some
$\alpha_2>0$, hence $-P_\infty(t)\preceq\alpha_2 I$.  Combined,
$0\preceq -P_\infty(t)\preceq\alpha_2 I$ uniformly in $t$.

\medskip
\noindent\textbf{Step 4: $P_\infty$ satisfies the infinite-horizon I-D-GRE.}
The arguments of Theorem~\ref{th:IMP:LQ:nec}, parts
\eqref{nec:DLMI},\eqref{nec:DLI},\eqref{nec:GRE}, transfer with
$V_a$ in place of $J_T^\star$, $M, M_k$ in place of $Z, Z_k$, and the
supremum direction replacing the infimum.  Each argument is local and
independent of horizon length, so it applies to $V_a$ verbatim.

\smallskip
\noindent\emph{Flow LMI: $\mathcal{C}_M(t,P_\infty(t))\succeq 0$.}
Fix $t$ in the interior of an inter-jump interval $(t_{k-1},t_k)$.
For any $[v_1^*\;v_2^*]^*\in\C^{n+m}$, the constant perturbation
$X_1\equiv v_1v_1^*$, $X_2\equiv v_2v_1^*$, $X_3\equiv v_2v_2^*$ on
$[t,t+h]\subset(t_{k-1},t_k)$, continued admissibly on
$[t+h,\infty)$, plus Bellman optimality for $V_a$ gives
\[
  V_a(t,EX(t)E^*)\ge
  -\!\int_t^{t+h}\!\ip{M(s)}{X(s)}\dt+V_a(t+h,EX(t+h)E^*).
\]
Substituting $V_a(\cdot,\cdot)=\ip{-P_\infty(\cdot)}{\cdot}$,
rearranging, and letting $h\to 0$ via \eqref{eq:IMP:LM}:
\[
  0\le[v_1^*\;v_2^*]\,\mathcal{C}_M(t,P_\infty(t))\,[v_1^*\;v_2^*]^*.
\]
Arbitrariness of $v_1,v_2$ gives $\mathcal{C}_M(t,P_\infty(t))\succeq 0$.

\smallskip
\noindent\emph{Jump LMI: $\mathcal{D}_M(k,P_\infty(t_k^+),P_\infty(t_k^-))\succeq 0$.}
The jump Bellman inequality combined with
\eqref{eq:sys:IMP:jump} and the adjoint relation
$\ip{P}{\mathcal{J}(k,X)}=\ip{\mathcal{J}^*(k,P)}{X}$ gives
\[
  0\le\ip{M_k+\mathcal{J}^*(k,P_\infty(t_k^+))-E^*P_\infty(t_k^-)E}{X(t_k^-)}
   =\ip{\mathcal{D}_M}{X(t_k^-)},
\]
hence $\mathcal{D}_M\succeq 0$.

\smallskip
\noindent\emph{Schur equalities.}
The flow LMI provides
$\operatorname{Im}((\mathcal{C}_M)_2^*)\subseteq\operatorname{Im}((\mathcal{C}_M)_3)$
by Lemma~\ref{lemma:extSchur}, so $\mathcal{K}_c^M(t,P_\infty(t))$ is
non-empty.  Repeating the local DP with the perturbation $u_2=Ku_1$
for $K\in\mathcal{K}_c^M$ tightens the Bellman inequality to equality
as $h\to 0$, giving $\ip{\mathcal{C}_M(t,P_\infty(t))}{uu^*}=0$ for
$u=[u_1^*\;(Ku_1)^*]^*$, every $u_1\in\C^n$.  Lemma~\ref{lemma:decomp}
splits the inner product as
\[
  \ip{\mathcal{C}_M}{uu^*}
  =\ip{\mathcal{C}_M\schur{/}(\mathcal{C}_M)_3}{u_1u_1^*}
   +\underbrace{\ip{(\mathcal{C}_M)_3}{W_fW_f^*}}_{=\,0\text{ since }K\in\mathcal{K}_c^M},
\]
so $u_1^*[\mathcal{C}_M\schur{/}(\mathcal{C}_M)_3]u_1=0$ for every
$u_1$, giving $\mathcal{C}_M\schur{/}(\mathcal{C}_M)_3=0$.  The
compatibility condition
$(\mathcal{C}_M)_3\pinv{(\mathcal{C}_M)_3}(\mathcal{C}_M)_2^*
=(\mathcal{C}_M)_2^*$
follows from Lemma~\ref{lemma:GREequiv}.  The identical argument at
each jump $t_k$ with $K_k\in\mathcal{K}_d^M$ gives the corresponding
jump equality and compatibility.

\smallskip
\noindent\emph{Piecewise-$C^1$ regularity.}
On each inter-jump interval, the I-D-GRE
\textup{(Definition~\ref{def:IMP:dGRE})} expresses $\dot P_\infty$ as
a continuous function of $P_\infty$ and $t$ via the pseudoinverse
formula; uniform boundedness follows from (D2).  Hence $P_\infty$ is
piecewise-$C^1$.

\medskip
\noindent\textbf{Step 5: uniqueness.}
Suppose $\tilde P$ is another bounded piecewise-$C^1$ solution of the
infinite-horizon I-D-GRE.  By
Theorem~\ref{th:IMP:diss:inf:suff}\eqref{inf:suff:val} applied to
$\tilde P$, $V_a(t_0,X_1^0)=\ip{-\tilde P(t_0)}{X_1^0}$.  Combined
with $V_a(t_0,X_1^0)=\ip{-P_\infty(t_0)}{X_1^0}$ from Step~1,
$\ip{P_\infty(t_0)-\tilde P(t_0)}{X_1^0}=0$ for every
$X_1^0\succeq 0$.  Taking $X_1^0=vv^*$ for every $v\in\C^n$ gives
$P_\infty(t_0)=\tilde P(t_0)$; arbitrariness of $t_0$ yields
$P_\infty\equiv\tilde P$.
\end{proof}

\subsubsection{Equivalence and dual DLMI (infinite horizon)}

\begin{corollary}[Equivalence, infinite-horizon dissipativity]
  \label{cor:IMP:diss:inf:equiv}
  Assume \textup{(D1)--(D3)}.  The following are equivalent:
  \begin{enumerate}[\upshape(i)]
    \item The system is dissipative on the infinite horizon, i.e.,
      $V_a(t_0,X_1^0)<\infty$ for all $X_1^0\succeq 0$.
    \item The infinite-horizon I-D-GRE admits a bounded
      piecewise-$C^1$ solution $P_\infty$.
  \end{enumerate}
  Under either condition, $P_\infty$ is unique,
  $0\preceq -P_\infty(t)\preceq\alpha_2 I$ uniformly, and
  $V_a(t_0,X_1^0)=\ip{-P_\infty(t_0)}{X_1^0}$.
\end{corollary}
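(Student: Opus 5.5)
The plan is to obtain the corollary as a direct assembly of Theorems~\ref{th:IMP:diss:inf:suff} and~\ref{th:IMP:diss:inf}, in the same way as Corollary~\ref{cor:IMP:LQ:inf:equiv}. The two implications are treated separately, and the uniqueness, bound and value statements are then collected from whichever theorem supplies them.

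\emph{(i)$\Rightarrow$(ii).} Assume (D1)--(D3) and $V_a(t_0,X_1^0)<\infty$ for every $X_1^0\succeq0$. These are exactly the hypotheses of Theorem~\ref{th:IMP:diss:inf}. That theorem gives a unique bounded piecewise-$C^1$ solution $P_\infty$ of the infinite-horizon I-D-GRE, together with the sandwich $0\preceq-P_\infty(t)\preceq\alpha_2I$ and the representation $V_a(t_0,X_1^0)=\ip{-P_\infty(t_0)}{X_1^0}$. This direction needs no further work.

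\emph{(ii)$\Rightarrow$(i).} Let $P_\infty$ be a bounded piecewise-$C^1$ solution. Apply Theorem~\ref{th:IMP:diss:inf:suff}\eqref{inf:suff:diss:val}, which gives $V_a(t_0,X_1^0)=\ip{-P_\infty(t_0)}{X_1^0}$. Since $P_\infty$ is bounded, this value is at most $\sup_t\|P_\infty(t)\|\,\norm{X_1^0}<\infty$, so the system is dissipative. Uniqueness comes from Theorem~\ref{th:IMP:diss:inf:suff}\eqref{inf:suff:diss:uniq}. Having established (i), we can invoke Theorem~\ref{th:IMP:diss:inf} again to get the bounds $0\preceq-P_\infty\preceq\alpha_2I$. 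Its uniqueness clause, together with the representation of $V_a$, shows that the solution it constructs is the given $P_\infty$, so the bounds hold for the given $P_\infty$.

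\emph{Main obstacle.} The only delicate point is the detectability hypothesis. The proof of Theorem~\ref{th:IMP:diss:inf:suff} uses (D4) to kill the boundary term $\ip{P_\infty(t_0+T)}{EX(t_0+T)E^*}$, but the corollary lists only (D1)--(D3). I would handle this in one of two ways. The first is to note that (D4) is a standing assumption of this subsection and is implicitly part of the setting. The second, if one wants to avoid it, is to bound the boundary term using the bounded extraction along trajectories realizing the supremum, via (D3), in place of (D4). I would choose the first: state explicitly that the sufficiency direction relies on (D4) exactly as Theorem~\ref{th:IMP:diss:inf:suff} does, and that the necessity direction uses only (D1)--(D3).
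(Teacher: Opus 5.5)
Your proposal is the paper's proof: it cites Theorem~\ref{th:IMP:diss:inf:suff}\eqref{inf:suff:diss:val} for (ii)$\Rightarrow$(i) and Theorem~\ref{th:IMP:diss:inf} for (i)$\Rightarrow$(ii). Your extra step of re-invoking Theorem~\ref{th:IMP:diss:inf} and its uniqueness clause to transfer the bounds $0\preceq-P_\infty\preceq\alpha_2 I$ to the given solution fills in what the paper leaves implicit. You are also right that the sufficiency direction silently relies on \textup{(D4)}, which the proof of Theorem~\ref{th:IMP:diss:inf:suff} uses even though its statement lists only \textup{(D1)--(D3)}; making that dependence explicit, rather than trying to replace it with \textup{(D3)} (which controls only the supply, not the state), is the correct choice.
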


\begin{proof}
  $(\textup{ii})\Rightarrow(\textup{i})$:
  Theorem~\ref{th:IMP:diss:inf:suff}\eqref{inf:suff:val}.
  $(\textup{i})\Rightarrow(\textup{ii})$:
  Theorem~\ref{th:IMP:diss:inf}.
\end{proof}

\begin{theorem}[Dual DLMI, infinite-horizon dissipativity]
  \label{th:IMP:diss:inf:LMI}
  Under \textup{(D1)--(D3)},
  \begin{equation}\label{eq:diss:LMI:inf}
    V_a(t_0,X_1^0)
    =\inf_P\;\ip{-P(t_0)}{X_1^0}
    \quad\text{subject to}\quad
    \begin{gathered}
    \mathcal{C}_M(t,P(t))\succeq0,\\\quad
    \mathcal{D}_M(k,P(t_k^+),P(t_k^-))\succeq0,\\
    P\text{ bounded}
    \end{gathered}
  \end{equation}
  the infimum is attained at $P_\infty$.
\end{theorem}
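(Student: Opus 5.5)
The plan follows the two-step template of Theorems~\ref{th:IMP:LQ:inf:LMI} and~\ref{th:IMP:diss:LMI}, with the inequalities reversed because we now \emph{minimize} the storage. There are two steps. First, every bounded feasible $P$ gives an upper bound on $V_a$. Second, $P_\infty$ from Theorem~\ref{th:IMP:diss:inf} is feasible and attains that bound.

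\textbf{Step 1: upper bound.} Fix a bounded $P$ with $\mathcal{C}_M(t,P(t))\succeq0$ and $\mathcal{D}_M\succeq0$, and an admissible $X$ with $EX(t_0)E^*=X_1^0$. Apply the key identity~\eqref{eq:IMP:keyid} on $[t_0,t_0+T]$ with $Z\leftarrow M$, $Z_k\leftarrow M_k$ and no terminal supply. Lemma~\ref{lemma:decomp} makes the $\mathcal{C}_M$ integral and the $\mathcal{D}_M$ sum nonnegative. This gives
\[
-\int_{t_0}^{t_0+T}\ip{M}{X}\dt-\sum_{k}\ip{M_k}{EX(t_k^-)E^*}\le\ip{-P(t_0)}{X_1^0}+\ip{P(t_0+T)}{EX(t_0+T)E^*}.
\]
Taking the supremum over $X$ gives $V_a^T(t_0,X_1^0)\le\ip{-P(t_0)}{X_1^0}+\sup_X\ip{P(t_0+T)}{EX(t_0+T)E^*}$, and we then need to send $T\to\infty$.

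\textbf{Main obstacle: the boundary term.} Only boundedness of $P$ is assumed, not a sign, so $\ip{P(t_0+T)}{EX(t_0+T)E^*}$ need not be nonpositive. I would handle it as in the proof of Theorem~\ref{th:IMP:diss:inf:suff}. Restrict to trajectories with bounded supply extraction, since only these matter for the supremum defining $V_a$ because $V_a<\infty$ by (D1) and Proposition~\ref{prop:Vsandwich}\eqref{sand:e}. Along such trajectories, invoke the detectability hypothesis (D4), with $P$ in place of $P_\infty$ (which is legitimate since only $P$ bounded and $\norm{EXE^*}$ bounded/decaying are used), to make the boundary term vanish in the limit. If only the $T^{-1}$-averaged version of (D4) is available, I would instead pick a subsequence $T_j$ along which $\norm{EX(t_0+T_j)E^*}\to0$. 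Such a subsequence exists because (D3) and finite extraction force $\int\norm{EXE^*}$ to stay controlled. Then evaluate the inequality on $T_j$. Either route yields $V_a(t_0,X_1^0)\le\ip{-P(t_0)}{X_1^0}$ for every feasible $P$, hence $V_a\le\inf_P\ip{-P(t_0)}{X_1^0}$.

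\textbf{Step 2: attainment.} Under (D1)--(D3), $V_a<\infty$, so Theorem~\ref{th:IMP:diss:inf} provides the bounded solution $P_\infty$ with $0\preceq-P_\infty\preceq\alpha_2I$ and $V_a=\ip{-P_\infty(t_0)}{\cdot}$. By Lemma~\ref{lemma:GREequiv}, the I-D-GRE implies $\mathcal{C}_M(t,P_\infty)\succeq0$ and $\mathcal{D}_M\succeq0$, so $P_\infty$ is feasible. Hence $\inf_P\ip{-P(t_0)}{X_1^0}\le\ip{-P_\infty(t_0)}{X_1^0}=V_a(t_0,X_1^0)$. Combined with Step~1 this gives equality, with the infimum attained at $P_\infty$. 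As in Theorem~\ref{th:IMP:diss:LMI}, a corollary is that $P_\infty\succeq P$ for every feasible $P$, obtained by testing Step~1 with $X_1^0=vv^*$.
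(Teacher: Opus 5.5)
\textbf{Step 2 is fine given Theorem~\ref{th:IMP:diss:inf}; the gap is in Step 1.} Your two-step structure matches the paper's. The paper gets Step~1 in one line from the sandwich property of Theorem~\ref{th:IMP:diss:inf:suff}(c), whereas you argue directly from the key identity. The failure is exactly at the boundary term you flag, and neither of your fixes closes it.

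First, (D4) is not a hypothesis of the theorem. Your fallback subsequence argument also does not work. (D3) bounds the supply from \emph{below}, so bounded extraction $-\int\ip{M}{X}\dt\le C$ gives no control on $\int\norm{EXE^*}\dt$; that would need a coercivity condition like (H2). For instance, $\dot x=u$ with $M=0$ satisfies (D1)--(D3), yet the zero-extraction trajectory $X\equiv\mathrm{diag}(X_1^0,0)$ keeps $\norm{EXE^*}$ constant.

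Second, and more fundamentally, $V_a=\sup_{T\ge0}V_a^T$. You therefore need $V_a^T\le\ip{-P(t_0)}{X_1^0}$ at \emph{every} finite $T$. Letting $T\to\infty$ along infinite trajectories only bounds $\sup_X\limsup_T$ of the extracted supply. Since $V_a^T$ need not be nondecreasing in $T$, the supremum can be reached at a finite horizon. Already $T=0$ gives $V_a\ge0$, so any valid bound forces $P(t_0)\preceq0$, which the constraints in \eqref{eq:diss:LMI:inf} do not impose.

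\textbf{Step 1 is false as posed, so no treatment of the boundary term can rescue it.} Take $n=m=1$, no jumps, $\mathcal{F}(t,X)=X_2+X_2^*$ (the moment dynamics of $\dot x=u$), and $M=\mathrm{diag}(q,r)$ with $q,r>0$.
\begin{itemize}
\item (D1)--(D3) hold, and $\ip{M}{X}\ge0$ gives $V_a^T\le0=V_a^0$, so $V_a=0$.
\item Yet $P\equiv\sqrt{qr}$ is bounded and feasible, since
\[
  \mathcal{C}_M(t,P)=\begin{bmatrix}q&\sqrt{qr}\\ \sqrt{qr}&r\end{bmatrix}\succeq0,
\]
and its objective is $\ip{-P(t_0)}{X_1^0}=-\sqrt{qr}\,X_1^0<V_a$.
\item Your limiting argument would at best deliver $\lim_T V_a^T=-\sqrt{qr}\,X_1^0$, not $\sup_T V_a^T$.
\end{itemize}

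\textbf{The missing ingredient is Willems' sign condition on the storage.} Add $P\preceq0$ to the feasible set, i.e.\ require $V=\ip{-P}{EXE^*}\ge0$. Then $\ip{P(t_0+T)}{EX(t_0+T)E^*}\le0$ for every $T$. Your displayed inequality then gives $V_a^T\le\ip{-P(t_0)}{X_1^0}$ for each $T$, with no detectability or limiting argument. $P_\infty$ remains feasible because Theorem~\ref{th:IMP:diss:inf} provides $0\preceq-P_\infty$.

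The paper's one-line proof has the same hole. The sandwich it invokes rests on Proposition~\ref{prop:Vsandwich}, whose own proof notes that the step $V(t_0+T,\cdot)\ge0$ is unavailable for sign-indefinite $P$.
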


\begin{proof}
Follows from Theorem~\ref{th:IMP:diss:inf:suff}(c): any bounded
feasible $P$ satisfies $P_\infty\preceq P$ (sandwich), so
$\ip{-P(t_0)}{X_1^0}\ge\ip{-P_\infty(t_0)}{X_1^0}=V_a$ for all
feasible $P$, hence $P_\infty$ attains the infimum.
\end{proof}

\subsubsection{Dwell-time conditions}
\label{sec:diss:dwell}

The dwell-time framework of Section~\ref{sec:OC:dwell} applies to
dissipativity with the operators $\mathcal{C}_M$, $\mathcal{D}_M$ in
place of $\mathcal{C}_Z$, $\mathcal{D}_Z$, the storage matrix $-P$
playing the role of the cost matrix $P$, and the supremum direction
replacing the infimum.  The structural assumptions on the system
operators $\mathcal{F},\mathcal{J},M,M_k$ in the three scenarios
\textup{(}periodic, MDT, RDT\textup{)} are inherited verbatim from
Section~\ref{sec:OC:dwell}, with $M, M_k$ in place of $Z, Z_k$.  The
one structural difference is the sign of the Geromel--Colaneri
condition: for optimal control the condition
$\mathcal{C}_Z^0\preceq 0$ ensures cost accumulation on the tail
\textup{(}an upper bound on cost\textup{)}, whereas for dissipativity
the condition $\mathcal{C}_M^0\succeq 0$ ensures that the storage
function remains valid on the tail \textup{(}the dissipation
inequality continues to hold\textup{)}.

\paragraph{Periodic impulses.}
Adopt the periodicity hypothesis of Theorem~\ref{th:OC:periodic}:
$\Delta_k\equiv T_p$, $\mathcal{F}(\cdot,X)$ and $M(\cdot)$ are
$T_p$-periodic in $t$, and $\mathcal{J}(k,\cdot)$, $M_k$ are
independent of $k$.

\begin{theorem}[Causal necessary and sufficient condition: periodic dissipativity]
  \label{th:diss:periodic}
  Under \textup{(D1)--(D3)} and the periodicity hypothesis above, the
  following are equivalent:
  \begin{enumerate}[\upshape(i)]
    \item The system is dissipative on the infinite horizon, i.e.,
      $V_a(t_0,X_1^0)<\infty$ for all $X_1^0\succeq 0$.
    \item The periodic D-GRE \textup{(}system~\eqref{eq:periodicGRE}
      with $\mathcal{C}_M$, $\mathcal{D}_M$\textup{)} has a
      $T_p$-periodic piecewise-$C^1$ solution
      $P_\infty:[0,T_p]\to\Sn$.
  \end{enumerate}
  Under either condition, $P_\infty$ is unique with
  $0\preceq -P_\infty(\tau)\preceq\alpha_2 I$ uniformly in $\tau$,
  the causal storage function
  $V(\tau,X):=\ip{-P_\infty(\tau)}{EXE^*}$ achieves the
  infinite-horizon available storage
  \[
    V_a(t_0,X_1^0)=\ip{-P_\infty(0^+)}{X_1^0},
  \]
  attained by the causal policy
  $K(\tau)\in\mathcal{K}_c^M(\tau,P_\infty(\tau))$ on flow and
  $K_k\in\mathcal{K}_d^M(P_\infty(0^+),P_\infty(T_p^-))$ at each jump.
\end{theorem}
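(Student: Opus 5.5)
The plan mirrors the proof of Theorem~\ref{th:OC:periodic}, but the existence step is taken from the infinite-horizon dissipativity results (Theorems~\ref{th:IMP:diss:inf:suff} and~\ref{th:IMP:diss:inf}). We do not use a monotone iteration, since with indefinite $M$ the finite-horizon family $\{-P_T\}$ need not be monotone, as noted in the proof of Theorem~\ref{th:IMP:diss:inf}.

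\textbf{(i)$\Rightarrow$(ii).} Assuming dissipativity, Theorem~\ref{th:IMP:diss:inf} gives a unique bounded piecewise-$C^1$ solution $P_\infty^{ih}$ of the infinite-horizon I-D-GRE. It satisfies $0\preceq-P_\infty^{ih}\preceq\alpha_2 I$ and $V_a(t_0,\cdot)=\ip{-P_\infty^{ih}(t_0)}{\cdot}$.
\begin{itemize}
\item Define the shift $\hat P(t):=P_\infty^{ih}(t+T_p)$. Because $\mathcal{F}$ and $M$ are $T_p$-periodic, $\mathcal{J}$ and $M_k$ are $k$-independent, and $t_{k+1}=t_k+T_p$, the shift $\hat P$ is again a bounded solution of the same infinite-horizon I-D-GRE. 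The jump conditions are preserved because the jump index simply shifts by one.
\item Uniqueness (Theorem~\ref{th:IMP:diss:inf:suff}\eqref{inf:suff:diss:uniq}) then gives $\hat P\equiv P_\infty^{ih}$.
\item Hence $P_\infty^{ih}$ is $T_p$-periodic. Its restriction to one period, written in the timer $\tau=t-t_k$, solves the periodic D-GRE~\eqref{eq:periodicGRE} with $\mathcal{C}_M,\mathcal{D}_M$.
\end{itemize}

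\textbf{(ii)$\Rightarrow$(i).} Given a $T_p$-periodic solution $P_\infty(\tau)$, extend it periodically to $[t_0,\infty)$ by $P(t):=P_\infty(t-t_k)$. The extension is bounded, being continuous on each compact period, and satisfies the infinite-horizon I-D-GRE at every flow instant and every jump. Theorem~\ref{th:IMP:diss:inf:suff} then yields three things:
\begin{itemize}
\item $V_a(t_0,X_1^0)=\ip{-P_\infty(0^+)}{X_1^0}<\infty$;
\item the dissipation inequalities for $V(\tau,X)$;
\item attainment by the closed-loop gains in $\mathcal{K}_c^M,\mathcal{K}_d^M$.
\end{itemize}
These gains depend only on the timer, so the policy is causal, exactly as in the sufficiency part of Theorem~\ref{th:OC:periodic}.

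\textbf{Uniqueness and bounds.} Any two periodic solutions give, by periodic extension, bounded infinite-horizon solutions, so uniqueness transfers from Theorem~\ref{th:IMP:diss:inf:suff}\eqref{inf:suff:diss:uniq}. The bounds $0\preceq-P_\infty\preceq\alpha_2I$ come from Theorem~\ref{th:IMP:diss:inf}, using that (ii) implies (i).

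\textbf{Main obstacle.} The delicate step is invoking Theorem~\ref{th:IMP:diss:inf:suff} for the periodic extension. Its attainment and lower-bound arguments require the boundary term $\ip{P(t_0+T)}{EX(t_0+T)E^*}$ to be asymptotically negligible, which is the detectability condition (D4) and is not among the listed (D1)--(D3).

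I would first try to recover this from periodicity: $P$ takes values in a compact set, and the closed-loop moment map over one period is a fixed linear operator, so boundedness of the supply extraction along the extracting trajectory should force its decay. If that fails, I would state explicitly that the periodic statement inherits (D4) from the infinite-horizon theorem, as the proof of Theorem~\ref{th:IMP:diss:inf:suff} already implicitly does. A secondary technical point is checking that the time shift maps the jump set onto itself. This holds exactly because $\Delta_k\equiv T_p$ and the first jump time is fixed modulo $T_p$.
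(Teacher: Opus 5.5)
Your proposal is essentially the paper's proof: (i)$\Rightarrow$(ii) by shifting the infinite-horizon solution of Theorem~\ref{th:IMP:diss:inf} by $T_p$ and invoking uniqueness, and (ii)$\Rightarrow$(i) by periodic extension and Theorem~\ref{th:IMP:diss:inf:suff}, with uniqueness and the bounds transferred through the extension as you describe.

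The obstacle you flag is genuine, and the paper passes over it silently by applying Theorem~\ref{th:IMP:diss:inf:suff} under (D1)--(D3) only. Periodicity will not rescue it, because the extracting closed loop need not decay (e.g.\ when the optimal gain leaves part of the state frozen). What is actually needed is that the unnormalized boundary term $\ip{P(t_0+T)}{EX^\star(t_0+T)E^*}$ vanishes, which is stronger than the $T^{-1}$-weighted condition written in (D4).
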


\begin{proof}
\noindent$(\textup{ii})\Rightarrow(\textup{i})$.
Extend $P_\infty$ to $[t_0,\infty)$ by $T_p$-periodicity:
$P_\infty^{\mathrm{ext}}(t):=P_\infty\bigl((t-t_0)\bmod T_p\bigr)$.
By $T_p$-periodicity of the data and $\Delta_k\equiv T_p$, the
inter-jump structure repeats with period $T_p$, so
$P_\infty^{\mathrm{ext}}$ is piecewise-$C^1$ on $[t_0,\infty)$ with
discontinuities only at the jump times $\{t_0+kT_p\}_{k\ge 0}$, and it
satisfies the infinite-horizon I-D-GRE wherever $P_\infty$ satisfies
the periodic D-GRE.  $P_\infty^{\mathrm{ext}}$ is bounded by
$\sup_{\tau\in[0,T_p]}\norm{P_\infty(\tau)}<\infty$ uniformly in
$[t_0,\infty)$.  Theorem~\ref{th:IMP:diss:inf:suff} therefore applies
and gives
$V_a(t_0,X_1^0)<\infty$ with
$V_a(t_0,X_1^0)=\ip{-P_\infty^{\mathrm{ext}}(t_0)}{X_1^0}
=\ip{-P_\infty(0^+)}{X_1^0}$, attainment by the closed-loop trajectory
with gains in $\mathcal{K}_c^M$, $\mathcal{K}_d^M$, and the bounds
$0\preceq -P_\infty\preceq\alpha_2 I$ uniformly.

\medskip
\noindent$(\textup{i})\Rightarrow(\textup{ii})$.
By Theorem~\ref{th:IMP:diss:inf}, dissipativity provides a unique
bounded piecewise-$C^1$ solution $P_\infty^{ih}:[t_0,\infty)\to\Sn$
of the infinite-horizon I-D-GRE with
$0\preceq -P_\infty^{ih}(t)\preceq\alpha_2 I$ uniformly in $t$.
Define the $T_p$-shifted function
\[
  Q(t):=P_\infty^{ih}(t+T_p),\qquad t\in[t_0,\infty).
\]
We show $Q$ also solves the infinite-horizon I-D-GRE on
$[t_0,\infty)$, so by uniqueness $Q\equiv P_\infty^{ih}$.

\smallskip
\emph{Flow.}  For $t$ in the interior of any inter-jump interval
$(t_k,t_{k+1})=(t_0+kT_p,t_0+(k+1)T_p)$, the shift $t\mapsto t+T_p$
maps to the interior of $(t_{k+1},t_{k+2})$, which is again an
inter-jump interval.  The composite flow operator depends on $t$
through $\mathcal{F}(t)$ and $M(t)$, both $T_p$-periodic by
hypothesis; hence $\mathcal{C}_M(t,\,\cdot\,)\equiv
\mathcal{C}_M(t+T_p,\,\cdot\,)$ as operators on $\Sn$.  Therefore
\[
  \mathcal{C}_M\bigl(t,Q(t)\bigr)
  =\mathcal{C}_M\bigl(t,P_\infty^{ih}(t+T_p)\bigr)
  =\mathcal{C}_M\bigl(t+T_p,P_\infty^{ih}(t+T_p)\bigr)\succeq 0,
\]
with the Schur equality
$\mathcal{C}_M\,\schur{/}\,(\mathcal{C}_M)_3=0$, both inherited
pointwise from $P_\infty^{ih}$ evaluated at $t+T_p$.

\smallskip
\emph{Jumps.}  At any jump time $t_k=t_0+kT_p$, the shift sends $t_k$
to $t_{k+1}=t_0+(k+1)T_p$, the next jump time.  Since
$\mathcal{J}(k,\,\cdot\,)$ and $M_k$ are independent of $k$, the jump
operator $\mathcal{D}_M(k,P',P)$ does not depend on $k$.  Therefore
\[
  \mathcal{D}_M\bigl(k,Q(t_k^+),Q(t_k^-)\bigr)
  =\mathcal{D}_M\bigl(k+1,P_\infty^{ih}(t_{k+1}^+),P_\infty^{ih}(t_{k+1}^-)\bigr)
  \succeq 0,
\]
with the Schur equality
$\mathcal{D}_M\,\schur{/}\,(\mathcal{D}_M)_3=0$, both inherited from
$P_\infty^{ih}$ at the jump time $t_{k+1}$.

\smallskip
\emph{Boundedness.}
$0\preceq -Q(t)=-P_\infty^{ih}(t+T_p)\preceq\alpha_2 I$ uniformly in
$t$, inherited from $P_\infty^{ih}$.

\smallskip
Hence $Q$ is a bounded piecewise-$C^1$ solution of the
infinite-horizon I-D-GRE on $[t_0,\infty)$.  By uniqueness in
Theorem~\ref{th:IMP:diss:inf}, $Q\equiv P_\infty^{ih}$, i.e.,
$P_\infty^{ih}(t+T_p)=P_\infty^{ih}(t)$ for all $t\in[t_0,\infty)$.
$P_\infty^{ih}$ is therefore $T_p$-periodic, and its restriction to
one period defines a piecewise-$C^1$ function
$P_\infty:[0,T_p]\to\Sn$ which solves the periodic D-GRE and
satisfies the bounds $0\preceq -P_\infty(\tau)\preceq\alpha_2 I$.
Uniqueness within the class of $T_p$-periodic solutions follows from
the same uniqueness applied to any other such solution after the
extension of $(\textup{ii})\Rightarrow(\textup{i})$.

The value formula
$V_a(t_0,X_1^0)=\ip{-P_\infty(0^+)}{X_1^0}$ and attainment by the
causal policy
$K(\tau)\in\mathcal{K}_c^M(\tau,P_\infty(\tau))$,
$K_k\in\mathcal{K}_d^M(P_\infty(0^+),P_\infty(T_p^-))$, follow from
$(\textup{ii})\Rightarrow(\textup{i})$ applied to the just-constructed
$P_\infty$.
\end{proof}

The Schur equality requirements
$\mathcal{C}_M\,\schur{/}\,(\mathcal{C}_M)_3=0$ and
$\mathcal{D}_M\,\schur{/}\,(\mathcal{D}_M)_3=0$
in Theorems~\ref{th:diss:periodic}--\ref{th:diss:RDT} identify the
\emph{optimal} extraction policy.  Dropping these equalities and
retaining only the LMIs $\mathcal{C}_M\succeq 0$ on flow and
$\mathcal{D}_M\succeq 0$ at jumps yields the broader class of
(non-optimal) causal linear storage functions in each dwell-time
scenario, and gives dual differential LMI characterizations of the
available storage parallel to Theorem~\ref{th:IMP:diss:inf:LMI}.  In
all three cases, any $P$ feasible for the LMI relaxation gives a
storage function $V(\tau,X)=\ip{-P(\tau)}{EXE^*}$ which upper-bounds
the available storage, so $V_a\le\ip{-P}{\cdot}$ pointwise on
$\Snpsd$.

\begin{theorem}[Dual DLMI, periodic dissipativity]
  \label{th:diss:periodic:LMI}
  Under \textup{(D1)--(D3)} and the periodicity hypothesis of
  Theorem~\ref{th:diss:periodic},
    \begin{equation}\label{eq:diss:periodic:LMI}
    \begin{array}{rl}
    \inf_P&\ip{-P(0^+)}{X_1^0}\\
    \text{s.t.}& P:[0,T_{\min}]\to\Sn,\;\text{bounded piecewise-}C^1,\\
      &\mathcal{C}_M(\tau,P(\tau))\succeq 0\;\text{a.e.,}\\
      &\mathcal{D}_M(P(0^+),P(T_p^-))\succeq 0.
    \end{array}
  \end{equation}
  The infimum is attained at $P_\infty$ of
  Theorem~\ref{th:diss:periodic}, which is the unique feasible $P$
  satisfying additionally the Schur equalities of the periodic D-GRE.
  The constraints are affine in $(P,\dot P)$, so~\eqref{eq:diss:periodic:LMI}
  is a genuine differential LMI.
\end{theorem}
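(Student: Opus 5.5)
The proof will follow the two-step template of Theorem~\ref{th:IMP:diss:inf:LMI}, applied in the timer domain with $P$ defined on one period $[0,T_p]$. (The domain $[0,T_{\min}]$ in the statement should be read as $[0,T_p]$.) Step~1 shows that every feasible $P$ gives an upper bound $V_a(t_0,X_1^0)\le\ip{-P(0^+)}{X_1^0}$. Step~2 shows that $P_\infty$ from Theorem~\ref{th:diss:periodic} is feasible and attains this bound. A final step then identifies $P_\infty$ as the unique feasible point satisfying the Schur equalities.

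\textbf{Step 1.} Take a feasible bounded piecewise-$C^1$ function $P$ and extend it $T_p$-periodically to $P^{\mathrm{ext}}(t):=P((t-t_0)\bmod T_p)$. This is the same construction as in the $(\textup{ii})\Rightarrow(\textup{i})$ part of Theorem~\ref{th:diss:periodic}. The data are periodic and $\Delta_k\equiv T_p$, so $\mathcal{C}_M(t,P^{\mathrm{ext}}(t))\succeq0$ holds a.e.\ on every flow interval. Likewise $\mathcal{D}_M(P^{\mathrm{ext}}(t_k^+),P^{\mathrm{ext}}(t_k^-))=\mathcal{D}_M(P(0^+),P(T_p^-))\succeq0$ holds at every jump. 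Hence $P^{\mathrm{ext}}$ is a bounded feasible point of the infinite-horizon DLMI \eqref{eq:diss:LMI:inf}.

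To obtain the bound, apply the dissipativity key identity on $[t_0,t_0+T]$ with $P^{\mathrm{ext}}$, in the form used in Step~1 of Theorem~\ref{th:IMP:diss:LMI}. The integral and jump sums enter with the favorable sign by Lemma~\ref{lemma:decomp}. This yields, for every admissible $X$,
\[
-\!\int\ip{M}{X}\dt-\textstyle\sum_k\ip{M_k}{EX(t_k^-)E^*}\le\ip{-P(0^+)}{X_1^0}+\ip{P^{\mathrm{ext}}(t_0+T)}{EX(t_0+T)E^*}.
\]

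\textbf{Main obstacle.} The delicate point is removing the boundary term when passing to the supremum and letting $T\to\infty$. It is handled exactly as in Theorem~\ref{th:IMP:diss:inf:suff}: $P^{\mathrm{ext}}$ is bounded because $P$ is bounded on the compact period, and (D4) then makes the boundary term asymptotically negligible. This gives $V_a(t_0,X_1^0)\le\ip{-P(0^+)}{X_1^0}$. Alternatively, one can invoke the sandwich argument of Theorem~\ref{th:IMP:diss:inf:LMI} directly on $P^{\mathrm{ext}}$, which gives $P^{\mathrm{ext}}\preceq P_\infty^{\mathrm{ext}}$ pointwise. I would present this second route and mention the first as the justification behind it.

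\textbf{Step 2.} By Theorem~\ref{th:diss:periodic}, $P_\infty$ is bounded, $T_p$-periodic and piecewise-$C^1$. It satisfies the periodic D-GRE, and Lemma~\ref{lemma:GREequiv} turns this into $\mathcal{C}_M\succeq0$ and $\mathcal{D}_M\succeq0$, so $P_\infty$ is feasible. Theorem~\ref{th:diss:periodic} also gives $V_a(t_0,X_1^0)=\ip{-P_\infty(0^+)}{X_1^0}$. Combined with Step~1, the infimum equals $V_a$ and is attained at $P_\infty$.

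\textbf{Uniqueness.} Suppose a feasible $P$ also satisfies both Schur equalities. Then it solves the periodic D-GRE, and the uniqueness part of Theorem~\ref{th:diss:periodic} gives $P=P_\infty$.

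\textbf{Affinity.} $\mathcal{C}_M(\tau,P)=E^*\dot PE+\mathcal{F}^*(\tau,P)+M$ is affine in $(P,\dot P)$. Similarly, $\mathcal{D}_M$ is affine in $(P(0^+),P(T_p^-))$ because $\mathcal{F}^*$ and $\mathcal{J}^*$ are linear. Hence \eqref{eq:diss:periodic:LMI} is a genuine differential LMI.
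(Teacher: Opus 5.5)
Your proposal is essentially the paper's proof: extend a feasible $P$ $T_p$-periodically, use the largest-solution property of Theorem~\ref{th:IMP:diss:inf:suff}\eqref{inf:suff:diss:largest} to get $V_a(t_0,X_1^0)\le\ip{-P(0^+)}{X_1^0}$, then use feasibility and attainment of $P_\infty$ from Theorem~\ref{th:diss:periodic}; your uniqueness and affinity remarks make explicit what the paper's proof leaves implicit. One caution about the key-identity justification you sketch: (D4) is not among the hypotheses (D1)--(D3), it only makes the boundary term bounded rather than vanishing, and a limit $T\to\infty$ does not reach $V_a=\sup_T V_a^T$ because $V_a^T$ is not monotone for indefinite $M$. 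The clean way to drop $\ip{P^{\mathrm{ext}}(t_0+T)}{EX(t_0+T)E^*}$ at every finite $T$ is the sign condition $P\preceq0$ (nonnegative storage); the cited sandwich result tacitly relies on it, and you may add it as an extra LMI constraint since $0\preceq-P_\infty$.
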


\begin{proof}
Any feasible $P$ gives a $T_p$-periodic causal linear storage function
$V_P(\tau,X)=\ip{-P(\tau)}{EXE^*}$ by
$\mathcal{C}_M\succeq 0$ on flow and $\mathcal{D}_M\succeq 0$ at the
jump $\tau=T_p$.  By
Theorem~\ref{th:IMP:diss:inf:suff}\eqref{inf:suff:diss:largest} applied to
its $T_p$-periodic extension, $V_a\le V_P$ on $\Snpsd$, so
$V_a(t_0,X_1^0)\le\ip{-P(0^+)}{X_1^0}$ and taking the infimum
$V_a(t_0,X_1^0)\le\inf_P\ip{-P(0^+)}{X_1^0}$.  Conversely, $P_\infty$
of Theorem~\ref{th:diss:periodic} is feasible (the GRE solution
satisfies the LMIs trivially) and yields
$V_a(t_0,X_1^0)=\ip{-P_\infty(0^+)}{X_1^0}$, so the infimum is
attained and the inequality is an equality.
\end{proof}

\paragraph{Minimum dwell-time.}
Adopt the timer-dependence hypothesis of Theorem~\ref{th:OC:MDT}:
$\Delta_k\ge T_{\min}>0$, the flow data depend only on the timer
$\tau=t-t_k$, and the time-invariance
condition~\eqref{eq:MDT:LTIassumption} holds for $\mathcal{J}$ and $M_k\equiv M_{\mathrm{jp}}$ and time-invariance of $\mathcal{F}$ and $M$ for $\tau\ge T_{\min}$.

\begin{theorem}[Sufficient causal condition: MDT dissipativity]
  \label{th:diss:MDT}
  Under \textup{(D1)--(D3)} and the MDT hypothesis above, suppose
  there exists $P_s:[0,T_{\min}]\to\Sn$ satisfying:
  \begin{enumerate}[\upshape(i)]
    \item \emph{Forward D-GRE on $(0,T_{\min})$:}
      $\mathcal{C}_M(\tau,P_s(\tau))=0$ for all
      $\tau\in(0,T_{\min})$.
    \item \emph{Jump D-GRE at $\tau=T_{\min}$:}
      $\mathcal{D}_M(P_s(0^+),P_s(T_{\min}))\succeq 0$ and
      $\mathcal{D}_M\,\schur{/}\,(\mathcal{D}_M)_3=0$.
    \item \emph{Geromel--Colaneri condition:}
      $\mathcal{C}_M^0(T_{\min},P_s(T_{\min})):=\mathcal{F}^*(T_{\min},P_s(T_{\min}))+M(T_{\min})\succeq 0$.
  \end{enumerate}
  Then the causal storage function
  $V(\tau,X):=\ip{-\bar P(\tau)}{EXE^*}$, with $\bar P$ the constant
  extension of $P_s$ for $\tau>T_{\min}$, satisfies the dissipation
  inequalities $\dot V\le\sigma_f$ on flow and $\Delta V\le\sigma_k$
  at each jump for all $\Delta_k\ge T_{\min}$, and provides the lower
  bound $V_a(t_0,X_1^0)\ge\ip{-P_s(0^+)}{X_1^0}$.  The condition is
  verifiable on $[0,T_{\min}]$ alone.
\end{theorem}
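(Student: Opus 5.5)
The proof will mirror that of Theorem~\ref{th:OC:MDT}, with $\mathcal{C}_M,\mathcal{D}_M$ in place of $\mathcal{C}_Z,\mathcal{D}_Z$ and the inequality directions adapted to storage functions. First I extend $P_s$ to $\bar P:[0,\infty)\to\Sn$ by $\bar P(\tau)=P_s(\tau)$ on $[0,T_{\min}]$ and $\bar P(\tau)=P_s(T_{\min})$ for $\tau>T_{\min}$. For a dwell-time sequence with $\Delta_k\ge T_{\min}$ I then define the absolute-time dual $P(t):=\bar P(t-t_k)$ on $(t_k,t_{k+1}]$.

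Next I verify the composite inequalities on this $P$. On $(0,T_{\min})$, condition (i) gives $\mathcal{C}_M(\tau,\bar P(\tau))=0\succeq0$. For $\tau>T_{\min}$, $\bar P$ is constant, so $\dot{\bar P}=0$. The flow data are time-invariant there by the MDT hypothesis, so $\mathcal{C}_M(\tau,\bar P(\tau))=\mathcal{F}^*(T_{\min},P_s(T_{\min}))+M(T_{\min})=\mathcal{C}_M^0\succeq0$ by (iii). At a jump $t_k$ the pre-jump timer equals $\Delta_{k-1}\ge T_{\min}$, hence $P(t_k^-)=P_s(T_{\min})$ and $P(t_k^+)=P_s(0^+)$. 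The jump operator is $k$-independent, so (ii) gives $\mathcal{D}_M(P_s(0^+),P_s(T_{\min}))\succeq0$. The breakpoint $\tau=T_{\min}$, where $\dot{\bar P}$ is discontinuous, is harmless because $P$ is continuous there and the key identity uses $\dot P$ only inside integrals.

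The dissipation inequalities then follow exactly as in Theorem~\ref{th:IMP:diss:suff}(c). On a jump-free subinterval $[s,t]$, the key identity gives $V(t)-V(s)=\int_s^t\ip{M}{X}-\int_s^t\ip{\mathcal{C}_M}{X}\le\int_s^t\sigma_f$, since $\mathcal{C}_M\succeq0$ and $X\succeq0$. At jumps, $V(t_k^+)-V(t_k^-)=\sigma_k-\ip{\mathcal{D}_M}{X(t_k^-)}\le\sigma_k$. Here I read $\sigma_k$ as the pairing of $M_k$ with $X(t_k^-)$ through $\mathcal{D}_M$, as in \eqref{eq:IMP:LM}.

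For the available-storage bound I apply the dissipativity key identity \eqref{eq:IMP:diss:keyid}, in its form \eqref{eq:IMP:keyid} with $M_T=0$, on $[t_0,t_0+T]$ with this $P$. I choose the policy $K_k\in\mathcal{K}_d^M(P_s(0^+),P_s(T_{\min}))$ at jumps. By (ii) and the attainment argument of Theorem~\ref{th:IMP:LQ:suff} (Lemma~\ref{lemma:decomp}), this makes every jump term zero. On $(0,T_{\min})$ the flow term is already zero by (i). The tail flow terms $\int\ip{\mathcal{C}_M^0}{X}$ are nonnegative, and they enter the extracted supply with a minus sign.

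This is the main obstacle: a naive estimate only yields the upper bound $-\int\sigma\le\ip{-P(t_0)}{X_1^0}+\text{boundary}$, not the claimed lower bound. To get a lower bound I must kill the tail terms by the choice of trajectory. For $\tau>T_{\min}$ I would pick free blocks so that $\ip{\mathcal{C}_M^0}{X}=0$. That requires (iii) to be tight along the chosen direction, e.g.\ via a gain in the kernel of $\mathcal{C}_M^0$ restricted as in Lemma~\ref{lemma:decomp}. If that is not available, I would fall back on restricting to dwell sequences for which the tail has measure zero, or else state the bound as $V_a\ge\ip{-P_s(0^+)}{X_1^0}-\limsup(\text{tail})$.

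It remains to control the boundary term $\ip{P(t_0+T)}{EX(t_0+T)E^*}$, which I would handle using (D4) and boundedness of $P_s$ on the compact interval $[0,T_{\min}]$. With the tail and jump terms zero and the boundary term negligible, letting $T\to\infty$ and taking the supremum gives $V_a\ge\ip{-P_s(0^+)}{X_1^0}$.
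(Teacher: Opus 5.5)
Your construction of $\bar P$, the check that $\mathcal{C}_M(\tau,\bar P(\tau))\succeq0$ (by (i) on $(0,T_{\min})$, by (iii) and the frozen data on the tail), and the check that $\mathcal{D}_M\succeq0$ at every jump with $\Delta_{k-1}\ge T_{\min}$ are all correct. So is the resulting pair $\dot V\le\sigma_f$, $\Delta V\le\sigma_k$, and this part is exactly the paper's argument. The gap is the available-storage bound: nothing in your plan actually produces $V_a(t_0,X_1^0)\ge\ip{-P_s(0^+)}{X_1^0}$. There are three separate problems.
\begin{itemize}
\item Forcing $\ip{\mathcal{C}_M^0}{X}=0$ on the tail with $X_1\neq0$ requires, by Lemma~\ref{lemma:decomp}, that $\mathcal{C}_M^0\,\widetilde{/}\,(\mathcal{C}_M^0)_3$ vanish on the directions visited by $X_1$. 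That means $P_s(T_{\min})$ would have to be an equilibrium of the frozen D-GRE, and (iii), being only an inequality, gives no such kernel.
\item The boundary term is not controlled. (D4) is not among the hypotheses, and even (D4) only gives $T^{-1}\ip{P}{EX(t_0+T)E^*}\to0$. A non-averaged lower bound needs $\ip{-\bar P(\tau(t_0+T))}{EX(t_0+T)E^*}$ itself to become $\le0$, e.g.\ an extraction trajectory that drives $EXE^*$ to zero.
\item Your three fallbacks add hypotheses or weaken the conclusion; they do not prove the stated bound.
\end{itemize}

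In fact no argument can close this gap, because the lower bound is false under (i)--(iii). Take $n=m=1$. For $\tau\in(0,T_{\min})$ let $\mathcal{F}\equiv0$ and $M\equiv0$. For $\tau\ge T_{\min}$ let $\mathcal{F}_\infty(X)=X_2+X_2^*$ (i.e.\ $\dot x=u$) and $M_\infty=\bigl[\begin{smallmatrix}1&1\\1&1\end{smallmatrix}\bigr]$; the MDT hypothesis only freezes the data for $\tau\ge T_{\min}$, so this switch is allowed. Let $\mathcal{J}(X)=EXE^*$, $M_k=0$ and $P_s\equiv-1$. Then:
\begin{itemize}
\item $\mathcal{C}_M=0$ on $(0,T_{\min})$ and $\mathcal{D}_M=0$;
\item $\mathcal{C}_M^0=\bigl[\begin{smallmatrix}0&-1\\-1&0\end{smallmatrix}\bigr]+M_\infty=I\succeq0$, so (i)--(iii) hold;
\item (D2)--(D3) hold, and (D1) holds because $\dot x=u$ steers $0$ to any state at finite supply.
\end{itemize}
Every supply is nonnegative on $\Snmpsd$, so $V_a\equiv0$, whereas $\ip{-P_s(0^+)}{X_1^0}=X_1^0>0$.

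What (i)--(iii) do give is Willems' inequality in the opposite direction. $V=\ip{-\bar P}{E\cdot E^*}$ is a storage function, so if you additionally assume $P_s\preceq0$ (making the terminal value of $V$ nonnegative), integrating the dissipation inequalities gives $-\int\sigma_f-\sum_k\sigma_k\le V(0^+,X_1^0)$. Hence $V_a(t_0,X_1^0)\le\ip{-P_s(0^+)}{X_1^0}$, consistent with Proposition~\ref{prop:Vsandwich} and with how Theorem~\ref{th:diss:MDT:LMI} uses this result. This is also what the paper's own last step yields: ``rearranging together with $V\ge0$'' bounds $V_a$ from above, so the stated direction is a sign slip. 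Moreover, the sign condition $-\bar P\succeq0$ it invokes is not among (i)--(iii). Your diagnosis of the obstacle was right; the fix is to conclude the upper bound under $P_s\preceq0$, not to force the lower one.
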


\begin{proof}
Extend $\bar P(\tau):=P_s(\tau)$ for $\tau\le T_{\min}$ and
$\bar P(\tau):=P_s(T_{\min})$ for $\tau>T_{\min}$.

\noindent\textbf{Flow dissipation.}
Differentiating $V=\ip{-\bar P}{EXE^*}$ along the flow gives
$\dot V=\sigma_f-\ip{\mathcal{C}_M(\tau,\bar P)}{X}$, so
$\dot V\le\sigma_f$ iff $\mathcal{C}_M(\tau,\bar P)\succeq 0$.  For
$\tau\in(0,T_{\min})$, $\mathcal{C}_M=0$ by~(i).  For
$\tau>T_{\min}$, $\dot{\bar P}=0$, so $\mathcal{C}_M(\tau,\bar P)
=\mathcal{C}_M^0(\tau,P_s(T_{\min}))\succeq 0$ by~(iii).

\noindent\textbf{Jump dissipation.}
$\Delta V\le\sigma_k$ iff $\mathcal{D}_M\succeq 0$, which holds for
$\Delta_k\ge T_{\min}$ since $\bar P(\Delta_k^-)=P_s(T_{\min})$ and
$\mathcal{D}_M(P_s(0^+),P_s(T_{\min}))\succeq 0$ by~(ii).

\noindent\textbf{Available storage bound.}
The dissipation inequalities along any admissible trajectory give
$V(\tau(t_0+T),EX(t_0+T)E^*)-V(0^+,X_1^0)\le\int\sigma_f\dt
+\sum\sigma_k$, and rearranging together with
$V\ge 0$ on $\Snpsd$ \textup{(}from $-\bar P\succeq 0$ implicit in the
infinite-horizon I-D-GRE\textup{)} yields
$\ip{-P_s(0^+)}{X_1^0}\le V_a(t_0,X_1^0)$.
\end{proof}

\begin{remark}\label{rem:diss:GC:sign}
The dissipativity Geromel--Colaneri condition~(iii) requires
$\mathcal{C}_M^0\succeq 0$ \textup{(}positive
semidefinite\textup{)}, in contrast to the optimal control
condition~\eqref{eq:GC:stability} which requires
$\mathcal{C}_Z^0\preceq 0$.  The sign reversal reflects the dual role
of $P$ on the frozen region: in the optimal control case
$P\succeq 0$ and the condition ensures that the cost matrix continues
to upper-bound the tail cost, whereas in the dissipativity case
$P_s$ may be indefinite and the condition ensures that the storage
matrix $-P_s$ continues to satisfy the dissipation inequality.
\end{remark}

\begin{theorem}[Dual DLMI, MDT dissipativity]
  \label{th:diss:MDT:LMI}
  Under \textup{(D1)--(D3)} and the MDT hypothesis of
  Theorem~\ref{th:diss:MDT},
  \begin{equation}\label{eq:diss:MDT:LMI}
    \begin{array}{rl}
    \min_P&\ip{-P(0^+)}{X_1^0}\\
    \text{s.t.}& P:[0,T_{\min}]\to\Sn,\;\text{bounded piecewise-}C^1,\\
      &\mathcal{C}_M(\tau,P(\tau))\succeq 0\;\text{a.e.\ on }(0,T_{\min}),\\
      &\mathcal{D}_M(P(0^+),P(T_{\min}))\succeq 0,\\
      &\mathcal{C}_M^0(T_{\min},P(T_{\min}))\succeq 0.
    \end{array}
  \end{equation}
  Any feasible $P$ defines a causal storage function
  $V_P(\tau,X)=\ip{-\bar P(\tau)}{EXE^*}$ with $\bar P$ the constant
  extension of $P$ to $[T_{\min},\infty)$.  The infimum is attained at
  $P_s$ of Theorem~\ref{th:diss:MDT} when the Schur equalities of the
  forward D-GRE on $[0,T_{\min}]$ and at $\tau=T_{\min}$ are imposed
  on top of the LMIs.
\end{theorem}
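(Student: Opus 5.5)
The proof will follow the two-step pattern of Theorem~\ref{th:diss:periodic:LMI}: first show that every feasible $P$ of~\eqref{eq:diss:MDT:LMI} yields an upper bound on the available storage, then show that the bound is achieved once the Schur equalities are added.

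\textbf{Step 1 (every feasible $P$ is an upper bound).} Fix a feasible $P$ and let $\bar P$ be its constant extension beyond $T_{\min}$. Define the absolute-time dual $\bar P(t-t_k)$ on each inter-jump interval $(t_k,t_{k+1}]$, exactly as in the proof of Theorem~\ref{th:OC:MDT}. Then check the flow and jump conditions pointwise.
\begin{itemize}
\item On $(0,T_{\min})$ the flow constraint gives $\mathcal{C}_M\succeq0$.
\item On $(T_{\min},\Delta_k)$ we have $\dot{\bar P}=0$ and the data are frozen, so $\mathcal{C}_M=\mathcal{C}_M^0(T_{\min},P(T_{\min}))\succeq0$ by the last constraint.
\item At each jump with $\Delta_k\ge T_{\min}$, $\bar P(\Delta_k^-)=P(T_{\min})$, so $\mathcal{D}_M\succeq0$ by the jump constraint.
\end{itemize}
Hence $V_P$ satisfies the dissipation inequalities on flow and at jumps, which proves the second claim of the statement.

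Next, apply the key identity~\eqref{eq:IMP:diss:keyid} in its untruncated form (as in Step~1 of Theorem~\ref{th:IMP:diss:LMI}) on $[t_0,t_0+T]$. Using Lemma~\ref{lemma:decomp}, the flow integral and the jump sum are nonnegative. This leaves the extracted supply bounded by $\ip{-P(0^+)}{X_1^0}+\ip{\bar P(\tau(t_0+T))}{EX(t_0+T)E^*}$. The boundary term is handled as in Theorem~\ref{th:IMP:diss:inf:suff}: $\bar P$ is bounded because $P$ is continuous on a compact interval and then constant, so the term becomes negligible along trajectories with bounded extraction, by (D3) together with the detectability hypothesis (D4) used there. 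Taking the supremum over $X$ and letting $T\to\infty$ gives $V_a(t_0,X_1^0)\le\ip{-P(0^+)}{X_1^0}$ for every feasible $P$.

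\textbf{Step 2 (attainment).} Impose the Schur equalities on $[0,T_{\min}]$ and at $\tau=T_{\min}$, i.e.\ take $P=P_s$ from Theorem~\ref{th:diss:MDT}, and use the causal policy with $K\in\mathcal{K}_c^M$ and $K_k\in\mathcal{K}_d^M$. The attainment argument of Theorem~\ref{th:IMP:LQ:suff} then makes the flow terms on $(0,T_{\min})$ and all jump terms vanish. Combined with the lower bound $V_a\ge\ip{-P_s(0^+)}{X_1^0}$ from Theorem~\ref{th:diss:MDT}, this squeezes $V_a=\ip{-P_s(0^+)}{X_1^0}$, so the minimum is attained at $P_s$.

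\textbf{Main obstacle.} The delicate part is the frozen tail $\tau>T_{\min}$ in Step~2. There $\mathcal{C}_M^0\succeq0$ need not vanish, so the term $\int\ip{\mathcal{C}_M^0}{X^\star}$ works against equality. I would first try to choose the tail extraction input so that $X^\star$ lies in $\ker\mathcal{C}_M^0$, which is possible when $\mathcal{C}_M^0$ has the Schur-equality structure. Failing that, I would state attainment under the additional tail equality $\mathcal{C}_M^0\,\widetilde{/}\,(\mathcal{C}_M^0)_3=0$, which is the reading I take of "when the Schur equalities are imposed".
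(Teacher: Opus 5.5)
\textbf{Step 1: the boundary term.} Your check that $\bar P$ satisfies $\mathcal{C}_M\succeq0$ on $(0,T_{\min})$, $\mathcal{C}_M=\mathcal{C}_M^0\succeq0$ on the tail, and $\mathcal{D}_M\succeq0$ at every jump is exactly the paper's. Deriving $V_a\le\ip{-P(0^+)}{X_1^0}$ directly from the key identity, rather than citing the smallest-storage property (part~(c) of Theorem~\ref{th:IMP:diss:inf:suff}) as the paper does, would be a legitimate alternative. However, your control of the boundary term $\ip{\bar P(\tau(t_0+T))}{EX(t_0+T)E^*}$ fails.
\begin{itemize}
\item (D4) is not among the hypotheses of the statement.
\item (D4) is stated for $P_\infty$, not for an arbitrary feasible $\bar P$.
\item (D4) only makes this term bounded ($T^{-1}$ times it tends to $0$); it does not make it vanish.
\item $V_a=\sup_{T\ge0}V_a^T$ with an indefinite supply, so you need the bound at every finite $T$. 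No limit $T\to\infty$ provides that.
\end{itemize}
In fact the bound is false without a sign restriction. Since $V_a\ge V_a^0=0$, it forces $-P(0^+)\succeq0$. Yet take e.g.\ $M\succeq\varepsilon I$ and $M_k\succeq\varepsilon I$ (nothing in (D1)--(D3) excludes this). Then $V_a\equiv0$, while $P\equiv pI$ with small $p>0$ is feasible, because $\mathcal{J}^*(pI)\succeq0$ and $\mathcal{F}^*(\tau,pI)=O(p)$ by (D2). This $P$ gives $\ip{-P(0^+)}{X_1^0}=-p\,\tr(X_1^0)<0$.

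In Step~1 of Theorem~\ref{th:IMP:diss:LMI} it is the constraint $P(T)\succeq-M_T$ that signs this term. With no terminal supply, its analogue here is nonnegativity of the storage, $-P\succeq0$ on $[0,T_{\min}]$. The paper's citation silently relies on the same thing: the terminal-compatibility condition in Proposition~\ref{prop:Vsandwich}, and the explicit ``$V\ge0$'' step in the proof of Theorem~\ref{th:diss:MDT}. Add $-P\succeq0$ as a further LMI. The boundary term is then nonpositive for every $T$, and Step~1 closes with neither a limit nor (D4).

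\textbf{Step 2: attainment.} Your squeeze rests on the lower bound $V_a\ge\ip{-P_s(0^+)}{X_1^0}$ quoted from Theorem~\ref{th:diss:MDT}. The argument given there (dissipation inequalities plus $V\ge0$) only yields extracted supply $\le V(0^+,X_1^0)$, which is the same upper bound as your Step~1. As your obstacle paragraph rightly anticipates, that lower bound generally fails for a schedule with $\Delta_k>T_{\min}$. By Lemma~\ref{lemma:decomp}, every trajectory loses at least $\ip{\mathcal{C}_M^0\,\widetilde{/}\,(\mathcal{C}_M^0)_3}{X_1}\ge0$ on each tail, whatever the free blocks. So no extraction policy recovers $\ip{-P_s(0^+)}{X_1^0}$ unless that Schur complement vanishes.

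Your fallback of imposing the tail equality changes the theorem. It is also not needed for what the statement claims, namely that $P_s$ minimizes~\eqref{eq:diss:MDT:LMI}:
\begin{itemize}
\item The Step~1 bound holds for every schedule with $\Delta_k\ge T_{\min}$, so apply it to $\Delta_k\equiv T_{\min}$, where the tail is empty.
\item Conditions (i)--(ii) of Theorem~\ref{th:diss:MDT} imply that the $T_{\min}$-periodic extension of $P_s$ solves the periodic D-GRE.
\item Theorem~\ref{th:diss:periodic} then gives $V_a=\ip{-P_s(0^+)}{X_1^0}$ for that schedule. This goes through the attainment argument of Theorem~\ref{th:IMP:diss:inf:suff}, with the boundary-decay hypothesis it uses.
\item Hence $\ip{-P(0^+)}{X_1^0}\ge\ip{-P_s(0^+)}{X_1^0}$ for every feasible $P$, and $P_s$, being feasible, attains the minimum.
\end{itemize}
The optimal value is therefore the available storage of the constant-$T_{\min}$ schedule, which is the largest over all admissible schedules. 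It equals $V_a$ for every MDT schedule only under your extra tail equality. The paper's one-line attainment step does not address the tail either.
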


\begin{proof}
For any feasible $P$, the constant extension $\bar P$ satisfies
$\mathcal{C}_M(\tau,\bar P(\tau))\succeq 0$ on all of
$(0,\infty)$ (using
$\mathcal{C}_M(\tau,\bar P)=\mathcal{C}_M^0(\tau,P(T_{\min}))\succeq 0$
for $\tau>T_{\min}$) and $\mathcal{D}_M\succeq 0$ at every jump.
Hence $V_P(\tau,X):=\ip{-\bar P(\tau)}{EXE^*}$ is a causal storage
function by the same argument as in the proof of
Theorem~\ref{th:diss:MDT}.  Then
$V_a\le V_P$ by
Theorem~\ref{th:IMP:diss:inf:suff}\eqref{inf:suff:diss:largest}, yielding
$V_a(t_0,X_1^0)\le\ip{-P(0^+)}{X_1^0}$ and the infimum bound.  The
attainment at $P_s$ follows since $P_s$ is feasible and satisfies the
Schur equalities, which are the residual conditions distinguishing
the LMI feasibility set from the GRE solution.
\end{proof}

\paragraph{Range dwell-time.}
Adopt the timer-dependence hypothesis of Theorem~\ref{th:OC:RDT}:
$\Delta_k\in[T_{\min},T_{\max}]$ and the flow and jump data depend
only on the timer $\tau\in[0,T_{\max}]$;  $\mathcal{F}(\tau,\cdot)$ and $M(\tau)$ are defined on
$[0,T_{\max}]$, and $\mathcal{J}, M_k$ are $k$-independent\textup{)}.

\begin{theorem}[Sufficient causal condition: RDT dissipativity]
  \label{th:diss:RDT}
  Under \textup{(D1)--(D3)} and the RDT hypothesis above, suppose
  there exists $P_s:[0,T_{\max}]\to\Sn$ satisfying:
  \begin{enumerate}[\upshape(i)]
    \item $\mathcal{C}_M(\tau,P_s(\tau))=0$ for
      $\tau\in(0,T_{\max})$.
    \item $\mathcal{D}_M(P_s(0^+),P_s(\tau))\succeq 0$ and
      $\mathcal{D}_M\,\schur{/}\,(\mathcal{D}_M)_3=0$
      for all $\tau\in[T_{\min},T_{\max}]$.
  \end{enumerate}
  Then $V(\tau,X):=\ip{-P_s(\tau)}{EXE^*}$ is a causal storage
  function achieving $\dot V=\sigma_f$ on flow and $\Delta V=\sigma_k$
  at the optimal extraction policy for all
  $\Delta_k\in[T_{\min},T_{\max}]$, and provides the lower bound
  $V_a(t_0,X_1^0)\ge\ip{-P_s(0^+)}{X_1^0}$.
\end{theorem}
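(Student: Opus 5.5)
The plan is to mirror the proof of Theorem~\ref{th:OC:RDT}, using the dissipativity key identity~\eqref{eq:IMP:diss:keyid} in place of~\eqref{eq:IMP:keyid}, and then to borrow the lower-bound argument of Theorem~\ref{th:diss:MDT}.

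\textbf{Step 1: absolute-time storage matrix.} Given any jump sequence with $\Delta_k\in[T_{\min},T_{\max}]$, define $P(t):=P_s(t-t_k)$ for $t\in(t_k,t_{k+1}]$. Then $P(t_k^+)=P_s(0^+)$ and $P(t_k^-)=P_s(\Delta_{k-1}^-)$. This $P$ is piecewise-$C^1$, with discontinuities only at jump times.

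\textbf{Step 2: verify the I-D-GRE conditions pointwise.} On flow, condition~(i) gives $\mathcal{C}_M(t,P(t))=\mathcal{C}_M(\tau,P_s(\tau))=0$, since $\tau=t-t_k\in(0,\Delta_k)\subset(0,T_{\max})$. At jumps, $\Delta_{k-1}\in[T_{\min},T_{\max}]$, so condition~(ii) gives $\mathcal{D}_M\succeq0$ with vanishing Schur complement.

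\textbf{Step 3: dissipation inequalities.} Differentiating $V=\ip{-P}{EXE^*}$ along the flow, exactly as in Theorem~\ref{th:IMP:diss:suff}(c), gives
\[
\dot V=\sigma_f-\ip{\mathcal{C}_M}{X}=\sigma_f .
\]
So flow dissipation holds with equality for every admissible $X$, not only the optimal one. At jumps, the computation of Theorem~\ref{th:IMP:diss:suff}(c) gives
\[
\Delta V=\sigma_k-\ip{\mathcal{D}_M}{X(t_k^-)}\le\sigma_k .
\]
Choosing $K_k\in\mathcal{K}_d^M(P_s(0^+),P_s(\Delta_{k-1}^-))$, the attainment argument of Theorem~\ref{th:IMP:LQ:suff} (Lemma~\ref{lemma:decomp} with the Schur equality) makes this inner product zero, so $\Delta V=\sigma_k$. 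This gain depends only on the just-elapsed $\Delta_{k-1}$, so the policy is causal.

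\textbf{Step 4: storage bound.} Summing over $[t_0,t_0+T]$ along the optimal extraction trajectory $X^\star$ yields
\[
-\int\sigma_f-\sum_k\sigma_k=\ip{-P_s(0^+)}{X_1^0}-V(\tau(t_0+T),EX^\star(t_0+T)E^*).
\]
Hence the extracted supply equals $\ip{-P_s(0^+)}{X_1^0}$ up to the terminal storage term. To get $V_a\ge\ip{-P_s(0^+)}{X_1^0}$, that terminal term must be shown to vanish or be nonpositive in the limit.

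\textbf{Main obstacle.} I expect Step~4's terminal term to be the hard part, since $P_s$ is sign-indefinite. I would first argue that $\tau\mapsto P_s(\tau)$ is continuous on the compact set $[0,T_{\max}]$ and hence bounded. I would then invoke the detectability hypothesis (D4) along $X^\star$ so that $\ip{P_s(\tau(t_0+T))}{EX^\star(t_0+T)E^*}\to0$, as in Theorem~\ref{th:IMP:diss:inf:suff}. If that is unavailable, I would fall back on the paper's convention in Theorem~\ref{th:diss:MDT} that $-P_s\succeq0$, which makes the terminal storage nonnegative and the inequality go the right way.
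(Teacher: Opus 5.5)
Your Steps 1--3 coincide with the paper's proof: an absolute-time $P$, then (i) gives $\dot V=\sigma_f$ and (ii) with $K_k\in\mathcal{K}_d^M$ gives $\Delta V=\sigma_k$. You are also right that, since (i) is the full matrix identity $\mathcal{C}_M=0$, flow equality holds for every admissible $X$. The gap is in Step~4, and neither of your two resolutions closes it. Along the extraction trajectory you correctly obtain
\[
S_T:=-\int_{t_0}^{t_0+T}\sigma_f\dt-\sum_{k:\,t_k\le t_0+T}\sigma_k
=\ip{-P_s(0^+)}{X_1^0}-\ip{-P_s(\tau(t_0+T))}{EX^\star(t_0+T)E^*}.
\]
Then $V_a\ge\sup_T S_T$ gives $V_a\ge\ip{-P_s(0^+)}{X_1^0}$ only if $\liminf_{T\to\infty}\ip{-P_s(\tau(t_0+T))}{EX^\star(t_0+T)E^*}\le0$.

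Your fallback $-P_s\succeq0$ makes this terminal storage \emph{nonnegative}, hence $S_T\le\ip{-P_s(0^+)}{X_1^0}$, and the inequality goes the wrong way. Because $\dot V\le\sigma_f$ and $\Delta V\le\sigma_k$ hold along every admissible trajectory, $V\ge0$ yields the \emph{upper} bound $V_a\le\ip{-P_s(0^+)}{X_1^0}$. This is the usual ``storage dominates available storage'' of Proposition~\ref{prop:Vsandwich}, not the claimed lower bound. The paper's proof makes the same move: it defers to the MDT argument (``follows as in the proof of Theorem~\ref{th:diss:MDT}''), which also delivers the upper bound once its rearrangement is written out. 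So the paper does not supply the missing ingredient either.

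Your first route fails as well. (D4) is not among the hypotheses (D1)--(D3). Even if added, it only says that $\norm{EXE^*}$ stays bounded and that $T^{-1}\ip{P_\infty(t_0+T)}{EX(t_0+T)E^*}\to0$, for $P_\infty$, not $P_s$. Combined with boundedness of $P_s$ on $[0,T_{\max}]$, this makes the terminal term $O(1)$. That suffices for time-averaged statements but not for the unnormalized limit you need.

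What is genuinely missing is a decay property of the closed-loop extraction trajectory under the RDT sequence. For example, $\liminf_{T\to\infty}\norm{EX^\star(t_0+T)E^*}=0$ (an RDT stability hypothesis on the extraction gains, in the spirit of (H1)) together with $\sup_{[0,T_{\max}]}\|P_s\|<\infty$ kills the terminal term. The only sign condition that works without decay is $P_s\succeq0$, which makes the terminal term nonpositive. But then $\ip{-P_s(0^+)}{X_1^0}\le0$ and the bound is essentially vacuous.
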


\begin{proof}
For any $\Delta_k\in[T_{\min},T_{\max}]$, condition~(i) gives
$\mathcal{C}_M=0$ on $(0,T_{\max})\supset(0,\Delta_k)$, so
$\dot V\le\sigma_f$ with equality at the optimal gain.  Condition~(ii)
gives $\mathcal{D}_M\succeq 0$ with Schur zero at $\tau=\Delta_k$, so
$\Delta V\le\sigma_k$ with equality.  Since the flow D-GRE holds with
equality on all of $[0,T_{\max}]$, no Geromel--Colaneri tail extension
is needed.  The available storage bound follows as in the proof of
Theorem~\ref{th:diss:MDT}.
\end{proof}

\begin{theorem}[Dual DLMI, RDT dissipativity]
  \label{th:diss:RDT:LMI}
  Under \textup{(D1)--(D3)} and the RDT hypothesis of
  Theorem~\ref{th:diss:RDT},
    \begin{equation}\label{eq:diss:RDT:LMI}
    \begin{array}{rl}
    \min_P&\ip{-P(0^+)}{X_1^0}\\
    \text{s.t.}& P:[0,T_{\max}]\to\Sn,\;\text{bounded piecewise-}C^1,\\
    &\mathcal{C}_M(\tau,P(\tau))\succeq 0\;\text{a.e.\ on }(0,T_{\max}),\\
    &\mathcal{D}_M(P(0^+),P(\tau))\succeq 0\;\text{for }\tau\in[T_{\min},T_{\max}],\\
    \end{array}
  \end{equation}
  Any feasible $P$ defines a causal storage function on
  $[0,T_{\max}]$.  The infimum is attained at $P_s$ of
  Theorem~\ref{th:diss:RDT} when the Schur equalities of the forward
  D-GRE on $[0,T_{\max}]$ and on $[T_{\min},T_{\max}]$ are imposed
  on top of the LMIs.
\end{theorem}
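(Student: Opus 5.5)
The argument mirrors the proof of Theorem~\ref{th:diss:MDT:LMI}, with the Geromel--Colaneri tail replaced by the pointwise jump constraint over the whole range $[T_{\min},T_{\max}]$. It has two steps. First, every feasible $P$ yields a causal storage function, so its value bounds $V_a$ from above. Second, the forward D-GRE solution $P_s$ of Theorem~\ref{th:diss:RDT} is feasible and attains that bound.

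\textbf{Step 1: every feasible $P$ gives an upper bound.} Fix a feasible $P:[0,T_{\max}]\to\Sn$ and an admissible dwell-time sequence $\Delta_k\in[T_{\min},T_{\max}]$. Define the absolute-time dual $P(t):=P(t-t_k)$ on $(t_k,t_{k+1}]$, as in the proof of Theorem~\ref{th:OC:RDT}.
\begin{itemize}
\item On flow, $\tau=t-t_k\in(0,\Delta_k)\subset(0,T_{\max})$, so $\mathcal{C}_M(t,P(t))\succeq0$ a.e. Hence $\dot V\le\sigma_f$ for $V(\tau,X)=\ip{-P(\tau)}{EXE^*}$, exactly as in the flow part of Theorem~\ref{th:IMP:diss:suff}(c).
\item At $t_k$ the pre-jump timer is $\Delta_{k-1}\in[T_{\min},T_{\max}]$. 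The second LMI constraint, evaluated at $\tau=\Delta_{k-1}$, gives $\mathcal{D}_M(P(0^+),P(\Delta_{k-1}^-))\succeq0$, hence $\Delta V\le\sigma_k$.
\end{itemize}
So $V_P$ is a bounded (by continuity on the compact interval) causal linear storage function. Applying the key identity~\eqref{eq:IMP:diss:keyid}, in the unterminated form~\eqref{eq:IMP:keyid} with $Z\leftarrow M$, on $[t_0,t_0+T]$ gives the supply-extraction bound
\[
\text{(extracted supply)}\;\le\;\ip{-P(0^+)}{X_1^0}+\ip{P(t_0+T)}{EX(t_0+T)E^*}.
\]
The boundary term is controlled by (D4), as in Theorem~\ref{th:IMP:diss:inf:suff}; alternatively one can invoke Theorem~\ref{th:IMP:diss:inf:suff}\eqref{inf:suff:diss:largest} directly on the periodized-by-timer extension. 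Either way $V_a(t_0,X_1^0)\le\ip{-P(0^+)}{X_1^0}$, and taking the infimum over feasible $P$ gives $V_a\le\inf$.

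\textbf{Step 2: $P_s$ is feasible and attains the bound.} By Theorem~\ref{th:diss:RDT}(i), $\mathcal{C}_M(\tau,P_s(\tau))=0\succeq0$ on $(0,T_{\max})$. Condition~(ii) gives $\mathcal{D}_M(P_s(0^+),P_s(\tau))\succeq0$ on $[T_{\min},T_{\max}]$, so $P_s$ is feasible. Under the extraction policy with gains in $\mathcal{K}_c^M$ and $\mathcal{K}_d^M(P_s(0^+),P_s(\Delta_{k-1}^-))$, the Schur equalities make every flow integrand and jump summand vanish. This uses Lemma~\ref{lemma:decomp} and the attainment argument of Theorem~\ref{th:IMP:LQ:suff}, which is causal because $\Delta_{k-1}$ is known at $t_k$. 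The key identity then holds with equality, and after the boundary term vanishes the extracted supply equals $\ip{-P_s(0^+)}{X_1^0}$. This gives $V_a\ge\ip{-P_s(0^+)}{X_1^0}$, consistent with Theorem~\ref{th:diss:RDT}. Combined with Step~1, the minimum is attained at $P_s$ and equals $V_a$. The role of the Schur equalities is exactly to single out this minimizer within the LMI feasible set.

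\textbf{Main obstacle.} The delicate point is the vanishing of the boundary term $\ip{P(t_0+T)}{EX(t_0+T)E^*}$ in both directions: the Step~1 upper bound must hold for arbitrary admissible $X$, and the Step~2 lower bound needs it along the closed loop. The statement assumes only (D1)--(D3), so I would first try to get decay from the uniform bound $\sup_{[0,T_{\max}]}\|P\|$ together with (D3), giving bounded extraction and hence boundedness of $EXE^*$. Failing that, I would explicitly invoke the detectability assumption (D4) as in Theorem~\ref{th:IMP:diss:inf:suff}, noting that it concerns only the trajectory and not the particular $P$.
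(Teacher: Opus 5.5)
This is essentially the paper's argument, which defers to the proof of Theorem~\ref{th:diss:MDT:LMI}: feasibility makes $V_P$ a causal storage function, so $V_a\le\ip{-P(0^+)}{X_1^0}$ by the smallest-storage property of Theorem~\ref{th:IMP:diss:inf:suff}\eqref{inf:suff:diss:largest}, and $P_s$ is feasible and attains the bound, with your explicit closed-loop extraction step supplying the lower bound $V_a\ge\ip{-P_s(0^+)}{X_1^0}$ that the paper only asserts. Your appeal to (D4) for the boundary term mirrors what the paper does implicitly through Theorem~\ref{th:IMP:diss:inf:suff}, whose proof uses (D4) although only (D1)--(D3) are listed; strictly, both of your steps need $\ip{P(t_0+T)}{EX(t_0+T)E^*}$ itself to vanish or to have a sign, which neither the $T^{-1}$-averaged form of (D4) nor your (D3)-based fallback actually provides.
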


\begin{proof}
Identical to that of Theorem~\ref{th:diss:MDT:LMI}, with the timer
interval $[0,T_{\max}]$ in place of $[0,T_{\min}]$ and the jump LMI
$\mathcal{D}_M\succeq 0$ holding for all $\tau\in[T_{\min},T_{\max}]$.
No tail extension is needed since the flow LMI holds on the entire
timer interval.
\end{proof}


\subsubsection{Continuous- and Discrete-Time Corollaries}

\begin{corollary}[Continuous-time dissipativity: infinite horizon]
  \label{cor:CT:diss:inf}
  Set $N_T=0$.  Theorem~\ref{th:IMP:diss:inf} gives the unique
  bounded $C^1$ solution $P_\infty$ of the continuous-time infinite-horizon
  D-GRE, with $V_a(t_0,X_1^0)=\ip{-P_\infty(t_0)}{X_1^0}$,
  $P_\infty=\lim_T P_T$ (the limit need not be monotone for indefinite $M$), and the
  smallest-storage-function sandwich property.
  The dual is $V_a=\inf_P\ip{-P(t_0)}{X_1^0}$ s.t.\
  $\mathcal{C}_M(t,P(t))\succeq0$, $P$ bounded.
\end{corollary}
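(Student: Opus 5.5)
This corollary is the specialisation $N_T=0$ of Theorems~\ref{th:IMP:diss:inf:suff}, \ref{th:IMP:diss:inf} and~\ref{th:IMP:diss:inf:LMI}, so the plan is to check that each ingredient survives once the jump terms are removed. First, with $N_T=0$ every sum over $k$ in the dissipativity key identity~\eqref{eq:IMP:diss:keyid} is empty, and the jump conditions of the I-D-GRE are vacuous. The infinite-horizon I-D-GRE therefore reduces to the continuous-time D-GRE: $\mathcal{C}_M(t,P(t))\succeq0$ and $\mathcal{C}_M\,\schur{/}\,(\mathcal{C}_M)_3=0$ for all $t\ge t_0$. Assumptions (D1)--(D3) involve only $M$ and $\mathcal{F}$ once $M_k$ and $\mathcal{J}$ are absent. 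Theorem~\ref{th:IMP:diss:inf} then yields a unique bounded solution $P_\infty$ with $V_a(t_0,X_1^0)=\ip{-P_\infty(t_0)}{X_1^0}$ and $0\preceq-P_\infty\preceq\alpha_2I$.

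Second, I will upgrade the regularity from piecewise-$C^1$ to $C^1$. The only breakpoints in the impulsive theory are jump times. On the single flow interval $[t_0,\infty)$ the D-GRE gives $E^*\dot P_\infty E$ as a continuous function of $(t,P_\infty(t))$ through the pseudoinverse formula, which is uniformly bounded on bounded sets by (D2). Since $P_\infty$ is continuous, $\dot P_\infty$ is continuous. The sandwich property follows from Theorem~\ref{th:IMP:diss:inf:suff}\eqref{inf:suff:diss:largest} together with Proposition~\ref{prop:Vsandwich}\eqref{sand:d}. The dual statement is Theorem~\ref{th:IMP:diss:inf:LMI} with the $\mathcal{D}_M$ constraint removed.

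Third, I will establish the limit statement $P_\infty=\lim_TP_T$, where $P_T$ solves the finite-horizon D-GRE with $P_T(t_0+T)=0$. This is the step I expect to be the main obstacle, because Theorem~\ref{th:IMP:diss:inf} gives only the Loewner least upper bound, and monotonicity fails for indefinite $M$.

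\begin{itemize}
\item \emph{Pointwise convergence of the value.} Using the dissipativity key identity on $[t_0,t_0+T]$ with $P_\infty$ and the extracting policy, I would show $V_a^T(t_0,X_1^0)\ge\ip{-P_\infty(t_0)}{X_1^0}-\ip{-P_\infty(t_0+T)}{EX^\star(t_0+T)E^*}$. By (D4), the boundary term along the optimal trajectory tends to zero. Combined with $V_a^T\le V_a$, this gives $V_a^T\to V_a$ for every $X_1^0$.
\item \emph{Matrix convergence.} Testing with $X_1^0=vv^*$ and polarising gives $P_T(t_0)\to P_\infty(t_0)$, and the same argument applies at each starting time $t$.
\item \emph{Uniform convergence on compacts.} If this stronger convergence is wanted, I would obtain it as in the footnote to Theorem~\ref{th:IMP:LQ:inf}: a uniform Lipschitz bound on $P_T$ from (D2), then Arzel\`a--Ascoli.
\end{itemize}

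The delicate point is bounding $-P_T$ uniformly from below. Since $V_a^T\ge -\int_{t_0}^{t_0+T}\ip{M}{X}\dt$ for the trajectory with zero free blocks, I plan to obtain this bound from (D3) combined with the stabilising behaviour assumed in (D4).
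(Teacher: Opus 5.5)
Your first two steps match what the paper intends. The corollary has no separate proof; it is meant as the $N_T=0$ specialisation of Theorems~\ref{th:IMP:diss:inf:suff}, \ref{th:IMP:diss:inf} and~\ref{th:IMP:diss:inf:LMI}. You are also right that the claim $P_\infty=\lim_TP_T$ is \emph{not} delivered by that specialisation. Theorem~\ref{th:IMP:diss:inf} only identifies $-P_\infty$ as the Loewner least upper bound of $\{-P_T\}$, and the paper concedes in Theorem~\ref{th:diss:AC:nec}\eqref{diss:AC:nec:conv} that convergence needs an extra structural condition.

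Your patch, however, has a genuine gap. The estimate
\[
  \ip{-P_\infty(t_0)}{X_1^0}-\ip{-P_\infty(t_0+T)}{EX^\star(t_0+T)E^*}\le V_a^T(t_0,X_1^0)\le V_a(t_0,X_1^0)
\]
is fine. But the boundary term is nonnegative (since $-P_\infty\succeq0$), and you need it to tend to \emph{zero}. Assumption (D4) does not give this. It only asserts $T^{-1}\ip{P_\infty(t_0+T)}{EX(t_0+T)E^*}\to0$, i.e.\ the term is $o(T)$, and $V_a-o(T)\le V_a^T\le V_a$ says nothing about $V_a^T-V_a$.

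Moreover, (D4) is not a hypothesis of Theorem~\ref{th:IMP:diss:inf}, which assumes only (D1)--(D3) plus dissipativity, and that is the theorem being specialised. The paper's proof of Theorem~\ref{th:IMP:diss:inf:suff} makes the same leap, so citing it does not close the gap. What your argument actually needs is that the extracting closed loop generated by $\mathcal{K}_c^M(t,P_\infty(t))$ drives $\ip{-P_\infty(s)}{EX^\star(s)E^*}$ to zero. A sufficient form is uniform exponential stability, $\norm{EX^\star(s)E^*}\le\beta e^{-\alpha(s-t)}\norm{EX^\star(t)E^*}$ for $s\ge t$. This is a stabilising-solution hypothesis that nothing in (D1)--(D4) supplies when $M$ is indefinite.

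Your plan for the uniform lower bound on $-P_T$ also fails as stated. (D3) bounds $\ip{M(t)}{X}$ from \emph{below}, hence bounds the extracted supply $-\int\ip{M}{X}\dt$ from \emph{above}. It can therefore only upper-bound $V_a^T$, never lower-bound it. A lower bound via the zero-free-block trajectory would require the uncontrolled flow to be integrable (stable), which is not assumed.

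Once the stabilising hypothesis is added, you do not need that step at all. Running your estimate from an arbitrary starting time $t$, using $-P_\infty\preceq\alpha_2 I$ and Proposition~\ref{prop:PQ}\eqref{st:bound}, gives
\[
  -P_\infty(t)-\alpha_2\beta e^{-\alpha(t_0+T-t)}I\preceq-P_T(t)\preceq-P_\infty(t).
\]
This yields the uniform bound and uniform convergence on compact subintervals directly, with no Arzel\`a--Ascoli argument.

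A minor point: your $C^1$ upgrade also needs $M$ to be continuous. Definition~\ref{def:IMP:supply} only gives piecewise continuity, so $\dot P_\infty$ may jump wherever $M$ does.
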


\begin{corollary}[Discrete-time dissipativity: infinite horizon]
  \label{cor:DT:diss:inf}
  Apply Theorem~\ref{th:IMP:diss:inf} with $\mathcal{F}=0$,
  $M=0$ on flow, $t_k=k$, $\mathcal{J}=\mathcal{D}$.
  The infinite-horizon discrete-time available storage is
  $V_a(k_0,X_1^0)=\ip{-P_\infty(k_0)}{X_1^0}$, where $P_\infty$
  satisfies the algebraic discrete-time D-GRE and is the
  limit of the $N$-step solutions.
  The dual is $V_a=\inf_P\ip{-P(k_0)}{X_1^0}$ s.t.\
  $\mathcal{D}_M(k,P^+,P(t))\succeq0$, $P$ bounded.
\end{corollary}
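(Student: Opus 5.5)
The statement to prove is Corollary~\ref{cor:DT:diss:inf}. I will obtain it by specializing Theorem~\ref{th:IMP:diss:inf}, Theorem~\ref{th:IMP:diss:inf:suff} and Theorem~\ref{th:IMP:diss:inf:LMI} to the discrete-time embedding of Definition~\ref{def:DT:sys}. The embedding sets $\mathcal{F}=0$, $M=0$ on flow, $t_k=k$ (with $k\ge k_0$), $\mathcal{J}=\mathcal{D}$, and $M_k=M(k)$.

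\textbf{Step 1: reduce the flow conditions.} First I will check what the flow part of the infinite-horizon I-D-GRE becomes. With $\mathcal{F}=0$ and $M=0$, the composite is
\[
\mathcal{C}_M(t,P(t))=E^*\dot P(t)E .
\]
This matrix has zero $(2,2)$ and off-diagonal blocks. Consequently the Schur equality $\mathcal{C}_M\,\schur{/}\,(\mathcal{C}_M)_3=0$ forces $\dot P=0$ on each open interval $(k,k+1)$. The compatibility condition and $(\mathcal{C}_M)_3\succeq0$ then hold trivially. So any solution is piecewise constant, and I may write $P(k):=P(t_k^-)$, which also equals $P(t_{k-1}^+)$.

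\textbf{Step 2: reduce the jump conditions.} With $P$ piecewise constant, the jump conditions become exactly the algebraic DT D-GRE:
\[
\mathcal{D}_M(k,P(k+1),P(k))\succeq0,\qquad \mathcal{D}_M\,\schur{/}\,(\mathcal{D}_M)_3=0 ,
\]
where the operator is read with the convention of Corollary~\ref{cor:DT:diss:fh}, namely $P^+=P(k+1)$.

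\textbf{Step 3: value and limit.} Applying Theorem~\ref{th:IMP:diss:inf} under (D1)--(D3) gives a unique bounded solution $P_\infty$ with
\[
V_a(k_0,X_1^0)=\ip{-P_\infty(k_0)}{X_1^0}.
\]
The same theorem gives $-P_\infty=\sup_N(-P_N)$ in the Loewner order, where $P_N$ are the $N$-step solutions of Corollary~\ref{cor:DT:diss:fh} with $P_N(k_0+N)=0$.

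\textbf{Step 4: dual.} The dual characterization $V_a=\inf_P\ip{-P(k_0)}{X_1^0}$ over bounded $P$ with $\mathcal{D}_M\succeq0$ follows from Theorem~\ref{th:IMP:diss:inf:LMI} after the same reduction. Feasible piecewise-$C^1$ $P$ that are not constant between jumps need the flow LMI $E^*\dot P E\succeq0$. This only lets $P$ increase on $(k,k+1)$. I will replace such a $P$ by its value at $t_k^+$ held constant; this keeps the jump LMIs and leaves the objective unchanged, so the infimum may be taken over sequences.

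\textbf{Main obstacle: ``limit'' versus supremum.} The corollary asserts that $P_\infty$ is the \emph{limit} of the $N$-step solutions, whereas Theorem~\ref{th:IMP:diss:inf} only gives a Loewner least upper bound, and the family need not be monotone for indefinite $M$. I will first try to show monotonicity of $N\mapsto -P_N(k)$ in the discrete setting.

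The route is a comparison. Evaluate the $(N+1)$-step cost along a trajectory that is optimal for $N$ steps, then extend it by one step with a gain in $\mathcal{K}_d^M$ at zero terminal weight. If the extra term is nonnegative, monotonicity holds and the bounded monotone sequence converges. If that sign argument fails, I will prove convergence directly instead. The argument uses the key identity with $P_\infty$ and the boundary term controlled by (D4): it shows $\ip{-P_N(k_0)}{X_1^0}\to V_a(k_0,X_1^0)$, because the $N$-step value differs from the truncated optimal value by the boundary term $\ip{P_\infty(k_0+N)}{EX^\star(k_0+N)E^*}$, which vanishes. This pins the limit at every $k$ by shifting $k_0$.
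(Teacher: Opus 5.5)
Your Steps 1--4 match how the paper obtains this corollary, which is pure substitution of $\mathcal{F}=0$, $M=0$ on flow, $t_k=k$, $\mathcal{J}=\mathcal{D}$ into Theorem~\ref{th:IMP:diss:inf} (value and D-GRE) and Theorem~\ref{th:IMP:diss:inf:LMI} (dual). Two of your details are correct and make explicit what the paper leaves implicit. First, the flow Schur equality forces $\dot P=0$. Second, since $\dot P\succeq0$ on each $(t_k,t_{k+1})$, an LMI-feasible $P$ can be frozen at $P(t_k^+)$ on that interval without breaking the jump LMIs.

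The gap is in the step you flag yourself. Theorem~\ref{th:IMP:diss:inf} only gives $-P_\infty=\sup_N(-P_N)$ in the Loewner order, and the paper offers no argument beyond that for the word ``limit''.

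\textbf{Monotone route.} This needs a sign condition. Extending an $N$-step optimizer by one step at zero terminal weight adds at best $-\ip{M_{k_0+N}\,\widetilde{/}\,(M_{k_0+N})_3}{X_1(k_0+N)}$, which has no sign for indefinite supply.

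\textbf{Fallback.} This does not close the gap either. The sandwich
\[
0\le V_a(k_0,X_1^0)-\ip{-P_N(k_0)}{X_1^0}\le\ip{-P_\infty(k_0+N)}{EX^\star(k_0+N)E^*}
\]
is correct. However, (D4) only gives boundedness of $EX^\star E^*$ and $N^{-1}\ip{P_\infty(k_0+N)}{EX^\star(k_0+N)E^*}\to0$. That leaves an $O(1)$ right-hand side, not a vanishing one. Moreover, (D4) is not among the hypotheses (D1)--(D3) of Theorem~\ref{th:IMP:diss:inf}.

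\textbf{The claim does not follow from dissipativity with bounded data.} Take $\mathcal{D}(k,X)=[1\ 1]\,X\,[1\ 1]^*$ (i.e.\ $x_{k+1}=x_k+u_k$), $M(k_0)=\mathrm{diag}(-1,r)$, and $M(k)=\mathrm{diag}(\delta,r)$ for $k>k_0$, with $\delta,r>0$. Then $V_a^1=x_0^2=V_a$. For $N\ge2$,
\[
V_a^N=\bigl(1-\tfrac{rc_{N-1}}{r+c_{N-1}}\bigr)x_0^2\le\bigl(1-\tfrac{r\delta}{r+\delta}\bigr)x_0^2,
\]
where $c_1=\delta$ and $c_j$ increases to the positive root of $c^2=\delta c+\delta r$. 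So $-P_N(k_0)$ converges, but strictly below the supremum. The limit is the D-GRE solution. The supremum is not: it has $P_\infty\equiv0$ for $k>k_0$, where $\mathcal{D}_M=M(k)$ has Schur complement $\delta\neq0$.

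\textbf{What is needed.} You must add a hypothesis. One option is $\norm{EX^\star(k)E^*}\to0$ along the $P_\infty$ extraction closed loop; then your sandwich gives convergence at every $k$. Another is $M(k)\,\widetilde{/}\,(M(k))_3\preceq0$ for all $k$; then $N\mapsto V_a^N$ is non-decreasing and bounded by $V_a$, so it converges to the supremum.
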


\subsection{Long-term average available storage}
\label{sec:diss:AC}

We now develop the dissipativity counterpart of Section~\ref{sec:OC:AC}.
The setup parallels that subsection: the dynamics are augmented by a
Hermitian forcing on the state block as in
Definition~\ref{def:IMP:forced}, and we study the long-term rate of
energy extraction rather than the cumulative available storage.  The
sign conventions of Section~\ref{sec:diss} are retained: the storage
function takes the form $V=\ip{-P}{EXE^*}$ and the available storage
is the \emph{supremum} of the extractable supply, in contrast to the
\emph{infimum} of the cost in the optimal-control case.  The
analysis proceeds in three steps: the forced key identity acquires
linear correction terms in $\mathcal{W},\mathcal{W}_k$
(Section~\ref{sec:diss:AC:keyid}); the average available-storage
rate is identified through a linear pairing with the forcing data
(Sections~\ref{sec:diss:AC:inf:suff}--\ref{sec:diss:AC:inf:nec}); and
causal dwell-time bounds follow by the same constructions as in
Section~\ref{sec:OC:AC:dwell}.

\subsubsection{Setting, supply rate, and assumptions}
\label{sec:diss:AC:setup}

The forced impulsive system is that of
Definition~\ref{def:IMP:forced}, with $\mathcal{W}(t)\in\Snpsd$ and
$\mathcal{W}_k\in\Snpsd$ bounded.  The supply rate is the linear
supply of Definition~\ref{def:IMP:supply},
$\sigma_f(t,X)=\ip{M(t)}{X}$, $\sigma_k(X)=\ip{M_k}{EXE^*}$,
$\sigma_T=0$ (no terminal supply, as we work directly in the
infinite-horizon setting).  The horizon-$T$ available storage is
\begin{equation}\label{eq:diss:AC:VaT}
  V_a^T(t_0,X_1^0)
  :=\sup_{X(\cdot)\in\Snmpsd}\,
   \left\{-\int_{t_0}^{t_0+T}\!\ip{M(t)}{X(t)}\dt
    -\!\!\sum_{k:\,t_k\le t_0+T}\!\!\ip{M_k}{EX(t_k^-)E^*}\right\}
\end{equation}
subject to~\eqref{eq:sys:IMP:forced} and $EX(t_0)E^*=X_1^0$.

\begin{definition}[Long-term average available storage rate]
  \label{def:IMP:Va:AC}
The \emph{long-term average available storage rate} is
\begin{equation}\label{eq:diss:AC:Va}
  \bar V_a^\star(X_1^0)
  :=\limsup_{T\to\infty}\frac{1}{T}\,V_a^T(t_0,X_1^0).
\end{equation}
\end{definition}

\medskip\noindent\textbf{Assumptions.}
The well-posedness of the long-term average available-storage
problem requires three assumptions on the homogeneous dynamics and
the forcing data.  These are the dissipativity counterparts of
(A1),~(A3),~(A4) of Section~\ref{sec:OC:AC:setup}, renumbered
consecutively here: no analogue of the cost-coercivity
condition~(A2) of Section~\ref{sec:OC:AC:setup} is required, since
dissipativity does not need a positive lower bound on the supply
rate~$M$ (which may be indefinite).

\medskip\noindent\textbf{(A1) Uniform stabilizability of the
homogeneous dynamics.}
There exist a feedback $K_s\in\C^{m\times n}$, constants
$\beta\ge1$, $\rho\in(0,1)$ and $\alpha>0$, such that the homogeneous
closed-loop trajectory $X_s^{\mathrm h}$ of~\eqref{eq:sys:IMP} (i.e.,
$\mathcal{W}=0$, $\mathcal{W}_k=0$) with feedback
$X_2^s=K_sX_1^s$, $X_3^s=K_sX_1^sK_s^*$ satisfies
$\norm{EX_s^{\mathrm h}(t)E^*}\le\beta e^{-\alpha(t-s)}\rho^{\kappa(t,s)}
\norm{EX_s^{\mathrm h}(s)E^*}$ for all $t\ge s\ge t_0$.

\medskip\noindent\textbf{(A2) Uniform boundedness of the operators
and supply rates.}
$M,M_k,\mathcal{F}(t,\cdot),\mathcal{J}(k,\cdot)$ are uniformly
bounded; $(\mathcal{C}_M)_3(t,P(t))\succeq0$ and
$(\mathcal{D}_M)_3(k,P(t_k^+),P(t_k^-))\succeq0$ for all
$P\succeq0$; $\pinv{(\mathcal{C}_M)_3}$ and
$\pinv{(\mathcal{D}_M)_3}$ are uniformly bounded on bounded sets of
$P$; and $\mathcal{W}(t),\mathcal{W}_k$ are uniformly bounded.

\medskip\noindent\textbf{(A3) Jump-rate regularity.}
The inter-jump times $\Delta_k=t_{k+1}-t_k$ are bounded both above
and below by positive constants: there exist $0<\Delta_{\min}\le
\Delta_{\max}<\infty$ such that $\Delta_k\in[\Delta_{\min},
\Delta_{\max}]$ for all $k\ge0$.  In particular, the jump rate
$\rho_{\mathrm{rate}}:=\lim_{T\to\infty}T^{-1}\#\{k:t_k\le t_0+T\}$
exists and is finite, with $\Delta_{\max}^{-1}\le\rho_{\mathrm{rate}}
\le\Delta_{\min}^{-1}$.

\medskip
Assumption~(A1) is identical to its optimal-control counterpart:
the dual decay $\beta e^{-\alpha(t-s)}\rho^{\kappa(t,s)}$ absorbs
both flow and jump contributions to the closed-loop decay rate,
and is exactly the condition that guarantees existence of a bounded
solution $P_\infty$ to the homogeneous infinite-horizon I-D-GRE by
Theorem~\ref{th:IMP:diss:inf}.  Assumption~(A2) extends the original
dissipativity boundedness hypotheses of Section~\ref{sec:diss:inf}
to include the forcing data $\mathcal{W},\mathcal{W}_k$.

Assumption~(A3) plays a different role from~(A1)--(A2).
Assumption~(A1), through the dual decay rate
$\beta e^{-\alpha(t-s)}\rho^{\kappa(t,s)}$, already ensures that the
homogeneous closed-loop trajectory decays robustly under both small
and large inter-jump times: small $\Delta_k$ are absorbed by the
geometric factor $\rho^{\kappa(t,s)}$ (each jump contributing decay),
while large $\Delta_k$ are absorbed by the continuous factor
$e^{-\alpha(t-s)}$ (the flow being independently stable).
Consequently, state stability and the summability of the jump-supply
contribution along stabilizing trajectories follow from~(A1) alone,
without any direct constraint on the $\Delta_k$.  The role of~(A3)
is purely to make the \emph{time-averaged} long-term available
storage well-defined as a genuine limit rather than a $\limsup$: the
existence of the jump rate $\rho_{\mathrm{rate}}$ ensures that the
discrete jump-supply contribution admits a long-run average, and
bounded $\Delta_k$ ensures that the supply rates $\mathcal{W},
\mathcal{W}_k$ are properly time-averaged.  Theorems giving only a
lower bound on the long-term available storage invoke
only~(A1)--(A2); theorems giving exact long-term equalities
additionally require~(A3).

\subsubsection{Forced key identity (dissipativity)}
\label{sec:diss:AC:keyid}

The forced key identity of Proposition~\ref{prop:keyid:forced}
applies with $Z\leftarrow M$, $Z_k\leftarrow M_k$, giving, for any
piecewise-$C^1$ $P:[t_0,t_0+T]\to\Sn$ and any admissible trajectory
$X$ of~\eqref{eq:sys:IMP:forced},
\begin{equation}\label{eq:diss:AC:keyid}
\begin{aligned}
  &-\int_{t_0}^{t_0+T}\!\ip{M(t)}{X(t)}\dt
  -\!\!\sum_{k:\,t_k\le t_0+T}\!\!\ip{M_k}{EX(t_k^-)E^*}\\
  &\quad
  =\ip{-P(t_0)}{X_1^0}-\ip{-P(t_0+T)}{EX(t_0+T)E^*}\\
  &\quad\;\;
  -\int_{t_0}^{t_0+T}\!\ip{\mathcal{C}_M(t,P(t))}{X(t)}\dt
  -\!\!\sum_{k:\,t_k\le t_0+T}\!\!\ip{\mathcal{D}_M(k,P(t_k^+),P(t_k^-))}{X(t_k^-)}\\
  &\quad\;\;
  -\int_{t_0}^{t_0+T}\!\ip{P(t)}{\mathcal{W}(t)}\dt
  -\!\!\sum_{k:\,t_k\le t_0+T}\!\!\ip{P(t_k^+)}{\mathcal{W}_k}.
\end{aligned}
\end{equation}
This is~\eqref{eq:IMP:keyid:forced} multiplied by $-1$ and rearranged,
with the storage-function sign convention $V=\ip{-P}{EXE^*}$ made
explicit on the right-hand side.

\subsubsection{Finite-horizon average available storage}
\label{sec:diss:AC:fh}

\begin{theorem}[Finite-horizon average available storage decomposition]
  \label{th:diss:AC:finite}
Let $T>0$ and let $P_T$ be the piecewise-$C^1$ solution of the
finite-horizon I-D-GRE (Definition~\ref{def:IMP:dGRE}) with
$P_T(t_0+T)=0$.  Then the horizon-$T$ available
storage~\eqref{eq:diss:AC:VaT} satisfies
\begin{equation}\label{eq:diss:AC:finite:Va}
  V_a^T(t_0,X_1^0)
  =\ip{-P_T(t_0)}{X_1^0}
  -\int_{t_0}^{t_0+T}\!\ip{P_T(t)}{\mathcal{W}(t)}\dt
  -\!\!\sum_{k:\,t_k\le t_0+T}\!\!\ip{P_T(t_k^+)}{\mathcal{W}_k},
\end{equation}
and the horizon-$T$ time-averaged rate is
$\bar V_a^T(t_0,X_1^0):=V_a^T/T$.  The supremum is attained at
$X^\star$ with $K(t)\in\mathcal{K}_c^M(t,P_T(t))$ on flow and
$K_k\in\mathcal{K}_d^M(k,P_T(t_k^+),P_T(t_k^-))$ at each jump.
\end{theorem}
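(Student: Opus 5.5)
The argument mirrors that of Theorem~\ref{th:IMP:AC:finite}, now in the dissipativity sign convention. The starting point is the forced dissipativity key identity~\eqref{eq:diss:AC:keyid} with $P=P_T$ on $[t_0,t_0+T]$. The terminal condition $P_T(t_0+T)=0$ kills the boundary term $\ip{-P(t_0+T)}{EX(t_0+T)E^*}$. For every admissible trajectory $X$ of~\eqref{eq:sys:IMP:forced} with $EX(t_0)E^*=X_1^0$, the extracted supply inside the supremum of~\eqref{eq:diss:AC:VaT} is therefore
\[
  \ip{-P_T(t_0)}{X_1^0}-R_T(P_T)
  -\int_{t_0}^{t_0+T}\ip{\mathcal{C}_M(t,P_T(t))}{X(t)}\dt
  -\sum_{k}\ip{\mathcal{D}_M(k,P_T(t_k^+),P_T(t_k^-))}{X(t_k^-)},
\]
where $R_T(P_T)$ denotes the two forcing pairings $\int\ip{P_T}{\mathcal{W}}\dt+\sum_k\ip{P_T(t_k^+)}{\mathcal{W}_k}$.

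\textbf{Upper bound.} Since $P_T$ solves the I-D-GRE, Lemma~\ref{lemma:GREequiv} gives $\mathcal{C}_M(t,P_T(t))\succeq0$ a.e.\ and $\mathcal{D}_M\succeq0$ at each jump, each with zero generalized Schur complement. Lemma~\ref{lemma:decomp}, applied exactly as in the lower-bound step of Theorem~\ref{th:IMP:LQ:suff}, then shows that the flow integrand and each jump summand are nonnegative. The last two terms above are consequently $\le0$, so every admissible trajectory extracts at most $\ip{-P_T(t_0)}{X_1^0}-R_T(P_T)$.

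\textbf{Attainment.} Take the closed-loop trajectory $X^\star$ with $K(t)\in\mathcal{K}_c^M(t,P_T(t))$ and $K_k\in\mathcal{K}_d^M(k,\cdot)$, i.e.\ $U_2^\star=-KU_1^\star$ and $V_{k,2}^\star=-K_kV_{k,1}^\star$. The attainment argument of Theorem~\ref{th:IMP:LQ:suff} applies verbatim, since it uses only the Schur equalities and not the homogeneity of the dynamics. It gives $(\mathcal{C}_M)_3W_f^\star=0$ and $(\mathcal{D}_M)_3W_k^\star=0$, hence both inner products vanish identically along $X^\star$. The supremum therefore equals~\eqref{eq:diss:AC:finite:Va} and is attained at $X^\star$. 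The time-averaged statement follows by dividing by $T$.

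The key structural point, shared with the optimal-control case, is that the forcing terms $R_T(P_T)$ do not depend on the free blocks $(X_2,X_3)$. They are only a constant shift, and the maximization is exactly the homogeneous one.

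\textbf{Main obstacle.} The main obstacle is legitimacy of the Schur machinery when $M$ is indefinite. Lemma~\ref{lemma:decomp} needs $(\mathcal{C}_M)_3\succeq0$ and the compatibility condition~\eqref{eq:compat}, which for indefinite supply hold only because they are built into the I-D-GRE (Definition~\ref{def:IMP:dGRE} and the remark after it). I would state this explicitly and use Lemma~\ref{lemma:GREequiv} to pass between the triple form and $\mathcal{C}_M\succeq0$ with zero Schur complement.

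A second point to check is admissibility of $X^\star$ under forcing. The feedback $X_2=KX_1$, $X_3=KX_1K^*$ gives a PSD $X^\star$ whenever $X_1^\star\succeq0$. Cone invariance of $\mathcal{F}$ and $\mathcal{J}$, together with $\mathcal{W},\mathcal{W}_k\succeq0$, keeps $X_1^\star\succeq0$ on flow and at jumps. If needed, the closed-loop flow can be written through the operator $\mathcal{A}_{\mathrm{cl}}$ of~\eqref{eq:cov:Acl} plus the PSD forcing, and the same holds for jumps.
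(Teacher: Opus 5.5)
Your proposal is correct and follows essentially the same route as the paper. You apply the forced dissipativity key identity~\eqref{eq:diss:AC:keyid} with $P=P_T$ and use $P_T(t_0+T)=0$ to remove the boundary term; the I-D-GRE (via Lemma~\ref{lemma:GREequiv} and Lemma~\ref{lemma:decomp}) then makes the $\mathcal{C}_M,\mathcal{D}_M$ terms nonnegative and zero along the closed loop with gains in $\mathcal{K}_c^M,\mathcal{K}_d^M$, and the forcing pairings do not depend on the free blocks. Your added checks on the admissibility of $X^\star$ under PSD forcing and on why the Schur machinery applies for indefinite $M$ are sound, and the paper leaves both implicit.
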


\begin{proof}
Apply~\eqref{eq:diss:AC:keyid} with $P=P_T$.  Since $P_T$ solves the
I-D-GRE, $\mathcal{C}_M(t,P_T(t))\succeq0$ and
$\mathcal{D}_M(k,P_T(t_k^+),P_T(t_k^-))\succeq0$ with both Schur
complements vanishing (Lemma~\ref{lemma:GREequiv}).  The Schur
inner-product decomposition (Lemma~\ref{lemma:decomp}) gives
$\ip{\mathcal{C}_M(t,P_T(t))}{X(t)}\ge0$ and
$\ip{\mathcal{D}_M(k,P_T(t_k^+),P_T(t_k^-))}{X(t_k^-)}\ge0$ for every
admissible $X$, with equality at any $X^\star$ with feedback in
$\mathcal{K}_c^M,\mathcal{K}_d^M$.  Using $P_T(t_0+T)=0$ (so the
second boundary term in~\eqref{eq:diss:AC:keyid} vanishes) and that
the last two terms are independent of the free blocks $(X_2,X_3)$,
the supremum in~\eqref{eq:diss:AC:VaT} is attained at any such
$X^\star$ and equals~\eqref{eq:diss:AC:finite:Va}.
\end{proof}

The decomposition~\eqref{eq:diss:AC:finite:Va} separates
$\bar V_a^T$ into a \emph{transient term}
$T^{-1}\ip{-P_T(t_0)}{X_1^0}$ that vanishes as $T\to\infty$ and
\emph{forcing terms} whose $T\to\infty$ limit gives the long-term
available storage rate; this is the content of the next subsection.

\subsubsection{Infinite-horizon: sufficient condition}
\label{sec:diss:AC:inf:suff}

The following theorem is the central sufficient condition.  It says
that any bounded solution of the \emph{homogeneous} infinite-horizon
I-D-GRE (Section~\ref{sec:diss:inf}) determines the long-term
available storage rate through a linear pairing with the forcing
data.  No new Riccati-type equation is required.

\begin{theorem}[Sufficient condition, infinite-horizon average
  available storage]\label{th:diss:AC:suff}
Suppose there exists a bounded piecewise-$C^1$ function
$P_\infty:[t_0,\infty)\to\Snpsd$ satisfying the infinite-horizon
I-D-GRE (Section~\ref{sec:diss:inf}), and assume further that the
limit
\begin{equation}\label{eq:diss:AC:Vastar}
  \bar V_a^\star
  :=\lim_{T\to\infty}\frac{1}{T}\!\left[
    -\int_{t_0}^{t_0+T}\!\ip{P_\infty(t)}{\mathcal{W}(t)}\dt
    -\!\!\sum_{k:\,t_k\le t_0+T}\!\!\ip{P_\infty(t_k^+)}{\mathcal{W}_k}
  \right]
\end{equation}
exists.  Then\textup{:}
\begin{enumerate}[\upshape(a)]
  \item\label{diss:AC:suff:val}
    The long-term average available storage rate satisfies
    $\bar V_a^\star(X_1^0)=\bar V_a^\star$ for every
    $X_1^0\in\Snpsd$, i.e., the rate is independent of the initial
    condition.
  \item\label{diss:AC:suff:gain}
    The supremum is attained at any trajectory $X^\star$ with
    $K_\infty(t)\in\mathcal{K}_c^M(t,P_\infty(t))$ on flow and
    $K_k\in\mathcal{K}_d^M(k,P_\infty(t_k^+),P_\infty(t_k^-))$ at
    each jump.
  \item\label{diss:AC:suff:uniq}
    $P_\infty$ is the unique bounded solution of the infinite-horizon
    I-D-GRE.
\end{enumerate}
\end{theorem}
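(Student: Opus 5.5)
The plan is to transcribe the proof of Theorem~\ref{th:IMP:AC:suff} into the dissipativity setting, with the supremum direction replacing the infimum. The only tool needed is the forced dissipativity key identity~\eqref{eq:diss:AC:keyid} applied with $P=P_\infty$ on $[t_0,t_0+T]$. Write
\[
  S_T(P_\infty):=-\int_{t_0}^{t_0+T}\ip{P_\infty(t)}{\mathcal{W}(t)}\dt-\sum_{k:\,t_k\le t_0+T}\ip{P_\infty(t_k^+)}{\mathcal{W}_k},
\]
so that $S_T(P_\infty)/T\to\bar V_a^\star$ by hypothesis.

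\textbf{Upper bound.} Since $P_\infty$ solves the infinite-horizon I-D-GRE, Lemma~\ref{lemma:GREequiv} gives $\mathcal{C}_M(t,P_\infty(t))\succeq0$ and $\mathcal{D}_M\succeq0$. Lemma~\ref{lemma:decomp} then makes the two subtracted $\mathcal{C}_M,\mathcal{D}_M$ terms in~\eqref{eq:diss:AC:keyid} nonpositive contributions. Hence, for every admissible $X$, the extracted supply on $[t_0,t_0+T]$ is bounded by
\[
  \ip{-P_\infty(t_0)}{X_1^0}+\ip{P_\infty(t_0+T)}{EX(t_0+T)E^*}+S_T(P_\infty).
\]
The boundary term is where the main obstacle lies. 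The term $\ip{P_\infty(t_0)}{X_1^0}/T$ vanishes trivially. For the other term I would use boundedness of $P_\infty$ together with a uniform bound on $\norm{EX(t_0+T)E^*}$. For the stabilizing or closed-loop trajectories this bound comes from Lemma~\ref{lem:IMP:AC:bdd} under (A1)--(A2). For a general admissible $X$, however, the sign matters. Since $P_\infty\succeq0$ here, the term $\ip{P_\infty(t_0+T)}{EX(t_0+T)E^*}$ is nonnegative, so it cannot simply be dropped, unlike in the optimal-control case. I would therefore restrict the supremum to trajectories with bounded second moment, in the spirit of the detectability hypothesis (D4) used in Theorem~\ref{th:IMP:diss:inf:suff}, and invoke Lemma~\ref{lem:IMP:AC:bdd} for those. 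Dividing by $T$ and taking $\limsup$ then gives $\bar V_a^\star(X_1^0)\le\bar V_a^\star$.

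\textbf{Attainment.} For the closed loop with $K_\infty(t)\in\mathcal{K}_c^M(t,P_\infty(t))$ and $K_k\in\mathcal{K}_d^M(k,P_\infty(t_k^+),P_\infty(t_k^-))$, the attainment argument of Theorem~\ref{th:IMP:LQ:suff} applies verbatim with $\mathcal{C}_M,\mathcal{D}_M$, since it uses only the Schur equalities and not the homogeneity of the dynamics. It shows that both inner products vanish along $X^\star$, so the upper bound above holds with equality. Lemma~\ref{lem:IMP:AC:bdd} bounds $EX^\star(t_0+T)E^*$ uniformly, so after dividing by $T$ both boundary terms vanish and the limit equals $\bar V_a^\star$. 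This proves~\eqref{diss:AC:suff:val} and~\eqref{diss:AC:suff:gain}; independence of $X_1^0$ is immediate because $X_1^0$ enters only through the transient term.

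\textbf{Uniqueness.} Part~\eqref{diss:AC:suff:uniq} concerns only the homogeneous I-D-GRE, which does not see $\mathcal{W},\mathcal{W}_k$. It therefore follows directly from Theorem~\ref{th:IMP:diss:inf:suff}\eqref{inf:suff:diss:uniq}: any two bounded solutions represent the same homogeneous available storage $V_a$, and testing with $X_1^0=vv^*$ identifies them pointwise in $t$.
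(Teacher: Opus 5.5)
Your argument follows the paper's proof: forced key identity with $P=P_\infty$, Lemma~\ref{lemma:decomp} for the $\mathcal{C}_M,\mathcal{D}_M$ terms, the Schur equalities for attainment, and Theorem~\ref{th:IMP:diss:inf:suff} for uniqueness. The step you flag is exactly where both break.

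\textbf{Upper bound.} The paper disposes of $T^{-1}\ip{P_\infty(t_0+T)}{EX(t_0+T)E^*}$ by asserting that Lemma~\ref{lem:diss:AC:bdd} bounds $\norm{EX(t)E^*}$ along every admissible trajectory. That lemma only concerns the loop closed with the stabilizing gain $K_s$ of (A1). Your repair does not close the gap either. Restricting the supremum changes $V_a^T$, which the statement defines over all admissible trajectories. Even on a restricted class, a per-trajectory bound is not enough: $\bar V_a^\star(X_1^0)=\limsup_T T^{-1}V_a^T$ puts the supremum inside the limit, so you need $\sup_X\ip{P_\infty(t_0+T)}{EX(t_0+T)E^*}=o(T)$.

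\textbf{Attainment and uniqueness.} Your attainment step misuses Lemma~\ref{lem:IMP:AC:bdd} in the same way. $X^\star$ is driven by $K_\infty\in\mathcal{K}_c^M$, not by $K_s$, and nothing in the hypotheses makes that loop stable. Citing Theorem~\ref{th:IMP:diss:inf:suff}\eqref{inf:suff:diss:uniq} for (c) imports the detectability (D4) used in that proof, which is not assumed here.

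\textbf{The statement is false as written.} Take no jumps, $n=m=1$, $\mathcal{F}(X)=2X_1+X_2+X_2^*$ (i.e.\ $\dot x=x+u$), $M=\mathrm{diag}(-\tfrac34,1)$ and $\mathcal{W}\equiv w>0$. Then $\mathcal{C}_M(P)=\bigl[\begin{smallmatrix}\dot P+2P-3/4&P\\P&1\end{smallmatrix}\bigr]$ has Schur complement $\dot P-(P-\tfrac12)(P-\tfrac32)$. So $P\equiv\tfrac12$ and $P\equiv\tfrac32$ are both bounded $\Snpsd$-valued solutions of the infinite-horizon I-D-GRE, and the closed loop of the first is unstable. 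They give the two values $-w/2$ and $-3w/2$ in~\eqref{eq:diss:AC:Vastar}, contradicting (a) and (c). Worse, the admissible choice $X_2=X_3=0$ gives $\dot X_1=2X_1+w$ and extracts $\tfrac34\int X_1\dt$, which grows exponentially. Hence $\bar V_a^\star(X_1^0)=+\infty$, even though (A1) (with $K_s=-2$) and (A2) hold.

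\textbf{What a correct version needs.} Adopt the sign convention of Theorem~\ref{th:IMP:diss:inf}, $-P_\infty\succeq0$. Then the boundary term $\ip{P_\infty(t_0+T)}{EX(t_0+T)E^*}$ is $\le0$ and can be dropped for every admissible $X$. This gives $V_a^T\le\ip{-P_\infty(t_0)}{X_1^0}+S_T(P_\infty)$ uniformly, with no boundedness lemma and no sup/limsup issue. Attainment then requires $P_\infty$ to be stabilizing, meaning the $K_\infty$-loop keeps $EX^\star E^*$ bounded. Uniqueness holds only among stabilizing solutions: for $\dot x=-x+u$ with the same $M$, both $P\equiv-\tfrac12$ and $P\equiv-\tfrac32$ solve the I-D-GRE, and only the first is stabilizing.
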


\begin{proof}
\noindent\textbf{Upper bound.}
Let $X$ be any admissible trajectory of~\eqref{eq:sys:IMP:forced}.
Apply~\eqref{eq:diss:AC:keyid} on $[t_0,t_0+T]$ with $P=P_\infty$.
Since $P_\infty$ solves the I-D-GRE,
$\mathcal{C}_M(t,P_\infty(t))\succeq0$ and
$\mathcal{D}_M(k,P_\infty(t_k^+),P_\infty(t_k^-))\succeq0$.  By
Lemma~\ref{lemma:decomp}, both inner products are non-negative, so
\begin{equation}\label{pf:diss:AC:suff:ub}
\begin{aligned}
  &  -\int_{t_0}^{t_0+T}\ip{M(t)}{X(t)}\dt-\sum_{k:\,t_k\le t_0+T}\ip{M_k}{EX(t_k^-)E^*}\\
  &\qquad\le\ip{-P_\infty(t_0)}{X_1^0}-\ip{-P_\infty(t_0+T)}{EX(t_0+T)E^*}-R_T(P_\infty),
\end{aligned}
\end{equation}
where
\[
  R_T(P_\infty)
  :=\int_{t_0}^{t_0+T}\!\ip{P_\infty(t)}{\mathcal{W}(t)}\dt
   +\sum_{k:\,t_k\le t_0+T}\!\ip{P_\infty(t_k^+)}{\mathcal{W}_k}.
\]
Dividing by $T$, and noting that
$T^{-1}\ip{-P_\infty(t_0)}{X_1^0}\to0$ as $T\to\infty$ (both factors
bounded), and that $T^{-1}\ip{-P_\infty(t_0+T)}{EX(t_0+T)E^*}\to0$
provided $\norm{EX(t_0+T)E^*}$ is bounded (which holds along any
admissible trajectory under (A1)--(A2) by
Lemma~\ref{lem:diss:AC:bdd} below):
\[
  \limsup_{T\to\infty}\frac{1}{T}V_a^T(t_0,X_1^0;X)
  \le\lim_{T\to\infty}\frac{1}{T}\bigl(-R_T(P_\infty)\bigr)
  =\bar V_a^\star.
\]
Taking the supremum over admissible $X$ yields
$\bar V_a^\star(X_1^0)\le\bar V_a^\star$.

\noindent\textbf{Attainment.}
Let $X^\star$ be the trajectory with gains
$K_\infty(t)\in\mathcal{K}_c^M(t,P_\infty(t))$ on flow and
$K_k\in\mathcal{K}_d^M(k,P_\infty(t_k^+),P_\infty(t_k^-))$ at each
jump.  By the attainment argument of
Theorem~\ref{th:IMP:diss:inf:suff} (which relies only on the I-D-GRE
and not on the homogeneous form of the dynamics),
$\ip{\mathcal{C}_M(t,P_\infty(t))}{X^\star(t)}=0$ a.e.\ and
$\ip{\mathcal{D}_M(k,P_\infty(t_k^+),P_\infty(t_k^-))}{X^\star(t_k^-)}=0$
for every $k$.  Substituting into~\eqref{eq:diss:AC:keyid} gives
\begin{align*}
  &  -\int_{t_0}^{t_0+T}\ip{M(t)}{X(t)}\dt-\sum_{k:\,t_k\le t_0+T}\ip{M_k}{EX(t_k^-)E^*}\\
  &\qquad=\ip{-P_\infty(t_0)}{X_1^0}-\ip{-P_\infty(t_0+T)}{EX(t_0+T)E^*}-R_T(P_\infty),
\end{align*}
Dividing by $T$ and letting $T\to\infty$, the boundary terms vanish
(boundedness and $1/T$), and $-R_T(P_\infty)/T\to\bar V_a^\star$ by
hypothesis.  Hence
$\lim_{T\to\infty}V_a^T(t_0,X_1^0;X^\star)/T=\bar V_a^\star$, the
supremum is achieved and $\bar V_a^\star(X_1^0)=\bar V_a^\star$,
proving~\eqref{diss:AC:suff:val} and~\eqref{diss:AC:suff:gain}.

\noindent\textbf{Uniqueness of $P_\infty$.}
If $\tilde P$ is another bounded solution of the infinite-horizon
I-D-GRE, the same argument gives
$\bar V_a^\star=\lim_T-R_T(\tilde P)/T$ as well.  But $\tilde
P\equiv P_\infty$ already by Theorem~\ref{th:IMP:diss:inf:suff} (the
homogeneous infinite-horizon I-D-GRE has a unique bounded solution).
This proves~\eqref{diss:AC:suff:uniq}.
\end{proof}

The bounded-trajectory claim used in the upper bound is recorded as
the following lemma, the dissipativity counterpart of
Lemma~\ref{lem:IMP:AC:bdd}.

\begin{lemma}[Bounded second moment under forcing, dissipativity]
  \label{lem:diss:AC:bdd}
Under (A1) and (A2), for any admissible trajectory $X(\cdot)$
of~\eqref{eq:sys:IMP:forced} with stabilizing feedback $K_s$ from
(A1), there exists a constant $C>0$ (depending only on $K_s$,
$\beta,\alpha,\rho$ from (A1), and the bounds in (A2), and
$\norm{X_1^0}$) such that $\norm{EX(t)E^*}\le C$ for all $t\ge t_0$.
\end{lemma}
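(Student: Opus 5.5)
The argument mirrors the proof of Lemma~\ref{lem:IMP:AC:bdd} and is a variation-of-constants estimate for the forced closed loop under the stabilizing feedback $K_s$ of (A1). First I fix the closed-loop trajectory: with $X_2=K_sX_1$ and $X_3=K_sX_1K_s^*$, the state block $X_1=EXE^*$ obeys a linear flow-jump evolution on $\Snpsd$. On flow it is $\dot X_1=\mathcal{A}_s(t)[X_1]+\mathcal{W}(t)$ with $\mathcal{A}_s(t)[X_1]:=\mathcal{F}(t,[\begin{smallmatrix}X_1&X_1K_s^*\\K_sX_1&K_sX_1K_s^*\end{smallmatrix}])$. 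At jumps it is $X_1(t_k^+)=\mathcal{B}_s[X_1(t_k^-)]+\mathcal{W}_k$, defined analogously through $\mathcal{J}$. Both operators are linear, bounded by (A2), and cone-preserving, because $\mathcal{F}$ and $\mathcal{J}$ leave $\Snmpsd$ invariant and the block matrix is $\succeq0$. Let $\Phi_s(t,s)$ denote the homogeneous evolution operator, composing flows and jumps. Then (A1) says exactly that $\norm{\Phi_s(t,s)[Q]}\le\beta e^{-\alpha(t-s)}\rho^{\kappa(t,s)}\norm{Q}$ for every $Q\succeq0$.

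Second, I write the Duhamel formula
\[
X_1(t)=\Phi_s(t,t_0)[X_1^0]+\int_{t_0}^t\Phi_s(t,s)[\mathcal{W}(s)]\,ds+\sum_{t_k\in(t_0,t]}\Phi_s(t,t_k^+)[\mathcal{W}_k].
\]
To justify it, I would verify it separately on each flow interval, where it is the standard formula for a linear ODE, and across each jump, where linearity of $\mathcal{B}_s$ distributes over the accumulated terms. I then induct on the number of jumps. Every term is positive semidefinite, so the nuclear norm $\norm{\cdot}=\tr(\cdot)$ is additive across the sum. This additivity is convenient, since it means the triangle inequality holds with equality and no constants are lost.

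Third, I bound the three terms. The initial term is at most $\beta\norm{X_1^0}$, using $e^{-\alpha(\cdot)}\le1$ and $\rho^{\kappa}\le1$. The flow forcing term is at most $\beta\alpha^{-1}\sup_s\norm{\mathcal{W}(s)}$: drop $\rho^\kappa\le1$ and integrate the exponential. For the jump forcing term, drop $e^{-\alpha(t-t_k)}\le1$ and enumerate the jumps backwards from $t$. The $j$-th most recent jump then carries the factor $\rho^{j}$ or $\rho^{j-1}$, depending on whether $t_k$ itself is counted in $\kappa$. Summing the geometric series gives at most $\beta(1-\rho)^{-1}\sup_k\norm{\mathcal{W}_k}$, up to a harmless factor $\rho^{-1}$ that I absorb into $C$. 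Setting $C$ equal to the sum of these three bounds gives a constant uniform in $t$ and independent of the inter-jump times. It depends only on $\beta,\alpha,\rho$, the forcing bounds in (A2), and $\norm{X_1^0}$; the dependence on $K_s$ enters through these constants.

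The main obstacle is making the Duhamel representation rigorous in this abstract operator setting, in particular checking that $\Phi_s(t,t_k^+)$ satisfies the same decay bound as (A1) when it is started just after a jump. I would handle this by noting that (A1) holds for all $s\ge t_0$, so the estimate applies with $s=t_k$ and the post-jump state. The convention for counting the jump at $t_k$ inside $\kappa$ only changes the constant by the factor $\rho^{-1}$ noted above.
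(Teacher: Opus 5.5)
Your proposal is correct and is essentially the paper's proof: the paper also writes the variation-of-constants formula for the closed loop under $K_s$. It bounds the initial, flow-forcing and jump-forcing terms by $\beta\norm{X_1^0}$, $\beta\alpha^{-1}\sup_s\norm{\mathcal{W}(s)}$ and $\beta(1-\rho)^{-1}\sup_k\norm{\mathcal{W}_k}$, the last via reverse enumeration of the jumps and a geometric series, and your reading of (A1) as a uniform decay bound on the homogeneous evolution operator from every initial time and every initial state $Q\succeq0$ is exactly what the paper uses tacitly.
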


\begin{proof}
Identical to Lemma~\ref{lem:IMP:AC:bdd}: variation-of-constants on
the closed-loop flow-jump system, using the dual decay
$\beta e^{-\alpha(t-s)}\rho^{\kappa(t,s)}$ of (A1), gives
\[
  \norm{EX(t)E^*}
  \le\beta\norm{X_1^0}
  +\beta\alpha^{-1}\sup_s\norm{\mathcal{W}(s)}
  +\beta(1-\rho)^{-1}\sup_k\norm{\mathcal{W}_k},
\]
where the $(1-\rho)^{-1}$ factor comes from the convergent geometric
series in $\rho^{\kappa}$ on the jump-sum term.  No bound on
$\Delta_k$ is needed.
\end{proof}

\subsubsection{Necessary condition and existence (infinite horizon)}
\label{sec:diss:AC:inf:nec}

In contrast to the optimal-control setting of
Section~\ref{sec:OC:AC:inf:suff}, where (A1)--(A2) suffice for
existence of an infinite-horizon dual variable through the
coercivity of the cost, the dissipativity setting requires an
additional structural hypothesis: the system itself must be
dissipative with respect to the supply rate $M$.  Stability of the
closed-loop homogeneous dynamics under (A1) does \emph{not} imply
dissipativity, because the supply rate $M$ may have an arbitrary
sign structure.  We therefore take dissipativity (in the form of
uniform finiteness of the homogeneous available storage) as the
starting hypothesis, and derive existence of $P_\infty$ together
with the long-term average available-storage formula.  This is the
genuine necessity direction: the existence of a solution to the
infinite-horizon I-D-GRE is the \emph{conclusion}, not an
assumption.

\begin{theorem}[Necessary condition and existence, infinite-horizon
  average available storage]\label{th:diss:AC:nec}
Suppose the homogeneous system \textup{(}with $\mathcal{W}=0$,
$\mathcal{W}_k=0$\textup{)} is dissipative with respect to $M$, in
the sense that
\begin{equation}\label{eq:diss:AC:nec:dissip}
  \sup_{T>0}V_a^T(t_0,X_1^0)<\infty
  \qquad\text{for every }X_1^0\in\Snpsd.
\end{equation}
Then, under (A1) and (A2):
\begin{enumerate}[\upshape(a)]
  \item\label{diss:AC:nec:exist}
    \emph{Existence and uniqueness.} There exists a unique bounded
    piecewise-$C^1$ solution
    $P_\infty:[t_0,\infty)\to\Snpsd$,
    $\alpha_1 I\preceq P_\infty\preceq\alpha_2 I$, of the
    infinite-horizon I-D-GRE; $-P_\infty$ is the smallest storage
    function.

  \item\label{diss:AC:nec:conv}
    \emph{Convergence.} $-P_\infty$ is the Loewner least upper
    bound of $\{-P_T\}$; under the additional structural condition below,
    $-P_T\to-P_\infty$ pointwise on $[t_0,\infty)$ and uniformly on
    compact subintervals.  The convergence is then monotone
    non-decreasing under the additional
    structural condition that the homogeneous system admits, at
    every reachable state, a non-positive-supply continuation
    \textup{(}a sufficient condition is that the homogeneous I-D-GRE
    flow admits an equilibrium; this holds in the LTI case and in
    the periodic case\textup{)}.

  \item\label{diss:AC:nec:formula}
    \emph{Long-term formula.}
    \begin{equation}\label{eq:diss:AC:nec:val}
      \bar V_a^\star(X_1^0)
      =\limsup_{T\to\infty}\frac{1}{T}\!\left[
        -\int_{t_0}^{t_0+T}\!\ip{P_\infty(t)}{\mathcal{W}(t)}\dt
        -\!\!\sum_{k:\,t_k\le t_0+T}\!\!\ip{P_\infty(t_k^+)}{\mathcal{W}_k}
      \right]
    \end{equation}
    independently of $X_1^0$.  The right-hand side is finite
    whenever $\limsup_T T^{-1}\#\{k:t_k\le t_0+T\}<\infty$.

  \item\label{diss:AC:nec:attain}
    \emph{Attainment.} The supremum is attained at the closed-loop
    extraction trajectory $X^\star$ with gains
    $K_\infty(t)\in\mathcal{K}_c^M(t,P_\infty(t))$ on flow and
    $K_k\in\mathcal{K}_d^M(k,P_\infty(t_k^+),P_\infty(t_k^-))$ at
    each jump.
\end{enumerate}
\end{theorem}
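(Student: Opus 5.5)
The plan is to reduce everything to the homogeneous infinite-horizon dissipativity theory of Section~\ref{sec:diss:inf}, and then to run the forced key identity~\eqref{eq:diss:AC:keyid} exactly as in the proof of Theorem~\ref{th:IMP:AC:nec}, with signs reversed and supremum in place of infimum.

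\textbf{Step 1: existence and uniqueness, part~\eqref{diss:AC:nec:exist}.}
Hypothesis~\eqref{eq:diss:AC:nec:dissip} says that $V_a(t_0,X_1^0)=\sup_T V_a^T(t_0,X_1^0)<\infty$ for the homogeneous system. Theorem~\ref{th:IMP:diss:inf} then yields a unique bounded piecewise-$C^1$ solution $P_\infty$ of the infinite-horizon I-D-GRE, with two-sided uniform Loewner bounds. Theorem~\ref{th:IMP:diss:inf:suff}\eqref{inf:suff:diss:largest} and Theorem~\ref{th:IMP:diss:inf:LMI} identify $-P_\infty$ as the smallest storage function. Assumption~(A2) supplies the boundedness hypothesis~(D2). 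The supply lower bound~(D3) follows from the uniform bound on $M$ in~(A2).

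I will match the sign convention of the statement, $P_\infty\in[\alpha_1I,\alpha_2I]$, to that of Theorem~\ref{th:IMP:diss:inf} by relabeling the storage matrix. Only boundedness of $P_\infty$ is actually used below, so the direction of the sign does not affect the argument.

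\textbf{Step 2: convergence, part~\eqref{diss:AC:nec:conv}.}
The Loewner least-upper-bound property is Step~2 of Theorem~\ref{th:IMP:diss:inf}. For pointwise convergence under the structural condition, I will first show monotonicity. If every reachable state at $t_0+T$ admits a continuation with non-positive supply on $(T,T']$, then concatenating an optimal $T$-trajectory with that continuation gives $V_a^{T'}\ge V_a^T$. Hence $-P_T$ is non-decreasing and bounded by $-P_\infty$, so it converges pointwise. The limit is a linear functional dominating every $V_a^T$ and dominated by $V_a$, so it equals $-P_\infty$.

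Uniform convergence on compacts follows as in Theorem~\ref{th:IMP:LQ:inf}. The I-D-GRE flow and~(A2) give a uniform Lipschitz bound on $P_T$, and Arzel\`a--Ascoli then upgrades pointwise convergence to uniform convergence on compact subintervals.

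\textbf{Step 3: long-term formula and attainment, parts~\eqref{diss:AC:nec:formula}--\eqref{diss:AC:nec:attain}.}
Apply~\eqref{eq:diss:AC:keyid} with $P=P_\infty$. Since $\mathcal{C}_M\succeq0$ and $\mathcal{D}_M\succeq0$, Lemma~\ref{lemma:decomp} gives, for every admissible $X$, the upper bound
\[
-\!\int\!\ip{M}{X}-\sum_k\ip{M_k}{EXE^*}\le \ip{-P_\infty(t_0)}{X_1^0}+\ip{P_\infty(t_0+T)}{EX(t_0+T)E^*}-R_T(P_\infty),
\]
with equality along the closed-loop trajectory with gains in $\mathcal{K}_c^M$ and $\mathcal{K}_d^M$.

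Both boundary terms are $O(1)$ in $T$. The first is fixed. For the second, $P_\infty$ is bounded and $\norm{EX(t_0+T)E^*}$ is uniformly bounded by Lemma~\ref{lem:diss:AC:bdd}. Dividing by $T$ and taking $\limsup$, followed by the supremum over $X$, gives $\bar V_a^\star(X_1^0)\le\limsup_T T^{-1}(-R_T(P_\infty))$. The equality at $X^\star$ gives the reverse inequality, and the resulting value is independent of $X_1^0$.

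For finiteness, note that $|R_T|\le\|P_\infty\|_\infty\bigl(T\sup\|\mathcal{W}\|+\#\{k:t_k\le t_0+T\}\sup\|\mathcal{W}_k\|\bigr)$, which is $O(T)$ under linear growth of the jump count.

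\textbf{Main obstacle.}
I expect the hardest step to be the boundary term in Step~3. Lemma~\ref{lem:diss:AC:bdd} bounds trajectories generated by the stabilizing feedback $K_s$, but the supremum ranges over arbitrary admissible $X$, which could grow. My first attempt will be to restrict the supremum to trajectories with bounded supply extraction, in the spirit of~(D4). Trajectories whose extracted supply grows superlinearly would contradict~\eqref{eq:diss:AC:nec:dissip} combined with the linear bound on $R_T$. If that argument fails, I will instead use $\ip{P_\infty(t_0+T)}{EX(t_0+T)E^*}\ge0$ or $\le0$, according to the sign of $P_\infty$ on $\Snpsd$, to drop the boundary term in the favorable direction.
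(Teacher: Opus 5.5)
Your route is the paper's route. You obtain $P_\infty$ from Theorem~\ref{th:IMP:diss:inf}, get the Loewner-sup property and monotonicity under the structural condition, then apply \eqref{eq:diss:AC:keyid} with $P=P_\infty$ and divide by $T$. Your Step~2 is in fact more complete than the paper's, which only argues monotonicity and asserts convergence. Two steps, however, fail as written.

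\textbf{The sign of $P_\infty$.} The ``relabeling'' is not available. Theorem~\ref{th:IMP:diss:inf} gives $0\preceq -P_\infty\preceq\alpha_2 I$, and this sign is forced, because $V_a\ge 0$ and $V_a=\ip{-P_\infty(t_0)}{\cdot}$.
\begin{itemize}
\item Replacing $P_\infty$ by $-P_\infty$ would flip the sign of the pairing in \eqref{eq:diss:AC:nec:val}, so you cannot match both (a) and (c).
\item There is no coercivity assumption on $M$ here (the role played by (H2) in Section~\ref{sec:OC}). So no strictly positive $\alpha_1$ is available, even for $-P_\infty$.
\end{itemize}
What is provable is $0\preceq -P_\infty\preceq\alpha_2 I$; the paper's proof merely asserts the stated bounds. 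Your remark that ``only boundedness is used'' is also contradicted by your own fallback, which needs the sign.

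\textbf{The boundary term.} With the correct sign, your fallback (ii) settles the upper bound for every admissible $X$. Since $\ip{P_\infty(t_0+T)}{EX(t_0+T)E^*}\le 0$, you get $V_a^T(t_0,X_1^0)\le\ip{-P_\infty(t_0)}{X_1^0}-R_T(P_\infty)$ with no growth estimate at all. Option (i) is then unnecessary, and it is not an actual argument as stated.

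The same sign, however, makes the term unfavorable in the attainment direction. Your sentence ``the equality at $X^\star$ gives the reverse inequality'' requires $\ip{P_\infty(t_0+T)}{EX^\star(t_0+T)E^*}=o(T)$ along the forced closed loop driven by $K_\infty\in\mathcal{K}_c^M$. Lemma~\ref{lem:diss:AC:bdd} only covers trajectories generated by $K_s$. Assumption (A1) says nothing about stability of the extraction loop under persistent forcing. So this step is unsupported; the paper's Step~4 makes the same unjustified use of the lemma.

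You need an extra ingredient. Options include:
\begin{itemize}
\item an extra hypothesis, such as uniform exponential stability of the extraction closed loop, or a (D4)-type condition imposed on $X^\star$;
\item starting instead from the exact identity $V_a^T=\ip{-P_T(t_0)}{X_1^0}-R_T(P_T)$ of Theorem~\ref{th:diss:AC:finite}, which has no boundary term. You would then show $T^{-1}R_T(P_T-P_\infty)\to 0$. This needs $P_T(t)\to P_\infty(t)$ uniformly in $t$ outside a terminal layer of length $o(T)$. Time-shift invariance gives this in the LTI and periodic cases, but not for general LTV data.
\end{itemize}

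Finally, your Step~1 does not fully check the hypotheses of Theorem~\ref{th:IMP:diss:inf}. (D3) does not follow from a norm bound on $M$: it is equivalent to $M(t)+\delta E^*E\succeq 0$, a structural condition on the input blocks. You also do not address (D1).
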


\begin{proof}
\textbf{Step 1: Existence of $P_\infty$ from dissipativity.}
The dissipativity hypothesis~\eqref{eq:diss:AC:nec:dissip} is
exactly the homogeneous-dissipativity premise of
Theorem~\ref{th:IMP:diss:inf}.  Combined with the boundedness of the
operators from (A2), that theorem provides a unique bounded
piecewise-$C^1$ solution $P_\infty$ with
$\alpha_1 I\preceq P_\infty\preceq\alpha_2 I$ of the homogeneous
infinite-horizon I-D-GRE; $-P_\infty$ is smallest storage
function by construction (Willems-type characterisation).  Note
that (A1) alone (closed-loop stability of the homogeneous dynamics)
is not used in this step: existence of the storage function is a
property of the supply rate $M$ relative to the dynamics, not of
stability.

\textbf{Step 2: Convergence of $-P_T$ to $-P_\infty$.}
Theorem~\ref{th:IMP:diss:inf} establishes that $-P_\infty$ is the
Loewner least upper bound of the family $\{-P_T\}$. Monotone non-decreasing
convergence requires
$T\mapsto V_a^T(t_0,X_1^0)$ to be monotone non-decreasing for every
$X_1^0$, equivalently that any horizon-$T$ trajectory can be
prolonged to a horizon-$T'$ trajectory ($T'>T$) without strictly
positive incremental supply.  A sufficient structural condition is
the existence of an equilibrium of the homogeneous I-D-GRE
flow at $P_s(0^+)$, which is automatic in the LTI case and in the
periodic case (Theorem~\ref{th:diss:AC:dwell:periodic}).  In the
general LTV case, $-P_T$ converges but the convergence need not be
monotone.

\textbf{Step 3: Long-term average formula.}
For any admissible trajectory $X$ of~\eqref{eq:sys:IMP:forced},
identity~\eqref{eq:diss:AC:keyid} with $P=P_\infty$ on
$[t_0,t_0+T]$ reads
\begin{align*}
  -\!\!\int_{t_0}^{t_0+T}\!\!\ip{M(t)}{X(t)}\dt
  &-\!\!\sum_{k:\,t_k\le t_0+T}\!\!\ip{M_k}{EX(t_k^-)E^*}\\
  &=\ip{-P_\infty(t_0)}{X_1^0}-\ip{-P_\infty(t_0+T)}{EX(t_0+T)E^*}\\
  &\quad
   -\!\!\int_{t_0}^{t_0+T}\!\!\ip{\mathcal{C}_M(t,P_\infty)}{X}\dt
   -\!\!\sum_{k:\,t_k\le t_0+T}\!\!\ip{\mathcal{D}_M}{X(t_k^-)}
   -R_T(P_\infty),
\end{align*}
with $R_T(P_\infty):=\int_{t_0}^{t_0+T}\!\ip{P_\infty(t)}{\mathcal{W}(t)}\dt
+\sum_{k:\,t_k\le t_0+T}\!\ip{P_\infty(t_k^+)}{\mathcal{W}_k}$.
Since $P_\infty$ solves the infinite-horizon I-D-GRE,
$\mathcal{C}_M(t,P_\infty)\succeq0$ and $\mathcal{D}_M\succeq0$, so
Lemma~\ref{lemma:decomp} gives $\ip{\mathcal{C}_M}{X(t)}\ge0$ a.e.\
and $\ip{\mathcal{D}_M}{X(t_k^-)}\ge0$, with equality along the
closed-loop extraction trajectory $X^\star$.  Hence
\begin{equation}\label{eq:diss:AC:nec:bound}
\begin{aligned}
  &  -\int_{t_0}^{t_0+T}\ip{M(t)}{X(t)}\dt-\sum_{k:\,t_k\le t_0+T}\ip{M_k}{EX(t_k^-)E^*}\\
  &\qquad\le\ip{-P_\infty(t_0)}{X_1^0}-\ip{-P_\infty(t_0+T)}{EX(t_0+T)E^*}-R_T(P_\infty),
\end{aligned}
\end{equation}
with equality at $X^\star$.

\textbf{Step 4: Boundary term is $O(1)$ in $T$ (uses (A1)).}
Lemma~\ref{lem:diss:AC:bdd} bounds $\|EX(t)E^*\|\le C_X<\infty$
uniformly in $t$ along any admissible trajectory with the
stabilizing feedback of (A1); its proof uses the geometric factor
$\rho^j$ from the dual decay in (A1) and never invokes a bound on
$\Delta_k$.  Since $P_\infty$ is bounded by $\alpha_2$,
$|\ip{-P_\infty(t_0+T)}{EX(t_0+T)E^*}|\le\alpha_2 C_X$ uniformly
in $T$.

\textbf{Step 5: Conclusion.}
Dividing~\eqref{eq:diss:AC:nec:bound} by $T$ and taking
$\limsup_{T\to\infty}$, both boundary terms vanish, leaving
$\limsup_T T^{-1}V_a^T(t_0,X_1^0;X)\le\limsup_T(-R_T(P_\infty))/T$.
Taking the supremum over admissible $X$ gives
$\bar V_a^\star(X_1^0)\le\limsup_T(-R_T(P_\infty))/T$.  The
closed-loop extraction trajectory $X^\star$
saturates~\eqref{eq:diss:AC:nec:bound}, so the reverse inequality
holds and equality is~\eqref{eq:diss:AC:nec:val}.  Independence of
$X_1^0$ follows since the right-hand side does not depend on
$X_1^0$.

\textbf{Step 6: Finiteness.}
$|R_T(P_\infty)|\le\alpha_2(T\sup_t\|\mathcal{W}(t)\|
+\#\{k:t_k\le t_0+T\}\cdot\sup_k\|\mathcal{W}_k\|)$, so
$T^{-1}|R_T(P_\infty)|$ is bounded above whenever
$\limsup_T T^{-1}\#\{k:t_k\le t_0+T\}<\infty$.
\end{proof}

\begin{remark}\label{rem:diss:AC:nec}
Formula~\eqref{eq:diss:AC:nec:val} characterises $\bar V_a^\star$ as
a $\limsup$ regardless of any inter-jump regularity.  A genuine
$\lim$ is recovered under additional structural assumptions on
$(P_\infty,\mathcal{W},\mathcal{W}_k)$, the cleanest case being
$T_p$-periodicity of the data
(Theorem~\ref{th:diss:AC:dwell:periodic}) where both contributions
of $R_T(P_\infty)/T$ converge by genuine periodic averaging.  In the
LTI specialisation (Corollary~\ref{cor:CT:diss:AC}), $P_\infty$ is
constant and the formula collapses to
$\bar V_a^\star=\tr(-P_\infty\mathcal{W})$, again a true limit.
This is the matrix-valued analogue of the classical steady-state
output-covariance / $H_2$-norm formula for closed-loop systems with
stable closed-loop dynamics.
\end{remark}

\subsubsection{Equivalence and dual DLMI (infinite horizon)}
\label{sec:diss:AC:LMI}

The equivalence statement and dual DLMI for the average available
storage are direct consequences of Theorems~\ref{th:diss:AC:suff}
and~\ref{th:diss:AC:nec}.

\begin{corollary}[Equivalence, infinite-horizon average available
  storage]\label{cor:diss:AC:equiv}
The infinite-horizon average available-storage
problem~\eqref{eq:diss:AC:Va} under~\eqref{eq:sys:IMP:forced} is
well-posed with $\bar V_a^\star(X_1^0)=\bar V_a^\star$ given
by~\eqref{eq:diss:AC:Vastar} if and only if the homogeneous
infinite-horizon I-D-GRE has a bounded piecewise-$C^1$ solution
$P_\infty\succeq0$.  The solution is unique.
\end{corollary}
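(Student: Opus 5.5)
The plan is to split Corollary~\ref{cor:diss:AC:equiv} into its two directions and check that each reduces to a result already in hand, exactly as Corollary~\ref{cor:IMP:AC:equiv} did for the optimal-control average cost.

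\emph{Sufficiency.} Assume the homogeneous infinite-horizon I-D-GRE has a bounded piecewise-$C^1$ solution $P_\infty\succeq0$. This is precisely the hypothesis of Theorem~\ref{th:diss:AC:suff}. That theorem gives the upper bound from the forced dissipativity key identity~\eqref{eq:diss:AC:keyid}. It gives attainment by the closed-loop extraction policy with $K_\infty\in\mathcal{K}_c^M$ and $K_k\in\mathcal{K}_d^M$. It also gives independence from $X_1^0$. Together these yield $\bar V_a^\star(X_1^0)=\bar V_a^\star$ as in~\eqref{eq:diss:AC:Vastar}, provided that limit exists. I will read ``well-posed with $\bar V_a^\star$ given by~\eqref{eq:diss:AC:Vastar}'' as including the existence of that limit. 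Existence of the limit is guaranteed under (A3) in the structured cases, for example periodic data; otherwise I fall back on the $\limsup$ formulation~\eqref{eq:diss:AC:nec:val}. The boundedness of the boundary term needed in that argument is supplied by Lemma~\ref{lem:diss:AC:bdd} under (A1)--(A2).

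\emph{Necessity.} Assume the problem is well-posed, meaning $\bar V_a^\star(X_1^0)$ is finite and equal to a constant. I need to recover the homogeneous dissipativity premise~\eqref{eq:diss:AC:nec:dissip}, after which Theorem~\ref{th:diss:AC:nec}\eqref{diss:AC:nec:exist} delivers the bounded solution $P_\infty$. The key observation is that, by Theorem~\ref{th:diss:AC:finite}, the forced horizon-$T$ storage splits into the homogeneous part $\ip{-P_T(t_0)}{X_1^0}$ plus forcing terms that do not depend on $X_1^0$.
\begin{itemize}
\item[] \textbf{Step 1.} Compare $V_a^T$ at $X_1^0$ and at $0$. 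The difference is $\ip{-P_T(t_0)}{X_1^0}$, which is exactly the homogeneous horizon-$T$ available storage.
\item[] \textbf{Step 2.} Show this difference is bounded uniformly in $T$. This is the main obstacle, because a finite average rate only controls growth that is $o(T)$, not uniform boundedness. I would first try to argue that the homogeneous part is an affine-in-$X_1^0$ transient that must stay bounded: otherwise the Loewner increments of $-P_T(t_0)$ would have to be unbounded, and I would try to contradict this using the state bound of Lemma~\ref{lem:diss:AC:bdd} together with the boundedness in (A2). If that argument does not close, I would state the corollary with homogeneous dissipativity~\eqref{eq:diss:AC:nec:dissip} taken as the standing premise, as Theorem~\ref{th:diss:AC:nec} does.
\end{itemize}

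\emph{Uniqueness.} Uniqueness is immediate from Theorem~\ref{th:diss:AC:suff}\eqref{diss:AC:suff:uniq}, which in turn rests on the uniqueness in Theorem~\ref{th:IMP:diss:inf:suff}.
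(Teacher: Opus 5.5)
Your sufficiency and uniqueness steps match the paper's proof, which is only three citations: Theorem~\ref{th:diss:AC:suff} for sufficiency, Theorem~\ref{th:diss:AC:nec} for necessity, and Theorem~\ref{th:diss:AC:suff}\eqref{diss:AC:suff:uniq} for uniqueness. The paper does not attempt your Steps~1--2. It applies Theorem~\ref{th:diss:AC:nec} directly, and that is legitimate only if ``well-posed'' already includes the homogeneous dissipativity premise~\eqref{eq:diss:AC:nec:dissip}. In the same way, its sufficiency direction silently presupposes that the limit in~\eqref{eq:diss:AC:Vastar} exists. Your fallback is therefore not a retreat from the paper: it is the paper's argument with these hidden hypotheses stated.

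Your attempt to derive~\eqref{eq:diss:AC:nec:dissip} has a real gap that the tools you name cannot close.
\begin{itemize}
\item Step~1 invokes Theorem~\ref{th:diss:AC:finite}, which presupposes that $P_T$ exists, i.e.\ finite-horizon homogeneous dissipativity. You can avoid this: an admissible homogeneous trajectory from $X_1^0$ plus an admissible forced trajectory from $0$ is an admissible forced trajectory from $X_1^0$. Hence $V_a^{T,\mathrm{h}}(X_1^0)\le V_a^T(X_1^0)-V_a^T(0)$ without any $P_T$.
\item Step~2 then needs this difference bounded uniformly in $T$. The hypothesis only says that $V_a^T(X_1^0)/T$ and $V_a^T(0)/T$ have the same $\limsup$. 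That does not even give $o(T)$ without genuine limits, let alone $O(1)$.
\item Lemma~\ref{lem:diss:AC:bdd} cannot bridge this. It bounds only trajectories driven by the stabilizing gain $K_s$ of (A1), whereas $V_a^T$ is a supremum over all extraction policies, which need not be stabilizing.
\end{itemize}
For the same reason, your claim that Lemma~\ref{lem:diss:AC:bdd} handles the boundary term in Theorem~\ref{th:diss:AC:suff} is not right. With $P_\infty\succeq0$, that term is nonnegative in the upper bound and must be $o(T)$ along every admissible $X$. In the attainment step it must be controlled along the extraction closed loop $X^\star$. Neither trajectory is $K_s$-driven, so a detectability hypothesis like (D4) is what would actually be needed.

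Two further points are inherited from the paper. First, the uniqueness you import from Theorem~\ref{th:IMP:diss:inf:suff} is proved there under (D1)--(D3) together with (D4), not under the present (A1)--(A3). Second, under the storage convention $V=\ip{-P}{EXE^*}$, Theorem~\ref{th:IMP:diss:inf} yields $-P_\infty\succeq0$. This conflicts with the sign $P_\infty\succeq0$ in the statement, and with $\alpha_1 I\preceq P_\infty$ in Theorem~\ref{th:diss:AC:nec}\eqref{diss:AC:nec:exist}, which you cite for it.
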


\begin{proof}
Sufficiency: Theorem~\ref{th:diss:AC:suff}.
Necessity: Theorem~\ref{th:diss:AC:nec}.
Uniqueness: Theorem~\ref{th:diss:AC:suff}\eqref{diss:AC:suff:uniq}.
\end{proof}

The DLMI characterisation expresses $\bar V_a^\star$ as the infimum
of a linear-in-$P$ functional over the DLMI feasibility set.  Define
the \emph{negative-forcing pairing}
\begin{equation}\label{eq:diss:AC:Phi}
  \bar\Phi^M(P)
  :=\limsup_{T\to\infty}\frac{1}{T}\!\left[
    -\int_{t_0}^{t_0+T}\!\ip{P(t)}{\mathcal{W}(t)}\dt
    -\!\!\sum_{k:\,t_k\le t_0+T}\!\!\ip{P(t_k^+)}{\mathcal{W}_k}\right],
\end{equation}
which is well-defined and finite for any bounded $P$ under (A2).
The functional $\bar\Phi^M:\Sn\text{-valued bounded functions}\to\R$
is linear and \emph{order-reversing}: if $P\preceq P'$ pointwise
then $\bar\Phi^M(P')\le\bar\Phi^M(P)$ (since $\mathcal{W}(t)\succeq0$
and $\mathcal{W}_k\succeq0$).

\begin{theorem}[Dual DLMI, infinite-horizon average available storage]
  \label{th:diss:AC:LMI}
Under (A1)--(A3),
\begin{equation}\label{eq:diss:AC:LMI}
  \bar V_a^\star
  =\inf_P\;\bar\Phi^M(P)
  \quad\text{s.t.}\quad
  \mathcal{C}_M(t,P(t))\succeq0,\;
  \mathcal{D}_M(k,P(t_k^+),P(t_k^-))\succeq0,\;
  P\text{ bounded};
\end{equation}
the infimum is attained at $P_\infty$.
\end{theorem}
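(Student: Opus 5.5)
The argument mirrors the proof of Theorem~\ref{th:IMP:AC:LMI}, with the inequalities reversed to account for the supremum and the order-reversing pairing $\bar\Phi^M$. It has two steps: every feasible $P$ yields an upper bound $\bar V_a^\star\le\bar\Phi^M(P)$, and $P_\infty$ is feasible and attains this bound.

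\textbf{Step 1 (upper bound).} Fix a bounded $P$ with $\mathcal{C}_M(t,P(t))\succeq0$ and $\mathcal{D}_M(k,P(t_k^+),P(t_k^-))\succeq0$, and an admissible trajectory $X$ of~\eqref{eq:sys:IMP:forced}. In the forced dissipativity key identity~\eqref{eq:diss:AC:keyid}, the two terms $-\int\ip{\mathcal{C}_M}{X}$ and $-\sum_k\ip{\mathcal{D}_M}{X(t_k^-)}$ are nonpositive, because $X\succeq0$ (Proposition~\ref{prop:PQ}). Hence the extracted supply on $[t_0,t_0+T]$ is at most
\[
\ip{-P(t_0)}{X_1^0}+\ip{P(t_0+T)}{EX(t_0+T)E^*}-R_T(P).
\]
Divide by $T$. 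The first term is $O(1)$ and vanishes in the limit. The boundary term also vanishes, since $P$ is bounded and $\norm{EX(t_0+T)E^*}$ is uniformly bounded by Lemma~\ref{lem:diss:AC:bdd}. Taking $\limsup_T$ then gives $\limsup_T T^{-1}(\cdot)\le\limsup_T T^{-1}(-R_T(P))=\bar\Phi^M(P)$. We then take the supremum over $X$. This step needs care: Lemma~\ref{lem:diss:AC:bdd} is stated for trajectories built with the stabilizing feedback of (A1). I would restrict the supremum to trajectories with bounded terminal block, arguing as in the proof of Theorem~\ref{th:diss:AC:suff} (which uses the same convention). Alternatively, trajectories with unbounded extraction can be excluded using the dissipativity hypothesis~\eqref{eq:diss:AC:nec:dissip} inherited from Theorem~\ref{th:diss:AC:nec}. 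With this restriction, $\bar V_a^\star\le\bar\Phi^M(P)$ for every feasible $P$, and hence $\bar V_a^\star\le\inf_P\bar\Phi^M(P)$.

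\textbf{Step 2 (attainment).} By Theorem~\ref{th:diss:AC:nec}, the bounded solution $P_\infty$ of the infinite-horizon I-D-GRE exists. By Lemma~\ref{lemma:GREequiv} it satisfies $\mathcal{C}_M\succeq0$ and $\mathcal{D}_M\succeq0$, so it is feasible. Formula~\eqref{eq:diss:AC:nec:val} gives $\bar V_a^\star=\bar\Phi^M(P_\infty)$. Therefore $\inf_P\bar\Phi^M(P)\le\bar\Phi^M(P_\infty)=\bar V_a^\star$, and combined with Step~1 this gives equality, with the infimum attained at $P_\infty$. Assumption (A3) is invoked only so that the time averages in~\eqref{eq:diss:AC:nec:val} are true limits, which makes $\bar V_a^\star$ a well-defined number that does not depend on $X_1^0$.

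An alternative route to Step~1 is the order-reversing property of $\bar\Phi^M$, provided one has $P\preceq P_\infty$ for every feasible $P$. That comparison is exactly the sandwich property of Theorem~\ref{th:IMP:diss:inf:suff}\eqref{inf:suff:diss:largest}. Its direction, however, relies on the available storage being the smallest storage function, which would give $-P_\infty\preceq -P$, i.e.\ $P\preceq P_\infty$. Order reversal then yields $\bar\Phi^M(P_\infty)\le\bar\Phi^M(P)$, which is consistent with Step~1. I would include this as a remark.

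\textbf{Main obstacle.} The delicate point is the boundary term $T^{-1}\ip{P(t_0+T)}{EX(t_0+T)E^*}$ for arbitrary admissible $X$, not only stabilized ones. Lemma~\ref{lem:diss:AC:bdd} covers only the latter. I would first try to justify the restriction through the dissipativity hypothesis, which prevents unbounded extraction. If that fails, I would fall back on the comparison argument of the previous paragraph, which avoids trajectories altogether.
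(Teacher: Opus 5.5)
Your proposal is essentially the paper's proof. The paper's Step~1 contains exactly your two arguments: it leads with the comparison $P\preceq P_\infty$ from Theorem~\ref{th:IMP:diss:inf:suff}\eqref{inf:suff:diss:largest} combined with the order reversal of $\bar\Phi^M$, and it gives the direct key-identity bound as a secondary argument. Its Step~2 is your feasibility argument plus attainment via Theorem~\ref{th:diss:AC:nec}. You also raise the concern that Lemma~\ref{lem:diss:AC:bdd} only bounds trajectories built with the stabilizing feedback of (A1); the paper's direct argument passes over this point silently, so relying on the comparison route, as you propose, is the cleaner choice.
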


\begin{proof}
\noindent\textbf{Step 1: every feasible $P$ is an upper bound for
$\bar V_a^\star$.}
Let $P$ be any feasible function for~\eqref{eq:diss:AC:LMI}.  By the
proof of Theorem~\ref{th:IMP:diss:inf:suff}, $P\preceq P_\infty$
pointwise (the infinite-horizon I-D-GRE solution is the largest
feasible solution; equivalently $-P_\infty$ is the smallest storage
function).  By the order-reversal of $\bar\Phi^M$,
$\bar\Phi^M(P_\infty)\le\bar\Phi^M(P)$ for every feasible $P$, so
$\inf_P\bar\Phi^M(P)\ge\bar\Phi^M(P_\infty)$.

A direct argument that does not invoke the comparison
$P\preceq P_\infty$ proceeds as follows.  For any admissible
trajectory $X$ of~\eqref{eq:sys:IMP:forced} and any feasible $P$,
identity~\eqref{eq:diss:AC:keyid} combined with the LMI constraints
$\mathcal{C}_M\succeq0$ and $\mathcal{D}_M\succeq0$ gives
\begin{equation*}
\begin{aligned}
  &  -\int_{t_0}^{t_0+T}\ip{M(t)}{X(t)}\dt-\sum_{k:\,t_k\le t_0+T}\ip{M_k}{EX(t_k^-)E^*}\\
  &\qquad\le\ip{-P(t_0)}{X_1^0}-\ip{-P(t_0+T)}{EX(t_0+T)E^*}-R_T(P),
\end{aligned}
\end{equation*}
where $R_T(P):=\int_{t_0}^{t_0+T}\!\ip{P(t)}{\mathcal{W}(t)}\dt
+\sum_{k:\,t_k\le t_0+T}\!\ip{P(t_k^+)}{\mathcal{W}_k}$.
Dividing by $T$, the boundary terms vanish (boundedness of $P$ and
of $EXE^*$ by Lemma~\ref{lem:diss:AC:bdd}, divided by $T$), so
$\limsup_T T^{-1}V_a^T(t_0,X_1^0;X)\le\bar\Phi^M(P)$.  Taking the
supremum over $X$ on the left gives
$\bar V_a^\star(X_1^0)\le\bar\Phi^M(P)$.  Since this holds for every
feasible $P$ and $\bar V_a^\star(X_1^0)=\bar V_a^\star$,
$\bar V_a^\star\le\inf_P\bar\Phi^M(P)$.

\noindent\textbf{Step 2: $P_\infty$ is feasible and attains the
bound.}
$P_\infty$ satisfies the DLMI constraints by the I-D-GRE
(Lemma~\ref{lemma:GREequiv}) and is bounded.  By
Theorem~\ref{th:diss:AC:nec}, $\bar\Phi^M(P_\infty)=\bar V_a^\star$,
so $\inf_P\bar\Phi^M(P)\le\bar\Phi^M(P_\infty)=\bar V_a^\star$.

Combining Steps 1 and 2: $\bar V_a^\star=\inf_P\bar\Phi^M(P)$,
attained at $P_\infty$.
\end{proof}

\subsubsection{Dwell-time conditions}
\label{sec:diss:AC:dwell}

The infinite-horizon I-D-GRE solution $P_\infty$ used in
Sections~\ref{sec:diss:AC:inf:suff}--\ref{sec:diss:AC:LMI} is
non-causal: its value at time $t$ depends on the future jump times.
We now combine the average available-storage
formula~\eqref{eq:diss:AC:nec:val} with the causal dwell-time
policies of Section~\ref{sec:diss:dwell} (periodic, MDT, RDT) to
obtain causal characterisations and bounds for $\bar V_a^\star$.
The construction parallels Section~\ref{sec:OC:AC:dwell} with the
sign convention reversed: $\bar\Phi^M$ in place of $\Phi$, $-P$ in
place of $P$ as the storage-function coefficient, and the
dissipativity Geromel--Colaneri condition
$\mathcal{C}_M^0(P_s(T_{\min}))\succeq0$
(Theorem~\ref{th:diss:MDT}) in place of
$\mathcal{C}_Z^0\preceq0$.  The structural assumptions on
$\mathcal{F},\mathcal{J},M,M_k$ in the three scenarios are inherited
verbatim from Section~\ref{sec:diss:dwell}; the only additions are
the corresponding assumptions on the forcing data
$\mathcal{W},\mathcal{W}_k$.

\paragraph{Periodic impulses.}
Adopt the periodicity hypothesis of Theorem~\ref{th:diss:periodic}:
$\Delta_k\equiv T_p$, $\mathcal{F}(\cdot,X)$ and $M(\cdot)$ are
$T_p$-periodic, and $\mathcal{J}(k,\cdot)$, $M_k$ are independent
of $k$.  For the forcing data, assume in addition that
$\mathcal{W}(\cdot)$ is $T_p$-periodic and that $\mathcal{W}_k$ is
independent of $k$, i.e.,
$\mathcal{W}_k\equiv\mathcal{W}_{\mathrm{jp}}$.

\begin{theorem}[Causal long-term average available storage under
  periodic impulses]\label{th:diss:AC:dwell:periodic}
Under \textup{(A1)--(A2)} and the periodicity assumption above
\textup{(}note that \textup{(A3)} is automatic from
$\Delta_k\equiv T_p$\textup{)}, the long-term average available
storage rate~\eqref{eq:diss:AC:Va} is given by the closed-form
expression
\begin{equation}\label{eq:diss:AC:dwell:periodic:Va}
  \bar V_a^\star
  =\frac{1}{T_p}\!\left[
    -\int_0^{T_p}\!\ip{P_\infty(\tau)}{\mathcal{W}(\tau)}\,d\tau
    -\ip{P_\infty(0^+)}{\mathcal{W}_{\mathrm{jp}}}
  \right],
\end{equation}
where $P_\infty:[0,T_p]\to\Snpsd$ is the unique $T_p$-periodic
piecewise-$C^1$ solution of the periodic I-D-GRE, attained by the
causal policy $K(\tau)\in\mathcal{K}_c^M(\tau,P_\infty(\tau))$ on
flow and $K_k\in\mathcal{K}_d^M(P_\infty(0^+),P_\infty(T_p^-))$ at
each jump.
\end{theorem}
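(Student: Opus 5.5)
The plan mirrors the proof of Theorem~\ref{th:IMP:AC:dwell:periodic}, transposed to the dissipativity setting through the sign conventions of Section~\ref{sec:diss:AC}. There are four steps: obtain the periodic storage matrix, identify it with the infinite-horizon one, invoke the average-storage formula, and evaluate that formula by periodic averaging.

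First, I obtain the periodic object. By Theorem~\ref{th:diss:periodic}, applied to the homogeneous dynamics, dissipativity of the homogeneous system is equivalent to the existence of a unique $T_p$-periodic piecewise-$C^1$ solution $P_\infty$ of the periodic I-D-GRE. That theorem also shows that the $T_p$-periodic extension $P_\infty^{\mathrm{ext}}(t):=P_\infty((t-t_0)\bmod T_p)$ coincides with the unique bounded solution $P_\infty^{ih}$ of the infinite-horizon I-D-GRE (Theorem~\ref{th:IMP:diss:inf}). The shift argument $Q(t)=P_\infty^{ih}(t+T_p)$, combined with uniqueness, gives periodicity. I take homogeneous dissipativity (the premise \eqref{eq:diss:AC:nec:dissip}) as implicit in the hypotheses, as Theorem~\ref{th:diss:AC:nec} does. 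Because the forcing enters the forced key identity~\eqref{eq:diss:AC:keyid} only through terms independent of $(X_2,X_3)$, the homogeneous I-D-GRE is the correct dual object.

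Second, I invoke Theorem~\ref{th:diss:AC:nec}\eqref{diss:AC:nec:formula}--\eqref{diss:AC:nec:attain} with $P_\infty^{\mathrm{ext}}$. This gives
\[
\bar V_a^\star(X_1^0)=\limsup_{T\to\infty}T^{-1}\bigl(-R_T(P_\infty^{\mathrm{ext}})\bigr),
\]
independently of $X_1^0$. The supremum is attained by the closed-loop gains in $\mathcal{K}_c^M,\mathcal{K}_d^M$, and these depend only on the timer $\tau=(t-t_0)\bmod T_p$, so they are causal. The boundary terms are $O(1)$ by Lemma~\ref{lem:diss:AC:bdd} under (A1)--(A2).

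Third, I evaluate the $\limsup$ as a true limit. Write $T=NT_p+r$ with $N=\lfloor T/T_p\rfloor$ and $r\in[0,T_p)$. By periodicity of $P_\infty^{\mathrm{ext}}$ and $\mathcal{W}$, the flow integral equals $N\int_0^{T_p}\ip{P_\infty(\tau)}{\mathcal{W}(\tau)}d\tau$ plus a remainder bounded by $T_p\alpha_2\sup\norm{\mathcal{W}}$. The jump sum has $N$ or $N+1$ terms, each equal to $\ip{P_\infty(0^+)}{\mathcal{W}_{\mathrm{jp}}}$. Dividing by $T$ and using $N/T\to1/T_p$ yields \eqref{eq:diss:AC:dwell:periodic:Va}. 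This also confirms that the limit \eqref{eq:diss:AC:Vastar} exists, so Theorem~\ref{th:diss:AC:suff} applies as well and gives the attainment and $X_1^0$-independence directly.

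The main obstacle is the first step: checking that the hypotheses of the present statement actually deliver the homogeneous dissipativity premise needed by Theorems~\ref{th:diss:periodic} and~\ref{th:IMP:diss:inf}, and reconciling the sign conventions. The statement writes $P_\infty\in\Snpsd$, whereas Theorem~\ref{th:diss:periodic} gives $-P_\infty\succeq0$. I would handle this by explicitly assuming dissipativity, as in Theorem~\ref{th:diss:AC:nec}, and tracking the sign convention $V=\ip{-P}{EXE^*}$ throughout. The averaging computation itself is routine.
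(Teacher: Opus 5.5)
Your proposal is correct and follows the paper's proof essentially step for step: you obtain the unique periodic solution from Theorem~\ref{th:diss:periodic}, identify its periodic extension with the infinite-horizon dual variable so that formula~\eqref{eq:diss:AC:nec:val} of Theorem~\ref{th:diss:AC:nec} applies, turn the $\limsup$ into a limit by averaging over $\lfloor T/T_p\rfloor$ complete periods, and get optimality of the timer-dependent gains from Theorem~\ref{th:diss:AC:suff}. The paper's proof tacitly relies on the same homogeneous-dissipativity premise (through Theorem~\ref{th:diss:periodic}) and on the sign convention $-P_\infty\succeq0$ that you make explicit, so stating these assumptions is appropriate and is not a departure from the paper's argument.
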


\begin{proof}
By Theorem~\ref{th:diss:periodic} applied to the homogeneous
dynamics, the periodic I-D-GRE has a unique $T_p$-periodic solution
$P_\infty$.  Its $T_p$-periodic extension to $[t_0,\infty)$
coincides with the infinite-horizon dual variable of
Theorem~\ref{th:diss:AC:nec}, so~\eqref{eq:diss:AC:nec:val} applies
and the $\limsup$ becomes a true $\lim$ by genuine periodic
averaging: the flow integral over $[t_0,t_0+T]$ contains $\lfloor
T/T_p\rfloor$ complete periods plus a boundary remainder, so its
time-average converges to
$T_p^{-1}\int_0^{T_p}\ip{P_\infty(\tau)}{\mathcal{W}(\tau)}\,d\tau$;
the jump sum contains $\lfloor T/T_p\rfloor$ jumps each contributing
$\ip{P_\infty(0^+)}{\mathcal{W}_{\mathrm{jp}}}$, with time-average
$T_p^{-1}\ip{P_\infty(0^+)}{\mathcal{W}_{\mathrm{jp}}}$.  Negating
and combining yields~\eqref{eq:diss:AC:dwell:periodic:Va}.  The
causal policy is the same as in Theorem~\ref{th:diss:periodic} and
is the optimal extraction policy by
Theorem~\ref{th:diss:AC:suff}\eqref{diss:AC:suff:gain}.
\end{proof}

\paragraph{Minimum dwell-time.}
Adopt the timer-dependence hypothesis of Theorem~\ref{th:diss:MDT}:
$\Delta_k\ge T_{\min}>0$, the flow data depend only on the timer
$\tau=t-t_k$, and the time-invariance
condition~\eqref{eq:MDT:LTIassumption} holds for
$\mathcal{F},M,\mathcal{J},M_k$.  For the forcing data, assume in
addition that $\mathcal{W}$ depends only on the timer, with
\begin{equation}\label{eq:diss:MDT:LTIassumption:forcing}
  \mathcal{W}(\tau)\equiv\mathcal{W}_\infty
  \;\;\text{for }\tau\ge T_{\min},
  \qquad
  \mathcal{W}_k\equiv\mathcal{W}_{\mathrm{jp}}
  \;\;\text{for all }k.
\end{equation}

\begin{theorem}[Causal long-term available-storage lower bound
  under MDT]\label{th:diss:AC:dwell:MDT}
Under the assumptions of Theorem~\ref{th:diss:MDT}
and~\eqref{eq:diss:MDT:LTIassumption:forcing}, suppose moreover
that the jump rate
$\bar\nu:=\limsup_{T\to\infty}T^{-1}\#\{k:t_k\le t_0+T\}$ is finite
\textup{(}so $\bar\nu\le 1/T_{\min}$\textup{)} and is in fact a
$\lim$.  Then the causal policy of Theorem~\ref{th:diss:MDT}
achieves a long-term average available storage rate bounded below
by
\begin{equation}\label{eq:diss:AC:dwell:MDT:Va}
  \bar V_a^\star
  \ge
  -\bar\nu\!\int_0^{T_{\min}}\!\ip{P_s(\tau)}{\mathcal{W}(\tau)}\,d\tau
  -(1-\bar\nu T_{\min})\ip{P_s(T_{\min})}{\mathcal{W}_\infty}
  -\bar\nu\ip{P_s(0^+)}{\mathcal{W}_{\mathrm{jp}}},
\end{equation}
where $P_s:[0,T_{\min}]\to\Snpsd$ is as in
Theorem~\ref{th:diss:MDT}.  The condition is verifiable on
$[0,T_{\min}]$ alone.
\end{theorem}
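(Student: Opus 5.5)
The plan mirrors the proof of Theorem~\ref{th:IMP:AC:dwell:MDT} with the dissipativity sign conventions. Extend $P_s$ to $\bar P:[0,\infty)\to\Sn$ by $\bar P(\tau)=P_s(\tau)$ for $\tau\le T_{\min}$ and $\bar P(\tau)=P_s(T_{\min})$ for $\tau>T_{\min}$, as in the proof of Theorem~\ref{th:diss:MDT}. Define the absolute-time dual $P(t):=\bar P(t-t_k)$ on $(t_k,t_{k+1}]$. Then apply the forced dissipativity key identity~\eqref{eq:diss:AC:keyid} on $[t_0,t_0+T]$ with this $P$, evaluated along the closed-loop trajectory $X^\star$ generated by the causal policy of Theorem~\ref{th:diss:MDT}. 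That policy uses gains in $\mathcal{K}_c^M(\tau,P_s(\tau))$ for $\tau\le T_{\min}$, the frozen gain beyond $T_{\min}$, and $K_k\in\mathcal{K}_d^M(P_s(0^+),P_s(T_{\min}))$ at jumps. Since $V_a^T\ge$ the extracted supply of any particular admissible trajectory, a lower bound on the extracted supply of $X^\star$ is a lower bound on $V_a^T$, and hence on $\bar V_a^\star$.

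Next, sign the terms of the identity. The jump terms vanish: condition~(ii) gives $\mathcal{D}_M\succeq0$ with zero Schur complement, every pre-jump value is $\bar P(\Delta_k^-)=P_s(T_{\min})$ because $\Delta_k\ge T_{\min}$, and Lemma~\ref{lemma:decomp} applies with the chosen $K_k$. On $(0,T_{\min})$ condition~(i) gives $\mathcal{C}_M=0$, so that part of the flow integral vanishes. On the frozen region, however, $\mathcal{C}_M(\tau,\bar P)=\mathcal{C}_M^0(P_s(T_{\min}))\succeq0$ by~(iii), so the term $-\int\ip{\mathcal{C}_M}{X^\star}$ is $\le0$. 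This has the wrong sign for a lower bound, and it is the main obstacle. The contribution $\int_{T_{\min}}^{\Delta_k}\ip{\mathcal{C}_M^0}{X^\star}$ is not identically zero unless the frozen gain saturates $\mathcal{C}_M^0$.

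I would try to handle this in one of two ways. The first is to choose the frozen gain in $\mathcal{K}$ of $\mathcal{C}_M^0$ and use the Schur decomposition. This kills the $(\mathcal{C}_M^0)_3$ part, but leaves $\ip{\mathcal{C}_M^0\,\schur{/}\,(\mathcal{C}_M^0)_3}{X_1^\star}\ge0$, so it works only if this Schur complement vanishes, i.e.\ if the tail D-GRE is an algebraic equality. The second is to show the frozen contribution is $o(T)$. This would use Lemma~\ref{lem:diss:AC:bdd} to bound $\norm{EX^\star E^*}$, together with the observation that this piece is independent of the forcing pairing and so only perturbs the rate. I expect honestly to need the first option, reading~(iii) as the algebraic equality at $T_{\min}$ implicit in ``attained by the causal policy''. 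I would state this explicitly in the proof.

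Once the $\mathcal{C}_M,\mathcal{D}_M$ terms are controlled, the boundary terms are handled as in Theorem~\ref{th:IMP:AC:dwell:MDT}. The initial term $\ip{-P_s(0^+)}{X_1^0}/T\to0$. The terminal term satisfies $\ip{\bar P(\tau(t_0+T))}{EX^\star(t_0+T)E^*}/T\to0$, because $\bar P$ is bounded (continuous on the compact set $[0,T_{\min}]$) and $EX^\star E^*$ is bounded by Lemma~\ref{lem:diss:AC:bdd}. This leaves $-R_T(\bar P)/T$.

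Finally, evaluate the forcing average by timer occupation. Each inter-jump interval contributes $\int_0^{T_{\min}}\ip{P_s}{\mathcal{W}}\,d\tau+(\Delta_k-T_{\min})\ip{P_s(T_{\min})}{\mathcal{W}_\infty}$ plus one jump term $\ip{P_s(0^+)}{\mathcal{W}_{\mathrm{jp}}}$. Since the jump rate $\bar\nu$ exists as a limit, the number of intervals per unit time tends to $\bar\nu$ and the fraction of time in the frozen region tends to $1-\bar\nu T_{\min}$; the truncated last interval contributes $O(1)/T$. Negating gives exactly~\eqref{eq:diss:AC:dwell:MDT:Va}. Verifiability on $[0,T_{\min}]$ follows because all data used are $P_s$ on $[0,T_{\min}]$ and the constants $\mathcal{W}_\infty,\mathcal{W}_{\mathrm{jp}}$.
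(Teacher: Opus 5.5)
Your proposal stops exactly at the real difficulty, and neither of your two options resolves it under the stated hypotheses. On the frozen part of each inter-jump interval, the identity contributes $-\int_{T_{\min}}^{\Delta_k}\ip{\mathcal{C}_M^0(P_s(T_{\min}))}{X^\star(t_k+\tau)}\,d\tau\le0$, and (i)--(iii) give no further control over this term. Option (2) fails. The persistent forcing keeps $EX^\star E^*$ from decaying, so whenever $\mathcal{C}_M^0\,\widetilde{/}\,(\mathcal{C}_M^0)_3\neq0$ this term is generically of order $T$. Its independence from the forcing pairing does not make it negligible in the rate. Option (1) is a correct argument, but for a different theorem. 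You add the hypothesis that the tail Schur complement vanishes and take the frozen gain in the corresponding kernel, so the tail term is zero along $X^\star$. Nothing in the statement, which only assumes $\mathcal{C}_M^0\succeq0$, licenses that reading. A minor point: Lemma~\ref{lem:diss:AC:bdd} concerns the $K_s$ closed loop, not $X^\star$. You do not need it, because $P_s\succeq0$ makes the terminal term $\ip{\bar P(\tau(t_0+T))}{EX^\star(t_0+T)E^*}$ nonnegative, so it can simply be dropped in a lower bound.

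The paper's proof does not get past this point either. It writes the same identity, marks the flow integral as $\ge0$ by (iii), and drops it to conclude ``$\ge$''. That integral enters with a minus sign, however, so dropping it gives an upper bound on the supply extracted by $X^\star$, not a lower bound.

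The inequality as stated actually fails in general. Take $n=m=1$, $\mathcal{F}(X)=-X_1$, $\mathcal{J}(X)=X_3$, $M=E^*E$, $M_k=\tfrac12$, $\mathcal{W}\equiv w>0$, $\mathcal{W}_{\mathrm{jp}}=0$, $T_{\min}=\ln2$, and $P_s(\tau)=1-\tfrac14e^{\tau}\in[\tfrac12,\tfrac34]$. Then $\mathcal{C}_M(\tau,P_s(\tau))=0$, $\mathcal{D}_M(P_s(0^+),P_s(T_{\min}))=\mathrm{diag}(0,\tfrac34)$ has zero Schur complement, and $\mathcal{C}_M^0=\mathrm{diag}(\tfrac12,0)\succeq0$. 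The supply depends only on $X_1$ and increases with it. The only free choice affecting $X_1$ is $X_3(t_k^-)\ge0$, so the best extraction resets $X_1$ to $0$ at each jump; this is also the causal policy ($K_k=0$). For a constant dwell time $\Delta>\ln2$, the identity then shows that $\bar V_a^\star$ equals the right-hand side of~\eqref{eq:diss:AC:dwell:MDT:Va} minus $\tfrac{1}{2\Delta}\int_{\ln2}^{\Delta}w(1-e^{-\tau})\,d\tau>0$.

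What (i)--(iii) naturally give is the reverse inequality. The timer extension $\bar P$ is feasible for the DLMI of Theorem~\ref{th:diss:AC:LMI}, and its Step~1 bounds $\bar V_a^\star$ from above by exactly this right-hand side. A genuine lower bound needs $\int\ip{\mathcal{C}_M^0}{X^\star}\le0$ on the tail. You can get this either from your extra equality hypothesis, or by reversing (iii) to $\mathcal{C}_M^0\preceq0$, as in the optimal-control Theorem~\ref{th:OC:MDT}.
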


\begin{proof}
Extend $P_s$ to $\bar P:[0,\infty)\to\Snpsd$ by
$\bar P(\tau)=P_s(\tau)$ for $\tau\le T_{\min}$ and
$\bar P(\tau)=P_s(T_{\min})$ for $\tau>T_{\min}$ as in the proof of
Theorem~\ref{th:diss:MDT}.  Apply the forced key
identity~\eqref{eq:diss:AC:keyid} on $[t_0,t_0+T]$ with $\bar P$
extended along the timer, evaluating at the closed-loop extraction
trajectory $X^\star$ with feedback in
$\mathcal{K}_c^M,\mathcal{K}_d^M$; denote by $\tau(t)$ the timer
value at time $t$:
\begin{align*}
  -\int_{t_0}^{t_0+T}\!\!\!\!\!\ip{M}{X^\star}\dt
  &-\!\!\sum_{k:\,t_k\le t_0+T}\!\!\ip{M_k}{EX^\star(t_k^-)E^*}\\
  &=\ip{-\bar P(0^+)}{X_1^0}-\ip{-\bar P(\tau(t_0+T))}{EX^\star(t_0+T)E^*}\\
  &\quad
   -\underbrace{\int_{t_0}^{t_0+T}\!\ip{\mathcal{C}_M(\bar P)}{X^\star}\dt}_{\ge\,0\text{ by Thm~\ref{th:diss:MDT}~(iii)}}
   -\underbrace{\sum_{k:\,t_k\le t_0+T}\!\!\ip{\mathcal{D}_M}{X^\star(t_k^-)}}_{=\,0\text{ at the optimal extraction}}\\
  &\quad
   -\int_{t_0}^{t_0+T}\!\ip{\bar P(\tau(t))}{\mathcal{W}(\tau(t))}\dt
   -\!\!\sum_{k:\,t_k\le t_0+T}\!\!\ip{\bar P(0^+)}{\mathcal{W}_{\mathrm{jp}}}.
\end{align*}
On the inter-jump interval $(t_k,t_{k+1})$ with timer
$\tau=t-t_k\in(0,T_{\min})$, $\mathcal{C}_M(\tau,P_s(\tau))=0$ by
the flow I-D-GRE; on $\tau>T_{\min}$ where $\bar P=P_s(T_{\min})$
is constant and the data are time-invariant,
$\mathcal{C}_M(\bar P)=\mathcal{C}_M^0(P_s(T_{\min}))\succeq0$ by
Theorem~\ref{th:diss:MDT}~(iii); combined with $X^\star\succeq0$
and Lemma~\ref{lemma:decomp}, the flow inner product is non-negative
on the tail, so the underbraced flow integral is non-negative
(reversed sign from the OC case).  At each $t_k$ the saddle gain
gives $\ip{\mathcal{D}_M}{X^\star(t_k^-)}=0$.  Dropping the
non-negative flow contribution and the non-negative boundary term
$\ip{-\bar P(\tau(t_0+T))}{EX^\star(t_0+T)E^*}\ge0$ (since
$-\bar P\preceq 0$ and $EX^\star E^*\succeq0$) gives the inequality
\begin{align*}
  -\int\ip{M}{X^\star}\dt-\sum\ip{M_k}{EX^\star(t_k^-)E^*}
  &\ge\ip{-\bar P(0^+)}{X_1^0}\\
  &\quad
   -\int_{t_0}^{t_0+T}\!\ip{\bar P(\tau(t))}{\mathcal{W}(\tau(t))}\dt
   -\!\!\sum_{k:\,t_k\le t_0+T}\!\!\ip{\bar P(0^+)}{\mathcal{W}_{\mathrm{jp}}}.
\end{align*}
Dividing by $T$ and taking $T\to\infty$, the initial term vanishes.
The time-average of $\ip{\bar P(\tau(t))}{\mathcal{W}(\tau(t))}$
decomposes according to the timer distribution: under MDT, in each
inter-jump interval of length $\Delta_k\ge T_{\min}$ the timer
spends time $T_{\min}$ on $[0,T_{\min}]$ (where $\bar P=P_s$ and
$\mathcal{W}=\mathcal{W}(\tau)$ vary) and time $\Delta_k-T_{\min}$
on the frozen region (where $\bar P=P_s(T_{\min})$ and
$\mathcal{W}=\mathcal{W}_\infty$).  With average jump rate
$\bar\nu$, the fraction of time on $[0,T_{\min}]$ is
$\bar\nu T_{\min}$ and on the frozen region is $1-\bar\nu T_{\min}$,
giving~\eqref{eq:diss:AC:dwell:MDT:Va}.
\end{proof}

\begin{remark}\label{rem:diss:AC:dwell:MDT:periodic}
For the constant dwell-time $\Delta_k\equiv T_{\min}$, the jump rate
$\bar\nu=1/T_{\min}$ and the frozen contribution
$-(1-\bar\nu T_{\min})\ip{P_s(T_{\min})}{\mathcal{W}_\infty}$
vanishes, so~\eqref{eq:diss:AC:dwell:MDT:Va} recovers the periodic
formula~\eqref{eq:diss:AC:dwell:periodic:Va} (with constant data and
$T_p=T_{\min}$).  For larger jump rates ($\bar\nu<1/T_{\min}$), the
frozen contribution accounts for the time spent past the minimum
dwell-time.
\end{remark}

\paragraph{Range dwell-time.}
Adopt the timer-dependence hypothesis of Theorem~\ref{th:diss:RDT}:
$\Delta_k\in[T_{\min},T_{\max}]$ and the flow and jump data depend
only on the timer $\tau\in[0,T_{\max}]$ as
in~\eqref{eq:MDT:LTIassumption} extended to $[0,T_{\max}]$
\textup{(}so that $\mathcal{F}(\tau,\cdot),M(\tau)$ are defined for
$\tau\in[0,T_{\max}]$ and $\mathcal{J},M_k$ are
$k$-independent\textup{)}.  For the forcing data, assume similarly
that $\mathcal{W}$ depends only on the timer
$\tau\in[0,T_{\max}]$ and that $\mathcal{W}_k$ is
$k$-independent: $\mathcal{W}_k\equiv\mathcal{W}_{\mathrm{jp}}$.

\begin{theorem}[Causal long-term available-storage lower bound under
  RDT]\label{th:diss:AC:dwell:RDT}
Under the assumptions of Theorem~\ref{th:diss:RDT} and the
timer-dependence hypotheses above, suppose moreover that the
empirical occupation measure of the inter-jump sequence
$\{\Delta_k\}$ converges weakly to a probability measure $\mu$ on
$[T_{\min},T_{\max}]$ as $T\to\infty$, and that the jump rate
$\bar\nu=(\int_{[T_{\min},T_{\max}]}\Delta\,\mu(d\Delta))^{-1}$
is finite.  Then the causal policy of Theorem~\ref{th:diss:RDT}
achieves a long-term average available storage rate bounded below
by
\begin{equation}\label{eq:diss:AC:dwell:RDT:Va}
  \bar V_a^\star
  \ge
  -\bar\nu\!\int_{T_{\min}}^{T_{\max}}\!\!\biggl(\int_0^\Delta\!
    \ip{P_s(\tau)}{\mathcal{W}(\tau)}\,d\tau\biggr)\mu(d\Delta)
  -\bar\nu\ip{P_s(0^+)}{\mathcal{W}_{\mathrm{jp}}},
\end{equation}
where $P_s:[0,T_{\max}]\to\Snpsd$ is as in
Theorem~\ref{th:diss:RDT}.
\end{theorem}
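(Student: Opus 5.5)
The proof will mirror that of Theorem~\ref{th:IMP:AC:dwell:RDT} with the signs reversed, exactly as Theorem~\ref{th:diss:AC:dwell:MDT} mirrors Theorem~\ref{th:IMP:AC:dwell:MDT}.

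First, build the absolute-time dual variable $P(t):=P_s(t-t_k)$ for $t\in(t_k,t_{k+1}]$, so that $P(t_k^+)=P_s(0^+)$ and $P(t_k^-)=P_s(\Delta_{k-1}^-)$. Then apply the forced dissipativity key identity~\eqref{eq:diss:AC:keyid} on $[t_0,t_0+T]$ with this $P$, evaluated along the closed-loop extraction trajectory $X^\star$ generated by the causal policy of Theorem~\ref{th:diss:RDT}. That policy uses gains in $\mathcal{K}_c^M(\tau,P_s(\tau))$ on flow and in $\mathcal{K}_d^M(P_s(0^+),P_s(\Delta_{k-1}^-))$ at $t_k$.

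Second, dispose of the $\mathcal{C}_M$ and $\mathcal{D}_M$ terms.
\begin{itemize}
  \item By condition~(i) of Theorem~\ref{th:diss:RDT}, $\mathcal{C}_M(\tau,P_s(\tau))=0$ on $(0,T_{\max})\supset(0,\Delta_k)$, so the flow integral vanishes identically.
  \item Since $\Delta_{k-1}\in[T_{\min},T_{\max}]$, condition~(ii) gives $\mathcal{D}_M\succeq0$ with vanishing Schur complement. The attainment argument of Theorem~\ref{th:IMP:LQ:suff}, via Lemma~\ref{lemma:decomp} with gain in $\mathcal{K}_d^M$, then gives $\ip{\mathcal{D}_M}{X^\star(t_k^-)}=0$.
\end{itemize}
The extracted supply along $X^\star$ therefore equals
\[
\ip{-P_s(0^+)}{X_1^0}+\ip{P(t_0+T)}{EX^\star(t_0+T)E^*}-R_T(P),
\]
where $R_T(P)=\int_{t_0}^{t_0+T}\ip{P_s(\tau(t))}{\mathcal{W}(\tau(t))}\dt+\sum_{k:\,t_k\le t_0+T}\ip{P_s(0^+)}{\mathcal{W}_{\mathrm{jp}}}$. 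Because $\bar V_a^\star$ is a supremum over admissible trajectories, this single trajectory already furnishes a lower bound.

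Third, divide by $T$ and let $T\to\infty$.
\begin{itemize}
  \item The initial term is $O(1)$, so it vanishes after division by $T$.
  \item For the boundary term I would not rely on a sign of $P_s$. Instead, use that $P_s$ is bounded on the compact interval $[0,T_{\max}]$ and that $\norm{EX^\star E^*}$ stays bounded. For the latter, invoke Lemma~\ref{lem:diss:AC:bdd} (under (A1)--(A2)), so the term is $O(1)$ and vanishes after division by $T$.
  \item The jump sum averages to $\bar\nu\ip{P_s(0^+)}{\mathcal{W}_{\mathrm{jp}}}$ by the definition of the jump rate.
  \item Split the flow integral over complete inter-jump intervals. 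Each interval of length $\Delta_k$ contributes $g(\Delta_k):=\int_0^{\Delta_k}\ip{P_s(\tau)}{\mathcal{W}(\tau)}d\tau$, and the last partial interval is $O(1)$. Hence $T^{-1}\sum_{k\le N_T}g(\Delta_k)=(N_T/T)\cdot N_T^{-1}\sum_k g(\Delta_k)\to\bar\nu\int g\,d\mu$. This uses weak convergence of the empirical measure together with continuity and boundedness of $g$ on $[T_{\min},T_{\max}]$, which holds since $P_s$ and $\mathcal{W}$ are bounded there.
\end{itemize}
Negating these averages yields~\eqref{eq:diss:AC:dwell:RDT:Va}.

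The main obstacle is this averaging step. Weak convergence is of the empirical measure indexed by jump count, whereas the average is over time, so I would need $N_T/T\to\bar\nu=(\int\Delta\,d\mu)^{-1}$. This follows because $T=\sum_{k\le N_T}\Delta_k+O(T_{\max})$ and $\Delta\mapsto\Delta$ is continuous and bounded on the support. A secondary point is the sign of the boundary term, which I handle by the boundedness argument rather than by positivity.
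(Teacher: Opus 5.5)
Your skeleton is the same as the paper's proof. You use the timer-indexed dual $P(t)=P_s(t-t_k)$ and the forced identity \eqref{eq:diss:AC:keyid} along the extraction trajectory $X^\star$. Condition~(i) of Theorem~\ref{th:diss:RDT} gives $\mathcal{C}_M\equiv0$, and condition~(ii) with gains in $\mathcal{K}_d^M$ gives $\ip{\mathcal{D}_M}{X^\star(t_k^-)}=0$; then you time-average $R_T$. Your averaging step is more careful than the paper's, which only cites $\bar\nu=(\mathbb{E}_\mu[\Delta])^{-1}$. Deriving $N_T/T\to\bar\nu$ from $T=\sum_{k\le N_T}\Delta_k+O(T_{\max})$, and using continuity of $g(\Delta)=\int_0^\Delta\ip{P_s(\tau)}{\mathcal{W}(\tau)}\,d\tau$ under weak convergence, is exactly what that step needs.

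\textbf{The gap is in your handling of the boundary term $\ip{P(t_0+T)}{EX^\star(t_0+T)E^*}$.}
\begin{itemize}
\item Lemma~\ref{lem:diss:AC:bdd} bounds $\norm{EXE^*}$ only along trajectories closed with the stabilizing feedback $K_s$ of (A1). Your $X^\star$ is closed with the extraction gains from $\mathcal{K}_c^M(\tau,P_s(\tau))$ and $\mathcal{K}_d^M(P_s(0^+),P_s(\Delta_{k-1}^-))$.
\item Neither Theorem~\ref{th:diss:RDT} nor (A1)--(A2) makes that closed loop stable. In the dissipativity setting these assumptions contain no coercivity of the supply, and an extraction policy generated by a non-minimal storage can be destabilizing.
\item So $\norm{EX^\star(t_0+T)E^*}$ may be unbounded. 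If $P_s$ is sign-indefinite, the boundary term can then be negative and superlinear in $T$, and the lower bound fails. Choosing not to use the sign of $P_s$ is what opens this gap.
\end{itemize}

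The repair is the paper's argument. The statement takes $P_s:[0,T_{\max}]\to\Snpsd$, so $\ip{P_s(\tau(t_0+T))}{EX^\star(t_0+T)E^*}\ge0$ and this term can simply be dropped. That gives, for every $T$, an extracted supply along $X^\star$ of at least $\ip{-P_s(0^+)}{X_1^0}-R_T(P)$. No bound on $X^\star$ is needed, because only a lower bound on $\bar V_a^\star$ is claimed. The paper writes the dropped quantity as $\ip{-P_s}{\cdot}\ge0$, which is a sign slip; the nonnegative term in the identity is $-\ip{-P_s}{\cdot}=\ip{P_s}{\cdot}$.
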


\begin{proof}
Apply the forced key identity~\eqref{eq:diss:AC:keyid} with $P=P_s$
over $[t_0,t_0+T]$, evaluating at the closed-loop extraction
trajectory $X^\star$.  The $\mathcal{C}_M$ and $\mathcal{D}_M$
contributions vanish identically at the saddle gain by the
conditions of Theorem~\ref{th:diss:RDT}, so
\begin{align*}
  -\int_{t_0}^{t_0+T}\!\ip{M}{X^\star}\dt
  &-\!\!\sum_{k:\,t_k\le t_0+T}\!\!\ip{M_k}{EX^\star(t_k^-)E^*}\\
  &=\ip{-P_s(0^+)}{X_1^0}-\ip{-P_s(\tau(t_0+T))}{EX^\star(t_0+T)E^*}\\
  &\quad
   -\int_{t_0}^{t_0+T}\!\ip{P_s(\tau(t))}{\mathcal{W}(\tau(t))}\dt
   -\!\!\sum_{k:\,t_k\le t_0+T}\!\!\ip{P_s(0^+)}{\mathcal{W}_{\mathrm{jp}}}.
\end{align*}
Dropping the non-negative boundary term
$\ip{-P_s(\tau(t_0+T))}{EX^\star(t_0+T)E^*}\ge0$ gives a $\ge$
inequality.  Dividing by $T$ and letting $T\to\infty$, the initial
term vanishes.  The jump-sum time-average is
$\bar\nu\ip{P_s(0^+)}{\mathcal{W}_{\mathrm{jp}}}$.  For the flow
time-average, in an inter-jump interval of length $\Delta_k$ the
timer traverses $[0,\Delta_k]$, contributing
$\int_0^{\Delta_k}\ip{P_s(\tau)}{\mathcal{W}(\tau)}\,d\tau$ to the
integral; dividing by $T$ and using
$\bar\nu=(\mathbb{E}_\mu[\Delta])^{-1}$ gives the $\mu$-integral
in~\eqref{eq:diss:AC:dwell:RDT:Va}.
\end{proof}

\begin{remark}\label{rem:diss:AC:dwell:RDT}
When $\mu$ is concentrated at a single point $\Delta_k\equiv T_p$
with $T_{\min}\le T_p\le T_{\max}$, the
bound~\eqref{eq:diss:AC:dwell:RDT:Va} becomes equality and reduces
to~\eqref{eq:diss:AC:dwell:periodic:Va}.  For unknown $\mu$, the
worst case over admissible distributions on $[T_{\min},T_{\max}]$
gives a robust lower bound on $\bar V_a^\star$.
\end{remark}

\subsubsection{Continuous- and discrete-time corollaries}
\label{sec:diss:AC:CTDT}

The continuous-time and discrete-time specialisations follow by
setting $N_T=0$ or $\mathcal{F}=0$.

\begin{corollary}[Continuous-time average available storage]
  \label{cor:CT:diss:AC}
Set $N_T=0$ (no jumps), so $\mathcal{W}_k$ is irrelevant.  Under
the continuous-time forced flow
$E\dot X E^*=\mathcal{F}(t,X(t))+\mathcal{W}(t)$ and (A1)--(A2),
\begin{equation}\label{eq:CT:diss:AC:Va}
  \bar V_a^\star
  =\limsup_{T\to\infty}\frac{1}{T}\!\left[
    -\int_{t_0}^{t_0+T}\!\ip{P_\infty(t)}{\mathcal{W}(t)}\dt
  \right],
\end{equation}
where $P_\infty$ is the unique bounded $C^1$ solution of the
continuous-time infinite-horizon D-GRE
(Corollary~\ref{cor:CT:diss:inf}).  The dual DLMI is
\[
  \bar V_a^\star
  =\inf_P\;\limsup_{T\to\infty}\frac{1}{T}\!\!\int_{t_0}^{t_0+T}\!\!\ip{-P(t)}{\mathcal{W}(t)}\dt
  \quad\text{s.t.}\quad
  \mathcal{C}_M(t,P(t))\succeq0,\;P\text{ bounded}.
\]
In the LTI case ($\mathcal{F},M,\mathcal{W}$ time-invariant),
$P_\infty$ is constant and~\eqref{eq:CT:diss:AC:Va} reduces to
\begin{equation}\label{eq:CT:diss:AC:LTI}
  \bar V_a^\star
  =\ip{-P_\infty}{\mathcal{W}}=\tr(-P_\infty\mathcal{W}),
\end{equation}
recovering the classical steady-state output-covariance / $H_2$-norm
formula when $\mathcal{W}=BB^*$ is a process-noise covariance.
\end{corollary}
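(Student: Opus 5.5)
The corollary specializes the average available-storage machinery of Section~\ref{sec:diss:AC} to $N_T=0$, and the proof runs in parallel with Corollary~\ref{cor:CT:AC} on the optimal-control side. With no jumps, the sums over $k$ in the forced key identity~\eqref{eq:diss:AC:keyid} vanish, and so do the jump forcing $\mathcal{W}_k$ and the operator $\mathcal{D}_M$. The identity therefore contains only the flow integral with $\mathcal{C}_M$, the boundary terms, and $-\int\ip{P}{\mathcal{W}}\dt$. The infinite-horizon I-D-GRE becomes the CT D-GRE of Corollary~\ref{cor:CT:diss:inf}. That corollary, through Theorem~\ref{th:IMP:diss:inf}, supplies a unique bounded $C^1$ solution $P_\infty$; piecewise-$C^1$ upgrades to $C^1$ because there are no jump times.

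For formula~\eqref{eq:CT:diss:AC:Va}, I apply Theorem~\ref{th:diss:AC:nec}\eqref{diss:AC:nec:formula} with the jump sum removed. Its two ingredients carry over directly.
\begin{enumerate}
\item The upper bound uses $\mathcal{C}_M(t,P_\infty)\succeq0$ together with Lemma~\ref{lemma:decomp}.
\item Attainment uses the gain $K_\infty\in\mathcal{K}_c^M(t,P_\infty(t))$.
\end{enumerate}
The boundary term is $O(1)$ by Lemma~\ref{lem:diss:AC:bdd}. With $N_T=0$ that lemma's bound reduces to $\beta\norm{X_1^0}+\beta\alpha^{-1}\sup\norm{\mathcal{W}}$, and it needs no jump-rate hypothesis. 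Dividing by $T$ and taking $\limsup$ gives the formula. The finiteness clause is automatic because the jump count is zero.

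For the dual DLMI, I follow the proof of Theorem~\ref{th:diss:AC:LMI} in its \emph{direct} form, which avoids the comparison $P\preceq P_\infty$. Take any bounded $P$ with $\mathcal{C}_M(t,P(t))\succeq0$. The key identity then gives $T^{-1}V_a^T(\cdot;X)\le T^{-1}\bigl(\ip{-P(t_0)}{X_1^0}+\ip{P(t_0+T)}{EX(t_0+T)E^*}\bigr)-T^{-1}\int\ip{P}{\mathcal{W}}\dt$. The boundary terms vanish in the limit by boundedness, so $\bar V_a^\star\le\bar\Phi^M(P)$. The reverse inequality holds because $P_\infty$ is feasible and achieves equality. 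Assumption (A3) concerns inter-jump times and is vacuous here, so (A1)--(A2) suffice.

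For the LTI case, I argue as in the periodic argument of Theorem~\ref{th:diss:periodic}. The shift $Q(t):=P_\infty(t+s)$ solves the same infinite-horizon D-GRE for every $s>0$, so uniqueness forces $P_\infty$ to be constant. The time average of the constant $\ip{-P_\infty}{\mathcal{W}}$ is then itself, the $\limsup$ is a true limit, and this yields~\eqref{eq:CT:diss:AC:LTI}.

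The main obstacle is the sign and regularity bookkeeping. Theorem~\ref{th:diss:AC:nec} is stated for $P_\infty$ with values in $\Snpsd$ and with bounds $\alpha_1 I\preceq P_\infty$, whereas Theorem~\ref{th:IMP:diss:inf} gives $-P_\infty\succeq0$. I would use only boundedness of $P_\infty$, which is all the averaging argument needs, and check that the boundary term $\ip{-P_\infty(t_0+T)}{EX(t_0+T)E^*}$ is controlled in absolute value rather than by sign. If that check fails, I would instead invoke the sufficiency Theorem~\ref{th:diss:AC:suff} under the explicit assumption that the limit exists. In the LTI case this assumption holds automatically.
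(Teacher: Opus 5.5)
Specializing Theorems~\ref{th:diss:AC:nec} and~\ref{th:diss:AC:LMI} to $N_T=0$ is the intended route. However, your plan inherits a step that does not go through: the control of the terminal pairing. Write $R_T(P)=\int_{t_0}^{t_0+T}\ip{P(t)}{\mathcal{W}(t)}\dt$. The key identity~\eqref{eq:diss:AC:keyid} with $\mathcal{C}_M(t,P_\infty(t))\succeq0$ gives only
$V_a^T(t_0,X_1^0;X)\le\ip{-P_\infty(t_0)}{X_1^0}+\ip{P_\infty(t_0+T)}{EX(t_0+T)E^*}-R_T(P_\infty)$.
In~\eqref{eq:diss:AC:Va} the supremum over $X$ sits \emph{inside} the $\limsup$, so you need $\sup_X\ip{P_\infty(t_0+T)}{EX(t_0+T)E^*}$ to be at most $o(T)$. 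Lemma~\ref{lem:diss:AC:bdd} cannot supply this. It bounds $EXE^*$ only along the closed loop generated by the stabilizing gain $K_s$ of (A1). The extractor may choose arbitrarily large free blocks, so $\sup_X\norm{EX(t_0+T)E^*}$ is in general $+\infty$.

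Controlling the term ``in absolute value'' is therefore hopeless. What closes the step is exactly the sign information you propose to discard. Theorem~\ref{th:IMP:diss:inf} gives $-P_\infty\succeq0$, so $\ip{P_\infty(t_0+T)}{EX(t_0+T)E^*}\le0$ for every admissible $X$, and the term can simply be dropped. Alternatively, Theorem~\ref{th:diss:AC:finite} together with $-P_T(t)\preceq-P_\infty(t)$ and $\mathcal{W}\succeq0$ gives $V_a^T(t_0,X_1^0)\le\ip{-P_\infty(t_0)}{X_1^0}-R_T(P_\infty)$ with no boundary term at all. With the sign $P_\infty\succeq\alpha_1 I$ of Theorem~\ref{th:diss:AC:nec}, this route yields nothing.

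Your fallback does not rescue this step either. Theorem~\ref{th:diss:AC:suff} handles the same term with the same lemma. Your ``direct'' dual argument has the identical defect. For a bounded feasible $P$ with a positive direction at $t_0+T$, the pairing $\ip{P(t_0+T)}{EX(t_0+T)E^*}$ is not uniformly controlled. The argument therefore works as written only for feasible $P\preceq0$. In general you must pass through the comparison $P\preceq P_\infty$ and the order-reversal of $\bar\Phi^M$.

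The matching lower bound has a separate gap. Along the extraction trajectory $X^\star$ generated by $K_\infty\in\mathcal{K}_c^M(t,P_\infty(t))$ the identity holds with equality. You still need $\norm{EX^\star(t_0+T)E^*}=o(T)$, but (A1) and Lemma~\ref{lem:diss:AC:bdd} concern $K_s$, not $K_\infty$. Nothing in (A1)--(A2) makes the extraction closed loop stable. For this step you must add a detectability-type hypothesis in the spirit of (D4), e.g.\ uniform exponential stability of the $K_\infty$ closed loop. With it, the variation-of-constants argument of Lemma~\ref{lem:diss:AC:bdd} applied to that loop gives the bound.

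The same property yields the comparison $P\preceq P_\infty$ needed for the dual. Apply the homogeneous key identity for $P$ and for $P_\infty$ along the homogeneous extraction trajectory. This gives $\ip{P(t_0)-P_\infty(t_0)}{X_1^0}\le\ip{P(t_0+T)-P_\infty(t_0+T)}{EX^\star(t_0+T)E^*}$, which tends to $0$.

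Existence of $P_\infty$ is also not a consequence of (A1)--(A2). It rests on the homogeneous dissipativity hypothesis~\eqref{eq:diss:AC:nec:dissip}, which you should state explicitly rather than attribute to Corollary~\ref{cor:CT:diss:inf}.

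Finally, for constancy in the LTI case, avoid relying on uniqueness of bounded solutions. The algebraic D-GRE is quadratic in $P$ and generally has several constant roots, e.g.\ stabilizing and anti-stabilizing ones. Instead, note that time-invariance makes $P_T(t)$ depend only on $t_0+T-t$. Hence the Loewner supremum of $\{-P_T(t)\}_{T}$, which is $-P_\infty(t)$ by Theorem~\ref{th:IMP:diss:inf}, does not depend on $t$.
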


\begin{corollary}[Discrete-time average available storage]
  \label{cor:DT:diss:AC}
Apply with $\mathcal{F}=0$, $M=0$ on flow, $t_k=k_0+k$,
$\mathcal{J}=\mathcal{D}$, $\mathcal{W}=0$ on flow.  Under the
discrete-time forced jump
$EX(k+1)E^*=\mathcal{D}(k,X(k))+\mathcal{W}_k$ and the
discrete-time analogues of (A1)--(A2),
\begin{equation}\label{eq:DT:diss:AC:Va}
  \bar V_a^\star
  =\limsup_{N\to\infty}\frac{1}{N}\sum_{k=k_0}^{k_0+N-1}\ip{-P_\infty(k+1)}{\mathcal{W}_k},
\end{equation}
where $P_\infty$ satisfies the algebraic DT-D-GRE
(Corollary~\ref{cor:DT:diss:inf}).  In the LTI case
($\mathcal{D},M_k,\mathcal{W}_k$ constant), $P_\infty$ is constant
and~\eqref{eq:DT:diss:AC:Va} reduces to
$\bar V_a^\star=\tr(-P_\infty\mathcal{W})$, again recovering the
classical steady-state covariance formula.
\end{corollary}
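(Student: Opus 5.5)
The statement is Corollary~\ref{cor:DT:diss:AC}. I would obtain it as a direct specialisation of Theorem~\ref{th:diss:AC:nec}, in the same way Corollary~\ref{cor:DT:AC} was obtained from the optimal-control average-cost results. The substitutions are $\mathcal{F}=0$, $M=0$ and $\mathcal{W}=0$ on flow, jump times $t_k=k_0+k$, and $\mathcal{J}=\mathcal{D}$. With these, the forced system~\eqref{eq:sys:IMP:forced} becomes $EX(k+1)E^*=\mathcal{D}(k,X(k))+\mathcal{W}_k$. The forced key identity~\eqref{eq:diss:AC:keyid} then loses its flow integral: with $P$ taken constant on each $(k,k+1)$, we have $\mathcal{C}_M=E^*\dot P E=0$. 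What remains is the discrete sum of $\ip{\mathcal{D}_M(k,P(k+1),P(k))}{X(k)}$ terms, plus the forcing pairing $\sum_k\ip{P(k+1)}{\mathcal{W}_k}$. Here $P(t_k^+)$ is identified with $P(k+1)$ under the discrete-time indexing of Corollary~\ref{cor:DT:LQ:fh}.

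The steps, in order, are as follows.

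\begin{enumerate}
\item Check that (A1)--(A2) hold in their discrete-time form. The flow factor $e^{-\alpha(t-s)}$ is absorbed into $\rho^{\kappa}$, since every unit interval contains exactly one jump.
\item Invoke Corollary~\ref{cor:DT:diss:inf}. It gives a bounded $P_\infty$ satisfying the algebraic DT-D-GRE, which is the discrete-time instance of the infinite-horizon I-D-GRE used in Theorem~\ref{th:diss:AC:nec}.
\item Apply Theorem~\ref{th:diss:AC:nec}\eqref{diss:AC:nec:formula}. The integral term vanishes and the jump sum becomes $\sum_{k=k_0}^{k_0+N-1}\ip{-P_\infty(k+1)}{\mathcal{W}_k}$. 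Replace $T$ by $N$, noting that the jump count equals $N$, so the finiteness condition holds trivially. This yields~\eqref{eq:DT:diss:AC:Va}.
\item Treat the LTI case. When $\mathcal{D},M_k,\mathcal{W}_k$ are constant, take the constant bounded solution $P_\infty$ of the algebraic DT-D-GRE. The average of the identical summands is then the genuine limit $\ip{-P_\infty}{\mathcal{W}}=\tr(-P_\infty\mathcal{W})$.
\end{enumerate}

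The step I expect to be the main obstacle is the LTI claim that $P_\infty$ is constant. I would establish it by the shift argument from the proof of Theorem~\ref{th:diss:periodic}, taking period $T_p=1$. The shifted function $k\mapsto P_\infty(k+1)$ solves the same time-invariant DT-D-GRE and is bounded. By the uniqueness in Theorem~\ref{th:IMP:diss:inf}, it coincides with $P_\infty$, so $P_\infty$ is constant.

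A smaller bookkeeping point is the index in the forcing pairing. It must be $P_\infty(k+1)$, the post-jump value, and not $P_\infty(k)$. This follows from~\eqref{pf:keyid:forced:jump}, where the forcing pairs with $P(t_k^+)$.
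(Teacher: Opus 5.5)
Your proposal is correct and follows the paper's route. The corollary is just Theorem~\ref{th:diss:AC:nec} specialised through $\mathcal{F}=0$, $M=0$, $\mathcal{W}=0$ on flow and $t_k=k_0+k$: the flow D-GRE forces $P$ to be constant between jumps, and $P(t_k^+)$ becomes $P(k+1)$. The paper merely asserts that $P_\infty$ is constant in the LTI case, and your unit-shift argument supplies that step.

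Two small refinements are worth making explicit.

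\begin{itemize}
\item Your Step~2 tacitly uses the homogeneous dissipativity hypothesis~\eqref{eq:diss:AC:nec:dissip} that underlies Corollary~\ref{cor:DT:diss:inf}. This hypothesis is not implied by (A1)--(A2), so it should be stated.
\item In the LTI case, constancy of $P_\infty$ follows more directly from shift invariance of the available storage itself: $V_a(k,\cdot)=\ip{-P_\infty(k)}{\cdot}=V_a(k+1,\cdot)$ when the data are constant. This avoids relying on uniqueness of bounded D-GRE solutions, which is a delicate claim for indefinite supplies, where minimal and maximal bounded solutions typically coexist.
\end{itemize}
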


\section{Minimax Optimal Control: Zero-Sum Differential Games}
\label{sec:game}

This section develops the zero-sum game counterpart of the optimal
control theory of Section~\ref{sec:OC}.  The impulsive system now has
\emph{two} input channels: a minimizing player (the controller, with
input $X_2^u$, $X_3^u$) and a maximizing player (the disturbance,
with input $X_2^w$, $X_3^w$).  The game value is the minimax cost,
and the saddle-point policy is the pair that simultaneously minimizes
for the controller and maximizes for the disturbance.

The analysis is built on the \emph{impulsive HJI-GRE}, the
game-theoretic analogue of the I-GRE.  Under Isaacs'
condition~\eqref{eq:Isaacs:gen}, the minimax and maximin coincide
and the game has a saddle-point solution characterized by the
HJI matrix $\mathcal{I}$ and its (indefinite) Isaacs block
$\mathcal{G}$.  The key structural difference from the optimal control
case is that the pivot block $\mathcal{G}$ is \emph{indefinite}: the
controller minimizes in the $\mathcal{I}_{uu}$ block ($\succeq0$) while
the disturbance maximizes in the $\mathcal{I}_{ww}$ block ($\preceq0$).
Consequently the saddle is handled by the \emph{generalized (indefinite)
Schur complement} of $\mathcal{G}$ in $\mathcal{I}$, and the dual
characterization (Theorem~\ref{th:game:LMI}) is a game Riccati
\emph{inequality} which, in contrast to the optimal control case, 
retains a Schur-complement term because the pivot $\mathcal{G}$ has
mixed inertia.

The finite-horizon game theory is in Section~\ref{sec:game:fh}, the
infinite-horizon game under LTV conditions in
Section~\ref{sec:game:inf}, and the long-term average game value in
Section~\ref{sec:game:AC}; causal dwell-time policies and
continuous-time and discrete-time corollaries appear throughout.
Connections to $H_\infty$ and full-information games are discussed in
Section~\ref{sec:game:hinf}.

\subsection{Finite-horizon game}
\label{sec:game:fh}

We study the zero-sum game on the bounded horizon $[t_0,t_0+T]$ for a
fixed $T>0$.  The decision variable is the full trajectory
$X(\cdot)\in\Snmpsd$, whose free blocks are now split between a
\emph{minimizing} player (the controller) and a \emph{maximizing}
player (the disturbance).  We introduce the augmented state, the game
cost, and the minimax problem in Section~\ref{sec:game:form}, the HJI
operators and the I-HJI-GRE in Section~\ref{sec:game:fh:GRE}, and the
sufficient, necessary, and dual conditions in
Theorems~\ref{th:game:suff}--\ref{th:game:LMI}.

\subsubsection{Augmented state, partition, and problem formulation}
\label{sec:game:form}

The setting of this section extends the optimal control framework of
Section~\ref{sec:OC} by splitting the free block $E_\bot$ into two
competing parts: a \emph{minimizing block} of size $m_u$ and a
\emph{maximizing block} of size $m_w$, with $m=m_u+m_w$.
Define the selectors
\begin{equation}
  E_u:=\bigl[\,0_{m_u\times n}\;\;I_{m_u}\;\;0_{m_u\times m_w}\,\bigr]
  \in\C^{m_u\times(n+m)},
  \qquad
  E_w:=\bigl[\,0_{m_w\times n}\;\;0_{m_w\times m_u}\;\;I_{m_w}\,\bigr]
  \in\C^{m_w\times(n+m)},
\end{equation}
so that $E_\bot=\begin{bmatrix}E_u\\E_w\end{bmatrix}$.  The extended Hermitian matrix
$X\in\Snmpsd$ is now partitioned as
\begin{equation}\label{eq:game:Xpart}
  X=\begin{bmatrix}
    X_1 & X_{12}^* & X_{13}^*\\
    X_{12} & X_{22} & X_{23}^*\\
    X_{13} & X_{23} & X_{33}
  \end{bmatrix},
\end{equation}
where $X_1=EXE^*\in\Snpsd$ (state, constrained by dynamics),
$X_{12}=E_uXE^*$ and $X_{22}=E_uXE_u^*$ (control blocks,
minimizer-controlled), $X_{13}=E_wXE^*$, $X_{23}=E_wXE_u^*$, and
$X_{33}=E_wXE_w^*$ (disturbance blocks, maximizer-controlled).
The blocks $X_{22}$ and $X_{33}$ are the self-adjoint free blocks
associated with the minimizer and maximizer respectively, and $X_{23}$
is their cross-block.

\medskip\noindent\textbf{Cost function.}
For cost matrix
$Z=\bigl[\begin{smallmatrix}Q&S_u&S_w\\S_u^*&R_u&R_{uw}\\
S_w^*&R_{uw}^*&{-R_w}\end{smallmatrix}\bigr]$
with $Q\succeq0$, $R_u\succ0$, $R_w\succ0$ (the negative sign on
$R_w$ reflects the maximizer's objective), and with the
\emph{controller cost block}
$Z_c:=\bigl[\begin{smallmatrix}Q&S_u\\S_u^*&R_u\end{smallmatrix}\bigr]\succeq0$
(equivalently $Q\succeq S_u\pinv{R_u}S_u^*$; this holds automatically when
$S_u=0$, as in the $H_\infty$ setting), the finite-horizon cost is
\begin{equation}\label{eq:game:cost}
  J_T(X,X_1^0)
  :=\int_{t_0}^{t_0+T}\ip{Z(t)}{X(t)}\dt
  +\sum_k\ip{Z_k}{X(t_k^-)}+\ip{Z_T}{EX(t_0+T)E^*},
\end{equation}
which is linear in $X$.  The negative sign on $R_w$ in the $(ww)$-block
reflects that the maximizer seeks to increase the cost, hence a
positive $R_w$ penalises its own block negatively.

\medskip\noindent\textbf{Zero-sum minimax problem.}
The minimizer controls $X_{12}$, $X_{22}$ (the control blocks), while
the maximizer controls $X_{13}$, $X_{23}$, $X_{33}$ (the disturbance
blocks), subject to $X(t)\succeq0$ and the dynamics:
\begin{equation}\label{eq:game:prob}
  J_T^\circ(X_1^0)
  :=\underset{X_{12},X_{22}}{\min}\;
    \underset{X_{13},X_{23},X_{33}}{\max}\;
    J_T(X,X_1^0)
  \quad\text{s.t.~\eqref{eq:sys:IMP},}\;EX(t_0)E^*=X_1^0.
\end{equation}
When the minimax equals the maximin (guaranteed by Isaacs' condition,
Definition~\ref{def:Isaacs} below), $J_T^\circ$ is the
\emph{saddle-point value}, also called the \emph{game value}.

\subsubsection{Composite operators and composite HJI matrix (finite horizon)}

The finite-horizon game analysis follows the same structure as
Section~\ref{sec:OC:fh} but with an indefinite Isaacs block that
reflects the opposing objectives of the two players.  The HJI operator
$\mathcal{I}(t,P(t))$ is the game analogue of $\mathcal{C}_Z(t,P(t))$;
its bottom-right $(m_u+m_w)$ block, the \emph{Isaacs block}
$\mathcal{G}$, is now \emph{indefinite} (the minimizer's block
$\mathcal{I}_{uu}\succeq0$, the maximizer's block
$\mathcal{I}_{ww}\preceq0$), and is the central quantity.  Isaacs'
condition is an inertia condition on $\mathcal{G}$ ensuring the
existence of a saddle-point, and the I-HJI-GRE, the generalized
Schur complement of $\mathcal{G}$ in $\mathcal{I}$ set to zero, is
the resulting Riccati equation.

For piecewise-$C^1$ $P:[t_0,t_0+T]\to\Sn$, define the
\emph{HJI flow operator}
\begin{equation}\label{eq:game:I}
  \mathcal{I}(t,P(t))
  :=E^*\dot{P}(t)E+\mathcal{F}^*(t,P(t))+Z(t)\in\mathbb{H}^{n+m},
\end{equation}
blocked conformally with~\eqref{eq:game:Xpart} as
\begin{equation}
  \mathcal{I}=\begin{bmatrix}
    \mathcal{I}_{11} & \mathcal{I}_{1u} & \mathcal{I}_{1w}\\
    \mathcal{I}_{1u}^* & \mathcal{I}_{uu} & \mathcal{I}_{uw}\\
    \mathcal{I}_{1w}^* & \mathcal{I}_{uw}^* & \mathcal{I}_{ww}
  \end{bmatrix},
\end{equation}
and the \emph{HJI jump operator}
\begin{equation}\label{eq:game:Ij}
  \mathcal{I}^\mathrm{j}(k,P(t_k^+),P(t_k^-))
  :=\mathcal{J}^*(k,P(t_k^+))-E^*P(t_k^-)E+Z_k,
\end{equation}
blocked analogously as
\begin{equation}
  \mathcal{I}^\mathrm{j}=\begin{bmatrix}
    \mathcal{I}^\mathrm{j}_{11} & \mathcal{I}^\mathrm{j}_{1u} & \mathcal{I}^\mathrm{j}_{1w}\\
    \mathcal{I}^{\mathrm{j}*}_{1u} & \mathcal{I}^\mathrm{j}_{uu} & \mathcal{I}^\mathrm{j}_{uw}\\
    \mathcal{I}^{\mathrm{j}*}_{1w} & \mathcal{I}^{\mathrm{j}*}_{uw} & \mathcal{I}^\mathrm{j}_{ww}
  \end{bmatrix}.
\end{equation}

\begin{remark}
  The HJI operator~\eqref{eq:game:I} is structurally identical to
  the flow operator~\eqref{eq:IMP:L} and the dissipativity
  operator~\eqref{eq:IMP:LM}, but now the cost matrix $Z$ has the
  mixed sign structure $-R_w\prec0$ in the $(ww)$-block.  The key
  identity (Proposition~\ref{prop:keyid}) holds verbatim with
  $\mathcal{C}_Z(t,P(t))\leftarrow\mathcal{I}(t,P(t))$.
\end{remark}

\subsubsection{Isaacs' condition and the I-HJI-GRE (finite horizon)}
\label{sec:game:fh:GRE}

\begin{definition}[Generalized Isaacs' condition and Isaacs block]
  \label{def:Isaacs}
  Define the \emph{Isaacs block} as the bottom-right
  $(m_u+m_w)\times(m_u+m_w)$ block of $\mathcal{I}$,
  \begin{equation}\label{eq:Ghat}
    \mathcal{G}(t,P(t))
    :=\begin{bmatrix}
      \mathcal{I}_{uu}(t,P(t)) & \mathcal{I}_{uw}(t,P(t))\\
      \mathcal{I}_{uw}(t,P(t))^* & \mathcal{I}_{ww}(t,P(t))
    \end{bmatrix},
  \end{equation}
  and the selection block
  $\mathcal{I}_{1g}:=\begin{bmatrix}\mathcal{I}_{1u}&\mathcal{I}_{1w}\end{bmatrix}$.
  Unlike the optimal control pivot $(\mathcal{C}_Z)_3$, the Isaacs block
  $\mathcal{G}$ is \emph{indefinite}.  \emph{Isaacs' condition}, a
  saddle-inertia condition on $\mathcal{G}$, holds at $(t,P(t))$ if
  \begin{equation}\label{eq:Isaacs:gen}
    \mathcal{I}_{uu}\succeq0,\quad
    \mathcal{I}_{ww}\preceq0,\quad
    \mathcal{I}_{ww}-\mathcal{I}_{uw}^*\pinv{\mathcal{I}_{uu}}\mathcal{I}_{uw}\preceq0,\quad
    \mathcal{I}_{uu}-\mathcal{I}_{uw}\pinv{\mathcal{I}_{ww}}\mathcal{I}_{uw}^*\succeq0,
  \end{equation}
  together with the range conditions
  $\operatorname{Im}(\mathcal{I}_{uw})\subseteq\operatorname{Im}(\mathcal{I}_{uu})$
  and
  $\operatorname{Im}(\mathcal{I}_{uw}^*)\subseteq\operatorname{Im}(\mathcal{I}_{ww})$.
  Equivalently, on its range $\mathcal{G}$ has inertia $(m_u,0,m_w)$:
  $m_u$ nonnegative directions (the minimizer's) and $m_w$ nonpositive
  directions (the maximizer's).  These conditions allow $\mathcal{I}_{uu}$
  and $\mathcal{I}_{ww}$ to be singular, in analogy with the optimal
  control condition $(\mathcal{C}_Z)_3\succeq0$.  When
  $\mathcal{I}_{uu}\succ0$ and $\mathcal{I}_{ww}\prec0$, $\mathcal{G}$ is
  nonsingular with exactly $m_u$ positive and $m_w$ negative eigenvalues,
  the range conditions hold automatically, and Isaacs' condition reduces
  to $\mathcal{I}_{uu}\succ0$, $\mathcal{I}_{ww}\prec0$.
\end{definition}

\begin{remark}\label{rem:game:saddle}
  Write $X=UU^*$ with $U=[U_1^*\ U_u^*\ U_w^*]^*$.  The generalized
  Schur identity (the algebraic part of Lemma~\ref{lemma:decomp}, which
  requires only the compatibility/range condition and \emph{not}
  $\mathcal{G}\succeq0$) gives
  \begin{equation}\label{eq:game:Idecomp}
    \ip{\mathcal{I}}{UU^*}
    =\ip{\mathcal{I}\,\schur{/}\,\mathcal{G}}{U_1U_1^*}
    +\ip{\mathcal{G}}{WW^*},\qquad
    W:=\pinv{\mathcal{G}}\begin{bmatrix}\mathcal{I}_{1u}^*\\\mathcal{I}_{1w}^*\end{bmatrix}U_1
    +\begin{bmatrix}U_u\\U_w\end{bmatrix},
  \end{equation}
  where $\mathcal{I}\,\schur{/}\,\mathcal{G}
  =\mathcal{I}_{11}-\mathcal{I}_{1g}\pinv{\mathcal{G}}\mathcal{I}_{1g}^*$
  is the generalized Schur complement of the \emph{indefinite} block
  $\mathcal{G}$.  Under Isaacs' condition,
  $\ip{\mathcal{G}}{WW^*}=\operatorname{tr}(W^*\mathcal{G}W)$ is a genuine
  \emph{saddle} in the free blocks $(U_u,U_w)$: convex in the minimizer's
  block $U_u$ (since $\mathcal{I}_{uu}\succeq0$) and concave in the
  maximizer's block $U_w$ (since
  $\mathcal{I}_{ww}-\mathcal{I}_{uw}^*\pinv{\mathcal{I}_{uu}}\mathcal{I}_{uw}\preceq0$),
  with saddle value $0$ attained at $W=0$.  Hence
  \begin{equation*}
    \min_{U_u}\max_{U_w}\ip{\mathcal{I}}{UU^*}
    =\max_{U_w}\min_{U_u}\ip{\mathcal{I}}{UU^*}
    =\ip{\mathcal{I}\,\schur{/}\,\mathcal{G}}{U_1U_1^*}:
  \end{equation*}
  the saddle of the cost integrand is the indefinite Schur complement
  $\mathcal{I}\,\schur{/}\,\mathcal{G}$.
\end{remark}

\begin{definition}[Impulsive HJI-GRE]\label{def:game:GRE}
  Under Isaacs' condition~\eqref{eq:Isaacs:gen}, the \emph{impulsive
  Hamilton-Jacobi-Isaacs generalized Riccati equation} (I-HJI-GRE)
  for piecewise-$C^1$ $P:[t_0,t_0+T]\to\Sn$ with $P(t_0+T)=Z_T$
  consists of:
  \begin{itemize}
    \item \emph{Flow conditions,} for a.e.\ $t$:
      \begin{subequations}\label{eq:game:GRE:flow}
      \begin{align}
        &\mathcal{I}_{11}-\begin{bmatrix}\mathcal{I}_{1u} & \mathcal{I}_{1w}\end{bmatrix}
          \,\pinv{\mathcal{G}}\,
          \begin{bmatrix}\mathcal{I}_{1u}^*\\\mathcal{I}_{1w}^*\end{bmatrix}=0,
          \label{eq:game:GRE:fR}\\
        &\mathcal{G}\,\pinv{\mathcal{G}}\,
          \begin{bmatrix}\mathcal{I}_{1u}^*\\\mathcal{I}_{1w}^*\end{bmatrix}
          =\begin{bmatrix}\mathcal{I}_{1u}^*\\\mathcal{I}_{1w}^*\end{bmatrix},
          \label{eq:game:GRE:fC}\\
        &\text{Isaacs' condition~\eqref{eq:Isaacs:gen} holds at }(t,P(t)).
          \label{eq:game:GRE:fPSD}
      \end{align}
      \end{subequations}
      Equation~\eqref{eq:game:GRE:fR} is
      $\mathcal{I}\,\schur{/}\,\mathcal{G}=0$ (the generalized Schur
      complement of the indefinite block $\mathcal{G}$),
      \eqref{eq:game:GRE:fC} is the compatibility (range) condition, and
      \eqref{eq:game:GRE:fPSD} is the saddle inertia of $\mathcal{G}$.
      Because $\mathcal{G}$ is indefinite, these conditions
      \emph{cannot} be collapsed into a single semidefinite condition
      $\mathcal{I}\succeq0$ (the optimal control situation); the saddle
      inertia must be imposed separately.
    \item \emph{Jump conditions,} at each $t_k$ ($k=1,\ldots,N_T$):
      write $\mathcal{I}^\mathrm{j}:=\mathcal{I}^\mathrm{j}(k,P(t_k^+),P(t_k^-))$
      for the HJI jump operator~\eqref{eq:game:Ij}, blocked as
      in~\eqref{eq:game:Ij}, and define the \emph{Isaacs jump block}
      as its bottom-right block
      \begin{equation}\label{eq:Ghatj}
        \mathcal{G}^\mathrm{j}(k,P(t_k^+),P(t_k^-))
        :=\begin{bmatrix}
          \mathcal{I}^\mathrm{j}_{uu} & \mathcal{I}^\mathrm{j}_{uw}\\
          (\mathcal{I}^\mathrm{j}_{uw})^* & \mathcal{I}^\mathrm{j}_{ww}
        \end{bmatrix},
        \qquad
        \mathcal{I}^\mathrm{j}_{1g}:=\begin{bmatrix}\mathcal{I}^\mathrm{j}_{1u}&\mathcal{I}^\mathrm{j}_{1w}\end{bmatrix}.
      \end{equation}
      The jump conditions are
      \begin{subequations}\label{eq:game:GRE:jump}
      \begin{align}
        &\mathcal{I}^\mathrm{j}_{11}
          -\mathcal{I}^\mathrm{j}_{1g}
          \,\pinv{(\mathcal{G}^\mathrm{j})}\,
          (\mathcal{I}^\mathrm{j}_{1g})^*=0,
          \label{eq:game:GRE:jR}\\
        &\mathcal{G}^\mathrm{j}\,\pinv{(\mathcal{G}^\mathrm{j})}\,
          (\mathcal{I}^\mathrm{j}_{1g})^*
          =(\mathcal{I}^\mathrm{j}_{1g})^*,
          \label{eq:game:GRE:jC}\\
        &\text{Isaacs' condition~\eqref{eq:Isaacs:gen} holds for }\mathcal{I}^\mathrm{j}.
          \label{eq:game:GRE:jPSD}
      \end{align}
      \end{subequations}
      Equivalently, \eqref{eq:game:GRE:jR} is
      $\mathcal{I}^\mathrm{j}\,\schur{/}\,\mathcal{G}^\mathrm{j}=0$ with
      \eqref{eq:game:GRE:jC} the range condition and~\eqref{eq:game:GRE:jPSD}
      the saddle inertia of $\mathcal{G}^\mathrm{j}$.
  \end{itemize}
  The \emph{saddle-point optimal sets} are, for the flow,
  \begin{equation}\label{eq:game:Ks}
    \bigl(\mathcal{K}_u^s\times\mathcal{K}_w^s\bigr)(t,P(t))
    :=\left\{(K_u,K_w)\,:\;
      \mathcal{G}\begin{bmatrix}K_u^*\\K_w^*\end{bmatrix}
      +\begin{bmatrix}\mathcal{I}_{1u}^*\\\mathcal{I}_{1w}^*\end{bmatrix}
      =0\right\},
  \end{equation}
  with general element
  $\begin{bmatrix}K_u^*\\K_w^*\end{bmatrix}=-\pinv{\mathcal{G}}
  \begin{bmatrix}\mathcal{I}_{1u}^*\\\mathcal{I}_{1w}^*\end{bmatrix}
  +(I-\pinv{\mathcal{G}}\mathcal{G})\,F$
  for arbitrary $F\in\C^{(m_u+m_w)\times n}$.
  At each jump $t_k$, the \emph{jump (discrete-time) saddle-point optimal
  set} is, with all blocks evaluated at $(k,P(t_k^+),P(t_k^-))$,
  \begin{equation}\label{eq:game:Ks:jump}
    \bigl(\mathcal{K}_u^s\times\mathcal{K}_w^s\bigr)(k,P(t_k^+),P(t_k^-))
    :=\left\{(K_u,K_w)\,:\;
      \mathcal{G}^\mathrm{j}\begin{bmatrix}K_u^*\\K_w^*\end{bmatrix}
      +(\mathcal{I}^\mathrm{j}_{1g})^*
      =0\right\},
  \end{equation}
  with general element
  \begin{equation*}
    \begin{bmatrix}K_u^*\\K_w^*\end{bmatrix}
    =-\pinv{(\mathcal{G}^\mathrm{j})}
    (\mathcal{I}^\mathrm{j}_{1g})^*
    +(I-\pinv{(\mathcal{G}^\mathrm{j})}\mathcal{G}^\mathrm{j})\,F^\mathrm{j}
    \quad\text{for arbitrary }F^\mathrm{j}\in\C^{(m_u+m_w)\times n}.
  \end{equation*}
\end{definition}

\begin{remark}
  When $\mathcal{I}_{uu}\succ0$ and $\mathcal{I}_{ww}\prec0$,
  $\mathcal{G}$ is nonsingular ($\pinv{\mathcal{G}}=\mathcal{G}^{-1}$),
  the compatibility condition~\eqref{eq:game:GRE:fC} holds trivially,
  and the saddle-point gains reduce to
  $(K_u^*,K_w^*)^\top=-\mathcal{G}^{-1}[\mathcal{I}_{1u}^*;\,\mathcal{I}_{1w}^*]$.
  Note $\mathcal{G}$ is indefinite (inertia $(m_u,0,m_w)$), so this is
  the stationary point of an indefinite quadratic, a saddle, not a
  minimum.  In the singular case, the kernel term
  $(I-\pinv{\mathcal{G}}\mathcal{G})F$
  parametrises all saddle-point strategies not uniquely determined by
  the HJI-GRE, exactly as in the optimal control and dissipativity
  sections.
\end{remark}

\begin{remark}[Separation and state-feedback]\label{rem:separation}
  When $\mathcal{I}_{uw}(t,P(t))=0$, $\mathcal{G}$ is block-diagonal:
  $\mathcal{G}=\mathrm{diag}(\mathcal{I}_{uu},\mathcal{I}_{ww})$,
  and the saddle-point equations decouple:
  \begin{align}
    K_u^*&=-\pinv{\mathcal{I}_{uu}}\mathcal{I}_{1u}^*
    \quad(\text{minimizer, depends only on }X_1),
    \notag\\
    K_w^*&=-\pinv{\mathcal{I}_{ww}}\mathcal{I}_{1w}^*
    \quad(\text{maximizer, depends only on }X_1).
    \notag
  \end{align}
  In this case the optimal minimizing strategy depends only on the
  constrained block $X_1$ and not on the maximizing free blocks
  $(X_{13},X_{23},X_{33})$.  This is the \emph{separation condition}
  for the game: the minimizer does not need to observe the maximizing
  blocks.  It holds whenever $Z_{uw}=0$ (no cross-penalty between
  the minimizing and maximizing inputs) and there are no cross
  terms in the flow adjoint $\mathcal{F}^*$.  When
  $\mathcal{I}_{uw}\neq0$, the optimal minimizer depends on
  the maximizer's current free blocks; the full-information
  formulation (where $X_{33}$ and $X_{23}$ are available) then
  applies.
\end{remark}

\subsubsection{Sufficient condition (finite horizon)}

Theorem~\ref{th:game:suff} below shows that any piecewise-$C^1$
solution $P\succeq0$ of the I-HJI-GRE is the unique solution, and that
$\ip{P(t_0)}{X_1^0}$ is simultaneously the saddle-point game value and
the cost attained by the saddle-point policy with gains in the optimal
sets $\mathcal{K}_u^s$ and $\mathcal{K}_w^s$ of~\eqref{eq:game:Ks}.
The proof applies the key identity (Proposition~\ref{prop:keyid}) with
$\mathcal{I}$ in place of $\mathcal{C}_Z$, then uses the genuine
saddle decomposition~\eqref{eq:game:Idecomp} of
Remark~\ref{rem:game:saddle}: along any trajectory the cost integrand
saddles to the indefinite Schur complement
$\mathcal{I}\,\schur{/}\,\mathcal{G}$, which vanishes on the I-HJI-GRE.

\begin{theorem}[Sufficiency of the I-HJI-GRE]\label{th:game:suff}
  Let $T>0$ and suppose Isaacs' condition holds for all
  $t\in[t_0,t_0+T]$.  Suppose there exists a piecewise-$C^1$
  solution $P:[t_0,t_0+T]\to\Snpsd$ with $P(t_0+T)=Z_T$ satisfying the
  I-HJI-GRE.  Then:
  \begin{enumerate}[\upshape(a)]
    \item The saddle-point value is
      $J_T^\circ(X_1^0)=\ip{P(t_0)}{X_1^0}$.
    \item The saddle-point is attained at $(K_u^s,K_w^s)$ from
      $\mathcal{K}_u^s$ and $\mathcal{K}_w^s$.
    \item The saddle-point value equals the minimax and the maximin:
      \begin{equation}\label{eq:game:saddle}
        J_T^\circ(X_1^0)
        =\min_{X_u}\max_{X_w}J_T
        =\max_{X_w}\min_{X_u}J_T
        =\ip{P(t_0)}{X_1^0}.
      \end{equation}
    \item The solution $P$ is unique.
  \end{enumerate}
\end{theorem}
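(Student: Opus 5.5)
The proof follows the template of Theorem~\ref{th:IMP:LQ:suff}, with the semidefinite decomposition replaced by the saddle decomposition~\eqref{eq:game:Idecomp} of Remark~\ref{rem:game:saddle}.

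\textbf{Reduction to a pointwise game.} Apply the key identity (Proposition~\ref{prop:keyid}) with $\mathcal{C}_Z\leftarrow\mathcal{I}$, $\mathcal{D}_Z\leftarrow\mathcal{I}^\mathrm{j}$ and $P(t_0+T)=Z_T$. This gives
\[
J_T(X,X_1^0)=\ip{P(t_0)}{X_1^0}+\int_{t_0}^{t_0+T}\ip{\mathcal{I}(t,P(t))}{X(t)}\dt+\sum_k\ip{\mathcal{I}^\mathrm{j}}{X(t_k^-)}
\]
for every admissible $X$, whoever chooses the free blocks. Factor $X=UU^*$ with $U=[U_1^*\ U_u^*\ U_w^*]^*$. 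The range condition~\eqref{eq:game:GRE:fC} allows~\eqref{eq:game:Idecomp}. The Riccati equality~\eqref{eq:game:GRE:fR} kills the Schur term, so the flow integrand equals $\tr(W^*\mathcal{G}W)$ with $W=\pinv{\mathcal{G}}\mathcal{I}_{1g}^*U_1+[U_u^*\ U_w^*]^*$. The same reduction applies at each jump with $\mathcal{G}^\mathrm{j}$.

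\textbf{Saddle inequalities.} Fix the saddle gains $(K_u^s,K_w^s)$, i.e.\ $U_u=-K_u^sU_1$ and $U_w=-K_w^sU_1$. By the general-element formula, $W$ then lies in $\ker\mathcal{G}$ up to a kernel term. I will show two things.
(i) If the minimizer uses $K_u^s$ and the maximizer plays anything, the $u$-component of $W$ is fixed by the gain while $W_w$ is free. The quadratic $\tr(W^*\mathcal{G}W)$ restricted to such $W$ has $W_u$ pinned. By Isaacs' condition ($\mathcal{I}_{ww}\preceq0$, together with the Schur inequality $\mathcal{I}_{ww}-\mathcal{I}_{uw}^*\pinv{\mathcal{I}_{uu}}\mathcal{I}_{uw}\preceq0$ and the range conditions), its maximum over $W_w$ should be $\le0$. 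Hence $J_T\le\ip{P(t_0)}{X_1^0}$.
(ii) Symmetrically, if the maximizer uses $K_w^s$, then $\mathcal{I}_{uu}\succeq0$ and $\mathcal{I}_{uu}-\mathcal{I}_{uw}\pinv{\mathcal{I}_{ww}}\mathcal{I}_{uw}^*\succeq0$ give $J_T\ge\ip{P(t_0)}{X_1^0}$.
Pointwise in $t$ and at each jump, this is the statement that $(W_u,W_w)\mapsto\tr(W^*\mathcal{G}W)$ is convex--concave with a saddle at the stationarity set $\mathcal{G}W=0$, which is exactly the defining equation~\eqref{eq:game:Ks}. Integrating and summing over jumps gives
\[
J_T(K_u^s,X_w)\le\ip{P(t_0)}{X_1^0}\le J_T(X_u,K_w^s).
\]
By the standard sandwich, $\min\max\le\sup_{X_w}J_T(K_u^s,\cdot)\le\ip{P(t_0)}{X_1^0}\le\inf_{X_u}J_T(\cdot,K_w^s)\le\max\min\le\min\max$. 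This yields (a)--(c) and attainment at the saddle pair, whose cost is $\ip{P(t_0)}{X_1^0}$ because both terms vanish there.

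\textbf{Main obstacle.} The delicate step is (i)/(ii) in the singular case. One deviation changes only one block of $W$, and I must show that the resulting indefinite quadratic has the right sign using only~\eqref{eq:Isaacs:gen} and the range conditions. I will first try completing the square in $W_w$ with $\pinv{\mathcal{I}_{ww}}$ (resp.\ in $W_u$ with $\pinv{\mathcal{I}_{uu}}$), using $\mathcal{G}W^s=0$ to eliminate cross terms. A further subtlety is the constraint $X\succeq0$: it is harmless here, because deviations are represented directly through $U$.

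\textbf{Uniqueness.} This follows exactly as in Theorem~\ref{th:IMP:LQ:suff}. Any two solutions give the same value $\ip{P(t_0)}{X_1^0}$ for all $X_1^0=vv^*$, hence coincide at every initial time.
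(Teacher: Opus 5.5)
Your proposal is correct and follows essentially the same route as the paper: the key identity with $\mathcal{I}$, the generalized Schur identity~\eqref{eq:game:Idecomp} with $\mathcal{I}\,\schur{/}\,\mathcal{G}=0$, one-sided deviations against the saddle gains, and the minimax--maximin sandwich, with uniqueness handled as in Theorem~\ref{th:IMP:LQ:suff}. The step you flag as the main obstacle is in fact immediate, as in the paper: write $W=W^s+\bigl[\begin{smallmatrix}0\\ \Delta\end{smallmatrix}\bigr]$ with $\mathcal{G}W^s=0$, which kills all cross terms and leaves $\tr(\Delta^*\mathcal{I}_{ww}\Delta)\le0$ (respectively $\tr(\Delta^*\mathcal{I}_{uu}\Delta)\ge0$ for the maximizer's deviation). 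So only the diagonal parts $\mathcal{I}_{uu}\succeq0$ and $\mathcal{I}_{ww}\preceq0$ of Isaacs' condition are needed, with no completing of squares or Schur inequalities, even in the singular case and with nonzero kernel terms in the gains.
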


\begin{proof}
\noindent\textbf{Key identity.}
By Proposition~\ref{prop:keyid} (with $\mathcal{I}$ in place of
$\mathcal{F}$) and $P(t_0+T)=Z_T$:
\begin{equation}\label{pf:game:id}
  J_T(X,X_1^0)
  =\ip{P(t_0)}{X_1^0}
  +\int_{t_0}^{t_0+T}\ip{\mathcal{I}(t,P(t))}{X}\dt
  +\sum_k\ip{\mathcal{I}^\mathrm{j}(k,P(t_k^+),P(t_k^-))}{X(t_k^-)}.
\end{equation}

\noindent\textbf{Saddle-point decomposition.}
Write $X=UU^*$ with $U=[U_1^*\ U_u^*\ U_w^*]^*$.  Isaacs' condition
and the I-HJI-GRE give the compatibility condition~\eqref{eq:game:GRE:fC}
and $\mathcal{I}\,\schur{/}\,\mathcal{G}=0$, so the generalized Schur
identity~\eqref{eq:game:Idecomp} reduces to
\begin{equation}\label{pf:game:decomp}
  \ip{\mathcal{I}}{UU^*}
  =\underbrace{\ip{\mathcal{I}\,\schur{/}\,\mathcal{G}}{U_1U_1^*}}_{=\,0}
  +\ip{\mathcal{G}}{WW^*}
  =\operatorname{tr}(W^*\mathcal{G}W),
  \qquad
  W=\begin{bmatrix}W_u\\W_w\end{bmatrix}
   =\pinv{\mathcal{G}}\begin{bmatrix}\mathcal{I}_{1u}^*\\\mathcal{I}_{1w}^*\end{bmatrix}U_1
    +\begin{bmatrix}U_u\\U_w\end{bmatrix}.
\end{equation}
By Isaacs' condition this is a \emph{saddle} in $(U_u,U_w)$ (equivalently
in $(W_u,W_w)$): for fixed $W_u$, the maximum over $W_w$ is
$\operatorname{tr}\!\bigl(W_u^*(\mathcal{I}_{uu}-\mathcal{I}_{uw}\pinv{\mathcal{I}_{ww}}\mathcal{I}_{uw}^*)W_u\bigr)\ge0$,
attained at $W_w=-\pinv{\mathcal{I}_{ww}}\mathcal{I}_{uw}^*W_u$ and equal
to $0$ iff $W_u=0$; for fixed $W_w$, the minimum over $W_u$ is
$\operatorname{tr}\!\bigl(W_w^*(\mathcal{I}_{ww}-\mathcal{I}_{uw}^*\pinv{\mathcal{I}_{uu}}\mathcal{I}_{uw})W_w\bigr)\le0$,
attained at $W_u=-\pinv{\mathcal{I}_{uu}}\mathcal{I}_{uw}W_w$ and equal to
$0$ iff $W_w=0$.  The same statements hold for $\mathcal{I}^\mathrm{j}$
and $\mathcal{G}^\mathrm{j}$ at each jump.

\noindent\textbf{Saddle-point value.}
\emph{Upper bound on the minimax.}  Let the minimizer play the
saddle gain $K_u^s$ of~\eqref{eq:game:Ks}, i.e.\ $U_u=K_u^sU_1$, which is
the $u$-block of $-\pinv{\mathcal{G}}[\mathcal{I}_{1u}^*;\mathcal{I}_{1w}^*]U_1$
and hence forces $W_u=0$.  Then for \emph{every} disturbance choice
$X_w$, $\ip{\mathcal{I}}{X}=\operatorname{tr}(W_w^*\mathcal{I}_{ww}W_w)\le0$
is maximized at $W_w=0$ with value $0$; the analogous statement holds at
each jump.  By the key identity~\eqref{pf:game:id},
$\max_{X_w}J_T(\cdot,X_w)=\ip{P(t_0)}{X_1^0}$ under this minimizer
policy, so $\min_{X_u}\max_{X_w}J_T\le\ip{P(t_0)}{X_1^0}$.

\emph{Lower bound on the maximin.}  Symmetrically, let the maximizer play
$K_w^s$, forcing $W_w=0$.  Then for every $X_u$,
$\ip{\mathcal{I}}{X}=\operatorname{tr}(W_u^*\mathcal{I}_{uu}W_u)\ge0$ is
minimized at $W_u=0$ with value $0$, so
$\min_{X_u}J_T(\cdot,X_w^\circ)=\ip{P(t_0)}{X_1^0}$ and
$\max_{X_w}\min_{X_u}J_T\ge\ip{P(t_0)}{X_1^0}$.

Since $\min_{X_u}\max_{X_w}J_T\ge\max_{X_w}\min_{X_u}J_T$ always, the two
bounds force all of~\eqref{eq:game:saddle} to coincide with
$\ip{P(t_0)}{X_1^0}$.

\noindent\textbf{Attainment.}
At the saddle-point gains $(K_u^s,K_w^s)$ from~\eqref{eq:game:Ks},
$W=0$ (since $[U_u;U_w]=-\pinv{\mathcal{G}}
[\mathcal{I}_{1u}^*;\mathcal{I}_{1w}^*]U_1$), so
$\ip{\mathcal{I}}{X^\circ}=0$ at each flow instant and at each
jump.  Identity~\eqref{pf:game:id} then gives
$J_T(X^\circ,X_1^0)=\ip{P(t_0)}{X_1^0}=J_T^\circ(X_1^0)$.

\noindent\textbf{Positivity and uniqueness.}
Since $P\succeq0$ by hypothesis,
$J_T^\circ(X_1^0)=\ip{P(t_0)}{X_1^0}\ge0$ for every $X_1^0\succeq0$.
Uniqueness follows from the
same inner-product argument as Theorem~\ref{th:IMP:LQ:suff}.
\end{proof}

\subsubsection{Necessary condition (finite horizon)}

Conversely, whenever the game is well-posed (a saddle-point exists for
every $X_1^0\succeq0$ and Isaacs' condition holds), the game value is
necessarily linear in $X_1^0$ and the associated dual variable must
satisfy the I-HJI-GRE.  Theorem~\ref{th:game:nec} establishes this via
a two-sided dynamic programming argument: the minimizer's and
maximizer's flow and jump inequalities give the Isaacs
inertia~\eqref{eq:Isaacs:gen} for $\mathcal{I}$ and
$\mathcal{I}^\mathrm{j}$, while the vanishing of the key
identity along the saddle-point trajectory yields the Schur equalities,
exactly as in Theorem~\ref{th:IMP:LQ:nec}.

\begin{theorem}[Necessity of the I-HJI-GRE]\label{th:game:nec}
  Suppose~\eqref{eq:game:prob} is well-posed, a saddle-point exists
  for every $X_1^0\succeq0$, and Isaacs' condition holds.  Then the
  unique piecewise-$C^1$ $P$ with
  $J_T^\circ(X_1^0)=\ip{P(t_0)}{X_1^0}$ satisfies the full
  I-HJI-GRE.
\end{theorem}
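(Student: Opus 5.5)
The plan mirrors the proof of Theorem~\ref{th:IMP:LQ:nec}, with the one-sided dynamic programming inequalities replaced by their two-sided (min/max) counterparts and the positive semidefinite pivot $(\mathcal{C}_Z)_3$ replaced by the indefinite Isaacs block $\mathcal{G}$.

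\textbf{Step 1: linearity of the value.} I first show that $J_T^\circ$ is linear in $X_1^0$. Positive homogeneity follows by scaling a saddle trajectory, since the dynamics are linear and the cost is linear in $X$. Additivity follows from dynamic programming: each player's saddle strategy is a state-independent linear feedback, so the saddle closed loop is a linear cone-preserving flow. Extending by polarisation to $\Sn$ and applying Frobenius--Riesz gives $J_T^\circ(X_1^0)=\ip{P(t_0)}{X_1^0}$. Repeating this at every initial time defines $P(t)$, with $P(t_0+T)=Z_T$. Piecewise-$C^1$ regularity holds on flow intervals exactly as in Theorem~\ref{th:IMP:LQ:nec}\eqref{nec:lin}.

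\textbf{Step 2: two-sided local inequalities.} Fix a flow instant $t$ and consider short perturbations on $[t,t+h]$, continued with saddle play afterwards.

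\emph{Maximizer deviates.} With the minimizer held at its saddle gain, the maximizer deviates with constant rank-one blocks built from $v_1,v_w$. The Bellman inequality $V(t)\ge\int_t^{t+h}\ip{Z}{X}+V(t+h)$ gives $\ip{\mathcal{I}}{uu^*}\le0$ for all $u=[v_1^*\,(K_u^s v_1)^*\,v_w^*]^*$. Taking $v_1=0$ yields $\mathcal{I}_{ww}\preceq0$. Eliminating the minimizer's best response yields $\mathcal{I}_{ww}-\mathcal{I}_{uw}^*\pinv{\mathcal{I}_{uu}}\mathcal{I}_{uw}\preceq0$.

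\emph{Minimizer deviates.} Symmetrically, with the maximizer held at its saddle gain, the reverse inequality gives $\ip{\mathcal{I}}{uu^*}\ge0$ over the minimizer's directions. This yields $\mathcal{I}_{uu}\succeq0$ and $\mathcal{I}_{uu}-\mathcal{I}_{uw}\pinv{\mathcal{I}_{ww}}\mathcal{I}_{uw}^*\succeq0$.

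\emph{Range conditions.} The range conditions follow from finiteness of the inner optimisations: an unbounded linear term in a flat direction would make the value $\pm\infty$. This is the argument of Lemma~\ref{lemma:extSchur} applied to each semidefinite sub-block. The identical argument at each jump $t_k$, using \eqref{eq:sys:IMP:jump} and the adjoint, gives Isaacs' condition for $\mathcal{I}^\mathrm{j}$.

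\textbf{Step 3: Schur equalities.} Along the saddle trajectory $X^\circ$, the key identity \eqref{pf:game:id} with $J_T(X^\circ)=\ip{P(t_0)}{X_1^0}$ gives $\int\ip{\mathcal{I}}{X^\circ}+\sum_k\ip{\mathcal{I}^\mathrm{j}}{X^\circ(t_k^-)}=0$.

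This is the step I expect to be the main obstacle. The integrands are no longer sign-definite, so the argument of Theorem~\ref{th:IMP:LQ:nec}, where each nonnegative term must vanish, does not apply directly. I will handle it locally instead.

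First, the range conditions make \eqref{eq:game:Idecomp} available. Next, I replace the global vanishing by local Bellman \emph{equalities}: the saddle value satisfies $V(t)=\int_t^{t+h}\ip{Z}{X^\circ}+V(t+h)$ for every short horizon. This gives $\ip{\mathcal{I}}{X^\circ(t)}=0$ pointwise a.e. Moreover, since both players play saddle responses, the pair $(W_u,W_w)$ is a saddle point of $\operatorname{tr}(W^*\mathcal{G}W)$. Using the two inertia conditions from Step 2, I will show $\operatorname{tr}(W^*\mathcal{G}W)=0$ there: the maximum over $W_w$ is zero only when $W_u=0$, and symmetrically for $W_w$. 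Hence $\ip{\mathcal{I}\,\schur{/}\,\mathcal{G}}{U_1^\circ U_1^{\circ*}}=0$.

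Finally, because $U_1^\circ$ ranges over all of $\C^n$ as $X_1^0$ varies, I will conclude $\mathcal{I}\,\schur{/}\,\mathcal{G}=0$ and the compatibility condition \eqref{eq:game:GRE:fC}. For compatibility, I argue as in Theorem~\ref{th:IMP:LQ:nec} from $\mathcal{G}W^\circ=0$ and the fact that the saddle gains satisfy \eqref{eq:game:Ks}. I then repeat the same steps for $\mathcal{G}^\mathrm{j}$ at each jump to obtain \eqref{eq:game:GRE:jR}--\eqref{eq:game:GRE:jC}.
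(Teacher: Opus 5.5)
Your proposal is essentially correct at the paper's level of rigor. Steps~1--2 follow the paper; at $v_1=0$, your unilateral deviations against the opponent's saddle gain reduce to the paper's single-block probes. Step~3, however, takes a genuinely different route.

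The paper stays global. It evaluates the key identity once along $X^\circ$ from $t_0$ and rewrites the integrand as $\ip{\mathcal{I}\,\schur{/}\,\mathcal{G}}{X_1^\circ}$. It then asserts that each term of \eqref{pf:nec:game:ptwise} vanishes, appealing to a sign-definiteness that is not available: Isaacs' condition constrains the blocks of $\mathcal{G}$, not $\mathcal{I}\,\schur{/}\,\mathcal{G}$. A single vanishing integral with sign-indefinite integrand cannot force pointwise vanishing.

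You spot exactly this and localize instead. The Bellman equality along the saddle on every short horizon gives $\ip{\mathcal{I}}{X^\circ(t)}=0$ pointwise; equivalently, apply the key identity on $[t,t_0+T]$ for every $t$ and differentiate in $t$. Combined with your Step-2 one-sided inequalities, this makes the saddle response an instantaneous saddle of $\ip{\mathcal{I}}{UU^*}$ with value $0$. That value equals $\ip{\mathcal{I}\,\schur{/}\,\mathcal{G}}{U_1U_1^*}$, so no sign is needed.

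The paper's route buys brevity and a formal parallel with Theorem~\ref{th:IMP:LQ:nec}. Yours relies on time-consistency of the saddle in every subgame, which the paper already uses to define $P(t)$ at every initial time. In exchange, it is the version that actually closes the argument.

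A few local repairs are needed.
\begin{enumerate}
\item You invoke \eqref{eq:game:Idecomp} before you have \eqref{eq:game:GRE:fC}, and the Isaacs range conditions only concern sub-blocks of $\mathcal{G}$. Obtain \eqref{eq:game:GRE:fC} first, from stationarity. The two one-sided inequalities hold with equality at the saddle response, so the partial gradients of $\ip{\mathcal{I}}{UU^*}$ in $U_u$ and $U_w$ vanish there. That is, $\mathcal{G}\,[U_u^{\circ*}\;U_w^{\circ*}]^*+[\mathcal{I}_{1u}\;\mathcal{I}_{1w}]^*U_1^\circ=0$, which gives \eqref{eq:game:GRE:fC} and $\mathcal{G}W^\circ=0$ at once.
\item Your claim that ``the maximum over $W_w$ is zero only when $W_u=0$'' fails when $\mathcal{I}_{uu}-\mathcal{I}_{uw}\pinv{\mathcal{I}_{ww}}\mathcal{I}_{uw}^*$ is singular. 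You do not need it: any saddle point of $\operatorname{tr}(W^*\mathcal{G}W)$ has value $\min\max=\max\min=0$, or you can use $\mathcal{G}W^\circ=0$ from the first repair.
\item For the spanning step, restart the game at time $t$ from an arbitrary $Q\succeq0$. Relying on $X_1^\circ(t)$ sweeping $\Snpsd$ as $X_1^0$ varies at $t_0$ can fail, for instance after a rank-reducing jump.
\item Isaacs' condition is a hypothesis of the theorem, so Step~2 only has to deliver the one-sided inequalities that Step~3 uses. This matters because your claim that the Isaacs range conditions follow from finiteness is false in general. With scalar blocks $\mathcal{I}_{uu}=0$, $\mathcal{I}_{uw}\neq0$, $\mathcal{I}_{ww}<0$, the block $\mathcal{G}$ is nonsingular and the instantaneous quadratic has a finite saddle, yet $\operatorname{Im}(\mathcal{I}_{uw})\not\subseteq\operatorname{Im}(\mathcal{I}_{uu})$.
\end{enumerate}
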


\begin{proof}
\noindent\textbf{Linearity of the value function.}
For $\lambda\ge0$: $J_T^\circ(\lambda X_1^0)=\lambda J_T^\circ(X_1^0)$
(scale the saddle-point trajectory).
For $X_1^0,\tilde X_1^0\succeq0$: superimposing the two
saddle-point trajectories gives a valid trajectory for
$X_1^0+\tilde X_1^0$ achieving cost $J_T^\circ(X_1^0)
+J_T^\circ(\tilde X_1^0)$, so $J_T^\circ(X_1^0+\tilde X_1^0)
\le J_T^\circ(X_1^0)+J_T^\circ(\tilde X_1^0)$.
The reverse inequality holds by the minimax–maximin equality (each
player's best response to the sum is at least as good as their
responses to the summands separately).
Hence $J_T^\circ$ is linear, and the Riesz representation gives unique
piecewise-$C^1$ $P$ with $J_T^\circ=\ip{P(t_0)}{\cdot}$.

\noindent\textbf{Flow: Isaacs inertia of $\mathcal{G}$.}
Fix a flow instant $t\in(t_{k-1},t_k)$.  Probe the players' blocks
separately by holding the state block $X_1=0$.
\emph{(minimizer.)} With only the control block active,
$X=[0;v_u;0][0;v_u;0]^*$ on $[t,t+h]$, the minimizer's dynamic
programming inequality (the value is the \emph{min} over $X_u$) forces,
after dividing by $h$ and $h\to0$, $v_u^*\mathcal{I}_{uu}(t,P(t))v_u\ge0$;
hence $\mathcal{I}_{uu}\succeq0$ (otherwise the minimizer drives the cost
to $-\infty$ and no saddle exists).
\emph{(maximizer.)} Symmetrically, with only the disturbance block active,
$X=[0;0;v_w][0;0;v_w]^*$, the maximizer's dynamic programming inequality
(the value is the \emph{max} over $X_w$) forces
$v_w^*\mathcal{I}_{ww}(t,P(t))v_w\le0$; hence $\mathcal{I}_{ww}\preceq0$.
\emph{(Cross/saddle.)} Existence of a finite saddle for the joint
quadratic $\operatorname{tr}(W^*\mathcal{G}W)$ requires, in addition, the
two complementary Schur conditions and range conditions
of~\eqref{eq:Isaacs:gen}: if the maximizer's reduced (Schur-complemented)
block $\mathcal{I}_{ww}-\mathcal{I}_{uw}^*\pinv{\mathcal{I}_{uu}}\mathcal{I}_{uw}$
were not $\preceq0$ (resp.\ the minimizer's reduced block not $\succeq0$),
the inner max (resp.\ min) would be unbounded.  Thus Isaacs'
condition~\eqref{eq:Isaacs:gen} holds.  The same argument applied at each
jump gives Isaacs' condition for $\mathcal{I}^\mathrm{j}$.

\noindent\textbf{Schur equality along the saddle.}
Along the saddle-point trajectory $X^\circ$, identity~\eqref{pf:game:id}
and the saddle-point value $J_T^\circ=\ip{P(t_0)}{X_1^0}$ give
\begin{equation}\label{pf:nec:game:zero}
  0=\int_{t_0}^{t_0+T}\ip{\mathcal{I}(t,P(t))}{X^\circ(t)}\dt
  +\sum_k\ip{\mathcal{I}^\mathrm{j}(k,P(t_k^+),P(t_k^-))}{X^\circ(t_k^-)}.
\end{equation}
By the saddle decomposition~\eqref{pf:game:decomp} (now established to be
a genuine saddle), along $X^\circ$ each integrand equals the saddle value
$\ip{\mathcal{I}\,\schur{/}\,\mathcal{G}}{X_1^\circ(t)}$ (the term
$\ip{\mathcal{G}}{W^\circ W^{\circ*}}$ vanishes at the saddle, $W^\circ=0$),
and similarly at each jump.  Hence
\begin{equation}\label{pf:nec:game:ptwise}
  0=\int_{t_0}^{t_0+T}\ip{\mathcal{I}\,\schur{/}\,\mathcal{G}}{X_1^\circ(t)}\dt
  +\sum_k\ip{\mathcal{I}^\mathrm{j}\,\schur{/}\,\mathcal{G}^\mathrm{j}}{X_1^\circ(t_k^-)}.
\end{equation}
Choosing $X_1^0$ supported on each coordinate direction makes
$X_1^\circ(t)$ range over a generating set of $\Snpsd$, and (by the
optimality of the same saddle policy for every $X_1^0$ and the
sign-definiteness of the two Schur complements established above) each
term must vanish individually.  Therefore
$\mathcal{I}\,\schur{/}\,\mathcal{G}=0$ a.e.\ (the flow Riccati
equation~\eqref{eq:game:GRE:fR}) and
$\mathcal{I}^\mathrm{j}\,\schur{/}\,\mathcal{G}^\mathrm{j}=0$ at each
jump; the well-definedness of $W^\circ$ gives the compatibility
conditions~\eqref{eq:game:GRE:fC}.  Together with Isaacs' condition this
is the full I-HJI-GRE (Definition~\ref{def:game:GRE}).
\end{proof}

\subsubsection{Equivalence and dual DLMI (finite horizon)}

Combining Theorems~\ref{th:game:suff} and~\ref{th:game:nec} gives the
equivalence Corollary~\ref{cor:game:equiv}: under Isaacs' condition,
the existence of a positive solution of the I-HJI-GRE is both necessary
and sufficient for the game to be well-posed.
Theorem~\ref{th:game:LMI} then provides an alternative
characterization of the game value as a maximization of
$\ip{P(t_0)}{X_1^0}$ over the solutions of a game Riccati
\emph{inequality}.  Unlike the optimal control dual
(Theorem~\ref{th:IMP:LQ:LMI}), this is \emph{not} a plain LMI: because
the Isaacs block $\mathcal{G}$ is indefinite, the characterization
retains the (indefinite) Schur-complement term
$\mathcal{I}\,\schur{/}\,\mathcal{G}\succeq0$ together with the
saddle-inertia condition on $\mathcal{G}$, which are not jointly affine
in $P$.

\begin{corollary}[Equivalence]\label{cor:game:equiv}
  Under Isaacs' condition, problem~\eqref{eq:game:prob} has a
  saddle-point for every $X_1^0\succeq0$ if and only if the
  I-HJI-GRE has a piecewise-$C^1$ solution $P\succeq0$ with
  $P(t_0+T)=Z_T$.
\end{corollary}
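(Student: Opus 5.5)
The corollary follows by combining the two preceding theorems, exactly as Corollary~\ref{cor:IMP:LQ:equiv} was derived from Theorems~\ref{th:IMP:LQ:suff} and~\ref{th:IMP:LQ:nec}. I would prove the two directions separately.

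\emph{Sufficiency.} Suppose the I-HJI-GRE has a piecewise-$C^1$ solution $P\succeq0$ with $P(t_0+T)=Z_T$. Under Isaacs' condition, Theorem~\ref{th:game:suff} applies directly. It gives $\min\max J_T=\max\min J_T=\ip{P(t_0)}{X_1^0}$, attained at the saddle gains $(K_u^s,K_w^s)$ of~\eqref{eq:game:Ks} and~\eqref{eq:game:Ks:jump}. These sets are nonempty because of the compatibility conditions~\eqref{eq:game:GRE:fC} and~\eqref{eq:game:GRE:jC}. A saddle point therefore exists for every $X_1^0\succeq0$.

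\emph{Necessity.} Suppose a saddle point exists for every $X_1^0\succeq0$. Then~\eqref{eq:game:prob} is well-posed: the value is finite and attained. Theorem~\ref{th:game:nec} then gives a unique piecewise-$C^1$ $P$ with $J_T^\circ=\ip{P(t_0)}{\cdot}$ that satisfies the full I-HJI-GRE. Two points remain:
\begin{enumerate}
\item The terminal condition $P(t_0+T)=Z_T$ follows by evaluating the value function at zero remaining horizon. There only the terminal term $\ip{Z_T}{EX(t_0+T)E^*}$ survives, so Riesz uniqueness gives $P(t_0+T)=Z_T$. This mirrors Theorem~\ref{th:IMP:LQ:nec}\eqref{nec:lin}.
\item Positivity $P\succeq0$ requires $J_T^\circ(X_1^0)\ge0$ for all $X_1^0\succeq0$. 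Proposition~\ref{prop:PQ}\eqref{st:neg} then gives $P(t_0)\succeq0$, and restarting at every initial time gives $P(t)\succeq0$.
\end{enumerate}

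The main obstacle is the positivity step, because $Z$ is indefinite and the argument of Theorem~\ref{th:IMP:LQ:suff} does not transfer. My plan is a maximin lower bound in which the maximizer plays the zero disturbance, $X_{13}=X_{23}=X_{33}=0$. This choice is admissible, since $X\succeq0$ still holds with those blocks zeroed. With the disturbance blocks at zero, only the controller cost block $Z_c\succeq0$ (and its jump analogues) is paired with $X$, so every admissible minimizer response has nonnegative cost. Hence $J_T^\circ=\max\min J_T\ge\min_{X_u}J_T(\cdot,X_w=0)\ge0$.

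For this argument I would need the analogous structure $Z_{k,c}\succeq0$ at the jumps and $Z_T\succeq0$ at the terminal time. If these are not stated assumptions, I would state them explicitly as the hypotheses under which positivity holds. Failing that, I would read the ``$P\succeq0$'' in the statement as inherited from Theorem~\ref{th:game:suff}'s hypothesis and prove the equivalence without a sign requirement on $P$.
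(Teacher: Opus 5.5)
Your proof is correct and follows the paper's route: the paper proves this corollary in one line, citing Theorem~\ref{th:game:suff} for sufficiency and Theorem~\ref{th:game:nec} for necessity. Your added steps are sound and supply what Theorem~\ref{th:game:nec} leaves implicit, since it never asserts $P\succeq0$ or $P(t_0+T)=Z_T$; the zero-disturbance lower bound is the argument the paper itself uses, for the flow term only, in the proof of Theorem~\ref{th:game:inf}, and you are right that the jump and terminal terms also require $Z_{k,c}\succeq0$ and $Z_T\succeq0$, which the game setup never states and without which the ``$P\succeq0$'' part of the equivalence can fail.
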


\begin{proof}
  Sufficiency is Theorem~\ref{th:game:suff}; necessity is
  Theorem~\ref{th:game:nec}.
\end{proof}

\begin{theorem}[Dual game Riccati-inequality characterization]\label{th:game:LMI}
  Suppose the I-HJI-GRE has a solution $P^*$.  Then
  \begin{equation}\label{eq:game:LMI}
    J_T^\circ(X_1^0)
    =\max_{P\succeq0}\;\ip{P(t_0)}{X_1^0}
    \quad\text{s.t.}\quad
    \begin{gathered}
    \textnormal{Isaacs cond.~\eqref{eq:Isaacs:gen} for }\mathcal{I},\mathcal{I}^\mathrm{j};\;
    \mathcal{I}\,\schur{/}\,\mathcal{G}\succeq0,\;
    \mathcal{I}^\mathrm{j}\,\schur{/}\,\mathcal{G}^\mathrm{j}\succeq0;\\
    P(t_0+T)\preceq Z_T,
    \end{gathered}
  \end{equation}
  attained at $P^*$.
\end{theorem}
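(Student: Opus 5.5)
The plan follows the two-step pattern of Theorem~\ref{th:IMP:LQ:LMI}: first show that every feasible $P$ bounds the game value from below, then show that $P^*$ is feasible and reaches the bound. The new ingredient is that the pointwise sign arguments for $\mathcal{C}_Z\succeq0$ and $\mathcal{D}_Z\succeq0$ are replaced by the saddle decomposition~\eqref{eq:game:Idecomp} of Remark~\ref{rem:game:saddle}. That decomposition only needs the range conditions contained in Isaacs' condition, not definiteness of $\mathcal{G}$.

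\textbf{Step 1 (feasible $P$ gives a lower bound).} Let $P$ be feasible.
\begin{itemize}
\item Fix the maximizer's strategy to the disturbance part of the saddle set, $K_w\in\mathcal{K}_w^s(t,P(t))$ on flow and the jump analogue~\eqref{eq:game:Ks:jump} at each $t_k$. These are built from this $P$, not from $P^*$, and are well defined because Isaacs' condition includes the compatibility conditions.
\item Under this choice the $w$-component of $W$ can be made to vanish, $W_w=0$. To see this, solve the joint stationarity system $\mathcal{G}[K_u^*;K_w^*]=-\mathcal{I}_{1g}^*$ and keep only its $w$-rows, so that the resulting $K_w$ is independent of the minimizer's block.
\item For an arbitrary minimizer trajectory, \eqref{eq:game:Idecomp} then gives
\[
\ip{\mathcal{I}}{X}=\ip{\mathcal{I}\,\schur{/}\,\mathcal{G}}{U_1U_1^*}+\operatorname{tr}(W_u^*\mathcal{I}_{uu}W_u)\ge0 ,
\]
using the constraint $\mathcal{I}\,\schur{/}\,\mathcal{G}\succeq0$ and $\mathcal{I}_{uu}\succeq0$. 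The same bound holds at each jump with $\mathcal{I}^\mathrm{j}$.
\item Insert these bounds into the key identity~\eqref{eq:IMP:keyid}, used with $\mathcal{I}$ in place of $\mathcal{C}_Z$ and not yet imposing $P(t_0+T)=Z_T$. The terminal term $\ip{Z_T-P(t_0+T)}{EX(t_0+T)E^*}$ is nonnegative by $P(t_0+T)\preceq Z_T$. Hence $\min_{X_u}J_T\ge\ip{P(t_0)}{X_1^0}$ under this disturbance policy.
\item It follows that $\max_{X_w}\min_{X_u}J_T\ge\ip{P(t_0)}{X_1^0}$. By Theorem~\ref{th:game:suff}(c) the maximin equals $J_T^\circ$, so $J_T^\circ(X_1^0)\ge\ip{P(t_0)}{X_1^0}$.
\end{itemize}

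\textbf{Step 2 (the bound is attained).} $P^*$ is feasible:
\begin{itemize}
\item Isaacs' condition holds by~\eqref{eq:game:GRE:fPSD} and~\eqref{eq:game:GRE:jPSD}.
\item The Schur complements vanish, so they are in particular $\succeq0$.
\item $P^*(t_0+T)=Z_T$.
\item $P^*\succeq0$ by the hypothesis of Theorem~\ref{th:game:suff}, which we invoke.
\end{itemize}
Theorem~\ref{th:game:suff}(a) then gives $\ip{P^*(t_0)}{X_1^0}=J_T^\circ(X_1^0)$. Combined with Step~1, the maximum equals $J_T^\circ$ and is attained at $P^*$.

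\textbf{Main obstacle.} The delicate point is Step~1. Isolating a maximizer gain that forces $W_w=0$ for every minimizer response is only possible if the $w$-row of the stationarity system does not depend on $U_u$. I would get this by eliminating through the block structure: Isaacs' condition makes $\mathcal{I}_{ww}-\mathcal{I}_{uw}^*\pinv{\mathcal{I}_{uu}}\mathcal{I}_{uw}$ well behaved, and taking its Schur complement expresses $W_w$ affinely in $U_1$ and $U_w$ alone.

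If that fails, the fallback keeps $W_w$ general and bounds
\[
\min_{W_u}\operatorname{tr}(W^*\mathcal{G}W)=\operatorname{tr}\bigl(W_w^*(\mathcal{I}_{ww}-\mathcal{I}_{uw}^*\pinv{\mathcal{I}_{uu}}\mathcal{I}_{uw})W_w\bigr),
\]
then chooses the maximizer's response so that this reduced term vanishes, which requires $W_w=0$ after minimization.

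A secondary issue is that $\mathcal{I}$ contains $\dot P$, so the Schur-complement constraints are not affine in $P$. This does not matter here, because the argument is pointwise and never uses convexity.
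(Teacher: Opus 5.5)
Your proposal is essentially the paper's proof. The maximizer plays the $w$-rows of $-\pinv{\mathcal{G}}\mathcal{I}_{1g}^*$ built from the feasible $P$, so that $W_w=0$; the decomposition~\eqref{eq:game:Idecomp} with $\mathcal{I}_{uu}\succeq0$ and $\mathcal{I}\,\schur{/}\,\mathcal{G}\succeq0$ makes every flow and jump term nonnegative; the key identity with $P(t_0+T)\preceq Z_T$ gives the lower bound; and $P^*$ is feasible and attains it by Theorem~\ref{th:game:suff}. The only difference is that the paper closes the lower bound with $\min\max\ge\max\min$ rather than Theorem~\ref{th:game:suff}(c), and either works. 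Your ``main obstacle'' is not one, since $W_w=(\pinv{\mathcal{G}}\mathcal{I}_{1g}^*)_wU_1+U_w$ never involves $U_u$. However, your claim that Isaacs' condition supplies the compatibility needed by \eqref{eq:game:Idecomp} is inaccurate: that is condition~\eqref{eq:game:GRE:fC}, which is neither in~\eqref{eq:Isaacs:gen} (which only has range conditions on $\mathcal{I}_{uw}$) nor in the feasible set. The paper's proof relies on it just as silently, and it holds automatically in the regular case $\mathcal{I}_{uu}\succ0$, $\mathcal{I}_{ww}\prec0$.
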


\begin{remark}
  In contrast to the optimal control dual
  (Theorem~\ref{th:IMP:LQ:LMI}), \eqref{eq:game:LMI} is \emph{not} a
  plain LMI.  Because $\mathcal{G}$ is indefinite, $\mathcal{I}\succeq0$
  fails (it would force $\mathcal{I}_{ww}\succeq0$, contradicting Isaacs'
  $\mathcal{I}_{ww}\preceq0$), so the constraint cannot be reduced to a
  single semidefinite condition affine in $P$.  The game Riccati
  inequality $\mathcal{I}\,\schur{/}\,\mathcal{G}\succeq0$ involves the
  generalized Schur complement of the indefinite block $\mathcal{G}$
  (rational in $P$ through $\pinv{\mathcal{G}}$) and is paired with the
  separate saddle-inertia condition~\eqref{eq:Isaacs:gen}; together they
  are the correct game analogue of $\mathcal{C}_Z\succeq0$.  In the
  separated case $\mathcal{I}_{uw}=0$ (e.g.\ the $H_\infty$ setting of
  Section~\ref{sec:game:hinf}), $\mathcal{I}\,\schur{/}\,\mathcal{G}
  =\mathcal{I}_{11}-\mathcal{I}_{1u}\pinv{\mathcal{I}_{uu}}\mathcal{I}_{1u}^*
  -\mathcal{I}_{1w}\pinv{\mathcal{I}_{ww}}\mathcal{I}_{1w}^*$, the
  disturbance term $-\mathcal{I}_{1w}\pinv{\mathcal{I}_{ww}}\mathcal{I}_{1w}^*
  =\mathcal{I}_{1w}\pinv{(-\mathcal{I}_{ww})}\mathcal{I}_{1w}^*\succeq0$
  entering with the opposite sign to the control term, exactly as in the
  bounded-real/$H_\infty$ Riccati inequality of~\cite{AitRami:00}.
\end{remark}

\begin{proof}
\noindent\textbf{Step~1: Every feasible $P$ gives a lower bound.}
Let $P$ satisfy Isaacs' condition, the Riccati inequalities
$\mathcal{I}\,\schur{/}\,\mathcal{G}\succeq0$ and
$\mathcal{I}^\mathrm{j}\,\schur{/}\,\mathcal{G}^\mathrm{j}\succeq0$, and
$P(t_0+T)\preceq Z_T$.  Let the maximizer play the saddle gain $K_w^s$
(forcing $W_w=0$ pointwise, as in the proof of
Theorem~\ref{th:game:suff}).  Then for every minimizer response $X_u$,
the saddle decomposition~\eqref{pf:game:decomp} gives
\begin{equation}\label{pf:game:lmi}
  \ip{\mathcal{I}(t,P(t))}{X(t)}
  =\ip{\mathcal{I}\,\schur{/}\,\mathcal{G}}{X_1(t)}
   +\operatorname{tr}\!\bigl(W_u^*\mathcal{I}_{uu}W_u\bigr)
  \;\ge\;\ip{\mathcal{I}\,\schur{/}\,\mathcal{G}}{X_1(t)}\;\ge\;0,
\end{equation}
since $\mathcal{I}_{uu}\succeq0$, $\mathcal{I}\,\schur{/}\,\mathcal{G}\succeq0$,
and $X_1\succeq0$; each jump term is likewise $\ge0$.  By the key
identity~\eqref{pf:game:id} and $P(t_0+T)\preceq Z_T$,
\[
  J_T(X,X_1^0)\big|_{X_w=K_w^s}
  \ge\ip{P(t_0)}{X_1^0}
   +\underbrace{\ip{Z_T-P(t_0+T)}{EX(t_0+T)E^*}}_{\ge0}
  \ge\ip{P(t_0)}{X_1^0}.
\]
minimizing over $X_u$ and noting that $K_w^s$ is one admissible maximizer
policy,
$\max_{X_w}\min_{X_u}J_T\ge\min_{X_u}J_T(\cdot,K_w^s)\ge\ip{P(t_0)}{X_1^0}$;
since $\min_{X_u}\max_{X_w}J_T\ge\max_{X_w}\min_{X_u}J_T$, we get
$J_T^\circ(X_1^0)\ge\ip{P(t_0)}{X_1^0}$.  Hence
$\max_{P\,\text{feasible}}\ip{P(t_0)}{X_1^0}\le J_T^\circ$.

\noindent\textbf{Step~2: $P^*$ is feasible and attains the bound.}
The I-HJI-GRE solution $P^*$ satisfies Isaacs' condition and
$\mathcal{I}\,\schur{/}\,\mathcal{G}=0\succeq0$,
$\mathcal{I}^\mathrm{j}\,\schur{/}\,\mathcal{G}^\mathrm{j}=0\succeq0$, and
$P^*(t_0+T)=Z_T\preceq Z_T$, so it is feasible.  By
Theorem~\ref{th:game:suff}, $\ip{P^*(t_0)}{X_1^0}=J_T^\circ(X_1^0)$, so
the maximum is attained at $P^*$ and equals $J_T^\circ$.
\end{proof}

\begin{corollary}[Continuous-time game: finite horizon]\label{cor:CT:game:fh}
  Set $N_T=0$.  The jump conditions are vacuous, and
  Theorems~\ref{th:game:suff},~\ref{th:game:nec},
  and~\ref{th:game:LMI} hold with the \emph{continuous-time HJI-GRE}
  \begin{equation}\label{eq:CT:game:GRE}
    \textnormal{Isaacs cond.~\eqref{eq:Isaacs:gen} for }\mathcal{I},\qquad
    \mathcal{I}\,\schur{/}\,\mathcal{G}=0,\qquad
    P(t_0+T)=Z_T,
  \end{equation}
  the continuous-time saddle-point controllers~\eqref{eq:game:Ks}, and
  the dual characterization
  \begin{equation}\label{eq:CT:game:LMI}
    J_T^\circ(X_1^0)=\max_{P\succeq0}\ip{P(t_0)}{X_1^0}
    \quad\text{s.t.}\quad
    \textnormal{Isaacs cond.~\eqref{eq:Isaacs:gen},}\;
    \mathcal{I}\,\schur{/}\,\mathcal{G}\succeq0,\;P(t_0+T)\preceq Z_T,
  \end{equation}
  recovering the stochastic $H_\infty$ Riccati equation
  of~\cite{AitRami:00}.
\end{corollary}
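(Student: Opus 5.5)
The plan is to obtain Corollary~\ref{cor:CT:game:fh} by specializing the impulsive results rather than reproving anything. First I will set $N_T=0$ in Definition~\ref{def:game:GRE}. The index set $k=1,\ldots,N_T$ is then empty, so the jump conditions~\eqref{eq:game:GRE:jump} and the jump saddle sets~\eqref{eq:game:Ks:jump} are vacuous. What remains of the I-HJI-GRE is exactly the continuous-time system~\eqref{eq:CT:game:GRE}: Isaacs' condition for $\mathcal{I}$, the Riccati equality $\mathcal{I}\,\schur{/}\,\mathcal{G}=0$, and the compatibility~\eqref{eq:game:GRE:fC}. I will fold the compatibility into Isaacs' condition via its range conditions, using that $\mathcal{G}\pinv{\mathcal{G}}$ is the projector onto $\operatorname{Im}\mathcal{G}$. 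Likewise, the key identity~\eqref{pf:game:id} loses its jump sum and reduces to the continuous-time identity~\eqref{eq:CT:keyid} with $\mathcal{C}_Z$ replaced by $\mathcal{I}$.

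Next I will check, one result at a time, that each proof survives this deletion.

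\textbf{Theorem~\ref{th:game:suff}.} Its proof only uses the pointwise saddle decomposition~\eqref{pf:game:decomp} on the flow, plus a parallel jump argument that becomes empty. The upper bound (minimizer plays $K_u^s$), the lower bound (maximizer plays $K_w^s$), attainment, and uniqueness therefore go through verbatim.

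\textbf{Theorem~\ref{th:game:nec}.} Its proof consists of the linearity/Riesz step, the local flow probes giving Isaacs inertia, and the Schur-equality argument along the saddle. Each of these is local on flow intervals, so they apply with the jump terms dropped in~\eqref{pf:nec:game:zero}--\eqref{pf:nec:game:ptwise}.

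\textbf{Theorem~\ref{th:game:LMI}.} The feasibility set reduces to~\eqref{eq:CT:game:LMI}. Step~1 uses~\eqref{pf:game:lmi} and the terminal term $\ip{Z_T-P(t_0+T)}{EX(t_0+T)E^*}\ge0$. Step~2 uses feasibility of $P^*$. Both are unaffected.

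Finally, I will justify the recovery of the stochastic $H_\infty$ Riccati equation of~\cite{AitRami:00}. For this I take $\mathcal{F}$ as the second-moment generator of an It\^o system with control and disturbance channels, and $Z$ in the separated form with $S_u=S_w=0$ and $R_{uw}=0$. By Remark~\ref{rem:separation}, $\mathcal{G}$ is then block-diagonal with $\mathcal{I}_{uu}\succ0$ and $\mathcal{I}_{ww}\prec0$. In that case $\mathcal{I}\,\schur{/}\,\mathcal{G}=\mathcal{I}_{11}-\mathcal{I}_{1u}\mathcal{I}_{uu}^{-1}\mathcal{I}_{1u}^*-\mathcal{I}_{1w}\mathcal{I}_{ww}^{-1}\mathcal{I}_{1w}^*$. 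Expanding $\mathcal{F}^*$ blockwise should give the familiar indefinite Riccati differential equation with the terminal condition $P(t_0+T)=Z_T$.

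I expect the main obstacle to be this last identification. The abstract statements are immediate, but matching the expanded expression to the exact form in~\cite{AitRami:00} requires writing out $\mathcal{F}^*$ for the It\^o second-moment operator, including the diffusion terms entering $\mathcal{I}_{uu}$ and $\mathcal{I}_{ww}$. I would present this as a routine block computation rather than carry it out in full.
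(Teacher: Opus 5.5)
Your route of specializing the impulsive theorems to $N_T=0$, so that every jump term and jump condition drops out, is the intended one; the paper offers nothing beyond this specialization. One step, however, is wrong: the compatibility condition \eqref{eq:game:GRE:fC} cannot be folded into Isaacs' condition.

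The range conditions in \eqref{eq:Isaacs:gen} only constrain $\mathcal{I}_{uw}$ against $\mathcal{I}_{uu}$ and $\mathcal{I}_{ww}$, i.e.\ the internal blocks of $\mathcal{G}$. By contrast, \eqref{eq:game:GRE:fC} asserts $\operatorname{Im}(\mathcal{I}_{1g}^*)\subseteq\operatorname{Im}(\mathcal{G})$. In the optimal-control corollary this came for free from $\mathcal{C}_Z\succeq0$ via Lemma~\ref{lemma:extSchur}. Here $\mathcal{I}$ is indefinite, and nothing supplies it.

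For instance, take $n=m_u=m_w=1$ and $\mathcal{I}=\bigl[\begin{smallmatrix}0&1&0\\1&0&0\\0&0&-1\end{smallmatrix}\bigr]$. Then $\mathcal{G}=\operatorname{diag}(0,-1)$, Isaacs' condition holds, and $\mathcal{I}\,\schur{/}\,\mathcal{G}=0$. But $\mathcal{G}\mathcal{G}^\dagger\mathcal{I}_{1g}^*=0\neq\mathcal{I}_{1g}^*$. Here $\ip{\mathcal{I}}{X}=2\operatorname{Re}X_{12}-X_{33}$, which the minimizer sends to $-\infty$ by taking $X_{22}$ large and $X_{12}=-(X_1X_{22})^{1/2}$. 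So \eqref{eq:game:Idecomp} fails and the argument of Theorem~\ref{th:game:suff} breaks down.

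You must therefore keep \eqref{eq:game:GRE:fC} as a separate condition, reading \eqref{eq:CT:game:GRE} as shorthand for the $N_T=0$ case of Definition~\ref{def:game:GRE}. You must also add it to the constraints of \eqref{eq:CT:game:LMI}. Step~1 of Theorem~\ref{th:game:LMI} uses the saddle decomposition and the gain $K_w^s$ for an arbitrary feasible $P$, and both need the range condition at that $P$. It is automatic only in the regular case $\mathcal{I}_{uu}\succ0$, $\mathcal{I}_{ww}\prec0$, where $\mathcal{G}$ is invertible.

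For the recovery claim, two points need correcting.
\begin{itemize}
\item Remark~\ref{rem:separation} requires, besides $R_{uw}=0$, that $\mathcal{F}^*$ have no $u$--$w$ cross terms. An It\^o second-moment generator whose diffusion involves both channels produces exactly such a term in $\mathcal{I}_{uw}$.
\item $\mathcal{I}_{ww}\prec0$ is a $\gamma$-type bound you must assume, not a consequence of your setup.
\end{itemize}
Separation is unnecessary anyway: once $\mathcal{G}$ is invertible, $\mathcal{I}_{11}-\mathcal{I}_{1g}\mathcal{G}^{-1}\mathcal{I}_{1g}^*=0$ is already the full indefinite Riccati differential equation.
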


\begin{corollary}[Discrete-time game: finite horizon]\label{cor:DT:game:fh}
  Apply Definition~\ref{def:DT:sys}: set $\mathcal{F}=0$ and $Z=0$ on
  flow, $t_k=k_0+k$, and $Z_k=Z(k)$ with the game block structure of
  Section~\ref{sec:game:form}.  With $P$ constant on each interval
  $(k,k+1)$ the flow conditions are vacuous, and the I-HJI-GRE reduces
  to the \emph{discrete-time HJI-GRE} at each step.  Define the
  \emph{discrete-time HJI operator}
  \begin{equation}\label{eq:DT:game:Id}
    \mathcal{I}_\mathrm{d}(k,P(k+1),P(k))
    :=\mathcal{J}^*(k,P(k+1))-E^*P(k)E+Z(k)\in\Snm,
  \end{equation}
  blocked as in~\eqref{eq:game:Ij}, with Isaacs block
  $\mathcal{G}_\mathrm{d}:=\begin{bsmallmatrix}
  (\mathcal{I}_\mathrm{d})_{uu}&(\mathcal{I}_\mathrm{d})_{uw}\\
  (\mathcal{I}_\mathrm{d})_{uw}^*&(\mathcal{I}_\mathrm{d})_{ww}\end{bsmallmatrix}$
  the bottom-right $(m_u+m_w)\times(m_u+m_w)$ block of
  $\mathcal{I}_\mathrm{d}$ (cf.~\eqref{eq:Ghatj}) and selection block
  $(\mathcal{I}_\mathrm{d})_{1g}:=[(\mathcal{I}_\mathrm{d})_{1u}\ (\mathcal{I}_\mathrm{d})_{1w}]$.
  Under the discrete-time Isaacs condition~\eqref{eq:Isaacs:gen} for
  $\mathcal{I}_\mathrm{d}$, the discrete-time HJI-GRE is, with $P(k_0+N)=Z_N$,
  \begin{equation}\label{eq:DT:game:GRE}
    \textnormal{Isaacs cond.~\eqref{eq:Isaacs:gen} for }\mathcal{I}_\mathrm{d},
    \qquad
    \mathcal{I}_\mathrm{d}\,\schur{/}\,\mathcal{G}_\mathrm{d}=0,
  \end{equation}
  and the discrete-time saddle-point controllers $(K_u,K_w)$ are
  \begin{equation}\label{eq:DT:game:K}
    \begin{bmatrix}K_u^*\\K_w^*\end{bmatrix}
    =-\pinv{(\mathcal{G}_\mathrm{d})}
    (\mathcal{I}_\mathrm{d})_{1g}^*
    +(I-\pinv{(\mathcal{G}_\mathrm{d})}\mathcal{G}_\mathrm{d})\,F,
    \qquad F\in\C^{(m_u+m_w)\times n},
  \end{equation}
  where $(\mathcal{I}_\mathrm{d})_{1u},(\mathcal{I}_\mathrm{d})_{1w}$ are
  the corresponding blocks of $\mathcal{I}_\mathrm{d}$.  The game value is
  $J_N^\circ(X_1^0)=\ip{P(k_0)}{X_1^0}$, with discrete-time key identity
  \[
    J_N^\circ(X_1^0)=\ip{P(k_0)}{X_1^0}
    +\sum_{k=k_0}^{k_0+N-1}\ip{\mathcal{I}_\mathrm{d}(k,P(k+1),P(k))}{X(k)},
  \]
  and dual characterization
  \begin{equation}\label{eq:DT:game:LMI}
    J_N^\circ(X_1^0)=\max_{P\succeq0}\ip{P(k_0)}{X_1^0}
    \quad\text{s.t.}\quad
    \textnormal{Isaacs cond.~\eqref{eq:Isaacs:gen} for }\mathcal{I}_\mathrm{d},\;
    \mathcal{I}_\mathrm{d}\,\schur{/}\,\mathcal{G}_\mathrm{d}\succeq0,\;
    P(k_0+N)\preceq Z_N,
  \end{equation}
  recovering the discrete-time stochastic $H_\infty$/zero-sum game
  Riccati equation.
\end{corollary}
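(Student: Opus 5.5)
The plan is to obtain every assertion of Corollary~\ref{cor:DT:game:fh} by specializing the impulsive game results (Theorems~\ref{th:game:suff}, \ref{th:game:nec} and~\ref{th:game:LMI}) to the discrete-time system of Definition~\ref{def:DT:sys}. This mirrors how Corollary~\ref{cor:DT:LQ:fh} was derived from the impulsive optimal-control theory. I would first fix the embedding: take $\mathcal{F}\equiv0$ and $Z\equiv0$ on each flow interval $(k,k+1)$, jump times $t_k=k_0+k$, jump weights $Z_k=Z(k)$ with the three-block game structure of Section~\ref{sec:game:form}, and terminal weight $Z_T=Z_N$ at $t_0+T=k_0+N$.

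With $P$ chosen constant on each open interval, the HJI flow operator~\eqref{eq:game:I} reduces to $\mathcal{I}(t,P(t))=E^*\dot PE+0+0=0$. Its Isaacs block $\mathcal{G}$ and its Schur complement are therefore both zero, so the flow conditions~\eqref{eq:game:GRE:flow} hold trivially, including Isaacs' condition in the degenerate form with all blocks zero. The flow integral in the key identity~\eqref{pf:game:id} drops out. Identifying $P(k):=P(t_k^-)=P((k-1)^+)$ and $P(k+1):=P(t_{k+1}^-)$ turns the HJI jump operator~\eqref{eq:game:Ij} into $\mathcal{I}_\mathrm{d}(k,P(k+1),P(k))$ of~\eqref{eq:DT:game:Id}. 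The jump conditions~\eqref{eq:game:GRE:jump} then become exactly the discrete-time HJI-GRE~\eqref{eq:DT:game:GRE}, and the jump saddle set~\eqref{eq:game:Ks:jump} becomes~\eqref{eq:DT:game:K}.

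\textbf{Index bookkeeping.} The main technical care is here. I need to check that the post-jump value at $t_k$ equals the pre-jump value at $t_{k+1}$, which holds because $P$ is constant in between. This makes the telescoping in Proposition~\ref{prop:keyid} collapse to the stated sum over $k=k_0,\dots,k_0+N-1$. I also need that the terminal condition $P(t_0+T)=Z_T$ becomes $P(k_0+N)=Z_N$. Once that is done, the results transfer as follows.
\begin{itemize}
\item Theorem~\ref{th:game:suff} gives the value $J_N^\circ=\ip{P(k_0)}{X_1^0}$ and the saddle gains.
\item Theorem~\ref{th:game:nec} gives necessity of the discrete-time HJI-GRE.
\item Theorem~\ref{th:game:LMI} gives the dual characterization~\eqref{eq:DT:game:LMI}, since its flow constraints are again vacuous.
\end{itemize}

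\textbf{Stated key identity.} One further point needs care. The displayed identity evaluates to $J_N^\circ$ only along the saddle trajectory, or more generally equals $J_N(X,X_1^0)$ along any trajectory. I would state it for arbitrary admissible $X$ via Proposition~\ref{prop:keyid}, and note that the sum vanishes at the saddle gains by the attainment step of Theorem~\ref{th:game:suff}.

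\textbf{Main obstacle.} I expect the main obstacle to be justifying that the vacuous flow blocks do not spoil the hypotheses of the impulsive theorems. Isaacs' condition is required for all $t$, and with $\mathcal{G}=0$ on flow it holds only in a degenerate sense: all inequalities and range conditions are trivially satisfied with zero blocks. I would verify directly that the saddle decomposition~\eqref{eq:game:Idecomp} still holds with zero pseudoinverse. In that case every flow integrand is identically zero regardless of the players' free blocks, so the flow portion contributes nothing to either the minimax or the maximin.
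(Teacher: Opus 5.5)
Your proposal is correct and follows the route the paper intends. The paper gives the corollary no separate proof; it is just the specialization of Theorems~\ref{th:game:suff}, \ref{th:game:nec} and~\ref{th:game:LMI} to trivial flow, exactly as Corollary~\ref{cor:DT:LQ:fh} specializes the optimal control theory. Your remarks on the degenerate Isaacs condition on flow, and on the displayed identity really being $J_N(X,X_1^0)$ for arbitrary $X$, are both apt.

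The one thing to repair is the bookkeeping you flagged yourself. With $t_k=k_0+k$, $T=N$, and jumps counted in the open interval $(t_0,t_0+T)$, only $N-1$ jumps occur. The telescoping therefore cannot produce all $N$ summands together with both $\ip{P(k_0)}{X_1^0}$ and $P(k_0+N)=Z_N$. Instead, take $T\in(N,N+1)$ and let $P(k)$ be the constant value of $P$ on $(k,k+1)$. Then $P(k_0)=P(t_0)$, $P(k_0+N)=P(t_0+T)=Z_N$, and the jump at time $k+1$ yields the summand $\ip{\mathcal{I}_\mathrm{d}(k,P(k+1),P(k))}{X(k)}$.
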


\subsection{Infinite-horizon game}
\label{sec:game:inf}

We now study the zero-sum game on the infinite horizon $[t_0,\infty)$.
The setup mirrors the optimal control case (Section~\ref{sec:OC:inf}):
there is no terminal cost, and the dual variable $P_\infty$ is no
longer fixed by a terminal condition but is characterized as the
bounded monotone limit of the finite-horizon solutions $P_T$ as
$T\to\infty$.  As in the optimal-control case, the game value is
monotone \emph{non-decreasing} in the horizon $T$ and $P_T$ converges to
$P_\infty$ \emph{from below}.  Although a longer horizon gives the
maximizing player more options, the disturbance is penalised ($-R_w$
block) and enters the value symmetrically with the control; with zero
terminal cost and a nonnegative stage value (guaranteed by
$Z_c\succeq0$), extending the horizon only adds nonnegative value.
Concretely, by dynamic programming the horizon-$T'$ value equals the
horizon-$T$ game with the nonnegative terminal weight
$P_{T'}(t_0+T)\succeq0$, which dominates the zero terminal weight of the
horizon-$T$ value; this is made precise in the proof of
Theorem~\ref{th:game:inf}.  As in
Section~\ref{sec:OC:inf}, no
time-invariance or periodicity of $\mathcal{F}$, $\mathcal{J}$, or $Z$
is assumed.

\subsubsection{Setting and assumptions (infinite horizon)}
\label{sec:game:inf:setup}

The infinite-horizon game removes the terminal penalty from the
finite-horizon cost~\eqref{eq:game:cost}, leaving
\begin{equation}\label{eq:game:Jinf}
  J_\infty(X,X_1^0)
  :=\int_{t_0}^\infty\ip{Z(t)}{X(t)}\dt
  +\sum_{k=1}^\infty\ip{Z_k}{X(t_k^-)},
\end{equation}
with game value
\begin{equation*}
  J_\infty^\circ(X_1^0)
  :=\min_{X_{12},X_{22}}\;\max_{X_{13},X_{23},X_{33}}\;
    J_\infty(X,X_1^0)
  \quad\text{s.t.~\eqref{eq:sys:IMP},}\;EX(t_0)E^*=X_1^0.
\end{equation*}
Well-posedness requires the game analogues of the
assumptions~\textup{(H1)--(H3)} of Section~\ref{sec:OC:inf}, with
uniform stabilizability~(H1) imposed on the \emph{saddle-point}
closed-loop, uniform coercivity~(H2) $EZ(t)E^*\succeq\delta I$, and
uniform boundedness~(H3) of the data, together with Isaacs'
condition~\eqref{eq:Isaacs:gen} holding uniformly in $t$.

\subsubsection{Sufficient condition (infinite horizon)}

The following theorem is the infinite-horizon analogue of
Theorem~\ref{th:game:suff}: any bounded positive solution of the
infinite-horizon I-HJI-GRE is the unique such solution and directly
provides the game value and the saddle-point policy.  The proof mirrors
Theorem~\ref{th:IMP:LQ:inf:suff}, with the finite-horizon terminal
correction replaced by a stability argument that forces
$\ip{P_\infty(t_0+T)}{EX^\circ(t_0+T)E^*}\to0$.

\begin{theorem}[Sufficient condition, infinite-horizon game]
  \label{th:game:inf:suff}
  Suppose there exists a bounded piecewise-$C^1$ function
  $P_\infty:[t_0,\infty)\to\Snpsd$ satisfying the infinite-horizon
  I-HJI-GRE with Isaacs' condition holding uniformly.  Then\textup{:}
  \begin{enumerate}[\upshape(a)]
    \item $J_\infty^\circ(X_1^0)=\ip{P_\infty(t_0)}{X_1^0}$, attained
      at the saddle-point policy with
      $(K_u,K_w)\in(\mathcal{K}_u^s\times\mathcal{K}_w^s)(t,P_\infty(t))$
      on flow and the analogous saddle gains at each jump.
    \item $P_\infty$ is the unique bounded solution of the
      infinite-horizon I-HJI-GRE.
  \end{enumerate}
\end{theorem}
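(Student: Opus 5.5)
The plan is to transplant the proof of Theorem~\ref{th:IMP:LQ:inf:suff} into the game setting. The finite-horizon saddle machinery of Theorem~\ref{th:game:suff} is used on $[t_0,t_0+T]$, and the boundary term $\ip{P_\infty(t_0+T)}{EX(t_0+T)E^*}$ is then controlled as $T\to\infty$. All of this rests on the key identity~\eqref{eq:IMP:keyid} with $\mathcal{I},\mathcal{I}^\mathrm{j}$ in place of $\mathcal{C}_Z,\mathcal{D}_Z$. On every horizon $T$ it gives
\[
  J_T^{0}(X,X_1^0)=\ip{P_\infty(t_0)}{X_1^0}-\ip{P_\infty(t_0+T)}{EX(t_0+T)E^*}
  +\int_{t_0}^{t_0+T}\ip{\mathcal{I}}{X}\dt+\sum_{t_k\le t_0+T}\ip{\mathcal{I}^\mathrm{j}}{X(t_k^-)},
\]
where $J_T^0$ is the truncated cost with zero terminal weight. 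Because $\mathcal{I}\,\schur{/}\,\mathcal{G}=0$ holds for all $t$, the decomposition~\eqref{pf:game:decomp} reduces each integrand to $\operatorname{tr}(W^*\mathcal{G}W)$, and the same holds at each jump.

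\textbf{Upper bound (minimizer side).} First fix the minimizer at the saddle gain $K_u^s$, so that $W_u=0$. For any disturbance, each integrand and jump term then equals $\operatorname{tr}(W_w^*\mathcal{I}_{ww}W_w)\le0$. This gives $J_T^0(X,X_1^0)\le\ip{P_\infty(t_0)}{X_1^0}-\ip{P_\infty(t_0+T)}{EX(t_0+T)E^*}\le\ip{P_\infty(t_0)}{X_1^0}$, where the last step uses $P_\infty\succeq0$. The stage cost is not signed in this game, so I cannot simply pass to the limit. Instead I restrict attention to disturbances for which $J_\infty$ is well defined, which is the admissible class in the game; for those, letting $T\to\infty$ gives $\sup_{X_w}J_\infty\le\ip{P_\infty(t_0)}{X_1^0}$. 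Hence $\min\max\le\ip{P_\infty(t_0)}{X_1^0}$.

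\textbf{Lower bound (maximizer side).} Next fix the maximizer at $K_w^s$, so that $W_w=0$. For any control, each term is now $\operatorname{tr}(W_u^*\mathcal{I}_{uu}W_u)\ge0$. This gives $J_T^0\ge\ip{P_\infty(t_0)}{X_1^0}-\ip{P_\infty(t_0+T)}{EX(t_0+T)E^*}$, and here the boundary term must vanish. I would argue as in the lower-bound step of Theorem~\ref{th:IMP:LQ:inf:suff}. With $K_w^s$ fixed, the minimizer faces an optimal control problem with stage matrix $Z$. The game version of (H2), $EZE^*\succeq\delta I$, together with $Z_c\succeq0$ and the boundedness of $K_w^s$ from (H3), should show that any control with $J_\infty<\infty$ forces $\norm{EX(t)E^*}\to0$. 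Then $\ip{P_\infty(t_0+T)}{EX(t_0+T)E^*}\le\alpha_2\norm{EX(t_0+T)E^*}\to0$, where $\alpha_2$ bounds $P_\infty$. Controls with infinite cost only help this bound. Therefore $\max\min\ge\ip{P_\infty(t_0)}{X_1^0}$, and combined with $\min\max\ge\max\min$ all three values coincide.

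\textbf{Attainment.} At the pair $(K_u^s,K_w^s)$ we have $W=0$, so the identity reads $J_T^0(X^\circ)=\ip{P_\infty(t_0)}{X_1^0}-\ip{P_\infty(t_0+T)}{EX^\circ(t_0+T)E^*}$. Along $X^\circ$ the derivative of $\ip{P_\infty}{EX^\circ E^*}$ equals $-\ip{Z}{X^\circ}$. I would then reuse the Gronwall-plus-jump contraction argument of Theorem~\ref{th:IMP:LQ:inf:suff}, which needs $\ip{Z}{X^\circ}\ge\tfrac{\delta}{\alpha_2}\ip{P_\infty}{EX^\circ E^*}$. This inequality does not follow from coercivity alone, because the $-R_w$ block can make $\ip{Z}{X^\circ}$ small. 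I would instead invoke the saddle closed-loop uniform stabilizability assumed in Section~\ref{sec:game:inf:setup}, which gives exponential decay of $\norm{EX^\circ E^*}$ directly and hence kills the boundary term.

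\textbf{Uniqueness.} Any other bounded solution $\tilde P$ yields the same game value $\ip{\tilde P(t_0)}{X_1^0}$. Testing with $X_1^0=vv^*$ and varying the initial time gives $\tilde P\equiv P_\infty$.

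The main obstacle is the two boundary-term limits. The OC proofs relied on $Z\succeq0$ to get decay from finite cost, whereas here the stage cost is indefinite. My plan is to handle the maximizer-fixed direction by reducing it to an OC problem, and the closed-loop direction by appealing to saddle stabilizability rather than a Gronwall estimate.
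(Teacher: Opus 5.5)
Your upper-bound, attainment and uniqueness steps are fine. The last two match the paper: attainment uses the saddle closed-loop stability hypothesis rather than Gronwall, and uniqueness tests with $X_1^0=vv^*$. Your minimizer-side bound fixes $W_u=0$ and uses $\mathcal{I}_{ww}\preceq0$, and $P_\infty\succeq0$ gives the boundary term a favorable sign. This is cleaner than the paper, which concludes $J_\infty(X^\circ,X_1^0)=J_\infty^\circ(X_1^0)$ without ever bounding the min-max from above.

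The gap is in the maximizer-side lower bound. Freezing the maximizer at $K_w^s$ makes the disturbance block a fixed linear function $U_w=LU_1$ of the state factor. The minimizer then faces state weight $Q+S_wL+L^*S_w^*-L^*R_wL$ (and cross weight $S_u+L^*R_{uw}^*$), not $Q$. This weight is generically indefinite. Boundedness of $L$ only bounds the negative term $-L^*R_wL$; it does not make it smaller than $\delta$. Also, $Z_c\succeq0$ says nothing about the disturbance blocks. So (H2), $Z_c\succeq0$ and (H3) do not imply that finite cost forces $\norm{EX(t)E^*}\to0$.

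A concrete instance is the rank-one lift of $\dot x=x+u+w$ with running cost $x^2+u^2-2w^2$:
\begin{itemize}
\item the HJI solution is $p=2+\sqrt6$, and the saddle loop $\dot x=(1-p/2)x$ is exponentially stable;
\item $Q=1$ is coercive;
\item against $w=px/2$, the response $u=-cx$ with $c^2=p^2/2-1$ has zero running cost;
\item the state nevertheless grows, since $(1+p/2)^2-c^2=3/2>0$, so $J_\infty=0<p\,x_0^2$;
\item the response $u=0$ even gives $J_\infty=-\infty$.
\end{itemize}
The lower bound is therefore false over unrestricted minimizer responses, and no argument from coercivity can rescue it.

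What is needed is an admissibility restriction: the minimizer must be confined to stabilizing policies, e.g.\ those with $\liminf_{T}\ip{P_\infty(t_0+T)}{EX(t_0+T)E^*}=0$. This is the standard class in infinite-horizon LQ games. The paper's proof does not use (H2) at this point. It disposes of the boundary term by appealing directly to the game analogue of (H1), which must be read as exactly this restriction. With it, your inequality $J_T^0\ge\ip{P_\infty(t_0)}{X_1^0}-\ip{P_\infty(t_0+T)}{EX(t_0+T)E^*}$ passes to the limit, and the rest of your argument goes through.
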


\begin{proof}
The argument follows Theorem~\ref{th:IMP:LQ:inf:suff}, with the genuine saddle decomposition~\eqref{pf:game:decomp} replacing the
minimization and the finite-horizon terminal correction
controlled by the saddle-point closed-loop stability.

\noindent\textbf{Lower bound.}
Let the maximizer play the saddle gain $K_w^s$ (forcing $W_w=0$),
and let $X_u$ be any admissible minimizer response.  Apply the key
identity~\eqref{pf:game:id} on $[t_0,t_0+T]$ with $P_\infty$ and no
terminal cost.  By Isaacs' condition and the infinite-horizon I-HJI-GRE
($\mathcal{I}\,\schur{/}\,\mathcal{G}=0$), the saddle
decomposition~\eqref{pf:game:decomp} with $W_w=0$ gives
$\ip{\mathcal{I}(t,P_\infty)}{X}=\operatorname{tr}(W_u^*\mathcal{I}_{uu}W_u)\ge0$
a.e.\ on flow and the analogous $\ge0$ at each jump. Hence
\[
  \int_{t_0}^{t_0+T}\!\!\ip{Z}{X}\dt
  +\!\!\sum_{k:\,t_k\le t_0+T}\!\!\ip{Z_k}{X(t_k^-)}
  \ge\ip{P_\infty(t_0)}{X_1^0}
   -\ip{P_\infty(t_0+T)}{EX(t_0+T)E^*}.
\]
Under the game analogue of~(H1) the saddle-point closed-loop is
uniformly exponentially stable, so $\norm{EX(t_0+T)E^*}\to0$ for any
finite-cost trajectory; since $P_\infty\preceq\alpha_2 I$ is bounded,
the terminal term vanishes as $T\to\infty$.  minimizing over $X_u$
and using that $K_w^s$ is one admissible maximizer policy, then
$T\to\infty$, gives
$J_\infty^\circ(X_1^0)\ge\ip{P_\infty(t_0)}{X_1^0}$.

\noindent\textbf{Attainment.}
For the saddle-point policy
$(K_u,K_w)\in(\mathcal{K}_u^s\times\mathcal{K}_w^s)(t,P_\infty(t))$ on
flow and the jump gains from~\eqref{eq:game:Ks:jump} at each jump,
$W=0$ in~\eqref{pf:game:decomp}, so
$\ip{\mathcal{I}(t,P_\infty)}{X^\circ}=0$ a.e.\ and
$\ip{\mathcal{I}^\mathrm{j}}{X^\circ(t_k^-)}=0$ at every jump.  The key
identity on $[t_0,t_0+T]$ then gives
\[
  \int_{t_0}^{t_0+T}\!\!\ip{Z}{X^\circ}\dt
  +\!\!\sum_{k:\,t_k\le t_0+T}\!\!\ip{Z_k}{X^\circ(t_k^-)}
  =\ip{P_\infty(t_0)}{X_1^0}
   -\ip{P_\infty(t_0+T)}{EX^\circ(t_0+T)E^*}.
\]
Since the saddle-point trajectory satisfies the uniform exponential
stability~(H1), $\norm{EX^\circ(t_0+T)E^*}\to0$, and as $P_\infty$ is
bounded the terminal term tends to $0$.  Taking $T\to\infty$ yields
$J_\infty(X^\circ,X_1^0)=\ip{P_\infty(t_0)}{X_1^0}=J_\infty^\circ(X_1^0)$,
so the policy is optimal and the saddle is attained.

\noindent\textbf{Uniqueness.}
If $\tilde P$ is another bounded solution, the lower-bound and
attainment arguments give
$J_\infty^\circ(X_1^0)=\ip{\tilde P(t_0)}{X_1^0}$ for all
$X_1^0\succeq0$.  Since also
$J_\infty^\circ(X_1^0)=\ip{P_\infty(t_0)}{X_1^0}$, taking $X_1^0=vv^*$
for every $v$ gives $P_\infty\equiv\tilde P$.
\end{proof}

\subsubsection{Necessary condition and existence (infinite horizon)}

Under the game analogues of \textup{(H1)--(H3)}, existence of a bounded
solution of the infinite-horizon I-HJI-GRE follows from a monotone
convergence argument: the finite-horizon solutions $P_T$ form a
\emph{non-decreasing} bounded sequence (the game value
increases as the
horizon lengthens) whose pointwise limit satisfies the I-HJI-GRE.  This
is the game counterpart of Theorem~\ref{th:IMP:LQ:inf}.

\begin{theorem}[Necessary condition and existence, infinite-horizon game]
  \label{th:game:inf}
  Assume Isaacs' condition holds uniformly and the analogues of
  \textup{(H1)--(H3)} hold for the saddle-point closed-loop.
  Then there exists a unique bounded piecewise-$C^1$ solution
  $P_\infty\in\Snpsd$ of the infinite-horizon I-HJI-GRE, satisfying
  $P_\infty=\lim_{T\to\infty}P_T$ (monotone non-decreasing), where
  $P_T$ solves the finite-horizon I-HJI-GRE with $P_T(t_0+T)=0$,
  and $J_\infty^\circ(X_1^0)=\ip{P_\infty(t_0)}{X_1^0}$.
\end{theorem}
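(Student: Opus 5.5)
The argument copies the proof of Theorem~\ref{th:IMP:LQ:inf}, with the minimization replaced by the saddle structure of Theorem~\ref{th:game:suff}. The steps are: existence of the finite-horizon duals $P_T$, monotonicity in $T$, uniform bounds, convergence with passage to the limit in the I-HJI-GRE, and identification of the value via Theorem~\ref{th:game:inf:suff}.

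\textbf{Finite-horizon duals.} For each $T>0$, Corollary~\ref{cor:game:equiv} with $Z_T=0$ gives a unique piecewise-$C^1$ solution $P_T\succeq0$ of the finite-horizon I-HJI-GRE. It satisfies $J_T^\circ(X_1^0)=\ip{P_T(t_0)}{X_1^0}$. Applied from an arbitrary initial time $t$, the same result identifies $P_T(t)$ as the value-to-go from $t$. Well-posedness on each finite horizon is part of the game analogue of (H1)--(H3) together with the uniform Isaacs condition.

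\textbf{Monotonicity.} Let $T'>T$. By dynamic programming, the horizon-$T'$ game restricted to $[t_0,t_0+T]$ is the horizon-$T$ game with terminal weight $P_{T'}(t_0+T)\succeq0$. Its dual variable $\hat P$ therefore solves the same I-HJI-GRE as $P_T$, but with terminal value $P_{T'}(t_0+T)\succeq0=P_T(t_0+T)$.

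To compare $\hat P$ with $P_T$, I will use the key identity~\eqref{pf:game:id} with $P=P_T$ while the maximizer plays the saddle gain of $P_T$, which forces $W_w=0$. Then, exactly as in Step~1 of Theorem~\ref{th:game:LMI}, for every minimizer response
\[
J\ge\ip{P_T(t_0)}{X_1^0}+\ip{P_{T'}(t_0+T)-P_T(t_0+T)}{EX(t_0+T)E^*}\ge\ip{P_T(t_0)}{X_1^0}.
\]
Hence $J_{T'}^\circ\ge J_T^\circ$, and testing with $X_1^0=vv^*$ gives $P_{T'}(t)\succeq P_T(t)$.

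\textbf{Bounds.} The lower bound $P_T\succeq0$ follows from $Z_c\succeq0$: when the maximizer plays $W=0$ on its blocks, the cost is nonnegative. For the upper bound, let the minimizer use the stabilizing saddle-loop feedback from the game analogue of (H1). Against any maximizer response, the game (H1) bound gives $J\le C_s\norm{X_1^0}$, so $P_T\preceq C_s I=:\alpha_2 I$ uniformly in $T$.

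\textbf{Convergence and the limit equation.} A bounded non-decreasing family converges pointwise to some $P_\infty$. On each flow interval, the I-HJI-GRE gives $E^*\dot P_T E$ in terms of $P_T$ through $\pinv{\mathcal{G}}$. This step uses the regular case $\mathcal{I}_{uu}\succ0$, $\mathcal{I}_{ww}\prec0$, or else the uniform pseudoinverse bounds from (H3) with Isaacs' condition holding uniformly. Either way, the equicontinuity footnote of Theorem~\ref{th:IMP:LQ:inf} applies and yields a uniform Lipschitz bound. By Arzel\`a--Ascoli the convergence is uniform on compact sets. I then pass to the limit in the integrated flow equation and in the jump equalities $\mathcal{I}^\mathrm{j}\,\schur{/}\,\mathcal{G}^\mathrm{j}=0$. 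Since $\mathcal{I}_{uu}$, $\mathcal{I}_{ww}$ and the Schur-complement conditions in~\eqref{eq:Isaacs:gen} are closed conditions, Isaacs' condition survives the limit.

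\textbf{Main obstacle.} The delicate point is the continuity of $P\mapsto\pinv{\mathcal{G}}$ when $\mathcal{G}$ is indefinite and possibly singular, since pseudoinverses are discontinuous when rank changes. I would first assume uniform nonsingularity $\mathcal{I}_{uu}\succeq\epsilon I$, $\mathcal{I}_{ww}\preceq-\epsilon I$ along the family. If that is not available, I would require constant rank of $\mathcal{G}$ along the family, so that the pseudoinverse is continuous.

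\textbf{Value and uniqueness.} Once $P_\infty$ is a bounded solution of the infinite-horizon I-HJI-GRE, Theorem~\ref{th:game:inf:suff} gives $J_\infty^\circ(X_1^0)=\ip{P_\infty(t_0)}{X_1^0}$ together with uniqueness.
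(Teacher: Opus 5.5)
Your proposal is correct and follows the paper's proof step for step: nonnegativity of $P_T$ from $Z_c\succeq0$ with the maximizer's blocks switched off, the dynamic-programming nesting with terminal weight $P_{T'}(t_0+T)\succeq0$, uniform bounds from the game analogue of (H1), equicontinuity with Arzel\`a--Ascoli, and Theorem~\ref{th:game:inf:suff} for the value and uniqueness.

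The one real difference is how you compare the horizon-$T$ games with terminal weights $P_{T'}(t_0+T)$ and $0$. The paper does it directly: because $EX(t_0+T)E^*\succeq0$, the two payoffs are ordered pointwise, and $\min\max$ preserves pointwise order. You instead rerun the verification argument of Step~1 of Theorem~\ref{th:game:LMI}, using $P_T$ and the maximizer's saddle gain. In effect you show that $P_T$ is feasible for the dual characterization of the longer-horizon game. This is valid, but it is heavier, since it needs the saddle decomposition and Isaacs' condition where plain order-preservation suffices.

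The extra nonsingularity or constant-rank hypothesis you consider for the limit passage is not needed. The (H3) analogue bounds $\|\pinv{\mathcal{G}}\|$ uniformly on the bounded set $\{0\preceq P\preceq\alpha_2 I\}$, which contains every $P_T(t)$ and also $P_\infty(t)$. Hence the nonzero singular values of $\mathcal{G}$ stay bounded away from zero. The rank therefore cannot drop in the limit, and $\pinv{\mathcal{G}}$ is continuous along the family, which is all the limit argument requires.
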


\begin{proof}
  \noindent\textbf{Nonnegativity $P_T\succeq0$.}
  $\ip{P_T(t)}{X_1}$ is the game value over $[t,t_0+T]$ from $X_1$ with
  zero terminal cost.  Letting the maximizer play $X_w=0$ is one
  admissible choice, so
  \[
    \ip{P_T(t)}{X_1}=\min_{X_u}\max_{X_w}\!\int_t^{t_0+T}\!\!\ip{Z}{X}\ds
    \;\ge\;\min_{X_u}\!\int_t^{t_0+T}\!\!\ip{Z}{X}\ds\Big|_{X_w=0}.
  \]
  With $X_w=0$ the integrand is $\ip{Z_c}{X_c}\ge0$ (since
  $Z_c\succeq0$ and the control block $X_c=\bigl[\begin{smallmatrix}X_1&X_{12}^*\\X_{12}&X_{22}\end{smallmatrix}\bigr]\succeq0$),
  so the right-hand side is $\ge0$.  Hence $P_T(t)\succeq0$ for all
  $t,T$.

  \noindent\textbf{Monotone limit (non-decreasing).}
  Fix $T'>T$.  By Isaacs' condition the saddle exists at every stage, so
  dynamic programming gives the nesting
  \[
    \ip{P_{T'}(t_0)}{X_1^0}
    =\min_{X_u}\max_{X_w}\Bigl[\int_{t_0}^{t_0+T}\!\!\ip{Z}{X}\ds
    +\ip{P_{T'}(t_0+T)}{EX(t_0+T)E^*}\Bigr],
  \]
  i.e.\ $J_{T'}^\circ$ is the horizon-$T$ game with terminal weight
  $\Phi:=P_{T'}(t_0+T)$, whereas $J_T^\circ$ is the same game with
  terminal weight $0$.  The game value is monotone in the terminal
  weight: if $\Phi_1\succeq\Phi_2$ then
  $\int\ip{Z}{X}+\ip{\Phi_1}{EX(T)E^*}\ge\int\ip{Z}{X}+\ip{\Phi_2}{EX(T)E^*}$
  pointwise (as $EX(T)E^*\succeq0$), and $\min_{X_u}\max_{X_w}$ preserves
  pointwise inequalities.  Since $\Phi=P_{T'}(t_0+T)\succeq0$ by the
  nonnegativity above, $\ip{P_{T'}(t_0)}{X_1^0}\ge\ip{P_T(t_0)}{X_1^0}$
  for all $X_1^0\succeq0$, i.e.\ $P_{T'}(t_0)\succeq P_T(t_0)$; the same
  argument from any $t$ gives $P_{T'}(t)\succeq P_T(t)$.  Thus
  $\{P_T(t)\}$ is non-decreasing in $T$.  The disturbance does not
  reverse this direction; it only requires the value to remain finite,
  which is the uniform Isaacs condition together with the upper bound
  next.  The bounds $0\preceq P_T\preceq\alpha_2 I$ follow
  from saddle-point stability (H1) (the minimizer's stabilizing
  policy bounds $\max_{X_w}$ from above since the disturbance is
  penalised by $-R_w\prec0$).  Equicontinuity from (H3) and
  Isaacs' condition, together with monotone convergence and
  Arzel\`a-Ascoli, give
  $C^1$-convergence on compact subintervals; passing to the limit
  in the HJI equation preserves all conditions.

  \noindent\textbf{Optimality.}
  $P_\infty$ is bounded and satisfies the infinite-horizon I-HJI-GRE,
  so Theorem~\ref{th:game:inf:suff} gives
  $J_\infty^\circ(X_1^0)=\ip{P_\infty(t_0)}{X_1^0}$.
\end{proof}

\subsubsection{Equivalence and dual DLMI (infinite horizon)}

Theorems~\ref{th:game:inf:suff} and~\ref{th:game:inf} together give the
complete characterization.  The dual game Riccati-inequality
characterization extends the finite-horizon
version (Theorem~\ref{th:game:LMI}) by replacing the terminal
constraint $P(t_0+T)\preceq Z_T$ with a boundedness condition; as
there, the indefinite Isaacs block $\mathcal{G}$ makes it a Riccati
inequality (with Schur-complement term), not a plain LMI.

\begin{corollary}[Equivalence, infinite-horizon game]
  \label{cor:game:inf:equiv}
  The infinite-horizon game is well-posed with value
  $\ip{P_\infty(t_0)}{X_1^0}$ if and only if the infinite-horizon
  I-HJI-GRE has a bounded positive solution.  The solution is unique.
\end{corollary}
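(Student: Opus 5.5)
The corollary follows by combining Theorems~\ref{th:game:inf:suff} and~\ref{th:game:inf}, in the same way Corollary~\ref{cor:IMP:LQ:inf:equiv} follows from Theorems~\ref{th:IMP:LQ:inf:suff} and~\ref{th:IMP:LQ:inf}. Throughout I work under the standing assumptions of Section~\ref{sec:game:inf:setup}: Isaacs' condition holds uniformly, together with the game analogues of (H1)--(H3) for the saddle-point closed loop.

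\emph{Sufficiency.} Suppose the infinite-horizon I-HJI-GRE has a bounded solution $P_\infty\succeq0$. Theorem~\ref{th:game:inf:suff}(a) then gives $J_\infty^\circ(X_1^0)=\ip{P_\infty(t_0)}{X_1^0}$, attained by the saddle gains $(K_u,K_w)\in(\mathcal{K}_u^s\times\mathcal{K}_w^s)(t,P_\infty(t))$ on flow and the jump gains~\eqref{eq:game:Ks:jump}. In particular the value is finite and linear in $X_1^0$ for every $X_1^0\succeq0$, and a saddle exists, so the game is well-posed.

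\emph{Necessity.} Suppose the game is well-posed with value $\ip{P_\infty(t_0)}{X_1^0}$. Theorem~\ref{th:game:inf} produces a bounded $P_\infty\succeq0$ solving the infinite-horizon I-HJI-GRE, obtained as the monotone limit of the finite-horizon solutions $P_T$ with $P_T(t_0+T)=0$. That theorem also gives $J_\infty^\circ=\ip{P_\infty(t_0)}{\cdot}$. Testing with $X_1^0=vv^*$ for all $v\in\C^n$ (Proposition~\ref{prop:PQ}) shows this $P_\infty$ is the matrix appearing in the well-posedness statement.

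\emph{Uniqueness.} This is Theorem~\ref{th:game:inf:suff}(b). Any two bounded solutions represent the same value $J_\infty^\circ$, so evaluating at $X_1^0=vv^*$ forces them to agree at every initial time.

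\emph{Main obstacle.} The delicate point is the necessity direction. Theorem~\ref{th:game:inf} relies on the game analogues of (H1)--(H3) rather than on well-posedness alone. I would therefore state the equivalence under those standing assumptions, matching how Corollary~\ref{cor:IMP:LQ:inf:equiv} uses (H1)--(H3), and check that "bounded positive" in the statement coincides with the bounds $0\preceq P_\infty\preceq\alpha_2 I$ delivered there.
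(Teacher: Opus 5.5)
Your proposal is correct and follows the paper's proof exactly: sufficiency is Theorem~\ref{th:game:inf:suff}, necessity is Theorem~\ref{th:game:inf}, and uniqueness comes from Theorem~\ref{th:game:inf:suff}(b). Your caveat is also accurate: the necessity direction really rests on the standing uniform Isaacs condition and the game analogues of (H1)--(H3), not on well-posedness alone, and the paper's two-line proof relies implicitly on those same standing assumptions.
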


\begin{proof}
  Sufficiency: Theorem~\ref{th:game:inf:suff}.
  Necessity: Theorem~\ref{th:game:inf}.
\end{proof}

\begin{theorem}[Dual game Riccati inequality, infinite-horizon game]\label{th:game:inf:LMI}
  Under the assumptions of Theorem~\ref{th:game:inf},
  \begin{equation}\label{eq:game:LMI:inf}
    J_\infty^\circ(X_1^0)
    =\sup_P\;\ip{P(t_0)}{X_1^0}
    \quad\text{subject to}\quad
    \begin{gathered}\textnormal{Isaacs cond.~\eqref{eq:Isaacs:gen} for }\mathcal{I},\mathcal{I}^\mathrm{j};\\
    \mathcal{I}\,\schur{/}\,\mathcal{G}\succeq0,\;
    \mathcal{I}^\mathrm{j}\,\schur{/}\,\mathcal{G}^\mathrm{j}\succeq0,\;P\text{ bounded};\end{gathered}
  \end{equation}
  the supremum is attained at $P_\infty$.
\end{theorem}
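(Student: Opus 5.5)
The plan is to follow the two-step template of Theorem~\ref{th:IMP:LQ:inf:LMI}, reusing the saddle argument of Theorem~\ref{th:game:LMI}. First show that every feasible bounded $P$ is a lower bound for $J_\infty^\circ(X_1^0)$. Then show that $P_\infty$ from Theorem~\ref{th:game:inf} is feasible and attains this bound.

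\textbf{Lower bound.} Fix a feasible bounded $P$. Let the maximizer play the saddle gain $K_w^s(t,P(t))$ on flow and its jump analogue from~\eqref{eq:game:Ks:jump}. The range conditions in Isaacs' condition make these sets nonempty, and the resulting gain forces $W_w=0$ in~\eqref{pf:game:decomp}. For any minimizer response $X_u$, inequality~\eqref{pf:game:lmi} then gives $\ip{\mathcal{I}(t,P(t))}{X(t)}\ge0$ a.e., and likewise $\ip{\mathcal{I}^\mathrm{j}}{X(t_k^-)}\ge0$ at each jump. These use $\mathcal{I}_{uu}\succeq0$, $\mathcal{I}\,\schur{/}\,\mathcal{G}\succeq0$ and $X_1\succeq0$. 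The key identity~\eqref{eq:IMP:keyid} on $[t_0,t_0+T]$, with $Z_T=0$, then yields
\[
\int_{t_0}^{t_0+T}\ip{Z}{X}\dt+\sum_{k:\,t_k\le t_0+T}\ip{Z_k}{X(t_k^-)}\ge\ip{P(t_0)}{X_1^0}-\ip{P(t_0+T)}{EX(t_0+T)E^*}.
\]
It remains to let $T\to\infty$, which is the delicate step. For minimizer responses with finite cost, the game coercivity (H2) gives $\int\delta\norm{EXE^*}\dt<\infty$. With (H3) bounding $\mathcal{F}$, this should force $\norm{EX(t)E^*}\to0$, as in the lower-bound argument of Theorem~\ref{th:IMP:LQ:inf:suff}. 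Since $P$ is bounded, the boundary term vanishes. Responses with infinite cost do not affect the minimum, because (H1) supplies a stabilizing minimizer with finite cost. Taking $\min_{X_u}$, and using that $K_w^s$ is one admissible maximizer policy, gives
\[
J_\infty^\circ\ge\max_{X_w}\min_{X_u}J_\infty\ge\ip{P(t_0)}{X_1^0},
\]
where the first inequality is the weak minimax inequality $\min\max\ge\max\min$.

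\textbf{Attainment.} By Theorem~\ref{th:game:inf}, $P_\infty$ is bounded and solves the infinite-horizon I-HJI-GRE. Hence it satisfies Isaacs' condition together with $\mathcal{I}\,\schur{/}\,\mathcal{G}=0\succeq0$ and $\mathcal{I}^\mathrm{j}\,\schur{/}\,\mathcal{G}^\mathrm{j}=0\succeq0$, so it is feasible. Theorem~\ref{th:game:inf:suff} gives $\ip{P_\infty(t_0)}{X_1^0}=J_\infty^\circ(X_1^0)$. Combined with the lower bound, the supremum equals $J_\infty^\circ(X_1^0)$ and is attained at $P_\infty$.

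The main obstacle is the vanishing of the boundary term for an arbitrary feasible $P$. There the maximizer plays $K_w^s(\cdot,P)$ rather than the true saddle gain, so the closed loop is not the one covered by the game version of (H1). The plan is to use finite cost plus coercivity, not closed-loop stability, to get $\norm{EX(t)E^*}\to0$. If pointwise decay fails, the fallback is to use integrability of $\norm{EX(t)E^*}$ to pick a sequence $T_j\to\infty$ along which $\norm{EX(t_0+T_j)E^*}\to0$, and pass to the limit along that sequence. This is enough because the cost integrals are monotone in $T$.
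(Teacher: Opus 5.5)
You follow the paper's own template: freeze the maximizer at $K_w^s(\cdot,P)$, use the saddle decomposition, apply the key identity and the weak minimax inequality, then check that $P_\infty$ is feasible. You also correctly isolate the one step that does not go through, namely killing $\ip{P(t_0+T)}{EX(t_0+T)E^*}$ for an arbitrary feasible $P$. Your repair does not work, however.

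\emph{Coercivity fails.} In the game the running cost is not coercive. (H2) only gives $EZE^*\succeq\delta I$, while $\ip{Z}{X}$ also contains $-\ip{R_w}{X_{33}}$ and the $S_w,R_{uw}$ cross terms. So finiteness of $J_\infty$ says nothing about $\int\norm{EXE^*}\dt$, and your subsequence fallback rests on exactly this integrability.

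\emph{The minimum can be $-\infty$.} Once the maximizer's blocks are slaved to $X_1$ through $K_w^s(\cdot,P)$, the minimizer faces a reduced weight with state block $Q+S_wK_w^s+(S_wK_w^s)^*-K_w^{s*}R_wK_w^s$, which can be negative definite. The minimizer can then destabilize and push the cost to $-\infty$. Those responses, not the finite-cost ones, determine $\min_{X_u}$. Your claim that ``infinite-cost responses do not affect the minimum'' only rules out $+\infty$. A finite-cost response supplied by (H1) is irrelevant against $-\infty$; moreover, (H1) is imposed on the loop generated by $P_\infty$, not by $P$.

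\emph{The statement itself is false as written.} Take $N_T=0$ and a scalar flow $\dot x=ax+b_uu+b_ww$, so that $\mathcal{F}^*(P)=A^*PE+E^*PA$ with $A=[a\ b_u\ b_w]$. Let $Z=\mathrm{diag}(q,r_u,-r_w)$ with $a=-2$, $q=b_u=b_w=r_u=1$ and $r_w=\tfrac12$.
\begin{itemize}
\item Then $\mathcal{G}=\mathrm{diag}(1,-\tfrac12)$ satisfies Isaacs' condition for every $P$, and a constant $P=p$ gives $\mathcal{I}\,\schur{/}\,\mathcal{G}=p^2-4p+1$.
\item The backward equation from $P_T(t_0+T)=0$ increases to $P_\infty=2-\sqrt3$, whose saddle loop $\dot x=-\sqrt3\,x$ is stable. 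So all hypotheses of Theorem~\ref{th:game:inf} hold, and $J_\infty^\circ(X_1^0)=(2-\sqrt3)X_1^0$.
\item Yet every constant $p\ge 2+\sqrt3$ is bounded, nonnegative and feasible, so the supremum is $+\infty$.
\item For $p=100$, $K_w^s$ is $w=200x$, and $u=0$ gives $J_\infty=-\infty$.
\end{itemize}
The paper's proof passes this step by appealing to the saddle-point closed-loop stability (H1). As you suspected, that concerns only the loop generated by $P_\infty$, so it does not close the gap either.

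What does hold without any stability argument is the reversed characterization: $J_\infty^\circ(X_1^0)=\min_P\ip{P(t_0)}{X_1^0}$ over bounded $P\succeq0$ satisfying Isaacs' condition with $\mathcal{I}\,\schur{/}\,\mathcal{G}\preceq0$ and $\mathcal{I}^\mathrm{j}\,\schur{/}\,\mathcal{G}^\mathrm{j}\preceq0$. There the minimizer plays $K_u^s(\cdot,P)$, every correction term is $\le0$, and the boundary term $-\ip{P(t_0+T)}{EX(t_0+T)E^*}\le0$ works in your favour; this is the mechanism of Theorem~\ref{th:game:MDT}. Alternatively, the supremum version needs an extra stabilizing constraint on the feasible $P$.

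A smaller point: Isaacs' range conditions only relate $\mathcal{I}_{uw}$ to $\mathcal{I}_{uu}$ and $\mathcal{I}_{ww}$. Nonemptiness of the saddle sets and validity of \eqref{pf:game:decomp} require $\operatorname{Im}(\mathcal{I}_{1g}^*)\subseteq\operatorname{Im}(\mathcal{G})$, i.e.\ \eqref{eq:game:GRE:fC}, which is not among the constraints.
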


\begin{proof}
For any bounded feasible $P$, let the maximizer play $K_w^s$
(forcing $W_w=0$).  By the Riccati inequalities and Isaacs' condition,
the saddle decomposition~\eqref{pf:game:lmi} gives
$\ip{\mathcal{I}(t,P)}{X}=\ip{\mathcal{I}\,\schur{/}\,\mathcal{G}}{X_1}
+\operatorname{tr}(W_u^*\mathcal{I}_{uu}W_u)\ge0$ a.e.\ for every
minimizer response, and each jump term $\ge0$.  The key identity on
$[t_0,t_0+T]$ then yields, for the maximizer policy $K_w^s$ and any
$X_u$,
\[
  \int_{t_0}^{t_0+T}\!\!\ip{Z}{X}\dt
  +\!\!\sum_{k:\,t_k\le t_0+T}\!\!\ip{Z_k}{X(t_k^-)}
  \ge\ip{P(t_0)}{X_1^0}-\ip{P(t_0+T)}{EX(t_0+T)E^*}.
\]
As $P$ is bounded and $\norm{EX(t_0+T)E^*}\to0$ under the saddle-point
closed-loop stability~(H1), minimizing over $X_u$, using that
$K_w^s$ is one maximizer policy, and letting
$T\to\infty$ gives $J_\infty^\circ(X_1^0)\ge\ip{P(t_0)}{X_1^0}$.  Since
$P_\infty$ is feasible (Theorem~\ref{th:game:inf}) and attains
$J_\infty^\circ(X_1^0)=\ip{P_\infty(t_0)}{X_1^0}$
(Theorem~\ref{th:game:inf:suff}), the supremum equals
$J_\infty^\circ$ and is attained at $P_\infty$.
\end{proof}

\subsubsection{Dwell-time conditions}
\label{sec:game:dwell}

The dwell-time framework of Section~\ref{sec:OC:dwell} extends to
the zero-sum game setting by replacing all flow and jump composite
operators by their HJI counterparts $\mathcal{I}$ and
$\mathcal{I}^\mathrm{j}$ and the optimal control pivot $(\mathcal{C}_Z)_3$
by the indefinite Isaacs block $\mathcal{G}$.  The forward Riccati
ODE~\eqref{eq:forwardRiccati} generalizes to the forward
HJI-GRE:
\begin{equation}\label{eq:forwardHJI}
  E^*\tfrac{d\tilde{P}}{d\tau}E
  =-\mathcal{F}^*(\tau,\tilde{P})-Z_{\text{game}}(\tau)
  +\begin{bmatrix}\mathcal{I}_{1u} & \mathcal{I}_{1w}\end{bmatrix}\,
   \pinv{\mathcal{G}}\,\begin{bmatrix}\mathcal{I}_{1u}^*\\\mathcal{I}_{1w}^*\end{bmatrix},
\end{equation}
where $Z_{\text{game}}$ carries the game-structured cost (with
$-R_w$ in the disturbance block).

\begin{theorem}[Causal conditions: periodic game]
  \label{th:game:periodic}
  Assume a constant period $\Delta_k\equiv T_p$; that the flow operator
  $\mathcal{F}(\cdot,X)$ and the flow cost $Z(\cdot)$ (with the game
  block structure of Section~\ref{sec:game:form}, $R_u\succ0$,
  $R_w\succ0$, $Q\succeq0$) are $T_p$-periodic in $t$; that the jump
  operator $\mathcal{J}(k,\cdot)$ and the jump cost $Z_k$ are
  independent of~$k$; and that Isaacs' condition~\eqref{eq:Isaacs:gen}
  holds uniformly.  Under the game analogues of~\textup{(H1)--(H3)}
  for the saddle-point closed-loop:
  \begin{enumerate}[\upshape(a)]
    \item \emph{(Existence and uniqueness.)} There exists a unique
      $T_p$-periodic piecewise-$C^1$ solution $P_\infty$ of the
      periodic I-HJI-GRE.
    \item \emph{(Sufficiency.)} The causal saddle-point policy from
      $(\mathcal{K}_u^s\times\mathcal{K}_w^s)(\tau,P_\infty(\tau))$
      achieves $J_\infty^\circ(X_1^0)=\ip{P_\infty(0)}{X_1^0}$.
    \item \emph{(Necessity.)} Any causal policy achieving
      $J_\infty^\circ$ must use $P_\infty$.
  \end{enumerate}
\end{theorem}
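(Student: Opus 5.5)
The plan follows the proof of Theorem~\ref{th:OC:periodic} step by step. Every optimal-control ingredient is replaced by its game counterpart: Theorem~\ref{th:game:inf} instead of Theorem~\ref{th:IMP:LQ:inf}, Theorem~\ref{th:game:inf:suff} instead of Theorem~\ref{th:IMP:LQ:inf:suff}, and the saddle decomposition~\eqref{pf:game:decomp} instead of Lemma~\ref{lemma:decomp}. I would not rebuild a monotone fixed-point iteration for a game one-period map. The shortest route is to argue periodicity of the infinite-horizon dual directly, as in the necessity part~(c) of Theorem~\ref{th:OC:periodic} and in the (i)$\Rightarrow$(ii) step of Theorem~\ref{th:diss:periodic}.

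\textbf{Step 1: existence and uniqueness (a).} Under the stated game analogues of (H1)--(H3) and the uniform Isaacs condition, Theorem~\ref{th:game:inf} provides a unique bounded piecewise-$C^1$ solution $P_\infty^{ih}\succeq0$ of the infinite-horizon I-HJI-GRE. Define the shift $Q(t):=P_\infty^{ih}(t+T_p)$.

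\emph{Flow.} Since $\mathcal{F}$ and $Z$ are $T_p$-periodic, $\mathcal{I}(t,\cdot)=\mathcal{I}(t+T_p,\cdot)$, and so $\mathcal{G}$ and $\mathcal{I}_{1g}$ are periodic too. Hence $Q$ inherits pointwise the Schur equality~\eqref{eq:game:GRE:fR}, the range condition~\eqref{eq:game:GRE:fC} and the Isaacs inertia~\eqref{eq:Isaacs:gen}.

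\emph{Jumps.} Because $\Delta_k\equiv T_p$, the shift maps $t_k$ to $t_{k+1}$. Since $\mathcal{J}$ and $Z_k$ are $k$-independent, $\mathcal{I}^\mathrm{j}$ is unchanged, so the jump conditions~\eqref{eq:game:GRE:jump} also transfer to $Q$.

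$Q$ is bounded and positive semidefinite, so uniqueness in Theorem~\ref{th:game:inf} gives $Q\equiv P_\infty^{ih}$. Restricting to one period therefore yields a $T_p$-periodic solution $P_\infty$ of the periodic I-HJI-GRE. Uniqueness among periodic solutions follows because any periodic solution, extended periodically, is a bounded solution of the infinite-horizon I-HJI-GRE. Theorem~\ref{th:game:inf:suff}(b) then identifies it with $P_\infty^{ih}$.

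\textbf{Step 2: sufficiency (b).} Extend $P_\infty$ periodically to $[t_0,\infty)$; it solves the infinite-horizon I-HJI-GRE as just shown. Theorem~\ref{th:game:inf:suff}(a) then gives $J_\infty^\circ(X_1^0)=\ip{P_\infty(0)}{X_1^0}$, attained by the saddle gains from~\eqref{eq:game:Ks} and~\eqref{eq:game:Ks:jump} evaluated at $P_\infty(\tau)$. Causality is immediate, since these gains depend only on the timer $\tau$. The point to check is that the saddle-point closed loop under this timer policy satisfies the stability assumption used in Theorem~\ref{th:game:inf:suff} to kill the terminal term $\ip{P_\infty(t_0+T)}{EX^\circ(t_0+T)E^*}$. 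This is exactly the assumed game analogue of (H1), so I would invoke it rather than reprove it.

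\textbf{Step 3: necessity (c).} Suppose a causal policy attains $J_\infty^\circ$. Its value function $X_1\mapsto J_\infty^\circ$ is linear with representer $P_\infty^{ih}$, by Theorem~\ref{th:game:inf} and the uniqueness of the Riesz representer. Along the corresponding trajectory the key identity~\eqref{pf:game:id} forces $\ip{\mathcal{I}}{X}=0$ and $\ip{\mathcal{I}^\mathrm{j}}{X(t_k^-)}=0$. By the saddle decomposition this forces $W=0$, i.e.\ the gains lie in the saddle sets built from $P_\infty^{ih}=P_\infty$.

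\textbf{Main obstacle.} I expect the main obstacle to be this last implication. With an indefinite $\mathcal{G}$, $\operatorname{tr}(W^*\mathcal{G}W)=0$ does not by itself force $W=0$. I would first argue one player at a time: fix the opponent at its saddle gain and use strict definiteness of the reduced Schur blocks in~\eqref{eq:Isaacs:gen}. If only semidefiniteness is available, I would weaken the conclusion to "uses $P_\infty$ up to the kernel parametrization $(I-\pinv{\mathcal{G}}\mathcal{G})F$".
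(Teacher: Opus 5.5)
Parts (a) and (b) are correct, but your (a) takes a different route from the paper.

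\textbf{Part (a).} The paper reruns the nesting argument of Theorem~\ref{th:game:inf} on the finite-horizon solutions: the horizon-$T$ game with nonnegative terminal weight, the bound $0\preceq P_T\preceq\alpha_2 I$, and Arzel\`a--Ascoli. It then simply identifies the limit with ``the unique periodic I-HJI-GRE solution.'' Periodicity of the infinite-horizon dual is only established afterwards, in (c), by the shift argument. You instead take $P_\infty^{ih}$ from Theorem~\ref{th:game:inf} as given and prove its $T_p$-periodicity directly, via $Q(t)=P_\infty^{ih}(t+T_p)$ and uniqueness. You then get uniqueness among periodic solutions from Theorem~\ref{th:game:inf:suff}(b). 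This is shorter, and it makes explicit a step the paper leaves implicit: no game analogue of the fixed-point Lemma~\ref{lemma:periodic:mono} is given, so the paper's (a) alone does not show the monotone limit is periodic. In exchange, the paper's route describes $P_\infty$ constructively as a limit of finite-horizon iterates. Both routes ultimately rest on Theorem~\ref{th:game:inf}.

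\textbf{Part (b).} Your (b) is the paper's argument repackaged through Theorem~\ref{th:game:inf:suff}(a), with the same appeal to the assumed saddle-point stability.

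\textbf{Part (c).} The paper only proves the dual-level statement: the infinite-horizon dual coincides with the periodic $P_\infty$. Your Step~1 already gives this. Your Step~3 aims at a stronger, policy-level claim, and its first inference fails. For an arbitrary causal pair whose cost equals $J_\infty^\circ$, the key identity only gives $\int\ip{\mathcal{I}}{X}\dt+\sum_k\ip{\mathcal{I}^\mathrm{j}}{X(t_k^-)}=0$. Since $\ip{\mathcal{I}}{X}=\operatorname{tr}(W^*\mathcal{G}W)$ is sign-indefinite, a minimizer deviation (positive contribution) and a maximizer deviation (negative contribution) can cancel. So pointwise vanishing is not forced, and the obstacle bites one step earlier than where you place it.

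Your one-player-at-a-time fallback repairs this, provided ``achieving $J_\infty^\circ$'' is read as a security or equilibrium property. That is, the minimizer's causal policy must guarantee $J_\infty^\circ$ against every disturbance, and symmetrically for the maximizer; producing the value along one trajectory is not enough. The repaired argument runs as follows.
\begin{enumerate}
\item Test the minimizer's policy against $K_w^s$. This gives $W_w=0$, so each flow term equals $\operatorname{tr}(W_u^*\mathcal{I}_{uu}W_u)\ge0$, and likewise at jumps.
\item Because the policy guarantees $J_\infty^\circ$, these nonnegative terms must vanish a.e., so $\mathcal{I}_{uu}W_u=0$.
\item The range condition in~\eqref{eq:Isaacs:gen} gives $\ker\mathcal{I}_{uu}\subseteq\ker\mathcal{I}_{uw}^*$, which upgrades this to $\mathcal{G}W=0$.
\item Letting $X_1^0$ vary then places the pair in the saddle set~\eqref{eq:game:Ks}.
\end{enumerate}
No strict definiteness is needed. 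The kernel term $(I-\mathcal{G}^\dagger\mathcal{G})F$ is already part of the saddle set, so this is the full conclusion rather than a weakened one.
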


\begin{proof}
\noindent\textbf{Existence and uniqueness~(a).}
The finite-horizon solutions $P_T$ form a non-decreasing bounded
sequence by the nesting argument of Theorem~\ref{th:game:inf}
(horizon-$T$ game with nonnegative terminal weight $P_{T'}(t_0+T)\succeq0$
dominates the zero-terminal game). Hence $P_T(0^-)$
is non-decreasing and bounded ($0\preceq P_T\preceq\alpha_2 I$).  By the minimax stability assumption and
Arzel\`a-Ascoli, the sequence converges uniformly to the unique
periodic I-HJI-GRE solution $P_\infty$.

\noindent\textbf{Sufficiency~(b).}
The $T_p$-periodic extension of $P_\infty$ satisfies
$\mathcal{I}\,\schur{/}\,\mathcal{G}=0$ everywhere.
Apply the key identity on $[t_0,t_0+T]$ with no terminal cost
along the saddle-point trajectory $X^\circ$ (gains in
$\mathcal{K}_u^s\times\mathcal{K}_w^s$), where the saddle
decomposition~\eqref{pf:game:decomp} makes each integrand and jump term
vanish:
\begin{align*}
  J_T^\circ(X_1^0)
  &=\ip{P_\infty(0^+)}{X_1^0}
  -\ip{P_\infty(T^-)}{EX^\circ(T)E^*}\\
  &\quad
  +\underbrace{\int_{t_0}^{t_0+T}\ip{\mathcal{I}(\tau,P_\infty)}{X^\circ}}\dt_{=0}
  +\underbrace{\textstyle\sum_k\ip{\mathcal{I}^\mathrm{j}(k,P_\infty(t_k^+),P_\infty(t_k^-))}{X^\circ(t_k^-)}}_{=0}.
\end{align*}
Thus along the saddle policy
$J_T^\circ=\ip{P_\infty(0^+)}{X_1^0}-\ip{P_\infty(T^-)}{EX^\circ(T)E^*}$;
that this is indeed the game value (and not merely the cost of one
policy) follows from the lower/upper bounds of Theorem~\ref{th:game:suff}
applied on $[t_0,t_0+T]$.
Since $P_\infty$ is bounded and
$\norm{EX^\circ(T)E^*}\to0$ under the stability assumption,
taking $T\to\infty$ gives
$J_\infty^\circ(X_1^0)=\ip{P_\infty(0^+)}{X_1^0}$.

\noindent\textbf{Necessity~(c).}
By Theorem~\ref{th:game:inf}, the unique infinite-horizon I-HJI-GRE
solution $P_\infty^{ih}$ satisfies $J_\infty^\circ=\ip{P_\infty^{ih}}{X_1^0}$.
Periodicity of the data implies $P_\infty^{ih}$ is $T_p$-periodic
(by the same shifting argument as Theorem~\ref{th:OC:periodic}~(c)),
so $P_\infty^{ih}=P_\infty$.
\end{proof}

\begin{theorem}[Sufficient causal condition: MDT game]\label{th:game:MDT}
  Let $T_{\min}>0$, assume Isaacs' condition~\eqref{eq:Isaacs:gen}, and
  assume that the flow and jump data depend only on the timer
  $\tau=t-t_k$, that the flow data are time-invariant beyond
  $T_{\min}$, and that the jump data are $k$-independent:
  \begin{equation}\label{eq:MDT:game:LTI}
    \mathcal{F}(\tau,\cdot)\equiv\mathcal{F}_\infty(\cdot),\quad
    Z(\tau)\equiv Z(T_{\min})\ \ (\tau\ge T_{\min}),
    \qquad
    \mathcal{J}(k,\cdot)\equiv\mathcal{J}(\cdot),\quad
    Z_k\equiv Z_{\mathrm{jp}}\ \ \forall k,
  \end{equation}
  where the flow cost $Z$ and the jump cost $Z_{\mathrm{jp}}$ carry the
  game block structure of Section~\ref{sec:game:form} (with $-R_w$ in
  the disturbance block, $R_u\succ0$, $R_w\succ0$, $Q\succeq0$).
  Suppose there exists $P_s:[0,T_{\min}]\to\Snpsd$ satisfying:
  \begin{enumerate}[\upshape(i)]
    \item \emph{Forward HJI-GRE on $[0,T_{\min}]$:}
      $P_s$ solves the forward HJI-ODE~\eqref{eq:forwardHJI} on
      $(0,T_{\min})$, i.e.,
      $\mathcal{I}\,\schur{/}\,\mathcal{G}=0$ with Isaacs'
      condition~\eqref{eq:Isaacs:gen}
      for $\tau\in(0,T_{\min})$.
    \item \emph{Jump HJI-GRE at $\tau=T_{\min}$:}
      Isaacs' condition~\eqref{eq:Isaacs:gen} for
      $\mathcal{I}^\mathrm{j}(k,P_s(0^+),P_s(T_{\min}))$
      and $\mathcal{I}^\mathrm{j}\,\schur{/}\,
      \mathcal{G}^\mathrm{j}=0$.
    \item \emph{Geromel--Colaneri condition:} the frozen HJI
      matrix
      \begin{equation}\label{eq:GC:game}
        \mathcal{I}^0(P_s(T_{\min}))
        :=\mathcal{F}_\infty^*(P_s(T_{\min}))+Z(T_{\min})
      \end{equation}
      satisfies Isaacs' condition~\eqref{eq:Isaacs:gen} and the frozen
      game Riccati inequality
      $\mathcal{I}^0\,\schur{/}\,\mathcal{G}^0\preceq0$, where
      $\mathcal{G}^0$ is the Isaacs block of $\mathcal{I}^0$.  Under
      \eqref{eq:MDT:game:LTI}, $\mathcal{I}^0$ coincides with
      $\mathcal{I}(\tau,P_s(T_{\min}))$ for every $\tau\ge T_{\min}$.
  \end{enumerate}
  Then the causal saddle-point policy from
  $(\mathcal{K}_u^s\times\mathcal{K}_w^s)(\tau,P_s(\tau))$
  for $\tau\le T_{\min}$ and frozen at $\tau=T_{\min}$ for
  $\tau>T_{\min}$ achieves
  $J_T^\circ(X_1^0)\le\ip{P_s(0)}{X_1^0}$ for all
  $\Delta_k\ge T_{\min}$.
\end{theorem}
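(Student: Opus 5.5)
We follow the template of Theorem~\ref{th:OC:MDT}, with the minimization replaced by the saddle decomposition~\eqref{pf:game:decomp}. First, extend $P_s$ to $\bar P:[0,\infty)\to\Snpsd$ by $\bar P(\tau)=P_s(\tau)$ for $\tau\le T_{\min}$ and $\bar P(\tau)=P_s(T_{\min})$ for $\tau>T_{\min}$. Then define the absolute-time dual $P(t):=\bar P(t-t_k)$ on $(t_k,t_{k+1}]$. Since $\Delta_k\ge T_{\min}$, every pre-jump value is $P(t_k^-)=P_s(T_{\min})$. The jump operator $\mathcal{I}^\mathrm{j}(P_s(0^+),P_s(T_{\min}))$ is therefore the same at every jump, and condition~(ii) applies there.

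Next, apply the key identity~\eqref{pf:game:id} on $[t_0,t_0+T]$ with this $P$ and zero terminal weight. This gives
\[
J_T(X,X_1^0)=\ip{P(t_0)}{X_1^0}-\ip{P(t_0+T)}{EX(t_0+T)E^*}+I_f+I_j .
\]
The boundary term is $\le0$, because $P\succeq0$ and $X\succeq0$. Split each flow integral at $\tau=T_{\min}$.

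On $(0,T_{\min})$, condition~(i) and the decomposition~\eqref{pf:game:decomp} give $\ip{\mathcal{I}}{X}=\tr(W^*\mathcal{G}W)$. At each jump, condition~(ii) gives the same with $\mathcal{G}^\mathrm{j}$. Let the minimizer play the $u$-component of the saddle gain, so that $W_u=0$ both on flow and at jumps. For every disturbance response we then get $\ip{\mathcal{I}}{X}=\tr(W_w^*\mathcal{I}_{ww}W_w)\le0$ on the first segment, and $\ip{\mathcal{I}^\mathrm{j}}{X(t_k^-)}\le0$ at each jump. At the saddle itself both terms equal $0$.

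On the tail $\tau>T_{\min}$, $\dot{\bar P}=0$ and the data are frozen by~\eqref{eq:MDT:game:LTI}, so $\mathcal{I}(\tau,\bar P)=\mathcal{I}^0(P_s(T_{\min}))$. Here we need the decomposition~\eqref{eq:game:Idecomp} for $\mathcal{I}^0$ with respect to $\mathcal{G}^0$. This requires the range condition, which is part of Isaacs' condition in~(iii). With the frozen saddle gain applied on the minimizer side (so $W_u=0$), we obtain
\[
\ip{\mathcal{I}^0}{X}=\ip{\mathcal{I}^0\,\schur{/}\,\mathcal{G}^0}{X_1}+\tr(W_w^*\mathcal{I}^0_{ww}W_w)\le0 ,
\]
using $\mathcal{I}^0\,\schur{/}\,\mathcal{G}^0\preceq0$ and $\mathcal{I}^0_{ww}\preceq0$.

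Summing the pieces gives $J_T(X,X_1^0)\le\ip{P_s(0)}{X_1^0}$ for every maximizer response against the causal minimizer policy. Hence $\min_{X_u}\max_{X_w}J_T\le\ip{P_s(0)}{X_1^0}$. Along the full saddle-point trajectory $X^\circ$ we also have $J_T(X^\circ,X_1^0)\le\ip{P_s(0)}{X_1^0}$. Causality holds because the gains depend only on $\tau$ and on $P_s$, which is evaluated on $[0,T_{\min}]$.

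The main obstacle is the tail segment. There the minimizer's frozen gain $K_u^s$, computed from $\mathcal{I}^0$, must still make $W_u=0$. This gain comes from the non-Riccati matrix $\mathcal{I}^0$, not from a GRE solution, so the argument depends on the range condition $\operatorname{Im}(\mathcal{I}^{0*}_{1g})\subseteq\operatorname{Im}(\mathcal{G}^0)$ being part of~(iii). If it is not, I would add it explicitly as a hypothesis, mirroring the role of the compatibility conditions in Lemma~\ref{lemma:decomp}.

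A second subtlety is that the statement reads $J_T^\circ\le\ip{P_s(0)}{X_1^0}$. I would interpret this as the upper value: the guaranteed cost under the causal minimizer strategy. The true saddle value is only bounded by this, which is what the argument delivers.
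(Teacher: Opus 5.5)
Your proposal is correct and follows the paper's proof essentially step for step: you extend $P_s$ by a constant, let the minimizer's saddle gain force $W_u=0$, sign each flow, tail and jump term of the key identity, and drop the nonpositive boundary term to bound the upper value. On the obstacle you flag, Isaacs' range conditions concern only $\mathcal{I}^0_{uw}$ inside $\mathcal{G}^0$ and do not give $\mathcal{G}^0\pinv{(\mathcal{G}^0)}(\mathcal{I}^0_{1g})^*=(\mathcal{I}^0_{1g})^*$, so your first claim that it is "part of Isaacs' condition" is wrong, but your fallback of adding this compatibility to (iii) is the right fix (the paper's proof applies the tail decomposition without stating it).
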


\begin{proof}
\noindent\textbf{Extension.}
Define $\bar{P}(\tau):=P_s(\tau)$ for $\tau\le T_{\min}$ and
$\bar{P}(\tau):=P_s(T_{\min})$ for $\tau>T_{\min}$.

\noindent\textbf{Verification (minimizer's upper bound).}
The minimizer plays its causal gain $K_u^s(\tau,\bar P(\tau))$ (frozen at
$\tau=T_{\min}$ for $\tau>T_{\min}$), forcing $W_u=0$ pointwise; the
maximizer is arbitrary.  By the saddle decomposition~\eqref{pf:game:decomp}
each integrand is then
$\ip{\mathcal{I}\,\schur{/}\,\mathcal{G}}{X_1}+\operatorname{tr}(W_w^*\mathcal{I}_{ww}W_w)$,
and since $\mathcal{I}_{ww}\preceq0$ (Isaacs) the second term is $\le0$.

\emph{Flow, $\tau\in(0,T_{\min})$:}
condition~(i) gives $\mathcal{I}\,\schur{/}\,\mathcal{G}=0$, so the
integrand is $\le0$.

\emph{Flow, $\tau>T_{\min}$:}
$\dot{\bar{P}}=0$ and, by~\eqref{eq:MDT:game:LTI},
$\mathcal{I}(\tau,\bar{P})=\mathcal{I}^0(P_s(T_{\min}))$, whose Schur
complement $\mathcal{I}^0\,\schur{/}\,\mathcal{G}^0\preceq0$ by
condition~(iii); hence the integrand
$\ip{\mathcal{I}^0\,\schur{/}\,\mathcal{G}^0}{X_1}+\operatorname{tr}(W_w^*\mathcal{I}^0_{ww}W_w)\le0$.

\emph{Jump at every $t_k$ with $\Delta_k\ge T_{\min}$:}
$\bar{P}(\Delta_k^-)=P_s(T_{\min})$, so by condition~(ii)
$\mathcal{I}^\mathrm{j}\,\schur{/}\,\mathcal{G}^\mathrm{j}=0$ and the jump
term is $\le0$.

\noindent\textbf{Cost bound.}
Apply the game key identity on $[t_0,t_0+T]$ with $\bar{P}$
(minimizer at $K_u^s$, any maximizer):
\begin{align*}
  J_T(X,X_1^0)
  &=\ip{\bar{P}(0)}{X_1^0}
  -\underbrace{\ip{\bar{P}(T)}{EX(T)E^*}}_{\ge0}\\
  &\quad
  +\underbrace{\int_0^{T_{\min}}\ip{\mathcal{I}}{X}}\dt_{\le0}
  +\underbrace{\int_{T_{\min}}^T\ip{\mathcal{I}^0}{X}}\dt_{\le0}
  +\underbrace{\textstyle\sum_k\ip{\mathcal{I}^\mathrm{j}}{X(t_k^-)}}_{\le0}.
\end{align*}
All correction terms are $\le0$, so $\max_{X_w}J_T\le\ip{P_s(0)}{X_1^0}$
under the causal minimizer policy, hence $J_T^\circ\le\ip{P_s(0)}{X_1^0}$
for all $T>0$.
The policy is causal since conditions~(i)--(iii) are verified on
$\tau\in[0,T_{\min}]$ only.
\end{proof}

\begin{theorem}[Sufficient causal condition: RDT game]
  \label{th:game:RDT}
  Let $0<T_{\min}\le T_{\max}$, assume Isaacs'
  condition~\eqref{eq:Isaacs:gen}, and assume the flow and jump data
  depend only on the timer $\tau=t-t_k$ and are $k$-independent, the
  flow cost $Z(\tau)$ and jump cost $Z_{\mathrm{jp}}$ carrying the game
  block structure of Section~\ref{sec:game:form} (with $-R_w$ in the
  disturbance block, $R_u\succ0$, $R_w\succ0$, $Q\succeq0$).  Unlike the
  MDT case, no time-invariance beyond $T_{\min}$ is required.  Suppose
  there exists $P_s:[0,T_{\max}]\to\Snpsd$ satisfying:
  \begin{enumerate}[\upshape(i)]
    \item $\mathcal{I}\,\schur{/}\,\mathcal{G}=0$ with Isaacs'
      condition~\eqref{eq:Isaacs:gen} on
      $(0,T_{\max})$ (forward HJI-ODE~\eqref{eq:forwardHJI}).
    \item Isaacs' condition~\eqref{eq:Isaacs:gen} for
      $\mathcal{I}^\mathrm{j}(k,P_s(0^+),P_s(\tau))$
      and $\mathcal{I}^\mathrm{j}\,\schur{/}\,
      \mathcal{G}^\mathrm{j}=0$ for all $\tau\in[T_{\min},T_{\max}]$.
  \end{enumerate}
  Then the causal saddle-point policy from
  $(\mathcal{K}_u^s\times\mathcal{K}_w^s)(\tau,P_s(\tau))$ achieves
  \begin{equation}\label{eq:RDT:game:cost}
    J_T^\circ(X_1^0)
    =\ip{P_s(0)}{X_1^0}-\ip{P_s(T)}{EX^\circ(T)E^*}
    \le\ip{P_s(0)}{X_1^0}
  \end{equation}
  for every $\Delta_k\in[T_{\min},T_{\max}]$, with equality as
  $T\to\infty$.
\end{theorem}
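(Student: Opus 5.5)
The plan is to mirror the proof of Theorem~\ref{th:OC:RDT}, replacing the positive-semidefinite pivot by the indefinite Isaacs block and the minimization by the saddle decomposition~\eqref{pf:game:decomp} of Remark~\ref{rem:game:saddle}. I would first build the absolute-time dual variable $P(t):=P_s(t-t_k)$ for $t\in(t_k,t_{k+1}]$. Then $P(t_k^+)=P_s(0^+)$ and $P(t_k^-)=P_s(\Delta_{k-1}^-)$ with $\Delta_{k-1}\in[T_{\min},T_{\max}]$. Condition~(i) makes $\mathcal{I}(t,P(t))$ satisfy Isaacs' condition and $\mathcal{I}\,\schur{/}\,\mathcal{G}=0$ on every flow interval, because the timer stays in $(0,\Delta_k)\subset(0,T_{\max})$. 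Condition~(ii), evaluated at $\tau=\Delta_{k-1}$, gives the same two properties for $\mathcal{I}^\mathrm{j}$ at every jump. The compatibility conditions~\eqref{eq:game:GRE:fC}--\eqref{eq:game:GRE:jC} come from the range conditions in~\eqref{eq:Isaacs:gen}. Since every dwell time is bounded by $T_{\max}$, no frozen tail or Geromel--Colaneri condition is needed.

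Next I would apply the key identity~\eqref{eq:IMP:keyid} with $\mathcal{C}_Z\leftarrow\mathcal{I}$ and no terminal cost on $[t_0,t_0+T]$. This gives
\[
J_T(X,X_1^0)=\ip{P_s(0)}{X_1^0}-\ip{P(t_0+T)}{EX(t_0+T)E^*}+\int\ip{\mathcal{I}}{X}\dt+\textstyle\sum_k\ip{\mathcal{I}^\mathrm{j}}{X(t_k^-)}.
\]
Along the saddle gains, $W=0$ in~\eqref{pf:game:decomp}, so both correction terms vanish and the cost of $X^\circ$ equals the expression in~\eqref{eq:RDT:game:cost}.

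The cost of one policy is not yet the value, so I would sandwich as in Theorem~\ref{th:game:suff}. If the minimizer plays $K_u^s$, then $W_u=0$, each integrand equals $\operatorname{tr}(W_w^*\mathcal{I}_{ww}W_w)\le0$, and the same holds at jumps. Hence every maximizer response gives cost at most the saddle expression, and $\min\max$ is bounded above by it. If the maximizer plays $K_w^s$, each term equals $\operatorname{tr}(W_u^*\mathcal{I}_{uu}W_u)\ge0$, which bounds $\max\min$ from below. The terminal term $\ip{P(t_0+T)}{EX(t_0+T)E^*}$ depends on the trajectory, so the two bounds must be compared on the horizon-$T$ game whose terminal weight is $-P(t_0+T)$.

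This is the step I expect to be the main obstacle. I would handle it as in the proof of Theorem~\ref{th:game:suff}, treating $-P(t_0+T)$ as the terminal weight of the sub-game. The timer value at $t_0+T$ is known from the causal jump history, so this weight is fixed, and both bounds then coincide at $\ip{P_s(0)}{X_1^0}$ minus the terminal term evaluated along $X^\circ$. The inequality in~\eqref{eq:RDT:game:cost} then follows from $P_s\succeq0$ and $EX^\circ E^*\succeq0$.

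For the limit $T\to\infty$, I would use that $P_s$ is continuous on the compact interval $[0,T_{\max}]$ and hence bounded. Under the saddle-point stability analogue of (H1) assumed in Section~\ref{sec:game:inf:setup}, $\norm{EX^\circ(t_0+T)E^*}\to0$, so the terminal term vanishes and equality holds in the limit. Causality follows as in Theorem~\ref{th:OC:RDT}: the flow gains depend only on $\tau$, and the jump gain at $t_k$ depends only on the already observed $\Delta_{k-1}$.
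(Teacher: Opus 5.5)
Apart from your sandwich step, your argument is the paper's proof. The paper uses the same timer-based dual, evaluates the key identity with zero terminal cost along the saddle pair (where $W=0$ kills every flow and jump term), gets the inequality from $P_s\succeq0$, and removes the boundary term via saddle stability. The equality in \eqref{eq:RDT:game:cost} is therefore asserted only for the cost of $X^\circ$.

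Your attempt to certify that cost as the finite-$T$ min--max value does not work. The sandwich of Theorem~\ref{th:game:suff} closes only because there $Z_T=P(t_0+T)$, so $\ip{Z_T-P(t_0+T)}{EX(t_0+T)E^*}$ vanishes along \emph{every} trajectory. The sub-game in which your two bounds meet has terminal weight $+P(t_0+T)$, not $-P(t_0+T)$, and its value is $\ip{P_s(0)}{X_1^0}$ with no terminal correction. With zero terminal cost, the residual $-\ip{P(t_0+T)}{EX(t_0+T)E^*}$ is evaluated along the \emph{deviating} trajectory: knowing the timer fixes the matrix, not $EX(t_0+T)E^*$.

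Concretely, with the minimizer at $K_u^s$, a maximizer response only yields $J_T\le\ip{P_s(0)}{X_1^0}-\ip{P(t_0+T)}{EX(t_0+T)E^*}$ along that response, not along $X^\circ$. The maximizer can trade the nonpositive integrand $\operatorname{tr}(W_w^*\mathcal{I}_{ww}W_w)$ for a smaller terminal penalty, so $K_w^s$ is not its best reply in the zero-terminal game; the same holds symmetrically for the minimizer. Indeed, if the I-HJI-GRE has a solution $P_T\succeq0$ with $P_T(t_0+T)=0$, Theorem~\ref{th:game:suff} gives the zero-terminal value as $\ip{P_T(t_0)}{X_1^0}$, which in general differs from the saddle-pair cost.

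What holds at finite $T$ is only the guarantee $\min\max J_T\le\ip{P_s(0)}{X_1^0}$, from the minimizer playing $K_u^s$ together with $P_s\succeq0$. The two bounds can meet at best as $T\to\infty$, and only if the boundary term vanishes along every minimizer deviation against $K_w^s$. So drop the sandwich and read $J_T^\circ$ in \eqref{eq:RDT:game:cost} as the saddle-pair cost, as the paper does.

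Separately, compatibility \eqref{eq:game:GRE:fC}, \eqref{eq:game:GRE:jC} does not follow from the range conditions in \eqref{eq:Isaacs:gen}. Those conditions only locate $\mathcal{I}_{uw}$ relative to the diagonal blocks of $\mathcal{G}$, not $\mathcal{I}_{1g}^*$ relative to $\operatorname{Im}\mathcal{G}$. For instance, take $\mathcal{I}_{uu}\succ0$, $\mathcal{I}_{uw}=0$, $\mathcal{I}_{ww}=0$ and $\mathcal{I}_{1w}\neq0$. This satisfies Isaacs' condition and allows $\mathcal{I}\,\schur{/}\,\mathcal{G}=0$, yet for any $U_1$ with $\mathcal{I}_{1w}^*U_1\neq0$ the quantity $\ip{\mathcal{I}}{UU^*}$ is linear in $U_w$ and unbounded above. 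Then \eqref{eq:game:Idecomp} and the saddle gains are unavailable. Compatibility must therefore be part of the hypothesis: read (i)--(ii) as the full forward I-HJI-GRE of Definition~\ref{def:game:GRE}.
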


\begin{proof}
For any $\Delta_k\in[T_{\min},T_{\max}]$: condition~(i) gives
$\mathcal{I}\,\schur{/}\,\mathcal{G}=0$ on
$(0,T_{\max})$, so the forward HJI-ODE equality holds everywhere in
$[0,\Delta_k]$; condition~(ii) gives the jump Schur equality at
$\tau=\Delta_k$.  Along the saddle-point trajectory $X^\circ$
(both players in $\mathcal{K}_u^s\times\mathcal{K}_w^s$), the saddle
decomposition~\eqref{pf:game:decomp} makes every flow and jump term
vanish, and the game key identity on $[t_0,t_0+T]$ gives
\[
  J_T^\circ=\ip{P_s(0)}{X_1^0}
  -\ip{P_s(T)}{EX^\circ(T)E^*}
  +\underbrace{\int_0^T\ip{\mathcal{I}}{X^\circ}}\dt_{=0}
  +\underbrace{\textstyle\sum_k\ip{\mathcal{I}^\mathrm{j}}{X^\circ(t_k^-)}}_{=0},
\]
giving~\eqref{eq:RDT:game:cost}.  Under the saddle-point stability
assumption, $\ip{P_s(T)}{EX^\circ(T)E^*}\to0$ as $T\to\infty$, so
the bound is sharp.  No Geromel--Colaneri extension is needed.
\end{proof}

\subsubsection{Continuous- and discrete-time corollaries (infinite horizon)}

\begin{corollary}[Continuous-time game: infinite horizon]\label{cor:CT:game:inf}
  With $N_T=0$, Theorem~\ref{th:game:inf} gives the unique bounded
  $C^1$ solution $P_\infty$ of the continuous-time infinite-horizon HJI equation,
  recovering the stochastic $H_\infty$ algebraic Riccati
  equation~\cite{AitRami:00}.
\end{corollary}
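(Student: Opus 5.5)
The corollary specializes Theorem~\ref{th:game:inf} (existence, monotone limit), Theorem~\ref{th:game:inf:suff} (value and uniqueness) and Theorem~\ref{th:game:inf:LMI} to the case $N_T=0$. The plan has three steps.

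\emph{Step 1: removing the jumps.} With $N_T=0$ there are no jump instants, so the jump conditions~\eqref{eq:game:GRE:jump} are vacuous. The key identity~\eqref{pf:game:id} then reduces to its flow part. Hence $\kappa(t,s)\equiv0$, and the game analogue of (H1) reduces to uniform exponential stability $\beta e^{-\alpha(t-s)}$ of the saddle-point closed loop. The infinite-horizon I-HJI-GRE becomes the CT HJI-GRE: Isaacs' condition~\eqref{eq:Isaacs:gen} for $\mathcal{I}$ together with $\mathcal{I}\,\schur{/}\,\mathcal{G}=0$ for all $t\ge t_0$.

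\emph{Step 2: invoking the infinite-horizon theorems.} Theorem~\ref{th:game:inf} then gives a bounded $P_\infty\succeq0$ as the non-decreasing limit of the $P_T$. Theorem~\ref{th:game:inf:suff} gives uniqueness among bounded solutions and the value $\ip{P_\infty(t_0)}{X_1^0}$.

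\emph{Step 3: upgrading to $C^1$ regularity.} Piecewise-$C^1$ regularity improves to global $C^1$, because the only breakpoints allowed by Definition~\ref{def:game:GRE} are the jump times, and there are none. On $[t_0,\infty)$, $E^*\dot P_\infty E$ is determined by~\eqref{eq:forwardHJI} as a continuous function of $(t,P_\infty(t))$, with the pseudoinverse bounded by (H3) and Isaacs' condition. So $\dot P_\infty$ is continuous.

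\emph{Step 4: recovering the algebraic Riccati equation.} In the LTI stochastic case, with $\mathcal{F}^*(P)$ built from $A^*P+PA+\sum_i C_i^*PC_i$, take constant $\mathcal{F}$ and $Z$. Time-shift invariance gives $\hat P(t):=P_\infty(t+s)$, which is again a bounded solution, so uniqueness forces $P_\infty$ to be constant. This is the same shifting argument as in Theorem~\ref{th:OC:periodic}(c), applied with every $s>0$. Then $\dot P_\infty=0$, and $\mathcal{I}\,\schur{/}\,\mathcal{G}=0$ becomes the algebraic equation
\[
\mathcal{F}^*(P)_{11}+Q-\mathcal{I}_{1g}\mathcal{G}^{-1}\mathcal{I}_{1g}^*=0 .
\]
In the separated case $\mathcal{I}_{uw}=0$ this splits into the control term minus the disturbance term, which is the form of~\cite{AitRami:00}.

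\emph{Main obstacle.} The hardest step is verifying that the constant limit inherits Isaacs' inertia strictly ($\mathcal{I}_{uu}\succ0$, $\mathcal{I}_{ww}\prec0$), so that the pseudoinverse is a true inverse. Nonstrict inertia would not suffice to identify the stabilizing solution. I would handle this with the uniform Isaacs assumption plus continuity of the limit, taking $R_u\succ0$ and $R_w\succ0$ in the constant data. Stabilizability of the resulting closed loop then follows from (H1).
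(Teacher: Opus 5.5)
Your Steps 1--2 are exactly the specialization the paper intends. The paper gives no argument beyond setting $N_T=0$ in Theorem~\ref{th:game:inf}, with uniqueness from Theorem~\ref{th:game:inf:suff}. Your Steps 3--4 (the $C^1$ upgrade and the LTI reduction) add detail the paper omits. They are sound, with one caveat: Step~3 tacitly needs $Z(\cdot)$ to be continuous in $t$, since a jump in $Z$ produces a kink in $P_\infty$.

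The genuine error is in your ``main obstacle'' paragraph.
\begin{itemize}
\item \textbf{Continuity cannot give strictness.} The Isaacs condition~\eqref{eq:Isaacs:gen} is nonstrict, so along the $P_T$ you only have $\mathcal{I}_{ww}\preceq0$. Even strict inequalities for each $T$ would pass to the limit only as $\mathcal{I}_{ww}(P_\infty)\preceq0$, unless you assume a uniform margin $\mathcal{I}_{ww}\preceq-\varepsilon I$.
\item \textbf{The two blocks behave differently.} For It\^o-type data, with the diffusion entering through $[C_i\;D_{u,i}\;D_{w,i}]$, one has $\mathcal{I}_{uu}=R_u+\sum_iD_{u,i}^*P_\infty D_{u,i}\succeq R_u\succ0$. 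This follows from $P_\infty\succeq0$, not from continuity.
\item \textbf{The disturbance block needs an extra hypothesis.} Here $\mathcal{I}_{ww}=-R_w+\sum_iD_{w,i}^*P_\infty D_{w,i}$, and $P_\infty\succeq0$ pushes this toward singularity. So $\mathcal{I}_{ww}\prec0$, i.e.\ the usual requirement $R_w\succ\sum_iD_{w,i}^*P_\infty D_{w,i}$ on the stochastic $H_\infty$ solution, must be assumed. It does not follow from $R_w\succ0$.
\item \textbf{Strictness is not needed anyway.} Stability of the saddle closed loop is assumed through the game version of (H1), and uniqueness among bounded solutions comes from Theorem~\ref{th:game:inf:suff}. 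So your claim that nonstrict inertia cannot identify the stabilizing solution is mistaken in this framework.
\end{itemize}
The fix is to write the algebraic equation with $\pinv{\mathcal{G}}$, i.e.\ the generalized ARE together with its range condition. It coincides with your $\mathcal{G}^{-1}$ form exactly when $\mathcal{G}$ is nonsingular.

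Finally, constancy of $P_\infty$ in the LTI case is available more cheaply than through the infinite-horizon shift argument. By finite-horizon uniqueness, $P_T(t)=\Pi(t_0+T-t)$ depends only on the time-to-go. Hence $P_\infty(t)=\lim_{\sigma\to\infty}\Pi(\sigma)$ is independent of $t$, with no appeal to Theorem~\ref{th:game:inf:suff}.
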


\begin{corollary}[Discrete-time game: infinite horizon]\label{cor:DT:game:inf}
  With $\mathcal{F}=0$, $Z=0$ on flow, and $t_k=k_0+k$, the
  infinite-horizon game value is
  $J_\infty^\circ(X_1^0)=\ip{P_\infty(k_0)}{X_1^0}$, where $P_\infty$ is
  the unique bounded positive solution of the \emph{algebraic}
  discrete-time HJI-GRE~\eqref{eq:DT:game:GRE} (without terminal
  condition), obtained as the monotone non-decreasing limit of the
  $N$-step solutions, and the discrete-time saddle-point controllers are
  given by~\eqref{eq:DT:game:K} evaluated at $P_\infty$.
\end{corollary}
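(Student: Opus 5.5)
This corollary is a direct specialisation of the infinite-horizon game machinery (Theorems~\ref{th:game:inf:suff} and~\ref{th:game:inf}) to the discrete-time embedding of Definition~\ref{def:DT:sys}, exactly as Corollary~\ref{cor:DT:LQ:inf} specialises Theorem~\ref{th:IMP:LQ:inf}. The plan is to set $\mathcal{F}=0$, $Z=0$ on flow, $t_k=k_0+k$, and take $P$ constant on each open interval $(k,k+1)$, so that $\mathcal{I}(t,P(t))=E^*\dot P E=0$ there. The flow part of the I-HJI-GRE then reads $0\,\schur{/}\,0=0$ with Isaacs' condition holding trivially, and is vacuous. The jump operator $\mathcal{I}^\mathrm{j}(k,P(t_k^+),P(t_k^-))$ coincides with $\mathcal{I}_\mathrm{d}(k,P(k+1),P(k))$ of~\eqref{eq:DT:game:Id}, once we identify $P(k):=P(t_k^-)$ and $P(k+1):=P(t_k^+)$, which equals the value on $(k,k+1)$.

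\textbf{Step 1 (existence and monotone limit).} Under the discrete-time analogues of (H1)--(H3) for the saddle-point closed loop and uniform Isaacs' condition, Theorem~\ref{th:game:inf} yields a unique bounded piecewise-$C^1$ (here piecewise-constant) $P_\infty\succeq0$ solving the infinite-horizon I-HJI-GRE. Its limit is $P_\infty=\lim_N P_N$, where $P_N$ are the $N$-step solutions of~\eqref{eq:DT:game:GRE} with $P_N(k_0+N)=0$. The nesting and nonnegativity argument in that proof carries over verbatim: nonnegativity uses $Z_c\succeq0$ with $X_w=0$, and monotonicity in the terminal weight is preserved by $\min\max$. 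It shows $P_N(k)$ is non-decreasing in $N$ and bounded by $\alpha_2 I$, so the limit exists pointwise in $k$. In discrete time no equicontinuity or Arzel\`a--Ascoli is needed: $k$ ranges over a countable set, and pointwise convergence suffices.

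\textbf{Step 2 (limit solves the algebraic DT-HJI-GRE).} We pass to the limit $N\to\infty$ in $\mathcal{I}_\mathrm{d}(k,P_N(k+1),P_N(k))\,\schur{/}\,\mathcal{G}_\mathrm{d}=0$. Since the map $P\mapsto\mathcal{I}_\mathrm{d}$ is affine, $\mathcal{I}_\mathrm{d}$ converges, and the pseudoinverse of $\mathcal{G}_\mathrm{d}$ is continuous provided its rank is constant along the sequence. This rank stability is the step I expect to be the main obstacle. I would handle it by invoking the uniform Isaacs condition together with (H3)'s uniform boundedness of pseudoinverses on bounded sets, which prevents rank drop in the limit. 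In the regular case $\mathcal{I}_{uu}\succ0$, $\mathcal{I}_{ww}\prec0$, $\mathcal{G}_\mathrm{d}$ is uniformly invertible and the issue disappears. The inertia inequalities and range conditions in~\eqref{eq:Isaacs:gen} are closed, so they pass to the limit. "Algebraic" means there is no terminal condition; in the LTI case shift-invariance plus uniqueness forces $P_\infty$ to be constant, by the shift argument of Theorem~\ref{th:OC:periodic}(c).

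\textbf{Step 3 (value and controllers).} Theorem~\ref{th:game:inf:suff} applied to this $P_\infty$ gives $J_\infty^\circ(X_1^0)=\ip{P_\infty(k_0)}{X_1^0}$, attained at the saddle gains from~\eqref{eq:game:Ks:jump}. Under the identification above, these are precisely~\eqref{eq:DT:game:K} evaluated at $(P_\infty(k+1),P_\infty(k))$. Uniqueness among bounded positive solutions follows from part~(b) of the same theorem.
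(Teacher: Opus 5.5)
Your proposal is correct and takes the same route as the paper, which gives no separate argument for this corollary: it is Theorems~\ref{th:game:inf} and~\ref{th:game:inf:suff} read off in the discrete-time embedding, where the flow HJI conditions are vacuous and $\mathcal{I}^\mathrm{j}$ reduces to $\mathcal{I}_\mathrm{d}$ (your remark that Arzel\`a--Ascoli is unnecessary over a countable index set is a fair simplification).

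One correction in Step~2: the range conditions in~\eqref{eq:Isaacs:gen} are not closed under limits. For example, scalar blocks $\mathcal{I}_{uu}=\mathcal{I}_{uw}=1$, $\mathcal{I}_{ww}=-1/N$ satisfy them for every $N$ but not in the limit, even though $\mathcal{G}$ stays uniformly invertible, so your rank argument on $\pinv{\mathcal{G}}$ does not catch this. Hence at $P_\infty$ you must invoke the assumed uniform Isaacs condition (or a uniform bound such as $\mathcal{I}_{ww}\preceq-\epsilon I$) rather than closedness; the sign conditions $\mathcal{I}_{uu}\succeq0$, $\mathcal{I}_{ww}\preceq0$ do pass to the limit, and by themselves they imply the two Schur-type inequalities.
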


\subsection{Connection to \texorpdfstring{$H_\infty$}{H-infinity} and full-information games}
\label{sec:game:hinf}

  The standard $H_\infty$ (bounded $L_2$-gain) condition corresponds
  to the cost block $Z_{ww}=-\gamma^2 I$ (purely diagonal in the
  maximizing block), giving
  $\mathcal{I}_{ww}(t,P(t))=-\gamma^2 I+E_w\mathcal{F}^*(t,P(t))E_w^*$
  (from the flow adjoint, since $E_wE^*=0$).
  Isaacs' condition $\mathcal{I}_{ww}\preceq0$ then reads
  $\gamma^2 I\succeq E_w\mathcal{F}^*(t,P(t))E_w^*$,
  which is precisely a norm bound on the maximizing-block component
  of $\mathcal{F}^*$.
  With $Z_{uw}=0$ (no cross coupling in $Z$), one has
  $\mathcal{I}_{uw}=0$ so $\mathcal{G}=\mathrm{diag}(
  \mathcal{I}_{uu},\mathcal{I}_{ww})$ and the separation condition of
  Remark~\ref{rem:separation} holds: the optimal minimizer depends
  only on $X_1$.
  The I-HJI-GRE~\eqref{eq:game:GRE:flow} then recovers the
  $H_\infty$ Riccati equation of~\cite{AitRami:00}.\\

  When the maximizer's blocks $X_{33}$ and $X_{23}$ are
  \emph{measured} and available to the minimizer, the problem becomes
  a \emph{full-information game}: the minimizer's decision variable
  is the full triple $(X_{12},X_{22},X_{23})$, constrained only by
  $X\succeq0$.  This case admits the same I-HJI-GRE conditions, and
  the saddle-point gains $(K_u^s,K_w^s)$ from~\eqref{eq:game:Ks}
  can be implemented directly from the current matrix $X$.  The
  separation condition $\mathcal{I}_{uw}=0$ is no longer required,
  since the minimizer has direct access to the maximizing blocks.

\subsection{Long-term average game value}
\label{sec:game:AC}

We now develop the zero-sum game counterpart of
Section~\ref{sec:OC:AC}.  The setup parallels that subsection: the
dynamics are augmented by a Hermitian forcing on the state block as
in Definition~\ref{def:IMP:forced}, and the average-cost rate
replaces the integrated saddle-point value as the object of interest.
The minimax--maximin structure of Section~\ref{sec:game} is retained,
so that the long-term average game value is again given by a linear
pairing of the HJI-GRE solution with the forcing data, under Isaacs'
condition.

\subsubsection{Setting and assumptions}
\label{sec:game:AC:setup}

The forced impulsive system is that of Definition~\ref{def:IMP:forced}.
The augmented decision variable $X\in\Snmpsd$ has the three-block
partition~\eqref{eq:game:Xpart} with the minimizer controlling
$(X_{12},X_{22})$ and the maximizer controlling
$(X_{13},X_{23},X_{33})$.  The cost is the integrated linear cost of
Section~\ref{sec:game:form} without terminal penalty:
\begin{equation}\label{eq:game:AC:JT}
  J_T(X,X_1^0)
  :=\int_{t_0}^{t_0+T}\ip{Z(t)}{X(t)}\dt
   +\sum_{k:\,t_k\le t_0+T}\ip{Z_k}{X(t_k^-)},
\end{equation}
where $Z=\bigl[\begin{smallmatrix} Q&S_u&S_w\\ S_u^*&R_u&R_{uw}\\
S_w^*&R_{uw}^*&-R_w\end{smallmatrix}\bigr]$ has the game block
structure of Section~\ref{sec:game:form} with $R_u\succ0$,
$R_w\succ0$, $Q\succeq0$.

\begin{definition}[Long-term average game value]\label{def:game:AC}
The \emph{long-term average game value} is the saddle-point rate
\begin{equation}\label{eq:game:AC:rho}
  \rho^\circ(X_1^0)
  :=\limsup_{T\to\infty}\frac{1}{T}\!\left[\min_{X_{12},X_{22}}\,\max_{X_{13},X_{23},X_{33}}\,J_T(X,X_1^0)\right]
\end{equation}
under the dynamics~\eqref{eq:sys:IMP:forced} and $EX(t_0)E^*=X_1^0$.
\end{definition}

The game-theoretic counterparts of (A1)--(A3) of
Section~\ref{sec:OC:AC:setup} are:

\medskip\noindent\textbf{(A1$^\circ$)} \emph{Uniform stabilizability
of the saddle-point homogeneous closed-loop:}
under the saddle-point feedback of the homogeneous game (i.e., with
$\mathcal{W}=0,\mathcal{W}_k=0$), the closed-loop second-moment
satisfies $\norm{EX^\circ(t)E^*}\le\beta e^{-\alpha(t-s)}\norm{EX^\circ(s)E^*}$.

\medskip\noindent\textbf{(A2$^\circ$)} \emph{Uniform coercivity:}
$EZ(t)E^*\succeq\delta I$ uniformly for some $\delta>0$.

\medskip\noindent\textbf{(A3$^\circ$)} \emph{Uniform boundedness and
Isaacs' condition.}
$Z,Z_k,\mathcal{F},\mathcal{J}$ uniformly bounded;
$\pinv{\mathcal{G}},\pinv{(\mathcal{G}^\mathrm{j})}$
uniformly bounded on bounded sets of $P$; Isaacs' condition
(Definition~\ref{def:Isaacs}) holds uniformly in $t$; $\mathcal{W}(t),
\mathcal{W}_k$ uniformly bounded; inter-jump times bounded above and
below.

\medskip
Under (A1$^\circ$)--(A3$^\circ$), the homogeneous infinite-horizon
I-HJI-GRE has a unique bounded piecewise-$C^1$ solution
$P_\infty\succeq0$ by Theorem~\ref{th:game:inf}.

\subsubsection{Forced HJI key identity and finite-horizon decomposition}
\label{sec:game:AC:fh}

Since the HJI flow operator~\eqref{eq:game:I} is structurally identical
to the composite flow operator~\eqref{eq:IMP:L} with the
game-structured cost $Z$, and similarly for the jump
operator~\eqref{eq:game:Ij}, the forced key identity of
Proposition~\ref{prop:keyid:forced} applies verbatim with
$\mathcal{C}_Z\leftarrow\mathcal{I}$ and
$\mathcal{D}_Z\leftarrow\mathcal{I}^\mathrm{j}$:
\begin{align}\label{eq:game:AC:keyid}
  J_T(X,X_1^0)
  &=\ip{P(t_0)}{X_1^0}-\ip{P(t_0+T)}{EX(t_0+T)E^*}\notag\\
  &\quad
   +\int_{t_0}^{t_0+T}\ip{\mathcal{I}(t,P(t))}{X(t)}\dt
   +\sum_{k:\,t_k\le t_0+T}\ip{\mathcal{I}^\mathrm{j}}{X(t_k^-)}\notag\\
  &\quad
   +\int_{t_0}^{t_0+T}\ip{P(t)}{\mathcal{W}(t)}\dt
   +\sum_{k:\,t_k\le t_0+T}\ip{P(t_k^+)}{\mathcal{W}_k}.
\end{align}
This is the only ingredient needed for the average-cost theory.

\begin{theorem}[Finite-horizon average game value decomposition]
  \label{th:game:AC:finite}
Let $T>0$ and let $P_T$ be the piecewise-$C^1$ solution of the
finite-horizon I-HJI-GRE (Definition~\ref{def:game:GRE}) with
$P_T(t_0+T)=0$, assumed to exist under Isaacs' condition.  Then
the finite-horizon saddle value $J_T^\circ(X_1^0)$ satisfies
\begin{equation}\label{eq:game:AC:finite:J}
  J_T^\circ(X_1^0)
  =\ip{P_T(t_0)}{X_1^0}
  +\int_{t_0}^{t_0+T}\ip{P_T(t)}{\mathcal{W}(t)}\dt
  +\!\!\sum_{k:\,t_k\le t_0+T}\!\!\ip{P_T(t_k^+)}{\mathcal{W}_k},
\end{equation}
and is attained at the saddle-point policy $(K_u^s,K_w^s)$ from
$(\mathcal{K}_u^s\times\mathcal{K}_w^s)(t,P_T(t))$ on flow and the
analogous saddle gains at each jump.  The horizon-$T$ average game
value is $J_T^\circ/T$.
\end{theorem}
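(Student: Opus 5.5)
The plan is to follow the proof of Theorem~\ref{th:IMP:AC:finite}, replacing the minimization argument by the two-sided saddle argument of Theorem~\ref{th:game:suff}.

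Apply the forced HJI key identity~\eqref{eq:game:AC:keyid} with $P=P_T$ on $[t_0,t_0+T]$. The terminal condition $P_T(t_0+T)=0$ kills the boundary term. Denote the two forcing pairings by $R_T(P_T)$. They depend only on $P_T$ and the data $\mathcal{W},\mathcal{W}_k$, not on the free blocks of either player. So for every admissible $X$,
\[
J_T(X,X_1^0)=\ip{P_T(t_0)}{X_1^0}+R_T(P_T)+\int_{t_0}^{t_0+T}\ip{\mathcal{I}(t,P_T(t))}{X(t)}\dt+\sum_k\ip{\mathcal{I}^\mathrm{j}}{X(t_k^-)} .
\]
The problem therefore reduces to showing that the min--max of the last two terms is $0$. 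This is exactly the homogeneous saddle analysis, because the forcing enters $J_T$ additively.

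To do this, write $X=UU^*$ pointwise and use the saddle decomposition~\eqref{pf:game:decomp}. It is legitimate here because the I-HJI-GRE supplies the range condition~\eqref{eq:game:GRE:fC} and $\mathcal{I}\,\schur{/}\,\mathcal{G}=0$, and likewise at jumps. Each integrand then becomes $\operatorname{tr}(W^*\mathcal{G}W)$.

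For the upper bound, let the minimizer play $K_u^s$, which forces $W_u=0$. For any maximizer response the integrand is $\operatorname{tr}(W_w^*\mathcal{I}_{ww}W_w)\le0$, since $\mathcal{I}_{ww}\preceq0$ by Isaacs' condition, and the jump terms behave the same way. Hence $\min\max J_T\le\ip{P_T(t_0)}{X_1^0}+R_T(P_T)$.

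For the lower bound, let the maximizer play $K_w^s$, which forces $W_w=0$. Every integrand is then $\operatorname{tr}(W_u^*\mathcal{I}_{uu}W_u)\ge0$, so $\max\min J_T\ge\ip{P_T(t_0)}{X_1^0}+R_T(P_T)$.

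Combining the two bounds with the weak-duality inequality $\min\max\ge\max\min$ gives equality, which is~\eqref{eq:game:AC:finite:J}. For attainment, the joint saddle gains make $W=0$ pointwise, so every $\mathcal{I}$ and $\mathcal{I}^\mathrm{j}$ term vanishes along $X^\circ$. The average statement follows by dividing by $T$.

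The main obstacle is admissibility under forcing. The saddle decomposition requires $X(t)\succeq0$, and the policies $X_u=K_u^sX_1$ (and the corresponding maximizer blocks) must yield $\Snmpsd$-valued trajectories of the forced dynamics~\eqref{eq:sys:IMP:forced}. I expect this to hold because the free blocks are set by gains acting on $X_1$, so $X=[I;K][\,\cdot\,]X_1[I;K]^*\succeq0$ whenever $X_1\succeq0$. Moreover, $X_1$ stays in $\Snpsd$ because $\mathcal{F},\mathcal{J}$ preserve the cone and $\mathcal{W},\mathcal{W}_k\succeq0$.

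A second point to check is the step ``for fixed $W_u=0$, the maximum over $W_w$ is attained at $W_w=0$''. This uses only $\mathcal{I}_{ww}\preceq0$ from~\eqref{eq:Isaacs:gen}, so the inner optimizations are bounded and the bounds hold for arbitrary opponent responses, not only feedback ones.
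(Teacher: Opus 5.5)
Your proposal is correct and follows the paper's proof. You apply the forced HJI key identity with $P=P_T$, use $P_T(t_0+T)=0$ to kill the boundary term, note that the forcing pairings do not depend on the free blocks, and invoke the saddle decomposition~\eqref{pf:game:decomp} under the I-HJI-GRE to show that the $\mathcal{I},\mathcal{I}^\mathrm{j}$ contributions have saddle value $0$, attained at $(K_u^s,K_w^s)$. The only difference is that you write out the one-sided bounds (minimizer forcing $W_u=0$, maximizer forcing $W_w=0$, then weak duality) that the paper imports implicitly from the proof of Theorem~\ref{th:game:suff}.
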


\begin{proof}
Apply~\eqref{eq:game:AC:keyid} with $P=P_T$ and $P_T(t_0+T)=0$, so
the second boundary term vanishes.  The I-HJI-GRE gives Isaacs'
condition and $\mathcal{I}\,\schur{/}\,\mathcal{G}=0$,
$\mathcal{I}^\mathrm{j}\,\schur{/}\,\mathcal{G}^\mathrm{j}=0$
(Definition~\ref{def:game:GRE}), so by the saddle
decomposition~\eqref{pf:game:decomp} (proof of Theorem~\ref{th:game:suff})
the saddle of $\ip{\mathcal{I}(t,P_T(t))}{X(t)}$ in the free blocks is
$0$, attained at the saddle policy, and likewise for
$\mathcal{I}^\mathrm{j}$.
Since the last two terms in~\eqref{eq:game:AC:keyid} are independent
of the free blocks $(X_{12},X_{22},X_{13},X_{23},X_{33})$, the minimax
saddle is attained at the same saddle policy as in the homogeneous
game, and the resulting saddle value is~\eqref{eq:game:AC:finite:J}.
\end{proof}

\subsubsection{Infinite-horizon: sufficient condition}
\label{sec:game:AC:inf:suff}

\begin{theorem}[Sufficient condition, infinite-horizon average game value]
  \label{th:game:AC:suff}
Suppose there exists a bounded piecewise-$C^1$ function
$P_\infty:[t_0,\infty)\to\Snpsd$ satisfying the infinite-horizon
I-HJI-GRE and Isaacs' condition uniformly, and define
\begin{equation}\label{eq:game:AC:rhostar}
  \rho^\circ
  :=\limsup_{T\to\infty}\frac{1}{T}\!\left[
    \int_{t_0}^{t_0+T}\ip{P_\infty(t)}{\mathcal{W}(t)}\dt
    +\!\!\sum_{k:\,t_k\le t_0+T}\!\!\ip{P_\infty(t_k^+)}{\mathcal{W}_k}
  \right].
\end{equation}
Then\textup{:}
\begin{enumerate}[\upshape(a)]
  \item $\rho^\circ(X_1^0)=\rho^\circ$ for every $X_1^0$;
  \item the saddle is attained at the saddle-point policy
    $(K_u^s,K_w^s)$ from $(\mathcal{K}_u^s\times\mathcal{K}_w^s)
    (t,P_\infty(t))$ on flow and the analogous gains at each jump;
  \item $P_\infty$ is the unique bounded solution of the
    infinite-horizon I-HJI-GRE.
\end{enumerate}
\end{theorem}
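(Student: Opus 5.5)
The plan follows the template of Theorem~\ref{th:IMP:AC:suff}, with the one-sided Schur decomposition replaced by the saddle decomposition~\eqref{pf:game:decomp}. Throughout I use the forced HJI key identity~\eqref{eq:game:AC:keyid} with $P=P_\infty$ on $[t_0,t_0+T]$. I write
\[
  R_T(P_\infty):=\int_{t_0}^{t_0+T}\ip{P_\infty(t)}{\mathcal{W}(t)}\dt+\sum_{k:\,t_k\le t_0+T}\ip{P_\infty(t_k^+)}{\mathcal{W}_k}
\]
for the forcing pairing. The key structural fact, as in Theorem~\ref{th:game:AC:finite}, is that $R_T(P_\infty)$ is independent of all free blocks. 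The saddle structure therefore acts only on the $\mathcal{I}$ and $\mathcal{I}^\mathrm{j}$ terms.

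\textbf{Upper bound on the minimax rate.} Let the minimizer play $K_u^s(t,P_\infty(t))$ and the corresponding jump gain, so that $W_u=0$. By~\eqref{pf:game:decomp} and $\mathcal{I}\,\schur{/}\,\mathcal{G}=0$, every flow integrand equals $\operatorname{tr}(W_w^*\mathcal{I}_{ww}W_w)\le0$, and the same holds at each jump. Hence, for every maximizer response,
\[
  J_T\le\ip{P_\infty(t_0)}{X_1^0}-\ip{P_\infty(t_0+T)}{EX(t_0+T)E^*}+R_T(P_\infty).
\]
The boundary term is $\ge0$ because $P_\infty\succeq0$, so it can be dropped. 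Dividing by $T$ and taking the limsup gives $\rho^\circ(X_1^0)\le\rho^\circ$.

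\textbf{Lower bound on the maximin rate.} Symmetrically, let the maximizer play $K_w^s$, so that $W_w=0$. Then every integrand is $\operatorname{tr}(W_u^*\mathcal{I}_{uu}W_u)\ge0$, which gives
\[
  J_T\ge\ip{P_\infty(t_0)}{X_1^0}-\ip{P_\infty(t_0+T)}{EX(t_0+T)E^*}+R_T(P_\infty).
\]
Here the boundary term enters with the unfavourable sign, and I must show it is $o(T)$. I would first restrict the minimizer to responses of finite average cost; trajectories with infinite average cost cannot beat the bound anyway. For these, coercivity~(A2$^\circ$) gives $\delta\int\norm{EXE^*}\le J_T$. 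Combining this with the upper bound just derived, $\frac1T\int_{t_0}^{t_0+T}\norm{EXE^*}$ is bounded. That is not quite pointwise $o(T)$, so I would use a Gronwall-type growth bound from (A3$^\circ$), with the dynamics uniformly bounded, to control $\norm{EX(t_0+T)E^*}$ by its local average. Alternatively, along the saddle trajectory I would use the Lemma~\ref{lem:IMP:AC:bdd}-type bound furnished by (A1$^\circ$). Either route gives $\ip{P_\infty(t_0+T)}{EX(t_0+T)E^*}/T\to0$ along a subsequence sufficient for the limsup. Since minimax $\ge$ maximin, parts (a) and (b) follow. At the full saddle policy we have $W=0$, so the identity holds with equality and yields attainment. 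The initial term vanishes after dividing by $T$.

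\textbf{Uniqueness and the main obstacle.} Part (c) follows directly from Theorem~\ref{th:game:inf:suff}(b): any other bounded solution coincides with $P_\infty$, because both represent the homogeneous value $J_\infty^\circ$. The main obstacle is the boundary term in the lower bound. Stability in (A1$^\circ$) is assumed only for the saddle closed loop, whereas an arbitrary minimizer deviation could in principle let $EXE^*$ grow. I would try the coercivity-plus-Gronwall argument first, and fall back on restricting to minimizer policies with bounded state. The latter is justified because the saddle policy already attains the value and any unbounded-state policy incurs average cost at least comparable to $\delta\norm{EXE^*}$.
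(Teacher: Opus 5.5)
\textbf{Verdict: genuine gap.} The lower bound on the game value does not go through as you set it up, so parts (a) and (b) remain unproved. Your upper bound and part (c) are fine.

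Your upper bound is correct and needs no stability. With the minimizer committed to $K_u^s(\cdot,P_\infty)$, every flow and jump term is $\operatorname{tr}(W_w^*\mathcal{I}_{ww}W_w)\le0$, and the boundary term drops because $P_\infty\succeq0$. Uniqueness via Theorem~\ref{th:game:inf:suff} is also what the paper does.

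The gap in the lower bound is not just a missing estimate. Committing the maximizer to the infinite-horizon feedback $K_w^s(\cdot,P_\infty)$ on every horizon fails in general: the minimizer's best response can make $\ip{P_\infty(t_0+T)}{EX(t_0+T)E^*}$ grow exponentially while $J_T\to-\infty$. Your coercivity step, and your justification for restricting to bounded-state minimizers, both rest on $\ip{Z}{X}\ge\delta\norm{EXE^*}$. That inequality is false in the game. $Z$ carries $-R_w$, and with $U_w=K_w^sU_1$ the effective state weight is $Q+\He{S_wK_w^s}-K_w^{s*}R_wK_w^s$, which can be negative even though $EZE^*\succeq\delta I$.

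Here is a concrete counterexample. Take the second-moment version of $\dot x=u+w$, i.e.\ $n=m_u=m_w=1$ and $\mathcal{F}(X)=X_{12}+X_{12}^*+X_{13}+X_{13}^*$, with $Z=\mathrm{diag}(q,1,-\gamma^2)$ and $1<\gamma^2<2$.
\begin{itemize}
\item The game Riccati solution is $P_\infty=p$ with $p^2(1-\gamma^{-2})=q$, and the saddle gains are $K_u^s=-p$, $K_w^s=p/\gamma^2$.
\item Isaacs' condition holds and the saddle loop is exponentially stable.
\item Yet the response $X_{12}=X_{22}=0$ gives $\dot X_1=(2p/\gamma^2)X_1$ with running cost $q\tfrac{\gamma^2-2}{\gamma^2-1}X_1<0$.
\end{itemize}
So your chain $\rho^\circ(X_1^0)\ge\limsup_T T^{-1}\min_{X_u}J_T(\cdot,K_w^s)$ yields only $-\infty$.

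Your other routes fail too:
\begin{itemize}
\item The Gronwall route does not work. $X_{12},X_{22}$ are unbounded decision variables, so a bounded $\mathcal{F}$ does not bound the growth of $X_1$ in terms of $X_1$.
\item (A1$^\circ$) controls only the saddle closed loop, not the minimizer's best response to $K_w^s$.
\item Restricting to bounded-state minimizers would change the problem. $\rho^\circ$ takes the min-max horizon by horizon over all admissible trajectories.
\end{itemize}

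The missing idea is to let the maximizer play, on each horizon, the zero-terminal finite-horizon saddle gain $K_w^s(\cdot,P_T)$. Here $P_T$ is the I-HJI-GRE solution with $P_T(t_0+T)=0$, as used in Theorem~\ref{th:game:inf}. The forced key identity with $P=P_T$ then gives, for every minimizer response,
\[
  \min_{X_u}\max_{X_w}J_T\;\ge\;\max_{X_w}\min_{X_u}J_T\;\ge\;\ip{P_T(t_0)}{X_1^0}+R_T(P_T),
\]
with no boundary term at all, as in Theorem~\ref{th:game:AC:finite}. Since $0\preceq P_T\preceq P_\infty$, what remains is $T^{-1}\bigl(R_T(P_\infty)-R_T(P_T)\bigr)\to0$.

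That follows if $P_T(t)\to P_\infty(t)$ uniformly in $t$ as the remaining horizon $t_0+T-t\to\infty$, together with a bounded jump rate. The pointwise monotone convergence of Theorem~\ref{th:game:inf} does not by itself control this Ces\`aro average in the LTV case. This uniform convergence is where exponential stability genuinely enters.

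The paper's own proof does not supply this step either. It disposes of the boundary term using boundedness of $EX^\circ E^*$ along the saddle trajectory only. So you located the crux correctly, but the argument you sketch for it would not close.
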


\begin{proof}
The argument is identical to that of Theorem~\ref{th:IMP:AC:suff}
with $\mathcal{C}_Z\leftarrow\mathcal{I}$,
$\mathcal{D}_Z\leftarrow\mathcal{I}^\mathrm{j}$, and the
$\sup_{X_w}\inf_{X_u}$ direction replacing $\inf_X$.  Specifically:

\noindent\textbf{Lower bound (for the minimizer).}
Fix any maximizer choice $X_w$.  Apply~\eqref{eq:game:AC:keyid} with
$P=P_\infty$.  By Isaacs' condition and the saddle-point
decomposition of Theorem~\ref{th:game:suff}, the
$\mathcal{I},\mathcal{I}^\mathrm{j}$ contributions are non-negative
when minimized over $X_u$ (with equality at the saddle).  Hence
\[
  \inf_{X_u}J_T(X,X_1^0)
  \ge\ip{P_\infty(t_0)}{X_1^0}-\ip{P_\infty(t_0+T)}{EX(t_0+T)E^*}+R_T(P_\infty),
\]
with $R_T(P):=\int\ip{P}{\mathcal{W}}\dt+\sum\ip{P(t_k^+)}{\mathcal{W}_k}$.
Taking the supremum over $X_w$ and dividing by $T$, the boundary
terms vanish in the limit (boundedness of $P_\infty$ and of
$EX^\circ E^*$ under the saddle-point closed-loop stability
(A1$^\circ$)), and $R_T(P_\infty)/T\to\rho^\circ$ by hypothesis.

\noindent\textbf{Attainment.}
At the saddle policy
$(K_u^s,K_w^s)\in(\mathcal{K}_u^s\times\mathcal{K}_w^s)(t,P_\infty(t))$,
the $\mathcal{I},\mathcal{I}^\mathrm{j}$ contributions vanish
identically, so~\eqref{eq:game:AC:keyid} gives
$J_T(X^\circ,X_1^0)=\ip{P_\infty(t_0)}{X_1^0}-\ip{P_\infty(t_0+T)}{EX^\circ(t_0+T)E^*}+R_T(P_\infty)$,
and the same limit computation gives the average game value
$\rho^\circ$.

\noindent\textbf{Uniqueness} follows from Theorem~\ref{th:game:inf:suff}.
\end{proof}

\subsubsection{Necessary condition and existence (infinite horizon)}
\label{sec:game:AC:inf:nec}

\begin{theorem}[Necessary condition and existence]\label{th:game:AC:nec}
Under (A1$^\circ$)--(A3$^\circ$) and Isaacs' condition, there exists
a unique bounded piecewise-$C^1$ solution $P_\infty\succeq0$ of the
infinite-horizon I-HJI-GRE, and
\begin{equation}\label{eq:game:AC:nec:val}
  \rho^\circ(X_1^0)
  \;=\;\limsup_{T\to\infty}\frac{1}{T}\!\left[
    \int_{t_0}^{t_0+T}\!\ip{P_\infty(t)}{\mathcal{W}(t)}\dt
    +\!\!\sum_{k:\,t_k\le t_0+T}\!\!\ip{P_\infty(t_k^+)}{\mathcal{W}_k}
  \right]
\end{equation}
independently of $X_1^0$. The right-hand side is finite whenever the
jump count grows at most linearly, i.e.,
$\limsup_{T\to\infty}T^{-1}\#\{k:t_k\le t_0+T\}<\infty$.
The saddle is attained at the trajectory $X^\circ$ with gains
$(K_u^s,K_w^s)\in(\mathcal{K}_u^s\times\mathcal{K}_w^s)(t,P_\infty(t))$
on flow and the analogous gains at each jump.
\end{theorem}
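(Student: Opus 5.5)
The plan is to follow the proof of Theorem~\ref{th:IMP:AC:nec} line by line. The operators $\mathcal{C}_Z,\mathcal{D}_Z$ are replaced by $\mathcal{I},\mathcal{I}^\mathrm{j}$, and the one-sided Schur decomposition is replaced by the saddle decomposition~\eqref{pf:game:decomp}. \textbf{Step 1 (existence).} The assumptions (A1$^\circ$)--(A3$^\circ$) restricted to the homogeneous game ($\mathcal{W}=0$, $\mathcal{W}_k=0$) are exactly the game analogues of (H1)--(H3) together with uniform Isaacs' condition. Theorem~\ref{th:game:inf} then provides a unique bounded solution $0\preceq P_\infty\preceq\alpha_2 I$ of the infinite-horizon I-HJI-GRE, obtained as the monotone limit of the $P_T$. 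No forcing enters this step, since the I-HJI-GRE does not see $\mathcal{W},\mathcal{W}_k$.

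\textbf{Step 2 (two-sided bound).} Apply the forced HJI key identity~\eqref{eq:game:AC:keyid} with $P=P_\infty$ on $[t_0,t_0+T]$, writing $R_T(P_\infty)$ for the forcing pairing.

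First fix the maximizer at $K_w^s$, so that $W_w=0$. Then~\eqref{pf:game:decomp} and $\mathcal{I}\,\schur{/}\,\mathcal{G}=0$ give flow integrand $\operatorname{tr}(W_u^*\mathcal{I}_{uu}W_u)\ge0$, and likewise at each jump. Hence for every minimizer response
\[
J_T\ge\ip{P_\infty(t_0)}{X_1^0}-\ip{P_\infty(t_0+T)}{EX(t_0+T)E^*}+R_T(P_\infty).
\]
Second, fix the minimizer at $K_u^s$, so that $W_u=0$. The integrands become $\operatorname{tr}(W_w^*\mathcal{I}_{ww}W_w)\le0$, and the reverse inequality holds for every maximizer response.

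Since the forcing terms do not depend on the free blocks, combining the two one-sided estimates with $\min\max\ge\max\min$ sandwiches the horizon-$T$ saddle value:
\[
\ip{P_\infty(t_0)}{X_1^0}+R_T(P_\infty)-B_T,
\]
where $B_T$ is a boundary term evaluated along the relevant closed loop. At the saddle pair $(K_u^s,K_w^s)$ both integrands vanish, which gives attainment.

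\textbf{Step 3 (boundary term, the main obstacle).} We must show that $\ip{P_\infty(t_0+T)}{EX(t_0+T)E^*}$ is $O(1)$ uniformly in $T$. In the optimal control case this came from Lemma~\ref{lem:IMP:AC:bdd}. Here one sign is harmless: the boundary term enters with a minus sign and is $\ge0$. Therefore in the minimizer's upper bound it can simply be dropped, giving $J_T\le\ip{P_\infty(t_0)}{X_1^0}+R_T$.

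In the lower bound we need boundedness of $EX(t_0+T)E^*$ along the trajectory where the maximizer plays $K_w^s$ and the minimizer plays its best response. I would restrict attention to the saddle closed loop, which is legitimate because the value is squeezed between the two one-sided bounds evaluated at saddle policies. I would then rerun the variation-of-constants estimate of Lemma~\ref{lem:IMP:AC:bdd} using (A1$^\circ$) and the bounded forcing of (A3$^\circ$). This yields $\norm{EX^\circ(t)E^*}\le\beta\norm{X_1^0}+\beta\alpha^{-1}\sup\norm{\mathcal{W}}+C\sup_k\norm{\mathcal{W}_k}$.

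For the jump-forcing sum, (A1$^\circ$) lacks a $\rho^\kappa$ factor. I would instead use the lower dwell bound in (A3$^\circ$) to get $\sum_k e^{-\alpha(t-t_k)}\le(1-e^{-\alpha\Delta_{\min}})^{-1}$. If controlling the best-response trajectory proves delicate, a fallback is available: first evaluate the lower bound at the saddle trajectory itself, which gives value equality directly, and appeal to Theorem~\ref{th:game:suff} on each finite horizon for the saddle property.

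\textbf{Step 4 (conclusion and finiteness).} Divide by $T$ and take $\limsup$. The terms $\ip{P_\infty(t_0)}{X_1^0}/T$ and $\alpha_2 C/T$ vanish, leaving $\rho^\circ(X_1^0)=\limsup_T R_T(P_\infty)/T$, which is independent of $X_1^0$. Finiteness follows from $|R_T|\le\alpha_2\bigl(T\sup\norm{\mathcal{W}}+\#\{k:t_k\le t_0+T\}\sup_k\norm{\mathcal{W}_k}\bigr)$, exactly as in Theorem~\ref{th:IMP:AC:nec}.
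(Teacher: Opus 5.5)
Your outline is the paper's proof: existence from Theorem~\ref{th:game:inf}, the forced HJI key identity~\eqref{eq:game:AC:keyid} with $P=P_\infty$, one-sided estimates from the saddle decomposition~\eqref{pf:game:decomp}, division by $T$, and the same finiteness bound. Two of your changes are improvements on it. First, you derive the upper bound from the minimizer's policy $K_u^s$ against \emph{every} maximizer response; the paper only says $X^\circ$ saturates the lower estimate, which gives the cost of one strategy pair, not a bound on $\min\max J_T$. Second, you correctly notice that (A1$^\circ$) has no $\rho^{\kappa}$ factor, so the jump-forcing series must be summed via the lower dwell bound in (A3$^\circ$); the paper invokes a geometric factor that (A1$^\circ$) does not provide.

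The gap is the lower bound, exactly where you flag it. For every minimizer response, $J_T(X_u,K_w^s)\ge\ip{P_\infty(t_0)}{X_1^0}+R_T(P_\infty)-\ip{P_\infty(t_0+T)}{EX(t_0+T)E^*}$. So a lower bound on $\min_{X_u}J_T(X_u,K_w^s)$ needs the boundary term to be $o(T)$ along the minimizer's horizon-$T$ best response to $K_w^s$. That trajectory is not the saddle closed loop. The cost in Definition~\ref{def:game:AC} has no terminal weight, while $P_\infty$ encodes the terminal weight $P_\infty(t_0+T)$. By~\eqref{eq:game:AC:keyid}, the minimizer therefore gains $\ip{P_\infty(t_0+T)}{EX(t_0+T)E^*}$ by inflating the terminal state and pays only $\int\operatorname{tr}(W_u^*\mathcal{I}_{uu}W_u)\dt$ plus the jump analogues. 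Neither (A1$^\circ$) nor your variation-of-constants bound says anything about such deviating trajectories.

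Your squeeze remark does not repair this. The value and the saddle-pair cost both lie in $\bigl[\min_{X_u}J_T(\cdot,K_w^s),\,\max_{X_w}J_T(K_u^s,\cdot)\bigr]$, and knowing that the saddle cost is within $O(1)$ of the right endpoint says nothing about the left one. The fallback fails for the same reason. Theorem~\ref{th:game:suff} (and its forced analogue) applied on $[t_0,t_0+T]$ with $P_\infty$ certifies a saddle of the game with terminal weight $Z_T=P_\infty(t_0+T)$, i.e.\ of $J_T+\ip{P_\infty(t_0+T)}{EX(t_0+T)E^*}$. Its value $\ip{P_\infty(t_0)}{X_1^0}+R_T(P_\infty)$ dominates $J_T^\circ$, so that is your upper bound again.

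The paper's proof does not close this step either. It asserts the bound of Lemma~\ref{lem:IMP:AC:bdd} ``along any admissible trajectory'', although the lemma only covers the stabilized closed loop. It also takes $\limsup_T$ for a fixed response before minimizing, which controls $\inf_{X_u}\limsup_T T^{-1}J_T$ rather than $\limsup_T T^{-1}\min_{X_u}J_T$ as Definition~\ref{def:game:AC} requires.

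One way to close it is to work with $P_T$ instead of $P_\infty$. Theorem~\ref{th:game:AC:finite} gives $J_T^\circ(X_1^0)=\ip{P_T(t_0)}{X_1^0}+R_T(P_T)$ exactly, and Theorem~\ref{th:game:inf} gives $0\preceq P_T\preceq P_\infty$. Since $\mathcal{W},\mathcal{W}_k\succeq0$, the upper bound is immediate, and the lower bound reduces to $T^{-1}\bigl(R_T(P_\infty)-R_T(P_T)\bigr)\to0$. That follows from a uniform convergence estimate $\sup_T\sup_{t_0\le t\le t_0+T-S}\|P_\infty(t)-P_T(t)\|\to0$ as $S\to\infty$, for instance exponential convergence of the backward HJI flow derived from the uniform saddle-point stability in (A1$^\circ$). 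You would still have to prove this: Theorem~\ref{th:game:inf} only gives convergence uniform on compact intervals, not on windows that move with the horizon.
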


\begin{proof}
\textbf{Existence and bounds on $P_\infty$.}
Theorem~\ref{th:game:inf} applied to the homogeneous game provides a
unique bounded piecewise-$C^1$ solution $P_\infty\succeq0$.

\smallskip
\textbf{Key identity with $P_\infty$.}
For any admissible trajectory $X$ of~\eqref{eq:sys:IMP:forced}, the
forced HJI key identity~\eqref{eq:game:AC:keyid} with $P=P_\infty$ on
$[t_0,t_0+T]$ reads
\[
  J_T(X,X_1^0)
  =\ip{P_\infty(t_0)}{X_1^0}
   -\ip{P_\infty(t_0+T)}{EX(t_0+T)E^*}
   +\!\!\int_{t_0}^{t_0+T}\!\!\ip{\mathcal{I}(t,P_\infty(t))}{X}\dt
\]
\[
   +\sum_{k:\,t_k\le t_0+T}\ip{\mathcal{I}^\mathrm{j}}{X(t_k^-)}
   +R_T(P_\infty),
\]
with
$R_T(P_\infty):=\int_{t_0}^{t_0+T}\!\ip{P_\infty(t)}{\mathcal{W}(t)}\dt
+\sum_{k:\,t_k\le t_0+T}\!\ip{P_\infty(t_k^+)}{\mathcal{W}_k}$. Under
Isaacs' condition and the
I-HJI-GRE give $\mathcal{I}\,\schur{/}\,\mathcal{G}=0$,
so, with the maximizer playing $K_w^s$ (forcing $W_w=0$), the
saddle decomposition~\eqref{pf:game:decomp} gives
$\ip{\mathcal{I}(t,P_\infty)}{X}=\operatorname{tr}(W_u^*\mathcal{I}_{uu}W_u)\ge0$
a.e.\ on flow and $\ip{\mathcal{I}^\mathrm{j}}{X(t_k^-)}\ge0$ at every
jump, for any minimizer response $X_u$, with equality along the
saddle-point trajectory $X^\circ$. Hence, under the maximizer
policy $K_w^s$,
\begin{equation}\label{eq:game:AC:nec:bound}
  J_T(X,X_1^0)\;\ge\;\ip{P_\infty(t_0)}{X_1^0}
   -\ip{P_\infty(t_0+T)}{EX(t_0+T)E^*}+R_T(P_\infty),
\end{equation}
with equality at $X^\circ$.

\smallskip
\textbf{Boundary term is $O(1)$ in $T$.}
The analogue of Lemma~\ref{lem:IMP:AC:bdd} for the saddle-point
closed-loop dynamics bounds $\|EX(t)E^*\|\le C_X<\infty$ uniformly in
$t$ along any admissible trajectory under (A1$^\circ$); the proof uses
the geometric factor $\rho^j$ from the dual decay and never invokes a
bound on $\Delta_k$. Since $P_\infty$ is uniformly bounded,
$\ip{P_\infty(t_0+T)}{EX(t_0+T)E^*}\le\alpha_2 C_X$ uniformly in $T$.

\smallskip
\textbf{Conclusion.}
Dividing~\eqref{eq:game:AC:nec:bound} by $T$ and taking
$\limsup_{T\to\infty}$, both $T^{-1}\ip{P_\infty(t_0)}{X_1^0}$ and
$T^{-1}\ip{P_\infty(t_0+T)}{EX(t_0+T)E^*}$ vanish, leaving
$\limsup_T T^{-1}J_T(X,X_1^0)\ge\limsup_T T^{-1}R_T(P_\infty)$ for
any minimizer response $X_u$ against the maximizer policy $K_w^s$.
minimizing over $X_u$ and using that $K_w^s$ is one admissible maximizer
policy ($\max_{X_w}\min_{X_u}\ge\min_{X_u}(\cdot,K_w^s)$) gives
$\rho^\circ(X_1^0)\ge\limsup_T T^{-1}R_T(P_\infty)$. The saddle-point
trajectory $X^\circ$ saturates~\eqref{eq:game:AC:nec:bound}, so the
reverse inequality holds and
$\rho^\circ(X_1^0)=\limsup_T T^{-1}R_T(P_\infty)$, which
is~\eqref{eq:game:AC:nec:val}. The right-hand side does not depend
on $X_1^0$.

\smallskip
\textbf{Finiteness.}
$|R_T(P_\infty)|\le\alpha_2\bigl(T\sup_t\|\mathcal{W}(t)\|
+\#\{k:t_k\le t_0+T\}\cdot\sup_k\|\mathcal{W}_k\|\bigr)$, so
$T^{-1}|R_T(P_\infty)|$ is bounded above whenever
$\limsup_T T^{-1}\#\{k:t_k\le t_0+T\}<\infty$.
\end{proof}

\subsubsection{Equivalence and dual DLMI (infinite horizon)}
\label{sec:game:AC:LMI}

\begin{corollary}[Equivalence]\label{cor:game:AC:equiv}
Under Isaacs' condition, the infinite-horizon average game value is
well-posed with $\rho^\circ$ given by~\eqref{eq:game:AC:rhostar} if
and only if the infinite-horizon I-HJI-GRE has a bounded
piecewise-$C^1$ solution $P_\infty\succeq0$.  The solution is unique.
\end{corollary}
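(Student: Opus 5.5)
The statement to prove is Corollary~\ref{cor:game:AC:equiv}. I would prove it exactly as the analogous Corollaries~\ref{cor:IMP:AC:equiv} and~\ref{cor:diss:AC:equiv} are proved: assemble the two infinite-horizon average-game theorems just established, one for each direction, and take uniqueness from the homogeneous infinite-horizon game theory.

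\emph{Sufficiency.} Assume the infinite-horizon I-HJI-GRE has a bounded piecewise-$C^1$ solution $P_\infty\succeq0$, with Isaacs' condition holding. Then Theorem~\ref{th:game:AC:suff} applies directly. It gives three things:
\begin{itemize}
\item $\rho^\circ(X_1^0)=\rho^\circ$ for every $X_1^0$, with $\rho^\circ$ as in~\eqref{eq:game:AC:rhostar};
\item attainment of the saddle by the policy $(K_u^s,K_w^s)$ built from $P_\infty$;
\item uniqueness of $P_\infty$ among bounded solutions.
\end{itemize}
Together these are the well-posedness claim. The only point to check is that the $\limsup$ defining~\eqref{eq:game:AC:rhostar} is finite. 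This follows from the finiteness estimate in the proof of Theorem~\ref{th:game:AC:nec}: $|R_T(P_\infty)|\le\alpha_2(T\sup\|\mathcal{W}\|+\#\{k\}\sup\|\mathcal{W}_k\|)$, where the jump count grows linearly because inter-jump times are bounded below by (A3$^\circ$).

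\emph{Necessity.} Assume the average game is well-posed. Under (A1$^\circ$)--(A3$^\circ$) and Isaacs' condition, Theorem~\ref{th:game:AC:nec} (via Theorem~\ref{th:game:inf}) produces a unique bounded piecewise-$C^1$ solution $P_\infty\succeq0$ of the homogeneous infinite-horizon I-HJI-GRE. It also shows that $\rho^\circ(X_1^0)$ equals the pairing~\eqref{eq:game:AC:nec:val}, which is the same expression as~\eqref{eq:game:AC:rhostar}, so the value is the one named in the statement.

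\emph{Uniqueness.} Suppose $\tilde P$ is another bounded solution. By Theorem~\ref{th:game:inf:suff}, both $P_\infty$ and $\tilde P$ represent the homogeneous game value $J_\infty^\circ(X_1^0)$. Testing with $X_1^0=vv^*$ for arbitrary $v$ then forces $\tilde P\equiv P_\infty$.

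\emph{Main obstacle.} The delicate step is the necessity direction. Well-posedness of the average-cost problem alone does not produce $P_\infty$; that requires the standing assumptions (A1$^\circ$)--(A3$^\circ$). So the corollary has to be read, as in the OC and dissipativity counterparts, as holding under those assumptions. I would state this explicitly and route existence through Theorem~\ref{th:game:inf} rather than through any argument starting from the average-cost value.
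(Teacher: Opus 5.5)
Your proposal is correct and essentially matches the paper's argument. As with Corollaries~\ref{cor:IMP:AC:equiv} and~\ref{cor:diss:AC:equiv}, the equivalence is assembled from Theorem~\ref{th:game:AC:suff} (sufficiency), Theorem~\ref{th:game:AC:nec} (necessity, via Theorem~\ref{th:game:inf}), and uniqueness inherited from Theorem~\ref{th:game:inf:suff}. You are right that necessity draws existence from the standing assumptions (A1$^\circ$)--(A3$^\circ$) rather than from well-posedness; the sufficiency direction likewise uses (A1$^\circ$) to control the boundary term, which is the same reading the paper implicitly relies on.
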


Define the average-forcing pairing $\Phi(P)$ as
in~\eqref{eq:IMP:AC:Phi}.  Since the game Riccati-inequality
feasibility set
$\{P:\text{Isaacs~\eqref{eq:Isaacs:gen}},\,
\mathcal{I}\,\schur{/}\,\mathcal{G}\succeq0,\,
\mathcal{I}^\mathrm{j}\,\schur{/}\,\mathcal{G}^\mathrm{j}\succeq0,\,
P\text{ bounded}\}$ has $P_\infty$ as its largest element
(Theorem~\ref{th:game:inf:LMI}), the natural direction for the dual
characterization is the supremum.

\begin{theorem}[Dual game Riccati inequality, infinite-horizon average game value]
  \label{th:game:AC:LMI}
Under (A1$^\circ$)--(A3$^\circ$),
\begin{equation}\label{eq:game:AC:LMI}
  \rho^\circ
  =\sup_P\;\Phi(P)
  \quad\text{s.t.}\quad
  \begin{gathered}\textnormal{Isaacs cond.~\eqref{eq:Isaacs:gen} for }\mathcal{I},\mathcal{I}^\mathrm{j};\\
  \mathcal{I}\,\schur{/}\,\mathcal{G}\succeq0,\;
  \mathcal{I}^\mathrm{j}\,\schur{/}\,\mathcal{G}^\mathrm{j}\succeq0,\;
  P\text{ bounded};\end{gathered}
\end{equation}
the supremum is attained at $P_\infty$.  As in
Theorem~\ref{th:game:inf:LMI}, the indefinite Isaacs block makes this a
game Riccati inequality (with Schur-complement term), not a plain LMI.
\end{theorem}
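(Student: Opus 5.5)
The proof runs in two steps, following Theorem~\ref{th:IMP:AC:LMI} with the saddle structure of Theorem~\ref{th:game:inf:LMI}: first every feasible $P$ gives a lower bound $\Phi(P)\le\rho^\circ$, then $P_\infty$ is feasible with $\Phi(P_\infty)=\rho^\circ$.

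\textbf{Step 1 (lower bound).} Fix a feasible bounded $P$, i.e.\ one satisfying Isaacs' condition~\eqref{eq:Isaacs:gen} for $\mathcal{I},\mathcal{I}^\mathrm{j}$ together with $\mathcal{I}\,\schur{/}\,\mathcal{G}\succeq0$ and $\mathcal{I}^\mathrm{j}\,\schur{/}\,\mathcal{G}^\mathrm{j}\succeq0$. Let the maximizer play the saddle gain $K_w^s(t,P(t))$ and its jump analogue from~\eqref{eq:game:Ks}--\eqref{eq:game:Ks:jump}, built from this $P$ rather than $P_\infty$. This forces $W_w=0$. The decomposition~\eqref{pf:game:lmi} then gives, for every minimizer response,
\[
\ip{\mathcal{I}(t,P)}{X}=\ip{\mathcal{I}\,\schur{/}\,\mathcal{G}}{X_1}+\operatorname{tr}(W_u^*\mathcal{I}_{uu}W_u)\ge0,
\]
and the same holds at each jump. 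Substituting into the forced HJI key identity~\eqref{eq:game:AC:keyid} yields
\[
J_T(X,X_1^0)\ge\ip{P(t_0)}{X_1^0}-\ip{P(t_0+T)}{EX(t_0+T)E^*}+R_T(P).
\]
Next divide by $T$. The initial term is $O(1)$. The boundary term is $O(1)$ because $P$ is bounded and $EX(t)E^*$ stays bounded uniformly in $t$; this uses the forced analogue of Lemma~\ref{lem:IMP:AC:bdd} under (A1$^\circ$) and the uniform bounds on the forcing in (A3$^\circ$). Taking $\limsup$ gives $\limsup_T T^{-1}\min_{X_u}J_T(\cdot,K_w^s)\ge\Phi(P)$. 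Finally use $\max_{X_w}\min_{X_u}\ge\min_{X_u}(\cdot,K_w^s)$ and weak duality ($\min\max\ge\max\min$, applied at each $T$ before taking $\limsup$) to conclude $\rho^\circ(X_1^0)\ge\Phi(P)$. Hence $\sup_P\Phi(P)\le\rho^\circ$.

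\textbf{Step 2 (attainment).} By Theorem~\ref{th:game:AC:nec}, there is a unique bounded $P_\infty\succeq0$ solving the infinite-horizon I-HJI-GRE, and $\rho^\circ(X_1^0)=\limsup_T T^{-1}R_T(P_\infty)=\Phi(P_\infty)$. Feasibility of $P_\infty$ is immediate: Isaacs' condition is part of the I-HJI-GRE, and $\mathcal{I}\,\schur{/}\,\mathcal{G}=0\succeq0$ and $\mathcal{I}^\mathrm{j}\,\schur{/}\,\mathcal{G}^\mathrm{j}=0\succeq0$. Therefore $\sup_P\Phi(P)\ge\Phi(P_\infty)=\rho^\circ$. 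Combined with Step~1, the supremum equals $\rho^\circ$ and is attained at $P_\infty$. One could instead use the order-preserving property of $\Phi$ together with the claim that $P_\infty$ is the largest feasible element. That comparison is only asserted, not proved, after Theorem~\ref{th:game:inf:LMI}, so the direct argument of Step~1 is the safer route.

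\textbf{Main obstacle.} The delicate point is the boundary term in Step~1. Stability (A1$^\circ$) is stated for the saddle-point closed loop of $P_\infty$, but here the maximizer plays the gain associated with an arbitrary feasible $P$, and the minimizer is arbitrary. My first attempt would be to handle this by pairing: it suffices to bound $\min_{X_u}$ from below, so restrict the minimizer to responses with $\limsup_T T^{-1}J_T<\infty$. Coercivity (A2$^\circ$), $EZE^*\succeq\delta I$, then gives $T^{-1}\int_{t_0}^{t_0+T}\norm{EXE^*}\dt$ bounded. This yields $\liminf_T T^{-1}\ip{P(t_0+T)}{EX(t_0+T)E^*}=0$ along a subsequence $T_j\to\infty$. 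Evaluating the inequality along that subsequence, and using that $\Phi$ is defined as a $\limsup$ while $R_T(P)/T$ converges under the jump-rate regularity in (A3$^\circ$), gives $\Phi(P)$ in the limit. If convergence of $R_T(P)/T$ fails, I would instead state the result with $\liminf$ in $\Phi$, or impose the (A3$^\circ$)-type averaging assumption explicitly.
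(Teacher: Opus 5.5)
Your Step~2 coincides with the paper's attainment step. Your Step~1 is a fleshed-out version of the paper's closing ``conversely'' sentence. The paper's main route to $\sup_P\Phi(P)\le\rho^\circ$ is different: it uses the comparison $P\preceq P_\infty$ together with monotonicity of $\Phi$.

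That comparison is not merely asserted. The proof of Theorem~\ref{th:game:inf:LMI} gives $J_\infty^\circ(X_1^0)\ge\ip{P(t_0)}{X_1^0}$ for every bounded feasible $P$. Combined with $J_\infty^\circ(X_1^0)=\ip{P_\infty(t_0)}{X_1^0}$, and applied from an arbitrary initial time, this yields $P(t)\preceq P_\infty(t)$. Both routes rest on exactly the same control of the boundary term $\ip{P(t_0+T)}{EX(t_0+T)E^*}$ along arbitrary minimizer responses, so the direct route is not safer. At the paper's own level of rigor, namely the uniform bound on $EX(t)E^*$ invoked in the proof of Theorem~\ref{th:game:AC:nec}, your Step~1 goes through.

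The repair you propose for that boundary term, however, does not work, for three reasons.
\begin{enumerate}
\item (A2$^\circ$) only bounds the $Q$-block. The game integrand $\ip{Z}{X}$ also contains $-\ip{R_w}{X_{33}}$ and the cross terms in $S_u,S_w,R_{uw}$, so it is not bounded below by $\delta\norm{EXE^*}$. With the maximizer frozen at $K_w^s(P)$ one has $X_{33}=K_wX_1K_w^*$, which contributes $-\tr(K_w^*R_wK_wX_1)$ and can outweigh $\delta\tr(X_1)$. Hence a finite average cost does not imply a bounded time-averaged state.
\item Definition~\ref{def:game:AC} takes $\limsup_T T^{-1}$ of horizon-$T$ values. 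The state entering the boundary term is therefore the terminal state at $t_0+T$ of the horizon-$T$ minimizing response, which is a different trajectory for each $T$. Picking good times $T_j$ along one fixed infinite trajectory controls a different quantity, a rate for infinite-horizon strategies, not $\rho^\circ$ as defined.
\item Along $T_j$ you only obtain $\limsup_j R_{T_j}(P)/T_j$, which can be strictly smaller than $\Phi(P)$. The bounded inter-jump times in (A3$^\circ$) do not make $R_T(P)/T$ converge for an arbitrary bounded feasible $P$.
\end{enumerate}
So the obstacle you identified remains open under your patch. It has to be handled either by the same stability assertion the paper uses or by a genuinely new ingredient.
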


\begin{proof}
By Theorem~\ref{th:game:inf:LMI}, every feasible $P$ satisfies
$P\preceq P_\infty$ pointwise.  Since $\mathcal{W}\succeq0$ and
$\mathcal{W}_k\succeq0$, $\Phi$ is order-preserving, so
$\Phi(P)\le\Phi(P_\infty)$ for every feasible $P$, hence
$\sup_P\Phi(P)\le\Phi(P_\infty)$.  By
Theorem~\ref{th:game:AC:nec}, $\Phi(P_\infty)=\rho^\circ$, so the
supremum is bounded above by $\rho^\circ$ and is attained at
$P_\infty$.  Conversely, applying~\eqref{eq:game:AC:keyid} with a
feasible $P$ and dividing by $T$ gives, after the minimax over
$(X_u,X_w)$, $\rho^\circ\ge\Phi(P)$ for every feasible $P$, so
$\rho^\circ\ge\sup_P\Phi(P)$.  Hence equality.
\end{proof}

\subsubsection{Dwell-time conditions}
\label{sec:game:AC:dwell}

The three dwell-time scenarios of
Sections~\ref{sec:OC:AC:dwell}--\ref{sec:diss:AC:dwell} apply to the
zero-sum game with $\mathcal{C}_Z\leftarrow\mathcal{I}$ and
$\mathcal{D}_Z\leftarrow\mathcal{I}^\mathrm{j}$ throughout, and with
the saddle-point feedback $(\mathcal{K}_u^s\times\mathcal{K}_w^s)$
replacing the optimal feedback $\mathcal{K}_c$.  The forward HJI-GRE
of~\eqref{eq:forwardHJI} plays the role of the forward
Riccati~\eqref{eq:forwardRiccati}, and the Geromel--Colaneri
condition is the frozen game Riccati inequality
$\mathcal{I}^0\,\schur{/}\,\mathcal{G}^0\preceq0$ with Isaacs'
condition~\eqref{eq:Isaacs:gen} on $\mathcal{I}^0$~\eqref{eq:GC:game},
as in Theorem~\ref{th:game:MDT}.

\begin{theorem}[Causal LTAC under periodic, MDT, RDT game]
  \label{th:game:AC:dwell}
Assume Isaacs' condition~\eqref{eq:Isaacs:gen} uniformly and the
hypotheses of Theorems~\ref{th:game:periodic},~\ref{th:game:MDT},
and~\ref{th:game:RDT} respectively (in particular, the periodicity
(periodic case), the time-invariance beyond $T_{\min}$ with the frozen
LMI~\eqref{eq:GC:game} (MDT case), and the timer-dependence (RDT case)
of the flow operator $\mathcal{F}$, the jump operator $\mathcal{J}$,
the costs $Z,Z_k$ (with the game block structure
of Section~\ref{sec:game:form}), \emph{and} of the forcing data
$\mathcal{W},\mathcal{W}_k$) together with
(A1$^\circ$)--(A3$^\circ$) and the bounded jump-rate conditions of
Theorems~\ref{th:IMP:AC:dwell:periodic}--\ref{th:IMP:AC:dwell:RDT}.
Let $P_\infty(\tau)$ or $P_s(\tau)$ denote the causal storage
function from the respective game theorem.  Then the causal
long-term average game value satisfies:
\begin{enumerate}[\upshape(a)]
  \item \emph{Periodic} $\Delta_k\equiv T_p$:
    $\rho^\circ=T_p^{-1}\bigl[\int_0^{T_p}\ip{P_\infty(\tau)}{\mathcal{W}(\tau)}d\tau
    +\ip{P_\infty(0^+)}{\mathcal{W}_k}\bigr]$.
  \item \emph{MDT} $\Delta_k\ge T_{\min}$:
    $\rho^\circ\le
    \bar\nu\int_0^{T_{\min}}\!\ip{P_s(\tau)}{\mathcal{W}}d\tau
    +(1-\bar\nu T_{\min})\ip{P_s(T_{\min})}{\mathcal{W}}
    +\bar\nu\ip{P_s(0^+)}{\mathcal{W}_k}$.
  \item \emph{RDT} $\Delta_k\in[T_{\min},T_{\max}]$:
    $\rho^\circ\le
    \bar\nu\int_{T_{\min}}^{T_{\max}}\!(\int_0^\Delta\ip{P_s(\tau)}{\mathcal{W}}d\tau)\mu(d\Delta)
    +\bar\nu\ip{P_s(0^+)}{\mathcal{W}_k}$.
\end{enumerate}
The periodic case is exact and the saddle is attained at
$(\mathcal{K}_u^s\times\mathcal{K}_w^s)(\tau,P_\infty(\tau))$ on flow;
MDT and RDT give upper bounds for the minimizer under any maximizer
strategy and any admissible dwell-time sequence (or timer-occupation
measure $\mu$).
\end{theorem}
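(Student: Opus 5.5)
The plan is to derive all three items from the forced HJI key identity~\eqref{eq:game:AC:keyid}, applied with the timer-extended dual variable. This mirrors the proofs of Theorems~\ref{th:IMP:AC:dwell:periodic}, \ref{th:IMP:AC:dwell:MDT} and~\ref{th:IMP:AC:dwell:RDT}. The only genuinely game-specific ingredient is the saddle decomposition~\eqref{pf:game:decomp}, which controls the sign of the $\mathcal{I}$ and $\mathcal{I}^\mathrm{j}$ terms. For each case, fix the absolute-time dual $P(t):=\bar P(t-t_k)$ on $(t_k,t_{k+1}]$, where $\bar P$ is the periodic $P_\infty$, the frozen extension of $P_s$ beyond $T_{\min}$, or $P_s$ on $[0,T_{\max}]$. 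Substituting into~\eqref{eq:game:AC:keyid} gives
\[
J_T=\ip{\bar P(0^+)}{X_1^0}-\ip{P(t_0+T)}{EX(t_0+T)E^*}+\textstyle\int\ip{\mathcal{I}}{X}+\sum\ip{\mathcal{I}^\mathrm{j}}{X}+R_T(P).
\]

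\textbf{Periodic case.} By Theorem~\ref{th:game:periodic}, the periodic extension of $P_\infty$ is the unique infinite-horizon I-HJI-GRE solution. Theorem~\ref{th:game:AC:nec} then gives $\rho^\circ=\limsup_T T^{-1}R_T(P_\infty)$. I then repeat the periodic-averaging computation of Theorem~\ref{th:IMP:AC:dwell:periodic}: the interval contains $\lfloor T/T_p\rfloor$ full periods and the same number of jumps, plus an $O(1)$ remainder, so the $\limsup$ becomes a limit equal to the stated formula. Attainment at the saddle gains comes from Theorem~\ref{th:game:AC:suff}(b).

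\textbf{MDT and RDT cases.} Let the minimizer play its causal gain $K_u^s(\tau,\bar P(\tau))$, which forces $W_u=0$, and let the maximizer be arbitrary. By~\eqref{pf:game:decomp}, each flow integrand equals $\ip{\mathcal{I}\,\schur{/}\,\mathcal{G}}{X_1}+\operatorname{tr}(W_w^*\mathcal{I}_{ww}W_w)$.
\begin{itemize}
\item On $(0,T_{\min})$ (MDT) or $(0,T_{\max})$ (RDT), the first term vanishes by condition~(i) of Theorem~\ref{th:game:MDT}, resp.\ Theorem~\ref{th:game:RDT}.
\item On the MDT tail it is $\le0$ by the frozen inequality~\eqref{eq:GC:game}.
\item The second term is $\le0$ by Isaacs' condition $\mathcal{I}_{ww}\preceq0$.
\item The jump terms are $\le0$ by condition~(ii) of the respective theorem, using $\Delta_{k-1}\ge T_{\min}$ (MDT) or $\Delta_{k-1}\in[T_{\min},T_{\max}]$ (RDT).
\end{itemize}
Dropping the nonpositive boundary term $-\ip{P(t_0+T)}{EX(t_0+T)E^*}$ (recall $\bar P\succeq0$) gives $\max_{X_w}J_T\le\ip{\bar P(0^+)}{X_1^0}+R_T(\bar P)$. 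Dividing by $T$, the initial term vanishes, so $\rho^\circ\le\limsup T^{-1}R_T(\bar P)$.

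The remaining step is to evaluate $\lim T^{-1}R_T(\bar P)$ through the timer occupation, exactly as in Theorems~\ref{th:IMP:AC:dwell:MDT} and~\ref{th:IMP:AC:dwell:RDT}. For MDT, the fraction of time spent with timer in $[0,T_{\min}]$ is $\bar\nu T_{\min}$, and on the frozen region $\bar P=P_s(T_{\min})$ and $\mathcal{W}=\mathcal{W}_\infty$. For RDT, each interval of length $\Delta_k$ contributes $\int_0^{\Delta_k}\ip{P_s}{\mathcal{W}}\,d\tau$, and weak convergence of the empirical measure gives the $\mu$-integral with $\bar\nu=(\mathbb{E}_\mu\Delta)^{-1}$. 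In both cases the jump sum averages to $\bar\nu\ip{P_s(0^+)}{\mathcal{W}_{\mathrm{jp}}}$.

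\textbf{Expected obstacle.} The hardest point is justifying these occupation limits, in particular in the MDT case, where the frozen-region fraction $1-\bar\nu T_{\min}$ needs $\bar\nu$ to be a genuine limit and needs the flow integral over partially completed final intervals to be $o(T)$. I would first bound the remainder by $\sup\|\bar P\|\,\sup\|\mathcal{W}\|\,T_{\max}$ (for MDT, by the length of the last interval, which is controlled by the bounded inter-jump times in (A3$^\circ$)). A secondary subtlety is that the upper bound must be interpreted for the minimizer's causal policy rather than as the true minimax; I will state it that way, as the theorem does.
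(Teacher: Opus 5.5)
Your proposal is correct and takes the same route as the paper. The paper's proof says the proofs of Theorems~\ref{th:IMP:AC:dwell:periodic}--\ref{th:IMP:AC:dwell:RDT} carry over verbatim with $\mathcal{C}_Z\leftarrow\mathcal{I}$ and $\mathcal{D}_Z\leftarrow\mathcal{I}^\mathrm{j}$. Isaacs' condition supplies the saddle decomposition, and the frozen game Riccati inequality $\mathcal{I}^0\,\schur{/}\,\mathcal{G}^0\preceq0$ controls the MDT tail. You make explicit the minimizer-side step from the proof of Theorem~\ref{th:game:MDT}: play $K_u^s$ to force $W_u=0$, then use $\mathcal{I}_{ww}\preceq0$. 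That step is what justifies the ``any maximizer strategy'' claim, which a literal verbatim transfer of the optimal-policy argument would leave implicit.
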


\begin{proof}
The proofs of Theorems~\ref{th:IMP:AC:dwell:periodic}--\ref{th:IMP:AC:dwell:RDT}
apply verbatim with the substitutions
$\mathcal{C}_Z\leftarrow\mathcal{I}$,
$\mathcal{D}_Z\leftarrow\mathcal{I}^\mathrm{j}$, and the
minimax direction (rather than minimization) in the saddle-point step.
Isaacs' condition uniformly ensures the saddle-point decomposition
applies on the bounded timer interval, and the Geromel--Colaneri
condition $\mathcal{I}^0\,\schur{/}\,\mathcal{G}^0\preceq0$ from
Theorem~\ref{th:game:MDT}
ensures the integral contribution on the tail $\tau>T_{\min}$ is
non-positive (so the upper-bound inequality is preserved).  All
other steps (timer occupation, jump-rate average) are identical.
\end{proof}

\subsubsection{Continuous- and discrete-time corollaries}
\label{sec:game:AC:CTDT}

\begin{corollary}[Continuous-time average game value]
  \label{cor:CT:game:AC}
Set $N_T=0$.  Under (A1$^\circ$)--(A3$^\circ$),
$\rho^\circ=\lim_T T^{-1}\int_{t_0}^{t_0+T}\ip{P_\infty(t)}{\mathcal{W}(t)}\dt$,
where $P_\infty$ is the unique bounded $C^1$ solution of the
continuous-time infinite-horizon HJI equation
(Corollary~\ref{cor:CT:game:inf}).  In the LTI case
($\mathcal{F},Z,\mathcal{W}$ constant), $P_\infty$ is constant and
$\rho^\circ=\tr(P_\infty\mathcal{W})$.
\end{corollary}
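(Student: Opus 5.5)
The proof specializes Theorem~\ref{th:game:AC:nec} and Theorem~\ref{th:game:AC:suff} to $N_T=0$, and then uses the LTI structure to collapse the time average.

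First, set $N_T=0$ so all jump terms disappear. The forced HJI key identity~\eqref{eq:game:AC:keyid} then contains only the flow integral, the boundary terms, and $R_T(P)=\int_{t_0}^{t_0+T}\ip{P(t)}{\mathcal{W}(t)}\dt$. By Corollary~\ref{cor:CT:game:inf} (i.e.\ Theorem~\ref{th:game:inf} with no jumps), (A1$^\circ$)--(A3$^\circ$) give the unique bounded $C^1$ solution $P_\infty\succeq0$ of the continuous-time infinite-horizon HJI equation. Theorem~\ref{th:game:AC:nec} then yields $\rho^\circ(X_1^0)=\limsup_T T^{-1}R_T(P_\infty)$, independent of $X_1^0$. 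The proof there is unchanged: the maximizer plays $K_w^s$ to get the lower bound, the saddle trajectory attains it, and the boundary term is $O(1)$ by the saddle-point analogue of Lemma~\ref{lem:IMP:AC:bdd}, whose jump series is now absent.

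Second, we need a genuine limit rather than a limsup in general. Here I would rely on the standing averaging hypothesis, as in Theorem~\ref{th:game:AC:suff} where the limit is assumed to exist, and say explicitly that the $\lim$ in the statement is understood in this sense. In the LTI case the limit exists unconditionally, which is the only case where the claim is sharp.

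Third, in the LTI case ($\mathcal{F},Z,\mathcal{W}$ constant) I show $P_\infty$ is constant. For any shift $s>0$, define $Q(t):=P_\infty(t+s)$. Time invariance of $\mathcal{I}$ means $Q$ solves the same infinite-horizon HJI-GRE, and $Q$ is bounded. By uniqueness in Theorem~\ref{th:game:inf:suff}, $Q\equiv P_\infty$, which is the shifting argument of Theorem~\ref{th:OC:periodic}(c) applied for every $s$. Hence $P_\infty$ is constant, and so $T^{-1}R_T(P_\infty)=\ip{P_\infty}{\mathcal{W}}=\tr(P_\infty\mathcal{W})$ for every $T$. The limit is trivially attained, giving $\rho^\circ=\tr(P_\infty\mathcal{W})$ (with $\mathcal{W}$ Hermitian, $\ip{P}{\mathcal{W}}=\tr(P\mathcal{W}^*)=\tr(P\mathcal{W})$).

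The main obstacle is the LTV $\lim$ versus $\limsup$ issue in the second step. Everything else is direct substitution into results already established.
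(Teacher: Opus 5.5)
Your proposal follows the route the paper leaves implicit. The corollary has no separate proof; it is Theorem~\ref{th:game:AC:nec} with $N_T=0$, plus constancy of $P_\infty$ for LTI data. Your shift-plus-uniqueness argument via Theorem~\ref{th:game:inf:suff} correctly supplies the constancy that the paper only asserts.

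Your $\lim$-versus-$\limsup$ caveat points to a real defect in the statement, not in your proof. With $N_T=0$ the jump-rate part of (A1$^\circ$)--(A3$^\circ$) is vacuous. Take constant $\mathcal{F},Z$ and $\mathcal{W}(t)=w(t)W_0$ with $W_0\in\Snpsd$, $\ip{P_\infty}{W_0}>0$, and $w\ge0$ bounded with oscillating Ces\`aro means: the hypotheses hold, yet the average has no limit. So only the $\limsup$ form is provable in the LTV case.

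One small citation fix: the limit-existence hypothesis you borrow is the one in Theorem~\ref{th:IMP:AC:suff}. Theorem~\ref{th:game:AC:suff} defines $\rho^\circ$ with a $\limsup$.
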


\begin{corollary}[Discrete-time average game value]
  \label{cor:DT:game:AC}
Apply with $\mathcal{F}=0$, $Z=0$ on flow, $t_k=k_0+k$, and
$\mathcal{W}=0$ on flow.  Then
$\rho^\circ=\lim_N N^{-1}\sum_{k=k_0}^{k_0+N-1}\ip{P_\infty(k+1)}{\mathcal{W}_k}$,
where $P_\infty$ is the unique bounded positive solution of the
algebraic discrete-time HJI-GRE~\eqref{eq:DT:game:GRE}
(Corollary~\ref{cor:DT:game:inf}) and the discrete-time saddle-point
controllers are given by~\eqref{eq:DT:game:K} at $P_\infty$.  In the
LTI case, $\rho^\circ=\tr(P_\infty\mathcal{W})$.
\end{corollary}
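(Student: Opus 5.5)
The statement is a reduction of Theorem~\ref{th:game:AC:nec}, Theorem~\ref{th:game:AC:LMI} and the corresponding optimal-control dwell-time results to the LTI specialisation with discrete time. I would therefore not rebuild anything and would argue by specialisation, in the way Corollaries~\ref{cor:DT:AC} and \ref{cor:DT:game:inf} are obtained.

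\emph{Step 1: embed the discrete-time problem.} Set $\mathcal{F}=0$, $Z=0$ and $\mathcal{W}=0$ on flow, and take $t_k=k_0+k$, so that $\Delta_k\equiv1$. Then (A3$^\circ$)'s bounded inter-jump condition holds trivially, and the flow part of the forced HJI key identity~\eqref{eq:game:AC:keyid} vanishes except through $\dot P$. I will choose $P$ piecewise constant on each interval $(k,k+1)$, so that $\mathcal{I}=E^*\dot PE=0$ on flow and the flow conditions of the I-HJI-GRE hold trivially. The key identity then collapses to
\[
J_N(X,X_1^0)=\ip{P(k_0)}{X_1^0}-\ip{P(k_0+N)}{EX(k_0+N)E^*}+\sum_k\ip{\mathcal{I}_\mathrm{d}}{X(k)}+\sum_k\ip{P(k+1)}{\mathcal{W}_k},
\]
where $P(t_k^+)=P(k+1)$ under the indexing of Corollary~\ref{cor:DT:game:fh}.

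\emph{Step 2: obtain $P_\infty$ and the formula.} Corollary~\ref{cor:DT:game:inf} supplies the unique bounded positive solution $P_\infty$ of the algebraic DT HJI-GRE~\eqref{eq:DT:game:GRE}. Theorem~\ref{th:game:AC:nec} then gives $\rho^\circ$ as the $\limsup$ of $N^{-1}\sum\ip{P_\infty(k+1)}{\mathcal{W}_k}$. To upgrade this $\limsup$ to a $\lim$ as stated, I intend to argue as follows:
\begin{itemize}
\item in the general case, the existence of the limit rests on the same averaging hypothesis as in Theorem~\ref{th:game:AC:suff};
\item in the LTI case, the data are $k$-invariant. Shifting $k\mapsto k+1$ maps bounded solutions to bounded solutions, so by uniqueness $P_\infty$ is constant.
\end{itemize}
With $P_\infty$ constant the average reduces to $\ip{P_\infty}{\mathcal{W}}=\tr(P_\infty\mathcal{W})$, which is an actual limit. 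The saddle-point gains are obtained by specialising~\eqref{eq:game:Ks:jump} to~\eqref{eq:DT:game:K} at $P_\infty$.

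\textbf{Main obstacle.} The delicate step is the bounded-state argument (the analogue of Lemma~\ref{lem:IMP:AC:bdd}) that kills the boundary term $N^{-1}\ip{P_\infty}{EX(N)E^*}$. In discrete time (A1$^\circ$) must be read as geometric decay $\rho^{k}$ rather than $e^{-\alpha t}$. I would restate it that way and sum the geometric series $\sum_j\rho^j\sup\|\mathcal{W}_k\|$ to get a uniform bound.
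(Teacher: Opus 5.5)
Your proposal is correct and follows the paper's route. The corollary is the specialization of Theorem~\ref{th:game:AC:nec} to $\mathcal{F}=0$, $Z=0$, $\mathcal{W}=0$ on flow with unit jump spacing, with $P_\infty$ supplied by Corollary~\ref{cor:DT:game:inf}. Your shift-plus-uniqueness argument for the constancy of $P_\infty$ in the LTI case is the same device the paper uses in Theorem~\ref{th:OC:periodic}(c).

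Two bookkeeping corrections:
\begin{enumerate}
\item The averaging hypothesis needed for a genuine $\lim$ outside the LTI case is the one assumed in Theorem~\ref{th:IMP:AC:suff}. Theorem~\ref{th:game:AC:suff} carries no such hypothesis; it is stated with a $\limsup$.
\item Just as you reread (A1$^\circ$), assumption (A2$^\circ$) fails literally when $Z=0$ on flow. It must be replaced by coercivity of the jump weights, $EZ_kE^*\succeq\delta I$.
\end{enumerate}
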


\begin{remark}[$H_\infty$-type interpretation]\label{rem:game:AC:Hinf}
With the cost block $Z_{ww}=-\gamma^2 I$ as in
the $H_\infty$ remark in Section~\ref{sec:game:hinf}, the saddle game value rate $\rho^\circ$ has
the meaning of the worst-case disturbance-to-output energy gain per
unit time, augmented by the $\gamma$-bound on the noise channel.  In
the LTI case with $\mathcal{W}=B_w B_w^*$, $\rho^\circ=\tr(P_\infty
B_w B_w^*)$ is the standard steady-state $H_\infty$ "$\gamma$-cost"
rate.  minimizing $\gamma^2$ subject to the
DLMI~\eqref{eq:game:AC:LMI} for $\rho^\circ\le\gamma^2$ recovers the
matrix-valued analogue of the classical
$H_\infty$-optimal cost rate problem.
\end{remark}



\section{Discussion}
\label{sec:summary}


Every result in this paper rests on three tools that are independent
of the system class and of the sign of $M$ or $P$: the key identity
(Sections~\ref{sec:OC:fh}--\ref{sec:diss:inf}),
Lemma~\ref{lemma:extSchur} (Extended Schur), and
Lemma~\ref{lemma:decomp} (Schur inner-product decomposition).
These apply identically to the flow integral, to each jump summand,
and hence to the full impulsive setting.


We have developed a comprehensive optimal control, optimal
steering, dissipativity, and game-theoretic framework for linear
Hermitian matrix-valued impulsive dynamical systems.  All results
are obtained under LTV assumptions and are general enough to cover
state-vector continuous-time and discrete-time systems subject to input
noise, Markovian and deterministic switching, and random and
deterministic jumps in the state---including sampled-data systems as a
special case. Consequently, existing results on the optimal control,
the dissipativity analysis, and the minimax control of such systems can
be recovered as corollaries of those obtained here. The dwell-time
conditions likewise generalize existing ones in the literature.

Several directions remain open.  A natural and important extension
is to \emph{stochastic} Hermitian matrix-valued systems: replacing
the deterministic flow $E\dot{X}E^*=\mathcal{F}(t,X)$ by an
It\^o-type matrix-valued SDE that includes a diffusion
term~\cite{Chen:16} \emph{and} stochastic jumps (not considered
here), where both the jump times and the post-jump state have
distributions depending on the pre-jump state.  Such a setting
would extend the present deterministic impulsive framework to the
genuinely stochastic case over Hermitian matrices; it is
qualitatively distinct from the Markov jump linear system
framework of~\cite{Costa:13,Dragan:25}, where it is the
second-order moment of the vector state that evolves according to
a deterministic impulsive flow as studied in~\cite{Briat:22:Matrix}.
Obtaining necessary causal conditions for MDT (as
Theorem~\ref{th:OC:periodic} does for the periodic case) remains
an open problem, as does extending the framework to switched
impulsive systems with mode-dependent dynamics.  The discussion
below addresses a third, structurally distinct direction.


The results of this paper rely on three structural properties of
the cone $\Snpsd$ (using the notation of this paper; the cone
$\Snpsd=\mathbb{H}^n_{\succeq0}$ of positive semidefinite Hermitian
matrices is also written $\mathcal{S}_n^+$ in the positive-systems
literature): it is a proper cone (closed, convex, pointed, and with
non-empty interior); it is self-dual ($(\Snpsd)^*=\Snpsd$, a
consequence of the trace inner product being non-degenerate on
$\Snpsd$); and the dynamics are cone-preserving ($\mathcal{F}$ and
$\mathcal{J}$ leave $\Snpsd$ invariant).  Remarkably, these are
the only structural properties used by the three tools underlying
all proofs: the key identity (whose derivation uses only linearity
of $\mathcal{F}$ and $\mathcal{J}$ in $X$), the Extended Schur
lemma (Lemma~\ref{lemma:extSchur}), and the Schur inner-product
decomposition (Lemma~\ref{lemma:decomp}).

It is therefore natural to ask whether the entire framework, or
large parts of it, extend to systems evolving on an arbitrary
proper cone $\mathcal{K}\subseteq\mathbb{R}^n$ with dynamics that
preserve $\mathcal{K}$ and costs that are linear on $\mathcal{K}$.
This generalization is precisely the setting studied in the recent
work on \emph{optimal control on positive cones}
(Pates and Rantzer~\cite{Pates:24}, Vladu, Megretski, and
Rantzer~\cite{Vladu:26}), where the Bellman equation for LQ-type
problems can be solved explicitly by a linear function under a
critical structural condition on the cone.  Stability analysis for
cone-preserving systems has been studied in, e.g.,~\cite{Shen:15}, where linear Lyapunov or storage functions play the
role that inner products with elements of $\Snpsd$ play here.

The parallel is more than formal.  In the $\Snpsd$-cone case,
the dual variable $P$ in the key identity is itself an element of
the dual cone $(\Snpsd)^*=\Snpsd$, the linear cost
$\ip{P}{X}=\tr(PX)$ is the natural duality pairing between
$P\in\Snpsd$ and $X\in\Snpsd$, and all LMI/Riccati conditions
are cone constraints in $\Snpsd$.  For a general proper cone
$\mathcal{K}$, the analogous objects are: the dual cone
$\mathcal{K}^*$, the pairing $\langle\lambda,x\rangle$ for
$\lambda\in\mathcal{K}^*$ and $x\in\mathcal{K}$, and the
corresponding linear inequalities on $\mathcal{K}^*$.  It appears
plausible that the finite-horizon optimal control, the
dissipativity, and the dwell-time results of this paper extend
verbatim to this general-cone setting, with $\Snpsd$ replaced by
$\mathcal{K}$, inner products replaced by the duality pairing, and
Schur complements replaced by the appropriate cone-linear
factorization.  We leave the systematic development of this
direction for future work; it would unify the $\Snpsd$-cone
(Hermitian matrix-valued) theory of this paper with the
positive-orthant theory of~\cite{Pates:24,Vladu:26} and the
cone-invariant theory of~\cite{Shen:15} under a single
abstract framework for cone-preserving impulsive systems.

\bibliographystyle{plain}
\bibliography{./global,./briat}

\end{document}